\documentclass[11pt, a4paper]{amsart}
\usepackage[margin=2.5cm,marginpar=2cm]{geometry}
\usepackage{amsmath}
\usepackage{amssymb}
\usepackage{xcolor}
\usepackage{mathrsfs}
\usepackage{mathtools}
\usepackage{stmaryrd}
\usepackage{longtable}
\usepackage[longtable]{multirow}
\usepackage{array}
\newcolumntype{V}[1]{>{\centering\arraybackslash}m{#1}}
\usepackage{makecell}
\usepackage{comment}
\usepackage{tikz-cd}
\usepackage{rotating}
\usepackage{tablefootnote}
\usepackage{adjustbox}
\usepackage[lowtilde]{url}
\usepackage{hyperref}
\usepackage[
  backend=biber,
  style=alphabetic,
  sorting=nyt,
  maxnames=99,
  maxalphanames=99,
  minalphanames=1,
  url=true,
  doi=true,
  eprint=true
]{biblatex}
\usepackage{xstring}
\DeclareLabelalphaNameTemplate{
  \namepart[use=true, strwidth=1, compound=true]{prefix}
  \namepart[strwidth=1]{family}
}
\DeclareLabelalphaTemplate{
  \labelelement{
    \field[final]{shorthand}
    \field{label}
    \field[strwidth=1,strside=left,noalphaothers]{labelname}
    \field[strwidth=3,strside=left,ifnames=0]{labeltitle}
  }
  \labelelement{
    \field[strwidth=2,strside=right]{year}
  }
}

\newtoggle{arxivbibentry}
\newtoggle{journalwitharxivbibentry}
\newcommand{\setarxivbibtoggle}{%
  \togglefalse{arxivbibentry}%
  \togglefalse{journalwitharxivbibentry}%
  \iffieldundef{eprinttype}{}{%
    \edef\bibeprinttype{\thefield{eprinttype}}%
    \IfSubStr{\bibeprinttype}{Xiv}{\toggletrue{arxivbibentry}}{%
      \IfSubStr{\bibeprinttype}{xiv}{\toggletrue{arxivbibentry}}{}%
    }%
  }%
  \iftoggle{arxivbibentry}{}{%
    \iffieldundef{doi}{}{%
      \edef\bibtmp{\thefield{doi}}%
      \IfSubStr{\bibtmp}{arXiv}{\toggletrue{arxivbibentry}}{}%
    }%
  }%
  \iftoggle{arxivbibentry}{}{%
    \iffieldundef{note}{}{%
      \edef\bibtmp{\thefield{note}}%
      \IfSubStr{\bibtmp}{arXiv}{\toggletrue{arxivbibentry}}{}%
    }%
  }%
  \iftoggle{arxivbibentry}{%
    \iffieldundef{journaltitle}{%
      \iffieldundef{journal}{}{%
        \togglefalse{arxivbibentry}%
        \toggletrue{journalwitharxivbibentry}%
      }%
    }{%
      \togglefalse{arxivbibentry}%
      \toggletrue{journalwitharxivbibentry}%
    }%
  }{}%
}
\AtEveryBibitem{%
  \setarxivbibtoggle
  \clearfield{urldate}%
  \iftoggle{journalwitharxivbibentry}{%
    \clearfield{url}%
    \clearfield{urlraw}%
  }{%
    \iftoggle{arxivbibentry}{%
      \clearfield{doi}%
      \clearfield{url}%
      \clearfield{urlraw}%
      \clearfield{number}%
      \iffieldundef{note}{}{%
        \edef\bibtmp{\thefield{note}}%
        \IfSubStr{\bibtmp}{arXiv}{\clearfield{note}}{}%
      }%
    }{%
      \clearfield{doi}%
      \ifentrytype{online}{}{%
        \clearfield{url}%
        \clearfield{urlraw}%
      }%
    }%
  }%
}
\renewbibmacro*{url+urldate}{%
  \usebibmacro{url}%
}
\renewbibmacro*{doi+eprint+url}{%
  \iftoggle{journalwitharxivbibentry}{%
    \iftoggle{bbx:doi}{\printfield{doi}}{}%
    \iffieldundef{eprint}{}{%
      \setunit{\addspace}\newblock
      \usebibmacro{eprint}%
    }%
  }{%
    \iftoggle{arxivbibentry}{%
      \usebibmacro{eprint}%
    }{%
      \ifentrytype{online}{%
        \iftoggle{bbx:url}{\usebibmacro{url}}{}%
      }{%
      }%
    }%
  }%
}
\usepackage{bbm}
\usepackage[new]{old-arrows}
\usepackage[capitalise,noabbrev,nameinlink]{cleveref}
\usepackage[textwidth=1.3in,textsize=tiny]{todonotes}
\usepackage{enumitem}
\usepackage{epigraph}

\makeatletter
\let\origsection\section
\renewcommand\section{\@ifstar{\starsection}{\nostarsection}}

\newcommand\nostarsection[1]
{\sectionprelude\origsection{#1}\sectionpostlude}

\newcommand\starsection[1]
{\sectionprelude\origsection*{#1}\sectionpostlude}

\newcommand\sectionprelude{%
  \vspace{1em}
}

\newcommand\sectionpostlude{%
  \vspace{1em}
}
\makeatother

\renewcommand{\arraystretch}{1.5}

\newcommand*{\isoarrow}[1]{\arrow[#1,"\rotatebox{90}{\(\sim\)}"]}
\newcommand*{\isovert}{\rotatebox{90}{\(\sim\)}}

\newcommand{\C}{\mathbb{C}}
\newcommand{\bQ}{\mathbb{Q}}
\newcommand{\A}{\mathcal{A}}

\newcommand{\J}{\mathcal{J}}
\newcommand{\g}{\mathfrak{g}}
\newcommand{\orb}{\mathbb{O}}
\newcommand{\orbk}{\orb_{K}}
\newcommand{\torbk}{\widetilde{\orb}_{K}}
\newcommand{\horbk}{\widehat{\orb}_{K}}

\newcommand{\Z}{\mathbb{Z}}
\newcommand{\gr}{\operatorname{gr}}

\newcommand{\fD}{\mathfrak{D}}

\newcommand{\fg}{\mathfrak{g}}

\newcommand{\fh}{\mathfrak{h}}

\newcommand{\fk}{\mathfrak{k}}

\newcommand{\fl}{\mathfrak{l}}

\newcommand{\fo}{\mathfrak{o}}
\newcommand{\fP}{\mathfrak{P}}

\newcommand{\fq}{\mathfrak{q}}

\newcommand{\fr}{\mathfrak{r}}
\newcommand{\fS}{\mathfrak{S}}
\newcommand{\fs}{\mathfrak{s}}

\newcommand{\ft}{\mathfrak{t}}

\newcommand{\fu}{\mathfrak{u}}

\newcommand{\ckfg}{{\check{\fg}}}

\newcommand{\ckfh}{{\check{\fh}}}
\newcommand{\ckorb}{\check{\orb}}

\newcommand{\fso}{\mathfrak{so}}
\newcommand{\fsp}{\mathfrak{sp}}
\newcommand{\fmp}{\widetilde{\mathfrak{sp}}}

\newcommand{\cC}{\mathcal{C}}
\newcommand{\cA}{\mathcal{A}}
\newcommand{\cF}{\mathcal{F}}
\newcommand{\cK}{\mathcal{K}}
\newcommand{\cL}{\mathcal{L}}
\newcommand{\cM}{\mathcal{M}}
\newcommand{\cN}{\mathcal{N}}

\newcommand{\cP}{\mathcal{P}}

\newcommand{\cS}{\mathcal{S}}
\newcommand{\cE}{\mathcal{E}}
\newcommand{\cJ}{\mathcal{J}}
\newcommand{\cV}{\mathcal{V}}

\newcommand{\cZ}{\mathcal{Z}}

\newcommand{\cW}{\mathcal{W}}
\newcommand{\cU}{\mathcal{U}}

\newcommand{\sD}{\mathscr{D}}

\newcommand{\Ug}{\cU\fg}
\newcommand{\Zg}{\cZ\fg}
\newcommand{\Abar}{\bar{A}}

\newcommand{\hb}{\hbar}
\newcommand{\series}{\llbracket \hb \rrbracket}

\newcommand{\WV}{W^{\scriptscriptstyle V}}
\newcommand{\cWV}{\cW^{\scriptscriptstyle V}}
\newcommand{\cWVbar}{\overline{\cW}^{\scriptscriptstyle V}}
\newcommand{\MV}{M^{\scriptscriptstyle V}}
\newcommand{\PBPG}{\PBP_{\GR}(\ckorb)}
\newcommand{\PBPGext}{\PBP_{\GR}^{\mathrm{ext}}(\ckorb)}
\newcommand{\MgK}{\cM_f(\g,K)}
\newcommand{\tGamma}{\widetilde{\Gamma}}

\DeclareMathOperator{\Lie}{Lie}
\DeclareMathOperator{\codim}{codim}
\DeclareMathOperator{\Sing}{Sing}
\DeclareMathOperator{\Ind}{Ind}
\DeclareMathOperator{\Ad}{Ad}
\DeclareMathOperator{\ad}{ad}
\DeclareMathOperator{\AV}{AV}
\DeclareMathOperator{\AC}{AC}
\DeclareMathOperator{\Ann}{Ann}
\DeclareMathOperator{\supp}{supp}
\DeclareMathOperator{\Vect}{Vec}
\DeclareMathOperator{\AOD}{AOD}
\DeclareMathOperator{\IrrAC}{IrrAC}
\DeclareMathOperator{\Unip}{Unip}

\DeclareMathOperator{\der}{Der}
\DeclareMathOperator{\Rep}{Rep}
\DeclareMathOperator{\sq}{sq}
\DeclareMathOperator{\Diff}{Diff}
\DeclareMathOperator{\Pic}{Pic}
\DeclareMathOperator{\Mod}{Mod}
\DeclareMathOperator{\Coh}{Coh}
\DeclareMathOperator{\Aut}{Aut}

\DeclareMathOperator{\Mp}{\widetilde{Sp}}

\DeclareMathOperator{\res}{Res}
\DeclareMathOperator{\PBP}{PBP}
\DeclareMathOperator{\Cov}{\mathcal{C}ov}
\DeclareMathOperator{\Sat}{Sat}

\DeclareMathOperator{\spr}{Spr}
\DeclareMathOperator{\sgn}{sgn}

\newcommand{\AODK}{\AOD_K}
\newcommand{\cAOD}{\mathcal{AOD}}
\newcommand{\UnipOv}{\Unip_{\ckorb}}
\newcommand{\cUnipOv}{\mathcal{U}nip_{\ckorb}}

\newcommand{\q}{\mathfrak{q}}

\newcommand{\SL}{\operatorname{SL}}
\newcommand{\SO}{\operatorname{SO}}
\newcommand{\Sp}{\operatorname{Sp}}

\newcommand{\Spin}{\operatorname{Spin}}

\newcommand{\Id}{\operatorname{Id}}
\newcommand{\pr}{\operatorname{pr}}
\newcommand{\smallhalf}{{\scriptscriptstyle 1/2}}
\newcommand{\mC}{\widetilde{C}}

\newcommand{\R}{\mathbb{R}}

\newcommand{\T}{\mathcal{T}}

\newcommand{\TT}{\mathscr{T}}

\newcommand{\tatp}{\TT^+(\Q, Y)}
\newcommand{\tat}{\TT(\Q,Y)}
\newcommand{\PicAlg}{\mathscr{PA}}

\newcommand{\quan}{\mathcal{Q}}
\newcommand{\per}{\operatorname{Per}}
\newcommand{\cm}{\C^\times}
\newcommand{\OO}{\mathcal{O}}
\newcommand{\Q}{\OO_\hb}

\newcommand{\rf}{\mathfrak{r}}
\newcommand{\GR}{G_\R}

\newcommand{\KR}{K_\R}

\newcommand{\spec}{\operatorname{Spec}}

\newcommand{\slf}{\mathfrak{sl}}
\newcommand{\Dcal}{\mathcal{D}}
\newcommand{\Hom}{\operatorname{Hom}}

\newcommand{\GL}{\operatorname{GL}}
\newcommand{\PSL}{\operatorname{PSL}}

\newcommand{\Irr}{\operatorname{Irr}}

\newcommand{\aut}{{\operatorname{Aut}}}
\newcommand{\Pe}{\cP_\epsilon}
\newcommand{\Pee}{\cP^{sp}_{\epsilon,\,\epsilon'}}

\newtheorem{Thm}{Theorem}[subsection]
\newtheorem{Prop}[Thm]{Proposition}
\newtheorem{Cor}[Thm]{Corollary}
\newtheorem{Lem}[Thm]{Lemma}
\theoremstyle{definition}
\newtheorem{Ex}[Thm]{Example}
\newtheorem{defi}[Thm]{Definition}
\newtheorem{Rem}[Thm]{Remark}

\Crefname{defi}{Definition}{Definition}
\numberwithin{equation}{section}

\title{Special unipotent representations and the coadjoint orbit method}
\author{Shilin Yu}
\dedicatory{Dedicated to my parents}

\begin{document}

\begin{abstract}
For any linear real reductive group or metapletic group, this article gives a geometric classification of special unipotent representations in the weak Arthur/Adams-Barbasch-Vogan (ABV) packets attached to quasi-distinguished nilpotent orbits in the Langlands or metaplectic dual Lie algebra in terms of their associated cycles.
We provide a uniform construction of the Harish-Chandra modules of these representations via deformation quantization of admissible vector bundles over certain Lagrangian subvarieties of the affinizations of the universal covers of special nilpotent orbits in question, which aligns with the coadjoint orbit method philosophy of Kirillov, Kostant, and Vogan. As consequences, all such Harish-Chandra modules have irreducible associated cycles, and are unitarizable by results from the theory of mixed Hodge modules. Conjecturally, the unitarity of all ABV packets can be reduced to the case when the dual orbits are distinguished.
Our approach highlights the application of Lusztig's conjecture on the geometry of special pieces, which has been proven for classical Lie algebras by Kraft and Procesi, and for all cases by Juteau, Levy, Sommers and the author.
\end{abstract}

\maketitle

\epigraph{\it All nilpotent orbits are equal, but some orbits are more equal than others.}{}

\tableofcontents

\section{Introduction}\label{sec:intr}

\subsection{Background and goals} \label{subsec:backgrounds}

The classification of irreducible unitary representations of real reductive groups is a long-standing central problem in the representation theory of Lie groups. The foundational work of Vogan and his collaborators has provided a comprehensive framework for understanding the unitary dual of such groups, culminating in the \texttt{atlas} algorithm of Adams–van Leeuwen–Trapa–Vogan \cite{ALTV}, which determines the unitarity of representations of reductive groups and has been implemented in the \texttt{atlas} software package \cite{atlas}. Another powerful approach arises from Howe’s theory of theta lifting \cite{Howe79, Howe89}, which has been an effective tool for constructing unitary representations of classical groups (see \Cref{subsec:previous_work} for further references). More recently, Saito’s theory of mixed Hodge modules \cite{Saito88, Saito90} has been applied by Schmid–Vilonen \cite{SchmidVilonen} and Davis–Vilonen \cite{DavisVilonen:MHM_real_groups, DavisVilonen:unitary} to provide a Hodge-theoretic perspective on the unitarity problem.

Two major sources of inspiration and examples for irreducible unitary representations have guided the field for decades. One is geometric, rooted in the coadjoint orbit method philosophy initiated by Kirillov and Kostant; the other is arithmetic, arising from the theory of automorphic representations developed by Langlands, Arthur, and others. The present article aims to weave these perspectives together, integrating symplectic and algebro-geometric methods with Hodge-theoretic approaches to shed new light on the unitarity problem.

The coadjoint orbit method proposes that unitary representations of a (connected) Lie group can be realized as “quantizations’’ of its coadjoint orbits—symplectic manifolds naturally endowed with the Kirillov–Kostant–Souriau symplectic structure. This idea mirrors the broader paradigm of quantizing classical mechanical systems into quantum ones: while no universal mathematical definition of quantization exists, it has proven remarkably effective in many concrete contexts. 
Vogan \cite{Vogan_AV} reformulated the orbit method for reductive groups and conjectured that one can ``quantize'' certain \emph{admissible vector bundles} over a single nilpotent orbit to obtain irreducible (and conjecturally unitarizable) Harish–Chandra modules, provided a certain boundary codimension condition is satisfied. These Harish–Chandra modules were constructed via deformation quantization by Losev \cite{Losev:quan_symp_orbit} in the case of complex reductive groups, and by Leung–Yu \cite{LY} and Losev–Yu \cite{LosevYu} in the case of real reductive groups.

On the arithmetic side, Arthur conjectured the existence of what are now known as \emph{Arthur packets}—certain finite sets of representations of reductive algebraic groups over local and global fields \cite{Arthur84, Arthur89}—parameterized by Arthur parameters.
The representations occurring in these packets are now referred to as Arthur representations.
This framework predicts, in particular, the existence of a fundamental collection of representations called \emph{special unipotent representations} of any linear (real) reductive group $\GR$, attached to unipotent Arthur parameters.
Special unipotent representations were defined by Barbasch and Vogan \cite{BV85:unipotent} for complex semisimple groups (regarded as real groups), and subsequently extended to general Arthur parameters and general linear real reductive groups by Adams, Barbasch, and Vogan \cite{ABV}.

Arthur \cite[Section 4]{Arthur89} and Adams--Barbasch--Vogan \cite[Chapter 1, Problem F]{ABV} conjectured that representations in Arthur/ABV packets are unitary. 
Moreover, these special unipotent representations—and more generally the broader, still conjectural class of unipotent representations—are expected to form fundamental building blocks of the unitary dual of a linear reductive group \cite{Vogan:unitary}.

In the present article, we combine the orbit method with mixed Hodge theory to deduce the unitarity of all special unipotent representations of any linear real reductive group or of any metaplectic group $\GR$, attached to (quasi-)distinguished nilpotent orbits $\ckorb$ in $\ckfg^*$ (see \Cref{defn:dist_orbit,defn:quasi-dist-saturation}), where $\ckfg$ is the Langlands/metaplectic dual Lie algebra of the complexified Lie algebra $\fg$ of $\GR$. In this setting, Vogan's original conjecture remains valid even without the codimension condition: the corresponding Harish--Chandra modules are realized as deformation quantizations of admissible vector bundles over nilpotent orbits.
In particular, all such Harish-Chandra modules have irreducible associated cycles, and their unitarity follows from the unitarity criterion derived from mixed Hodge theory in \cite{DMB-Hodge}. Finally, the general case is reduced to this setting via the work of Adams, Ionov, Mason-Brown, and Vogan \cite{AIMV:ArthurPackets}.

The case of classical groups was previously established by Barbasch, Ma, Sun, and Zhu \cite{BMSZ:counting, BMSZ:construction_unitarity} via the method of theta lifting \cite{Howe79, Howe89}, together with parabolic and cohomological induction. They constructed all special unipotent representations of classical groups and computed their associated cycles, observing that those attached to (quasi-)distinguished nilpotent orbits $\ckorb$ in $\ckfg^*$ possess irreducible associated cycles. This key observation provides one of the starting points and motivations for the present work.

The goal of this article is to develop a uniform geometric approach to the unitarity conjecture of Arthur and Adams--Barbasch--Vogan for all linear reductive groups, one that remains faithful to the original philosophy of the coadjoint orbit method while revealing new connections with the intricate algebraic and symplectic geometry of special pieces, studied in \cite{Spaltenstein,Lusztig:unipotent}.
A central ingredient in our approach is the study of certain coverings of nilpotent orbits, in particular, the universal cover in our setting. This is closely related to the refined Barbasch--Vogan--Lusztig--Spaltenstein duality map defined in \cite{LMBM}, which was subsequently generalized in \cite{MBMY} to encompass coverings of more general, non-special nilpotent orbits. 
A key discovery underlying the present work, however, is that Lusztig’s special piece conjecture \cite{Lusztig:unipotent} (now a theorem by \cite{Kraft-Procesi:special,JLSY:SpecialPiece}) implies that the geometry of (the affinizations of) those covers of special orbits relevant to our application exhibits remarkably favorable properties not shared by general nilpotent orbit covers. These properties are essential for applying deformation quantization to construct the desired representations. 

\subsection{Summary of approach} \label{subsec:approach}

Let $G$ be a simply connected complex simple group defined over $\R$, with Lie algebra $\fg$, and let $\GR = G(\R)$ denote its group of real points, which is necessarily connected. In the main body of the paper, we will work with more general real reductive groups; see \Cref{subsec:basic_settings} for details. For the purposes of the introduction, however, it suffices to restrict attention to this case.

Fix a maximal compact subgroup $\KR$ of $\GR$, and let $K$ be its complexification. Let $\fk$ denote the (complex) Lie algebra of $K$. The real group $\GR$ then determines a Harish--Chandra symmetric pair $(\fg, K)$. We work in the category of Harish--Chandra (HC) $(\fg, K)$-modules, which is closely related to the representation theory of $\GR$. Let $\Irr(\fg, K)$ denote the set of equivalence classes of irreducible HC $(\fg, K)$-modules.

As mentioned in \Cref{subsec:backgrounds}, we will connect the coadjoint orbit method philosophy to the study of special unipotent representations of $\GR$. The orbit method has been successfully developed for nilpotent Lie groups \cite{Kirillov_nil}, solvable Lie groups of type I \cite{AK}, and compact Lie groups, using geometric quantization. Thus, as proved in \cite{Duflo}, the general case reduces to reductive groups, where substantial challenges arise, particularly for nilpotent orbits.
In \cite{Vogan_AV}, Vogan reformulated the orbit method for reductive groups and conjectured that one can quantize admissible vector bundles (see \Cref{subsec:AOD}) over nilpotent $K$-orbits $\orbk$ in $\fk^\perp := (\fg/\fk)^* \subset \fg^*$ to obtain irreducible (unitarizable) HC modules, with some additional conditions on $\orbk$. Namely, let $\orb := \Ad(\fg) \cdot \orbk$ be the $\Ad(\fg)$-saturation of $\orbk$, which is a nilpotent $\Ad(\fg)$-orbit in $\fg^*$. Vogan's conjecture treats the case when the boundary $\partial\orb$ has codimension at least 4 in the Zariski closure $\overline{\orb}$ of $\orb$. 


Our main technical tool is the prior work \cite{LY}, where general results concerning equivariant formal deformation quantization of Lagrangian subvarieties of conical symplectic varieties were developed.
The principal challenge in applying the results of \cite{LY} lies in the codimension condition in Vogan's formulation. Let $\orb$ be the Barbasch--Vogan dual orbit of a (quasi-)distinguished nilpotent orbit $\ckorb$ in $\ckfg^*$, and let $\partial \orb$ be its boundary. Then $\partial \orb$ generally has codimension $2$ in $\overline{\orb}$, while the setting of \cite{LY} requires $\partial \orb$ to have codimension at least $6$. Moreover, it was shown in \cite{LosevYu} that Vogan’s conjecture fails in some cases when the codimension equals $4$.

To overcome this obstacle, we instead consider the affinization $X$ of the universal cover $\widetilde{\orb}$ of $\orb$. The variety $X$ is a conical symplectic variety whose canonical quantization yields the special unipotent ideal attached to $\ckorb$ by \cite{LMBM}. A key new input is Lusztig’s conjecture on the geometry of special pieces \cite{Lusztig:unipotent}, proven for classical groups by Kraft and Procesi \cite{Kraft-Procesi:special} and, in full generality, by the author and collaborators \cite{JLSY:SpecialPiece}. It implies that the singular locus of $X$ has codimension at least $6$.

We can then apply the results of \cite{LY} to quantize admissible orbit data in $\orb$ (or rather their lifts to the smooth locus of $X$) and construct a collection of HC modules of special unipotent representations attached to $\ckorb$.
These modules are in bijection with the admissible orbit data in $\orb$, with the bijection given by taking associated cycles. In particular, these HC modules manifestly have irreducible associated cycles.

To complete the classification, it remains to show that the HC modules obtained by quantization exhaust the special unipotent representations attached to $\ckorb$. We do so by comparing the number of these HC modules with the number of admissible orbit data in $\orb$. For classical groups, this comparison has already been carried out in \cite{BMSZ:counting}. For exceptional groups, we use the \texttt{atlas} software to count special unipotent representations and to determine the number of admissible orbit data in each case, relying on the tables of component groups of nilpotent $K$-orbits in \cite{King}\footnote{Along the way, we found and corrected an error in \cite{King}; see \Cref{rem:Z_K}.}. 

It is worth noting that we employ \texttt{atlas} only to count the number of special unipotent representations, without verifying unitarity (which would be computationally much more intensive). Furthermore, the current \texttt{atlas} algorithm can compute associated varieties of representations, but not their associated cycles. Therefore, our result establishing the irreducibility of associated cycles for exceptional groups is genuinely new and cannot be verified directly using \texttt{atlas}.

Our conclusion about the unitarity of special unipotent representations relies crucially on the theory of mixed Hodge modules developed by Saito \cite{Saito88, Saito90}. Schmid and Vilonen \cite{SchmidVilonen} had the deep insight that mixed Hodge modules provide a powerful tool for detecting unitarity of HC modules. They observed that every irreducible HC module admits a canonical Hodge filtration coming from its Beilinson--Bernstein localization (\cite{BeilinsonBernstein}) as a twisted $\sD$-module on the complex flag variety. A unitarity criterion in terms of Hodge filtrations was proved in \cite{DavisVilonen:unitary} based on the conjecture in \cite{SchmidVilonen} and the algorithm computing Hodge filtrations (modulo conjectures) in \cite{AdamsTrapaVogan}.

It was conjectured in \cite[Remark 7.3.4]{LY} that the filtration of irreducible HC modules arising from quantization in \cite{LY} and \cite{LosevYu} should coincide with the Hodge filtration up to degree shift, when the nilpotent orbit $\orb$ of the complex group is birationally rigid. This conjecture has recently been proven by Davis and Mason-Brown \cite{DMB-Hodge} using mixed Hodge theory, as a byproduct of their proof of a general unitarity criterion \cite[Theorem 5.22]{DMB-Hodge} for HC modules with irreducible associated cycles (see \Cref{thm:unitarity_criterion}) and weakly unipotent annihilating ideals. 

As mentioned above, one of the key observations motivating the present paper is that the \emph{irreducibility of associated cycles} condition in \cite[Theorem~5.22]{DMB-Hodge} (see \Cref{thm:unitarity_criterion}(ii)) is also satisfied for special unipotent representations attached to (quasi-)distinguished nilpotent orbits $\ckorb$ of $\ckfg^*$. This holds even though the dual orbit $\orb=d(\ckorb)$ in $\fg^*$ very often has boundary of codimension~$2$, and therefore fails to meet the hypotheses in Vogan’s original conjecture and the main theorem, Theorem 1.4, of \cite{DMB-Hodge}. Moreover, our classification shows that the relevant HC modules are Hermitian (see \Cref{prop:Hermitian_UnipOv}), a property that should be understood prior to any discussion of unitarity and is not evident from the original definition of ABV packets (see \Cref{subsec:previous_work}).

\subsection{Previous work} \label{subsec:previous_work}

There are two different approaches to defining Arthur packets. One of them is due to Arthur himself \cite{Arthur:endoscopic} for the split forms of general linear groups and quasi-split forms of symplectic and orthogonal groups, using harmonic analysis together with both local and global methods. This approach was later extended to include quasi-split unitary groups by Mok \cite{Mok} and pure real forms of symplectic and orthogonal groups by Mœglin and Renard \cite{MoeglinRenard2020}. 

The other approach is due to Adams, Barbasch, and Vogan \cite{ABV} for the real points of any connected reductive algebraic group defined over $\R$. They defined what are now known as \emph{ABV packets} of representations attached to local Arthur parameters in terms of microlocal geometry and sheaf theory. They established a number of properties of these representations conjectured by Arthur except for their unitarity; see Problems A--F in \cite[Introduction]{ABV}. 

For the real reductive groups to which Arthur's analytic approach applies, it has been shown in \cite{AdamsArancibiaMezo,ArancibiaMezo:unitary,ArancibiaMezo:symplectic_orthogonal} that the Arthur packets coincide with the ABV packets. For more detailed references and history on the analytic approach and its relation to the ABV approach, we refer the reader to \cite[Section 1.3]{BMSZ:construction_unitarity}.

Special unipotent representations (of integral infinitesimal characters) of complex semisimple groups were classified by Barbasch and Vogan \cite{BV85:unipotent} and, in general, by Wong \cite{Wong:Richardson}. An alternative approach from the perspective of quantization of nilpotent orbit covers was obtained in \cite{LMBM}. The case of real groups was studied in \cite{ABV}. 

There is a large literature on the unitarity of special unipotent representations and more general representations. The unitary dual of general linear groups was completely understood by the work of Vogan \cite{Vogan:unitary_dual_GL}. For complex classical groups, Barbasch has classified the unitary dual in \cite{Barbasch:unitary_dual_complex_classical} and the unitarity of special unipotent representations follows from it. The unitarity of special unipotent representations for general complex reductive groups has recently been confirmed by Davis and Mason-Brown \cite{DMB-Hodge} using mixed Hodge theory.

For both complex and real classical groups, it was realized (see, e.g., \cite{Howe82, Li}) that Howe's theory of theta lifting \cite{Howe79, Howe89} is a powerful tool for constructing unitary representations. There has been much work on constructing special unipotent representations and more general unipotent representations using theta lifting; see \cite{Sahi, Przebinda91, Przebinda93, HuangZhu, HuangLi, Brylinski, He, Trapa, PaulTrapa, Barbasch:unipotent_dual_pair, Moeglin:unipotent_Howe, BarbaschWong:normality} for an incomplete list. 
Recently, Barbasch, Ma, Sun, and Zhu have constructed all special unipotent representations of classical groups via the method of theta lifting and parabolic induction \cite{BMSZ:counting, BMSZ:construction_unitarity} and confirmed their unitarity. They have also treated real spin groups via parabolic and cohomological induction and confirmed their unitarity \cite{BMSZ:spin}. 

The case of real exceptional groups is much less understood. Miller proved the unitarity of certain spherical special unipotent representations of exceptional groups in \cite{Miller} by realizing them as residues of Eisenstein series with the help of computer computations.
In the ongoing work \cite{AMVV}, Adams, Miller, van Leeuwen, and Vogan have checked the unitarity of all special unipotent representations of real exceptional groups case-by-case using the \texttt{atlas} software. 

\subsection{Main results} \label{subsec:main_results}

We assume the setting and notation in \cref{subsec:approach}, so that $\fg$ is a simple Lie algebra over $\C$. Let $\ckfg$ be its Langlands dual Lie algebra. Let $\fh$ and $\ckfh$ be the universal Cartan subalgebras of $\fg$ and $\ckfg$, respectively. Then $\ckfh$ is canonically identified with the linear dual $\fh^*$ of $\fh$. Let $\ckorb \subset \ckfg^*$ be a nilpotent coadjoint orbit (with respect to the coadjoint action of the adjoint group). We can regard $\ckorb$ as a nilpotent adjoint orbit in $\ckfg$ via the $\Ad(\ckfg)$-equivariant isomorphism $\ckfg \simeq \ckfg^*$ given by any nondegenerate $\Ad(\ckfg)$-invariant bilinear form on $\ckfg$. Pick $\check{e} \in \ckorb$ and complete it into an $\slf_2$-triple $(\check{e}, \check{h}, \check{f})$ with semisimple element $\check{h}$. The $\Ad(\ckfg)$-orbit of $\check{h}/2$ is uniquely determined by $\ckorb$ and hence gives rise to an element in $\ckfh/W = \fh^*/W$. Pick any representative $\lambda_{\ckorb} \in \fh^*$ of $\check{h}/2 \in \fh^*/W$. As in \cite[Section 5]{BV85:unipotent}, the element $\lambda_{\ckorb}$, or rather the $W$-orbit $W \cdot \lambda_{\ckorb}$, determines an algebraic character $\chi_{\lambda_{\ckorb}}$ of the center $\Zg$ of the universal enveloping algebra $\Ug$ of $\fg$ via the Harish-Chandra isomorphism $\Zg \simeq (S\fh)^W$. 

Let $\J_{\ckorb} := \J_{max}(\lambda_{\ckorb})$ be the maximal primitive ideal of $\Ug$ with infinitesimal character $\lambda_{\ckorb}$, which depends only on $\ckorb$ and is independent of the choice of $\check{e}$ and the $\slf_2$-triple. The \emph{Barbasch-Vogan (BV) dual orbit} $d(\ckorb) \in \cN_o$ of $\ckorb$ is defined to be the unique Zariski open $\Ad(\fg)$-orbit in the associated variety $\cV(\J_{\ckorb})$ of $\J_{\ckorb}$, guaranteed by \cite[Theorem 3.10]{Joseph:associated_variety}; see \cite[Appendix]{BV85:unipotent}.
Following \cite{BV85:unipotent}, we call $\lambda_{\ckorb}$ and $\J_{\ckorb}$ the \emph{special unipotent infinitesimal character} and \emph{special unipotent ideal}, respectively, attached to $\ckorb$.

The \emph{weak Arthur/ABV packet} attached to $\ckorb$ for $(\g,K)$ is the (finite) subset of $\Irr(\g,K)$ defined by
\begin{equation} \label{eq:UnipOv_defn}
	\UnipOv(\g, K) = \UnipOv(\GR) := \{M \in \Irr(\g,K) \,|\, \Ann(M) = \J_{\ckorb}\}.
\end{equation}
Note that since $\J_{\ckorb}$ is a maximal ideal, $M \in \UnipOv(\g, K)$ if and only if $M$ is annihilated by $\J_{\ckorb}$, equivalently, if and only if it has infinitesimal character $\lambda_{\ckorb}$ and its complex associated variety is the Zariski closure of the BV dual orbit $d(\ckorb)$ of $\ckorb$. The HC modules in $\UnipOv(\g, K)$ are called \emph{special unipotent representations} of $\GR$ attached to $\ckorb$. By \cite[Corollary 27.13]{ABV}, $\UnipOv(\g, K)$ is the union of those Arthur/ABV packets whose corresponding Arthur parameters are unipotent and have restrictions to $\SL_2(\C)$ determined by $\slf_2$-triples associated to $\ckorb$. 

In this article, we will apply the method in \cite{LY} to a smooth closed Lagrangian subvariety $Y$ of the smooth locus of the affinization $X$ of the universal cover $\widetilde{\orb}$ of a special nilpotent orbit $\orb$ in $\g^*$. Here $\orb$ is the BV dual orbit of any (quasi-)distinguished nilpotent orbit $\ckorb$ in $\ckfg^*$. Such $Y$ has an open dense $K$-stable subset $\torbk$ that is a finite $K$-equivariant cover of a nilpotent $K$-orbit $\orbk$ in $\orb_{\fk^\perp} := \orb \cap \fk^\perp$. 

As mentioned in \Cref{subsec:approach}, the main obstacle to applying the method in \cite{LY} is that the boundary $\partial \orb$ of $\orb$ generally has codimension $2$. By the (proven) conjecture of Lusztig on the geometry of special pieces (\Cref{thm:special_piece}), we are able to prove the following result (see \Cref{thm:codim_X} for the general case of quasi-distinguished nilpotent orbits).

\begin{Thm} \label{thm:main_codim_X_dist}
	Let $\ckorb$ be a distinguished nilpotent orbit in $\ckfg^*$ and let $\orb$ be the Barbasch--Vogan dual orbit of $\ckorb$. Let $\widetilde{\orb}$ be the universal cover of $\orb$ and let $X = \spec(\C[\widetilde{\orb}])$ be the affinization of $\widetilde{\orb}$. Then the singular locus $X^{sing}$ of $X$ has codimension at least $6$ in $X$.
\end{Thm}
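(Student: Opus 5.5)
Since $X$ is normal, $X^{sing}$ lies in the complement of $\widetilde{\orb}$, which is a union of the preimages of the boundary orbits $\orb' \subset \partial\orb$ under the finite map $\pi\colon X \to \overline{\orb}$. We therefore argue orbit by orbit. Fix an orbit $\orb' \subset \partial \orb$ of codimension $2$ or $4$ in $\overline{\orb}$. For each such $\orb'$ we aim to show that $\pi^{-1}(\orb')$ lies in the smooth locus of $X$, equivalently that the pullback of the slice singularity is smooth. Boundary orbits of codimension $\geq 6$ cannot contribute singular strata of smaller codimension, since $\pi$ is finite. The plan is to analyse the formal neighbourhood of $X$ along $\pi^{-1}(\orb')$. Étale-locally near a point $e' \in \orb'$, $X$ is a product of $\orb'$ with a cover $\widetilde{S}$ of the transverse slice $S = S_{\orb,e'} = \overline{\orb}\cap(e'+\ker\ad f')$. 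Here $\widetilde{S}$ is the normalization of $S$ in the restriction of the cover $\widetilde{\orb}\to\orb$ to $S\cap\orb$. So it suffices to prove that every irreducible component of $\widetilde{S}$ is smooth, i.e.\ isomorphic to $\C^{2k}$.

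The key input is the special pieces theorem (\Cref{thm:special_piece}). Because $\ckorb$ is distinguished, its dual $\orb = d(\ckorb)$ is special, and we then need a statement about its closure. For every orbit $\orb'$ in the closure of $\orb$ whose codimension is at most $4$, we claim that $\orb'$ lies in the special piece of $\orb$, i.e.\ $\orb'$ is non-special with $d(\orb')=d(\orb)$. Equivalently, no special orbit other than $\orb$ has codimension $<6$ in $\overline{\orb}$. We would establish this claim from the distinguishedness of $\ckorb$. By duality, a special orbit $\orb'' \subsetneq \overline{\orb}$ corresponds to $d(\orb'') \supsetneq \ckorb$. We would then use the dimension formula $\dim \orb = \dim \fg - \dim\fh - \dim \ckorb + \dim \ckfh$ with the corresponding formula for $\orb''$, together with the fact that a distinguished $\ckorb$ has minimal degeneration codimension at least $4$ upward in $\ckfg$ ($\ckorb$ distinguished forces the next larger orbit to differ by codim $\ge 4$). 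That gives codimension $\ge 4$ of $\orb''$, but we need $\ge 6$. Closing this gap is the delicate point. One option is case analysis via Kraft--Procesi/Fu--Juteau--Levy--Sommers tables of minimal degenerations restricted to special orbits. The other is to argue that a codim-4 special degeneration would produce a Levi through which $\ckorb$ factors, contradicting distinguishedness.

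Granting the claim, Lusztig's conjecture says that the special piece of $\orb$ is isomorphic to the quotient $V/\bar A$. Here $V$ is a vector space and $\bar A$ is Lusztig's canonical quotient of the component group, acting (for classical types) by reflections, or more generally as in \Cref{thm:special_piece}. Consequently the slice $S$ is, locally near $e'$, isomorphic to $(V'/\bar A)$ for a symplectic representation $V'$. Moreover the $\bar A$-cover $V'\setminus\{\text{ramification}\}\to (V'/\bar A)^{reg}$ corresponds to a quotient of the local fundamental group of $S\cap \orb$. Since $\widetilde{\orb}$ is the \emph{universal} cover, the restriction of $\widetilde{\orb}\to\orb$ to $S\cap\orb$ dominates every connected cover of $S\cap\orb$ induced from $\pi_1(\orb)$. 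We must check that it dominates the $\bar A$-cover. For this we would use that $\pi_1^{loc}(S\cap\orb)\to\pi_1(\orb)\to A(\orb)\to\bar A(\orb)$ is compatible with Lusztig's identification. Then each component of $\widetilde S$ is a normalization of a further cover of $V'$ that is étale in codimension one, and hence equals $V'$ by purity of the branch locus, since $V'$ is simply connected. This makes $\widetilde{S}$ smooth. Combining over all $\orb'$ of codimension $\le4$ yields $\codim X^{sing}\geq 6$. I expect the main obstacle to be the compatibility of the local fundamental group of the special piece with $\pi_1(\orb)$, i.e.\ showing the universal cover genuinely unfolds the $V/\bar A$ singularity. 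This likely requires the precise form of \Cref{thm:special_piece}, which includes the $\bar A$-action identification.
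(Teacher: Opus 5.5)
Your outline is the paper's: first show $\codim(\partial_{sp}\orb,\overline{\orb})\geqslant 6$, then get smoothness of $X$ over $\pi^{-1}(\cP(\orb))$ from \Cref{thm:special_piece}. The genuine gap is the first step. It carries most of the content, and, as you acknowledge, you leave it open; the numerical route you sketch cannot close it.

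\emph{The dimension count fails.} Your dimension formula reduces (since $\dim\fh=\dim\ckfh$) to $\dim\orb+\dim\ckorb=\dim\fg$. This already fails for $\ckorb$ regular. The variant $\dim\orb+\dim\ckorb=\dim\cN$ fails for the self-dual orbit $[2,1]$ of $\slf_3$. The claim that nothing lies in codimension $2$ above a distinguished orbit is also false: for $n\geqslant 3$ the subregular orbit $\ckorb=[2n-2,2]$ of $\fsp_{2n}$ is distinguished and has codimension $2$ in the nilpotent cone.

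The same example shows why no dimension count can work. The upward special degeneration $\ckorb\prec[2n]$ has codimension $2$. But the dual orbit $\orb=d(\ckorb)$, with partition $[3,1^{2n-2}]$ in $\fso_{2n+1}$, has $\partial_{sp}\orb=\{0\}$, of codimension $4n-2$. Codimension is not preserved under duality of minimal special degenerations. You must know which transverse singularities occur and how they transform.

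\emph{What the paper does instead.} The minimal special degenerations of $\orb$ are the images $d(\ckorb')$ of the special orbits $\ckorb'$ minimal above $\ckorb$. These are tabulated, together with their dual singularities, in \cite[Tables 1 and 2]{JLS:Duality}. Good parity and the multiplicity constraints on the partition of $\ckorb$ exclude the type $a$ rows and all rows involving $\mu$. They also force the parameter $n$ in the remaining rows to be large enough that every dual singularity has dimension $4n-2\geqslant 6$ or $4n-6\geqslant 6$ (\Cref{prop:codim_sp_classical}). The exceptional types are a finite check (\Cref{prop:codim_sp_exceptional}). Distinguishedness is used essentially here: for the quasi-distinguished orbit $\ckorb=A_4+A_1$ of $E_7$ the bound drops to $4$ (\Cref{tab:qD_exceptional}). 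Your alternative Levi idea would have to capture exactly this distinction; as written it is only a suggestion.

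Granting the bound, your second half is right in outline. The obstacle you flag is exactly what \Cref{thm:special_piece} supplies, so no local fundamental-group computation is needed. Applied with $F=1$, it says that the affinization of the cover $\widetilde{\orb}_1$ attached to $\ker\bigl(\pi_1(\orb)\to\Abar(\orb)\bigr)$ is smooth over $\cP(\orb)$. \Cref{lem:H_quasi-dist} gives $\pi_1(\orb)=\Abar(\orb)$, so $\widetilde{\orb}_1$ is the universal cover. The complement $\pi^{-1}(\partial_{sp}\orb)$ then has codimension $\geqslant 6$ because $\pi$ is finite. Alternatively, since $X\to X_1$ is finite and \'etale over $\widetilde{\orb}_1$, Zariski--Nagata purity transfers smoothness from $X_1$ to $X$ over the special piece.

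Three corrections to your paraphrase of Lusztig's conjecture:
\begin{enumerate}
\item The group involved is $H(\orb)\subset\Abar(\orb)$. It equals $\Abar(\orb)$ here only by \Cref{lem:H_quasi-dist}.
\item It does not act by reflections, since otherwise the quotient would be smooth; in fact $\C^2/\{\pm1\}$ occurs along the codimension-$2$ orbits (\Cref{prop:A1}).
\item At a non-minimal orbit of $\cP(\orb)$ the local model is a quotient by a stabilizer subgroup, not by the whole group.
\end{enumerate}
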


We show that any $K$-equivariant admissible vector bundle $\cE$ over $\torbk$ can be extended uniquely to a $K$-equivariant twisted $\sD$-module $\overline{\cE}$ over $Y$, where the twist is half (or a square root) of the canonical line bundle $\omega_Y$ on $Y$ (\Cref{prop:extn_AOD}). We can then apply the method in \cite{LY} to construct a quantization of $\overline{\cE}$ over $Y$ to obtain a (reducible) HC $(\g,K)$-module; its decomposition into irreducible components gives all HC modules in $\UnipOv(\g, K)$. 

To summarize, let $\AODK(\orb)$ denote the set of (equivalence classes of) all {\it admissible orbit data} (see \Cref{subsec:AOD}) $(\orbk, \cV)$ with $\orbk$ a $K$-orbit in $\orb_{\fk^\perp}$ and $\cV$ a $K$-equivariant admissible vector bundle over $\orbk$. The main result of the article is as follows (\Cref{thm:Unip_AOD_bij}).


\begin{Thm}\label{thm:main_bij}
	Assume that $G$ is a simply connected complex simple group defined over $\R$ and $\GR = G(\R)$, with $(\g,K)$ the associated symmetric pair. Let $\ckorb$ be a (quasi-)distinguished nilpotent orbit in $\ckfg^*$ and let $\orb = d(\ckorb)$. Then the construction above gives a bijection 
	\[ \quan_{\ckorb}^K: \AODK(\orb) \xrightarrow{\ \scriptstyle\sim \ } \UnipOv(\g, K),\]
	whose inverse is given by taking the associated cycle of the HC module. 
	
	In particular, all HC modules in $\UnipOv(\g, K)$ have irreducible associated cycles in the sense of \Cref{defn:irred_AC}.
\end{Thm}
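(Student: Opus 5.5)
The plan has three parts. First build the map $\quan_{\ckorb}^K$ geometrically. Then show it lands in $\UnipOv(\g,K)$, is injective, and is inverted by the associated cycle. Finally show surjectivity by counting.

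\textbf{Construction.} Fix $(\orbk,\cV)\in\AODK(\orb)$. Let $X=\spec(\C[\widetilde{\orb}])$ and let $\pi\colon\widetilde{\orb}\to\orb$ be the universal cover. Take a connected component $\torbk$ of $\pi^{-1}(\orbk)$, and let $Y$ be the closure of $\torbk$ in the smooth locus $X^{reg}$. By \Cref{thm:main_codim_X_dist} (and \Cref{thm:codim_X} in the quasi-distinguished case), $X^{sing}$ has codimension at least $6$. I expect this to force $Y\setminus\torbk$ and $\overline{Y}\cap X^{sing}$ to have codimension at least $3$ in the Lagrangian $Y$, which is the hypothesis needed in \cite{LY}.

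Pull $\cV$ back to $\torbk$. Admissibility should make $\pi^*\cV$ into a $K$-equivariant vector bundle twisted by $\omega^{1/2}$. Extend it uniquely to a twisted $\sD$-module $\overline{\cE}$ on $Y$ using \Cref{prop:extn_AOD}.

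Next, take the canonical $\C^\times$-equivariant quantization of $X$. By \cite{LMBM} its global sections form $\Ug/\J_{\ckorb}$. Apply the results of \cite{LY} to obtain a $K$-equivariant quantization of $\overline{\cE}$ over $Y$. Its global sections $M$ are a HC $(\g,K)$-module annihilated by $\J_{\ckorb}$, carrying a good filtration whose associated graded is $\Gamma(Y,\overline{\cE}\otimes\omega^{1/2})$. Pushing this forward to $\orbk$ shows that the associated cycle of $M$ is supported on $\orbk$, with multiplicity bundle $\pi_*\pi^*\cV$ restricted to the relevant isotypic part.

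\textbf{Decomposition and injectivity.} The deck group of $\torbk\to\orbk$ acts on $\pi^*\cV$ and hence on $M$. Decompose $M$ into isotypic components for this finite group. The component corresponding to $\cV$ itself is defined to be $\quan_{\ckorb}^K(\orbk,\cV)$. Its associated graded is exactly $\Gamma(\orbk,\cV\otimes\ldots)$ with the grading shift, so its associated cycle is $[\orbk,\cV]$, which is irreducible.

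Irreducibility of the module follows from this: any proper submodule would have an associated cycle that is a proper nonzero summand of an irreducible cycle, which is impossible. Since $\Ann(M)\supseteq\J_{\ckorb}$, $\J_{\ckorb}$ is maximal, and $\AV$ of the module is $\overline{\orb}$, the module lies in $\UnipOv(\g,K)$. Because the associated cycle recovers $(\orbk,\cV)$, the map is injective and the associated cycle is a left inverse.

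\textbf{Surjectivity.} This is the main obstacle, since a priori an arbitrary $M\in\UnipOv(\g,K)$ need not have irreducible associated cycle. I would compare cardinalities: show $|\AODK(\orb)|=|\UnipOv(\g,K)|$. For classical groups I would use the count in \cite{BMSZ:counting}. For exceptional groups I would count $\UnipOv$ with \texttt{atlas} and count admissible orbit data from King's component groups, using the correction in \Cref{rem:Z_K}, going through case by case. Given equal finite cardinalities, the injective map is a bijection. Hence every module in $\UnipOv(\g,K)$ has irreducible associated cycle, and the associated cycle map is the two-sided inverse of $\quan_{\ckorb}^K$.
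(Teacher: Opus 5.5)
Your construction and injectivity argument follow the paper: your $\Gamma_{\tilde{\rho}}$-invariant component is the paper's $\MV=(\WV)^{\Gamma_{\tilde{\rho}}}$. Proving surjectivity by counting is also the paper's strategy. However, two steps are not right as stated.

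\textbf{The codimension bound.} \Cref{thm:codim_X} gives $\codim(X^{sing},X)\geqslant 6$ only when $\codim(\partial_{sp}\orb,\overline{\orb})\geqslant 6$, not for every quasi-distinguished $\ckorb$. Three orbits in \Cref{tab:qD_exceptional} violate the bound: $\orb=A_4+A_1$ in $E_7$, $\orb=A_4+A_1$ in $E_8$, and $\orb=A_4+2A_1$ in $E_8$. Each has a codimension-$4$ degeneration with slice $a_2^+$. The punctured slice $\orb_{min}(\slf_3)$ is simply connected, so the universal cover is trivial near it and $X$ stays singular in codimension $4$. Hence \cite{LY} does not apply there.

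The paper uses the dichotomy of \Cref{thm:codim_X}. When $\codim(\partial\orb,\overline{\orb})\geqslant 4$, one has $\J_{\ckorb}=I_0(\widetilde{\orb})=I_0(\orb)$. Then \cite[Theorem 1.3.1]{LosevYu} shows directly that $\AC\colon\UnipOv(\g,K)\to\AODK(\orb)$ is bijective, with no counting. Your construction is only needed when $\codim(\partial\orb,\overline{\orb})=2$.

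Two smaller corrections. First, the codimension-$3$ bound concerns $\overline{Y}\cap X^{sing}$, not $Y\setminus\torbk$. The latter contains the divisors $\torbk'$ lying over codimension-$2$ orbits $\orb'\subset\cP(\orb)$. Extending across them is exactly the content of \Cref{prop:extn_AOD}, via \Cref{prop:A1} and the non-admissibility of $\orbk'$. Second, $\cA_0$ is not $\Ug/\J_{\ckorb}$. You only have $\Ug/\J_{\ckorb}=\Ug/I_0(\widetilde{\orb})\hookrightarrow\cA_0$ by \Cref{prop:dual_quasi-dist}, and that is all the annihilation argument needs.

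\textbf{The counting step for spin groups.} Citing \cite{BMSZ:counting} does not cover the groups in the theorem for types $B$ and $D$. Since $G$ is simply connected, there $\GR=\Spin(p,q)$ or $\Spin^*(2n)$. The paper splits $\UnipOv(\g,K)$ into modules inflated from $\bar K$ and genuine ones.
\begin{itemize}
\item For $\Spin(p,q)$, the non-genuine part lives on $\SO_0(p,q)$. That count is not in \cite{BMSZ:counting} either. It is obtained from the $\mathrm{O}(p,q)$ and $\SO(p,q)$ results via \Cref{lem:res_Opq} and the compatibility of quantization with restriction to finite-index subgroups (\Cref{prop:quant_res}, \Cref{cor:quant_res_bij}).
\item The genuine part of $\Spin(p,q)$ needs the explicit count of genuine admissible data (\Cref{prop:adm_Spin_pq}), matched against \cite{BMSZ:spin}.
\item For $\Spin^*(2n)$, one shows $\AODK(\orb)^{gen}=\varnothing$ (\Cref{prop:adm_SO*(2n)}). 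Then $\UnipOv(\g,K)^{gen}=\varnothing$ follows from the a priori admissibility of associated cycles (\Cref{lem:AC_special_unip}, \Cref{cor:AC_special_gen}).
\end{itemize}

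\textbf{The counting step for exceptional groups.} \cite{King} lists $Z_{\bar K}$ for the adjoint group, with the correction of \Cref{rem:Z_K}. But $\AOD_{\bar K}(\orbk)$ is in bijection with genuine irreducibles of a double cover $\widetilde{Z}_{\bar K}$ whose extension class is not determined. The paper checks case by case that $|\Irr(\widetilde{Z}_{\bar K})^{gen}|=|\Irr(Z_{\bar K})|$ (\Cref{lem:adm_count_exceptional}). Meanwhile \texttt{atlas} counts $\UnipOv(\g,K)$ for the simply connected group. The equality $|\AOD_{\bar K}(\orb)|=|\UnipOv(\g,K)|$, together with the injections $\AOD_{\bar K}(\orb)\hookrightarrow\AODK(\orb)\hookrightarrow\UnipOv(\g,K)$ of \Cref{prop:counting_exceptional}, is what closes the argument for $K$.
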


Note that the results also hold when $\GR$ is the metaplectic double cover $\Mp(2n,\R)$ of the real symplectic group $\Sp(2n,\R)$, in the framework of the metaplectic Barbasch--Vogan duality map (\cite{BMSZ:metaplecticBV}). This case and the case in which $\GR$ is a classical linear group were previously established in \cite[Theorem 5.3, (b)]{BMSZ:construction_unitarity} using theta lifting. Our approach is different and works uniformly for both classical and exceptional groups.


\Cref{thm:main_bij} has important implications for the unitarity of HC modules in $\UnipOv(\g, K)$. In \cite[Theorem 5.22]{DMB-Hodge}, it is shown that any irreducible Hermitian HC module with irreducible associated cycle, whose annihilator ideal is maximal and (very) weakly unipotent, is unitarizable. \Cref{thm:main_bij} already implies the irreducibility of the associated cycles of HC modules in $\UnipOv(\g, K)$. 
We will show in \Cref{prop:Hermitian_UnipOv} that all HC modules in $\UnipOv(\g, K)$ are Hermitian, again using a geometric argument based on \Cref{thm:main_bij}. Even this is not immediately obvious from the original definition of ABV packets, at least to the author and to the experts in the field whom he has consulted. The weak unipotence of $\J_{\ckorb}$ for distinguished or even nilpotent orbits $\ckorb$ was shown previously in \cite[Proposition 5.10]{BV85:unipotent}, and later generalized to arbitrary $\ckorb$ by Ma and the author in \cite[Theorem 2.19]{MaYu:weak_unipotence}. Therefore \cite[Theorem 5.22]{DMB-Hodge} implies the following theorem (\Cref{thm:unitarity_dist} and \cref{cor:unitarity_dist_reductive}).

\begin{Thm} \label{thm:main_unitarity_dist}
	All HC modules in the weak Arthur/ABV packet $\UnipOv(\g, K)$ attached to a (quasi-)distinguished nilpotent orbit $\ckorb$ in $\ckfg^*$ are unitarizable.
\end{Thm}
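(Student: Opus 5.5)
The plan is to feed the modules in $\UnipOv(\g,K)$ into the unitarity criterion \cite[Theorem 5.22]{DMB-Hodge} (\Cref{thm:unitarity_criterion}). That criterion asserts that an irreducible HC module $M$ is unitarizable provided that:
\begin{enumerate}
\item[(i)] $\Ann(M)$ is a maximal ideal of $\Ug$ which is (very) weakly unipotent;
\item[(ii)] $M$ has irreducible associated cycle;
\item[(iii)] $M$ is Hermitian.
\end{enumerate}
So the proof consists of verifying these three hypotheses for every $M \in \UnipOv(\g,K)$ with $\ckorb$ (quasi-)distinguished, using results already stated.

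Condition (i) is immediate from the definition \eqref{eq:UnipOv_defn}: $\Ann(M)=\J_{\ckorb}=\J_{max}(\lambda_{\ckorb})$ is maximal. Its weak unipotence is \cite[Proposition 5.10]{BV85:unipotent} in the distinguished case, and \cite[Theorem 2.19]{MaYu:weak_unipotence} for arbitrary $\ckorb$, which covers the quasi-distinguished case. Condition (ii) is the last clause of \Cref{thm:main_bij}. Every $M$ equals $\quan_{\ckorb}^K(\orbk,\cV)$ for a unique admissible orbit datum, and its associated cycle is $[\overline{\orbk}]$ with multiplicity given by the rank of $\cV$. Here the irreducibility comes from \Cref{thm:main_codim_X_dist}: the singular locus of $X$ has codimension at least $6$, which lets the quantization of \cite{LY} be applied to the extension $\overline{\cE}$ over $Y$.

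Condition (iii) is the step I expect to be the main obstacle, namely Hermitian-ness as in \Cref{prop:Hermitian_UnipOv}. The approach is to compare $M$ with its Hermitian dual $M^h$. Since $\J_{\ckorb}$ is stable under the anti-involution defining Hermitian duals (the infinitesimal character $\lambda_{\ckorb}$ is real), $M^h$ again lies in $\UnipOv(\g,K)$. By \Cref{thm:main_bij} it therefore suffices to show $\AC(M^h)=\AC(M)$. Geometrically, the associated cycle of $M^h$ is obtained from that of $M$ by the operation on admissible orbit data induced by complex conjugation relative to the compact real form. For nilpotent $K$-orbits in $\fk^\perp$, this operation fixes each $\orbk$ (it acts as $-1$ composed with conjugation, which preserves real nilpotent orbits through Kostant--Sekiguchi), and it sends $\cV$ to its dual twisted appropriately. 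One must then check that the admissible bundle is preserved, i.e.\ that the relevant characters of the component group $A_K(\orbk)$ (factoring through the quotient relevant to $\widetilde{\orb}$) are self-dual. I would first try to argue this from the quantization construction itself: the quantization of $\overline{\cE}$ carries a natural contravariant form coming from the $\omega_Y^{1/2}$-twist and Verdier/Hermitian duality of twisted $\sD$-modules on $Y$. If that fails, the fallback is a case-by-case check that the component-group characters appearing are of order at most two.

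With (i)--(iii) established, \cite[Theorem 5.22]{DMB-Hodge} yields unitarizability of every $M\in\UnipOv(\g,K)$. The extension to the general reductive and metaplectic settings (\cref{cor:unitarity_dist_reductive}) then follows by the same three checks, using the general form of \Cref{thm:main_bij}.
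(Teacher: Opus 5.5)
\textbf{The gap is in condition (iii), Hermitian-ness.} Your reduction is fine as far as it goes: once $M^h\in\UnipOv(\g,K)$, \Cref{thm:main_bij} reduces the claim to $\AC(M^h)=\AC(M)$. But you never prove that equality, and the mechanism you propose does not give a workable argument.

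\begin{itemize}
\item Describing $\AC(M^h)$ as a twisted dual of $\AC(M)$ ignores the complex conjugation in the Hermitian dual. On the compact part of $K_Q$, that conjugation undoes the dualization.
\item The fallback ``check that the component-group characters are of order at most two'' cannot work as stated. The irreducible representations that occur are often higher-dimensional, e.g.\ for $Z_{\bar K}=\fS_4,\fS_5,\fD_6$ in \Cref{tab:qD_real_exceptional}, or the spin representations in \Cref{prop:adm_Spin_pq}.
\item Genuine representations can have non-real characters. Examples are the faithful characters of $\Spin_{1^2}\simeq\Z/4$ when $\mathbf{p}=\mathbf{q}=2$ in \Cref{prop:adm_Spin_pq}. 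So any self-duality statement would need a precisely identified twist, which you do not supply.
\item The Verdier-duality alternative is only a hope, not an argument.
\item Stability of $\J_{\ckorb}$ under Hermitian duality does not follow from reality of $\lambda_{\ckorb}$ alone. The $\sigma$-Hermitian dual also involves $\theta$, which may be outer, so you also need $\theta$-stability of $\J_{\ckorb}$.
\end{itemize}

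\textbf{The missing idea is to compare $M$ with $\theta^*M$ rather than with $M^h$.} By \Cref{prop:Hermitian} (the ALTV existence of $c$-invariant forms at real infinitesimal character), $M$ is Hermitian if and only if $M\simeq\theta^*M$. The paper then argues in three steps.
\begin{enumerate}
\item $\J_{\ckorb}$ is $\theta$-stable (\Cref{prop:unipotent_theta_stable}). Indeed $\J_{\ckorb}=I_0(\widetilde{\orb})$ by \Cref{prop:dual_quasi-dist}(iii). The Poisson automorphism $\gamma(\sqrt{-1})\circ\tilde\sigma$ of $\widetilde{\orb}$ covers $\theta$ and lifts to the canonical quantization. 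Uniqueness of the quantum co-moment map then forces $\ker\Phi$ to be $\theta$-stable. Hence $\theta^*M\in\UnipOv(\g,K)$.
\item $\AC(\theta^*M)=\AC(M)$ holds for every module (\Cref{prop:AC_theta}). Here $\theta$ acts by $-1$ on $\fk^\perp$, and $-1$ is realized on each $\orbk$ by $\exp(\pi i h/2)\in K$.
\item Injectivity of $\AC$ on $\UnipOv(\g,K)$, from \Cref{thm:main_bij}, gives $M\simeq\theta^*M$.
\end{enumerate}
No component-group computation is needed.

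\textbf{Minor points.}
\begin{itemize}
\item Irreducible associated cycle in the sense of \Cref{defn:irred_AC} means $[\cV]$ is an irreducible $K$-equivariant bundle, not merely a single orbit with multiplicity $\mathrm{rk}\,\cV$.
\item That every $M$ has such a cycle uses the exhaustion (counting) half of \Cref{thm:main_bij}. The codimension estimate only produces the quantized modules, and for quasi-distinguished orbits it is \Cref{thm:codim_X} rather than \Cref{thm:main_codim_X_dist}.
\item In the metaplectic case, very weak unipotence uses \cite[Theorem 3.16]{MaYu:weak_unipotence}.
\end{itemize}
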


Finally, the recent work of Adams, Ionov, Mason-Brown, and Vogan \cite{AIMV:ArthurPackets} shows that the unitarity of general ABV packets can be reduced to that of unipotent ABV packets. More precisely, representations in general ABV packets can be obtained via parabolic and cohomological induction of special unipotent representations of Levi subgroups, and the inductions preserve unitarity. The unitarity of general ABV packets (\cref{thm:main_unitarity_ABV} below) then follows from that of special unipotent representations, where were established in different cases via completely different methods (classical groups v.s. exceptional groups, complex groups v.s. real groups), see the proof of \cite[Theorem 1.0.5]{AIMV:ArthurPackets}. 

On the other hand, Jeffrey Adams has informed the author that the method in \cite{AIMV:ArthurPackets} can also be used to reduce the unitarity of unipotent ABV packets to that of unipotent ABV packets attached to distinguished orbits of $\ckfg$.
Once established, this, together with \cref{thm:main_unitarity_dist} in the current article, will give a uniform proof of \cref{thm:main_unitarity_ABV}.

\begin{Thm} \label{thm:main_unitarity_ABV}
	All ABV packets consist of unitary representations.
\end{Thm}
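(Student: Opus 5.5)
The plan is to reduce \Cref{thm:main_unitarity_ABV} to \Cref{thm:main_unitarity_dist} in two reduction steps, with the Hodge-theoretic unitarity criterion doing the work at the base.

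\textbf{First reduction: to special unipotent packets.} By \cite{AIMV:ArthurPackets}, every representation in an ABV packet attached to a general local Arthur parameter $\psi$ is obtained by parabolic and cohomological induction. Concretely, one factors $\psi$ through the dual of a Levi subgroup $L$ of $G$ so that its restriction becomes a unipotent parameter for $L_\R$, possibly twisted by a unitary character. The induced modules then arise from representations in a unipotent ABV packet of $L_\R$, and in the relevant good range these inductions preserve unitarity. Thus it suffices to prove unitarity for all unipotent ABV packets. By \cite[Corollary 27.13]{ABV}, each such packet is contained in a weak packet $\UnipOv(\g,K)$. We also need this for reductive, not only simple simply connected, groups; \cref{cor:unitarity_dist_reductive} is meant to supply it, by passing through central isogenies and products of simple factors and checking that the annihilator condition $\Ann(M)=\J_{\ckorb}$ is compatible with these operations.

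\textbf{Second reduction: to (quasi-)distinguished $\ckorb$.} Let $\ckorb$ be arbitrary. Choose a minimal Levi subalgebra $\check{\fl}\subset\ckfg$ meeting $\ckorb$. Then $\ckorb\cap\check{\fl}$ contains a distinguished orbit $\ckorb_{\check{\fl}}$ of $\check{\fl}$, and the unipotent parameter for $\ckorb$ factors through the dual Levi $L$. Following the method of \cite{AIMV:ArthurPackets}, as communicated by Adams, one shows that every member of $\UnipOv(\g,K)$ is a constituent of a parabolically or cohomologically induced module from $\Unip_{\ckorb_{\check{\fl}}}(\fl,K\cap L)$. The infinitesimal character $\lambda_{\ckorb}$ restricts to $\lambda_{\ckorb_{\check{\fl}}}$ because the $\slf_2$-triple can be taken inside $\check{\fl}$, so the induction lands at the correct infinitesimal character and is, by construction, in the unitarity-preserving range. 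The base case is then exactly \Cref{thm:main_unitarity_dist}, applied to the Levi $L_\R$ (a reductive group, hence the need for the reductive version above). That theorem in turn rests on three inputs:
\begin{itemize}
\item irreducibility of associated cycles, from \Cref{thm:main_bij};
\item the Hermitian property, from \Cref{prop:Hermitian_UnipOv};
\item weak unipotence of $\J_{\ckorb}$, from \cite{MaYu:weak_unipotence};
\end{itemize}
all fed into \cite[Theorem 5.22]{DMB-Hodge}.

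\textbf{Main obstacle.} The hard step is the second reduction. We must show that induction from the distinguished Levi datum both exhausts $\UnipOv$ and stays in a range where irreducibility or unitarity is preserved. The difficulty is that cohomological induction outside the weakly fair range need not preserve unitarity, and $\lambda_{\ckorb}$ may sit at its edge. I would first try to check that the induced infinitesimal character lies in the weakly fair range, using the explicit description of $\check h/2$ along the Levi. Exhaustion would come from the character-theoretic description of ABV packets in \cite{ABV} and their compatibility with endoscopic and Levi induction. Absent a uniform argument, one may instead fall back on the case-by-case known results cited in \cite[Theorem 1.0.5]{AIMV:ArthurPackets} for the non-distinguished cases.
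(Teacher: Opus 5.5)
Your first reduction is what the paper uses. By \cite{AIMV:ArthurPackets}, unitarity of arbitrary ABV packets reduces to unitarity of special unipotent representations of Levi subgroups, and \cite[Corollary 27.13]{ABV} places the relevant unipotent packets inside the weak packets $\UnipOv(\g,K)$.

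The gap is your second reduction, which you rightly flag as the main obstacle but do not resolve. You assert, without proof, that every member of $\UnipOv(\g,K)$ is a constituent of a module parabolically or cohomologically induced from $\Unip_{\ckorb_{\check{\fl}}}$ on a real form of the Bala--Carter Levi. This cannot be taken from the paper either. The paper only reports that Adams has informed the author that the method of \cite{AIMV:ArthurPackets} can be used for such a reduction to distinguished orbits, and says a uniform proof via \Cref{thm:main_unitarity_dist} will follow \emph{once this is established}.

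Unitarity preservation is the lesser part of your worry. The $\slf_2$-triple can be taken in $[\check{\fl},\check{\fl}]$, so $\lambda_{\ckorb}=\check h/2$ lies in the span of the roots of $\fl$ and vanishes on $\mathfrak{z}(\fl)$. Hence, with the usual $\rho(\fu)$-normalization, the cohomological induction you describe lies in the weakly fair range, at its edge. The parabolic induction has trivial continuous parameter, and irreducible constituents of unitary modules are unitary.

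What is genuinely missing is exhaustion, which is a statement about the ABV packets themselves. You need every representation attached to a non-distinguished $\ckorb$ to actually occur in such induced modules, for the appropriate real and $\theta$-stable forms of $L$. This does not follow from matching infinitesimal characters. At the edge of the weakly fair range the induced modules may vanish or be reducible, so no irreducibility or naive counting argument supplies it.

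As written, your main line is therefore incomplete, and the theorem is at present proved only along what you call the fallback. That fallback is the paper's argument: the reduction of \cite{AIMV:ArthurPackets}, combined with the unitarity of all special unipotent representations. That unitarity has been established case by case by quite different methods (classical versus exceptional, complex versus real groups), as assembled in the proof of \cite[Theorem 1.0.5]{AIMV:ArthurPackets}. On that route no reduction to distinguished orbits is needed. The reduction to distinguished orbits, with \Cref{thm:main_unitarity_dist} as its base case, is only the prospective uniform proof, which the paper describes as conditional on Adams' announced extension of \cite{AIMV:ArthurPackets}.
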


\subsection{Organization of the paper}\label{subsec:organization}

We begin in \Cref{sec:preliminaries} by fixing the conventions on real reductive groups, nilpotent orbits and their covers, component groups, Slodowy slices, and associated cycles. The geometric input is established in \Cref{sec:geometry_special_pieces}: Lusztig's special pieces are used there to prove the codimension estimate for the affinization of covers of Barbasch--Vogan duals of quasi-distinguished orbits. With this input in place, \Cref{sec:adm_bundle} develops the language of Picard algebroids, twisted differential operators, admissible vector bundles, and admissible orbit data, including the modifications needed for metaplectic and spin groups. The quantization construction is then carried out in two stages. First, \Cref{sec:orbit_method_complex} treats the complex-group case and gives a priori constraints for the associated cycles of HC modules in $\UnipOv(\g, K)$ when $\ckorb$ is quasi-distinguished or $\g$ is classical metaplectic; then \Cref{sec:orbit_method_real} adapts the construction to real groups and proves the exhaustion statement by counting. The passage from construction to unitarity occupies \Cref{sec:unitarity}, where Hermitianness is proved and unitarity criterion from the mixed Hodge theory is applied. Finally, \Cref{sec:res_quant} studies the behaviour of the quantization functor/map under restriction/inflation1, while \Cref{sec:correction_component} supplies a component-group computation needed in the exceptional cases.

\subsection{Notation and conventions}\label{subsec:notations}

Unless otherwise specified, algebraic groups and varieties are defined over the field $\C$ of complex numbers. All group actions on varieties, sheaves, and modules are algebraic; for vector spaces and modules, this means locally finite.

Let $\phi: \widetilde{H} \to H$ be a finite surjective morphism of algebraic groups. A representation, sheaf, or any other object with a $\widetilde{H}$-action is called \emph{genuine} with respect to $\phi$ if the action does not factor through $H$. We use the superscript $^{gen}$ for the corresponding subset or full subcategory. 

For an algebraic group $G$, we denote by $G^\circ$ its identity component and by $\pi_0 G := G/G^\circ$ its component group. For a Lie group or algebraic group $G$, we write $\Lie(G)$ for its Lie algebra; we also use the corresponding fraktur letter when no ambiguity is possible, for example, $\fg=\Lie(G)$. If $V$ is a linear representation of $G$, then $V^G$ denotes the subspace of $G$-fixed vectors.

For an abelian or exact category $\cC$, we write $\cK\cC$ for its Grothendieck group and $\cK_{\geqslant 0}\,\cC$ for the submonoid of classes of objects of $\cC$. We write $\cK_{>0}\,\cC$ for the corresponding positive semigroup with the zero class removed. 
\vskip 1em

\subsection{Acknowledgements}
The author would like to thank Jeffrey Adams, Dougal Davis, Baohua Fu, Daniel Juteau, Paul Levy, Ivan Losev, Jia-Jun Ma, Lucas Mason-Brown, Dmytro Matvieievskyi, Kari Vilonen, Eric Sommers, Binyong Sun, Daniel Wong, and Chen-Bo Zhu for helpful discussions. The author is also grateful to the developers of the \texttt{atlas} software.

The author is indebted to Binyong Sun for persuading him several times to consider the problem of constructing special unipotent representations with irreducible associated cycles, which the author initially did not think possible, before realizing its relationship with Lusztig's special pieces conjecture. The author would also like to thank Jeffrey Adams for sharing his unpublished notes \cite{Adams:quasi-dist} containing a general definition of quasi-distinguished nilpotent orbits for all complex reductive groups that inspires our \cref{defn:quasi-dist-saturation}.

The work of the author has been partially supported by the Fundamental and Interdisciplinary Disciplines Breakthrough Plan of the Ministry of Education of China (FIDBP), by NSFC grants (Grant Nos. 12471028 and 12131018), and by the Natural Science Foundation of Fujian Province (Grant No. 2022J06005).

\section{Preliminaries}
\label{sec:preliminaries}

\subsection{Basic settings} \label{subsec:basic_settings}

We will work with the field $\C$ of complex numbers and the subfield $\R$ of real numbers. We follow the general setting in \cite[Section 8.1]{DavisVilonen:unitary} and define a {\it real reductive group} to be a real Lie group $\GR$ such that there exists a complex reductive group $G$ with an anti-holomorphic involutive automorphism $\varpi$, and a finite covering homomorphism from $\GR$ to a union of connected components of the group $G(\R):=G^\varpi$ of real points of $G$ (with respect to $\varpi$). Let $\g := \Lie(G)$ denote the (complex) Lie algebra of $G$, and let $\g_\R = \g^{\varpi}$ be the Lie algebra of $\GR$, which is a real form of $\g$.

Fix an anti-holomorphic involution $\varpi_c$ of $G$, commuting with $\varpi$, such that the $\varpi_c$-fixed point subgroup $U_{\R} := G^{\varpi_c}$ is a compact real form of $G$.
Then $\theta:=\varpi \circ \varpi_c$ is a Cartan involution of $G$. We use the same notation to denote the induced involution on $\g$. Then $U_\R \cap G(\R) = G(\R)^\theta$ is a maximal compact subgroup of $G(\R)$ with complexification $G^\theta$. Let $\KR$ denote the preimage of $G(\R)^\theta$ in $\GR$, and let $K$ denote its complexification. Then $\KR$ is a maximal compact subgroup of $\GR$ and $K$ is a finite cover of a union of connected components of $G^\theta$. Let $\fk$ denote the (complex) Lie algebra of $K$. Then it is naturally a (complex) Lie subalgebra of $\g$. Then $(\g,K)$ forms a symmetric Harish-Chandra pair. 

Most of our main results only concern the case when $G$ is a simply connected complex simple group and $\GR = G(\R)$. In this case, $\GR$ is not a complex group itself, since the complex group case has already been treated in \cite{DMB-Hodge}.
However, the more general setting above is necessary for several reasons. First, it is natural to consider the case of a {\it linear real reductive group}, i.e., a (closed) subgroup of finite index in $G(\R)$. This occurs in our treatment of the identity components $\SO_0(p,q)$ of the indefinite special orthogonal groups $\SO(p,q)$, see \Cref{cor:counting_classical_identity_component}. 

Also note that in the above general setting, we allow $G$ to be disconnected for technical reasons, since in \Cref{subsubsec:counting_classical} we will have to deal with disconnected orthogonal groups $G = \mathrm{O}(N)$ and $\GR = \mathrm{O}(p,q)$ with $N=p+q$, which is not of Harish-Chandra class. However, we will not deal with representations of $\mathrm{O}(p,q)$ directly; we only need this case to translate the counting result for $\mathrm{O}(p,q)$ in \cite[Proposition 4.11]{BMSZ:construction_unitarity} to $\SO(p,q)$ (see \Cref{lem:res_Opq} and \Cref{prop:counting_classical}). 

We will also deal with the metaplectic group $\Mp(2n,\R)$, the unique connected $2$-fold covering group of $\Sp(2n,\R)$, which is not a linear real reductive group. We will prove results analogous to those for linear classical groups in a somewhat uniform way. See \Cref{subsec:special_duality,subsec:special_duality_classical}, as well as \cref{cor:metaplectic_genuine} for more details.

Later we will consider the following situation: suppose we have $(G, \varpi)$ as well as two real reductive groups $\GR$ and $\GR'$ with homomorphisms $\GR \to G$ and $\GR' \to G$ and maximal compact subgroups $\KR$ and $\KR'$ as above respectively. Suppose there is a finite homomorphism $\phi: \GR \to \GR'$ of Lie groups intertwining $\GR \to G$ and $\GR' \to G$. Then $\phi$ induces a morphism $\phi: (\g, K) \to (\g, K')$ of symmetric pairs, which is the identity map of $\g$ and a finite homomorphism $\phi: K \to K'$ of algebraic groups onto a closed subgroup of finite index in $K'$ that intertwines the $\Ad$-actions of $K$ and $K'$ on $\g$. Then the pullback of $\GR'$-representations to $\GR$-representations corresponds to pullback of the corresponding HC $(\g,K')$-modules to $(\g,K)$-modules.

In the special case where $\phi: \GR \to \bar{G}_\R$, and hence $\phi: K \to \bar{K}$, is a finite surjective morphism, the pullback of representations via $\phi$ is just inflation.
Then we have natural embeddings $\UnipOv(\g, \bar{K}) \hookrightarrow \UnipOv(\g, K)$ by inflation. 
We write $\UnipOv(\g,K)^{gen} := \UnipOv(\g,K) \setminus \UnipOv(\g, \bar{K})$ for the subset of $\UnipOv(\g,K)$ consisting of isomorphism classes of \emph{genuine} (with respect to $\phi: K \to \bar{K}$) special unipotent irreducible HC $(\g,K)$-modules attached to $\ckorb$, that is, those modules that do not factor through $\bar{K}$. One important example is the 2-fold covering morphism $\Mp(2n,\R) \to \Sp(2n,\R)$.

\subsection{Nilpotent orbits}\label{subsec:nil_orbits}

We recall basic definitions and notation related to nilpotent orbits. We refer the reader to \cite{CM} for details.

Let $G$ be a connected complex reductive algebraic group with Lie algebra $\g=\Lie(G)$. Let $Z(G)$ be the center of $G$ and $G_{ad} := G/Z(G)$ be the adjoint group of $\g$. We work with the coadjoint action of $G$ on $\g^*$. Fix once and for all a nondegenerate $G$-invariant bilinear form $B$ on $\g$ (e.g. the Killing form when $\g$ is semisimple) and use it to identify $\g\cong\g^*$ as $G$-representations. Under this identification, we speak interchangeably of elements of $\g$ and of $\g^*$; in particular a coadjoint element $\xi\in\g^*$ will often be denoted by the corresponding element $e\in\g$. Sometimes we also allow $G$ to be disconnected with finitely many connected components, such as $G=\mathrm{O}(N)$.

The $G$-(co)adjoint orbits are the same as $G_{ad}$-(co)adjoint orbits. Kirillov, Kostant, and Souriau observed that any coadjoint orbit $\orb$ in $\fg^*$ carries a natural $G$-invariant algebraic symplectic form $\Omega=\Omega_\orb$, which is referred to as the \emph{KKS symplectic form} (see \cite[(5.14)(b)]{Vogan_AV}). Recall that a coadjoint $G$-orbit $\orb\subset\g^*$ is called nilpotent if (equivalently) its image in $\g$ under the fixed identification consists of nilpotent elements, or equivalently $0\in\overline{\orb}$ (see \cite[Theorem 5.7]{Vogan_AV}), where $\overline{\orb}$ denotes the closure of $\orb$ in $\g^*$ in the Zariski topology. 

We denote the set of nilpotent coadjoint $G$-orbits in $\g^*$ by $\cN_o$. It is well-known that $\cN_o$ is a finite set. Moreover, it is a partially ordered set under the closure order, i.e., $\orb' \preceq \orb$ (we say $\orb'$ is \emph{dominated} by $\orb$) if and only if $\orb' \subset \overline{\orb}$. 
Let $\cN^*(\g)$, or simply $\cN^*$, denote the nilpotent cone in $\g^*$, which is the $G$-stable closed subvariety of $\g^*$ consisting of all nilpotent elements of $\g^*$. Let $\cN$ be the similarly defined closed subset in $\g$. Then the identification $\g \simeq \g^*$ induces a $G$ (or $G_{ad}$)-equivariant isomorphism $\cN^* \simeq \cN$.

Given any $G$-orbit $\orb$ in $\cN$, the (finite) component groups $G_e/G_e^\circ$ of the centralizer $G_e$ of various $e \in \orb$ are all isomorphic to each other and the isomorphisms are unique up to composition with some inner automorphisms of the groups. Therefore we can define the {\it ($G$-equivariant) component group} $A_G(\orb):= G_e/G_e^\circ$ of $\orb$ by abuse of notation and terminology. Then $A_G(\orb)$ is isomorphic to the $G$-equivariant fundamental group $\pi_1^G(\orb)$ of $\orb$. When the group $G$ in question is clear, we will simply write $A(\orb) = A_G(\orb)$. When $G$ is a simply connected semisimple group, $A(\orb)$ is just the fundamental group $\pi_1(\orb)$. We will describe the component group $A(\orb)$ in more detail in \Cref{subsec:component_groups}.

By the Jacobson-Morozov Theorem, we can complete $e$ into an $\slf_2$–triple $(e,h,f)$, with $h \in \g$ semisimple and $f \in \g$ nilpotent. Such an $\slf_2$-triple is unique up to $G_{ad}$-conjugation. We then define the \emph{reductive centralizer} $Q:=Z_G(e,h,f)$, 
which is a reductive (indeed Levi) subgroup of $G_e$. The unipotent radical $U:=R_u(G_e)$ is the maximal connected normal unipotent subgroup of $G_e$. By \cite[Proposition 2.4]{BV85:unipotent} we have the semidirect product decomposition $G_e \cong Q \ltimes U$,
equivalently on Lie algebras $\fg_e=\fq \oplus \fu$ with $\fq=\Lie(Q)$ reductive and $\fu=\Lie(U)$ nilpotent; $Q$ acts on $U$ (and on $\fu$) via the adjoint action. Since $U$ is connected, we have $A_G(\orb) = G_e/G_e^\circ \simeq Q/Q^\circ$.

Now let $\GR \to G$ be a real reductive Lie group as in \Cref{subsec:basic_settings}, such that its Lie algebra $\g_\R$ is a real form of $\g$. We have a Cartan decomposition $\g = \fk \oplus \fs$ of $\g$ into $\pm 1$-eigenspaces of $\theta$, where $\fk = \g^\theta = \Lie(K)$ and $\fs = \g^{-\theta}$. The vector subspace $\fs$ can be identified $K$-equivariantly with $\fk^\perp = (\g/\fk)^*$ via a $\GR$-invariant nondegenerate bilinear form on $\g_\R$. 

Let $\cN_\theta^*:=\cN^* \cap \fk^\perp$ be the closed subvariety consisting of all nilpotent elements in $\fk^\perp \subset \g^*$, which is $K$-stable. We similarly define $\cN_\theta := \cN \cap \fs$. Then the isomorphism $\cN^* \simeq \cN$ restricts to a $K$-equivariant isomorphism $\cN^*_\theta \simeq \cN_\theta$. It is known that $\cN_\theta$ (and hence $\cN^*_\theta$) has finitely many $K$-orbits; see \cite{KostantRallis}. We will denote $K$-orbits in $\cN_\theta^* \simeq \cN_\theta$ by symbols such as $\orbk$, to distinguish them from nilpotent $G$-orbits $\orb$ in $\cN^* \simeq \cN$.

Let $\sigma = -\theta$. Then $\sigma$ is a $K$-equivariant anti-Poisson involution of $\g^*$. Let $\orb$ be a nilpotent orbit in $\g^*$. Recall that $\orb_{\fk^\perp} = \orb \cap \fk^\perp$. We can identify $\orb_{\fk^\perp}$ with its image $\orb \cap \fs$ under the isomorphism $\g^* \simeq \g$. Assume that $\orb_{\fk^\perp} \neq \varnothing$. Then both $\sigma$ and $\theta$ preserve $\orb$. In this case $\sigma$ restricts to an anti-Poisson involution of $\orb$ and $\orb_{\fk^\perp} = \orb^{\sigma}$ is a smooth closed Lagrangian subvariety of $\orb$, with respect to the KKS symplectic form $\Omega_\orb$ (\cite[Corollary 5.20]{Vogan_AV}).

Given a nilpotent $K$-orbit $\orbk$ in $\fs$ and any $e \in \orbk$, we can complete $e$ into a \emph{normal} $\slf_2$–triple $(e,h,f)$, i.e., $h \in \fk$ and $f \in \fs$. We can again define the reductive $K$-centralizer $K_Q:=Z_K(e,h,f) \subset K_e$. 
We again have a semidirect decomposition $K_e = K_Q \ltimes U^\theta$, where $U^\theta$ is the unipotent radical of $K_e$. We then define the $K$-component group $Z_K:=K_e/K_e^\circ \simeq K_Q/K_Q^\circ$ of $\orbk$.

By \cite{Sekiguchi:bijection}, \cite{Djokovic:bijection}, and \cite{Vergne}, there is a natural bijection, commonly known as the \emph{Kostant-Sekiguchi-Vergne (KSV) correspondence}, between the set of nilpotent $\GR$-orbits in $\g_\R^*$ and the set of nilpotent $K$-orbits in $\fk^\perp$ (see also \cite[Section 9.5]{CM}). 
For real exceptional groups $\GR$, the nilpotent $\GR$-orbits in $\g_\R^*$ are classified in \cite{Djokovic:centralizer_inner, Djokovic:centralizer_outer}, and hence we also have a classification of the nilpotent $K$-orbits in $\fk^\perp$.

Nilpotent $K$-orbits of real classical groups $\GR$ are classified by \emph{$ab$-diagrams}, which are Young diagrams of the corresponding partitions associated to the $G$-orbits with labels $a$ and $b$ filled in each box, alternating in each row, satisfying extra conditions. For details, see \cite{Kraft-Procesi:normal} and \cite{Ohta:Adm}. Also see \cite[Section 9.3]{CM} and \cite[Proposition 3.5]{GomezZhu:local_theta_lifting} for the classification of nilpotent $\GR$-orbits in $\g_\R^*$ in terms of signed Young diagrams. The two classifications are related by the KSV correspondence. The reductive centralizer $K_Q$ and $K_Q^\circ$ of a nilpotent $K$-orbit can be computed directly from the $ab$-diagrams using \cite{Ohta:Adm}, or one can first compute the reductive centralizer of the corresponding nilpotent $\GR$-orbit under the KSV correspondence, in terms of the signed Young diagrams using \cite[(3.7)]{GomezZhu:local_theta_lifting}, and then translate it to the $K$-orbit side using \cite[Remark 9.5.2]{CM}.

\subsection{Covers of nilpotent orbits} \label{subsec:orbit_covers}

For our purposes, we will also frequently work with covers of nilpotent orbits. By a \emph{$G$-equivariant cover} of a nilpotent $G$-orbit $\orb$ in $\fg^*$, we mean a finite \'etale $G$-equivariant morphism $\rho: \widetilde{\orb} \to \orb$, where $\widetilde{\orb}$ is a connected $G$-variety. Let $X := \spec(\C[\widetilde{\orb}])$ be the affinization of $\widetilde{\orb}$. The covering morphism $\rho: \widetilde{\orb} \to \orb$ induces a surjective morphism $\rho: X \to \overline{\orb}$. The KKS symplectic form on $\orb$ pulls back via $\rho$ to a $G$-invariant algebraic symplectic form on $\widetilde{\orb}$. This construction is studied by Brylinski and Kostant \cite{Brylinski-Kostant}. 


We will also consider covers of nilpotent $K$-orbits $\torbk$ in $\fk^\perp \simeq \fs$. Since we do not assume either the group $K$ or the orbit $\orbk$ to be connected, some explanations are necessary.

By a ($K$-equivariant) cover of $\orbk$, we mean a variety $\torbk$ equipped with a transitive $K$-action and a $K$-equivariant finite \'{e}tale (necessarily surjective) morphism $\tilde{\rho}: \torbk \to \orbk$.

A morphism between two covers $\hat{\rho}: \horbk \to \orbk$ and $\tilde{\rho}: \torbk \to \orbk$ is a $K$-equivariant morphism $\varphi: \horbk \to \torbk$ such that $\tilde{\rho} \circ \varphi = \hat{\rho}$, which is necessarily finite, \'{e}tale, and surjective. Let $\Cov_K(\orbk)$, or simply $\Cov(\orbk)$ when $K$ is obvious from the context, denote the category of all $K$-equivariant covers of $\orbk$. We can similarly define a $K$-equivariant cover $\varphi: \horbk \to \torbk$ of the cover $\torbk$ of $\orbk$ with covering morphism $\tilde{\rho}: \torbk \to \orbk$, as a $K$-equivariant finite \'{e}tale morphism $\varphi: \horbk \to \torbk$. Then $\hat{\rho} = \tilde{\rho} \circ \varphi: \horbk \to \orbk$ is also an object in $\Cov_K(\orbk)$ and $\varphi$ is precisely a morphism from $\hat{\rho}$ to $\tilde{\rho}$ in $\Cov_K(\orbk)$. 

We next record the corresponding deck-transformation group. Suppose $\tilde{\rho}: \torbk \to \orbk$ is an object of $\Cov_K(\orbk)$. We define the ($K$-equivariant) Galois group of $\tilde{\rho}$ as
	\[ \Gamma_{\tilde{\rho}} = \Aut_K(\tilde{\rho}) := \{ \sigma \in \Aut_K(\torbk) \,\mid\, \tilde{\rho} \circ \sigma = \tilde{\rho} \},\]
where $\Aut_K(\torbk)$ is the group of $K$-equivariant automorphisms of $\torbk$ as a $K$-variety. This can similarly be defined for coverings of orbit covers.
Fix a point $e \in \orbk$ and let $y$ be any point in the preimage of $e$ in $\torbk$. We have inclusions of groups $K_e^\circ \subset K_y \subset K_e$ and well-defined $K$-equivariant isomorphisms 
	\[ K/K_e \simeq \orbk, \quad k K_e \mapsto k \cdot e, \quad \text{and} \quad K/K_y \simeq \torbk, \quad k K_y \mapsto k \cdot y.\]
Then the Galois group $\Gamma_{\tilde{\rho}}$ can be identified with $N_{K_e}(K_y)/K_y$, where $N_{K_e}(K_y)$ is the normalizer of $K_y$ in $K_e$, via the action of $N_{K_e}(K_y)$ on $\torbk$ given by
\begin{equation}\label{eq:Aut_K}
	 \bar{n} \cdot (k \cdot y) := (k n^{-1}) \cdot y, \quad \forall\, n \in N_{K_e}(K_y), \, k \in K,
\end{equation}
where $\bar{n}$ denotes the image of $n$ in $N_{K_e}(K_y)/K_y$.

We now define the universal object in this category. We say that $\hat{\rho}: \horbk \to \orbk \in \Cov_K(\orbk)$ is a \emph{universal} $K$-equivariant cover of $\orbk$ if it is \emph{almost} an initial object in $\Cov_K(\orbk)$, i.e., for any $\tilde{\rho}: \torbk \to \orbk \in \Cov_K(\orbk)$, there exists at least one morphism $\varphi$ from $\hat{\rho}$ to $\tilde{\rho}$ in $\Cov_K(\orbk)$, but this morphism is not necessarily unique\footnote{An alternative standard approach is to define a similar category consisting of covers $\torbk$ with a fixed base point $y \in \torbk$. Then a universal cover is indeed an initial object in this category in the usual sense. We prefer a base-free definition.}.

In concrete terms, $\hat{\rho}: \horbk \to \orbk$ is a universal $K$-equivariant cover if and only if the $K$-stabilizer $K_z$ of some (and hence any) point $z \in \horbk$ is equal to $K_e^\circ$, where $e = \hat{\rho}(z)$. Such universal covers always exist and are isomorphic to one another (but the isomorphism is not necessarily unique). In the most concrete terms, they can be identified with $K/K_e^\circ$ ($K$-equivariantly).

Similarly, given any $\tilde{\rho}: \torbk \to \orbk \in \Cov(\orbk)$, we can define a universal cover $\varphi: \horbk \to \torbk$ of $\torbk$ as well. In this case, $\hat{\rho}=\tilde{\rho} \circ \varphi: \horbk \to \orbk$ is also a $K$-equivariant universal cover of $\orbk$.

The Galois groups of universal covers can be described in component-group terms. Note that for the universal cover $\hat{\rho}: \horbk \to \orbk$, its Galois group $\Gamma_{\hat{\rho}}$ can be naturally identified with the $K$-component group $Z_K = K_e/K_e^\circ$ by the discussion above. For a general cover $\tilde{\rho}: \torbk \to \orbk$, we can similarly identify the Galois group $\Gamma_{\varphi}$ of the universal cover $\varphi: \horbk \to \torbk$ of $\torbk$ with its component group $Z_K(\tilde{\rho}) := \pi_0(K_y) = K_y/K_y^\circ = K_y/K_e^\circ$. We have a natural injective homomorphism $Z_K(\tilde{\rho}) \simeq \Gamma_{\varphi} \hookrightarrow \Gamma_{\hat{\rho}}\simeq Z_K$, which can be identified with the obvious inclusion map $K_y/K_e^\circ \hookrightarrow K_e/K_e^\circ$. We have canonical identifications   
\begin{equation} \label{eq:Galois_normalizer}
    \Gamma_{\tilde{\rho}} \simeq N_{K_e}(K_y)/K_y \simeq N_{Z_K}(Z_K(\tilde{\rho})) / Z_K(\tilde{\rho}).
\end{equation}

Note that for a $K$-equivariant cover $\tilde{\rho}: \torbk \to \orbk$, the degree $\deg \tilde{\rho}$ of the finite morphism $\tilde{\rho}$ is well-defined since $K$ acts transitively on $\orbk$ and $\torbk$. We have $\deg \tilde{\rho} = |K_e/K_y|$ for any choice of $e \in \orbk$ and $y \in \tilde{\rho}^{-1}(e)$. 

The action of $\Gamma_{\tilde{\rho}}$ on $\torbk$ induces a morphism 
	\[ (\pr_2, \alpha): \Gamma_{\tilde{\rho}} \times \torbk \to \torbk \times \torbk, \]
where $\pr_2: \Gamma_{\tilde{\rho}} \times \torbk \to \torbk$ denotes the projection morphism to the second factor $\torbk$, and $\alpha: \Gamma_{\tilde{\rho}} \times \torbk \to \torbk$ denotes the action of $\Gamma_{\tilde{\rho}}$ on $\torbk$.

Finally, we recall the standard Galois condition for these possibly disconnected covers. Even though we are dealing with disconnected orbits and covers, the usual notion of Galois coverings can be extended to this case without difficulty, as reflected in the following definition/proposition. The proof is standard, so we omit it. 

\begin{defi} \label{defn:Galois} 
We say that a $K$-equivariant cover $\tilde{\rho}: \torbk \to \orbk$ is a {\it Galois cover} if the following equivalent conditions are satisfied:
\begin{enumerate}[label=(\roman*), itemindent=*, leftmargin=*]
    \item $|\Gamma_{\tilde{\rho}}|$ is equal to the degree of $\tilde{\rho}: \torbk \to \orbk$.
	\item The induced morphism $\Gamma_{\tilde{\rho}} \times \torbk \to \torbk \times \torbk$ is an isomorphism.
	\item The induced morphism from the quotient $\torbk/\Gamma_{\tilde{\rho}}$ of $\torbk$ by $\Gamma_{\tilde{\rho}}$ to $\orbk$ is a ($K$-equivariant) isomorphism.
    \item For one (and hence any) choice of $e \in \orbk$ and $y \in \tilde{\rho}^{-1}(e)$, $K_y$ is a normal subgroup of $K_e$, or equivalently, $Z_K(\tilde{\rho})$ is a normal subgroup of $Z_K$. In this case, $\Gamma_{\tilde{\rho}} \simeq K_e/K_y \simeq Z_K / Z_K(\tilde{\rho})$.
\end{enumerate}
\end{defi}

\subsection{\texorpdfstring{$\cm$}{C\textasciicircum{}*}-actions on nilpotent orbit covers} \label{subsec:orbit_cm}
Note that any nilpotent $G$-orbit $\orb$ in $\g$ and any nilpotent $K$-orbit $\orbk$ in $\fs$ are preserved by the dilation $\cm$-action on $\g$ and $\fs$ respectively. By \cite[Lemma 1.3]{Brylinski-Kostant}, the square of the dilation $\cm$-action lifts uniquely to any $G$-equivariant cover $\widetilde{\orb}$ of $\orb$. The same holds for any $K$-equivariant cover $\torbk$ of $\orbk$. We call such a $\cm$-action the \emph{Kostant-Brylinski $\cm$-action}. 

We briefly recall the construction from \cite[Section 2.4]{LY}. Fix $e \in \orbk$ and complete it into a normal $\slf_2$-triple $(e,h,f)$. Let $C_h := \exp(\C h)$ be the connected subgroup of $K$ generated by $h$. Then there is a well-defined surjective homomorphism of groups
\begin{equation}\label{eq:cm2C}
	s: \cm \to C_h, \quad \exp t \mapsto \exp(t h).
\end{equation}
The subgroup $C_h$ normalizes $K_e$ and $K_e^\circ$, hence also acts continuously on the discrete finite group $Z_K = K_e/K_e^\circ$ by conjugation. Since $C_h$ is connected, the $C_h$-action on $Z_K$ is trivial.

Let $y$ be any point in the preimage of $e$ in $\torbk$, so that $\torbk \simeq K/K_y$. The discussion above implies that $C_h$ normalizes both $K_e$ and $K_y$, and therefore there is a well-defined left action of $\cm$ on $\torbk \simeq K/K_y$ via \eqref{eq:cm2C} and \eqref{eq:Aut_K}, by right multiplication through $s$.
Since $[h,e]=2e$, the covering morphism $\tilde{\rho}: \torbk \to \orbk$ is $\cm$-equivariant, where we take the square of the dilation $\cm$-action on $\orbk$. This $\cm$-action on $\torbk$ is independent of the choice of $e$ and $y$, and is the unique $\cm$-action on $\torbk$ making $\tilde{\rho}$ $\cm$-equivariant. Therefore the $\cm$-action commutes with the $K$-action and with any automorphism $\sigma \in \Gamma_{\tilde{\rho}}$ of the covering $\tilde{\rho}$.

\subsection{Nilpotent orbits in classical Lie algebras}\label{subsec:nil_classical}


It is well-known that nilpotent orbits in classical Lie algebras are classified by certain partitions, which we briefly recall below. We refer to \cite[Section 5.1]{CM} for further details. Nilpotent orbits in $\slf_n$ are in one-to-one correspondence with the set of partitions of $n$ (\cite[Proposition 3.1.7 \& Theorem 5.1.1]{CM}). Next we recall the classification of nilpotent orbits in types $B$, $C$, and $D$ (\cite[Theorem 5.1.2--5.1.6]{CM}) in terms of partitions with constraints.

For a partition $\lambda$ of $N$, we always write its parts in a non-increasing sequence 
	\[ \lambda = [\lambda_1 \geqslant \lambda_2 \geqslant \cdots \geqslant \lambda_k \geqslant 0]. \]
For convenience, we sometimes allow $0$ as a part of $\lambda$.
For a part $a$ of $\lambda$, let $m(a) = m_\lambda(a)$ denote the multiplicity of $a$ in $\lambda$ and $h(a) = h_\lambda(a)$ denote the height of $a$ in $\lambda$, i.e., $h(a)=\sum_{b\geqslant a} m(b)$. We make the convention that $m(0) = 0$. Let $\#\lambda$ denote the number of nonzero parts of $\lambda$.

Let $V$ be a vector space of dimension $N$ endowed with a non-degenerate bilinear form satisfying $\langle w,v\rangle = (-1)^\epsilon \langle v,w\rangle$, where $\epsilon\in \{ 0,1\}$.
Let $G = G_\epsilon(V)$ be the stabilizer of this form and let $\g=\Lie(G)=\g_\epsilon(V)$, also denoted $\g_\epsilon(N)$, denote its Lie algebra.
(Note that $G$ is disconnected if $\epsilon=0$.)

An $\epsilon$-partition of $N$ is a partition $\lambda$ of $N$ such that $m(a)$ is even for all $a\equiv \epsilon\bmod 2$. Let $\Pe(N)$ denote the set of all $\epsilon$-partitions of $N$.
It is well-known that the nilpotent $G$-orbits in $\g$ are in one-to-one correspondence with the $\epsilon$-partitions of $N$. In all classical types, nilpotent $G$-orbits are the same as $G^\circ$-orbits, except that those $G$-orbits in type $D$ corresponding to partitions with all even parts (called the \emph{very even partitions/orbits}) split into two distinct orbits of $G^\circ = \SO(2n)$ that are interchanged by the adjoint action of $G=\mathrm{O}(2n)$ on $\g$. Thanks to \cref{rem:dist_very_even}, we will never encounter the very even orbits in our main results involving partitions of type $D$ (e.g., \Cref{prop:adm_Spin_pq} and \cref{prop:adm_SO*(2n)}). Other than this exception, we will write $\orb_\lambda$ for the nilpotent $G^\circ$-orbit corresponding to the $\epsilon$-partition $\lambda$.

\subsection{Component groups for complex groups} \label{subsec:component_groups}

The list of component groups of nilpotent orbits for adjoint and simply connected exceptional groups can be found in \cite{Alexeevski} (see also \cite[Section 8.4]{CM}). In this subsection, we shall describe the component groups and their canonical quotients for classical groups in terms of partitions.

As in \cref{subsec:nil_classical}, let $G = G_\epsilon(V)$, where $\epsilon\in \{ 0,1\}$ and $V$ is a vector space of dimension $N$ endowed with a non-degenerate bilinear form satisfying $\langle w,v\rangle = (-1)^\epsilon \langle v,w\rangle$. Suppose $\orb$ is a nilpotent orbit in $\fg=\fg_\epsilon(V)$.
Let $A(\orb)$ and $A^+(\orb)$ denote the component groups of the centralizer of $e\in\orb$ in $G^\circ$ and (possibly disconnected) $G$ respectively. These can be computed in terms of the partition $\lambda$ as follows. Let $\nu_1>\ldots > \nu_l$ be the parts of $\lambda$ which are not congruent to $\epsilon$ modulo 2. When $\epsilon = 1$, we make the convention that $\nu_l=0$; when $\epsilon = 0$, we have $\nu_i>0$ for all $i$.
Let $e \in\orb$ and complete it into an $\slf_2$-triple $(e,h,f)$. Then the reductive centralizer $Q \subset G_e$ in $G$ is always a product of orthogonal and symplectic groups. More precisely, a part $a$ of $\lambda$ such that $a \not\equiv \epsilon \bmod 2$ (resp. $a \equiv \epsilon\bmod 2$) contributes a factor ${\rm O}_{m(a)}$ (resp. $\Sp_{m(a)}$) to $Q$.

The symplectic factors are connected, hence only the orthogonal groups ${\rm O}_{m(\nu_i)}$ contribute to the component group $A^+(\orb)$ in $G$. Note that when $\epsilon = 1$, we follow the convention that $m(0) = 0$ and regard ${\rm O}_{m(\nu_l)} = {\rm O}_{0}$ as the trivial group, so it does not contribute to the component group. We denote the image in $A^+(\orb)$ of the non-trivial element of the component group of ${\rm O}_{m(\nu_i)}$ by $x_{\nu_i}$ and regard $x_{0}$ as the trivial element. Therefore $A^+(\orb)$ is isomorphic to $\fS_2^l$ when $\epsilon = 0$ and $\fS_2^{l-1}$ when $\epsilon = 1$, or uniformly, $A^+(\orb) \simeq \fS_2^{l-\epsilon}$.
In both orthogonal and symplectic cases, $A(\orb)$ is the subgroup of $A^+(\orb)$ generated by the elements $x_{\nu_i}x_{\nu_{i+1}}, 1\leqslant i \leqslant l-1$, and hence $A(\orb) \simeq \fS_2^{l-1}$.


\subsection{Slodowy slices} \label{subsec:Slodowy}

Let $e'$ be a nilpotent element in $\g$, and let $\orb' := G \cdot e'$ be the associated nilpotent orbit in $\g$. Complete $e'$ into an $\slf_2$-triple $(e',h',f')$. Let $Q' = Z_G(e',h',f')$. Recall that the \emph{Slodowy slice} (associated to this triple) is the affine subspace of $\g$ defined by
	\[ \cS = \cS_{e'} = e' + \mathfrak{z}_{\g}(f') \subset \g,\] 
where $\mathfrak{z}_{\g}(f')$ denotes the $f'$-centralizer of $\g$.
Suppose $\orb$ is a nilpotent orbit in $\g$ such that $\orb' \preceq \orb$. Let $\cS_\orb := \cS \cap \overline{\orb}$. Then $\cS_\orb$ is a transversal slice to $\orb'$ in $\overline{\orb}$ at $e'$. Both $\cS$ and $\cS_\orb$ are stable under the adjoint $Q'$-action.

Recall that the \emph{Kazhdan $\cm$-action} on $\fg$ is defined by 
\begin{equation}\label{eq:Kazhdan}
	z \cdot \xi = z^{-2} \Ad(\gamma(z))(\xi), \qquad z \in \cm, \ \xi \in \fg, 
\end{equation}
where $\gamma: \cm \to G$ is the cocharacter associated to the semisimple element $h'$. This action commutes with the adjoint $Q'$-action on $\g$, stabilizes the Slodowy slice $\cS$ and the slice $\cS_\orb$, and contracts them onto $e'$. 

Suppose $\rho: \widetilde{\orb} \to \orb$ is a $G$-equivariant connected finite covering of $\orb$. Let $X = \spec(\C[\widetilde{\orb}])$. Then $\rho$ induces a $G$-equivariant finite surjective morphism $\rho: X \to \overline{\orb}$. Let $\check{X}$ be any connected component of $\rho^{-1}(\cS_\orb)$. This is a transverse slice to the preimage of $\orb'$ in $X$.  
By the proof of \cite[Proposition 5.12]{MBMY}, the Kazhdan action lifts uniquely to a $\cm$-action on $X$ and again commutes with the $G$-action. This lifted action stabilizes $\check{X}$ and contracts it to a unique point $\tilde{e}' \in X$ such that $\rho(\tilde{e}') = e'$. This makes $\check{X}$ a conical symplectic singularity.

By the representation theory of $\slf_2 \simeq \mathrm{span}\{h',e',f'\}$, we have a $Q'$-invariant direct sum decomposition 
\begin{equation} \label{eq:zf_decomp}
	\mathfrak{z}_{\g}(f') = \bigoplus_{k \leqslant 0} \mathfrak{z}_{\g}(f')_k,
\end{equation}
where
\[
\mathfrak{z}_{\g}(f')_k = \{ x \in \mathfrak{z}_{\g}(f') : [h',x] = kx\}.
\] 
Let $\pr_0: \cS_\orb \to \mathfrak{q}'$ be the restriction of the 
$Q' \times \cm$-equivariant linear projection of $\cS = e' + \mathfrak{z}_{\g}(f')$ onto $\mathfrak{q}' = \mathfrak{z}_{\g}(f')_0$. Then 
	\[\phi := \pr_0 \circ \, \rho\,|_{\check{X}} : \check{X} \to \q' \simeq (\q')^*\] 
is a moment map for the Hamiltonian $Q'^\circ$-action on $\check{X}$. The $Q'^\circ$-action on $\check{X}$ induces a group homomorphism $\Phi: Q'^\circ \twoheadrightarrow \aut_{\cm}(\check{X})^\circ$. 

Finally, note that if $e' \in \fs$ and the $\slf_2$-triple $(e',h',f')$ is normal, then the decomposition \eqref{eq:zf_decomp} is $\theta$-stable and hence $(-\theta)$-stable.

\subsection{Associated varieties and cycles} \label{subsec:AC}

In this subsection, we recall the basic definitions of the associated varieties and cycles of HC $(\g,K)$-modules and set up notation. We refer the reader to \cite{Vogan_AV} and \cite[\S\,5.2]{DMB-Hodge} for details.


Let $\MgK$ denote the abelian category of all $(\g,K)$-modules of finite length.
For any $M \in \MgK$, we can pick a good $K$-stable filtration $F_\bullet M$ on $M$ and consider the associated graded $\gr^F M$, which is a finitely generated $(S\g,K)$-module, where $S\g$ is the symmetric algebra generated by $\g$. Thus $\gr^F M$ gives rise to a $K$-equivariant coherent sheaf on $\fk^\perp = (\g/\fk)^*$. We define the \emph{associated variety} $\AV(M)$ of $M$ to be the set-theoretic support $\supp(\gr^F M)$ of the coherent sheaf associated with $\gr^F M$ in $\fk^\perp$, which is the same as the $K \times \cm$-invariant closed subvariety of $\fk^\perp$ defined by the radical of the annihilator $\Ann_{S(\g/\fk)}(\gr^F M)$ of $\gr^F M$ in $S(\g/\fk)$ (\cite[pp. 25--26]{Matsumura:comm_ring}). The variety $\AV(M)$ is independent of the choice of the good filtration and is known to be a $K$-stable closed subvariety in $\cN_\theta$, hence a union of finitely many nilpotent $K$-orbits in $\fk^\perp$. 

Recall that for a two-sided ideal $I$ in $\Ug$, one can similarly define the notion of the \emph{associated variety} $\cV(I)$ of $I$ in $\fg^*$ as follows: let $F_\bullet \Ug$ denote the usual degree filtration of $\Ug$. Then, by the Poincar\'{e}-Birkhoff-Witt theorem, there is a natural isomorphism of $G$-equivariant graded algebras $\gr^F \Ug \xrightarrow{\sim} S(\fg)$, via which $\gr(I) := \gr^F(I)$ can be identified with a $G$-invariant homogeneous ideal in $S(\fg)$. Then the \emph{associated variety} of $I$ is the Zariski closed $G \times \cm$-invariant subvariety $\cV(I) := V(\gr(I)) \subset \fg^*$ corresponding to $\gr(I)$. 

Recall that we have the Harish-Chandra isomorphism $\Zg \simeq (S\fh)^W$, via which any $\lambda \in \fh^*$ determines a character $\chi_\lambda$ of $\Zg$. An ideal $I \subset \Ug$ is said to have \emph{infinitesimal character} (resp., \emph{generalized infinitesimal character}) $\chi_\lambda$ if $I \cap \Zg$ contains the maximal ideal $\ker \chi_\lambda$ (resp., some power of $\ker \chi_\lambda$). In this case, the associated variety $\cV(I)$ is always contained in the nilpotent cone $\cN^* \subset \fg^*$. An ideal $I \subset \Ug$ is \emph{primitive} if it is the annihilator ideal $\Ann(M) = \Ann_{\Ug}(M)$ of some irreducible $\Ug$-module $M$. By Schur's lemma, every primitive ideal $I$ has an infinitesimal character. In particular, its associated variety $\cV(I)$ is a closed $G \times \cm$-invariant subset of the nilpotent cone $\cN^*$. By \cite{Joseph:associated_variety}, there is a unique open dense nilpotent $G$-orbit $\orb$ in $\cV(I)$. Consequently, $\cV(I) = \overline{\orb}$ and hence is irreducible. 

The following fundamental fact about associated varieties was proved in \cite[Theorem 8.4]{Vogan_AV}. 

\begin{Thm} \label{thm:V_AV}
	Let $M$ be an irreducible $(\fg,K)$-module and set $I=\Ann(M)$, which is a primitive ideal in $U(\fg)$. Let $\orb$ be the open dense $G$-orbit in $\cV(I)$. Then the maximal $K$-orbits in $\AV(M)$ all lie in $\orb_{\fk^\perp}$. In particular,
	$$\dim \AV(M) = \frac{1}{2} \dim \orb.$$
\end{Thm}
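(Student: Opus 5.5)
The plan follows Vogan's original argument via the characteristic cycle and the Lagrangian/isotropic nature of associated varieties. Since $M$ is irreducible, $I=\Ann(M)$ is primitive with infinitesimal character. First, we compare $\gr(I)$ with $\Ann_{S\g}(\gr^F M)$. Choose a good $K$-stable filtration $F_\bullet M$ compatible with $F_\bullet\Ug$. Then $\gr(I)$ annihilates $\gr^F M$, so $\AV(M)\subset \cV(I)\cap\fk^\perp=\overline{\orb}\cap\fk^\perp$. Conversely, the annihilator of $\gr^F M$ in $S\g$ has radical containing $\sqrt{\gr I}$. One shows $\cV(I)$ equals the $G$-saturation of $\AV(M)$ up to closure. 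We do this by realizing $M$ as a $\Ug/I$-module: $\Ug/I$ embeds $G$-equivariantly into $\operatorname{End}_\C(M)$, and the symbol map shows that $V(\gr I)$ is contained in the closure of $G\cdot \AV(M)$. Hence
\[
\overline{\orb}=\overline{G\cdot\AV(M)} .
\]
Since $\AV(M)$ is a finite union of $K$-orbits, some $K$-orbit $\orbk\subset\AV(M)$ satisfies $G\cdot\orbk=\orb$. Any $K$-orbit in $\AV(M)$ lying in a smaller $G$-orbit cannot have larger dimension than the relevant ones; this is checked in the next step.

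The key step is the dimension equality. By Kostant--Rallis, every nilpotent $K$-orbit $\orbk'\subset \orb'_{\fk^\perp}$ satisfies $\dim\orbk'=\tfrac12\dim\orb'$. Indeed, $\orb'_{\fk^\perp}=(\orb')^{\sigma}$ is a smooth Lagrangian subvariety of $\orb'$ (\cite[Corollary 5.20]{Vogan_AV}), and each $K$-orbit in it is open, as a tangent space computation with the normal $\slf_2$-triple shows. Hence any $K$-orbit in $\AV(M)$ lying in $G$-orbits $\orb'\subsetneq\overline{\orb}$ has dimension $\tfrac12\dim\orb'<\tfrac12\dim\orb$. Such orbits are therefore not maximal, since the orbits in $\orb_{\fk^\perp}$ have strictly larger dimension. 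It remains to exclude the possibility that a maximal $K$-orbit lies in a smaller $G$-orbit without lying in the closure of one in $\orb_{\fk^\perp}$. For this we need the result that $\AV(M)$ is equidimensional, or at least that all its irreducible components have dimension $\tfrac12\dim\orb$. This follows from Gabber's involutivity theorem: the characteristic variety of $\gr^F M$, viewed in $\g^*$, is coisotropic in the appropriate sense in $\overline{\orb}$. Combined with isotropy, since it sits inside the $\sigma$-fixed locus, which is Lagrangian in each $G$-orbit, every component $C$ has $\dim C\ge \tfrac12\dim$ of the $G$-orbit it generically meets. The dimension bound then forces every component to generically meet $\orb$.

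The main obstacle is this last step: applying Gabber's integrability of characteristics, which needs the Poisson structure on $\overline{\orb}$ and care with the fact that $\gr^F M$ is a module over $S(\g/\fk)$ rather than over $S\g/\gr I$. I would first try to reduce to Joseph's result on the unique open orbit, together with involutivity applied to $\gr^F M$ as an $S\g$-module supported in $\fk^\perp\cap\overline{\orb}$. The final formula $\dim\AV(M)=\tfrac12\dim\orb$ is then immediate from the Lagrangian property.
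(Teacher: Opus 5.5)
Your skeleton (saturation $\overline{G\cdot\AV(M)}=\cV(I)$, then Kostant--Rallis, then equidimensionality of $\AV(M)$) is the right one. However, the equidimensionality step does not follow from what you invoke. For an irreducible component $C$ of $\AV(M)$, let $\orb_C$ be the $G$-orbit meeting $C$ densely. Involutivity only gives $\dim C\ge\frac12\dim\orb_C$. Combined with isotropy this yields $\dim C=\frac12\dim\orb_C$, which is just Kostant--Rallis and holds for \emph{every} $K$-orbit closure in $\cN_\theta$. Indeed, each $\overline{\orbk'}$ is itself involutive in $\fg^*$, because it is generically Lagrangian in its $G$-orbit. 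So involutivity cannot distinguish components over $\orb$ from components over smaller orbits, and ``the dimension bound then forces every component to generically meet $\orb$'' is a non sequitur. Likewise, your earlier remark that lower-dimensional $K$-orbits are ``therefore not maximal'' confuses dimension with maximality in the closure order.

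A sanity check makes the gap concrete. Involutivity, Kostant--Rallis and the saturation identity all hold for every finitely generated $(\fg,K)$-module, yet the conclusion fails for reducible ones. For $\mathrm{SU}(2,2)$, write $\fs=\mathfrak{p}^+\oplus\mathfrak{p}^-$ and take the direct sum of an antiholomorphic discrete series (associated variety one summand, say $\mathfrak{p}^-$) with a holomorphic ladder representation (associated variety the rank $\le 1$ locus in $\mathfrak{p}^+$). Then $\cV(\Ann)$ is the closure of the orbit $[2,2]$, but one component of the associated variety lies over the minimal orbit $[2,1,1]$. So irreducibility of $M$ must be used, and your argument never uses it.

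The missing input is Gabber's \emph{equidimensionality} theorem, not the integrability theorem. $\Ug$ with its degree filtration is Auslander regular with $\gr\Ug=S\fg$ regular, an irreducible module is pure, and the characteristic variety of a pure module is equidimensional. With this, every component of $\AV(M)$ has the dimension $\frac12\dim\orb$ of a component meeting $\orb$. Kostant--Rallis then gives $\dim\orb_C=\dim\orb$, and $\orb_C\subset\overline{\orb}$ forces $\orb_C=\orb$.

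Separately, your saturation step $\cV(I)\subset\overline{G\cdot\AV(M)}$ is asserted rather than proved. The embedding $\Ug/I\hookrightarrow\operatorname{End}_\C(M)$ says nothing about symbols by itself: elements outside $I$ (for instance elements of $\fk$) have symbols annihilating $\gr^F M$. Hence ``the symbol map shows'' hides the whole difficulty. This inclusion is exactly what pins the maximal orbits to the dense orbit of $\cV(I)$ rather than some smaller one, so it needs a genuine argument. The paper does not supply one either; it simply cites \cite[Theorem 8.4]{Vogan_AV}.
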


Next, we define the notion of the associated cycle of an HC $(\g,K)$-module, which is a refinement of the associated variety. Let us first set up some notation. Let $\cC(\g,K)$ denote the abelian category of all finitely generated $(S(\g/\fk),K)$-modules $N$ (i.e., $N$ is a finitely generated $S(\g/\fk)$-module with a compatible locally finite $K$-action, see \cite[(2.1)(c)]{Vogan_AV}), or equivalently, the category of all coherent $K$-equivariant sheaves on $\fk^\perp$, with the additional condition that $\supp(N) \subset \cN_\theta$. 

The module $\gr^F M$ determines a class $[\gr^F M]$ in $\cK_{\geqslant 0}\cC(\g,K)$. By \cite[Proposition 2.2]{Vogan_AV}, this class is independent of the choice of the good filtration. Therefore the assignment $M \mapsto \gr^F M$ gives rise to a well-defined group homomorphism 
  \[ \gr: \cK\MgK\rightarrow \cK\cC(\g,K), \quad [M] \mapsto [\gr^F M], \] 
and restricts to a monoid homomorphism $\gr: \cK_{\geqslant 0}\MgK\rightarrow \cK_{\geqslant 0}\cC(\g,K)$.

Let $\orbk^1, \ldots, \orbk^m$ be the distinct nilpotent $K$-orbits in $\cN_\theta$. For each $i$, let $\Vect^K(\orbk^i)$ denote the exact category of $K$-equivariant algebraic vector bundles on $\orbk^i$. Recall from \cite[\S\,5.2]{DMB-Hodge} that an \emph{associated cycle} for $(\g,K)$ is a formal (finite) sum 
	\[\sum_{i=1}^n [\cV_i] \cdot \orbk^i, \quad [\cV_i] \in \cK_{\geqslant 0}\Vect^K(\orbk^i), \quad \forall\, 1 \leqslant i \leqslant m, \] 
satisfying the following condition: 
	\[\text{If } [\cV_i] \neq 0 \text{ for some } i, \text{ then } \,[\cV_j] = 0, \,\forall\, j \neq i, \text{ such that } \orbk^j \subset \partial \orbk^i = \overline{\orb}_{K}^i \backslash \orbk^i. \]
In other words, if we denote by $\{\orbk^{i_1}, \ldots, \orbk^{i_p}\}$ the subset of $\{ \orbk^1, \ldots ,\orbk^m\}$ consisting of all $K$-orbits $\orbk^i$ with $[\cV_i] \neq 0$, then $\orbk^{i_j}$ is not contained in $\overline{\orb}_{K}^{i_k}$ for all $k \neq j$.

Let $\AC(\g,K)$ denote the set of all associated cycles for $(\g,K)$. Then $\AC(\g,K)$ is an abelian monoid with the addition defined by
\[
	\left(\sum_{i=1}^n [\cV_i] \cdot \orbk^i \right) + \left(\sum_{i=1}^n [\cV'_i] \cdot \orbk^i \right) := \sum_{i=1}^n [\cW_i] \cdot \orbk^i,\]
where $[\cW_i]$ is defined as follows: if there is a $j$ such that $\orbk^i \subseteq \partial \orbk^j$ and $[\cV_j] + [\cV'_j] \neq 0$, then we define $[\cW_i]=0$; otherwise, we define $[\cW_i] = [\cV_i] + [\cV'_i]$.

Let $N \in \cC(\g,K)$ and let $\orbk^{(1)},\ldots ,\orbk^{(q)}$ be the (distinct) maximal $K$-orbits in $\supp(N)$ (with respect to the closure relation). 
By \cite[Lemma 2.9]{Vogan_AV}, there exists a finite filtration $0=N_0 \subset N_1 \subset \cdots \subset N_n$ of $N$ by finitely generated $(S(\g/\fk),K)$-submodules, such that each subquotient $N_k/N_{k-1}$ is generically reduced along each irreducible component $\overline{\orb}_K^{(j)}$ of $\supp(N)$, $1 \leqslant j \leqslant q$. Therefore the pullback $\cV_j^k:=N_k/N_{k-1}|_{\orbk^{(j)}}$ of $N_k/N_{k-1}$ to each $\orbk^{(j)}$, as a sheaf of $\C$-vector spaces, is naturally a $K$-equivariant coherent sheaf of $\OO_{\orbk^{(j)}}$-modules, and hence a $K$-equivariant algebraic vector bundle on $\orbk^{(j)}$.
We then define
	\[ \mathcal{AC}(N) :=  \sum_{j=1}^{q} \left( \sum_{k=1}^{n} [\cV_j^k] \right) \cdot \orbk^{(j)} \in \AC(\g,K). \]
By \cite[Theorem 2.13]{Vogan_AV}, $\mathcal{AC}(N)$ is independent of the choice of the filtration of $N$, and we therefore obtain a well-defined monoid homomorphism 
  \[ \mathcal{AC}: \cK_{\geqslant 0}\cC(\g,K) \rightarrow \AC(\g,K), \quad [N] \mapsto \mathcal{AC}(N).\]
Finally, we can define the monoid homomorphism
\[ \AC := \mathcal{AC} \circ \gr : \cK_{\geqslant 0}\MgK \rightarrow \AC(\g,K).\]
We will simply write $\AC(M) = \AC([M])$ for any $M \in \MgK$, and call it the \emph{associated cycle} of $M$. Clearly, if $\AC(M) = \sum_{j=1}^q [\cV_j] \cdot \orbk^{(j)}$ with $[\cV_j] \neq 0$, then $\AV(M) = \bigcup_{j=1}^q \overline{\orb}_K^{(j)}$.

The map $\AC$ is compatible with pullback of HC modules as follows: assume we have a morphism $\phi: (\g,K) \to (\g,K')$ of symmetric pairs as in the last part of \cref{subsec:basic_settings}. Then we have the natural functors defined by pullback via
	\[ \phi^*: \cC(\g,K') \to \cC(\g,K) \quad \text{and} \quad \phi^*: \cM_f(\g,K') \to \MgK.\]
These functors are exact and hence induce homomorphisms between the corresponding Grothendieck groups and submonoids, as well as the monoids $\AC(\g, \bullet)$, which are still denoted as $\phi^*$. When $\phi: K \to K'$ is injective, so that $K$ can be identified with a closed subgroup of $K'$ of finite index, pullback means restriction of modules, and hence we also denote $\phi^*$ as $\res_{K}^{K'}$.

The following lemma is standard.

\begin{Lem} \label{lem:AC_pullback}
	We have $\phi^* \circ \AC = \AC \circ \,\phi^*$, i.e., the following diagram commutes:
	\[
		\begin{tikzcd}
			\cK_{\geqslant 0}\cM_f(\g,K') \arrow[r, "\AC"] \arrow[d, "\phi^*"] & \AC(\g,K') \arrow[d, "\phi^*"] \\
			\cK_{\geqslant 0}\MgK \arrow[r, "\AC"] & \AC(\g,K).
		\end{tikzcd}
	\]
\end{Lem}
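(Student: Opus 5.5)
The plan is to check the identity $\phi^*(\AC(M)) = \AC(\phi^*M)$ one step at a time along the factorization $\AC = \mathcal{AC}\circ\gr$. By additivity it suffices to take $M \in \MgK'$ a single module. The first step is to show that pullback commutes with taking associated graded. Let $F_\bullet M$ be a good $K'$-stable filtration. Since $\phi:K\to K'$ intertwines the adjoint actions and is the identity on $\g$, the same filtration is $K$-stable on $\phi^*M$. It is still good, because goodness only involves the $\Ug$-module structure, and this does not change. Hence $\gr^F(\phi^*M) = \phi^*(\gr^F M)$ as $(S(\g/\fk),K)$-modules. Here I use that $\fk=\Lie(K)=\Lie(K')$, since $\phi$ is finite onto a finite-index subgroup. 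So $\phi^*\circ\gr = \gr\circ\phi^*$ on classes.

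The second step is to compare $\mathcal{AC}$ with $\phi^*$ on $\cC(\g,K')$. Take $N\in\cC(\g,K')$ and a filtration $0=N_0\subset\cdots\subset N_n=N$ by $(S(\g/\fk),K')$-submodules, as in \cite[Lemma 2.9]{Vogan_AV}, whose subquotients are generically reduced along each maximal $K'$-orbit in $\supp N$. Each $N_k$ is also $K$-stable, so this is an admissible filtration of $\phi^*N$.

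The key geometric point is how orbits behave. Since $\phi(K)$ has finite index in $K'$ and $K^\circ\to K'^\circ$ is surjective, each $K'$-orbit $\orbk'$ in $\cN_\theta$ decomposes as a finite disjoint union of $K$-orbits. These $K$-orbits are open and closed in $\orbk'$ and all have the same dimension. Consequently:
\begin{enumerate}[label=(\alph*)]
\item the maximal $K$-orbits in $\supp N$ are exactly the $K$-orbits contained in the maximal $K'$-orbits;
\item generic reducedness along $\overline{\orb}_{K'}$ implies generic reducedness along the closure of each $K$-orbit component.
\end{enumerate}
The restriction of the $K'$-equivariant bundle $\cV^k_j$ on $\orbk'^{(j)}$ to each $K$-orbit piece is then precisely the bundle that appears in the definition of $\mathcal{AC}(\phi^*N)$.

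This matches the definition of $\phi^*$ on $\AC(\g,K')$. That map sends $[\cV]\cdot\orbk'$ to $\sum_{\orbk\subset\orbk'}[\phi^*\cV|_{\orbk}]\cdot\orbk$, and because of (a) it does not create any spurious boundary relations. By \cite[Theorem 2.13]{Vogan_AV}, $\mathcal{AC}(\phi^*N)$ is independent of the filtration, so $\mathcal{AC}\circ\phi^* = \phi^*\circ\mathcal{AC}$.

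Combining the two steps gives the lemma. The main care point is (a): I need the orbit decomposition to be compatible with the closure order, so that the condition "if $[\cV_i]\neq0$ then orbits in $\partial\orbk^i$ carry zero" is preserved. This follows because closures of $K$-orbits inside one $K'$-orbit are open-closed components of the same dimension, so no $K$-piece of a maximal $K'$-orbit lies in the boundary of another piece.

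When $\phi$ is merely surjective with finite kernel, the orbits coincide and the argument becomes trivial.
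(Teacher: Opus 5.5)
Your argument is correct and is the standard verification the paper has in mind; the paper states the lemma as standard and gives no proof. You reuse the good $K'$-stable filtration and the Vogan filtration for $\phi^*M$, and use that $K$-orbits inside a $K'$-orbit are open--closed of equal dimension, so that maximal orbits, generic reducedness, the bundles and the boundary condition in $\AC(\g,\cdot)$ all match. Your stated justification for (a) only covers two $K$-pieces of the same $K'$-orbit. The case of different $K'$-orbits also needs the fact that $\overline{\orb}_{K'}$ is $K'$-stable: a $K$-orbit in a maximal $K'$-orbit therefore cannot lie in the closure of a $K$-orbit coming from a different $K'$-orbit, and conversely every maximal $K$-orbit lies in a maximal $K'$-orbit. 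With that step spelled out, the proof is complete.
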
 

The following definition is standard and gives the case of interest for our application.

\begin{defi} \label{defn:irred_AC}
	An associated cycle in $\AC(\g,K)$ of the form $[\cV] \cdot \orbk$, where $\orbk$ is a single nilpotent $K$-orbit in $\cN_\theta$ and $\cV \in \Vect^K(\orbk)$ is an irreducible $K$-equivariant vector bundle over $\orbk$, is called an \emph{irreducible associated cycle}. Let $\IrrAC(\g,K) \subset \AC(\g,K)$ denote the subset consisting of all irreducible associated cycles. 
	If $\AC(M) \in \IrrAC(\g,K)$, we say that the $(\g,K)$-module $M$ has an irreducible associated cycle.
\end{defi}

\begin{Rem}
	When $K$ is connected, any $(\g,K)$-module with irreducible associated cycle also has irreducible associated variety as a variety. It is not true in general when $K$ is not connected.
\end{Rem}

\section{Geometry of special pieces}
\label{sec:geometry_special_pieces}

\subsection{Special orbits and duality} \label{subsec:special_duality}

Let $\g$ be a complex reductive Lie algebra with Weyl group $W$, and let $\ckfg$ be its Langlands dual Lie algebra with the same Weyl group $W$. Let $\ckfh$ be the abstract Cartan subalgebra of $\ckfg$, which is identified as the linear dual $\fh^*$ of the abstract Cartan subalgebra $\fh$ of $\g$. 

For any nilpotent orbit $\orb \in \cN_o$, let $\mathrm{Loc}(\orb)$ be the set of isomorphism classes of irreducible $G_{ad}$-equivariant local systems over $\orb$. The Springer correspondence is an injective map
\[ 
\spr_\g : \Irr(W) \hookrightarrow \{ (\orb, \rho) \,|\, \orb \in \cN_o, \, \rho \in \mathrm{Loc}(\orb) \}. 
\]
We also abbreviate $\spr_\g$ to $\spr$ when the relevant Lie algebra $\g$ is clear from the context.

Let $\Irr(W)^{sp} \subset \Irr(W)$ denote the set of isomorphism classes of {\it special representations} of $W$ in the sense of Lusztig (\cite{Lusztig:irred_Weyl_I,Lusztig:irred_Weyl_II}). By definition, a special representation of $W$ is an irreducible representation of $W$ such that its fake degree coincides with its generic degree. 
For any $\sigma \in \Irr(W)^{sp}$, it is known that $\spr_\g(\sigma)$ is of the special form $(\orb_\sigma, \mathbbm{1})$, where $\mathbbm{1}$ denotes the trivial local system (\cite{Lusztig:irred_Weyl_I}). We say that $\orb_\sigma$ is the \emph{special (nilpotent) orbit} attached to the special representation $\sigma$. Let $\cN_o^{sp}$ denote the set of all special nilpotent orbits in $\g^*$. 

Recall that we used the map in \Cref{subsec:main_results}, defined by Barbasch and Vogan \cite{BV85:unipotent},
	\[ d = d_{\fg}: \check{\cN}_o \to \cN_o, \quad \ckorb \mapsto \orb = d(\ckorb), \]
where $\orb$ is the unique dense open nilpotent orbit in the associated variety of the maximal primitive ideal $\J_{\ckorb}$ with infinitesimal character $\chi_{\lambda_{\ckorb}}$ given by $\lambda_{\ckorb} = \frac{1}{2} \check{h}$ for some $\slf_2$-triple $(\check{e}, \check{h}, \check{f})$ in $\ckfg$ with $\check{e} \in \ckorb$. We call $d$ the {\it Barbasch-Vogan (BV) duality map} and $\orb$ the {\it Barbasch-Vogan (BV) dual orbit} of $\ckorb$.
The map $d$ is order-reversing with respect to the closure order. It satisfies 
\begin{equation} \label{eq:d^3}
	d_{\fg} \circ d_{\ckfg} \circ d_{\fg} = d_{\fg}. 
\end{equation}

It turns out that the subset $\cN_o^{sp} \subset \cN_o$ is exactly the image of $d$. Moreover, the restriction of $d$ to $\check{\cN}_o^{sp}$, the set of special nilpotent orbits in $\ckfg^*$, is an order-reversing bijection
\begin{equation} \label{eq:duality_sp}
	d = d_{\fg}: \check{\cN}_o^{sp} \xrightarrow{\sim} \cN_o^{sp}. 
\end{equation}
By \eqref{eq:d^3}, $d$ is an involution, i.e., $d_{\ckfg} \circ d_{\fg}$ is the identity map on $\check{\cN}_o^{sp}$.
Lusztig and Spaltenstein \cite{Spaltenstein} also defined the same bijection between the sets of special nilpotent orbits using the Springer correspondence and theory of two-sided cells of $\Irr(W)$.

Besides duality between the usual Langlands dual Lie algebras, there is also an analogous duality in the setting of representation theory of metaplectic groups $\Mp(2n,\R)$, the nonlinear two-fold covering group of the real symplectic group $\Sp(2n,\R)$, studied in \cite{BMSZ:metaplecticBV} and also in \cite{JLS:Duality}. In this case, the complex Lie algebra on both sides is $\fsp(2n,\C)$, and we write $\ckfg = \fg = \fmp(2n,\C)$ to distinguish it from the case when $\ckfg = \fso(2n+1,\C)$ and $\fg = \fsp(2n,\C)$. 
We identify the (abstract) Cartan subalgebra $\ckfh$ of $\ckfg$ with the dual $\fh^*$ of the (abstract) Cartan subalgebra $\fh$ of $\fg$ via half the trace form on the Lie algebra $\fg = \fsp(2n,\C)$. Then we can follow the same recipe as above \eqref{eq:UnipOv_defn} to define $\lambda_{\ckorb}$, $\cJ_{\ckorb}$ and the {\it metaplectic Barbasch-Vogan (BV) duality map} (\cite{BMSZ:metaplecticBV}), also denoted by $d$ when the type is clear from the context, unless otherwise specified. 

There is also a related notion of {\it metaplectic special orbit}, which appeared earlier in \cite{Moeglin:p-adic} (where it is called anti-special), \cite{JiangLiuSavin}, \cite{BMSZ:metaplecticBV}, and \cite{JLS:Duality} (where it is called alternative special). Again metaplectic special orbits are exactly the image of the metaplectic BV duality map (see \cite{BMSZ:metaplecticBV} and \cite{JLS:Duality}). We will review the classfication of metaplectic special orbits in \cref{subsec:special_duality_classical}.

\begin{Rem}
    We remark that there is a generalization of the usual and metaplectic BV duality maps to the setting of nonlinear covering groups, see \cite{GLLS:covering_BV}. 
\end{Rem}

The closure diagrams of (special) nilpotent orbits and the duality map for exceptional Lie algebras can be found in \cite{Spaltenstein} and \cite{JLS:Duality}. 
For $\g=\mathfrak{sl}_n$, all nilpotent orbits are special and the duality map $d$ is given by transposing the partitions corresponding to the orbits.

\subsection{Special orbits and duality maps for classical Lie algebras} \label{subsec:special_duality_classical}

We now describe the (metaplectic) special nilpotent orbits and the duality map for both the classical Lie algebras and the metaplectic type. Let $X \in \{B, C, D, \mC\}$. We attach to \(X\) two parity parameters
\[
(\epsilon,\epsilon') = (\epsilon_X,\epsilon'_X) =
\begin{cases}
(0,1), & X=B;\\
(1,0), & X=C;\\
(0,0), & X=D;\\
(1,1), & X=\mC.
\end{cases}
\]
where \(\mC\) denotes the {\it metaplectic} (\cite{BMSZ:metaplecticBV}) (or {\it alternative} (\cite{JLS:Duality})) type \(C\) theory. This defines a bijection of sets
\begin{equation} \label{eq:pairs_types}
	\{B,C,D,\mC\} \xrightarrow{\sim} \Z_2 \times \Z_2, \quad X \mapsto (\epsilon_X,\epsilon'_X). 
\end{equation}
For a given type $X$, we will consider the partitions in $\cP_X(N) = \cP_{\epsilon_X}(N)$ and the corresponding nilpotent orbits $\orb := \orb_{\lambda}$ in $\g_{\epsilon_X}(N)$ for $\lambda \in \cP_X(N)$.
Note that \(X=\mC\) has the same underlying symplectic nilpotent orbits as type \(C\), but with the metaplectic canonical quotient group of the component group described below. When $(\epsilon,\epsilon')$ is given, we require $N$ to be compatible with $(\epsilon,\epsilon')=(\epsilon_X,\epsilon'_X)$ in the sense that $N \equiv \epsilon'(1-\epsilon) \bmod{2}$.

We make one additional convention:
when $\epsilon = 1$ (i.e., we are in type $C$ or $\mC$), we need $h(0) \equiv \epsilon' \bmod 2$. We take it to be the smallest integer that is congruent to $\epsilon'$ modulo $2$ and larger than $\#\lambda$. 

Later, we will often call metaplectic special orbits of type $\mC$ simply special orbits, when the type $X$ (or the corresponding $(\epsilon_X,\epsilon'_X)$) is clear from the context, so that we can state results in a unified way.

Let $X \in \{B, C, D, \mC\}$ and $(\epsilon, \epsilon') = (\epsilon_X,\epsilon'_X)$ as defined above. 
Then a partition $\lambda \in \cP_X(N) = \Pe(N)$ is said to be {\it special} if and only if 
	\[ h(a) \equiv \epsilon' \bmod 2 \quad \text{whenever } a \text{ is a part of } \lambda \text{ with } a \equiv \epsilon \bmod 2. \]
Let $\cP_X^{sp}(N) = \Pee(N)$ denote the set of all special partitions $\lambda \in \Pe(N)$. Let $\g = \g_\epsilon(V)$ be of type $X$. Then we have the following characterization of special nilpotent orbits in $\g$ (see \cite[Section 2.2]{JLS:Duality}).

\begin{Lem} \label{lem:special_classical}
	A nilpotent orbit $\orb = \orb_\lambda$ in $\g = \g_\epsilon(N)$ of type $B$, $C$, $D$, or $\mC$ is special if and only if $\lambda$ is special, equivalently $\lambda \in \Pee(N)$.
\end{Lem}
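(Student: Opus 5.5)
The plan is to reduce \Cref{lem:special_classical} to the standard description of special orbits via the Springer correspondence and Lusztig's symbols. For types $B$, $C$, $D$ the notion of special orbit is the one defined in \Cref{subsec:special_duality} through $\Irr(W)^{sp}$. For type $\mC$, special means being in the image of the metaplectic BV duality map. So the argument has to treat the linear types and the metaplectic type separately. In each case I will compare the intrinsic definition with the combinatorial parity condition $h(a)\equiv\epsilon' \bmod 2$ for all parts $a\equiv\epsilon \bmod 2$.

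For the linear types, I will proceed as follows.
\begin{enumerate}
\item Recall Shoji's and Lusztig's explicit Springer correspondence in types $B$, $C$, $D$. The representation $\sigma$ with $\spr(\sigma)=(\orb_\lambda,\mathbbm{1})$ is given by the symbol obtained from $\lambda$: add $0,1,2,\dots$ to the parts and then split the result by parity.
\item Recall Lusztig's criterion that $\sigma$ is special exactly when its symbol is special, i.e.\ its entries interlace.
\item Translate the interlacing condition back to $\lambda$. The interlacing fails precisely when some part $a$ of the wrong parity has height of the wrong parity. This gives the condition on $h(a)$: with $(\epsilon,\epsilon')=(0,1)$ one recovers the type $B$ condition that even parts have odd height; with $(1,0)$ the type $C$ condition that odd parts have even height; with $(0,0)$ the type $D$ condition that even parts have even height.
\end{enumerate}
This translation is the classical content of \cite[Section 6.3]{CM}, or equivalently \cite{Spaltenstein} and \cite[Section 2.2]{JLS:Duality}. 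I would largely cite it rather than reprove it, checking only that the paper's conventions match: $m(0)=0$, and $h(0)$ adjusted to have parity $\epsilon'$ when $\epsilon=1$. The $h(0)$ convention matters only in type $C$, where the zero part plays the role of an even part.

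For type $\mC$, I will use the characterization of the image of the metaplectic BV duality from \cite{BMSZ:metaplecticBV} and \cite{JLS:Duality}. A symplectic partition lies in that image exactly when every even part has odd height, which is the $(\epsilon,\epsilon')=(1,1)$ case. The convention $h(0)$ odd then handles the zero part uniformly. I would cite this directly and match conventions, since the paper already says metaplectic special orbits are exactly the image of the metaplectic duality.

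The main obstacle is bookkeeping rather than mathematics: the literature uses several equivalent formulations. One is in terms of the transpose partition, e.g.\ $\lambda^t$ being of type $C$ when $\lambda$ is of type $B$. Others are in terms of heights versus multiplicities, or require including $0$ among the parts. Before quoting a source I would verify on small examples, such as $[2,2,1]$ in $B_2$ and $[2,1,1]$ in $C_2$, that the height formulation with the stated convention for $h(0)$ reproduces the known lists of special orbits.
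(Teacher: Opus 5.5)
For types $B$, $C$, $D$ your plan is fine and is essentially what the paper does. The paper does not prove the lemma; it quotes it from \cite[Section 2.2]{JLS:Duality}. The three height conditions you list are the correct specializations of the definition.

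The type $\mC$ step, however, misreads the definition. With $(\epsilon,\epsilon')=(1,1)$, the condition constrains the parts $a\equiv\epsilon=1 \bmod 2$, i.e.\ the \emph{odd} parts. So $\lambda\in\cP_C(2n)$ lies in $\cP^{sp}_{1,1}(2n)$ if and only if every odd part of $\lambda$ has odd height. Since odd parts have even multiplicity, this is equivalent to $m_{>c}(\lambda)$ being odd for every odd part $c$, which is the form used in \Cref{prop:metaplectic_adm}. It is also equivalent to every even part of $\lambda^*$ having even multiplicity.

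Your criterion ``every even part has odd height'' is a different and false statement. Take $\fsp(2,\C)$. The two orbits $[2]$ and $[1,1]$ of $\ckfg=\fmp(2,\C)$ give infinitesimal characters $\tfrac12\rho_{\fg}$ and $0$. Neither is the infinitesimal character of a finite-dimensional module, so $d$ sends both to $[2]$, and the zero orbit $[1,1]$ is not metaplectic special. The correct criterion rejects $[1,1]$ because $h(1)=2$. Yours accepts it, since its only even part is the formal part $0$, whose height is odd by convention.

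The same slip appears in your remarks on $h(0)$. When $\epsilon=1$ the zero part is even, so it never enters the specialness condition, in type $C$ or in type $\mC$. The convention on $h(0)$ plays no role in this lemma. It is used only later: for $\tau_q=0$ in the description of $\Abar(\orb)$, and for $\nu_l=0$ in the definition of $J$.

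So when you match the metaplectic literature \cite{BMSZ:metaplecticBV,JLS:Duality} to the paper's definition, match it to the odd-parts condition. A sanity check on $\fsp(2,\C)$, parallel to your $B_2$ and $C_2$ checks, would have exposed the error.
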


For classical Lie algebras, the duality map $d$ can be computed explicitly in terms of partitions. General formulas for type $B$, $C$, and $D$ can be found in many sources in the literature, such as \cite[Theorem 5.1]{McGovern1994} and \cite[Theorem 12]{Sommers:Duality} (with $\nu = \varnothing$). The combinatorial formula for metaplectic BV duality of type $\mC$ can be found in \cite{BMSZ:metaplecticBV}. For any of the four types $X$, the duality interchanges $X$ with its {\it dual type} $d(X)=\check{X}$, corresponding to the pair 
	\[ (\epsilon_{\check{X}}, \epsilon'_{\check{X}}) = (\epsilon'_X, \epsilon_X) \]
under the bijection \eqref{eq:pairs_types}.
We will only need the explicit formulas for the restriction of $d$ to special orbits, which can be found in \cite[Sections 2.2 and 2.3]{JLS:Duality}. In this case, the duality map $d$ can be regarded as an order-reversing bijection between special partitions
	\[ d: \cP_{\epsilon',\,\epsilon}^{sp}(\check{N}) \xrightarrow{\sim} \Pee(N), \quad \eta \mapsto \lambda = d(\eta), \]
with $\check{N} := N + \epsilon - \epsilon'$. For each type of $\ckfg$, the formula for $d$ is given by:
\begin{equation}\label{eq:dBV_classical}
\begin{aligned}
	\check{X}=B:   &\qquad& d(\eta) &= \eta^-\!_C\!^*;  \\
	\check{X}=C:   &\qquad& d(\eta) &= \eta^+\!_B\!^*; \\
	\check{X}=D:   &\qquad& d(\eta) &= \eta^*\!_D;  \\
	\check{X}=\mC: &\qquad& d(\eta) &= \eta_D\!^*.  \\
\end{aligned}
\end{equation}

\subsection{Lusztig's canonical quotients} 
\label{subsec:Abar}

For any special nilpotent orbit $\orb$ in $\fg^*$, Lusztig defined in \cite{Lusztig:book} the \emph{canonical quotient group $\Abar(\orb)$} of $\orb$, which is a quotient group of the component group of $\orb$ with respect to the adjoint group of $\fg$. The list of Lusztig's canonical quotient groups for exceptional groups can be found in \cite{Lusztig:book}. In this subsection, we describe the canonical quotients for classical groups in terms of partitions.

We now recall the description of Lusztig's canonical quotient group $\Abar(\orb)$ in both the ordinary classical types and the metaplectic type, following
\cite[Section 5]{Sommers:Duality} (for the ordinary classical types) and \cite[Section 6.2]{JLS:Duality} (for all types). Given a type $X \in \{B, C, D, \mC\}$ and $(\epsilon, \epsilon') = (\epsilon_X,\epsilon'_X)$ as defined in \cref{subsec:special_duality_classical}, let $\orb = \orb_\lambda$ be a special nilpotent orbit in $\g = \g_\epsilon(N)$ of type $X$. Let $\nu_1>\cdots>\nu_l$ be the parts of $\lambda$ that are not congruent to $\epsilon$ modulo 2, as defined in \cref{subsec:component_groups}.
Consider the subsequence 
  \[\tau_1>\cdots>\tau_q\] 
of $\nu_1>\cdots>\nu_l$ consisting of those $\nu_i$ satisfying $h(\nu_i)\equiv\epsilon'\bmod 2$.
Recall that in types $C$ and $\mC$, our convention is that $\nu_l=0$. Thus $0$ is included among the $\tau_j$ precisely because we choose $h(0)$ to have parity $\epsilon'$: in type $C$, $h(0)$ is even; in
type $C'$, $h(0)$ is odd. Hence in both cases $\tau_q=0$. We also
make the convention that $\tau_0=\infty$.
Set 
	\[ N_\lambda := \ker(A(\orb_\lambda) \twoheadrightarrow \Abar(\orb_\lambda)). \] 
Then $N_\lambda$ is generated by the elements of the form $x_{\nu_{i}} x_{\tau_{j}}$ such that $\tau_{j-1} > \nu_{i} \geqslant \tau_{j}$ for $1 \leqslant j \leqslant q$.


\subsection{Quasi-distinguished nilpotent orbits}\label{subsec:quasi-dist}

We assume $G$ is a connected complex reductive algebraic group with Lie algebra $\g = \Lie(G)$. Let $L \subset G$ be a Levi subgroup with Lie algebra $\fl$ and let $\orb_L$ be a nilpotent $L$-orbit in $\fl$. The saturation of $\orb_L$ from $L$ to $G$ is the nilpotent $G$-orbit in $\fg$ given by
\[
   \Sat_L^G\orb_L := G \cdot \orb_L \subset \fg,
\]
where \(\orb_L\) is viewed as a subvariety in $\fg$ via the inclusion $\fl \subset \fg$. Note that the saturation is insensitive to the choice of the group $G$ and only depends on the Lie algebras.

We will need the following fundamental definition, which will later be generalized in \cref{defn:quasi-dist-saturation}. See \cite[Section 8.2]{CM} for details.

\begin{defi} \label{defn:dist_orbit}
	Let $\orb \subset \fg$ be a nilpotent $G$-orbit. We say that $\orb$ is distinguished if the only Levi subalgebra of $\g$ that meets $\orb$ is $\g$ itself, or equivalently, $\orb$ cannot be saturated from a nilpotent $L$-orbit of any proper Levi subgroup $L \subsetneq G$.
\end{defi}

The principal/regular orbit is always distinguished. We recall from \cite[Theorem 8.2.14]{CM} the following characterization of distinguished orbits (\Cref{defn:dist_orbit}) in classical Lie algebras.

\begin{Thm}\label{thm:dist_classical}
    We have the following classification of distinguished orbits in classical Lie algebras:
	\begin{enumerate}[itemindent=*, leftmargin=*]
		\item If $\g$ is of type $A$, then the only distinguished orbit is the principal/regular orbit.
		
		\item If $\g$ is of type $B$, $C$, or $D$, then an orbit $\orb = \orb_\lambda$ corresponding to an $\epsilon$-partition $\lambda$ is distinguished if and only if $\lambda$ has no repeated parts, i.e., $m(a) = 1$ for all $a \in \lambda$. This means that the partition of a distinguished orbit in types $B$ and $D$ has only odd parts, each occurring once, while the partition of a distinguished orbit in type $C$ has only even parts, each again occurring once. Note that distinguished orbits in type $D$ are never very even.
	\end{enumerate}
\end{Thm}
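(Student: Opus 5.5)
This is the standard classification of \cite[Theorem 8.2.14]{CM}, and I would follow the Bala--Carter approach. The central input is the criterion that $\orb$ is distinguished if and only if the reductive centralizer $\fq=\mathfrak{z}_\g(e,h,f)$ contains no nonzero semisimple element, i.e.\ $\fq$ has trivial centre modulo $\mathfrak{z}(\g)$; for semisimple $\g$ this means $\fq$ itself is semisimple, and in type $B/C/D$ it means $\fq=0$ after accounting for factors. This criterion comes from the bijection between Levi subalgebras meeting $\orb$ and tori in the centralizer. If $e\in\fl$ for a Levi $\fl=\mathfrak{z}_\g(\ft)$, then $\ft\subset\mathfrak{z}_\g(e)$, and after conjugation $\ft$ lies in the reductive part $\fq$. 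Conversely, a nonzero torus $\ft\subset\fq$ gives $e\in\mathfrak{z}_\g(\ft)$, and this is a proper Levi subalgebra as long as $\ft\not\subset\mathfrak{z}(\g)$.

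With the criterion in hand, the next step is to compute $Q$ from the partition. For type $A$, $Q$ for $\lambda$ is the image of $\prod_a \GL_{m(a)}$ inside $\SL$ (or $\GL$). This group contains a nontrivial torus unless $\lambda$ has a single part $a$ with $m(a)=1$, which means $\lambda=[n]$, the principal orbit.

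For types $B$, $C$ and $D$, I would use the description from \cref{subsec:component_groups}: $Q\cong\prod_{a\not\equiv\epsilon}{\rm O}_{m(a)}\times\prod_{a\equiv\epsilon}\Sp_{m(a)}$. The identity component $Q^\circ$ contains a nonzero torus exactly when some factor has positive rank. That happens when an ${\rm O}_{m}$ factor has $m\ge2$, or when an $\Sp_m$ factor has $m\ge2$; by the $\epsilon$-partition condition, $m$ is even for those latter parts, so any part with $a\equiv\epsilon$ automatically gives positive rank. Hence $Q^\circ$ is finite (trivial) if and only if every part satisfies $a\not\equiv\epsilon \bmod 2$ and $m(a)=1$. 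Unwinding this gives the stated answer. In types $B$ and $D$ ($\epsilon=0$) the parts are odd and distinct; in type $C$ ($\epsilon=1$) the parts are even and distinct. Such a partition cannot be very even, since all its parts are odd.

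The step I expect to be the main obstacle is a careful proof of the criterion, in particular that a torus centralizing $e$ can be conjugated into $\fq$. I would handle this using the Levi decomposition $G_e=Q\ltimes U$: any torus in $G_e$ is conjugate under $U$ into $Q$, since maximal reductive subgroups of $G_e$ are conjugate. The remaining step, computing $Q$, is routine bookkeeping from the $\slf_2$-isotypic decomposition of $V$.
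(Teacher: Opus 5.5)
Your route is the standard one behind \cite[Theorem 8.2.14]{CM}, which the paper cites rather than reproves: characterize distinguished orbits as those whose reductive centralizer $\fq$ contains no torus outside $\mathfrak{z}(\g)$ (using that Levi subalgebras are centralizers of tori, and that maximal reductive subgroups of $G_e$ are conjugate), then read off $Q^\circ$ from the partition; your case analysis for types $A$ through $D$ is correct. One correction to your opening sentence: ``no nonzero semisimple element'' does \emph{not} mean that $\fq$ has trivial centre or is semisimple, since any nonzero reductive $\fq$ has a nonzero Cartan subalgebra consisting of semisimple elements (for example, $\lambda=[2,1,1]$ in $\fsp_4$ has $\fq\cong\fsp_2$, which is semisimple, yet $\lambda$ is not distinguished); the right reformulation is $\fq\subset\mathfrak{z}(\g)$, i.e.\ $\fq=0$ for semisimple $\g$, which is exactly what your rank count actually uses.
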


Choose $e \in \orb_L$. The inclusion $L_e \subset G_e$ of centralizers gives a homomorphism
\[
   \iota: A_L(\orb_L)\longrightarrow A_G(\Sat_L^G\orb_L),
\]
of component groups, defined up to inner automorphisms of $A_L(\orb_L)$. By \cite[Proposition~2.19]{MBMY}, $\iota$ descends to a well-defined homomorphism between canonical quotients
\[
   \bar\iota:\Abar(\orb_L)\longrightarrow \Abar(\Sat_L^G\orb_L),
\]
which is always injective by a case-by-case check, as noticed below \cite[(2.1.2)]{MBMY}. Again the homomorphism $\bar\iota$ is insensitive to the choice of the group $G$.

The following definition is inspired by \cite{Adams:quasi-dist}.

\begin{defi} \label{defn:quasi-dist-saturation}
	Let $\ckorb \subset \ckfg^*$ be a special nilpotent orbit. We say that $\ckorb$ is \emph{quasi-distinguished} if there is no proper Levi subgroup $\check L \subsetneq \check G$ and no special nilpotent orbit $\ckorb_{\check L} \subset \check\fl^*$ such that $\Sat_{\check L}^{\check G}\ckorb_{\check L} = \ckorb$
	and the induced homomorphism on canonical quotients $\bar\iota: \Abar_{\check L}(\ckorb_{\check L}) \longrightarrow \Abar_{\check G}(\ckorb)$ is an isomorphism.
\end{defi}

It is clear that distinguished orbits are always quasi-distinguished.
In the case of classical/metaplectic Lie algebras, the intrinsic notion of quasi-distinguished orbits is equivalent to the following notion from \cite[Section 10.1]{BMSZ:counting} defined in terms of partitions. This can be shown easily using \cite[Proposition~5.25]{MBMY} (which was only stated for classical Lie algebras, but the metaplectic version is similar).

\begin{defi} \label{defi:quasi-dist_classical}
	Let $\check{X} \in \{B, C, D, \mC\}$ and let $X$ be its dual type. A nilpotent orbit $\ckorb$ in $\ckfg$ of type $\check{X}$ is said to be {\it quasi-distinguished} if its associated partition 
	\[\eta = [\eta_1 \geqslant \eta_2 \geqslant \ldots \geqslant \eta_p] \in \cP_{\check{X}}(\check{N}) = \cP_{\epsilon_{\check{X}}}(\check{N})\]
	satisfies the following properties:
	\begin{enumerate}
		\item $\eta$ (or $\ckorb$) is of {\it good parity}: $\eta_i \not\equiv \epsilon_{\check{X}} \bmod 2$ for all; $i$\footnote{This was defined in \cite[p. 356]{Moeglin:multi1} and \cite[Definition 2.6]{BMSZ:counting}.}
		\item $\eta$ contains no consecutive equal parts $\eta_i = \eta_{i+1} > 0$ with $i \equiv \epsilon_{X} \bmod 2$.
	\end{enumerate}
\end{defi}

\begin{Rem} \label{rem:quasi-dist_classical}
Note that by \cref{defi:quasi-dist_classical}, if $\ckorb_\eta$ is quasi-distinguished, then the multiplicity of each part of $\eta$ is at most $2$, and if $\eta_i = \eta_{i+1}$, then $i \not\equiv \epsilon_X \bmod 2$. 

Also note that orbits of good parity, and hence quasi-distinguished orbits in the sense of \cref{defi:quasi-dist_classical}, are always (metaplectic) special by \cref{lem:special_classical}.
\end{Rem}

We now describe the construction of $\eta^\dagger$. Let $\eta = [\eta_1 \geqslant \eta_2 \geqslant \ldots \geqslant \eta_p] \in \cP_{\check{X}}(\check{N})$ be a quasi-distinguished partition. 
Clearly, $p$ is odd (resp. even) when $\ckfg$ is of type $B$ (resp. $D$). We extend this to the general assumption that $p$ is the least integer satisfying $p \geqslant \#\eta$ and $p \equiv \epsilon + \epsilon' \bmod 2$, by adding a $0$ to $\eta$ if necessary. Then the cases where $\ckfg$ is of type $B$ or $D$ automatically satisfy this condition. When $\ckfg$ is of type $C$ (resp. $\mC$), this means that we assume $p$ is odd (resp. even). 

Define the partition $\eta^\dagger$ by $\eta^\dagger_i = \eta_i - (-1)^{i+\epsilon}$ for $1 \leqslant i \leqslant p$. More concretely, we have for each type of $\ckfg$:
\begin{equation}\label{eq:eta_dagger}
\begin{aligned}
	\check{X} = B: &\qquad& \eta^\dagger = [\eta_1 - 1, \eta_2 + 1, \ldots, \eta_{2k} + 1, \eta_{2k+1} - 1], &\quad p = 2k + 1; \\
	\check{X} = C: &\qquad& \eta^\dagger = [\eta_1 + 1, \eta_2 - 1, \ldots, \eta_{2k} - 1, \eta_{2k+1} + 1], &\quad p = 2k + 1; \\
	\check{X} = D: &\qquad& \eta^\dagger = [\eta_1 + 1, \eta_2 - 1, \ldots, \eta_{2k-1} + 1, \eta_{2k} - 1], &\quad p = 2k; \\
	\check{X} = \mC: &\qquad& \eta^\dagger = [\eta_1 - 1, \eta_2 + 1, \ldots, \eta_{2k-1} - 1, \eta_{2k} + 1], &\quad p = 2k. \\
\end{aligned}
\end{equation}

The following lemma is an easy consequence of \eqref{eq:dBV_classical}, whose details are left to the reader. Note that the quasi-distinguished condition guarantees that the parts of $\eta^\dagger$ in each formula above indeed form a non-increasing sequence.

\begin{Lem}\label{lem:d_quasi-dis}
	Suppose $\ckorb$ is a quasi-distinguished nilpotent orbit in $\ckfg$ of type $\check{X} \in \{B, C, D, \mC\}$, whose associated partition is $\eta$. Then $\orb = d(\ckorb)$ corresponds to the partition $(\eta^\dagger)^*$.
\end{Lem}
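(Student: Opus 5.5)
We verify \Cref{lem:d_quasi-dis} type by type, using \eqref{eq:dBV_classical}. The key tool is a transposed description of the collapse. For $Y\in\{B,C,D\}$, the $Y$-collapse $\mu_Y$ of a partition $\mu$ is the unique largest $Y$-partition dominated by $\mu$. Since transposition reverses the dominance order, $(\mu_Y)^*$ is then the unique smallest partition $\nu$ that dominates $\mu^*$ and satisfies $\nu^*\in\cP_Y$. Using this, each formula in \eqref{eq:dBV_classical} becomes a claim about $\eta^\dagger$.

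\textbf{Types $\check X=D$ and $\check X=\mC$.} These formulas have the form $\eta^*{}_D$ or $\eta_D{}^*$, with no $\pm$ modification, so we treat them first.

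For $\check X=D$ we must show $(\eta^*{}_D)^*=\eta^\dagger$. By the characterization above, this means three things:
\begin{enumerate}
\item $\eta^\dagger$ dominates $\eta$;
\item $(\eta^\dagger)^*$ is a $D$-partition;
\item every $\nu$ dominating $\eta$ with $\nu^*\in\cP_D$ also dominates $\eta^\dagger$.
\end{enumerate}

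Point (1) is immediate from \eqref{eq:eta_dagger}: the partial sums of $\eta^\dagger$ equal those of $\eta$ at even indices and exceed them by $1$ at odd indices.

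Point (2) uses good parity and the quasi-distinguished condition. All $\eta_i$ are odd, so all parts of $\eta^\dagger$ are even. A repeated pair $\eta_{i}=\eta_{i+1}$ occurs only with $i$ odd, and it becomes $(\eta_i+1,\eta_i-1)$, so $\eta^\dagger$ is non-increasing. The multiplicity of a part $h$ in $(\eta^\dagger)^*$ equals $\eta^\dagger_h-\eta^\dagger_{h+1}$. A parity check of this difference shows that even parts of $(\eta^\dagger)^*$ occur with even multiplicity, as required for a $D$-partition.

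Point (3), minimality, is the crucial step. It suffices to show that for odd $k$, any such $\nu$ has $\nu_1+\cdots+\nu_k>\eta_1+\cdots+\eta_k$. Suppose instead equality holds at some odd $k$. Then, since $\nu$ dominates $\eta$, we would have $\nu_{k}\le\eta_k$ and $\nu_{k+1}\ge \eta_{k+1}$. Because $\eta_k$ is odd, $\nu^*$ would then acquire an even column length (namely $k$, or a neighbouring value) with odd multiplicity, which is impossible for $\nu^*\in\cP_D$.

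The case $\check X=\mC$, where $d(\eta)=\eta_D{}^*$, is handled the same way with the roles of collapse and transpose swapped. Here the added $0$ and the convention on $h(0)$ account for the last part of $\eta$.

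\textbf{Types $\check X=B$ and $\check X=C$.} These involve the modifications $\eta^-$ and $\eta^+$: subtract $1$ from the smallest part, respectively add $1$ to the largest part. This leaves a partition of the wrong parity type, which is then collapsed.

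For $\check X=C$ the plan is as follows. Write $\eta^+=[\eta_1+1,\eta_2,\ldots]$, so that $\eta^\dagger$ is obtained from $\eta^+$ by the same alternating $\mp1$ pattern as before, starting from index $2$. We then run the same three-step argument (dominance, parity of $(\eta^\dagger)^*$, minimality) for $(\eta^+)_B$ in place of $\eta^*{}_D$. We apply the transposed characterization to $\mu=(\eta^+)^*$, using that transposition commutes with the dominance reversal.

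For $\check X=B$ we proceed analogously with $\eta^-$. The last part $\eta_p-1$ of $\eta^\dagger$ absorbs the subtraction.

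In all four cases, the quasi-distinguished hypothesis is used only to guarantee that $\eta^\dagger$ is a genuine (non-increasing) partition and to control the parities of column multiplicities.

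\textbf{Expected obstacle.} The main difficulty is the minimality step (3) in the $B$ and $C$ cases. There the boundary indices (the first part, the last part, and the padding $0$) interact with the $\pm$ modifications, and the parity bookkeeping must be done carefully at those endpoints. I would first check the argument on small distinguished examples, for instance $\eta=[5,3,1]$ in type $B$, to fix the conventions before writing the general inequality.
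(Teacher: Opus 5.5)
Your overall plan matches the route the paper intends: read $d(\eta)$ off \eqref{eq:dBV_classical} and pin down each collapse by its extremal property in the dominance order. The paper simply calls the lemma an easy consequence of \eqref{eq:dBV_classical}. Your steps (1) and (2) for $\check X=D$ are fine. The crucial step (3), however, does not work as you argue it. The parity defect of $\nu^*$ is not located near $k$; indeed ``an even column length, namely $k$'' is impossible, since $k$ is odd. For $\eta=\nu=[3,3,1,1]$ and $k=1$, the column lengths $1$ and $2$ have multiplicities $0$ and $2$, and the only even column length of odd multiplicity is $4$. What you need is a global parity count. If $\nu^*\in\cP_D$ and $|\nu^*|$ is even, then $\nu_1=\#\nu^*$ is even. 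Also $\nu_{2m}\equiv\nu_{2m+1}\pmod 2$ for all $m\geqslant 1$, since these differences are the multiplicities of the even parts $2m$. Hence $\nu_1+\cdots+\nu_k$ is even for every odd $k$, whereas $\eta_1+\cdots+\eta_k$ is odd for odd $k\leqslant p$. Combined with $\nu\geqslant\eta$, this gives the strict inequality.

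For $\check X=B$ and $C$ your plan points in the wrong direction. In \eqref{eq:dBV_classical} the collapse is taken \emph{before} transposing, so what must be shown is $(\eta^-)_C=\eta^\dagger$, resp.\ $(\eta^+)_B=\eta^\dagger$. That is, $\eta^\dagger$ itself (not $(\eta^\dagger)^*$) is a $C$-, resp.\ $B$-partition, it is \emph{dominated by} $\eta^-$, resp.\ $\eta^+$, and it is maximal with these properties. Applying the transposed characterization to $\mu=(\eta^+)^*$ would instead give the smallest $\nu$ dominating $\eta^+$ with $\nu^*\in\cP_B$. That cannot be $\eta^\dagger$, because $\eta^\dagger$ is strictly dominated by $\eta^+$ once $p\geqslant 3$: its partial sums are smaller by one at even indices.

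Correctly formulated, these cases, and $\mC$ (where one shows $\eta_D=\eta^\dagger$), are the easy ones, settled by a local parity argument on rows. Take $\check X=B$ and suppose a $C$-partition $\nu\leqslant\eta^-$ had $\nu_1+\cdots+\nu_j=\eta_1+\cdots+\eta_j$ for some odd $j<p$. This sum is odd, which for a $C$-partition forces $\nu_j=\nu_{j+1}$ to be odd, because an odd-valued block must be split at $j$. Then $\eta_j\leqslant\nu_j=\nu_{j+1}\leqslant\eta_{j+1}$, so $\eta_j=\eta_{j+1}$ with $j$ odd, which \cref{defi:quasi-dist_classical}(2) excludes.

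Equivalently, run the collapse algorithm on $\eta^-$ (resp.\ $\eta^+$, or $\eta$ for $\mC$). By the quasi-distinguished condition, the part being lowered has multiplicity one at every stage, and the output is visibly $\eta^\dagger$. So the transposed characterization, and the genuinely global parity argument, are needed only for $\check X=D$, not in the $B$/$C$ cases you flagged as the obstacle.
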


\begin{Rem} \label{rem:dist_very_even}
	In type $D_n$ ($n \geqslant1$), a quasi-distinguished orbit $\ckorb$ and its dual orbit $\orb$ are never very even by \cref{defi:quasi-dist_classical}. Indeed, if $\ckorb$ corresponds to a quasi-distinguished partition $\eta$, then $\orb$ corresponds to $(\eta^\dagger)^*$, which must contain part $1$. 
\end{Rem}

\subsection{Special pieces}

Let $\g$ be a reductive Lie algebra over $\C$. 
For a special nilpotent orbit $\orb \subset \fg^*$, let $\partial_{sp} \orb$ denote the union of the closures $\overline{\orb}'_{sp}$ of all special orbits $\orb'_{sp}$ strictly dominated by $\orb$ under the closure relation. 

To each special orbit $\orb$ in $\fg^*$, we associate the \emph{special piece} $\cP(\orb) := \overline{\orb} \setminus \partial_{sp} \orb$. Then $\cP(\orb)$ is a locally closed subvariety of $\cN^*$. By \cite[Chapitre III. 1 \& 2]{Spaltenstein}, special pieces form a partition of $\cN^*$. This also holds for metaplectic special orbits and their special pieces, see \cite[Section 2.4]{JLS:Duality} as well as \cref{subsec:special_piece_classical}. In all cases, it turns out that the orbits in $\cP(\orb)$ consist exactly of the fiber $d_{\ckfg}^{-1} (d_{\fg}(\orb))$ of the duality map $d_{\ckfg}: \cN_o \to \check{\cN}_o$.

\subsubsection{Special pieces in classical/metaplectic Lie algebras} \label{subsec:special_piece_classical}
We now describe the structure of special pieces in classical/metaplectic Lie algebras. Let $X \in \{B, C, D, \mC\}$ and $(\epsilon, \epsilon') = (\epsilon_X,\epsilon'_X)$ as in \cref{subsec:special_duality_classical}.
Let $\g = \g_\epsilon(N)$ and $\orb = \orb_\lambda$ be a (metaplectic) special orbit corresponding to $\lambda \in \Pee(N)$. As in \cref{subsec:component_groups}, we set $\nu_1>\nu_2>\cdots>\nu_l$ to be the distinct parts of $\lambda$ that are not congruent to $\epsilon$ modulo $2$.
These are the parts of $\lambda$ that contribute to the
component group. Recall that when $\epsilon = 1$, i.e., $X = C$ or $\mC$, we allow the formal part \(0\), with \(x_0 = 1\).
Let 
\begin{equation}\label{eq:J}
	J = J_{\epsilon,\epsilon'}(\lambda) = 
\{ i \in \{ 1, \ldots, l - 1 \} \mid \nu_{i+1} = \nu_i - 2,\, h(\nu_i) \equiv \epsilon' \bmod 2 \}.
\end{equation}
Let $k = \lvert J\rvert$ and write $J = \{ i_1 < i_2 < \cdots < i_k\}$. For $i \in J$, put $s_i = \nu_i - 1$.
Then $s_i \equiv \epsilon \bmod 2$. Since $\lambda \in \Pee(N)$, the multiplicity of $s_i$ in $\lambda$ is even. Write $m_\lambda(s_i) = 2b_i$ for some non-negative integer $b_i$.
Then $\lambda$ can be written as 
\[[\mu_0, s_1+1, s_1^{2b_1}, s_1-1, \mu_1, s_2+1, \ldots, s_k-1, \mu_k],\]
where $\mu_i$ is a partition with all parts lying in $\{s_i - 1, \ldots, s_{i+1} + 1\}$ (we make the convention that $s_0 = \infty$ and $s_{k+1} = 0$).
It follows from \cite[Section 2.4]{JLS:Duality} that the orbits in $\cP(\orb)$ correspond to those obtained from $\lambda$ by replacing the subpartition $[s_j+1,s_j^{2b_j},s_j-1]$ by $[s_j^{2b_j+2}]$ for all $j$ in some subset of $\{ 1,\ldots, k\}$.


\subsubsection{Lusztig's conjecture on special pieces} \label{subsec:Lusztig}

Lusztig conjectured in \cite{Lusztig:green} that any special piece is rationally smooth and checked it for the minimal (non-zero) special nilpotent orbit. This was verified by Shoji in \cite{Shoji:Green} for $F_4$, and by Beynon and Spaltenstein in \cite{Beynon-Spaltenstein} for $E_6$, $E_7$, and $E_8$. For classical Lie algebras, Kraft and Procesi \cite{Kraft-Procesi:special} proved the stronger statement that $\cP(\orb)$ of any special orbit $\orb$ is the quotient of a smooth variety by a finite elementary abelian 2-group $H(\orb)$. Later Lusztig conjectured in \cite{Lusztig:unipotent} that the same statement also holds for exceptional Lie algebras and interpreted the group $H(\orb)$ as a normal subgroup of his canonical quotient group $\Abar(\orb)$ of the component group $A(\orb)$. We refer to \cite{Sommers:Duality} and \cite{JLSY:SpecialPiece} for detailed information about $\Abar(\orb)$ and $H(\orb)$, respectively. For a detailed historical overview, we refer the reader to \cite{FJLS23} and \cite{JLSY:SpecialPiece}.

In \cite{JLSY:SpecialPiece}, $H(\orb)$ is defined to be (the image of) the fundamental group of the smooth locus of a Slodowy slice in $\overline{\orb}$ to the unique minimal orbit in $\cP(\orb)$, so that $H(\orb)$ is identified as a subgroup of $\pi_1(\orb)$ up to conjugation. It is shown in \cite{JLSY:SpecialPiece} that the (isomorphic) image of $H(\orb)$ in $\Abar(\orb)$ is a well-defined subgroup of $\Abar(\orb)$ and coincides with Lusztig's definition of $H(\orb)$. For any metaplectic special orbit $\orb$ in $\fmp(2n,\C)$, there is also a version of $\Abar(\orb)$ and $H(\orb)$ in \cite{JLSY:SpecialPiece}. Note that in this case, $\Abar(\orb)$ is usually a quotient of $\pi_1(\orb)$ but not a quotient of $A(\orb)$.

The following refined version of Lusztig's special pieces conjecture has been proved in \cite{JLSY:SpecialPiece}. To set up notation, fix a (metaplectic) special nilpotent orbit $\orb$ and consider any subgroup $F \subset \Abar(\orb)$. Since $\Abar(\orb)$ is a quotient of the fundamental group $\pi_1(\orb)$ of $\orb$, the preimage of $F$ in $\pi_1(\orb)$ determines a connected $G_{sc}$-equivariant cover $\widetilde{\orb}_F$ of $\orb$ ($G_{sc}$ being the simply connected group with Lie algebra $\fg$)\footnote{In fact, all the relevant covers are actually equivariant with respect to the adjoint group $G_{ad}$, except for the case of metaplectic type.}, which depends only on the conjugacy class of $F$ in $\Abar(\orb)$. Let $X_F := \spec(\C[\widetilde{\orb}_F])$ be the affinization of $\tilde{\orb}_F$ and let $\pi_F: X_F \to \overline{\orb}$ be the surjective morphism induced by the covering morphism $\widetilde{\orb}_F \to \orb$. Set $\tilde{\cP}(\orb)_F := \pi_F^{-1}(\cP(\orb))$. Then it is an open subset of $X_F$.

\begin{Thm}[\cite{JLSY:SpecialPiece}] \label{thm:special_piece}
	Let $\orb$ be a (metaplectic) special nilpotent orbit in $\fg^*$. For any subgroup $F$ of $\Abar(\orb)$, $\tilde{\cP}(\orb)_F$ is smooth if and only if $F \cap H(\orb) = 1$. Moreover, if $\Abar(\orb)$ is a semidirect product of $F$ and $H(\orb)$, we have a Galois action of $H(\orb) \simeq \Abar(\orb)/F$ on $\widetilde{\orb}_F$ and hence on $X_F$, so that the restriction of $\pi_F$ to $\tilde{\cP}(\orb)_F$ is a quotient morphism by $H(\orb)$.
\end{Thm}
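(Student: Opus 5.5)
The plan is to reduce everything to a local computation on a Slodowy slice at the unique minimal orbit $\orb_{min}$ of $\cP(\orb)$, and then spread the conclusion over the whole piece using the conical structure.

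\textbf{Step 1 (reduction to the minimal orbit).} The singular locus of $\tilde{\cP}(\orb)_F$ is closed and $G\times\cm$-stable. Every orbit of $\cP(\orb)$ contains $\orb_{min}$ in its closure, and $\cP(\orb)$ is open in $\overline{\orb}$. Hence it suffices to decide smoothness of $X_F$ at the points of $\pi_F^{-1}(e')$ for $e'\in\orb_{min}$. Near such a point $X_F$ is locally a product of $\orb_{min}$ with a connected component $\check X$ of $\pi_F^{-1}(\cS_\orb)$, as recalled in \Cref{subsec:Slodowy}. The Kazhdan action contracts $\check X$ to the point $\tilde e'$. So $X_F$ is smooth along $\pi_F^{-1}(\cP(\orb))$ if and only if every such $\check X$ is smooth at $\tilde e'$, and then, by the contraction, $\check X$ is an affine space.

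\textbf{Step 2 (the slice for $F=1$ and its local fundamental group).} I would use the defining property of $H(\orb)$ from \cite{JLSY:SpecialPiece}. The image of $\pi_1$ of the smooth locus of $\cS_\orb\cap\cP(\orb)$ in $\pi_1(\orb)$, and then in $\Abar(\orb)$, is $H(\orb)$, with that image mapping isomorphically onto $H(\orb)$. The key geometric input is the special pieces theorem: the slice $\cS_\orb\cap\cP(\orb)$ is isomorphic to $V/H(\orb)$ for a symplectic vector space $V$ on which $H(\orb)$ acts linearly and freely in codimension $\geq 2$.

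In classical types this comes from Kraft--Procesi, via the explicit description of $\cP(\orb)$ in \Cref{subsec:special_piece_classical}. There the local slice is a product of $k$ factors, one for each $j$ with the collapse $[s_j+1,s_j^{2b_j},s_j-1]\to[s_j^{2b_j+2}]$, and each factor is a $\Z/2$-quotient of an affine space. In exceptional types it comes from the case analysis of \cite{JLSY:SpecialPiece}.

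Granting this, $\check X$ is the normalization of $V/H(\orb)$ in the étale cover of its smooth locus determined by the subgroup $F\cap H(\orb)$, up to conjugation. This normalization is $V/(F\cap H(\orb))$.

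\textbf{Step 3 (criterion).} If $F\cap H(\orb)=1$, then $\check X\simeq V$ is smooth. Conversely, suppose $F\cap H(\orb)\neq 1$. This group acts linearly on $V$ without pseudo-reflections, since the action is symplectic and free in codimension $\geq 2$. By Chevalley--Shephard--Todd, $V/(F\cap H(\orb))$ is then singular at the origin. This gives the first assertion.

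\textbf{Step 4 (semidirect product case).} If $\Abar(\orb)=F\ltimes H(\orb)$, then $H(\orb)$ is normal in $\Abar(\orb)$ and $F\cap H(\orb)=1$. I would then check that $H(\orb)$ injects into the deck group $N(F)/F$ of $\widetilde{\orb}_F\to\widetilde{\orb}_{F H(\orb)}=\orb$, up to identifying it with $\Abar(\orb)/F$ via the complement. Comparing degrees, $|H(\orb)|=[\Abar(\orb):F]$, shows that this cover is Galois with group $H(\orb)$. The action extends to the affinization $X_F$ by normality. Finally, $\pi_F$ restricted to $\tilde{\cP}(\orb)_F$ is finite, and $X_F/H(\orb)\to\overline{\orb}$ is birational, finite and bijective over the normal variety $\cP(\orb)$. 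The normality of $\cP(\orb)$ itself follows from the local model $V/H(\orb)$. Hence this map is an isomorphism there.

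\textbf{Main obstacle.} The real difficulty is Step 2: establishing the local model $V/H(\orb)$ for the slice to the minimal orbit, and identifying the image of the local fundamental group with Lusztig's $H(\orb)$ inside $\Abar(\orb)$. In exceptional types this requires the case-by-case geometry of \cite{JLSY:SpecialPiece}. I would take it from there rather than reprove it.
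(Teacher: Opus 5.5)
The paper does not prove this statement itself; it imports it from \cite{JLSY:SpecialPiece}. Your plan is a reasonable way to unpack it: reduce to the slice at the minimal orbit of $\cP(\orb)$, take the local model $V/H(\orb)$ from the same source, and apply Chevalley--Shephard--Todd. Steps 1--3 are sound. Two points there should be made explicit. First, identifying $\check X$ with $V/(F\cap H(\orb))$ uses that $\check X$ is normal, which it inherits from $X_F$ through the local product structure. Second, passing between the components of $\pi_F^{-1}(\cS_\orb)$ ``up to conjugation'' uses that $H(\orb)$ is normal in $\Abar(\orb)$, so that $gFg^{-1}\cap H(\orb)=g(F\cap H(\orb))g^{-1}$.

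The step that fails as written is in Step 4. Suppose $\Abar(\orb)=F\ltimes H(\orb)$ with only $H(\orb)$ normal. Then $H(\orb)$ does not in general land in $N_{\Abar(\orb)}(F)/F$. For $h\in H(\orb)$ and $f\in F$, the commutator $hfh^{-1}f^{-1}$ lies in $H(\orb)$. So $h$ normalizes $F$ if and only if $hfh^{-1}f^{-1}\in F\cap H(\orb)=1$ for all $f\in F$, that is, if and only if $h$ centralizes $F$. Your injection therefore holds only when the product is direct, and your degree count merely re-expresses the requirement that $F$ be normal. For a non-normal complement $F$, the cover $\widetilde{\orb}_F\to\orb$ is not Galois: its deck group is $N(F)/F$, of order strictly less than $[\Abar(\orb):F]=|H(\orb)|$. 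In that case no group can act on $\widetilde{\orb}_F$ with quotient $\orb$, so the conclusion cannot be proved in that generality.

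The repair is to use that $F$ is normal. The statement already presupposes this when it writes $H(\orb)\simeq\Abar(\orb)/F$, and it holds in every case. In classical and metaplectic types, $\Abar(\orb)$ is an elementary abelian $2$-group. In exceptional types, $H(\orb)\in\{1,\Abar(\orb)\}$, which forces $F\in\{\Abar(\orb),1\}$. With $F\trianglelefteq\Abar(\orb)$, the preimage of $F$ in $\pi_1(\orb)$ is normal. Hence $\widetilde{\orb}_F\to\orb$ is Galois with group $\Abar(\orb)/F$, and the quotient map restricted to the complement $H(\orb)$ is an isomorphism onto this group. The rest of your Step 4 then goes through: the extension of the action to $X_F$, and the finite birational map $\pi_F^{-1}(\cP(\orb))/H(\orb)\to\cP(\orb)$ onto a normal variety.
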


In exceptional types, either $H(\orb) = \Abar(\orb)$, or $H(\orb) = 1$ if the special piece is trivial, i.e., $\cP(\orb) = \orb$. Also note that, since all nilpotent orbits in type $A$ are special, the statement of Lusztig's special pieces conjecture is vacuous. Moreover, the only (quasi-)distinguished orbit in $\ckfg = \mathfrak{sl}_n$ is the regular orbit and its dual is the zero orbit in $\g$. Thus we ignore the type $A$ case. 

We now briefly recall the description of $H(\orb)$ for type $X \in \{B, C, D, \mC\}$ from \cite{JLSY:SpecialPiece}.
Retain the notation from \cref{subsec:component_groups} and \cref{subsec:special_piece_classical}. Let $\g = \g_\epsilon(N)$ and $\orb = \orb_\lambda$ be a (metaplectic) special orbit corresponding to $\lambda \in \Pee(N)$.
Let $J = J_{\epsilon,\epsilon'}(\lambda) = \{ i_1<i_2<\cdots <i_k\}$ be as in \eqref{eq:J}. Note that $\{ \nu_{i_j} \}_{i_j \in J}$ is a subset of $\{ \tau_i \}_{1 \leqslant i \leqslant q}$ by definition. Therefore, for each $1 \leqslant j \leqslant k$, we have $\nu_{i_j} = \tau_{t_j}$ for some $t_j$, with $t_1 < t_2 < \cdots < t_k$. By \cite[Corollary 1.9]{JLSY:SpecialPiece}, the subgroup $H(\orb)$ of $\Abar(\orb)$ is generated by the basis elements that are the images of 
\begin{equation}\label{eq:H(orb)_generator}
	x_{s_j+1} x_{s_j-1} = x_{\nu_{i_j}} x_{\nu_{i_j + 1}} = x_{\tau_{t_j}} x_{\tau_{t_j + 1}} \in A(\orb), \quad 1 \leqslant j \leqslant k.
\end{equation}

\begin{Lem}\label{lem:H_quasi-dist}
	Assume $\ckorb$ is a quasi-distinguished orbit in $\ckfg^*$ and let $\orb = d(\ckorb)$ be the (metaplectic) BV dual orbit in $\fg^*$. Then we have the equalities of groups
		\[ H(\orb) = \Abar(\orb) = \pi_1(\orb).\] 
	In particular, the subgroup $F$ of $\Abar(\orb)$ in \Cref{thm:special_piece} can only be the trivial subgroup and $\widetilde{\orb}$ is exactly the universal cover of $\orb$.
\end{Lem}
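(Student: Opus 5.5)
The plan is to verify the two equalities separately: first $H(\orb)=\Abar(\orb)$, then that the quotient map $\pi_1(\orb)\twoheadrightarrow\Abar(\orb)$ is injective. The \lq\lq in particular'' part then follows formally. Since $F\subset\Abar(\orb)=H(\orb)$, the criterion $F\cap H(\orb)=1$ of \Cref{thm:special_piece} forces $F=1$. The cover $\widetilde{\orb}_1$ then corresponds to the trivial subgroup of $\pi_1(\orb)$, so it is the universal cover. I would argue uniformly in the classical and metaplectic types via partitions, and case by case for exceptional $\fg$.

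For types $X\in\{B,C,D,\mC\}$, let $\eta$ be the quasi-distinguished partition of $\ckorb$. By \Cref{lem:d_quasi-dis}, $\lambda=(\eta^\dagger)^*$, and I would compute its columns directly from \eqref{eq:eta_dagger}. The key claims are the following.
\begin{enumerate}
\item The distinct parts $\nu_1>\cdots>\nu_l$ of $\lambda$ not congruent to $\epsilon$ modulo $2$ (including the formal part $0$ in types $C,\mC$) satisfy $\nu_{i+1}=\nu_i-2$ for all $i$.
\item Each of them satisfies $h(\nu_i)\equiv\epsilon'\bmod 2$.
\end{enumerate}
The heuristic for these claims is as follows. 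The parts of $\eta^\dagger$ alternate between $\eta_i\pm1$ with the good parity condition, and there are no forbidden repetitions (\Cref{rem:quasi-dist_classical}). Consequently every column length of the transpose jumps in steps dictated by the index parity $i\equiv\epsilon_X$. This should make the heights of the relevant parts all of parity $\epsilon'$, and make consecutive relevant parts differ by exactly $2$.

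Granting the claims, $J=\{1,\dots,l-1\}$ in \eqref{eq:J}, and every $\nu_i$ is some $\tau_j$. Hence the subgroup $N_\lambda$ of \cref{subsec:Abar} is trivial: every generator $x_{\nu_i}x_{\tau_j}$ has $\nu_i=\tau_j$. So $A(\orb)=\Abar(\orb)$. By \eqref{eq:H(orb)_generator}, $H(\orb)$ is generated by all $x_{\nu_i}x_{\nu_{i+1}}$, and these generate $A(\orb)$ by \cref{subsec:component_groups}. Thus $H(\orb)=\Abar(\orb)=A(\orb)$. I expect the bookkeeping of parities of $h(\nu_i)$ across the four types to be routine but fiddly, and I would organize it by the index parity of the rows of $\eta^\dagger$.

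The step I expect to be the main obstacle is $\pi_1(\orb)=A(\orb)$, i.e.\ that the central extension $A_{G_{sc}}(\orb)\to A_{G_{ad}}(\orb)$ is trivial. This concerns the spin cover in types $B,D$, the center of $\Sp$ in type $C$, and, in type $\mC$, the comparison with the version of $\Abar$ from \cite{JLSY:SpecialPiece}, which is only a quotient of $\pi_1$. I would first try the criterion of \cite[Section 6.1]{CM}: the nontrivial central element of $\Spin$ lies in the identity component of the centralizer precisely when some odd part occurs with multiplicity at least $2$, and analogously for $\Sp$. I would then check that $\lambda=(\eta^\dagger)^*$, being a transpose, always has such a repeated part. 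Where this fails, I would instead invoke directly the identification in \cite{JLSY:SpecialPiece} of $\Abar(\orb)$ as a quotient of $\pi_1(\orb)$, together with the explicit kernel description there.

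For exceptional $\fg$, I would run through the duals of the (quasi-)distinguished orbits of $\ckfg$ using the tables of \cite{Lusztig:book}, \cite{Alexeevski} and \cite{Sommers:Duality}. For each, I would check that $\pi_1(\orb)\to\Abar(\orb)$ is bijective and that the special piece $\cP(\orb)$ is nontrivial whenever $\Abar(\orb)\neq1$. Since $H(\orb)$ is either $1$ or $\Abar(\orb)$ in exceptional types, this gives $H(\orb)=\Abar(\orb)$.
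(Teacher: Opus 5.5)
Your proposal is correct and is essentially the paper's proof. It uses the same partition computation showing $\nu_{i+1}=\nu_i-2$ and $h_\lambda(\nu_i)\equiv\epsilon'$; the paper gets the latter in one line, since $h_\lambda(a)=(\lambda^*)_a=\eta^\dagger_a\equiv\epsilon'\bmod 2$ by good parity. This gives $J=\{1,\dots,l-1\}$ and $H(\orb)=\Abar(\orb)=A(\orb)$, and is followed, as in the paper, by \cite[Corollary 6.1.6]{CM} for $\pi_1(\orb)=A(\orb)$ and a table check for exceptional $\fg$.

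One sharpening: $A(\orb)$ in \cref{subsec:component_groups} is taken for the linear group $\SO(N)$ or $\Sp(2n)$, not $G_{ad}$. So the $\pi_1$ comparison is automatic for $\Sp(2n)$ (types $C$ and $\mC$), and your worries about the center of $\Sp$ and about type $\mC$ are unnecessary. For $\SO(N)$ the repeated odd part is simply the part $1$ of $\lambda$, whose multiplicity is $\eta^\dagger_1-\eta^\dagger_2\geqslant 2$ when $\check{X}\in\{C,D\}$. This follows from \eqref{eq:eta_dagger}, not from $\lambda$ merely being a transpose.
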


\begin{proof}

	We first consider the case of classical Lie algebras. Let $X \in \{B, C, D, \mC\}$ be the type of the Lie algebra $\g$ and let $(\epsilon, \epsilon') = (\epsilon_X, \epsilon'_X)$ be as defined in \Cref{subsec:special_duality_classical}. Assume $\fg = \fg_{\epsilon}(N)$, so that $\ckfg = \fg_{\epsilon'}(N + \epsilon - \epsilon')$.
	Let $G = G_\epsilon(N)^\circ$ be the corresponding connected linear classical group. Let $\eta$ be the partition associated to $\ckorb$ in $\ckfg^*$. Then by \cref{subsec:special_duality_classical}, $\lambda = d(\eta)$ is the partition associated to the orbit $\orb = d(\ckorb)$ in $\g^*$, with $\lambda \in \Pee(N)$ and $\eta \in \cP_{\epsilon',\,\epsilon}(N + \epsilon - \epsilon')$.
	
	By \Cref{lem:d_quasi-dis}, $\lambda = d(\eta) = (\eta^\dagger)^*$. As in \Cref{subsec:component_groups}, let $\nu_1 > \cdots > \nu_l$ be the parts of $\lambda$ that are not congruent to $\epsilon$ modulo 2 (including $0$ when $\epsilon=0$). It is not hard to see that $\nu_{i+1} = \nu_i - 2$ for all $1 \leqslant i \leqslant l-1$. Moreover, all parts of $\lambda$ have heights congruent to $\epsilon'$ modulo 2, since these heights are nothing but the parts of $\eta^\dagger$, and $\eta$ is of good parity by \cref{defi:quasi-dist_classical} (1). In particular, we have $h_\lambda(\nu_i) \equiv \epsilon' \bmod 2$ for all $i$.
	Combining the description of $H(\orb)$ in \eqref{eq:H(orb)_generator}, we deduce that $H(\orb) = \Abar(\orb) = A(\orb)$. 

	The equality $A(\orb) = \pi_1(\orb)$ for classical Lie algebras follows from \cite[Corollary 6.1.6]{CM}: when $G=\Sp(2n,\C)$ this is trivial; when $G=\SO(N)$, i.e., the dual type $\check{X} = C$ or $D$, the formulas in \eqref{eq:eta_dagger} tell us that $\lambda = (\eta^\dagger)^*$ must have part $1$ with even multiplicity, hence is not rather odd (defined above \cite[Corollary 6.1.6]{CM}).
	\footnote{Note that $A(\orb)$ in \cref{subsec:nil_classical} is defined with respect to the connected linear classical group $G$, not the adjoint group, but in this case component groups defined for different connected complex simple groups with the same Lie algebra $\fg$ all coincide with the fundamental group $\pi_1(\orb)$. The same holds for the exceptional groups.}

	For exceptional $\g$, let $G$ be the corresponding connected adjoint exceptional group. We can verify case by case that $H(\orb) = \Abar(\orb) = A(\orb)$ by \cite{Lusztig:unipotent} and \cite{JLSY:SpecialPiece}, and $A(\orb) = \pi_1(\orb)$ follows from tables in \cite{Alexeevski} (see also \cite[Section 8.4]{CM}).
\end{proof}

\subsubsection{Special pieces dual to quasi-distinguished orbits}
\label{subsec:special_piece_dist}

In this subsection, we study the geometry of special pieces associated to special orbits that are dual to quasi-distinguished orbits. Again all the statements for type $A$ are vacuous, so we ignore it. 

\begin{Prop}\label{prop:codim_sp_classical}
	Let $\fg$ be a complex simple Lie algebra of type $X \in \{B, C, D, \mC\}$, let $\ckfg$ be its Langlands/metaplectic dual Lie algebra of type $\check{X}$, and let $d: \check{\cN}_o \to \cN_o$ be the usual/metaplectic Barbasch-Vogan duality map. Let $\ckorb$ be a quasi-distinguished nilpotent orbit in $\ckfg^*$ and let $\orb = d(\ckorb)$ be the dual orbit in $\fg^*$. Then $\codim(\partial_{sp} \orb, \overline{\orb}) \geqslant 6$.
\end{Prop}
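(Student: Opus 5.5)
The plan is to work entirely with partitions. By \Cref{lem:d_quasi-dis}, $\orb=\orb_\lambda$ with $\lambda=(\eta^\dagger)^*$. Since $\partial_{sp}\orb$ is a finite union of closures $\overline{\orb}'$ of special orbits $\orb'\prec\orb$, it suffices to prove $\dim\orb-\dim\orb'\geqslant 6$ for every \emph{maximal} special orbit $\orb'=\orb_{\lambda'}$ strictly dominated by $\orb$.

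The first step is to record the rigid shape of $\lambda$, as in the proof of \Cref{lem:H_quasi-dist}. The columns of $\lambda$ are the parts of $\eta^\dagger$, all of parity $\epsilon'$. The parts $\nu_1>\cdots>\nu_l$ of parity $\not\equiv\epsilon$ satisfy $\nu_{i+1}=\nu_i-2$, and every height is $\equiv\epsilon'$. Hence in \eqref{eq:J} we have $J=\{1,\dots,l-1\}$, and $\lambda$ has the form
\[
[\,s_1+1,\ s_1^{2b_1},\ s_1-1=s_2+1,\ s_2^{2b_2},\ \dots\,],
\]
with each $\mu_i$ empty or absorbed. By \Cref{subsec:special_piece_classical}, $\cP(\orb)$ consists of the $2^k$ orbits obtained by collapsing any subset of the blocks $[s_j+1,s_j^{2b_j},s_j-1]$ to $[s_j^{2b_j+2}]$. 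Each such single collapse is a codimension-$2$ degeneration (of Kraft--Procesi type $b$, $c$ or $d$), and it stays inside $\cP(\orb)$.

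The second step, which is the heart of the argument, is to show that every $\epsilon$-partition $\lambda'<\lambda$ that is \emph{not} in $\cP(\orb)$ but is dominated by some element of $\cP(\orb)$ through a codimension-$2$ or codimension-$4$ step is non-special. This forces $\codim\geqslant 6$ for special $\lambda'$. I would use the Kraft--Procesi minimal-degeneration calculus: row/column cancellation preserves codimension and the singularity type, so every minimal degeneration $\lambda\to\lambda''$, or more generally $\mu\to\mu''$ with $\mu\in\cP(\orb)$, reduces to a small irreducible pair. Because the columns of $\lambda$ all have parity $\epsilon'$ and the parts of each block differ by at most $2$, the only minimal degenerations of codimension $2$ from elements of $\cP(\orb)$ are either
\begin{enumerate}
\item the block collapses above, or
\item moves that create a part of parity $\epsilon$ with height of the wrong parity, producing a non-special partition.
\end{enumerate}
I would then check that codimension-$4$ chains starting from $\cP(\orb)$ and leaving it likewise land on non-special partitions, using the specialness criterion of \Cref{subsec:special_duality_classical} (heights $h(a)\equiv\epsilon'$ for $a\equiv\epsilon$). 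Equivalently, I would verify directly that the "specialization" (the largest special partition below) of any such non-special $\lambda'$ lies at codimension $\geqslant6$. For the codimension count I would use the explicit formula $\dim\orb_\lambda=\tfrac12\bigl(N^2-\sum_i(\lambda^*_i)^2\bigr)\mp\tfrac12\#\{i:\lambda_i\text{ odd}\}$, with the sign depending on type. By cancellation, this reduces to computing on a window of at most two adjacent blocks.

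The main obstacle is this combinatorial case analysis. It has to handle all four types $B,C,D,\mC$ uniformly, including the formal part $0$ in types $C,\mC$ and the metaplectic specialness condition, and it has to ensure that no codimension-$4$ degeneration leaving the special piece reaches a special orbit. I would first try to organize it by showing that a maximal special $\lambda'\prec\lambda$ not in $\cP(\orb)$ must be dominated by the minimal element $\lambda_{\min}$ of $\cP(\orb)$ (all blocks collapsed). It is then obtained from $\lambda_{\min}$, whose parts are all of parity $\epsilon$ with even multiplicity, by the smallest special-preserving move. That move merges or splits across two adjacent blocks and costs exactly codimension $\geqslant 4$ beyond $\lambda_{\min}$, hence $\geqslant 6$ relative to $\lambda$ only if at least one block is present. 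The degenerate case where $\cP(\orb)=\orb$ is handled separately by the same local computation.
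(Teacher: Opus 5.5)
Your description of $\lambda=(\eta^\dagger)^*$ and of $\cP(\orb)$ is correct. The proposal still does not prove the statement: its whole content (that no special orbit lies at codimension $2$ or $4$ below $\orb$) is deferred to a case analysis you only describe, and both claims you plan to organize that analysis around are false.

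\emph{First}, a block collapse $[s_j+1,s_j^{2b_j},s_j-1]\to[s_j^{2b_j+2}]$ is not of codimension $2$ in general. Cancelling the first $s_j-1$ columns reduces it to $[2,1^{2b_j}]\to[1^{2b_j+2}]$ in $\fsp_{2b_j+2}$. That is a $c_{b_j+1}$ singularity of dimension $2b_j+2$. For example, the distinguished orbit $\eta=[9,7,3,1]$ in $\fso_{20}$ gives $\lambda=[3^4,2^2,1^4]$ and $\lambda_{\min}=[3^3,2^4,1^3]$, which sits at codimension $4$.

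\emph{Second}, the claim that every maximal special $\lambda'\prec\lambda$ outside $\cP(\orb)$ lies below $\lambda_{\min}$ fails in the same example. The partition $[3^4,1^8]$ is special. It is a minimal degeneration of $\lambda$: it reduces to $[2,2,1^4]\to[1^8]$ in $\fso_8$, of type $d_4$ and codimension $10$. It is not dominated by $[3^3,2^4,1^3]$, since the fourth partial sums are $12>11$. So arguing ``from $\lambda_{\min}$'' misses maximal special degenerations entirely. The case $\cP(\orb)=\orb$ is also left with no argument.

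What is missing is a complete list of the minimal special degenerations of $\orb$. The paper obtains it on the dual side. Since $d$ is an order-reversing bijection on special orbits, the maximal special orbits in $\partial_{sp}\orb$ are exactly the $d(\ckorb')$ with $\ckorb'$ minimal among special orbits strictly above $\ckorb$. These pairs are classified in the tables of Juteau--Levy--Sommers, which also give the singularity of the dual degeneration. The quasi-distinguished conditions on $\eta$ are good parity, multiplicities at most $2$, and the parity constraint on the index of a repeated part. They exclude the rows with a $\mu$-block and type $a$, and force $n\geqslant 2$ in the remaining types ($n\geqslant 3$ for $C_n^*$). Hence the dual singularities $c_n^{sp}$, $b_n^{sp}$, $d_{n+1}$, $d_{n+1}/V_4$, $c_{n-1}^{sp}$ all have dimension $\geqslant 6$.

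If you want to stay on the $\fg$ side, you need to run $\lambda$ itself through such a classification of minimal special degenerations. A chain-by-chain argument through $\cP(\orb)$ will not do it.
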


\begin{proof}

Let $X \in \{B, C, D, \mC\}$ be the type of the Lie algebra $\g$ and let $(\epsilon, \epsilon') = (\epsilon_X, \epsilon'_X)$ be as defined in \Cref{subsec:special_duality_classical}. Assume $\fg = \fg_{\epsilon}(N)$, so that $\ckfg = \fg_{\epsilon'}(N + \epsilon - \epsilon')$. 
We will consider all the minimal special degenerations of $\orb$ and show that their codimensions in $\overline{\orb}$ are all at least $6$. To list these minimal special degenerations, it is more convenient to work on the $\ckfg$ side. Namely, we consider all orbits $\ckorb'$ that are minimal among all special orbits that dominate $\ckorb$, then the orbits $\orb' = d(\ckorb')$ exhaust all the minimal special degenerations of $\orb$. The list of all minimal special degenerations in classical/metaplectic types of dimension 2 (resp. $\geqslant 4$) can be found in Table 1 (resp. Table 2) of \cite{JLS:Duality}. We refer to the Introduction of \cite{JLS:Duality} for the notation for the singularities mentioned below.

Recall from \cref{rem:quasi-dist_classical} that if $\ckorb_\eta$ is quasi-distinguished, then the multiplicity of each part of $\eta$ is at most $2$. This rules out all cases in \cite[Table 2]{JLS:Duality} involving the row $\mu$ since all such $\mu$ have at least 4 rows, which would imply that $\eta$ has at least one part of multiplicity greater than 2 and hence lead to contradiction. 

For the same reason, the integer $n$ in \cite[Table 1]{JLS:Duality} for the types $c$, $d$, and $e$ satisfy $n \geqslant 2$, whose corresponding singularities (of Slodowy slices) are $B_n$, $B_n$, and $[2B_n]^+$, respectively. By \cite[Figure 2]{JLS:Duality}, the singularities of their dual minimal special degenerations are all equivalent to $c_n^{sp}$, whose dimensions are $4n - 2 \geqslant 6$.

If the type $a$ case in \cite[Table 1]{JLS:Duality} occurs, then $\eta$ must have a part (which is $s+1$ in the notation of \cite[Table 1]{JLS:Duality}) congruent to $\epsilon_{\check{X}} = \epsilon'$ modulo 2, which is impossible since $\eta$ is of good parity by definition.

It remains to consider the type $b$ of \cite[Table 1]{JLS:Duality}. Again by \cref{rem:quasi-dist_classical}, if $\eta_i = \eta_{i+1}$, then $i \not\equiv \epsilon_X \equiv \epsilon'_{\check{X}} \bmod 2$. This means that $n=2$ in type $b$ of \cite[Table 1]{JLS:Duality} only occurs when $n = 2$ and the number $l = i-1$ of rows above the $i$-th row of $\eta$ as in \cite[Table 1]{JLS:Duality} is congruent to $\epsilon'_{\check{X}}$ modulo 2. The corresponding singularity is therefore $C_2$. When $n \geqslant 3$, the corresponding singularity is denoted as $C_n$ (resp. $C_n^*$) if the action of $\Abar(\ckorb')$ on the corresponding Slodowy slice $\cS_{\ckorb}$ transversal to the minimal special degeneration $\ckorb'$ in the closure of $\ckorb$ is trivial (resp. non-trivial). By \cite[Figure 2]{JLS:Duality}, the singularities of minimal special degenerations dual to $C_n$ are type $b_{n}^{sp}$, $d_{n+1}$, or $d_{n+1}/V_4$, all of dimension $4n-2 \geqslant 6$ (since $n \geqslant 2$), while the singularities dual to $C_n^*$ are $c_{n-1}^{sp}$ of dimension $4n-6 \geqslant 6$ (since $n \geqslant 3$). In both cases, the dual minimal special degenerations are of dimension $\geqslant 6$.

Therefore, in all cases, the minimal special degenerations of $\orb$ are of dimension $\geqslant 6$ and the proof is complete.
\end{proof}

For exceptional Lie algebras, the following proposition follows from a straightforward case-by-case check. The closure diagrams of nilpotent orbits of exceptional Lie algebras can be found in \cite{FJLS:generic_sing}. We list all non-distinguished quasi-distinguished orbits $\ckorb$ in \cref{tab:qD_exceptional} with $\orb = d(\ckorb)$, the fundamental groups $\pi_1(\orb)$ and Lusztig canonical quotient groups $\Abar(\orb)$ of $\orb$, the minimal special degenerations $\orb'_{sp}$ of $\orb$, the singularities of $\overline{\orb}$ along $\orb'_{sp}$, and their dimensions (we again refer the reader to \cite{JLS:Duality} for the notation for the singularities).

\begin{Prop} \label{prop:codim_sp_exceptional}
	Let $\fg$ be a complex exceptional simple Lie algebra, let $\ckfg$ be its Langlands dual Lie algebra, and let $d$ be the usual Barbasch-Vogan duality map. Let $\orb = d(\ckorb)$, where $\ckorb$ is a quasi-distinguished nilpotent orbit in $\ckfg^*$. Then exactly one of the following cases occurs:
	\begin{enumerate}
		\item \label{item:exc_codim_6}
			$\codim(\partial_{sp} \orb, \overline{\orb}) \geqslant 6$ (this is always the case when $\ckorb$ is distinguished), or
		\item \label{item:exc_codim_4}
			$\codim(\partial_{sp} \orb, \overline{\orb}) = 4$. In this case, we also have $\codim(\partial \orb, \overline{\orb}) = 4$.
	\end{enumerate}
\end{Prop}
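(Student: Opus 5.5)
This is a finite case-by-case verification, organized to mirror the proof of \Cref{prop:codim_sp_classical} but working from tables. The plan has three steps.

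\emph{Step 1: enumerate.} For each exceptional $\fg$ of type $G_2, F_4, E_6, E_7, E_8$, with $\ckfg$ of type $G_2, F_4, E_6, E_7, E_8$ respectively, I list all quasi-distinguished orbits $\ckorb$. These are:
\begin{itemize}
\item the distinguished orbits, which can be read off from the Bala--Carter labels with no proper Levi component;
\item the non-distinguished special orbits $\ckorb$ saturated from a special $\ckorb_{\check L}$ with $\bar\iota$ not an isomorphism. I check the last condition using Sommers' description of $\Abar$ and the injectivity of $\bar\iota$ noted after \cite[(2.1.2)]{MBMY}.
\end{itemize}
For each such $\ckorb$ I compute $\orb = d(\ckorb)$ from Spaltenstein's tables \cite{Spaltenstein}, or equivalently \cite{JLS:Duality}.

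\emph{Step 2: bound codimensions.} For each $\orb$, I determine the minimal special degenerations $\orb'_{sp}$ from the closure diagrams in \cite{FJLS:generic_sing}. I prefer the dual route used in the classical case: take the special orbits $\ckorb'$ minimal among those strictly dominating $\ckorb$, and set $\orb'_{sp} = d(\ckorb')$. Since $d$ is an order-reversing bijection on special orbits \eqref{eq:duality_sp}, this exhausts the minimal special degenerations. I then read off the singularity type of $\overline{\orb}$ along $\orb'_{sp}$ from \cite{FJLS:generic_sing} and \cite{JLS:Duality}, and record
\[
\dim\orb - \dim\orb'_{sp}.
\]
The quantity $\codim(\partial_{sp}\orb,\overline{\orb})$ is the minimum of these values over all $\orb'_{sp}$, which proves the dichotomy once the values are tabulated. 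The non-distinguished cases are exactly what \cref{tab:qD_exceptional} records. For distinguished $\ckorb$, the dual orbit is typically small, for example $0$ when $\ckorb$ is regular, and I check that every minimal special degeneration has dimension at least $6$. The dimension-$2$ degenerations are excluded using a structural fact: a dimension-$2$ minimal special degeneration $\orb'_{sp}\subset\overline{\orb}$ corresponds dually to a degeneration $\ckorb\subset\overline{\ckorb'}$ of a type with large dimension, and such configurations arise only when $\ckorb$ is saturated from a Levi factor.

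\emph{Step 3: the codimension-$4$ cases.} For each case with $\codim(\partial_{sp}\orb,\overline{\orb}) = 4$, I must also show $\codim(\partial\orb,\overline{\orb}) = 4$. Since $\partial\orb \supset \partial_{sp}\orb$, the codimension of $\partial\orb$ is at most $4$. It remains to rule out a non-special orbit $\orb''$ of codimension $2$ in $\overline{\orb}$, that is, a codimension-$2$ degeneration lying inside the special piece $\cP(\orb)$. By \Cref{lem:H_quasi-dist} and \Cref{thm:special_piece}, $\cP(\orb)$ is a quotient of a smooth variety by $H(\orb)$. In exceptional type either $H(\orb)=\Abar(\orb)$ or the special piece is trivial, so I can check directly in the closure diagram whether any orbit of codimension $2$ lies in $\cP(\orb)\setminus\orb$.

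\emph{Main obstacle.} I expect the bookkeeping in Step 1 to be the hard part: deciding quasi-distinguishedness for non-distinguished orbits in $E_7$ and $E_8$ requires comparing canonical quotients under saturation. I would first handle it by listing the special orbits $\ckorb$ with $\Abar(\ckorb)\neq 1$ that are not distinguished, since for these $\bar\iota$ can fail to be surjective. I would then cross-check the resulting list against Adams' notes \cite{Adams:quasi-dist}.
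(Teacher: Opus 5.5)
Your plan matches the paper's proof. The proposition is proved by a finite case check against the closure diagrams of \cite{FJLS:generic_sing} and the duality and singularity data of \cite{JLS:Duality}, and the non-distinguished quasi-distinguished cases, with their minimal special degenerations and dimensions, are collected in \cref{tab:qD_exceptional}.

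Three points need tightening.

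\emph{Step 1.} The definition requires $\bar\iota$ to fail to be an isomorphism for \emph{every} proper Levi $\check L$ and \emph{every} special $\ckorb_{\check L}$ with $\Sat_{\check L}^{\check G}\ckorb_{\check L}=\ckorb$. It is not enough that it fails for one of them; otherwise you would admit orbits for which the dichotomy is not claimed. Your filter $\Abar(\ckorb)\neq 1$ is a valid necessary condition. The reason is that the Bala--Carter Levi orbit is distinguished and therefore special, and it then gives an isomorphic $\bar\iota$ whenever $\Abar(\ckorb)=1$.

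\emph{Step 2.} The ``structural fact'' is not an available result. For distinguished $\ckorb$ it is essentially the assertion you are trying to verify, and it would not exclude dimension-$4$ degenerations in any case. Drop it and rule out the dimension-$2$ cases by reading the dimensions off the tables as well.

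\emph{Step 3.} \Cref{lem:H_quasi-dist} and \Cref{thm:special_piece} do no work here. In the codimension-$4$ cases, the absence of a codimension-$2$ orbit in $\overline{\orb}$ is a direct lookup in the closure diagram.
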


\begin{table}[htbp]
\caption{Non-distinguished quasi-distinguished orbits in exceptional Lie algebras}\label{tab:qD_exceptional}
\centering
\renewcommand{\arraystretch}{1.22}
\setlength{\extrarowheight}{2.2pt}
\setlength{\tabcolsep}{4pt}
\begin{tabular}{|c|c|c|c|c|c|c|c|}
\hline
Type & $\ckorb$ & $\orb$ & $\pi_1(\orb)$ & $\Abar(\orb)$ & $\orb'_{sp}$ & Singularity & dim \\
\hline
\multirowcell{2}{$E_6$} & \multirowcell{2}{$D_4(a_1)$} & \multirowcell{2}{$D_4(a_1)$} & \multirowcell{2}{$\fS_3$} & \multirowcell{2}{$\fS_3$} & $A_3$ & $b_2^{sp}$ & $6$ \\ \cline{6-8}
 &  &  &  &  & $2A_2$ & $g_2^{sp}$ & $10$ \\ \hline
\multirowcell{2}{$E_7$} & \multirowcell{2}{$E_6(a_1)$} & \multirowcell{2}{$A_2+A_1$} & \multirowcell{2}{$\fS_2$} & \multirowcell{2}{$\fS_2$} & $A_2$ & $a_5^+$ & $10$ \\ \cline{6-8}
 &  &  &  &  & $(3A_1)''$ & $f_4^{sp}$ & $22$ \\ \hline
\multirowcell{2}{$E_7$} & \multirowcell{2}{$A_4+A_1$} & \multirowcell{2}{$A_4+A_1$} & \multirowcell{2}{$\fS_2$} & \multirowcell{2}{$\fS_2$} & $A_3+A_2+A_1$ & $a_2/\fS_2$ & $4$ \\ \cline{6-8}
 &  &  &  &  & $A_4$ & $a_2^+$ & $4$ \\ \hline
\multirowcell{2}{$E_8$} & \multirowcell{2}{$D_7(a_2)$} & \multirowcell{2}{$A_4+A_1$} & \multirowcell{2}{$\fS_2$} & \multirowcell{2}{$\fS_2$} & $D_4(a_1)+A_2$ & $a_2^+$ & $4$ \\ \cline{6-8}
 &  &  &  &  & $A_4$ & $a_4^+$ & $8$ \\ \hline
$E_8$ & $E_6(a_1)+A_1$ & $A_4+2A_1$ & $\fS_2$ & $\fS_2$ & $A_4+A_1$ & $a_2^+$ & $4$ \\ \hline
\end{tabular}
\end{table}

The following main theorem of this section strengthens \eqref{eq:codim_4} in \Cref{thm:birigid} (also see \Cref{prop:dual_dist}). 

\begin{Thm}\label{thm:codim_X}
	Let $\fg$ be a complex simple Lie algebra, let $\ckfg$ be its Langlands/metaplectic dual Lie algebra, and let $d$ be the usual/metaplectic Barbasch-Vogan duality map. Let $\orb = d(\ckorb)$, where $\ckorb$ is a quasi-distinguished nilpotent orbit in $\ckfg^*$. Let $\widetilde{\orb}$ be the universal cover of $\orb$ and $X = \spec(\C[\widetilde{\orb}])$. Then exactly one of the following two cases occurs:
    \begin{enumerate}[label=(\roman*)]
        \item $\codim(\partial \orb, \overline{\orb}) = 2$. In this case, $\codim(\partial_{sp} \orb, \overline{\orb}) \geqslant 6$ and $\codim(X^{sing}, X) \geqslant 6$;
        \item $\codim(\partial \orb, \overline{\orb}) \geqslant 4$.
    \end{enumerate}	
	Moreover, whenever $\codim(\partial_{sp} \orb, \overline{\orb}) \geqslant 6$, we have $\codim(X^{sing}, X) \geqslant 6$. This is always the case when $\ckorb$ is distinguished.
\end{Thm}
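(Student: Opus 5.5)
The core claim is the \emph{Moreover} part: if $\codim(\partial_{sp}\orb,\overline{\orb})\geqslant 6$, then $\codim(X^{sing},X)\geqslant 6$. The dichotomy (i)/(ii) and the statement about distinguished orbits will then follow from \Cref{prop:codim_sp_classical} and \Cref{prop:codim_sp_exceptional}.

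For the main claim, consider the finite surjective morphism $\pi: X\to\overline{\orb}$ induced by $\widetilde{\orb}\to\orb$. By \Cref{lem:H_quasi-dist}, $\widetilde{\orb}$ is the universal cover, so $X=X_F$ with $F=1$. Since $F\cap H(\orb)=1$ trivially, \Cref{thm:special_piece} shows that $\tilde{\cP}(\orb)_1=\pi^{-1}(\cP(\orb))$ is smooth. Hence $X^{sing}\subset\pi^{-1}(\partial_{sp}\orb)$. Because $\pi$ is finite and surjective, hence dimension-preserving on closed subsets, and $X$ is irreducible of dimension $\dim\orb$, we get
\[
\codim(X^{sing},X)\;\geqslant\;\codim\bigl(\pi^{-1}(\partial_{sp}\orb),X\bigr)\;=\;\codim(\partial_{sp}\orb,\overline{\orb})\;\geqslant\;6.
\]
I expect the only delicate point to be confirming that $\C[\widetilde{\orb}]$ is finite over $\C[\overline{\orb}]$, so that $\pi$ really is finite. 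This holds because $\C[\widetilde{\orb}]$ is the integral closure of $\C[\overline{\orb}]$ in the function field of $\widetilde{\orb}$, which is a finite extension; this is standard and implicitly used in \Cref{thm:special_piece}. For distinguished $\ckorb$, \Cref{prop:codim_sp_classical} in classical and metaplectic types and \Cref{prop:codim_sp_exceptional}(1) in exceptional types give $\codim(\partial_{sp}\orb,\overline{\orb})\geqslant 6$, so the conclusion applies.

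For the dichotomy, codimensions of boundary strata in $\overline{\orb}$ are even, since orbits are symplectic, so exactly one of $\codim(\partial\orb,\overline{\orb})=2$ or $\geqslant 4$ holds. Suppose the codimension is $2$. In classical and metaplectic types, \Cref{prop:codim_sp_classical} gives $\codim(\partial_{sp}\orb,\overline{\orb})\geqslant 6$ unconditionally. In exceptional types, case (2) of \Cref{prop:codim_sp_exceptional} would force $\codim(\partial\orb,\overline{\orb})=4$, which is a contradiction, so case (1) holds. In either situation the main claim gives $\codim(X^{sing},X)\geqslant 6$, which is (i). Otherwise (ii) holds and nothing more needs to be shown.

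The main obstacle is already absorbed in the cited results. The real content is the combination of \Cref{thm:special_piece}, which gives smoothness over the whole special piece rather than merely over $\orb$, with \Cref{lem:H_quasi-dist}, which identifies the universal cover with $X_1$. Given these, the proof is just the dimension count above.
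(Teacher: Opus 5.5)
Your proposal is correct and takes the paper's approach: the dichotomy and the distinguished case come from \Cref{prop:codim_sp_classical} and \Cref{prop:codim_sp_exceptional}. The bound on $X^{sing}$ comes from \Cref{thm:special_piece} with $F=1$ combined with \Cref{lem:H_quasi-dist}, and you have additionally spelled out the finiteness of $X\to\overline{\orb}$ and the dimension count that the paper leaves implicit.
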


\begin{proof}
	The first part of the statement follows from \cref{prop:codim_sp_classical} and \cref{prop:codim_sp_exceptional}.
	Whenever $\codim(\partial_{sp} \orb, \overline{\orb}) \geqslant 6$, we have $\codim(X^{sing}, X) \geqslant 6$ by \Cref{thm:special_piece} and \Cref{lem:H_quasi-dist}. 
\end{proof}

\section{Admissible vector bundles and twisted \texorpdfstring{$\sD$}{D}-modules} \label{sec:adm_bundle}

The associated cycles of special unipotent representations studied in this paper are quite constrained. They turn out to be \emph{admissible orbit data} introduced in \cite[Section 7]{Vogan_AV}, whose definition will be recalled in \Cref{subsec:AOD}. We will prove this eventually in \Cref{thm:Unip_AOD_bij}. Standard references are \cite{Vogan_AV, Ohta:Adm, Schwartz}. However, we will treat admissible orbit data in terms of modules over certain Picard algebroids, which are more suitable for our purposes.

\subsection{Picard Lie algebroids and twisted \texorpdfstring{$\sD$}{D}-modules} \label{subsec:picard}

\subsubsection{Picard algebroids and TDOs} \label{subsec:pic_defn}

We recall the notion of Picard algebroid, its basic properties, and its relation with twisted differential operators (TDOs) and twisted $\mathscr{D}$-modules from \cite[Section 2]{BeilinsonBernstein}. The notion of a Picard algebroid is a special case of the notion of a Lie algebroid, whose definition can be found in \cite[Section 1.2]{BeilinsonBernstein}. Let $X$ be a smooth variety and $\T_X$ (resp. $\T_X^*$) be its tangent (resp. cotangent) sheaf. 

\begin{defi} \label{defn:pic}
	A {\it Picard Lie algebroid}, or simply a {\it Picard algebroid} over $X$, is a Lie algebroid $\TT$ over $X$ with $\OO_X$-linear anchor map $\eta: \TT \to \T_X$ whose kernel is $\OO_X \cdot 1_{\TT}$, generated by a central section $1_{\TT}$ of $\TT$. 
	A morphism of Picard algebroids is a morphism of Lie algebroids that preserves the distinguished sections $1_{\TT}$, which is necessarily an isomorphism.
\end{defi}

For any Picard algebroid $\TT$ over $X$, we have an exact sequence of sheaves of $\OO_X$-modules,
	\[ 0 \to \OO_X \xrightarrow{\iota_\TT} \TT \xrightarrow{\eta} \T_X \to 0, \]
where $\iota_\TT(f) = f \cdot 1_{\TT}$ for all $f \in \OO_X$, and all maps in the sequence are Lie algebra homomorphisms (we endow $\OO_X$ with the trivial Lie algebra structure).

Let $\PicAlg(X)$ denote the set of isomorphism classes of Picard algebroids over $X$. The Baer sum construction defines a natural vector-space structure over $\C$ on $\PicAlg(X)$. 
Namely, for any $\TT_i \in \PicAlg(X)$ and $r_i \in \C$, $i = 1,2$, the linear combination $\TT = r_1 \TT_1 + r_2 \TT_2$ is a Picard algebroid equipped with a morphism of Lie algebroids $s_{r_1, r_2}: \TT_1 \times_{\T_X} \TT_2 \to \TT$ such that $s_{r_1, r_2} (f_1, f_2) = r_1 f_1 + r_2 f_2$ for any $f_i \in \OO_X$, $i = 1,2$.
The zero vector in $\PicAlg(X)$ is given by the trivial Picard algebroid $\TT_{\operatorname{triv}} = \OO_X \oplus \T_X$.

To any given Picard algebroid $\TT$, we can associate a sheaf $\sD_\TT$ of \emph{twisted differential operators (TDOs)} in the sense of \cite[Definition 2.1.1]{BeilinsonBernstein}, defined as the quotient of the universal enveloping sheaf $\mathscr{U}(\TT)$ modulo the ideal generated by the central section $1 - 1_\TT$, where $1$ is the identity section of $\mathscr{U}(\TT)$ and $1_\TT$ is the central section of $\TT$. Then $\sD_\TT$ is equipped with an injective morphism of algebras $\iota_{\sD}: \OO_X \hookrightarrow \sD_{\TT}$, and an increasing exhaustive algebra filtration $\sD_{\TT, \leqslant i}$, $i \geqslant 0$. We say that $\sD_\TT$ is a {\it sheaf of TDOs with twist $\TT$}.

The morphism $\iota_{\sD}$ identifies $\OO_X$ with $\sD_{\TT, \leqslant 0}$.
The subsheaf $\sD_{\TT, \leqslant 1}$ of $\sD_\TT$ is naturally a Picard algebroid with Lie bracket given by the commutator in $\mathscr{U}(\TT)$.
The composite map $\TT \hookrightarrow \mathscr{U}(\TT) \to \sD_{\TT}$ gives an isomorphism between $\TT$ and $\sD_{\TT, \leqslant 1}$ as Picard algebroids. The constructions above show that $\TT \mapsto \sD_\TT$ gives an equivalence between the category of Picard algebroids and the category of sheaves of TDOs. Under this equivalence, the trivial Picard algebroid $\TT_{\operatorname{triv}}$ corresponds to $\sD_X$, the sheaf of usual (untwisted) differential operators on $X$.

To any line bundle $L$ over $X$, i.e., a locally free $\OO_X$-module of rank $1$, we can associate a sheaf of TDOs 
	\[ \sD_L := \Diff(L) \simeq L \otimes_{\OO_X} \sD_X \otimes_{\OO_X} L^{-1} \]
as the sheaf of local differential endomorphisms of $L$, equipped with the natural filtration $\sD_{L, \leqslant i}$ by orders of differential operators. Then $\TT(L) := \sD_{L, \leqslant 1}$ forms a Picard algebroid by the discussion above.
Let $\Pic(X)$ be the Picard group of $X$. Then the construction above gives a homomorphism of abelian groups
\begin{equation} \label{eq:Pic2PicAlg}
	\TT(\cdot): \Pic(X) \to \PicAlg(X), \quad L \mapsto \TT(L), 
\end{equation}
which transforms the tensor product of line bundles into the Baer sum of Picard algebroids. 


We now define the category of modules over Picard algebroids and TDOs. Given a quasi-coherent $\OO_X$-module $\cE$, a (left) $\TT$-module structure on $\cE$ is equivalent to a (left) $\sD_\TT$-module structure on $\cE$, such that the $\OO_X$-action on $\cE$ via $\iota_{\sD}: \OO_X \hookrightarrow \sD_{\TT}$ agrees with the $\OO_X$-module structure on $\cE$. Let $\Mod(\TT)$ and $\Mod(\sD_\TT)$ denote the abelian categories of $\TT$-modules and $\sD_\TT$-modules, respectively, so that we have a canonical equivalence $\Mod(\TT) \simeq \Mod(\sD_\TT)$ of categories. 

We are mainly interested in $\TT$-modules or $\sD_\TT$-modules $\cE$ that are coherent as $\OO_X$-modules. It is well-known (\cite[Lemma 2.3.1 (i)]{BeilinsonBernstein}) that the $\OO_X$-coherence of a $\sD_\TT$-module $\cE$ is equivalent to local freeness of $\cE$ as an $\OO_X$-module.  
Let $\Coh(\TT)$ and $\Coh(\sD_\TT)$\footnote{There is also a notion of a $\sD_\TT$-module that is coherent as a $\sD_\TT$-module (see \cite[Section 1.4]{HTT:D-modules}), which is different from $\OO_X$-coherence. We denote the category of coherent $\sD_\TT$-modules by $\Mod_c(\sD_\TT)$ (but we will never use it). Of course, $\OO_X$-coherence implies $\sD_\TT$-coherence.} denote the Serre subcategories of all $\OO_X$-coherent modules in $\Mod(\TT)$ and $\Mod(\sD_\TT)$, respectively. Then we have an equivalence $\Coh(\TT) \simeq \Coh(\sD_\TT)$ of categories.

\subsubsection{Inverse image functors} \label{subsubsec:pullback_pic}

Let $\varphi: Y \to X$ be a morphism of smooth varieties. Given a Picard algebroid $(\TT, \eta, 1_\TT)$ over $X$, one can define the pullback Picard algebroid $\varphi^{\circ} \TT$ over $Y$ as in \cite[Section 1.4.6 and 2.2]{BeilinsonBernstein}. Namely, let $\varphi^* \TT$ denote the usual inverse image of the $\OO_X$-module $\TT$ via $\varphi$ as an $\OO_Y$-module. Then $\varphi^{\circ} \TT$, as an $\OO_Y$-module, is defined to be the fiber product 
\begin{equation} \label{eq:pullbackTT}
	\varphi^{\circ} \TT := \varphi^* \TT \times_{\varphi^* \T_X} \T_Y 
\end{equation}
with respect to the morphisms 
\[ 
	\varphi^*\TT \xrightarrow{\varphi^*(\eta)} \varphi^* \T_X \xleftarrow{d \varphi} \T_Y, 
\] 
where $d \varphi: \T_Y \to \varphi^* \T_X$ is the differential of $\varphi$.
The anchor map is the projection map from $\varphi^{\circ} \TT$ to $\T_Y$ and $1_{\varphi^{\circ} \TT} = (\varphi^* 1_{\TT}, 0)$. The Lie bracket $[\cdot, \cdot]_{\varphi^{\circ} \TT}$ on $\varphi^{\circ} \TT$ is given by the formula
\[ 
	[ (f_1 \otimes l_1, \zeta_1),  (f_2 \otimes l_2, \zeta_2) ]_{\varphi^{\circ} \TT} := ( f_1 f_2 [l_1, l_2]_{\TT} + d\varphi(\zeta_1)(l_2) - d\varphi(\zeta_2)(l_1), [\zeta_1, \zeta_2]_{\T_Y} ), 
\]
for any local sections $f_i \in \OO_X$, $l_i \in \varphi^{-1}\TT$, $\zeta_i \in \T_Y$.
The construction gives a homomorphism of abelian groups
\[ 
	\varphi^\circ: \PicAlg(X) \to \PicAlg(Y), \quad \TT \mapsto \varphi^\circ \TT. 
\]
We also have the standard inverse image map $\varphi^*: \Pic(X) \to \Pic(Y)$ of line bundles given by $L \mapsto \varphi^* L$. We have the following commutative diagram:
\begin{equation} \label{diag:Pic2PicAlg_pullback}
	\begin{tikzcd}
		\Pic(X) \ar[r, "\varphi^*"] \ar[d] & \Pic(Y) \ar[d] \\
		\PicAlg(X) \ar[r, "\varphi^\circ"] & \PicAlg(Y)
	\end{tikzcd}
\end{equation}
where the vertical maps are given by \eqref{eq:Pic2PicAlg}.

Equivalently, one can define the pullback of the corresponding sheaf $\sD_\TT$ of TDOs as follows. Let 
\[ 
	\sD_\TT^{Y \to X} := \OO_Y \otimes_{\varphi^{-1} \OO_X} \varphi^{-1} \sD_\TT.
\]
This is a sheaf over $Y$ of $\OO_Y$-$\varphi^{-1} \sD_\TT$-bimodules. Then we define 
\[ 
\varphi^\circ \sD_\TT := \Diff_{\varphi^{-1} \sD_\TT} (\sD_\TT^{Y \to X}) 
\]
to be the sheaf over $Y$ of all $\OO_Y$-differential endomorphisms of $\sD_\TT^{Y \to X}$ that commute with the right $\varphi^{-1} \sD_\TT$-action. Then $\varphi^\circ \sD_\TT$ is a sheaf of TDOs over $Y$. It is not hard to see that $\varphi^\circ \sD_\TT$ corresponds to the Picard algebroid $\varphi^\circ \TT$ defined above, so that we have a canonical isomorphism $\varphi^\circ \sD_\TT \simeq \sD_{\varphi^\circ \TT}$ of sheaves of TDOs. 

For any $\TT$-module $\cE$ over $X$, the usual inverse image $\varphi^*\cE = \OO_Y \otimes_{\varphi^{-1} \OO_X} \varphi^{-1} \cE$ of $\cE$ as a $\OO_X$-module naturally admits the structure of a $\varphi^\circ \TT$-module. Regarding $\cE$ as a left $\sD_\TT$-module, we have 
	\[ \varphi^*\cE \simeq \sD_{\TT}^{Y \to X} \otimes_{\varphi^{-1} \sD_\TT} \varphi^{-1} \cE. \] 
This defines inverse image functors 
	\[ \varphi^*: \Mod(\TT) \to \Mod(\varphi^\circ \TT) \quad \text{and} \quad \varphi^*: \Mod(\sD_\TT) \to \Mod(\sD_{\varphi^\circ \TT}). \] 
When $\cE$ is $\OO_X$-coherent, $\varphi^*\cE$ is $\OO_Y$-coherent. Therefore $\varphi^*$ restricts to functors 
	\[ \varphi^*: \Coh(\TT) \to \Coh(\varphi^\circ \TT) \quad \text{and} \quad \varphi^*: \Coh(\sD_\TT) \to \Coh(\sD_{\varphi^\circ \TT}). \]

\subsubsection{Direct image functors} \label{subsubsec:pushforward_pic}

The definition of the direct image functor $\varphi_+$ of $\TT$-modules or left $\sD_\TT$-modules is more involved, see \cite[Section 1.3]{HTT:D-modules}. We need some preparation before giving the definition. For a sheaf of TDOs $\sD_\TT$ with twist $\TT$ over $X$, its opposite algebra $\sD_\TT^{op}$ is also a sheaf of TDOs with twist being the Picard algebroid $\TT^{op}$ defined in \cite[Section 2.4]{BeilinsonBernstein} by
\begin{equation}\label{eq:TTop}
	\TT^{op} = \TT(\omega_X) - \TT,
\end{equation}
where $\omega_X := \bigwedge^{\mathrm{top}}\T_X^*$ denotes the canonical line bundle of $X$ and the difference on the right-hand side is defined via the vector space structure of $\PicAlg(X)$. 

Note that, for any $\TT_i \in \PicAlg(X)$ and $\cE_i \in \Mod(\TT_i)$, $i = 1,2$, the tensor product $\cE_1 \otimes_{\OO_X} \cE_2$ is naturally a module over the Picard algebroid $\TT_1 + \TT_2$. Therefore, we have an equivalence of categories
\[ 
	\omega_X \otimes_{\OO_X} (\cdot): \Mod(\sD_\TT) \to \Mod(\sD_{\TT + \TT(\omega_X)}) 
\]
whose quasi-inverse is given by $\cE \mapsto \omega_X^{-1} \otimes_{\OO_X} \cE$. We have a similar equivalence for $Y$. Note that a left $\sD_{\TT + \TT(\omega_X)}$-module can be regarded as a right module over $\sD_{\TT + \TT(\omega_X)}^{op} \simeq \sD_{-\TT}$ by \eqref{eq:TTop}. Then we can define the direct image functor $\varphi_+: \Mod(\sD_{\varphi^\circ \TT}) \to \Mod(\sD_{\TT})$ so that it fits into the following commutative diagram
\[
	\begin{tikzcd}
		\Mod(\sD_{\varphi^\circ \TT}) \ar[r, "\varphi_+"] \ar[d, "\isovert", "\omega_Y \otimes_{\OO_Y} (\cdot)"'] & \Mod(\sD_{\TT}) \ar[d, "\omega_X \otimes_{\OO_X} (\cdot)", "\isovert"'] \\
		\Mod(\sD_{-\varphi^\circ \TT}^{op}) \ar[r] & \Mod(\sD_{-\TT}^{op})
	\end{tikzcd}
\]
where the lower horizontal functor is given by 
\[ 
	\Mod(\sD_{-\varphi^\circ \TT}^{op}) \to \Mod(\sD_{-\TT}^{op}), \quad \cE \mapsto \varphi_* (\cE \otimes_{\sD_{-\varphi^\circ \TT}} \sD_{-\TT}^{Y \to X}),
\]
where $\varphi_*$ denotes the usual sheaf-theoretic push-forward functor. Note that $\varphi^\circ(-\TT) = -\varphi^\circ \TT$, hence the $\sD_{-\varphi^\circ \TT}$-$\varphi^{-1} \sD_{-\TT}$-bimodule $\sD_{-\TT}^{Y \to X}$ is well-defined. We define the $\varphi^{-1} \sD_{\TT}$-$\sD_{\varphi^\circ \TT}$-bimodule $\sD_{\TT}^{X \leftarrow Y}$ by
\[ 
	\sD_{\TT}^{X \leftarrow Y} := \omega_Y \otimes_{\OO_Y} \sD_{-\TT}^{Y \to X} \otimes_{\varphi^{-1} \OO_X} \varphi^{-1} \omega_X^{-1}.
\]
Then the direct image functor $\varphi_+: \Mod(\sD_{\varphi^\circ \TT}) \to \Mod(\sD_{\TT})$ is defined as
\[ 
	\varphi_+ \cE = \varphi_*(\sD_{\TT}^{X \leftarrow Y} \otimes_{\sD_{\varphi^\circ \TT}} \cE). 
\]

We are mainly interested in the following situation later in the paper, especially in \Cref{sec:adm_bundle}. Let $\varphi: Y \to X$ be an \'{e}tale morphism. We write 
	\[ \TT^+_X := \frac{1}{2} \TT(\omega_X) \quad \text{and} \quad \TT^+_Y := \frac{1}{2} \TT(\omega_Y). \]
The canonical morphism $(d\varphi)^*: \varphi^*\T_X \to \T_Y$ dual to the differential morphism $d\varphi: \T_Y \to \varphi^*\T_X$ is an isomorphism. Also, we have $\dim X = \dim Y$. Therefore we have a canonical isomorphism $\varphi^*\omega_X \simeq \omega_Y$, which induces a canonical isomorphism $\varphi^\circ \TT^+_X \simeq \TT^+_Y$ of Picard algebroids. In fact, in this case we have $\varphi^\circ \TT^+_X \simeq \varphi^*\TT^+_X$ by \eqref{eq:pullbackTT}. Then, as in the case of untwisted sheaves of TDOs, one can show that the $\sD$-module push-forward functor 
\[
	\varphi_+: \Mod(\sD_{\TT^+_Y}) \simeq \Mod(\sD_{\varphi^\circ \TT^+_X}) \to \Mod(\sD_{\TT^+_X})
\] 
coincides with the sheaf-theoretic push-forward functor $\varphi_*$. If in addition $\varphi$ is a projective or finite \'{e}tale morphism, then $\varphi_+ = \varphi_*$ sends coherent $\OO_Y$-modules to coherent $\OO_X$-modules, and hence we have a functor
\[
	\varphi_*: \Coh(\sD_{\TT^+_Y}) \to \Coh(\sD_{\TT^+_X}).
\]

\subsubsection{Equivariance} \label{subsubsec:pic_equiv}

Let $K$ be an algebraic group with $\Lie(K) = \fk$. Let $X$ be a smooth variety equipped with an algebraic $K$-action $\alpha: K \times X \to X$. The differential of $\alpha$ is a Lie algebra homomorphism $d \alpha: \fk \to \Gamma(X, \T_X)$. The following definitions from \cite[Section 1.8]{BeilinsonBernstein} give the right setup for equivariance of Picard algebroids and their modules.

\begin{defi}\label{defn:pic_action}
	Let $(\TT, \eta, 1_\TT)$ be a Picard Lie algebroid over $X$. A {\it (weak) $K$-action} or a {\it weakly $K$-equivariant structure} $\alpha_\TT$ on $\TT$ is a $K$-action on $\TT$ by automorphisms of Picard algebroids such that it lifts the action $\alpha$ on $X$ and makes $\TT$ into a $K$-equivariant coherent sheaf of $\OO_X$-modules. In particular, it satisfies $k^* 1_\TT = 1_\TT$ for any $k \in K$.
\end{defi}

Let $\der(\TT)$ denote the sheaf of Lie algebras of derivations of $\TT$. A weak $K$-action $\alpha_\TT$ on $\TT$ induces a Lie algebra morphism $d\alpha_\TT: \fk \to \der(\TT)$ by differentiation.

\begin{defi}\label{defn:strong_pic_action}
	A {\it strong $K$-action} on $\TT$ is a pair $(\alpha_\TT, \nu_\fk)$, where $\alpha_\TT$ is a weak $K$-action on $\TT$ and $\nu_\fk: \fk \to \Gamma(X, \TT)$ is a morphism of Lie algebras, such that
	\begin{enumerate}[label=(\roman*)]
		\item
		$\nu_\fk$ is $K$-equivariant with respect to the adjoint $K$-action on $\fk$ and the $K$-action on $\Gamma(X, \TT)$ induced by $\alpha_\TT$;
		\item
		the differential $d\alpha_\TT$ of $\alpha_\TT$ coincides with the adjoint $\fk$-action $\ad_{\nu_\fk}$ induced by $\nu_\fk$.
	\end{enumerate}
A morphism between Picard algebroids with strong $K$-actions is required to be $K$-equivariant and intertwine the $\nu_\fk$'s, which is necessarily an isomorphism. 

Let $\PicAlg_K(X)$ denote the set of all isomorphism classes of Picard algebroids with compatible strong $K$-actions. As in the non-equivariant case, $\PicAlg_K(X)$ is a vector space under Baer sum. The zero vector is the trivial Picard algebroid $\TT_{triv} = \OO_X \oplus \T_X$ with the Lie algebra morphism $\nu_\fk: \fk \to \Gamma(X, \TT_{triv})$ given by the composition of 
\begin{equation} \label{eq:nu_triv_pic}
	\fk \to \Gamma(X, \T_X) \to \Gamma(X, \OO_X \oplus \T_X),
\end{equation} 
where $\fk \to \Gamma(X, \T_X)$ is given by the differentiation of the $K$-action on $X$ and $\Gamma(X, \T_X) \to \Gamma(X, \OO_X \oplus \T_X)$ is induced by the natural inclusion $\T_X \hookrightarrow \OO_X \oplus \T_X$ of sheaves.
\end{defi}

A strong $K$-action on $\TT$ is the same as a $K$-action on the associated sheaf of TDOs $\mathscr{D}_\TT$ by algebra automorphisms, such that the inclusion map $\iota_\TT: \OO_X \hookrightarrow \mathscr{D}_\TT$ is equivariant, together with a Lie algebra map $\nu_\fk: \fk \to \mathscr{D}_{\TT, \leqslant 1}$ that satisfies conditions similar to (i) and (ii) above. In this case, we also say that $\mathscr{D}_\TT$ carries a strong $K$-action.

Let $\Pic_K(X)$ denote the equivariant Picard group of $X$ consisting of all ($K$-equivariant) isomorphism classes of $K$-equivariant line bundles over $X$. Note that a $K$-action on a line bundle $L$ induces a strong $K$-action on the associated Picard algebroid $\TT(L)$. This enhances \eqref{eq:Pic2PicAlg} in the non-equivariant case to a homomorphism of abelian groups $\Pic_K(X) \to \PicAlg_K(X)$.

For a $K$-equivariant morphism $\varphi: Y \to X$ of smooth varieties with $K$-actions and $\TT \in \PicAlg_K(X)$, the pullback $\varphi^\circ \TT$ also carries an induced strong $K$-action, so that we also have a homomorphism $\varphi^\circ: \PicAlg_K(X) \to \PicAlg_K(Y)$ of vector spaces (see \cite[Section 2.4]{Yu:period} for more details). As in \eqref{diag:Pic2PicAlg_pullback}, we have a commutative diagram 
\begin{equation} \label{eq:Pic2PicAlg_pullback_equiv}
	\begin{tikzcd}
		\Pic_K(X) \ar[r, "\varphi^*"] \ar[d] & \Pic_K(Y) \ar[d] \\
		\PicAlg_K(X) \ar[r, "\varphi^\circ"] & \PicAlg_K(Y)
	\end{tikzcd}
\end{equation}

We now pass from equivariant Picard algebroids to equivariant modules.

\begin{defi}\label{defn:pic_module_action}
	Let $\TT$ be a Picard algebroid over $X$ equipped with a weak $K$-action $\alpha_\TT$. A {\it weakly $K$-equivariant} $\TT$-module, or {\it weak} $(\TT, K)$-module, is an $\TT$-module $\cE$ equipped with a $K$-equivariant structure (as an $\OO_X$-module) $\alpha_\cE$ that is compatible with $\alpha_\TT$, i.e., the multiplication morphism $\TT \otimes_{\OO_X} \cE \to \cE$ is $K$-equivariant, where the $K$-equivariant structure on $\TT \otimes_{\OO_X} \cE$ is induced by $\alpha_\TT$ and $\alpha_\cE$.

	Now assume $\TT$ is equipped with a strong $K$-action $(\alpha_\TT, \nu_\fk)$. 
	Given $\kappa \in (\fk^*)^{K}$, a {\it $(K, \kappa)$-equivariant $\TT$-module} is a weak $(\TT, K)$-module $\cE$ such that the differential of the $K$-action equals $\nu_\fk - \kappa \cdot \Id_\cE$. A {\it (strong) $(\TT, K)$-module} is a $(K, \kappa)$-equivariant $\TT$-module $\cE$ with $\kappa = 0$. We have equivalent notions of $(K, \kappa)$-equivariant $\sD_\TT$-modules and (strong) $(\sD_\TT, K)$-modules.
\end{defi}

Let $\Mod^\kappa(\TT,K)$ denote the abelian category of all $(K, \kappa)$-equivariant $\TT$-modules. All morphisms between objects in $\Mod^\kappa(\TT,K)$ are required to be $K$-equivariant.
Let $\Coh^\kappa(\TT,K)$ denote the Serre subcategory of $\Mod^\kappa(\TT,K)$ consisting of all $\OO_X$-coherent objects. When $\kappa = 0$, i.e., the $K$-action is strong, we will simply suppress $\kappa$ from the notations.
Note that, if $\beta: K \to \cm$ is a character of $K$ and $d\beta$ is the differential of $\beta$, then we have a natural equivalence of categories 
\begin{equation} \label{eq:equiv_mod_kappa}
	\Mod^\kappa(\TT,K) \simeq \Mod^{\kappa + d\beta}(\TT,K), \quad \cE \mapsto \cE \otimes_{\C} \C_{\beta}, 
\end{equation}
where $\C_{\beta}$ is the one-dimensional $K$-module associated to the character $\beta$. We have a similar equivalence of coherent categories
\begin{equation} \label{eq:equiv_coherent_kappa}
	\Coh^\kappa(\TT,K) \simeq \Coh^{\kappa + d\beta}(\TT,K), \quad \cE \mapsto \cE \otimes_{\C} \C_{\beta}, 
\end{equation}
The inverse image and direct image functors can both be enhanced to functors between the corresponding equivariant categories,
\[ \Mod^\kappa(\TT,K) \xrightleftharpoons[ \quad\varphi_+\quad ]{\quad\varphi^*\quad} \Mod^\kappa(\varphi^\circ \TT,K), \]
and similarly for the equivariant coherent categories when $\varphi$ is proper.

\subsection{Admissible representations}\label{subsec:adm_repn}

Let $H$ be an algebraic reductive group with identity component $H^\circ$ and (finite) component group $\pi_0 H = H/H^\circ$. Let $\delta: H \to \cm$ be a character of $H$ and let $d \delta: \fh = \Lie(H) \to \C$ be its differential. Set $\gamma = \frac{1}{2} d \delta$. 

\begin{defi}\label{defn:gamma_repn}
	A \emph{$\gamma$-representation of $H$} is a finite-dimensional representation $\pi: H \to \GL(V)$ such that $d \pi = \gamma \cdot \Id_V$. Let $\widehat{H}^\gamma$ denote the subset of $\widehat{H}$ consisting of equivalence classes of irreducible $\gamma$-representations.
\end{defi}

Let $\widetilde{H} = \widetilde{H}_\delta$ be the fiber product of the two homomorphisms $\delta: H \to \cm$ and $\cm \to \cm, \, z \mapsto z^2$, so that we have the Cartesian diagram
\begin{equation}\label{diag:H_tilde}
	\begin{tikzcd}
		\widetilde{H}_\delta \arrow{r} \arrow[d, "\tilde{\delta}"]  & H  \arrow[d, "\delta"]   \\
		\cm \arrow[r, "z \mapsto z^2"]  & \cm
	\end{tikzcd}
\end{equation}
Then $\widetilde{H}_\delta$ is a (possibly disconnected) two-fold covering of $H$. Let $\zeta$ denote the unique nontrivial element in the kernel of the covering map $\widetilde{H}_\delta \to H$. Clearly, we have $\tilde{\delta}(\zeta) = -1$ and $d \tilde{\delta} = \frac{1}{2} d \delta = \gamma$ as characters of $\Lie(\widetilde{H}_\delta) \simeq \fh$.

\begin{defi}\label{defn:adm}
	A representation $\pi: \widetilde{H}_\delta \to \GL(V)$ of $\widetilde{H}_\delta$ is said to be \emph{admissible}\footnote{The reader should not confuse the terminology here with the notion of admissible $(\g, K)$-modules.}
	if $\pi$ is trivial on the identity component $\widetilde{H}_\delta^\circ$ of $\widetilde{H}_\delta$ and $\pi(\zeta) = -\Id_V$. We denote the set of equivalence classes of irreducible admissible representations of $\widetilde{H}_\delta$ as $\Irr(\widetilde{H}_\delta)_{adm}$.
\end{defi}

Clearly, if there exists a nontrivial admissible representation of $\widetilde{H}_\delta$, then $\zeta$ does not lie in $\widetilde{H}_\delta^\circ$ and hence the homomorphism $\widetilde{H}_\delta \to H$ restricts to an isomorphism $\widetilde{H}_\delta^\circ \simeq H^\circ$. 
Any admissible representation of $\widetilde{H}_\delta$ factors through the quotient $\widetilde{H}_\delta \to \widetilde{H}_\delta/\widetilde{H}_\delta^\circ = \pi_0 \widetilde{H}_\delta$. The component group $\pi_0 \widetilde{H}_\delta$ is a central extension of $\pi_0 H$ by the cyclic subgroup $\{1, \zeta\}$ of order $2$. Therefore an admissible representation $\alpha: \widetilde{H}_\delta \to \GL(V)$ of $\widetilde{H}_\delta$ is the same as a representation $\alpha: \pi_0 \widetilde{H}_\delta \to \GL(V)$ of the finite group $\pi_0 \widetilde{H}_\delta$, such that $\alpha(\zeta) = -\Id_V$, which means that $\alpha$ is genuine with respect to the morphism $\widetilde{H}_\delta \to H$. We denote the set of equivalence classes of irreducible genuine representations of $\pi_0 \widetilde{H}_\delta$ as $\Irr(\pi_0 \widetilde{H}_\delta)^{gen}$.

We observe that tensoring any admissible representation of $\widetilde{H}_\delta$ with the character $\tilde{\delta}: \widetilde{H}_\delta \to \cm$ gives a representation of $\widetilde{H}_\delta$ that is trivial on $\zeta$ and hence factors through a $\gamma$-representation of $H$. Hence we have the following standard lemma (see \cite{Schwartz} and \cite[\S\,1, Proposition and Remark 4]{Ohta:Adm}).

\begin{Lem} \label{lem:adm_repn}
	The following conditions are equivalent:
	\begin{enumerate}
		\item 
		There exists a nontrivial admissible representation of $\widetilde{H}_\delta$.
		\item 
		The inverse image of $H^\circ$ via the covering map $\widetilde{H}_\delta \to H$ is disconnected.
		\item 
		There exists a (unique) character $\chi$ of $H^\circ$ such that $\chi(h)^2 = \delta(h)$ for all $h \in H^\circ$.
		\item 
		There exists a nontrivial $\gamma$-representation of $H$.
	\end{enumerate}
	When any of the above conditions is satisfied, the discussions above give rise to bijections 
	\begin{equation} \label{eq:bij_gamma_adm}
		\Irr(\widetilde{H}_\delta)_{adm} \simeq \Irr(\pi_0 \widetilde{H}_\delta)^{gen} \simeq \widehat{H}^\gamma.
	\end{equation}
\end{Lem}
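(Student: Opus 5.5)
The plan is to prove the cycle of implications (1)$\Rightarrow$(2)$\Rightarrow$(3)$\Rightarrow$(4)$\Rightarrow$(1), working throughout with the Cartesian square \eqref{diag:H_tilde}. Concretely, $\widetilde{H}_\delta=\{(h,z)\in H\times\cm : \delta(h)=z^2\}$, and $\zeta=(1,-1)$. Let $p:\widetilde{H}_\delta\to H$ be the covering. Then $p^{-1}(H^\circ)$ is a double cover of the connected group $H^\circ$, so it has one or two connected components.

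For (1)$\Rightarrow$(2), suppose $\pi$ is a nontrivial admissible representation. It is trivial on $\widetilde{H}_\delta^\circ$ and satisfies $\pi(\zeta)=-\Id\neq\Id$, so $\zeta\notin\widetilde{H}_\delta^\circ$. Since $\widetilde{H}_\delta^\circ\subset p^{-1}(H^\circ)$ and $\zeta\in p^{-1}(H^\circ)$, this forces $p^{-1}(H^\circ)$ to be disconnected.

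For (2)$\Rightarrow$(3), the identity component of $p^{-1}(H^\circ)$ maps isomorphically onto $H^\circ$, because a connected double cover that is disconnected upstairs must split. This gives a section $s:H^\circ\to\widetilde{H}_\delta$, and we set $\chi:=\tilde\delta\circ s$, so that $\chi^2=\delta|_{H^\circ}$. Uniqueness follows because two square roots differ by a character of $H^\circ$ with values in $\{\pm1\}$, which is trivial since $H^\circ$ is connected.

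For (3)$\Rightarrow$(4), the graph $\{(h,\chi(h))\}$ is a connected subgroup of $p^{-1}(H^\circ)$ that does not contain $\zeta$. Hence $\widetilde{H}_\delta^\circ$ equals this graph, and $\pi_0\widetilde{H}_\delta$ is a central extension of $\pi_0 H$ by $\{1,\zeta\}$. We take an irreducible summand $\rho$ of the induced representation $\mathrm{Ind}_{\{1,\zeta\}}^{\pi_0\widetilde{H}_\delta}(\sgn)$ on which $\zeta$ acts by $-1$. Such a summand exists because the induced representation is nonzero and $\zeta$ is central. Then $\rho\otimes\tilde\delta$ is trivial on $\zeta$, so it descends to a representation $\pi$ of $H$. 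Its differential is $d\tilde\delta=\gamma$, because $\rho$ is trivial on $\widetilde{H}_\delta^\circ$ and $\Lie(\widetilde{H}_\delta)\simeq\fh$. So $\pi$ is a nontrivial $\gamma$-representation.

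For (4)$\Rightarrow$(1), we reverse the twist. Given a $\gamma$-representation $\pi$, consider $\pi\circ p\otimes\tilde\delta^{-1}$. It sends $\zeta$ to $-\Id$, and its differential is $\gamma-\gamma=0$, so it is trivial on $\widetilde{H}_\delta^\circ$. Hence it is admissible.

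The bijections \eqref{eq:bij_gamma_adm} come from the same two constructions. Admissible representations of $\widetilde{H}_\delta$ are exactly the genuine representations of $\pi_0\widetilde{H}_\delta$, since they factor through $\pi_0$ and satisfy $\zeta\mapsto-\Id$. The twisting functors $V\mapsto V\otimes\tilde\delta^{\pm1}$ are mutually inverse and preserve irreducibility.

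The only delicate point is the differential bookkeeping. Any representation trivial on $\widetilde{H}_\delta^\circ$ has zero differential, and $d\tilde\delta=\tfrac12 d\delta$. Both facts are immediate from the Cartesian square, so I expect no real obstacle.
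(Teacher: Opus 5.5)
Your proof is correct and follows essentially the same route as the paper's discussion preceding the lemma. Once $\zeta\notin\widetilde{H}_\delta^\circ$, you identify $\widetilde{H}_\delta^\circ$ with $H^\circ$, take $\chi=\tilde\delta|_{\widetilde{H}_\delta^\circ}$, and pass between admissible representations and $\gamma$-representations by twisting with $\tilde\delta^{\pm1}$. The one point you make explicit, which the paper leaves to its references, is the existence of an irreducible genuine representation of $\pi_0\widetilde{H}_\delta$ (obtained by inducing $\sgn$ from the central subgroup $\{1,\zeta\}$); this is exactly what (3)$\Rightarrow$(4) requires.
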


We remark that in the case (3) of \Cref{lem:adm_repn}, the character $\chi$ is nothing but the restriction of $\tilde{\delta}$ to $\widetilde{H}_\delta^\circ \simeq H^\circ$.

We conclude this subsection by a few additional remarks. Assume that the conditions in \Cref{lem:adm_repn} are satisfied, so that we have a unique character $\chi$ of $H^\circ$ satisfying $\chi^2 = \delta|_{H^\circ}$. Note that $H^\circ$ is a normal subgroup of $H$, so $H$ acts on $H^\circ$ by conjugation, which in turn acts on the set of equivalence classes of representations of $H^\circ$ and $\fh$. Since $\delta$ is a character of $H$, it is clear that $d \delta$, $\delta$, $\gamma$, and $\chi$ are invariant under this $H$-action. Explicitly, this means that
\begin{equation}\label{eq:delta_inv}
	d\delta(\Ad_h(\xi)) = d\delta(\xi), \quad \delta(h^{-1} x h) = \delta(x), \quad \forall \, h \in H, \, \xi \in \fh, \, x \in H^\circ
\end{equation}
and
\begin{equation}\label{eq:chi_inv}
	\chi(h^{-1} x h) = \chi(x), \quad \forall \, h \in H, \, x \in H^\circ.
\end{equation}

\subsection{Admissible orbit data}\label{subsec:AOD}

This subsection starts from the pointwise definition of admissible orbit data and then gives base-point-free reformulations using universal covers. We next record the symmetry groups arising from the universal cover, classify admissible vector bundles by genuine representations, choose compatible $\cm$-actions, and finally collect the pullback identities needed for Galois covers.

\subsubsection{Definitions} \label{subsubsec:AOD_defn}

Let $\orb$ be a nilpotent $G$-orbit with $\orb_{\fk^\perp}\neq \varnothing$.
Let $\orbk$ be a $K$-orbit in $\orb_{\fk^\perp}$ and let $e$ be any point on $\orbk$. In the setting of \Cref{defn:gamma_repn}, let $H = K_e$ and $\delta = \delta_e: K_e \to \cm$ be the character corresponding to the top exterior power $\bigwedge^{top}(\fk / \fk_e)^*$ of the coadjoint representation of $K_e$ on the cotangent space $T^*_e \orbk \simeq (\fk / \fk_e)^*$ of $\orbk$ at $e$. Recall the following definition from \cite[Definition 7.13]{Vogan_AV}:

\begin{defi} \label{defn:adm_orbit_datum}
	Assume that $(\g, K)$ is a Harish-Chandra symmetric pair corresponding to a real reductive group $\GR$. A nilpotent $K$-orbit $\orbk$ in $\fk^\perp$ or $\fs$ is said to be \emph{$K$-admissible} if the conditions in \Cref{lem:adm_repn} are satisfied with respect to $H = K_e$ and $\delta$ as above. In this case, let $\gamma = \gamma_e$ and $\chi = \chi_e$ be as in \Cref{lem:adm_repn}.
	A $K$-equivariant (irreducible) \emph{admissible vector bundle} over $\orbk$ is of the form $\cV = K \times_{K_e} V$, where $V$ is a (irreducible) $\gamma$-representation of $K_e$, i.e., the Lie algebra $\fk_e$ acts on $V$ via the character $\frac{1}{2} \operatorname{tr}(\operatorname{ad}_{\fk/\fk_e})$. 
	
	An {\it ($K$-)admissible orbit datum} over the $K$-orbit $\orbk$ in $\fk^\perp$ is a pair $(\orbk, \cV)$, where $\cV$ is an irreducible $K$-equivariant admissible vector bundle over $\orbk$. If at least one $K$-equivariant admissible vector bundle over $\orbk$ exists, we say that $\orbk$ is {\it $K$-admissible} or admissible with respect to $\GR$. We will simply say that $\orbk$ is admissible if there is no ambiguity about the group $K$ or $\GR$. If $G$ is a simply connected complex semisimple group with a Cartan involution $\theta$, and $K = G^\theta$, we also call a $K$-admissible orbit $\orbk$ {\it linearly admissible}. 
\end{defi}

The independence of \Cref{defn:adm_orbit_datum} from the choice of $e \in \orbk$ is an immediate consequence of \eqref{eq:delta_inv} and \eqref{eq:chi_inv}. We can similarly define the notion of an admissible vector bundle over a $K$-equivariant cover $\tilde{\rho}: \torbk \to \orbk$ of $\orbk$. However, we would like to have more intrinsic equivalent definitions of admissibility that are independent of the choice of a point in $\torbk$. 

For this purpose, let $\tilde{\rho}: Y = \torbk \to \orbk$ be any $K$-equivariant cover of a nilpotent $K$-orbit $\orbk$ in $\fk^\perp$. Let $\varphi: Z=\horbk \to \torbk$ be a universal $K$-equivariant cover of $\torbk$, so that $\hat{\rho} := \tilde{\rho} \circ \varphi: \horbk \to \orbk$ is also a universal $K$-equivariant cover of $\orbk$. Recall from \Cref{subsubsec:pushforward_pic} that we have the Picard algebroid $\TT_Y^+ = \frac{1}{2} \TT(\omega_Y)$ over $Y$, where $\omega_Y$ is the canonical line bundle of $Y$. Since $\TT(\omega_Y)$ carries a canonical strong $K$-action, so does $\TT_Y^+$. We have a canonical $K$-equivariant isomorphism $\tilde{\rho}^* \omega_{\orbk} \simeq \omega_Y$ of line bundles, which induces a canonical isomorphism $\tilde{\rho}^* \TT_{\orbk}^+ \simeq \TT_Y^+$ of strongly $K$-equivariant Picard algebroids. 
We can consider the category $\Coh(\TT_Y^+, K)$ of all $\OO_Y$-coherent (strong) $(\TT_Y^+, K)$-modules and similarly for $\orbk$ and $Z$. As in \Cref{subsec:picard}, we can define inverse image and direct image functors between the relevant coherent categories associated to the three covering maps $\tilde{\rho}$, $\varphi$, and $\hat{\rho}$. 

We now state the following proposition giving equivalent definitions of admissibility.

\begin{Prop} \label{prop:adm_equivalent}
	The following conditions are equivalent:
	\begin{enumerate}[label=(\roman*)]
		\item $\orbk$ is $K$-admissible in the sense of \Cref{defn:adm_orbit_datum};
		\item the category $\Coh(\TT_{\orbk}^+, K)$ has non-zero objects;
		\item the category $\Coh(\TT_Y^+, K)$ has non-zero objects for any $K$-equivariant cover $\tilde{\rho}: Y = \torbk \to \orbk$;
		\item the canonical line bundle $\omega_Z$ of the $K$-equivariant universal cover $\hat{\rho}: Z=\horbk \to \orbk$ has a $K$-equivariant square root, that is, there exists a $K$-equivariant line bundle $\omega_Z^{\smallhalf}$ over $Z$ equipped with a $K$-equivariant isomorphism $\sq: (\omega_Z^{\smallhalf})^{\otimes 2} \xrightarrow{\,\sim\,} \omega_Z$ of line bundles, called a \emph{square isomorphism}.
	\end{enumerate}
	If any of the conditions is satisfied, then the abelian category of $K$-admissible vector bundles over any cover $\tilde{\rho}: Y = \torbk \to \orbk$ is equivalent to $\Coh(\TT_Y^+, K)$ in the obvious way.
\end{Prop}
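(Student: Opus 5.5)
The plan is to reduce everything to representations of isotropy groups at a point, via the standard equivalence between $K$-equivariant objects on a homogeneous space $K/K_y$ and $K_y$-representations. Fix $e\in\orbk$, $y\in\tilde\rho^{-1}(e)$, and $z\in\varphi^{-1}(y)$, so that
\[
\orbk\simeq K/K_e,\qquad Y\simeq K/K_y,\qquad Z\simeq K/K_e^\circ .
\]
First I identify $\Coh(\TT_Y^+,K)$ concretely. An $\OO_Y$-coherent strong $(\TT_Y^+,K)$-module is locally free by \cite[Lemma 2.3.1]{BeilinsonBernstein}, and is $K$-equivariant, hence of the form $K\times_{K_y}V$. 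The $\TT^+_Y$-connection is determined by the strong equivariance: vector fields from $\fk$ act through $\nu_\fk$, and these span $\T_Y$ since $Y$ is homogeneous. The remaining constraint is at the fiber. The stabilizer algebra $\fk_y=\fk_e$ must act on $V$ by the scalar through which $\nu_\fk|_{\fk_e}$ acts on the fiber of ``$\omega_Y^{1/2}$''. That scalar is $\tfrac12$ of the character of $\fk_e$ on $\bigwedge^{top}T_y^*Y\simeq\bigwedge^{top}(\fk/\fk_e)^*$, which is $\tfrac12 d\delta_e=\gamma$.

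Conversely, given a $\gamma$-representation $V$ of $K_y$, I would build the $\TT_Y^+$-module structure on $K\times_{K_y}V$ directly. One way is to note that $V\otimes V^{*}$-type twists reduce it to $\TT(\omega_Y)$-modules tensored with a $-\gamma$ piece. The cleaner way is to pass to $Z$ and use (iv) below. This gives the final sentence of the proposition: $\Coh(\TT_Y^+,K)$ is equivalent to the category of $\gamma$-representations of $K_y$, i.e.\ $K$-admissible vector bundles over $Y$. The same computation for $Y=\orbk$, $K_y=K_e$, gives (i)$\Leftrightarrow$(ii) via \Cref{lem:adm_repn}(4).

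For (ii)$\Rightarrow$(iii), pull back along $\tilde\rho$, using $\tilde\rho^\circ\TT^+_{\orbk}\simeq\TT_Y^+$; pullback of a nonzero locally free module is nonzero. (iii)$\Rightarrow$(ii) is immediate by taking $\tilde\rho=\mathrm{id}$. Alternatively, push forward along the finite étale $\tilde\rho$ using $\tilde\rho_*$ from \Cref{subsubsec:pushforward_pic}. At the level of groups this is just restriction/induction between $K_e$ and $K_y$, both of which preserve the $\gamma$-condition since $\fk_y=\fk_e$.

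For (iv), $K$-equivariant line bundles on $Z=K/K_e^\circ$ are characters of $K_e^\circ$. The bundle $\omega_Z$ corresponds to $\delta|_{K_e^\circ}$, so a $K$-equivariant square root with a square isomorphism is exactly a character $\chi$ of $K_e^\circ$ with $\chi^2=\delta|_{K_e^\circ}$. This is \Cref{lem:adm_repn}(3), so (iv)$\Leftrightarrow$(i). Given such $\omega_Z^{\smallhalf}$, the module $\omega_Z^{\smallhalf}$ itself is a strong $(\TT_Z^+,K)$-module, since $\TT(\omega_Z^{\smallhalf})=\TT_Z^+$ via $\sq$. This also supplies the construction direction in the first step for general $Y$: push forward along $\varphi$ and decompose.

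The main obstacle I expect is the first step: checking carefully that the strong-equivariance condition on a $\TT^+_Y$-module translates exactly into $d\pi=\gamma$ on $\fk_e$, with the correct sign and factor $\tfrac12$. This sign depends on the convention that $K$ acts on $\omega$ through the coadjoint action on $(\fk/\fk_e)^*$. I would verify it first for $\TT(\omega_Y)$ itself, where the module $\omega_Y$ has fiber character $\delta_e$, and then use linearity of $\nu_\fk$ under Baer sums to halve it.
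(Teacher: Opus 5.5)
Your argument is correct. The paper gives no proof of its own here; it defers to \cite[Section 2.4.2]{Yu:period}, and your reduction to isotropy groups supplies the content behind that citation. For $\xi\in\fk_y$, the section $\nu_\fk(\xi)$ evaluates at $y$ to a multiple of $1_{\TT}$, so it acts on fibers by a scalar. That scalar is $d\delta$ for $\TT(\omega_Y)$ (test on the module $\omega_Y$), hence $\gamma$ for $\TT^+_Y$ by linearity of Baer sums. Moreover, $K$-equivariant line bundles on $Z=K/K_e^\circ$ are characters of $K_e^\circ$, which turns (iv) into \Cref{lem:adm_repn}(3) verbatim.

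Where you differ from the paper is the construction direction. Right after this proposition, the paper realizes every object of $\Coh(\TT^+_{\orbk},K)$ as the $\Gamma_K$-descent of $\underline{V}_Z\otimes_{\OO_Z}\omega_Z^{\smallhalf}$ via \eqref{eq:equiv_underline_V}--\eqref{eq:equiv_pushforward}, which requires building the central extension $\tGamma_K$. Your ``push $\omega_Z^{\smallhalf}$ forward along $\varphi$ and decompose'' needs only Frobenius reciprocity for $K_e^\circ=K_y^\circ\subset K_y$: every irreducible $\gamma$-representation of $K_y$ restricts to a $\chi$-isotypic representation of $K_e^\circ$, so it occurs in the fiber $\Ind_{K_e^\circ}^{K_y}\chi$. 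This is shorter for the existence and equivalence statements, but it gives no canonical parametrization of the objects.

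Two things to tighten. First, drop the ``$V\otimes V^{*}$-type twist'' suggestion; it does not by itself put a $\TT^+_Y$-module structure on $K\times_{K_y}V$. Second, state explicitly that $\TT^+_Y$ is generated over $\OO_Y$ by $1_{\TT}$ and $\nu_\fk(\fk)$. This holds because the anchor of $\nu_\fk(\xi)$ is the vector field of $\xi$, and these fields span $\T_Y$ since $Y$ is homogeneous. This single fact makes $K$-stable subbundles of $\varphi_*\omega_Z^{\smallhalf}$ automatically $\TT^+_Y$-submodules, so your decomposition takes place inside $\Coh(\TT^+_Y,K)$. It also makes the forgetful functor to $K$-equivariant vector bundles fully faithful, which the final sentence of the proposition needs.
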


\begin{proof}
	The proposition follows from the discussion in \cite[Section 2.4.2]{Yu:period}.
\end{proof}

\begin{Rem} \label{rem:omega_half}
    Note that if the pair $(\omega_Z^{\smallhalf}, \sq)$ in \Cref{prop:adm_equivalent} (iv) exists, then it is unique up to $K$-equivariant isomorphisms that are compatible with the square isomorphism $\sq$. More precisely, given two pairs $(\omega_Z^{\smallhalf}, \sq)$ and $((\omega_Z^{\smallhalf})', \sq')$ satisfying \Cref{prop:adm_equivalent} (iv), there are two $K$-equivariant isomorphisms $\phi: \omega_Z^{\smallhalf} \xrightarrow{\sim} (\omega_Z^{\smallhalf})'$ such that $\sq = \sq' \circ \,\phi^{\otimes 2}$, and these differ from each other by a sign.
	
	In particular, if the line bundle $\omega_Z^{\smallhalf}$ is fixed, the isomorphism $\sq$ in \Cref{prop:adm_equivalent} (iv) is unique up to multiplication by a nonzero scalar. From now on, we always fix a square isomorphism $\sq$. The choice of $\sq$ will not affect the constructions and definitions below.
\end{Rem}

We set up some notation before proceeding.
Suppose $\orb$ is a $G$-orbit in $\fg^*$. 
Given any $K$-orbit $\orbk$ in $\cN_\theta$, let $\AODK(\orbk)$ denote the set of $K$-equivariant isomorphism classes of admissible orbit data over $\orbk$, or equivalently, irreducible objects in $\Coh(\TT^+_{\orbk}, K)$.
Set
\begin{equation}\label{eq:AODO}
	\AODK(\orb):=\bigsqcup_{\orbk \textrm{ is a $K$-orbit in $\orb_{\fk^\perp}$}} \AODK(\orbk)
\end{equation}
We can regard $\AODK(\orb)$ naturally as a subset of $\AC(\g,K)$ defined in \Cref{subsec:AC}. Clearly $\AODK(\orb) \subset \IrrAC(\g,K)$. 
Then $\AODK(\orb)$ is precisely the set of isomorphism classes of irreducible objects in the abelian category $\cAOD(\orb, K) := \Coh(\TT^+_{\orb_{\fk^\perp}}, K)$. The inclusion into $\AC(\g,K)$ identifies $\cK\cAOD(\orb, K)$ with a subgroup of $\AC(\g,K)$, and identifies $\cK_{>0}\cAOD(\orb, K)$ with the subsemigroup generated by $\AODK(\orb)$. Thus any element of $\cK_{>0}\cAOD(\orb, K)$ is of the form
\[ \sum_{i=1}^r [\cV_i] \cdot \orbk^i, \]
where $\orbk^1, \ldots, \orbk^r$ are all the $K$-orbits in $\orb_{\fk^\perp}$ and each $\cV_i$ is either zero or a (not necessarily irreducible) $K$-equivariant admissible vector bundle over $\orbk^i$.

\subsubsection{Symmetries from the universal cover} \label{subsubsec:tilde_Gamma_K}

We now consider the symmetries of the universal cover. 
For a universal cover $\hat{\rho}: Z=\horbk \to \orbk$ of $\orbk$, we write $\Gamma_K(\orbk) := \Gamma_{\hat{\rho}}$ for its Galois group, as defined in \Cref{subsec:orbit_covers}. We also write $\Gamma_K$ for $\Gamma_K(\orbk)$ when there is no ambiguity about the $K$-orbit $\orbk$ in question. By the functoriality of the cotangent sheaf $\T^*_Z$ of $Z$, we have a natural $\Gamma_K$-action on $\T^*_Z$, and hence on $\omega_Z$, which commutes with the $K$-action. More precisely, there is a collection of $K$-equivariant isomorphisms $\tau_\psi: \omega_Z \xrightarrow{\sim} \psi^* \omega_Z$ for each $\psi \in \Gamma_K$\footnote{Note that $\psi^*\omega_Z$ carries a natural $K$-equivariant structure induced from that of $\omega_Z$, since $\psi$ is a $K$-equivariant automorphism of $Z$. The same applies to $\psi^*(\omega_Z^{\smallhalf})$.}, such that for any $\psi_1, \psi_2 \in \Gamma_K$, the composition of isomorphisms
\begin{equation} \label{eq:tau_psi}
	\omega_Z \xrightarrow{\,\tau_{\psi_2}\,} \psi_2^* \omega_Z \xrightarrow{\psi_2^*(\tau_{\psi_1})} \psi_2^* \psi_1^* \omega_Z \xrightarrow{\sim} (\psi_1 \circ \psi_2)^* \omega_Z 
\end{equation}
coincides with $\tau_{\psi_1 \circ \psi_2}$, where the last arrow arises from the natural equivalence $\psi_2^* \psi_1^* \xrightarrow{\sim} (\psi_1 \circ \psi_2)^*$ of functors (we will not repeat this fact in the following).

From now on, we always assume $\orbk$ is $K$-admissible. We want to study the symmetries of the $K$-equivariant line bundle $\omega_Z^{\smallhalf}$. It is not true in general that the action of $\Gamma_K$ on $\horbk$ can be lifted to an action on $\omega_Z^{\smallhalf}$. 
Instead, we define $\tGamma_K = \tGamma_K(\orbk)$ as the group consisting of all pairs $(\psi, \vartheta)$, where $\psi \in \Gamma_K$ and $\vartheta: \omega_Z^{\smallhalf} \xrightarrow{\sim} \psi^* (\omega_Z^{\smallhalf})$ is a $K$-equivariant isomorphism of line bundles such that the following condition holds:

\begin{enumerate}[label=($\ast$), ref=($\ast$), itemindent=*, leftmargin=*]
\item \label{cond:tGamma_K} 
The composition of isomorphisms
\[ 
	(\omega_Z^{\smallhalf})^{\otimes 2} \xrightarrow{\vartheta^{\otimes 2}} (\psi^* \omega_Z^{\smallhalf})^{\otimes 2} \xrightarrow{\,\sim\,} \psi^* \left[ (\omega_Z^{\smallhalf})^{\otimes 2} \right] \xrightarrow{\psi^*(\sq)} \psi^* \omega_Z 
\]
(where the middle arrow is the natural isomorphism expressing the compatibility of tensor products of coherent sheaves with the inverse-image functor, and we will not repeat this fact in the following) coincides with the composition
\[ (\omega_Z^{\smallhalf})^{\otimes 2} \xrightarrow{\, \sq \,} \omega_Z \xrightarrow{\,\tau_\psi\,} \psi^* \omega_Z. \]
\end{enumerate}

It is clear that condition ($\ast$) does not change if we rescale $\sq$ by any nonzero scalar, and hence is independent of the choice of $\sq$ (see \Cref{rem:omega_half}).
The multiplication in $\tGamma_K$ is given by
	\[ (\psi_1, \vartheta_1) \cdot (\psi_2, \vartheta_2) := (\psi_1 \circ \psi_2, \psi_2^* (\vartheta_1) \circ \vartheta_2), \]
where $\psi_2^* (\vartheta_1) \circ \vartheta_2$ is understood as the composition of isomorphisms
\begin{equation} \label{eq:mult_tilde_Gamma_K}
	\omega_Z^{\smallhalf} \xrightarrow{\vartheta_2} \psi_2^* \omega_Z^{\smallhalf} \xrightarrow{\psi_2^* (\vartheta_1)} \psi_2^* \psi_1^* \omega_Z^{\smallhalf} \xrightarrow{\,\sim\,} (\psi_1 \circ \psi_2)^* \omega_Z^{\smallhalf}.
\end{equation}
It is routine to check that this indeed defines a group structure on $\tGamma_K$.

There is a forgetful map $\tGamma_K \to \Gamma_K$ given by $(\psi, \vartheta) \mapsto \psi$, which is a surjective group homomorphism. 
To see the surjectivity, note that the composition of isomorphisms 
\[
    \sq': (\psi^* \omega_Z^{\smallhalf})^{\otimes 2} \xrightarrow{\,\sim\,} \psi^* \left[ (\omega_Z^{\smallhalf})^{\otimes 2} \right] \xrightarrow{\psi^*(\sq)} \psi^* \omega_Z  \xrightarrow{\tau_\psi^{-1}} \omega_Z
\]
is also a $K$-equivariant square isomorphism.
By the uniqueness of the square root pair as discussed in \Cref{rem:omega_half}, applied to $(\omega_Z^{\smallhalf}, \sq)$ and $(\psi^* \omega_Z^{\smallhalf}, \sq')$, there exist exactly two $K$-equivariant isomorphisms $\pm \vartheta$ (up to sign) such that $\sq = \sq' \circ (\pm \vartheta)^{\otimes 2}$. It is clear that $(\psi, \pm \vartheta) \in \tGamma_K$.
It follows that the forgetful map $\tGamma_K \to \Gamma_K$ is surjective and 
\[
    \ker(\tGamma_K \to \Gamma_K) = \{ (\Id_Z, \pm 1) \} \simeq \Z_2.
\]
Thus $\tGamma_K$ is a central extension of $\Gamma_K$ by $\Z_2$,
\begin{equation} \label{eq:extn_Gamma_K}
	1 \to \Z_2 \to \tGamma_K \to \Gamma_K \to 1. 
\end{equation}

By definition, $\tGamma_K$ acts on $\omega_Z^{\smallhalf}$ so that the square isomorphism $\sq: (\omega_Z^{\smallhalf})^{\otimes 2} \xrightarrow{\sim} \omega_Z$ is $K \times \tGamma_K$-equivariant, where the action of $\tGamma_K$ on $\omega_Z$ factors through the natural $\Gamma_K$-action on $\omega_Z$. Note that the $\tGamma_K$-action on $\omega_Z^{\smallhalf}$ induces a (strong) action of $\tGamma_K$ on $\TT(\omega_Z^{\smallhalf})$, which factors through a (strong) action of $\Gamma_K$, since the central kernel $\ker(\tGamma_K \to \Gamma_K) \simeq \Z_2$ acts on $\omega_Z^{\smallhalf}$ by scalars and hence trivially on the Picard algebroid. Then $\sq$ induces an isomorphism $2 \TT(\omega_Z^{\smallhalf}) \simeq \TT(\omega_Z)$, and hence an isomorphism 
\begin{equation} \label{eq:isom_T}
	\TT(\omega_Z^{\smallhalf}) \simeq \frac{1}{2}\TT(\omega_Z) = \TT^+_Z 
\end{equation} 
of strongly $K \times \Gamma_K$-equivariant Picard algebroids over $Z$. 
This makes $\omega_Z^{\smallhalf}$ naturally into a strongly $K \times \tGamma_K$-equivariant $\TT^+_Z$-module. These isomorphisms and the induced structures do not depend on the choice of the square isomorphism $\sq$ (cf. \Cref{rem:omega_half}).

We now give a more concrete description of the symmetries above, which depends on the choice of base points. 
Fix $e \in \orbk$ and $z \in \hat{\rho}^{-1}(e)$. Then $K_z = K_e^\circ$, and we have $K$-equivariant isomorphisms $K/K_e \simeq \orbk$, $k K_e \mapsto k \cdot e$, and similarly $K/K_z = K/K_e^\circ \simeq \horbk$, which fit into the following commutative diagram:
\begin{equation} \label{diag:cover_diagram}
	\begin{tikzcd}
		K/K_e^\circ \ar[r] \isoarrow{d} & K/K_e \isoarrow{d} \\
		\horbk \ar[r, "\hat{\rho}"]  & \orbk  \\
	\end{tikzcd}
\end{equation}
As in \Cref{subsec:orbit_covers}, we can identify $\Gamma_K = \Gamma_{\hat{\rho}}$ with the $K$-component group $Z_K = K_e / K_e^\circ$ of $\orbk$.

We now describe the induced actions on the canonical bundle and its square root in concrete terms. We have a $K$-equivariant isomorphism $\omega_Z \simeq K \times_{K_e^\circ} \C_\delta$, where the character $\delta: K_e \to \cm$ corresponds to the coadjoint action of $K_e$ on $\bigwedge^{top} (\fk/\fk_e)^*$ and the $K$-action on $K \times_{K_e^\circ} \C_\delta$ is induced by left multiplication. Let $K_e$ act on $K \times_{K_e^\circ} \C_\delta$ by
	\[ h \cdot [k, v] := [k h^{-1}, \delta(h) v], \quad \forall\, k \in K, \, h \in K_e, \, v \in \C_\delta. \]
It is routine to check that this action is well-defined and commutes with the left $K$-action. Its restriction to $K_e^\circ$ is given by
	\[ h \cdot [k, v] = [k h^{-1}, \delta(h) v] = [k, \delta(h^{-1}) \delta(h) v] = [k, v], \quad \forall\, k \in K, h \in K_e^\circ, \, v \in \C_\delta. \]
This means that $K_e^\circ$ acts trivially and hence the action of $K_e$ factors through an action of $Z_K = K_e / K_e^\circ$. This is compatible with the identification $\Gamma_K \simeq Z_K$ and the (abstractly defined) action of $\Gamma_K$ on $\omega_Z$ as above.

As in \Cref{subsec:adm_repn}, let $\widetilde{K}_e$ denote the double cover of $K_e$ associated to the character $\delta: K_e \to \cm$, so that we have a character $\tilde{\delta}: \widetilde{K}_e \to \cm$ whose restriction to $\widetilde{K}_e^\circ \simeq K_e^\circ$ is $\chi$.
Then we have a $K$-equivariant isomorphism $\omega_Z^{\smallhalf} \simeq K \times_{K_e^\circ} \C_\chi$. 
We let $\widetilde{K}_e$ act on $K \times_{K_e^\circ} \C_\chi$ by
	\[ h \cdot [k, v] := [k \bar{h}^{-1}, \tilde{\delta}(h) v], \quad \forall\, k \in K, h \in \widetilde{K}_e, \, v \in \C_\chi, \]
where $\bar{h}$ denotes the image of $h$ in $K_e$. Again this action is well-defined and commutes with the left $K$-action. 
As in the case of $K_e^\circ$, one can check that $\widetilde{K}_e^\circ$ acts trivially and hence the action of $\widetilde{K}_e$ factors through an action of the finite group $\widetilde{K}_e / \widetilde{K}_e^\circ \simeq \widetilde{K}_e/K_e^\circ$, which is a central extension of $Z_K = K_e / K_e^\circ$ by $\Z_2$. We write $\widetilde{Z}_K := \pi_0(\widetilde{K}_e)$. Then the discussion above gives an identification $\tGamma_K \simeq \widetilde{Z}_K$.

Note that the isomorphisms $\Gamma_K \simeq Z_K$ and $\tGamma_K \simeq \widetilde{Z}_K$ are compatible and depend on the choice of $z \in Z=\horbk$ only up to inner automorphisms. 
This implies the following easy fact.

\begin{Lem} \label{lem:bij_irr_gen}
    The induced bijection $\Irr(\widetilde{Z}_K)^{gen} \xrightarrow{\sim} \Irr(\tGamma_K)^{gen}$ is canonical and independent of the choice of the point $z \in Z = \horbk$.
\end{Lem}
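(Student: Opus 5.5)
We have to show that the bijection $\Irr(\widetilde{Z}_K)^{gen} \xrightarrow{\sim} \Irr(\tGamma_K)^{gen}$ coming from the identification $\tGamma_K \simeq \widetilde{Z}_K$ does not depend on the base point $z \in Z = \horbk$. The plan is to reduce this to the classical fact that an inner automorphism of a finite group acts trivially on isomorphism classes of its representations. So we will show that changing $z$ alters the identification only by an inner automorphism of $\widetilde{Z}_K$.

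First fix two base points $z, z' \in Z$. Since $K$ acts transitively on $Z$, we can write $z' = k_0 \cdot z$ for some $k_0 \in K$. Put $e = \hat{\rho}(z)$ and $e' = \hat{\rho}(z') = k_0 \cdot e$. Conjugation $c_{k_0} = \Ad(k_0)$ then gives isomorphisms $K_e \simeq K_{e'}$ and $K_e^\circ \simeq K_{e'}^\circ$. Because $\delta_{e'} \circ c_{k_0} = \delta_e$ (the coadjoint action transports $\bigwedge^{top}(\fk/\fk_e)^*$ to $\bigwedge^{top}(\fk/\fk_{e'})^*$), $c_{k_0}$ lifts to an isomorphism $\widetilde{K}_e \simeq \widetilde{K}_{e'}$ of the fibre products \eqref{diag:H_tilde} that commutes with the maps $\tilde\delta$. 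Passing to component groups, we obtain $\widetilde{Z}_K(e) \simeq \widetilde{Z}_K(e')$, and this isomorphism is compatible with the central $\Z_2$'s.

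Next we compare the two identifications with $\tGamma_K$. Transport of structure along the isomorphism $K/K_e^\circ \to K/K_{e'}^\circ$, $kK_e^\circ \mapsto k k_0^{-1} K_{e'}^\circ$, intertwines the explicit actions on $K\times_{K_e^\circ}\C_\chi$ and $K\times_{K_{e'}^\circ}\C_{\chi'}$ that define the two maps to $\tGamma_K$. Hence the two identifications $\widetilde{Z}_K(e) \to \tGamma_K$ and $\widetilde{Z}_K(e') \to \tGamma_K$ differ exactly by the isomorphism above. The choice of $k_0$ is determined only up to $K_z = K_e^\circ$. Even taking $z'$ in the same fibre, where $k_0 \in K_e$, the composite is conjugation by the image of a lift of $k_0$ in $\widetilde{Z}_K$, which is an inner automorphism. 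It therefore preserves isomorphism classes of irreducible representations and also the genuineness condition $\pi(\zeta) = -\Id$.

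The only delicate step is the second-to-last verification, namely checking that the transported actions on $\omega_Z^{\smallhalf}$ match, including the sign from the square root. This should follow from the uniqueness up to sign in \Cref{rem:omega_half}. Indeed, any two lifts differ by the central element $\zeta$, which acts by a scalar and so does not affect the induced map on genuine irreducibles.
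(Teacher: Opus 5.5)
Your proposal is correct and follows the same route as the paper, which simply notes that the identifications $\Gamma_K\simeq Z_K$ and $\tGamma_K\simeq\widetilde{Z}_K$ change only by inner automorphisms when $z$ changes; your transport-of-structure computation supplies exactly the details behind that remark. One small caution about your last paragraph: the sign worry is unnecessary, because the $\tilde\delta$-compatible lift $(h,w)\mapsto(k_0hk_0^{-1},w)$ makes the two identifications agree on the nose. The justification you offer there is also not valid as stated, since a discrepancy given by a $\{1,\zeta\}$-valued character of $\widetilde{Z}_K$ would twist the genuine irreducibles by a sign character and so would in general change the bijection.
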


\subsubsection{Classification of admissible vector bundles} \label{subsubsec:classification_adm_vb}

In addition to the original definition in \Cref{defn:adm_orbit_datum}, we give a more intrinsic classification of irreducible $K$-admissible vector bundles over a nilpotent $K$-orbit $\orbk$ (or its covers) in \Cref{prop:classification_AOD}. This classification is independent of the choice of base points in $\orbk$, which will be convenient for later use.

We introduce some notation before stating the result. Let $Z = \horbk$ as before, and let $\Coh(\sD_Z, K \times \tGamma_K)^{gen}$ denote the category of $\OO_Z$-coherent strong $(\sD_Z, K)$-modules with a compatible genuine $\tGamma_K$-action.
Let $\Rep(\tGamma_K)^{gen}$ denote the (semisimple) abelian category of all genuine $\tGamma_K$-representations.
For any $V \in \Rep(\tGamma_K)^{gen}$, let $\underline{V}_Z := \OO_Z \otimes_\C V$ and regard it as an object of $\Coh(\sD_Z, K \times \tGamma_K)^{gen}$, where the strong $K$-action is induced from the one on $\OO_Z$, and the genuine $\tGamma_K$-action is induced from that of the genuine $\tGamma_K$-representation $V$ and the $\tGamma_K$-action on $\OO_Z$ (which factors through $\Gamma_K$). By \cite[Lemma 2.7]{Yu:period}, this defines an equivalence of categories
\begin{equation} \label{eq:equiv_underline_V}
	\Rep(\tGamma_K)^{gen} \xrightarrow{\,\sim\,} \Coh(\sD_Z, K \times \tGamma_K)^{gen}, \quad V \mapsto \underline{V}_Z,
\end{equation}
whose quasi-inverse is given by 
\begin{equation} \label{eq:equiv_underline_V_inverse}
	\Coh(\sD_Z, K \times \tGamma_K)^{gen} \to \Rep(\tGamma_K)^{gen}, \quad \cV \mapsto \Gamma(Z, \cV)^K,
\end{equation}
by taking the $K$-invariant global sections. Since $Z$ is $K$-homogeneous, $\Gamma(Z, \cV)^K$ coincides with the space of flat sections of the $\sD_Z$-module $\cV$.

We have a second equivalence of categories
\begin{equation} \label{eq:equiv_twist_omega}
    \Coh(\sD_Z, K \times \tGamma_K)^{gen} \xrightarrow{\sim} \Coh(\TT^+_Z, K \times \Gamma_K), \quad \cE \mapsto \cE \otimes_{\OO_Z} \omega_Z^{\smallhalf}. 
\end{equation}
Note that, since both $\tGamma_K$-actions on $\cE$ and $\omega_Z^{\smallhalf}$ are genuine, the induced diagonal action of $\tGamma_K$ on the sheaf $\cE \otimes_{\OO_Z} \omega_Z^{\smallhalf}$ factors through a $\Gamma_K$-action. Therefore this functor is well-defined. 
This functor is an equivalence, with quasi-inverse
\begin{equation} \label{eq:equiv_twist_omega_inverse}
	\Coh(\TT^+_Z, K \times \Gamma_K) \xrightarrow{\sim} \Coh(\sD_Z, K \times \tGamma_K)^{gen}, \quad \cV \mapsto \cV \otimes_{\OO_Z} \omega_Z^{-\smallhalf},
\end{equation}
where $\omega_Z^{-\smallhalf} := (\omega_Z^{\smallhalf})^{-1}$ is the dual (or inverse) line bundle of $\omega_Z^{\smallhalf}$.


There is a third equivalence of categories
\begin{equation} \label{eq:equiv_pushforward}
	\Coh(\TT^+_Z, K \times \Gamma_K) \xrightarrow{\,\sim\,} \Coh(\TT^+_{\orbk}, K), \quad \cV \mapsto (\hat{\rho}_* \cV)^{\Gamma_K},
\end{equation}
where $\hat{\rho}_* \cV$ is the direct image of $\cV$ under $\hat{\rho}$ and $(\hat{\rho}_* \cV)^{\Gamma_K}$ is the subsheaf of $\Gamma_K$-invariant local sections of $\hat{\rho}_* \cV$.
Its quasi-inverse is given by $\cF \mapsto \hat{\rho}^* \cF$. See \cite[Section 1.8.9]{BeilinsonBernstein} for details.

We denote the composition of the three equivalences \eqref{eq:equiv_underline_V}, \eqref{eq:equiv_twist_omega}, and \eqref{eq:equiv_pushforward} above by 
\begin{equation} \label{eq:classification_adm_vb_cat}
	\Rep(\tGamma_K)^{gen} \xrightarrow{\,\sim\,} \Coh(\TT^+_{\orbk}, K), \quad V \mapsto \cF_V := \left[\hat{\rho}_* (\underline{V}_Z \otimes_{\OO_Z} \omega_Z^{\smallhalf})\right]^{\Gamma_K}.
\end{equation}
Passing to isomorphism classes of irreducible objects, we obtain the following alternative, equivalent classification of $K$-admissible orbit data.

\begin{Prop} \label{prop:classification_AOD}
	We have a bijection
	\begin{equation} \label{eq:classification_adm_vb_irr}
		\Irr(\tGamma_K)^{gen} \xrightarrow{\,\sim\,} \AODK(\orbk), \quad V \mapsto \cF_V,
	\end{equation}
	where $\cF_V = \left[\hat{\rho}_* (\underline{V}_Z \otimes_{\OO_Z} \omega_Z^{\smallhalf})\right]^{\Gamma_K}$.
\end{Prop}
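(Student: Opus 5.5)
The plan is to deduce the bijection \eqref{eq:classification_adm_vb_irr} from the equivalence of abelian categories \eqref{eq:classification_adm_vb_cat}, which is the composite of \eqref{eq:equiv_underline_V}, \eqref{eq:equiv_twist_omega} and \eqref{eq:equiv_pushforward}. An equivalence of abelian categories sends simple objects to simple objects and induces a bijection on isomorphism classes of simple objects. The simple objects of $\Rep(\tGamma_K)^{gen}$ are exactly the irreducible genuine representations, since a subrepresentation of a genuine representation on which the central $\Z_2$ acts by $-1$ is again genuine. By definition, $\AODK(\orbk)$ is the set of isomorphism classes of irreducible objects of $\Coh(\TT^+_{\orbk},K)$. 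So the statement follows once the three functors are checked to be equivalences.

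I would verify each equivalence in order.

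\textbf{Equivalence \eqref{eq:equiv_underline_V}.} I would quote \cite[Lemma 2.7]{Yu:period}. The point is that a $K$-equivariant $\OO_Z$-coherent strong $\sD_Z$-module on the homogeneous space $Z=K/K_e^\circ$ corresponds to a representation of $K_z=K_e^\circ$ whose differential vanishes. Because $K_e^\circ$ is connected, that representation is trivial, so the module is $\OO_Z\otimes$ (its flat sections). The commuting $\tGamma_K$-action then passes to the flat sections $\Gamma(Z,\cV)^K$.

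\textbf{Equivalence \eqref{eq:equiv_twist_omega}.} Tensoring with the strongly $K\times\tGamma_K$-equivariant $\TT^+_Z$-module $\omega_Z^{\smallhalf}$ (via \eqref{eq:isom_T}) shifts the twist by $\TT^+_Z$. Genuine times genuine is trivial on the central $\Z_2$, so the $\tGamma_K$-action descends to $\Gamma_K$. The inverse is tensoring with $\omega_Z^{-\smallhalf}$, which restores genuineness.

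\textbf{Equivalence \eqref{eq:equiv_pushforward}.} This is Galois descent along the Galois cover $\hat\rho$, which has group $\Gamma_K\simeq Z_K$ (\Cref{defn:Galois}(iv), since $K_e^\circ$ is normal in $K_e$). Using the canonical identification $\hat\rho^\circ\TT^+_{\orbk}\simeq\TT^+_Z$ coming from $\hat\rho^*\omega_{\orbk}\simeq\omega_Z$ for \'etale $\hat\rho$, descent of $\OO$-modules upgrades to descent of $\TT^+$-modules. Here I would invoke \cite[1.8.9]{BeilinsonBernstein}, with $\hat\rho_+=\hat\rho_*$ as in \Cref{subsubsec:pushforward_pic}.

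The main point needing care is that the $\Gamma_K$-action on $\TT^+_Z$ used in \eqref{eq:equiv_pushforward} agrees with the one induced by $\tGamma_K$ on $\TT(\omega_Z^{\smallhalf})$ under \eqref{eq:isom_T}. Both come from the natural action $\tau_\psi$ on $\omega_Z$, by condition \ref{cond:tGamma_K}, so they coincide. Finally, by \Cref{lem:bij_irr_gen} and \Cref{rem:omega_half}, the resulting bijection does not depend on the base point or on the choice of $\sq$. In the concrete model $\omega_Z^{\smallhalf}\simeq K\times_{K_e^\circ}\C_\chi$, it recovers \Cref{defn:adm_orbit_datum} via \eqref{eq:bij_gamma_adm}.
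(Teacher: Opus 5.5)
Your proposal is correct and is essentially the paper's argument. The paper obtains the proposition simply by passing to isomorphism classes of simple objects in the composite equivalence \eqref{eq:classification_adm_vb_cat} of \eqref{eq:equiv_underline_V}, \eqref{eq:equiv_twist_omega} and \eqref{eq:equiv_pushforward}, relying on the same references you cite (\cite[Lemma 2.7]{Yu:period} and \cite[Section 1.8.9]{BeilinsonBernstein}), and your extra checks (agreement of the two $\Gamma_K$-actions on $\TT^+_Z$ through \eqref{eq:isom_T}, and base-point independence via \Cref{lem:bij_irr_gen}) match what the paper sets up before the statement.
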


\begin{Rem} \label{rem:classification_AOD_cover}
	We have a complete analogue of \Cref{prop:classification_AOD} when $\orbk$ is replaced by any $K$-equivariant cover $\tilde{\rho}: \torbk \to \orbk$. The proof is essentially the same, except that we need to replace $\hat{\rho}$ by the universal cover $\varphi: Z=\horbk \to \torbk$ of $\torbk$ and the Galois group $\Gamma_K = \Gamma_{\hat{\rho}}$ by $\Gamma_{\varphi}$ in the discussion above. 
\end{Rem}

\subsubsection{Strong \texorpdfstring{$\cm$}{C\textasciicircum{}*}-actions} \label{subsubsec:strong_cm_action}

As in \cite[Proposition 2.4.1]{LY}, we can define a $\cm$-action on any irreducible $K$-admissible vector bundle over $Y = \torbk$ that lifts the $\cm$-action on $\torbk$ defined in \Cref{subsec:orbit_cm}. Such a $\cm$-action is unique up to multiplication by a character of $\cm$. The goal of this subsection is to make uniform choices of the $\cm$-actions on admissible vector bundles over various covers of $\orbk$ that are compatible with various functors and constructions.

First, note that the $\cm$-action on any cover $Y = \torbk$ induces a canonical $\cm$-equivariant structure on the canonical line bundle $\omega_Y$ of $Y$, which in turn induces a strong $\cm$-action on the associated Picard algebroid $\TT(\omega_Y)$ and hence on $\TT^+_Y = \frac{1}{2} \TT(\omega_Y)$ as well. Therefore $\TT^+_Y$ is naturally a strong $K \times \cm$-equivariant Picard algebroid over $Y$ and it makes sense to ask for $(\cm, \kappa)$-equivariant structures on $K$-admissible vector bundles over $Y$ in the sense of \Cref{defn:pic_module_action}.

Now choose a $K \times \tGamma_K$-equivariant square root line bundle $\omega_Z^{\smallhalf}$ of the canonical line bundle $\omega_Z$ of $Z = \horbk$, together with a $K \times \tGamma_K$-equivariant square isomorphism $\sq: (\omega_Z^{\smallhalf})^{\otimes 2} \xrightarrow{\sim} \omega_Z$, in the sense of \Cref{prop:adm_equivalent} (iv). 
A natural first attempt is to define a $\cm$-action on $\omega_Z^{\smallhalf}$ that commutes with the $K$-action, making it into a strong $(\TT^+_Z, K \times \cm)$-module. 
It is also natural to require that the square map $\sq$ be $\cm$-equivariant with respect to this $\cm$-action. This would lift the identification $\TT(\omega_Z^{\smallhalf}) \simeq \TT^+_Z$ \eqref{eq:isom_T} into an equality in $\PicAlg_{K \times \cm}(Z)$.
This is not always true, however; see \Cref{ex:metaplectic_rep} below. 

We remedy this problem by passing to a double cover of $\cm$, or of $\Gamma_K \times \cm$, which is an analogue of the group $\tGamma_K$ in \Cref{subsubsec:tilde_Gamma_K}. To simplify the notation, we write 
\[
	\Gamma_K^{\cm} = \Gamma_K^{\cm}(\orbk) := \Gamma_K(\orbk) \times \cm.
\]
As in \Cref{subsubsec:tilde_Gamma_K}, for each $\psi \in \Gamma_K^{\cm}$, we write 
$\tau_\psi: \omega_Z \xrightarrow{\sim} \psi^* \omega_Z$
for the collection of $K$-equivariant isomorphisms defining the $\Gamma_K \times \cm$-equivariant structure on $\omega_Z$.
We now define $\tGamma_K^{\cm} = \tGamma_K^{\cm}(\orbk)$ to be the group consisting of pairs $(\psi, \vartheta)$, where $\psi \in \Gamma_K^{\cm}$ and $\vartheta: \omega_Z^{\smallhalf} \xrightarrow{\sim} \psi^* \omega_Z^{\smallhalf}$ is a $K$-equivariant isomorphism of line bundles satisfying the same condition \ref{cond:tGamma_K} as in \Cref{subsubsec:tilde_Gamma_K}. Again this condition is independent of the choice of $\sq$. The multiplication on $\tGamma_K^{\cm}$ is given by the same formula \eqref{eq:mult_tilde_Gamma_K}. Again, we have a forgetful map $\tGamma_K^{\cm} \to \Gamma_K^{\cm}$ given by $(\psi, \vartheta) \mapsto \psi$. The same argument as for $\tGamma_K$ in \Cref{subsubsec:tilde_Gamma_K} shows that there is a central extension
	\begin{equation} \label{eq:extn_Gamma_K_cm}
		1 \to \Z_2 \to \tGamma_K^{\cm} \to \Gamma_K^{\cm} \to 1,
	\end{equation}
such that, if we pull back this extension via the inclusion $\Gamma_K \simeq \Gamma_K \times \{1\} \hookrightarrow \Gamma_K^{\cm}$, we obtain the extension \eqref{eq:extn_Gamma_K}. In other words, $\tGamma_K$, as a natural normal subgroup of $\tGamma_K^{\cm}$, is nothing but the preimage of $\Gamma_K \times \{1\}$ in $\tGamma_K^{\cm}$. By definition, there are actions of $\tGamma_K^{\cm}$ on $\omega_Z$ and $\omega_Z^{\smallhalf}$ that extend the previously defined $\tGamma_K$-actions.

Before proceeding, we remark that the square isomorphism $\sq: (\omega_Z^{\smallhalf})^{\otimes 2} \xrightarrow{\sim} \omega_Z$ is $\tGamma_K^{\cm}$-equivariant, where the action of $\tGamma_K^{\cm}$ on $\omega_Z$ factors through the natural $\Gamma_K^{\cm}$-action on $\omega_Z$. Moreover, the $\tGamma_K^{\cm}$-action on $\omega_Z^{\smallhalf}$ induces an action on the associated Picard algebroid $\TT^+_Z \simeq \TT(\omega_Z^{\smallhalf})$, which factors through $\Gamma_K^{\cm}$. 
The line bundle $\omega_Z^{\smallhalf}$ can be considered as an object in $\Coh(\TT^+_Z, K \times \tGamma_K^{\cm})^{gen}$.

To better understand the central extension \eqref{eq:extn_Gamma_K_cm}, let $T$ denote the subgroup $\{1\} \times \cm$ of $\Gamma_K^{\cm}$ and let $\widetilde{T}$ denote the preimage of $T$ in $\tGamma_K^{\cm}$.
Then the projection homomorphism $\pr_{\cm}: \Gamma_K^{\cm} = \Gamma_K \times \cm$ to the second factor gives rise to an identification $T \simeq \cm$. Thus $\widetilde{T}$ is a central extension of $T \simeq \cm$ by $\{\pm 1\}$ via the quotient map $\tGamma_K^{\cm} \twoheadrightarrow \Gamma_K^{\cm}$. There are two cases:
\begin{enumerate}[label=(\alph*), itemindent=*, leftmargin=*]
	\item $\widetilde{T}$ is disconnected. Hence the restriction of $\widetilde{T} \twoheadrightarrow T$ to $\widetilde{T}^\circ$ is an isomorphism $\widetilde{T}^\circ \xrightarrow{\sim} T$, and $\widetilde{T} = \widetilde{T}^\circ \times \{\pm 1\} = T \times \{\pm 1\}\simeq \cm \times \{\pm 1\}$.

	\item $\widetilde{T}$ is a connected $2$-fold cover of $T$ via $\widetilde{T} \twoheadrightarrow T$, and hence can be identified with $\cm$ (abstractly) so that the covering map $\widetilde{T} \to T$ is identified with the square homomorphism $\cm \to \cm,\, z \mapsto z^2$.
\end{enumerate}
In both cases, the inclusions $\tGamma_K$ and $\widetilde{T}$ into $\tGamma_K^{\cm}$ induce an isomorphism of groups 
\begin{equation} \label{eq:Gamma_K_cm_iso}
	(\tGamma_K \times \widetilde{T}) / \{(1, 1), (-1, -1)\} \simeq \tGamma_K^{\cm}.
\end{equation}
We now use this quotient description to extend genuine representations of $\tGamma_K$ and lift the constructions in \Cref{subsubsec:classification_adm_vb} to $\cm$-equivariant or $\tGamma_K^{\cm}$-equivariant settings, as follows. 
First fix a character $\varepsilon: \widetilde{T} \to \cm$ of $\widetilde{T}$ that is genuine, that is, $\varepsilon(-1) = -1$. 
Note that this choice is not unique: two choices of $\varepsilon$ differ by tensoring with a character of $\widetilde{T}$ that factors through $T$, i.e., a character of $T$ regarded as a character of $\widetilde{T}$ by inflation. We will fix our choice of $\varepsilon$ for the rest of the article and write $\C_\varepsilon$ for the corresponding one-dimensional representation space. 
For any $V \in \Rep(\tGamma_K)^{gen}$, we can then identify $V$ with $V \boxtimes_\C \C_\varepsilon$, the latter being a $\tGamma_K \times \widetilde{T}$-representation which factors through $\tGamma_K^{\cm}$ by \eqref{eq:Gamma_K_cm_iso}. Indeed, the diagonal element $(-1, -1)$ acts trivially because both factors are genuine.


Let $\ft := \Lie(T)$. The covering morphism $\widetilde{T} \to T$ induces an isomorphism of Lie algebras $\Lie(\widetilde{T}) \xrightarrow{\sim} \ft$, and hence an isomorphism of dual spaces $\Lie(\widetilde{T})^* \simeq \ft^*$. Let $\kappa = d \varepsilon \in \ft^*$ be the differential of $\varepsilon$. Clearly, when case (a) above occurs, $\kappa$ integrates to an (algebraic) character of $T$, say, the restriction of the genuine character $\varepsilon$ of $\widetilde{T} \simeq T \times \Z_2$ to $T$. When case (b) occurs, $\kappa$ does not integrate to any character of $T$, but does integrate to a genuine character of the connected torus $\widetilde{T}$.

Now, in the definition of the equivalence functor \eqref{eq:equiv_underline_V}, we equip $\OO_Z$ with the standard strong $(\sD_Z, K \times \Gamma_K^{\cm})$-module structure, so that $\underline{V}_Z = \OO_Z \otimes_\C V \in \Coh(\sD_Z, K \times \tGamma_K)^{gen}$ carries a natural genuine $\tGamma_K^{\cm}$-action induced from the genuine action on $V$ and the strong $\Gamma_K^{\cm}$-action on $\OO_Z$. If we regard $\kappa \in \ft^*$ as a character of $\fk \oplus \ft$ by inflation, then $\underline{V}_Z$ becomes a $(K \times \tGamma_K^{\cm}, \kappa)$-equivariant $\sD_Z$-module. Any morphism in $\Coh(\sD_Z, K \times \tGamma_K)^{gen}$ is automatically $\tGamma_K^{\cm}$-equivariant. 
This gives a fully faithful embedding 
\[ \Coh(\sD_Z, K \times \tGamma_K)^{gen} \longhookrightarrow \Coh^{\kappa}(\sD_Z, K \times \tGamma_K^{\cm})^{gen}, \]
which is not essentially surjective. Different choices of $\varepsilon$ give different embeddings, which are intertwined by the equivalences \eqref{eq:equiv_coherent_kappa} with each other. By the above discussion concerning $\kappa$, we can set $\kappa = 0$ if and only if case (a) occurs.

\begin{Ex} \label{ex:metaplectic_rep} 
	The oscillator/metaplectic/Segal-Shale-Weil representations of the metaplectic group $\Mp(2n,\R)$ arise as quantizations of admissible vector bundles over minimal nilpotent $K$-orbits of $\fsp(2n,\R)$. In this example, case (a) occurs when $n$ is even and case (b) occurs when $n$ is odd. 
\end{Ex}

In the definition of the functor \eqref{eq:equiv_twist_omega}, $\cE \otimes_{\OO_Z} \omega_Z^{\smallhalf}$ carries a natural $\tGamma_K^{\cm}$-action induced from the genuine action on $\cE$ and the canonical genuine $\tGamma_K^{\cm}$-action on $\omega_Z^{\smallhalf}$. This action descends to a $\Gamma_K^{\cm}$-action, making $\cE \otimes_{\OO_Z} \omega_Z^{\smallhalf}$ a $(K \times \Gamma_K^{\cm}, \kappa)$-equivariant $\TT^+_Z$-module. In the definition of the functor \eqref{eq:equiv_pushforward}, $\hat{\rho}_* \cV$ is $(K \times \cm, \kappa)$-equivariant, and hence so is $(\hat{\rho}_* \cV)^{\Gamma_K}$. This upgrades the functor \eqref{eq:classification_adm_vb_cat} into the lower horizontal equivalence in the following commutative diagram:
\begin{equation} \label{diag:lift_graded}
	\begin{tikzcd}
		\Rep(\tGamma_K)^{gen} \ar[r, "\sim"] \ar[d, hook] & \Coh(\TT^+_{\orbk}, K) \ar[d, hook] \\
		\Rep(\tGamma_K^{\cm})^{gen} \ar[r, "\sim"] & \Coh^\kappa(\TT^+_{\orbk}, K \times \cm)
	\end{tikzcd}
\end{equation}
where the top horizontal equivalence is given by \eqref{eq:classification_adm_vb_cat}. The left vertical functor is given by $V \mapsto V \boxtimes_\C \C_\varepsilon$ and the right vertical functor is given by equipping any object in $\Coh(\TT^+_{\orbk}, K)$ with the $\cm$-action induced from $\varepsilon$ as above. These two vertical functors are fully faithful embeddings but not essentially surjective. 

\begin{Rem} \label{rem:strong_cm_action_Galois}
	The same construction works for any $K$-equivariant Galois cover $\tilde{\rho}: \torbk \to \orbk$ of $\orbk$. Again, we need to consider the universal cover $\varphi: Z=\horbk \to \torbk$ of $\torbk$ and $\Gamma_K$ should be replaced by the Galois group of $\varphi$.
\end{Rem}

\subsubsection{Pullback to Galois covers} \label{subsubsec:galois}

In this subsection, we consider the special case when $\tilde{\rho}: Y = \torbk \to \orbk$ is a $K$-equivariant Galois cover.
Recall that $\Gamma_{\tilde{\rho}}$ and $\Gamma_\varphi$ denote the Galois groups of the $K$-equivariant Galois covering $\tilde{\rho}: Y = \torbk \to \orbk$ and the universal cover $\varphi: \horbk \to \torbk$, respectively. Then $\Gamma_\varphi$ is naturally a normal subgroup of $\Gamma_K$ and $\Gamma_{\tilde{\rho}}$ can be identified with $\Gamma_K/\Gamma_\varphi$. Let $\tGamma_\varphi$ be the preimage of $\Gamma_\varphi$ under the quotient homomorphism $\tGamma_K \twoheadrightarrow \Gamma_K$. By \Cref{defn:Galois}, we have natural group isomorphisms 
    \[ \tGamma_K/\tGamma_\varphi \simeq \Gamma_K/\Gamma_\varphi \simeq \Gamma_{\tilde{\rho}}.\] 
Set $\Gamma_{\tilde{\rho}}^{\cm} := \Gamma_{\tilde{\rho}} \times \cm$. Then there are natural group isomorphisms 
\[ \tGamma_K^{\cm}/\tGamma_\varphi \simeq \Gamma_K^{\cm}/\Gamma_\varphi \simeq \Gamma_{\tilde{\rho}}^{\cm}.\]

Define
\begin{equation} \label{eq:cW_Y}
	\cW := \varphi_* \omega_Z^{\smallhalf} \in \Coh(\TT^+_Y, K \times \tGamma_K^{\cm})^{gen}.
\end{equation}
Then $\tGamma_\varphi$ also acts on $\cW$ via the action of $\tGamma_K \subset \tGamma_K^{\cm}$. 
For a fixed choice of the character $\varepsilon: \widetilde{T} \to \cm$ and $\kappa = d\varepsilon$, we lift any $V \in \Rep(\tGamma_K)^{gen}$ to an object $V \boxtimes_\C \C_\varepsilon$ in $\Rep(\tGamma_K^{\cm})^{gen}$ (still denoted as $V$) via a functor $\Rep(\tGamma_K)^{gen} \hookrightarrow \Rep(\tGamma_K^{\cm})^{gen}$ as in \Cref{subsubsec:strong_cm_action}.
Set
\begin{equation} \label{eq:cWV_Y}
	\cWV := (V \otimes_\C \cW)^{\Gamma_\varphi} \in \Coh^\kappa(\TT^+_Y, K \times \Gamma_{\tilde{\rho}}^{\cm}),
\end{equation}
where the diagonal $\tGamma_\varphi$-action on $V \otimes_\C \cW$ descends to $\Gamma_\varphi$.

\begin{Lem} \label{lem:galois_pullback_adm_vb}
	With the notation above, we have a natural isomorphism in $\Coh^\kappa(\TT^+_Y, K \times \Gamma_{\tilde{\rho}}^{\cm})$,
\begin{equation} \label{eq:cWV_FV_pullback}
	\cWV \simeq \tilde{\rho}^* \cF_V,
\end{equation}
where $\cF_V$ is as in \eqref{eq:classification_adm_vb_cat}. Moreover, we have a natural isomorphism in $\Coh^\kappa(\TT^+_{\orbk}, K \times \Gamma_K^{\cm})$,
\begin{equation} \label{eq:rho_cWV_FV}
	(\tilde{\rho}_* \cWV)^{\Gamma_{\tilde{\rho}}} \simeq \cF_V.
\end{equation}
\end{Lem}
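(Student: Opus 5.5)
The plan is to reduce both isomorphisms to the equivalence \eqref{eq:equiv_pushforward} and its analogue for the Galois cover, using the factorization $\hat{\rho} = \tilde{\rho} \circ \varphi$. First, we rewrite $\cF_V$ as a $\Gamma_K$-invariant pushforward. Since $\varphi$ is finite étale, $\varphi_*$ commutes with tensoring by the constant sheaf $V$, and $\underline{V}_Z \otimes_{\OO_Z} \omega_Z^{\smallhalf} = V \otimes_\C \omega_Z^{\smallhalf}$. This gives
\[
	\hat{\rho}_*(\underline{V}_Z \otimes_{\OO_Z} \omega_Z^{\smallhalf}) \simeq \tilde{\rho}_* \varphi_* (V \otimes_\C \omega_Z^{\smallhalf}) \simeq \tilde{\rho}_*(V \otimes_\C \cW),
\]
compatibly with the diagonal $\tGamma_K^{\cm}$-actions. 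Next, we take invariants in two steps, using $\Gamma_\varphi \trianglelefteq \Gamma_K$ with quotient $\Gamma_{\tilde{\rho}}$ (\Cref{defn:Galois}). Since $\tilde{\rho}_*$ commutes with taking invariants under a finite group acting fibrewise over $Y$, we obtain
\[
	\cF_V = \bigl[\tilde{\rho}_*(V \otimes_\C \cW)\bigr]^{\Gamma_K} \simeq \bigl[\tilde{\rho}_*(V\otimes_\C\cW)^{\Gamma_\varphi}\bigr]^{\Gamma_{\tilde{\rho}}} = (\tilde{\rho}_*\cWV)^{\Gamma_{\tilde{\rho}}}.
\]
This is \eqref{eq:rho_cWV_FV}. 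The same computation works with $\tGamma_K^{\cm}$ in place of $\tGamma_K$, so the $(K\times\cm,\kappa)$-structures and the $\TT^+$-module structures match. Here we use $\tilde{\rho}^\circ\TT^+_{\orbk} \simeq \TT^+_Y$ and $\varphi^\circ \TT^+_Y\simeq \TT^+_Z$, so all pushforwards are the sheaf-theoretic ones by \Cref{subsubsec:pushforward_pic}.

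For \eqref{eq:cWV_FV_pullback}, we apply the Galois-cover version of the equivalence \eqref{eq:equiv_pushforward}:
\[
	\Coh^\kappa(\TT^+_Y, K \times \Gamma_{\tilde{\rho}}^{\cm}) \xrightarrow{\sim} \Coh^\kappa(\TT^+_{\orbk}, K \times \cm), \quad \cV \mapsto (\tilde{\rho}_*\cV)^{\Gamma_{\tilde{\rho}}},
\]
whose quasi-inverse is $\tilde{\rho}^*$. This is \cite[1.8.9]{BeilinsonBernstein}, valid because $Y/\Gamma_{\tilde{\rho}} \simeq \orbk$ by \Cref{defn:Galois}(iii). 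Applying $\tilde{\rho}^*$ to \eqref{eq:rho_cWV_FV} then yields $\cWV \simeq \tilde{\rho}^*(\tilde{\rho}_*\cWV)^{\Gamma_{\tilde{\rho}}} \simeq \tilde{\rho}^*\cF_V$. Concretely, the unit map $\tilde{\rho}^*(\tilde{\rho}_*\cV)^{\Gamma_{\tilde{\rho}}} \to \cV$ is an isomorphism. We can check this étale-locally: by \Cref{defn:Galois}(ii), $Y\times_{\orbk} Y \simeq \Gamma_{\tilde{\rho}}\times Y$, and over this base change the statement becomes the trivial identity $(\bigoplus_{\gamma}\gamma^*\cV)^{\Gamma_{\tilde{\rho}}}\simeq \cV$.

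The main obstacle is bookkeeping rather than geometry. One must check that the $\Gamma_{\tilde{\rho}}^{\cm}$-action on $\cWV$ is well defined, and that the isomorphisms respect the $\TT^+$-module structures and the $\kappa$-twisted $\cm$-action. For the first point, the diagonal $\tGamma_K^{\cm}$-action on $V\otimes\cW$ is non-genuine, because both factors are genuine, so it descends to $\Gamma_K^{\cm}$. Taking $\Gamma_\varphi$-invariants then leaves a residual action of $\Gamma_K^{\cm}/\Gamma_\varphi\simeq\Gamma_{\tilde{\rho}}^{\cm}$. The action on $\TT^+$ commutes with it, since the $\tGamma$-action on $\TT^+_Z\simeq\TT(\omega_Z^{\smallhalf})$ factors through $\Gamma$. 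Naturality in $V$ is then automatic from functoriality of each step.
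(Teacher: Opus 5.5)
Your proposal is correct and follows essentially the paper's argument. Both use the chain $\tilde{\rho}_*(V\otimes_\C\cW)\simeq\hat{\rho}_*(\underline{V}_Z\otimes_{\OO_Z}\omega_Z^{\smallhalf})$ coming from $\hat{\rho}=\tilde{\rho}\circ\varphi$, take invariants in two stages via $\Gamma_{\tilde{\rho}}\simeq\Gamma_K/\Gamma_\varphi$, and obtain \eqref{eq:cWV_FV_pullback} from Galois descent along $\tilde{\rho}$. The only difference is ordering: the paper simply cites the descent isomorphism for \eqref{eq:cWV_FV_pullback}, whereas you explicitly deduce it from \eqref{eq:rho_cWV_FV} by applying $\tilde{\rho}^*$, which makes the logical dependence clearer.
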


\begin{proof}
	The isomorphism \eqref{eq:cWV_FV_pullback} is the descent isomorphism for the Galois cover $\tilde{\rho}$, see the paragraph around \eqref{eq:equiv_pushforward}. We have a chain of isomorphisms in $\Coh^\kappa(\TT^+_{\orbk}, K \times \Gamma_K^{\cm})$,
	\begin{equation} \label{eq:rho_cWV}
		\tilde{\rho}_* (V \otimes_\C \cW) \simeq \tilde{\rho}_* \varphi_* (V \otimes_{\C} \omega_Z^{\smallhalf}) \simeq \tilde{\rho}_* \varphi_* (\underline{V}_Z \otimes_{\OO_Z} \omega_Z^{\smallhalf}) \simeq \hat{\rho}_* (\underline{V}_Z \otimes_{\OO_Z} \omega_Z^{\smallhalf}),
	\end{equation}
	which follows from the definition $\cW=\varphi_*\omega_Z^{\smallhalf}$ and the equality $\hat{\rho}=\tilde{\rho}\circ\varphi$. Since $\Gamma_{\tilde{\rho}} \simeq \Gamma_K/\Gamma_\varphi$, we have natural isomorphisms
	\[ 
		(\tilde{\rho}_* \cWV)^{\Gamma_{\tilde{\rho}}} = \left[\tilde{\rho}_* (V \otimes_\C \cW)^{\Gamma_\varphi}\right]^{\Gamma_{\tilde{\rho}}} \simeq \left[ \tilde{\rho}_* (V \otimes_\C \cW) \right]^{\Gamma_K} \simeq \cF_V,
	\]
	which gives \eqref{eq:rho_cWV_FV}.
\end{proof}

\subsection{Admissibility vs specialness}\label{subsec:adm_special_orbit}

We discuss the relationship between admissibility and specialness of nilpotent orbits for linear classical/metaplectic groups. We first recall the admissibility criteria for classical types and extend item to the metaplectic case, then apply them to orbits dual to quasi-distinguished orbits. A main reference for classification of admissible orbits of linear classical groups is \cite{Ohta:Adm} (see also \cite{Schwartz}). In this subsection and \cref{subsec:adm_spin_genuine}, we will often need to compute the reductive $K$-centralizer $K_Q$ and $K_Q^\circ$ of a nilpotent $K$-orbit, which can be computed from the $ab$-diagrams associated to the $K$-orbit $\orbk$ as outlined in the last part of \cref{subsec:nil_orbits}.

Let $\lambda \in \cP_\epsilon(N)$ be an $\epsilon$-partition of $N$, with $\epsilon = 0$ or $1$. The following condition (P') was introduced on page 291 of \cite{Ohta:Adm}.

\begin{itemize}
    \item[(P')] For each even part of $\lambda$, the number of odd parts of $\lambda$ that are less than the even part is even, and for each odd part of $\lambda$, the number of even parts of $\lambda$ that are greater than the odd part is even.
\end{itemize}

We will need the following classification result for admissible orbits of classical groups.

\begin{Prop}\label{prop:adm_classical}
	Let $G_\R$ be a connected real classical Lie group of type $B$, $C$, or $D$, with complexified Lie algebra $\fg$. Let $K$ be the complexification of a maximal compact subgroup $K_\R$ of $G_\R$. Let $\orb$ be any nilpotent orbit in $\g^*$ and $\orbk$ be any nilpotent $K$-orbit in $\orb_{\fk^\perp}$. Then the following conditions are equivalent:
	\begin{enumerate}[label=(\roman*)]
		\item $\orbk$ is $K$-admissible;
		\item the partition $\lambda$ associated to $\orb$ satisfies the condition (P');
		\item $\orb$ is a special nilpotent orbit in $\fg$.
	\end{enumerate}
\end{Prop}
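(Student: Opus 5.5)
The plan is to route everything through the partition $\lambda$ and to split the statement into the two equivalences (i)$\Leftrightarrow$(ii) and (ii)$\Leftrightarrow$(iii).

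\textbf{Step 1: (i)$\Leftrightarrow$(ii).} This is essentially Ohta's classification \cite{Ohta:Adm}. Here $G_\R$ is connected, with $K$ the complexified maximal compact subgroup. Ohta computes, directly from the $ab$-diagram of $\orbk$, the reductive $K$-centralizer $K_Q$, which is a product of orthogonal, symplectic and unitary-type factors. He also computes the character $\delta$ on $\bigwedge^{top}(\fk/\fk_e)^*$ restricted to $K_Q$.

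By \Cref{lem:adm_repn}(3), admissibility amounts to $\delta|_{K_e^\circ}$ admitting a square root. Since $K_e=K_Q\ltimes U^\theta$ with $U^\theta$ unipotent, this can be tested on $K_Q^\circ$. The $\mathrm{GL}$-type factors of $K_Q^\circ$ are indexed by the rows of the $ab$-diagram, grouped by length. The exponent of $\delta$ on the determinant of the factor attached to a row of length $a$ has parity equal to the number of rows of the opposite parity that are shorter (for $a$ even) or longer (for $a$ odd). This parity is independent of the $a/b$ labeling.

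Hence the existence of a square root on $K_Q^\circ$ is exactly condition (P') on $\lambda$. In particular it is independent of the choice of $\orbk\subset\orb_{\fk^\perp}$. I would simply cite \cite[\S\,1--2]{Ohta:Adm} for this computation, checking only that his groups agree with our $K$. The one subtlety is in type $D$ and $B$ with $G_\R=\SO_0(p,q)$, where $K=\SO(p)\times\SO(q)$ is connected; this case is handled by restricting from $\mathrm{S}(\mathrm{O}(p)\times\mathrm{O}(q))$, where only $K_e^\circ$ matters.

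\textbf{Step 2: (ii)$\Leftrightarrow$(iii).} This is purely combinatorial, and I would verify it against the description of special partitions in \Cref{subsec:special_duality_classical} with $(\epsilon,\epsilon')$ of type $B$, $C$ or $D$. The criterion there says that $h(a)\equiv\epsilon'\bmod 2$ for every part $a\equiv\epsilon\bmod 2$.

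Take a part $a\equiv\epsilon$. Because parts $\equiv\epsilon$ occur with even multiplicity, the parity of $h(a)$ equals the parity of the number of parts $\geqslant a$ of the opposite parity $\not\equiv\epsilon$, counted with multiplicity. Using that $N\equiv\epsilon'(1-\epsilon)$ and that the total number of such parts is $\equiv N$, one rewrites the condition in terms of parts smaller than $a$ or larger than $a$, as appropriate:

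In type $C$ ($\epsilon=1$) the relevant $a$ are odd, and the condition becomes "the number of even parts greater than $a$ is even".

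In types $B$ and $D$ ($\epsilon=0$) the relevant $a$ are even, and the condition becomes "the number of odd parts less than $a$ is even". In type $B$ this uses $N$ odd; in type $D$ it uses $N$ even.

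In each type, the other half of (P') holds automatically, because it concerns parts $\not\equiv\epsilon$ and counts parts of even multiplicity. This identification is the step where I must be careful with the off-by-one issues between "greater than" and "at least". I expect it to be the main, if routine, obstacle. I would check it on small examples such as $[2,2,1]$ in $B_2$ and $[3,1]$ in $C_2$ before writing the general parity argument.
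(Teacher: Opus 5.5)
Your proposal takes the same route as the paper. There, (i)$\Leftrightarrow$(ii) is simply cited from \cite[Theorem 2]{Ohta:Adm}; admissibility only sees $K_e^\circ$ and $\delta|_{K_e^\circ}$, so passing between $\SO_0(p,q)$ and larger groups is harmless. And (ii)$\Leftrightarrow$(iii) is declared an easy consequence of \Cref{lem:special_classical}, i.e.\ exactly the parity check against the $h(a)$-criterion that your Step 2 carries out correctly.

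Two cosmetic slips. First, $[3,1]$ is not a type $C$ partition, since odd parts must have even multiplicity; use e.g.\ $[2,1,1]$ in $C_2$ as a non-special test case. Second, the congruence ``number of parts $\not\equiv\epsilon$ is $\equiv N$'' holds only for $\epsilon=0$, which is the only case in which you actually use it.
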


\begin{proof}
	The equivalence of (i) and (ii) is the content of \cite[Theorem 2]{Ohta:Adm}. The equivalence of (ii) and (iii) is an easy consequence of \cref{lem:special_classical}.
\end{proof}

For the metaplectic group $\Mp(2n,\R)$, we introduce the following definition.

\begin{defi}
	Let $\GR = \Mp(2n,\R)$, so that the complexification of any maximal compact subgroup of $\GR$ is $K \simeq \widetilde{\GL}(n,\C)$, the unique connected $2$-fold cover of $\GL(n,\C)$. We say that a nilpotent $K$-orbit $\orbk$ is {\it metaplectic admissible} if it is not admissible with respect to $\Sp(2n,\R)$ but is admissible with respect to $\Mp(2n,\R)$.
\end{defi}

Given $\lambda \in \cP_C(2n)$, we introduce the following notation: for each odd part $c$ of $\lambda$, let $m_{>c}(\lambda)$ denote the number of even parts of $\lambda$ that are greater than $c$. We have the following analogue of \cref{prop:adm_classical} for the metaplectic group.

\begin{Prop} \label{prop:metaplectic_adm}
	Let $\GR = \Mp(2n,\R)$. Let $\orb$ be any nilpotent orbit in $\fg^*$ and $\orbk$ be any nilpotent $K$-orbit in $\orb_{\fk^\perp}$. Then $\orbk$ is metaplectic admissible if and only if $\orb$ is a metaplectic special nilpotent orbit in $\fg^*$.
\end{Prop}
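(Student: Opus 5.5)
We will work entirely with the pointwise criterion of \Cref{defn:adm_orbit_datum} and \Cref{lem:adm_repn}. For $\GR=\Mp(2n,\R)$ we have $K=\widetilde{\GL}(n,\C)$, the connected double cover of $\bar K=\GL(n,\C)$. We fix $e\in\orbk$ with a normal $\slf_2$-triple and reduce everything to the reductive centralizers $K_Q\subset K_e$ and $\bar K_Q\subset\bar K_e$. The character $\delta_e$ is pulled back from $\bar K_e$ and only involves $\fk/\fk_e$, so its restriction to the reductive centralizer is the same on both sides. The difference between admissibility for $\Sp(2n,\R)$ and for $\Mp(2n,\R)$ therefore lies only in the preimage $K_Q$ of $\bar K_Q$, and in particular in whether $K_Q^\circ$ is a connected or a disconnected cover of $\bar K_Q^\circ$.

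\textbf{Step 1: explicit description of $\bar K_Q$ and $K_Q$.} Using the $ab$-diagram of $\orbk$ (\cite{Ohta:Adm}, or signed Young diagrams via \cite[(3.7)]{GomezZhu:local_theta_lifting}), we write
\[
\bar K_Q\simeq\prod_{c\ \text{odd}}\GL(m(c))\times\prod_{a\ \text{even}}\mathrm{O}(p_a)\times\mathrm{O}(q_a),\qquad p_a+q_a=m(a).
\]
We then determine the double cover $K_Q$ of $\bar K_Q$ by restricting the determinant character of $\GL(n,\C)$ to $\bar K_Q$. Each factor $\GL(m(c))$ appears with exponent $c$ or a related multiplicity, and each orthogonal factor contributes a sign. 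The key point is that $\widetilde{\GL}(n)$ is the square-root-of-$\det$ cover, so $K_Q=\{(k,z): z^2=\det k\}$.

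\textbf{Step 2: the criterion.} We apply \Cref{lem:adm_repn}(3) on both sides. For $\Sp(2n,\R)$, $\orbk$ is admissible iff $\delta|_{\bar K_Q^\circ}$ has a square root; by \Cref{prop:adm_classical} this is equivalent to condition (P'). For $\Mp(2n,\R)$, $\orbk$ is admissible iff $\tilde\delta:=\delta\cdot(\text{pullback of }\det^{-1})$ admits a square root on $K_Q^\circ$, up to the genuine character $z$. Concretely this becomes the condition that
\[
\delta|_{\GL(m(c))}\cdot\det^{\,\pm1}
\]
be a square for every odd part $c$. We compute $\delta$ on the $\GL(m(c))$ factor as $\det^{\,m_{>c}(\lambda)+(\text{even contributions})}$, following Ohta's computation, which yields (P'). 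The extra twist by $\det$ then shifts the parity condition. We thus expect: metaplectic admissible $\iff$ for every odd part $c$, $m_{>c}(\lambda)$ is odd, and the condition on even parts from (P') holds, with everything transferred consistently. The even-part condition will need separate care, since the orthogonal factors only see signs.

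\textbf{Step 3: matching with metaplectic specialness.} Using the definition of special partitions in \Cref{subsec:special_duality_classical} with $(\epsilon,\epsilon')=(1,1)$, metaplectic specialness requires $h(a)$ odd for every even part $a$, with the convention that $h(0)$ is odd. We translate this into the parity of the number of odd parts below and above each even part, and compare it with the condition from Step 2. We also check that such orbits fail (P'), which follows because specialness of type $C$ and type $\mC$ are incompatible except where the cover trivializes.

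\textbf{Main obstacle.} We expect the hardest step to be Step 2: pinning down the exponent of $\det$ on each $\GL(m(c))$ factor in both $\delta$ and the restricted covering character, and verifying that the cover $K_Q^\circ\to\bar K_Q^\circ$ is genuinely nontrivial in exactly the right cases. We would first test the answer on the minimal orbit, where the oscillator representation must appear (\Cref{ex:metaplectic_rep}), and then induct on rows of the $ab$-diagram.
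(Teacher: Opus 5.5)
\textbf{There are genuine gaps in Steps 3 and 2, and Step 3's caveat hides a case where the statement itself fails.} Your route is otherwise the paper's: reduce to reductive centralizers, let the parity of $\det$ on the $\GL$-factors decide whether $K_Q^\circ\to\bar K_Q^\circ$ is the connected double cover, and read off parities from Ohta's computation of $\gamma$.

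\textbf{Step 3 uses the wrong definition.} For $(\epsilon,\epsilon')=(1,1)$, specialness constrains the \emph{odd} parts: $h(c)$ must be odd for every odd part $c$. The convention on $h(0)$ belongs to the description of $\Abar(\orb)$ and plays no role here. Your condition on even parts is not equivalent. For example, $\lambda=[3,3]$ passes it, since only $h(0)$ is tested and it is odd by convention. Yet $h(3)=2$, so $[3,3]$ is not metaplectic special. It also satisfies (P'), so its $K$-orbits are linearly admissible. Step 3 as you set it up would therefore assert a false equivalence. With the correct definition the step is immediate. Odd parts of a symplectic partition have even multiplicity, so $h(c)=m(c)+\#\{\text{parts}>c\}\equiv m_{>c}(\lambda)\pmod 2$. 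Hence metaplectic specialness means that $m_{>c}(\lambda)$ is odd for every odd part $c$.

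\textbf{Step 2 is left at ``we expect'' exactly where the argument lives.} The factor attached to an odd part $c$ is $\GL(m(c)/2)$, not $\GL(m(c))$. It sits in $\bar K=\GL(n,\C)$ as $c$ diagonal copies. Since $c$ is odd, $\det$ restricts to $\det^{c}$, so the preimage in $K$ is the \emph{connected} cover $\widetilde{\GL}(m(c)/2)$. All these covers contain the same kernel $\{\pm1\}$ of $K\to\bar K$. On the orthogonal factors $\det$ is trivial, and $\gamma$ vanishes on their Lie algebra by \cite[Lemma 7]{Ohta:Adm}.

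Hence $\orbk$ is admissible for $\Mp(2n,\R)$ if and only if the restrictions of $\gamma$ integrate to characters of the various $\widetilde{\GL}(m(c)/2)$ that agree at $-1$. By the same lemma, this holds if and only if the $m_{>c}(\lambda)$ share a common parity. Your criterion ``$\delta\det^{-1}$ is a square'' only detects genuine square roots, and it presupposes the connectedness just described. The even-part clause of (P') that you set aside is automatic in type $C$, so no induction on $ab$-diagrams is needed.

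\textbf{The statement fails when $\lambda$ has no odd part.} Your caveat ``except where the cover trivializes'' cannot be absorbed into the proof. Take $n=1$ and $\orb$ regular, which is the metaplectic dual of the regular orbit of $\fmp(2)$. Then $\bar K_Q^\circ$ is a product of special orthogonal groups on which $\det$ is trivial, and its preimage in $K$ is disconnected. So $\orbk$ is admissible for $\Sp(2n,\R)$, hence not metaplectic admissible, although $\orb$ is vacuously metaplectic special. The paper's proof has the same blind spot: its final equivalence is vacuous in this case.

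What the argument actually proves is that $\orbk$ is metaplectic admissible if and only if $\orb$ is metaplectic special and $\lambda$ has at least one odd part. Equivalently, $\orb$ is metaplectic special but not special in type $C$.
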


\begin{proof}
    Let $G = \Sp(2n, \C)$ and $\bar{G}_\R = \Sp(2n,\R)$,
	and let $\phi: \GR \to \bar{G}_\R$ be the $2$-fold covering morphism, so that we are in the situation of the last part of \Cref{subsec:basic_settings}. Then $\bar{K} \simeq \GL(n,\C)$ and $K \simeq \widetilde{\GL}(n, \C)$.

    Given any $\orbk \subset \orb_{\fk^\perp}$, fix a point $e \in \orbk$ and take a normal $\slf_2$-triple $(e, h, f)$. Let $\bar{K}_e \subset \bar{K}$ and $\bar{K}_Q \subset \bar{K}_e$ be the groups defined for $\bar{K}$ analogously to $K_e$ and $K_Q$ for $K$ (see \Cref{subsec:nil_orbits}). 
    Let $\lambda \in \cP_C(2n)$ be the partition corresponding to $\orb$. 
	Then $\bar{K}_Q^\circ$ is a product of (complex) orthogonal group factors (contributed from even parts of $\lambda$) and general linear group factors (contributed from odd parts of $\lambda$). Each odd part $c$ of $\lambda$ contributes a factor of $\GL(n_c)$ to $\bar{K}_Q^\circ$, where $2n_c = m(c)$. Therefore $\bar{K}_Q^\circ$ can be decomposed as a direct product
    \[ 
        \bar{K}_Q^\circ \simeq \prod_{a \text{ odd}} \GL(n_c) \times C,
    \]
    where $C$ is a direct product of the orthogonal group factors coming from even parts of $\lambda$.
    The embedding of each $\GL(n_c)$-factor into $\bar{K} \simeq \GL(n)$ is, up to conjugation, given by the standard block diagonal embedding 
		\[ \GL(n_c) \hookrightarrow \GL(n), \quad A \mapsto \mathrm{diag}(\underbrace{A, A, \cdots, A}_{c}, I_{n - c \cdot n_c}). \] 
	Since $a$ is odd, the preimage of each $\GL(n_c)$-factor of $\bar{K}_Q^\circ$ in $K \simeq \widetilde{\GL}(n)$ is connected and isomorphic to the $2$-fold cover $\widetilde{\GL}(n_c)$ of $\GL(n_c)$. 
    
    By \cite[Lemma 7]{Ohta:Adm}, one can deduce that the character $\gamma$ as defined in \cref{subsubsec:AOD_defn}, regarded as a character of $\fk_Q$, restricts to the zero character of the Lie algebra of $C$. Therefore $\orbk$ is admissible for $\Mp(2n,\R)$ if and only if the restriction of $\gamma$ to the Lie algebra of each $\GL(n_c)$-factor can be integrated to a character of $\widetilde{\GL}(n_c)$, and these characters all agree on the intersections of the various $\widetilde{\GL}(n_c)$, which is exactly $\ker(K \twoheadrightarrow \bar{K}) = \{\pm 1\}$. 
    Again by \cite[Lemma 7]{Ohta:Adm}, this is equivalent to the condition that the numbers $m_{>c}(\lambda)$ for all odd parts $a$ of $\lambda$ have the same parity. 
	
	On the other hand, $\orbk$ is admissible for $\Sp(2n,\R)$ if and only if $\lambda$ satisfies the condition (P'), i.e., the numbers $m_{>c}(\lambda)$ for all odd parts $a$ of $\lambda$ are even. Therefore $\orbk$ is metaplectic admissible if and only if the numbers $m_{>c}(\lambda)$ are odd for all odd parts $c$ of $\lambda$, and this is equivalent to the condition that $\lambda \in \cP_{1,1}^{sp}(2n)$, i.e., $\orb = \orb_\lambda$ is a metaplectic special nilpotent orbit in $\fg^*$.
\end{proof}

\begin{Prop}\label{prop:adm_dist}
	Let $\ckorb$ be any quasi-distinguished orbit in $\ckfg^*$ and $\orb = d(\ckorb)$ be its dual orbit in $\g^*$. Then any $K$-orbit $\orbk$ in $\orb_{\fk^\perp}$ is linearly admissible.
\end{Prop}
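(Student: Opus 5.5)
The proof splits into classical and exceptional types, since linear admissibility is checked differently in each. Throughout, $G$ is the simply connected group and $K=G^\theta$. If $\orb_{\fk^\perp}=\varnothing$ there is nothing to prove, so assume it is nonempty.

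\emph{Classical types.} For $\g$ of type $B$, $C$ or $D$, the orbit $\orb=d(\ckorb)$ is special, because it lies in the image of the Barbasch--Vogan duality map (equivalently, by \Cref{lem:d_quasi-dis} and \Cref{lem:special_classical}). \Cref{prop:adm_classical} then says every $K$-orbit in $\orb_{\fk^\perp}$ is $K$-admissible, where $K$ is the complexified maximal compact subgroup of the connected linear classical group.

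This has to be transported to the simply connected group, i.e.\ to $\Spin$ in types $B$ and $D$, where $K$ is a double cover of the orthogonal-type $K$. Admissibility is a statement about square roots of $\delta$ on the identity component $K_e^\circ$ (\Cref{lem:adm_repn}(3)). Pulling back along a covering $K\to\bar K$ maps $K_e^\circ$ onto $\bar K_e^\circ$. So a square root $\chi$ on $\bar K_e^\circ$ pulls back to a square root on $K_e^\circ$, and admissibility for $\bar K$ implies admissibility for $K$.

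For the metaplectic dual case, with $\g=\fsp(2n)$ and $\ckfg$ of type $\mC$, there is a wrinkle. Here $d(\ckorb)$ is metaplectic special, and by \Cref{prop:metaplectic_adm} its $K$-orbits are admissible only for $\Mp(2n,\R)$, not linearly admissible. I therefore read the proposition, in that case, as asserting admissibility for $\GR=\Mp(2n,\R)$, which \Cref{prop:metaplectic_adm} gives directly.

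For the other real forms (the quasi-split and non-split forms of $\Sp$, $\SO^*$, $\Sp(p,q)$), the criterion of \cite{Ohta:Adm} is stated uniformly. Condition (P') is just a partition condition, and it is implied by specialness via \Cref{lem:special_classical}.

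\emph{Exceptional types.} Here I would argue case by case. $\orb=d(\ckorb)$ runs over the duals of the distinguished orbits together with the five orbits in \Cref{tab:qD_exceptional}. For each real form, each $K$-orbit $\orbk\subset\orb_{\fk^\perp}$ comes with the reductive centralizer $K_Q$ and its identity component $K_Q^\circ$, as tabulated in \cite{King} (with the correction in \Cref{sec:correction_component}). Admissibility reduces to whether $\gamma=\tfrac12 d\delta$ integrates on $K_Q^\circ$.

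A useful shortcut is the following. When $K_Q^\circ$ is semisimple, or more generally when $\delta|_{K_Q^\circ}$ is trivial, $\chi=1$ works and admissibility is automatic. Only orbits whose $K_Q^\circ$ has a central torus need an actual computation: the restriction of $\delta$ to that torus must be an even weight, i.e.\ one computes the sum of the weights of $\fk/\fk_e$ on the torus. Alternatively, Vogan's criterion, or the known list of admissible orbits for exceptional groups in \cite{Schwartz}, can be cited directly.

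The main obstacle is this exceptional verification. It is computational rather than conceptual, and it depends on accurate component-group and centralizer data; this is exactly where the error in \cite{King} matters.
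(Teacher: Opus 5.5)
Your classical argument is the paper's: $d(\ckorb)$ is special, so \Cref{prop:adm_classical} applies. For $\fg=\fso(N)$ you add the observation, which the paper states without proof and you justify correctly, that a square root of $\delta$ on $\bar K_e^\circ$ pulls back to one on $K_e^\circ$; hence admissibility for the $\SO$-type group gives linear admissibility for $\Spin$.

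You are also right that linear admissibility fails for duals under the metaplectic duality. An example is the minimal orbit $[2,1^{2n-2}]$ of $\fsp(2n)$ with $n\geqslant 2$, which is the metaplectic dual of the regular orbit of type $\mC$. The paper's proof only invokes \Cref{prop:adm_classical}, so the proposition is meant for the usual $d$. Your metaplectic paragraph is therefore an aside, not part of what has to be proved.

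The exceptional half is where your proposal does not yet amount to a proof. The paper cites the existing classification of admissible orbits: \cite{Noel1, Noel2} for all noncompact real exceptional forms (with the reinterpretation in \cite[Remark 8.5.4]{LY}), and \cite{Nevins} for the split simply connected groups.

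\begin{itemize}
\item Your fallback citation \cite{Schwartz} cannot replace these, because Schwartz treats only real classical groups.
\item Your computational test is not sufficient as stated. That $\delta$ restricted to the central torus $S$ of $K_Q^\circ$ is an even weight is only necessary. Write $K_Q^\circ=(D\times S)/F$ with $D$ the derived group and $F=D\cap S$. You need $\delta|_{K_Q^\circ}\in 2X^*(K_Q^\circ)$, i.e.\ a square root $\mu$ of $\delta|_S$ that is also trivial on $F$. Already for $K_Q^\circ=\GL_2$ and $\delta=\det$, one has $\delta|_S=z^2$, yet $\det$ has no square root, since $\mu(-I)=-1$. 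This is exactly the kind of sign condition handled in \cite[Lemma 7]{Ohta:Adm}.
\item The data you point to is the wrong kind. By \Cref{lem:adm_repn}(3), admissibility depends only on $K_e^\circ$, so component groups play no role. What you need is $\fk_Q$ from Djokovi\'c's centralizer tables \cite{Djokovic:centralizer_inner, Djokovic:centralizer_outer}, together with the character lattice of $K_Q^\circ$ for the simply connected $K$. In particular \cite{King} and the correction in \Cref{sec:correction_component} are irrelevant here; they matter only for counting admissible bundles in \Cref{lem:adm_count_exceptional}.
\end{itemize}

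Either cite Noël and Nevins as the paper does, or carry out the check in $X^*(K_Q^\circ)$ correctly.
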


\begin{proof}
	For classical $\g$, we appeal to \Cref{prop:adm_classical}. Note that when $\g = \fso(N)$, admissibility for the classical groups automatically implies linear admissibility. The case of exceptional groups can be checked case by case using the tables in \cite{Nevins} (for groups of points of split simply connected algebraic groups of types $G_2$, $F_4$, $E_6$, and $E_7$) and \cite{Noel1, Noel2} (for all real exceptional groups, with appropriate reinterpretation as pointed out in \cite[Remark 8.5.4]{LY}). 
\end{proof}

\begin{Prop} \label{prop:non_adm_codim2}
	Let $G$ be a connected complex simple group that is not a spin group.
	Let $\ckorb$ be any quasi-distinguished orbit in $\ckfg$ and $\orb = d(\ckorb)$, where $d$ stands for the usual BV duality map (not the metaplectic one). Let $\orb'$ be a $G$-orbit in $\cP(\orb)$ such that $\codim(\orb', \overline{\orb}) = 2$. Then any $K$-orbit $\orbk' \subset \orb_{\fk^\perp}$ is not admissible for $K=(G^\theta)^\circ$.
\end{Prop}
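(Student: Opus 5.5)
The plan is to reduce the statement to the fact that the orbit $\orb'$ is \emph{not special}, and then to invoke the admissibility criteria of \Cref{subsec:adm_special_orbit}. I read the statement with $\orbk' \subset \orb'_{\fk^\perp}$, the orbit $\orb'$ replacing $\orb$.

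First I will show that $\orb'$ is not special. By construction, a special piece $\cP(\orb) = \overline{\orb} \setminus \partial_{sp}\orb$ contains exactly one special orbit, namely $\orb$ itself. Indeed, any other special orbit in $\overline{\orb}$ is strictly dominated by $\orb$ and therefore lies in $\partial_{sp}\orb$. Since $\codim(\orb', \overline{\orb}) = 2 > 0$, we have $\orb' \neq \orb$. Hence $\orb'$ is a non-special orbit in $\cP(\orb)$. In the classical case this can also be checked directly from \Cref{subsec:special_piece_classical}. There $\orb'$ arises from $\lambda = d(\eta)$ by replacing some blocks $[s_j+1, s_j^{2b_j}, s_j-1]$ with $[s_j^{2b_j+2}]$. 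The codimension-$2$ hypothesis forces a single replacement with $b_j$ small, and that replacement breaks the parity condition $h(a) \equiv \epsilon' \bmod 2$ of \Cref{lem:special_classical}.

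\emph{Classical case.} Here $G$ is $\Sp(2n,\C)$ or $\SO(N)$; spin groups are excluded by hypothesis, and the metaplectic duality is excluded because $d$ is the usual duality. The group $K = (G^\theta)^\circ$ is then the complexified maximal compact subgroup of the identity component $\GR^\circ$ of the relevant real form. This $\GR^\circ$ is one of $\Sp(2n,\R)$, $\Sp(p,q)$, $\SO_0(p,q)$, or $\SO^*(2n)$, so it is a connected real classical group as in \Cref{prop:adm_classical}. By the equivalence (i)$\Leftrightarrow$(iii) of \Cref{prop:adm_classical}, a $K$-orbit in $\orb'_{\fk^\perp}$ is $K$-admissible if and only if $\orb'$ is special. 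Since $\orb'$ is not special, no such $K$-orbit is admissible.

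\emph{Exceptional case.} I will work through \Cref{tab:qD_exceptional} together with the distinguished $\ckorb$. For each such orbit I list the non-special orbits $\orb'$ of codimension $2$ in $\cP(\orb)$, using the closure diagrams of \cite{FJLS:generic_sing} and the description of special pieces in \cite{Spaltenstein}. For each real form and each $K$-orbit in $\orb'_{\fk^\perp}$, I then check non-admissibility against the admissibility tables of \cite{Nevins} and \cite{Noel1, Noel2}, reinterpreted for $K = (G^\theta)^\circ$ as in \cite[Remark 8.5.4]{LY}.

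The main obstacle is this exceptional verification. It requires matching $K$-orbit labels across the sources and handling the passage from $G^\theta$ to its identity component. To reduce the bookkeeping, I would first try a semi-uniform argument. Near $\orb'$, the normalization of $\overline{\orb}$ is modelled on a Kleinian singularity, and by \Cref{thm:special_piece} and \Cref{lem:H_quasi-dist} it is smoothed by the $H(\orb)$-cover. This should force the character $\gamma$ on $\fk_{e'}$ to contain a half-odd contribution from the $\mathrm{O}_2$- or $\SL_2$-type factor of $K_{Q'}^\circ$ transverse to the slice, so that $\gamma$ cannot integrate to $K_{e'}^\circ$. I would confirm this against the tables rather than rely on it alone.
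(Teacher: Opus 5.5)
Your proposal is correct and follows the paper's proof. For linear classical $G$ you combine non-specialness of $\orb'$ with \Cref{prop:adm_classical}, and for exceptional $\g$ you check case by case against the tables of Nevins and No\"el; your derivation of non-specialness directly from the definition of $\cP(\orb)$, which contains no special orbit other than $\orb$, is slightly more direct than the paper's appeal to \Cref{prop:codim_sp_classical}.

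Do not rely on the semi-uniform heuristic, though: the metaplectic case has the same local $A_1$ geometry but the opposite integrability on $\fr^\theta$ (\Cref{prop:A1}(3b)), and in the paper the logic runs the other way, with \Cref{prop:A1}(3a) deduced from this proposition.
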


\begin{proof}
	For linear classical $G$, $\orb'$ is non-special by \cref{prop:codim_sp_classical}, and hence $\orbk'$ is not admissible by \Cref{prop:adm_classical}. For exceptional $\g$, we can check it case by case as in the proof of \Cref{prop:adm_dist}.
\end{proof}

\begin{Rem}
	From \Cref{prop:adm_classical} and \Cref{prop:non_adm_codim2}, one can see a connection between specialness and admissibility. This has been pointed out by Vogan and Nevins; see the Main Theorem, the Conjecture, and the discussion between them in \cite{Nevins}.
\end{Rem}

\subsection{Genuine admissible orbit data of spin groups} \label{subsec:adm_spin_genuine}

We now consider the case when $G=\Spin(N)$. It suffices to only consider the case when $N \geqslant 7$, since otherwise $\fg$ is either abelian, or of type $A$ or $C$. Then $G$ is the universal covering group of $\bar{G} = \SO(N)$. The kernel of the covering morphism $G \to \bar{G}$ is isomorphic to $\Z_2$, denoted as $\{\pm 1\}$. For any anti-holomorphic involution $\varpi$ of $\fg$, regard it as an anti-holomorphic involution of $G=\Spin(N)$ or $\bar{G} = \SO(N)$, and set $\GR = G(\R) = G^\varpi$ (which is connected since $G$ is simply connected) and $\bar{G}_\R = (\bar{G}^\varpi)^\circ$. Then the restriction of the covering morphism $G \to \bar{G}$ to $\GR$ gives a 2-fold covering morphism $\GR \to \bar{G}_\R$. Let $\theta$ be the Cartan involution as in \cref{subsec:basic_settings}. Set $K = (G^\theta)^\circ = G^\theta$ and $\bar{K} = (\bar{G}^\theta)^\circ$.
There are two types of noncompact real forms: 
\begin{itemize}[itemindent=*, leftmargin=*]
	\item The special orthogonal Lie algebra $\fso(p,q)$, where $p+q=N$, $p,q \geqslant 1$. In this case, $\bar{G}_\R=\SO_0(p,q)$, $\GR=\Spin(p,q)$, and
		\[ \bar{K} \simeq \SO(p) \times \SO(q) \quad \text{and} \quad K \simeq (\Spin(p) \times \Spin(q))/\{(1, 1), (-1, -1)\}. \]
	\item The quaternionic orthogonal Lie algebra $\fso^*(2n)$, where $N=2n$ is even. In this case, $\bar{G}_\R=\SO^*(2n)$, $\GR=\Spin^*(2n)$, $\bar{K} \simeq \GL(n,\C)$, and $K \simeq \widetilde{\GL}(n, \C)$ is the unique connected $2$-fold cover of $\GL(n,\C)$.
\end{itemize}
In either case, we have an obvious inclusion map $\AOD_{\bar{K}}(\orb) \hookrightarrow \AODK(\orb)$ via inflation. The complement of the image of this inclusion is the set $\AODK(\orb)^{gen}$ of genuine admissible orbit data, which we study below in both cases.

\begin{Prop} \label{prop:adm_Spin_pq}
	Assume $\g = \fso(N)$ and let $\g_\R$ be a real form of $\g$ corresponding to a Cartan involution $\theta$. Let $\bar{G}_\R = \SO(p,q)$ and $G_\R = \Spin(p,q)$, with $p+q=N$. Set $G = \Spin(N)$, $\bar{G} = \SO(N)$, $K = G^\theta$ and $\bar{K} = (\bar{G}^\theta)^\circ$. Let $\ckorb = \ckorb_\eta$ be a quasi-distinguished orbit in $\ckfg$ and $\orb = d(\ckorb)$. Then 
	\begin{enumerate}[itemindent=*, leftmargin=*]
		\item If some nonzero part of $\eta$ has odd multiplicity (which must be $1$), then $\AODK(\orb)^{gen} = \varnothing$;
		\item Suppose all nonzero parts of $\eta$ have even multiplicities (which must be $2$). Then 
		\[
			|\AODK(\orb)^{gen}| =
			\begin{cases}
				4, & \text{if } p=q \text{ is even}; \\
				1, & \text{if } p=q \text{ is odd}; \\
				2, & \text{if } |p-q|=1; \\
				0, & \text{otherwise}.
			\end{cases}
		\]
		Moreover, in the first three cases, there is a unique $K$-orbit $\orbk \subset \orb_{\fk^\perp}$ such that $\AODK(\orbk)^{gen} \neq \varnothing$.
	\end{enumerate}
\end{Prop}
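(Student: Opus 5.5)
We prove \Cref{prop:adm_Spin_pq} by reducing genuine admissibility to a parity computation on the reductive $K$-centralizer, in the same spirit as the proof of \Cref{prop:metaplectic_adm}.

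\textbf{Reduction to the reductive centralizer.} Fix $\orbk\subset\orb_{\fk^\perp}$ with ab-diagram of shape $\lambda=(\eta^\dagger)^*$ (\Cref{lem:d_quasi-dis}), a point $e$, and a normal $\slf_2$-triple. Let $\bar K_Q\subset\bar K$ be the reductive centralizer and $K_Q$ its preimage in $K$, so $K_Q\to\bar K_Q$ is a $2$-fold cover with kernel $\{\pm1\}$. Since $\lambda$ consists of odd parts only in type $B$/$D$, $\bar K_Q$ is a product $\prod_{a}S(\mathrm{O}(p_a)\times\mathrm{O}(q_a))$ over the odd parts $a$, where $p_a+q_a=m_\lambda(a)$ is read off from the numbers of rows beginning with $a$ or $b$. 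By \Cref{lem:adm_repn}, a genuine admissible datum on $\orbk$ is an irreducible representation of $\pi_0(\widetilde K_e)=\pi_0(\widetilde K_Q)$ that is genuine for \emph{both} double covers: the metaplectic one $\widetilde K_Q\to K_Q$ attached to $\delta$, and the spin one $K_Q\to\bar K_Q$.

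\textbf{Key dichotomy.} $\AODK(\orbk)^{gen}\neq\varnothing$ requires $-1\in K_Q\setminus K_Q^\circ$. If $K_Q^\circ\to\bar K_Q^\circ$ is a nontrivial double cover, then $-1\in K_Q^\circ$, and it acts trivially on every admissible representation, so nothing is genuine. The preimage of $\SO(p_a)\times\SO(q_a)$ in $\Spin(N)$ is connected as soon as some factor $\SO(r)$ with $r\geqslant2$ appears, since the spin cover restricts nontrivially to $\SO(r)$ embedded via $r$ copies of the standard representation with $a$ odd.

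Hence we need every $p_a,q_a\leqslant1$, i.e.\ $\bar K_Q^\circ$ trivial and $\orbk$ essentially a ``distinguished'' $K$-orbit. Using the formulas \eqref{eq:eta_dagger}, the multiplicities of $\lambda$ are the differences of consecutive parts of $\eta^\dagger$. A part of $\eta$ of multiplicity $1$ produces a difference $\geqslant 3$ in $\eta^\dagger$, hence some $m_\lambda(a)\geqslant3$, forcing $p_a$ or $q_a\geqslant2$. This gives (1). When all parts of $\eta$ occur twice, every odd part of $\lambda$ has multiplicity exactly $2$, and we need $p_a=q_a=1$ for each $a$. This pins down a unique ab-diagram, which must exist and be compatible with signature $(p,q)$. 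Counting the $a$'s and $b$'s in rows of odd length (rows of length $a$ starting with $a$ contribute one extra $a$) shows that existence forces $|p-q|\leqslant1$, which gives the uniqueness claim and the ``otherwise $0$'' case.

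\textbf{Counting.} In the remaining cases $\pi_0(K_Q)$ is the preimage of the elementary abelian $2$-group $\prod_a S(\mathrm{O}(1)\times\mathrm{O}(1))$ in the Clifford group, i.e.\ a finite group generated by products $v_iv_j$ of anticommuting Clifford generators. Its further $\delta$-double cover must be determined using Ohta's formula \cite[Lemma 7]{Ohta:Adm} for $\gamma$, as in \Cref{prop:metaplectic_adm}. Then $\sum(\dim V)^2$ over doubly genuine irreducibles equals $|\pi_0\widetilde K_Q|/4$, computable via a Clifford-algebra (extraspecial group) description. Its irreducibles give $4$, $1$, $2$ according to whether the number of Clifford generators is $\equiv0,\ \equiv$ odd, or the parity mismatch of $p=q$ even, $p=q$ odd, $|p-q|=1$; this mirrors the even/odd Clifford algebra having $1$ or $2$ simple modules, twisted by the $\delta$-cover.

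\textbf{Main obstacle.} The hardest step is precisely this last one: correctly identifying the central extension by \emph{two} $\Z_2$'s (spin and $\delta$) and computing its genuine irreducibles. I would first verify the small cases $\Spin(4,3)$, $\Spin(4,4)$ and $\Spin(3,3)$ against \texttt{atlas} or the counts in \cite{BMSZ:spin} to fix the parity conventions.
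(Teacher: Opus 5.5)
\textbf{Verdict: there is a genuine gap in the counting step of part (2).} The reduction to the reductive centralizer, the dichotomy via connectedness of the preimage of $\bar K_Q^\circ$, part (1), and the ab-diagram argument for uniqueness and $|p-q|\leqslant 1$ all follow the paper's proof. But you assert the values $4,1,2$ rather than derive them, and the set-up you propose would not produce them.

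First, the metaplectic $\delta$-cover is unnecessary. By \Cref{defn:adm_orbit_datum} and \Cref{lem:adm_repn}, admissible data are $\gamma$-representations of $K_e$ itself. The $\gamma$-character of $K_Q^\circ$ is pulled back from $\bar K_Q^\circ$ (by uniqueness of $\chi$ and linear admissibility, \Cref{prop:adm_dist}). So only the single spin extension $1\to\{\pm1\}\to K_Q\to\bar K_Q\to 1$ enters; there is no second $\Z_2$ to identify and no appeal to \cite[Lemma 7]{Ohta:Adm}.

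Second, $\bar K_Q$ is not $\prod_a S(\mathrm{O}(p_a)\times\mathrm{O}(q_a))$. Membership in $\bar K=\SO(p)\times\SO(q)$ imposes two \emph{global} conditions: $\det=1$ on $V^+$ and $\det=1$ on $V^-$. Moreover, $\mathrm{O}(p_a)$ acts on $V^{\pm}$ through $(a\pm1)/2$ copies of its standard representation. For instance, with $p_a=q_a=1$, your element $(-1,-1)$ has determinant $(-1)^a=-1$ on $V^+$, so it does not even lie in $\bar K$.

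The paper instead gets $\bar K_Q\simeq\Z_2^{\mathbf p-1}\times\Z_2^{\mathbf q-1}$ with $\mathbf p=\sum_c p_c$ and $\mathbf q=\sum_c q_c$, and identifies $K_Q\simeq(\Spin_{1^{\mathbf p}}\times\Spin_{1^{\mathbf q}})/\{(1,1),(-1,-1)\}$. It then uses that $\Spin_{1^n}$ has two genuine irreducibles (restricted half-spin representations) for $n$ even and one (restricted spin representation) for $n$ odd. The genuine irreducibles of $K_Q$ are $V_{\mathbf p}\otimes V_{\mathbf q}$, giving $2\cdot 2$, $1\cdot 1$, $1\cdot 2$. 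The value $4$ genuinely needs this product of two factors. A single group generated by products $v_iv_j$ under one parity condition, as in your heuristic, is $\Spin_{1^{\mathbf p+\mathbf q}}$ and has at most two genuine irreducibles. Also, $\sum(\dim V)^2=|\pi_0\widetilde K_Q|/4$ does not determine the number of irreducibles; that number is governed by the centre of $K_Q$.

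There are also smaller inaccuracies.
\begin{itemize}
\item $\lambda$ need not have only odd parts: $\eta=[5,5,1,1]$ gives $\lambda=(\eta^\dagger)^*=[3,3,2,2,1,1]$. An even part $c$ contributes a $\GL(n_c)$-factor to $\bar K_Q^\circ$, embedded through $c$ (an even number of) copies of $L\oplus L^*$, so the spin cover splits over it. These factors do not affect the dichotomy, and in the count they split off as a direct factor. But it means ``$p_a,q_a\leqslant 1$'' does not make $\bar K_Q^\circ$ trivial.
\item In part (1), not every singleton of $\eta$ produces a jump $\geqslant 3$ at an \emph{odd} part of $\lambda$: for $\eta=[7,5,3,1]$ one has $\eta^\dagger_2-\eta^\dagger_3=0$. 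A large multiplicity at an even part only yields a $\GL$-factor and gives no contradiction. As in the paper, take the singleton at an odd position $i$; equal pairs $\eta_j=\eta_{j+1}$ start only at odd $j$, so the first singleton sits at an odd position. Then $m_\lambda(i)=\eta^\dagger_i-\eta^\dagger_{i+1}\geqslant 3$ with $i$ odd.
\item The resulting $\SO(t)$, $t\geqslant 2$, embeds via $i$ copies of its standard representation, an odd number, which is why its preimage is connected. You wrote ``$r$ copies''; the relevant count is the odd part $a$.
\end{itemize}
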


\begin{proof}
	In this case, $\ckfg$ is of type $C$ or $D$. By \Cref{lem:d_quasi-dis}, the partition associated to the orbit $\orb$ is $\lambda = d(\eta) = (\eta^\dagger)^*$. By \cref{defi:quasi-dist_classical}, it is easy to see that $\lambda$ has at least one odd part, say, $1$. 
	
	Given any $\orbk \subset \orb_{\fk^\perp}$, fix a point $e \in \orbk$ and take a normal $\slf_2$-triple $(e, h, f)$. Let $\bar{K}_e \subset \bar{K}$ and $\bar{K}_Q \subset \bar{K}_e$ be the groups defined for $\bar{K}$ analogously to $K_e$ and $K_Q$ for $K$ (see \Cref{subsec:nil_orbits}). By \Cref{prop:adm_dist}, we know that any such $\orbk$ is linearly admissible (for $\bar{K}$). The condition $\AODK(\orb)^{gen} = \varnothing$ is equivalent to the connectedness of the preimage of $\bar{K}_Q^\circ$ in $K$ (or $K_Q$), since this would mean that any $\gamma$-admissible representation $V$ of $K_Q$ must be trivial on $\ker(K_Q \to \bar{K}_Q) = \ker(K \to \bar{K})\simeq \Z_2$, and hence any such $V$ factors through $\bar{K}_Q$. 
	
	In our case, $\bar{K}_Q^\circ$ is a direct product of (complex) special orthogonal groups and general linear groups. Each odd member $c$ of $\lambda$ contributes a factor of $\SO(p_c) \times \SO(q_c)$ to $\bar{K}_Q^\circ$, where $p_c + q_c = m_\lambda(c)$ and $p_c$ (resp. $q_c$) is the number of rows of length $c$ in the $ab$-diagram of $\orbk$ that starts with $a$ (resp. $b$). 
	The preimage of each factor $\SO(p_c) \times \SO(q_c)$ in $K$ is $(\Spin(p_c) \times \Spin(q_c))/\{(1,1), (-1,-1)\}$.


	Suppose that $\eta$ has at least one nonzero part with odd multiplicity (which has to be $1$). Then by \cref{defi:quasi-dist_classical}, we can take this part to be $\lambda_i$, where $i$ is a positive odd integer. Since all parts of $\eta$ are of the same parity, we have $\eta_i \geqslant \eta_{i+1}+2$. Note that, if $i = \#\eta$, we make the convention that $\eta_{i+1}=0$ (this can only occur when $\ckfg$ is of type $C$). By \Cref{lem:d_quasi-dis}, $\eta^\dagger_i = \eta_i+1$ and $\eta^\dagger_{i+1} \leqslant \eta_{i+1}$, hence 
		\[ p_i + q_i = m_\lambda(i) = \eta^\dagger_i - \eta^\dagger_{i+1} \geqslant \eta_i - \eta_{i+1} + 1 \geqslant 3. \]
	This implies that $p_i \geqslant 2$ or $q_i \geqslant 2$. Therefore $\bar{K}_Q^\circ$ contains a factor of $\SO(t)$ with $t \geqslant 2$. It is easy to see that the preimage of this factor $\SO(t)$ in $K$ is $\Spin(t)$, which is connected. Therefore the preimage of $\bar{K}_Q^\circ$ in $K$ or $K_Q$ is connected, and hence $\AODK(\orbk)^{gen} = \varnothing$ for any $\orbk \subset \orb_{\fk^\perp}$. This confirms case (1).  
	
	From now on, assume that all nonzero parts of $\eta$ have even multiplicities (which equal $2$). Then $\eta$ is of the form $[t_1^2, t_2^2, \cdots, t_s^2]$ with $t_1 > t_2 > \cdots > t_s > 0$. Set $N=p+q$. We have two cases:
	\begin{enumerate}[label=(\Roman*), itemindent=*, leftmargin=*]
		\item $N$ is even: in this case, $\fg$ is of type $D$ and 
			\[ \eta^\dagger = [t_1+1, t_1-1, \cdots, t_s+1, t_s-1]. \] 
			Then all parts of $\lambda$ have multiplicity $2$.
		\item $N$ is odd: in this case, $\fg$ is of type $B$ and 
			\[ \eta^\dagger = [t_1+1, t_1-1, \cdots, t_s+1, t_s-1, 1]. \] 
			Then the largest part $\lambda_1 = 2s + 1$ of $\lambda$ is odd and has multiplicity $1$. The other odd parts of $\lambda$ all have multiplicity $2$.
	\end{enumerate}

	If there is one odd part $c$ of $\lambda$ with $m_\lambda(c) = 2$ and the $ab$-diagram of $\orbk$ satisfies $p_c=2$ or $q_c=2$, then as in case (1), $\bar{K}_Q^\circ$ contains a factor of $\SO(2)$ and hence $\AODK(\orbk)^{gen} = \varnothing$ for such $\orbk$. Therefore $\AODK(\orbk)^{gen} \neq \varnothing$ only if for any odd part $c$, we have both $p_c \leqslant 1$ and $q_c \leqslant 1$. 

	Assuem that, for all odd parts $c$ of $\lambda$, we have both $p_c \leqslant 1$ and $q_c \leqslant 1$. Then there is one unique $ab$-diagram configuration satisfying this condition, which forces $p=q$ when $\fg$ is of type $D$ and $|p-q|=1$ when $\fg$ is of type $B$. In either case, there is a unique $K$-orbit $\orbk \subset \orb_{\fk^\perp}$ attached to such an $ab$-diagram by \cite[Theorem 9.3.4]{CM}. For this $\orbk$, $\bar{K}_Q^\circ$ is a direct product of copies of $\SO(1)$ and hence trivial. Set 
		\[ \mathbf{p} = \sum_{c \text{ odd}} p_c \quad \text{and} \quad \mathbf{q} = \sum_{c \text{ odd}} q_c. \]
	Then $\bar{K}_Q = Z_{\bar{K}} \simeq \Z_2^{\mathbf{p}-1} \times \Z_2^{\mathbf{q}-1}$ and $K_Q = Z_K$ is a central $\Z_2$-extension of $\bar{K}_Q$. To describe this central extension, we identify $\Z_2^{\mathbf{p}-1}$ (resp. $\Z_2^{\mathbf{q}-1}$) with the diagonal subgroup of $\SO(\mathbf{p})$ (resp. $\SO(\mathbf{q})$), denoted as $\SO_{1^{\mathbf{p}}}$ (resp. $\SO_{1^{\mathbf{q}}}$). Let $\Spin_{1^{\mathbf{p}}}$ (resp. $\Spin_{1^{\mathbf{q}}}$) be the preimage of $\SO_{1^{\mathbf{p}}}$ (resp. $\SO_{1^{\mathbf{q}}}$) in $\Spin(\mathbf{p})$ (resp. $\Spin(\mathbf{q})$), which is a central $\Z_2$-extension of $\SO_{1^{\mathbf{p}}}$ (resp. $\SO_{1^{\mathbf{q}}}$). Then 
		\[ K_Q = Z_K \simeq (\Spin_{1^{\mathbf{p}}} \times \Spin_{1^{\mathbf{q}}}) / \{(1, 1), (-1, -1)\}, \]
	where $\{(1, 1), (-1, -1)\}$ is the diagonal subgroup of the product $\Z_2 \times \Z_2$ of the two central $\Z_2$-subgroups of $\Spin_{1^{\mathbf{p}}}$ and $\Spin_{1^{\mathbf{q}}}$.
	An irreducible genuine representation of $K_Q$ must be of the form $V_{\mathbf{p}} \otimes V_{\mathbf{q}}$ for some irreducible genuine representation $V_{\mathbf{p}}$ (resp. $V_{\mathbf{q}}$) of $\Spin_{1^{\mathbf{p}}}$ (resp. $\Spin_{1^{\mathbf{q}}}$) (the action of $(-1,-1) \in \Z_2 \times \Z_2$ on $V_{\mathbf{p}} \otimes V_{\mathbf{q}}$ is trivial, hence the representation factor through $K_Q$). The irreducible genuine representations of $\Spin_{1^n}$ for any positive integer $n$ are classified as follows:
	\begin{itemize}[itemindent=*, leftmargin=*]
		\item When $n=2m$ is even, there are two irreducible genuine representations of $\Spin_{1^n}$ up to isomorphism, which are the restrictions of the two half-spin representations $S_+^{n}$ and $S_-^{n}$ of $\Spin(2m)$, both of dimension $2^{m-1}$;
		\item When $n=2m+1$ is odd, there is only one irreducible genuine representation of $\Spin_{1^n}$ up to isomorphism, which is the restriction of the spinor representation $S^{n}$ of $\Spin(2m+1)$ of dimension $2^m$.
	\end{itemize}
	Returning to the classification of irreducible genuine representations of $K_Q$, we have the following three cases:
	\begin{enumerate}[label=(\roman*), itemindent=*, leftmargin=*]
		\item $p=q$ is even: in this case, $\mathbf{p} = \mathbf{q} = n$ is even. There are 4 irreducible genuine representations of $K_Q$ up to isomorphism, which are of the form $S_{\pm}^{\mathbf{p}} \otimes S_{\pm}^{\mathbf{q}}$, all of dimension $2^{n-2}$. Therefore $|\AODK(\orbk)^{gen}| = 4$;
		\item $p=q$ is odd: in this case, $\mathbf{p} = \mathbf{q} = n$ is odd. There is a unique irreducible genuine representation of $K_Q$ up to isomorphism, which is of the form $S^{\mathbf{p}} \otimes S^{\mathbf{q}}$ of dimension $2^{n-1}$. Therefore $|\AODK(\orbk)^{gen}| = 1$;
		\item $|p-q|=1$: in this case, $|\mathbf{p} - \mathbf{q}| = 1$. There are 2 irreducible genuine representations of $K_Q$ up to isomorphism: if $\mathbf{p} - \mathbf{q} = 1$, then $\mathbf{p}$ is odd and $\mathbf{q}$ is even, and $\Irr(K_Q) = \{S^{\mathbf{p}} \otimes S_{\pm}^{\mathbf{q}}\}$; if $\mathbf{q} - \mathbf{p} = 1$, then $\mathbf{p}$ is even and $\mathbf{q}$ is odd, and $\Irr(K_Q) = \{S_{\pm}^{\mathbf{p}} \otimes S^{\mathbf{q}}\}$.  
		All these representations have dimension $2^{\frac{\mathbf{p}+\mathbf{q}-1}{2}-1} = 2^{\min(\mathbf{p}, \mathbf{q})-1}$. Therefore $|\AODK(\orbk)^{gen}| = 2$.
	\end{enumerate}
\end{proof}

\begin{Prop} \label{prop:adm_SO*(2n)}
	Let $\bar{G}_\R = \SO^*(2n)$ and $G_\R = \Spin^*(2n)$. Set $G = \Spin(2n)$, $\bar{G} = \SO(2n)$, $K = G^\theta$ and $\bar{K} = (\bar{G}^\theta)^\circ$. Let $\ckorb = \ckorb_\eta$ be a quasi-distinguished orbit in $\ckfg$ and $\orb = d(\ckorb)$. Then $\AODK(\orb)^{gen} = \varnothing$.
\end{Prop}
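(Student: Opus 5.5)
The argument follows the template of the proof of \Cref{prop:adm_Spin_pq}. Fix a $K$-orbit $\orbk \subset \orb_{\fk^\perp}$, a point $e \in \orbk$ and a normal $\slf_2$-triple $(e,h,f)$. Let $\bar{K}_Q \subset \bar{K}_e$ be the reductive centralizer in $\bar{K} \simeq \GL(n,\C)$.

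By \Cref{prop:adm_dist}, every such $\orbk$ is linearly admissible. Hence $\AODK(\orbk)^{gen}=\varnothing$ as soon as the preimage of $\bar{K}_Q^\circ$ in $K \simeq \widetilde{\GL}(n,\C)$ is connected. Indeed, the kernel $\{\pm1\}$ then lies in $K_Q^\circ$, on which any admissible (i.e.\ $\gamma$-) representation acts by the scalar-type character $\chi$. Since $\orbk$ is linearly admissible, $\chi$ is trivial on $\{\pm 1\}$, so every admissible representation factors through $\bar{K}_Q$. So it suffices to show that this preimage is connected for every $K$-orbit in $\orb_{\fk^\perp}$.

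For $\fso^*(2n)$, nilpotent $K$-orbits are described by $ab$-diagrams in which even rows occur in pairs. The group $\bar{K}_Q^\circ$ is a product of factors $\GL(m_c)$, one for each odd part $c$ with $2m_c=m_\lambda(c)$ (rows of odd length $c$ come in pairs starting with $a$ and with $b$), together with symplectic factors $\Sp(m_c)$ coming from even parts. Here $\lambda=(\eta^\dagger)^*$ by \Cref{lem:d_quasi-dis}, and $\ckfg$ is of type $D$.

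I expect to use the following facts about the embeddings into $\bar{K}$:
\begin{itemize}[itemindent=*, leftmargin=*]
\item A factor $\GL(m_c)$ with $c$ odd embeds in $\GL(n)$ as $\mathrm{diag}(A,\dots,A)$ with a block multiplicity of parity $c$, i.e.\ odd. Its determinant is therefore $\det(A)^{\text{odd}}$, so its preimage in $\widetilde{\GL}(n)$ is the connected double cover $\widetilde{\GL}(m_c)$, which contains $-1$.
\item A factor $\Sp(m_c)$ is simply connected, so its preimage splits. It then contains $-1$ in the identity component only if $-1$ is already reached through one of the $\GL$-factors.
\end{itemize}
The key step is therefore to show that $\lambda$ always has an odd part; this makes the preimage of $\bar{K}_Q^\circ$ connected. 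As noted in \Cref{rem:dist_very_even}, $\lambda=(\eta^\dagger)^*$ contains the part $1$ in type $D$, since $\eta^\dagger$ ends with $\eta_{p}-1$ while its other parts differ by $\pm1$ from the parts of $\eta$. More precisely, $m_\lambda(1)=\eta^\dagger_1-\eta^\dagger_2\geqslant 2$.

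I then need to check that the odd-part factor $\GL(m_1)$ is nontrivial, i.e.\ $m_1\geqslant 1$; this holds because $m_\lambda(1)=2m_1\geqslant 2$. The preimage of $\GL(m_1)$ is then the connected group $\widetilde{\GL}(m_1)$, which contains the kernel $\{\pm1\}$, so the whole preimage of $\bar{K}_Q^\circ$ is connected.

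The main obstacle is justifying the embedding claim: that the $\GL(m_c)$-factors embed in $\GL(n)$ with odd block multiplicity, so that the determinant character is an odd power and the double cover stays connected. For this I would translate the signed Young diagram data via \cite[(3.7)]{GomezZhu:local_theta_lifting} and \cite[Remark 9.5.2]{CM}. The parity bookkeeping, namely the multiplicity $c$ versus the number of $a$'s in rows of length $c$, needs care. I would first test the argument on small cases such as $\SO^*(8)$.
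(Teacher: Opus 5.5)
Your proposal is correct and is essentially the paper's proof. There too one uses $m_\lambda(1)=\eta^\dagger_1-\eta^\dagger_2=2t\geqslant 2$ to obtain a factor $\mathrm{GL}(t)\subset\bar K_Q^\circ$ whose preimage $\widetilde{\mathrm{GL}}(t)$ in $K$ is connected and contains $\ker(K\to\bar K)$, and then one concludes as in \Cref{prop:adm_Spin_pq}. Your explicit remark that linear admissibility forces the integrating character on $K_Q^\circ$ to be pulled back from $\bar K_Q^\circ$ nicely spells out that last step.

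Only the part $c=1$ is needed, and there the factor embeds simply as $A\mapsto\mathrm{diag}(A,I_{n-t})$, so the general odd-$c$ embedding you flag as the main obstacle never has to be justified. Also, part $1$ occurs because $\eta^\dagger_1>\eta^\dagger_2$, not because of the last entry $\eta_p-1$.
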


\begin{proof}
	The proof is similar to that of \Cref{prop:adm_Spin_pq}.  
	The partition associated to the orbit $\orb$ is $\lambda = d(\eta) = (\eta^\dagger)^*$. 
	In the current setting, $\bar{K}_Q^\circ$ is a product of (complex) symplectic groups and general linear groups. Each odd part $c$ of $\lambda$ contributes a factor of $\GL(n_c)$ to $\bar{K}_Q^\circ$, where $2n_c = m_\lambda(c)$. By the same observation as in the first paragraph of the proof of \cref{prop:adm_Spin_pq}, we know $m_\lambda(1) \geqslant 2$ and is even. We write $m_\lambda(1)=2t$. 
	Therefore $\bar{K}_Q^\circ$ contains a factor of $\GL(t)$ with $t \geqslant 1$. The embedding of this $\GL(t)$ into $\bar{K} \simeq \GL(n)$ is, up to conjugation, given by the standard block diagonal embedding 
		\[ \GL(t) \hookrightarrow \GL(n), \quad A \mapsto \mathrm{diag}(A, I_{n-t}). \] 
	Thus the preimage of the $\GL(t)$-factor of $\bar{K}_Q^\circ$ in $K$ is connected and isomorphic to the metaplectic $2$-fold cover $\widetilde{\GL}(t)$ of $\GL(t)$. Therefore the preimage of $\bar{K}_Q^\circ$ in $K$ or $K_Q$ is connected, so $\AODK(\orb)^{gen} = \varnothing$ by the same argument as in \cref{prop:adm_Spin_pq}.
\end{proof}

\section{Coadjoint orbit method for complex groups} \label{sec:orbit_method_complex}

This section recalls the quantization tools needed below, specializes them to affinizations of nilpotent $G$-orbit covers, and then applies the resulting canonical quantizations to special unipotent ideals and associated cycles.

\subsection{Filtered and Hamiltonian quantizations} \label{subsec:filtered_hamiltonian_quant}

In this subsection, we will recall the definitions of filtered and Hamiltonian quantizations of graded Poisson algebras.

\begin{defi} \label{defn:graded_poisson}
	Let $d \in \Z_{>0}$. A \emph{(non-negatively) graded Poisson algebra} of degree $-d \in \Z_{<0}$ is a finitely generated commutative associative $\Z_{\geqslant 0}$-graded unital algebra 
		\[ A = \bigoplus_{i=0}^{\infty} A_i, \quad A_0 = \C \cdot 1, \]
	with unit $1$, equipped with a Poisson bracket $\{\cdot, \cdot\}$ of degree $-d$ such that
	  \[ \{A_i, A_j\}\subset A_{i+j-d}, \qquad \forall\,i,j \in \Z_{\geqslant 0}.\]
Note that finite generation implies that each $A_i$ is finite dimensional.
\end{defi}

\begin{defi}\label{defn:filtered_quant}
	A \emph{filtered quantization} of a graded Poisson algebra $A$ of degree $-d$ is a pair $(\cA,\varsigma)$ consisting of the following data:
	\begin{enumerate}[label=(\roman*)]
    	\item an associative algebra $\cA$ equipped with a complete multiplicative ascending filtration by subspaces
    	\[ \cA_{\leqslant 0} \subseteq \cA_{\leqslant 1} \subseteq \cA_{\leqslant 2} \subseteq \ldots, \quad \cA = \bigcup_{i=0}^{\infty} \cA_{\leqslant i} \]
    	such that
    	\[ [\cA_{\leqslant i}, \cA_{\leqslant j}] \subseteq \cA_{\leqslant i+j-d} \qquad \forall\,i,j \in \Z,\]
    	
    	\item an isomorphism of graded Poisson algebras
    	\[ \varsigma: \gr\cA \xrightarrow{\, \sim \,} A, \]
    	where $\gr\cA := \bigoplus_{i=0}^{\infty} \cA_{\leqslant i} / \cA_{\leqslant i-1}$ is the associated graded algebra of $\cA$, and
		the Poisson bracket on $\gr\cA$ is defined by
    	\[ \{a+\cA_{\leqslant i-1}, b+\cA_{\leqslant j-1}\} = [a,b]+\cA_{\leqslant i+j-d-1}, \qquad \forall\,a \in \cA_{\leqslant i}, \ b \in \cA_{\leqslant j}.\]
	\end{enumerate}
	An isomorphism of filtered quantizations $(\cA_1, \varsigma_1) \xrightarrow{\sim} (\cA_2, \varsigma_2)$ is an isomorphism of filtered algebras $\phi: \cA_1 \xrightarrow{\sim} \cA_2$ such that $\varsigma_1 = \varsigma_2 \circ \gr\phi$. Denote the set of isomorphism classes of quantizations of $A$ by $\mathrm{Quant}(A)$.
\end{defi}



Let $A$ be a graded Poisson algebra of degree $-d$. Suppose $G$ is an algebraic group that acts rationally on $A$ by graded Poisson automorphisms. Let $\der(A)$ denote the Lie algebra of derivations of $A$. The differentiation of the $G$-action on $A$ induces a Lie algebra homomorphism
\[\fg \to \der(A), \qquad \xi \mapsto \xi_A.\]
A \emph{classical co-moment map} (for the $G$-action on $A$) is a $G$-equivariant map $\phi: \g \to A_d$ satisfying
\[\{\phi(\xi), a\} = \xi_A(a), \qquad \xi \in \fg, \quad a \in A.\] 
In this case, we call the $G$-action on $A$ (together with $\phi$) Hamiltonian.

\begin{Ex} \label{ex:orbit_cover}
The most important example of a graded Poisson algebra with a Hamiltonian $G$-action for us is the algebra of regular functions $A=\C[\widetilde{\orb}]$ of a connected finite $G$-equivariant covering $\widetilde{\orb}$ of a nilpotent orbit $\orb$ in $\g^*$. The $G$-action on $A$ is induced by the $G$-action on $\widetilde{\orb}$, and the grading on $A$ is induced by the $\cm$-action on $\widetilde{\orb}$ as in \Cref{subsec:orbit_cm}. As mentioned in \cref{subsec:orbit_covers}, $\widetilde{\orb}$ is equipped with the pullback of the KKS symplectic form of $\orb$, which induces the Poisson bracket on $A$ of degree $-2$. 

The composition of the covering morphism $\rho: \widetilde{\orb} \to \orb$ with the inclusion $\orb \hookrightarrow \g^*$ is a moment map for the Hamiltonian $G$-action on $\widetilde{\orb}$. This induces a classical co-moment map $\phi: \g \to A_2$ for the Hamiltonian $G$-action on $A$.
\end{Ex}

A filtered quantization $(\cA,\varsigma)$ of $A$ is said to be \emph{$G$-equivariant} if $G$ acts rationally on $\cA$ by filtered algebra automorphisms and the isomorphism $\varsigma: \gr\cA \xrightarrow{\sim} A$ is $G$-equivariant (with respect to the $G$-action on $\gr\cA$ inherited from $\cA$ and that on $A$). In this case, we get a Lie algebra homomorphism
\[\fg \to \der(\cA), \qquad \xi \mapsto \xi_{\cA}.\]

\begin{defi}\label{defn:hamiltonian}
Suppose $A$ is a graded Poisson algebra equipped with a Hamiltonian $G$-action. A ($G$-)\emph{Hamiltonian} quantization of $A$ is a triple $(\cA,\varsigma,\Phi)$ consisting of
 \begin{itemize}
     \item[(i)] a $G$-equivariant filtered quantization $(\cA,\varsigma)$ of $A$, and
     \item[(ii)] a $G$-equivariant map $\Phi: \fg \to \cA_{\leq d}$, called a \emph{quantum co-moment map}, such that
     \[[\Phi(\xi), a] = \xi_{\cA}(a), \qquad \forall\,\xi \in \fg, \quad a \in \cA.\]
     %
 \end{itemize}
 An isomorphism $(\cA_1,\varsigma_1,\Phi_1) \xrightarrow{\sim} (\cA_2,\varsigma_2,\Phi_2)$ of Hamiltonian quantizations of $A$ is a $G$-equivariant isomorphism of filtered algebras $\varphi: \cA_1 \to \cA_2$ such that $\varsigma_1 = \varsigma_2 \circ \gr(\varphi)$ and $\Phi_2 = \varphi \circ \Phi_1$. Denote the set of isomorphism classes of Hamiltonian quantizations of $A$ by $\mathrm{Quant}^G(A)$.
\end{defi}

In particular, a quantum co-moment map $\Phi$ extends a $G$-equivariant filtered homomorphism $\Phi: \Ug \to \cA$ of algebras, where we filter $\Ug$ so that the associated graded algebra $S\fg = \C[\fg^*]$ has $\g$ sitting in degree $d$.

\begin{Rem}
	It is easy to see that the map $\gr\Phi: \fg \to A_d$ defined by 
	\[
		\gr\Phi(\xi) = \varsigma(\Phi(\xi) + \cA_{\leqslant d-1})
	\]
	is a classical co-moment map for $G$. If $A$ is already equipped with a Hamiltonian $G$-action and a classical co-moment map $\phi: \g \to A_d$, then one usually assumes the compatibility condition $\gr\Phi = \phi$ by default. However, we will mainly consider the case where $A=\C[X]$ is a canonical symplectic singularity and $G$ is a semisimple group; in this case, both the classical co-moment map and the quantum co-moment map are unique, so that the compatibility condition is automatically satisfied. See \Cref{lem:Hamiltonian_quant}.
\end{Rem}

The next subsection recalls the formal sheaf-theoretic version of quantization, which is the form in which period maps and even quantizations are most convenient.

\subsection{Formal quantizations}
\label{subsec:formal_quant}

We review deformation quantization in the algebraic context from \cite{BeKaledin}. Let $X$ be a smooth algebraic variety (over $\C$) equipped with an algebraic symplectic form $\Omega \in H^0(X, \Omega_X^2)$. Then $\Omega$ induces a Poisson bracket $\{ - , -\}$ on the structure sheaf $\OO_X$ of $X$.

\begin{defi}\label{defn:formal_quan}
A \emph{formal deformation quantization over $\C\series$}, or simply a {\it quantization}, of a smooth symplectic variety $(X, \Omega)$ is the following datum:
\begin{itemize}
	\item
		a sheaf $\OO_\hb$ of flat associative $\C\series$-algebras on $X$, complete and separated in the $\hb$-adic topology, such that the commutator $[a, b] = a \cdot b - b \cdot a$ lies in $\hb \OO_\hb$ for any $a, b \in \OO_\hb$, and
	\item
		an isomorphism $\varsigma: \OO_\hb / \hb \OO_\hb \xrightarrow{\sim} \OO_X$ of sheaves of commutative Poisson algebras, where the Poisson bracket on $\OO_\hb / \hb \OO_\hb$ is given by $\hb^{-1}[a, b] + \hb \OO_\hb$, $a, b \in \OO_\hb$.
\end{itemize}
\end{defi}

There is an obvious definition of isomorphic quantizations. Denote by $\quan(X, \Omega)$ or simply $\quan(X)$ the set of equivalence classes of quantizations of $X$. 

We also need the notions of graded and even quantization from \cite[Section 2.2]{Losev:isom_quant}.

\begin{defi}\label{defn:graded_quan}
	A symplectic variety $(X, \Omega)$ is called \emph{graded} of weight $d$ if it is equipped with a $\cm$-action, so that the symplectic form $\Omega$ is of weight $d$ with respect to the induced $\cm$-action on smooth algebraic forms. This means that the induced Poisson bracket is of degree $-d$. 
	
	A \emph{graded (formal) quantization} of a graded symplectic variety $(X, \Omega)$ of weight $d$ is a formal deformation quantization $\OO_\hb$ of $(X, \Omega)$ equipped with a pro-rational $\cm$-action by automorphisms of sheaves of algebras, such that 
	\begin{itemize}
		\item[(i)] the isomorphism $\varsigma: \OO_\hb / \hb \OO_\hb \xrightarrow{\sim} \OO_X$ is $\cm$-equivariant, and
		\item[(ii)] $z \cdot \hb = z^d \hb$ for any $z \in \cm$.
	\end{itemize}
	We require isomorphisms between graded quantizations to be $\cm$-equivariant. Let $\quan(X)^{gr}$ denote the set of isomorphism classes of graded quantizations.
\end{defi}

\begin{defi}\label{defn:even_quant}
	A graded quantization $\OO_\hb$ is said to be \emph{even} if there is a $\cm$-equivariant involutive anti-automorphism $\epsilon: \OO_\hb \to \OO_\hb$ that is the identity modulo $\hb$ and sends $\hb$ to $-\hb$. The involution $\epsilon$ is called the {\it parity involution} and the pair $(\OO_\hb,\epsilon)$ is called an \emph{even quantization}. 
\end{defi}

The parity involution $\epsilon$, if it exists, is unique up to $\cm$-equivariant automorphisms of the quantization $\OO_\hb$. An even quantization carries an extra action of $\Z_2 = \Z/2\Z$, which is generated by $\epsilon$. 

We give one more terminology that will be useful in the following.

\begin{defi}\label{defn:strongly_adm}
	A smooth symplectic variety $(X, \Omega)$ is said to be \emph{strongly admissible} if it satisfies $H^1(X,\OO_X) = H^2(X,\OO_X) = 0$, where $H^i(X,\OO_X)$ is the $i$-th sheaf cohomology group of $\OO_X$.
\end{defi}

Now we are ready to state the classification result for graded quantizations (\cite{BeKaledin}, \cite[Corollary 2.3.3]{Losev:isom_quant}). 

\begin{Thm}\label{thm:per_graded}
	Let $(X, \Omega)$ be a graded strongly admissible smooth symplectic variety. Then there is a bijection, called the \emph{noncommutative period map}, 
	\[ \per: \quan(X)^{gr} \xrightarrow{\sim} H^2(X,\C). \]
	Moreover, a graded quantization $\OO_\hb$ is even if and only if $\per(\OO_\hb) = 0$.
\end{Thm}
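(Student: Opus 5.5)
This statement is the classification theorem for graded quantizations, attributed to Bezrukavnikov--Kaledin and Losev. I would prove it by reducing to the non-graded period map of Bezrukavnikov--Kaledin and then isolating the $\cm$-equivariant part.

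\textbf{Step 1 (ungraded classification).} For a smooth symplectic $(X,\Omega)$ with $H^1(X,\OO_X)=H^2(X,\OO_X)=0$, the Bezrukavnikov--Kaledin theory gives a bijection
\[ \per:\quan(X)\xrightarrow{\sim} \Omega+\hb H^2_{DR}(X)\series . \]
The construction is as follows. One uses the Harish-Chandra torsor of formal coordinate systems and localizes the universal quantization of the formal symplectic disc. The obstruction and classification groups are then computed by the de Rham complex, via the Hodge-type vanishing supplied by strong admissibility. I would quote this result rather than reprove it.

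\textbf{Step 2 (graded case).} The $\cm$-action on $X$ induces an action on the set $\quan(X)$, and $\per$ is $\cm$-equivariant. Since $\cm$ is connected, it acts trivially on $H^2(X,\C)$. Because $z\cdot\hb=z^d\hb$ and $\Omega$ has weight $d$, a quantization with period $\Omega+\sum_{k\geqslant 1}\hb^k c_k$ is carried to one with period $\Omega+\sum_k z^{d(k-1)}\hb^k c_k$. Hence the $\cm$-fixed points are exactly the periods of the form $\Omega+\hb c$ with $c\in H^2(X,\C)$.

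One must then show two further things:
\begin{enumerate}
\item A fixed isomorphism class admits a pro-rational $\cm$-action. The automorphism group of a quantization is pro-unipotent up to $H^1$-type terms, which vanish, so the obstruction to lifting disappears.
\item The lift is unique up to isomorphism, so that $\quan(X)^{gr}\to \quan(X)^{\cm}$ is bijective.
\end{enumerate}
Together these give $\per:\quan(X)^{gr}\xrightarrow{\sim}H^2(X,\C)$, $\OO_\hb\mapsto c$. This lifting and uniqueness step is the main technical obstacle. To handle it, I would follow Losev's argument in \cite{Losev:isom_quant}, using rationality of the action on each truncation $\OO_\hb/\hb^n$ and passing to the limit.

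\textbf{Step 3 (evenness).} An anti-automorphism with $\hb\mapsto-\hb$ identifies $\OO_\hb$ with $\OO_\hb^{op}$ after the substitution $\hb\mapsto-\hb$. The period of the opposite quantization is $-\per(\OO_\hb)$; this is a standard computation, since the canonical-class shift vanishes in the symplectic case, where $\omega_X$ is trivial. Combined with the substitution $\hb\mapsto-\hb$, this shows that evenness forces $c=-c$, i.e.\ $c=0$.

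Conversely, if $c=0$, then $\OO_\hb$ and its twisted opposite have the same graded period, so by Step 2 they are isomorphic. This produces an anti-involution $\epsilon$. It is an involution because $\epsilon^2$ is a graded automorphism that is the identity mod $\hb$. Such automorphisms are inner-trivial by the vanishing of $H^1$, so $\epsilon$ can be adjusted to square to the identity.
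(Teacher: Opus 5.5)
Your outline follows the route the paper takes. The paper gives no proof of its own and quotes the statement from \cite{BeKaledin} for the ungraded period map and from \cite[Corollary 2.3.3]{Losev:isom_quant} for the graded refinement and the evenness criterion; your Steps 1--3, with the graded lifting deferred to Losev, sketch exactly those arguments.

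One justification in Step 3 should be replaced. Graded automorphisms that are the identity mod $\hb$ need not be inner or trivial, because $H^1(X,\OO_X)=0$ does not kill their $H^1_{DR}$-part. You do not need that claim, however: $\psi=\epsilon^2$ lies in the pro-unipotent group of such automorphisms and commutes with $\epsilon$, so $\epsilon\exp(-\tfrac12\log\psi)$ is the required $\cm$-equivariant parity involution.
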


\subsection{Equivariant graded quantizations} \label{subsec:equiv_quant}

Let $(X, \Omega)$ be a graded smooth symplectic variety. Let $G$ be an algebraic group acting on $X$ via $\beta: G \times X \to X$ by symplectic automorphisms, such that the $G$-action commutes with the $\cm$-action. 
By a {\it $G$-equivariant graded quantization} of $X$, we mean a graded quantization $\OO_\hb$ of $X$ equipped with a $G$-action $\beta_\hb$ on $\OO_\hb$ by automorphisms of sheaves of $\C\series$-algebras that lifts the $G$-action on $X$, such that 
\begin{enumerate}[label=(\roman*)]
	\item
	it commutes with the $\cm$-action on $\OO_\hb$, and
	\item
	the composition $\OO_\hb \twoheadrightarrow \OO_\hb / \hb \OO_\hb \xrightarrow{\varsigma} \OO_X$ is $G$-equivariant.
\end{enumerate}

We consider the problem of lifting the $G$-action $\beta$ to an action $\beta_\hb$ on $\OO_\hb$. We will be mainly interested in the case of an even quantization.

\begin{defi}[{\cite[Definition 3.3.4]{LY}}]\label{defn:symm_beta}
	We say that a $G$-action $\beta_\hb$ on an even quantization $(\OO_\hb,\epsilon)$ of $(X,\Omega)$ is \emph{symmetrized} if it commutes with the $\cm \times \Z_2$-action on $\OO_\hb$.
\end{defi}

\begin{Prop}[{\cite[Corollary 3.3.2]{LY}}]
	Suppose the graded symplectic variety $X$ is strongly admissible and $(\OO_\hb, \epsilon)$ is an even quantization of $X$. Then the $G$-action $\beta$ lifts uniquely to a symmetrized $G$-action $\beta_\hb$ on $\OO_\hb$.
\end{Prop}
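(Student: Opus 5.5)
The statement to prove is the cited \cite[Corollary 3.3.2]{LY}: a $G$-action $\beta$ on a graded strongly admissible smooth symplectic variety $X$ lifts uniquely to a symmetrized action $\beta_\hb$ on an even quantization $(\OO_\hb,\epsilon)$. The plan is to use the period map of \Cref{thm:per_graded} to produce lifts, and the vanishing of $H^1(X,\OO_X)$ to make them unique.

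\emph{Existence for each group element.} Fix $g\in G$. The pullback $g^*\OO_\hb$, with the $\cm$-action and the isomorphism $\varsigma$ transported along $g$, is again a graded quantization of $(X,\Omega)$, because $g$ is symplectic and commutes with $\cm$. It is also even: transport $\epsilon$ along $g$. By \Cref{thm:per_graded}, every even quantization has period $0$, so $g^*\OO_\hb\simeq\OO_\hb$ as graded quantizations. This gives a $\cm$-equivariant isomorphism $\Phi_g:\OO_\hb\to g^*\OO_\hb$ lifting $g$.

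\emph{Rigidity.} Next I show that a $\cm$-equivariant automorphism of $\OO_\hb$ that reduces to the identity modulo $\hb$ must be trivial, up to inner automorphisms and under the stated vanishing hypotheses. Such an automorphism has the form $\exp(\hb^{k}D)$ to leading order, where $D$ is a derivation, i.e. a symplectic vector field. Symplectic vector fields modulo Hamiltonian ones are governed by $H^1(X,\C)$. Graded-ness forces $D$ to have a fixed $\cm$-weight. A weight argument, combined with $H^1(X,\OO_X)=0$ (which makes locally Hamiltonian fields globally Hamiltonian after the grading shift), shows every step is inner by an $\hb$-adically convergent $\cm$-invariant element. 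The remaining inner ambiguity is then killed by imposing the symmetrization condition: we require $\Phi_g$ to commute with $\epsilon$. Since $\epsilon$ negates $\hb$, the symmetrized lift is unique. Concretely, replace $\Phi_g$ by the solution of $\epsilon\Phi_g\epsilon=\Phi_g$ obtained by "averaging" the inner correction $\exp(\hb a)$ with $a=\epsilon(a)$-odd part set to zero, order by order. This is the same mechanism as Losev's proof that even quantizations have a canonical $\Z_2$-symmetric automorphism group.

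\emph{Group law and algebraicity.} Uniqueness implies $\Phi_{gh}=h^*\Phi_g\circ\Phi_h$, so the lifts assemble into an action of $G$. Pro-rationality follows by working modulo $\hb^n$. At each finite level the lifts are controlled by finite-dimensional data on which $G$ acts algebraically, and $H^2(X,\OO_X)=0$ ensures there are no obstructions to extending from level $n$ to level $n+1$.

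\emph{Main obstacle.} I expect the rigidity step to be the hardest: showing that the only $\epsilon$-compatible, $\cm$-equivariant automorphism lifting the identity is the identity. The approach is to compare with Losev's analysis of $\Aut$ of a graded quantization. An automorphism lifting the identity is $\exp$ of a derivation in $\hb\,\mathrm{Der}$. Derivations are inner up to $H^1(X,\C)$ classes, and these classes are rescaled by $\cm$ with a nonzero weight, so equivariance forces them to vanish. The inner part is $\mathrm{ad}(a)$ with $a$ of $\cm$-weight $d$ times $\hb^{-1}$. Parity compatibility forces $a$ to be $\epsilon$-anti-invariant in a way that is incompatible with reducing to a function modulo $\hb$ unless $a$ is central, so $\mathrm{ad}(a)=0$.
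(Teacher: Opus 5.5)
You correctly obtain the graded isomorphism $\Phi_g$ from $\per(g^*\OO_\hb)=\per(\OO_\hb)=0$. The gap is the rigidity step: both your uniqueness claim and your symmetrization of $\Phi_g$ depend on it, and the mechanism you give is wrong. Let $\chi=\exp(D)$ be $\cm$-equivariant and trivial mod $\hb$, with $D\equiv\hb^k\xi$ to leading order. Then $\xi$ is a symplectic vector field of weight $-kd$, and $\iota_\xi\Omega$ is a closed $1$-form of weight $(1-k)d$. At the first order $k=1$ this weight is $0$. The Euler-field argument then gives nothing, and $H^1(X,\OO_X)=0$ does not control $H^1_{DR}(X)$. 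So neither ``every step is inner'' nor ``the $H^1$-classes are rescaled by a nonzero weight'' holds.

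Here is a concrete case. Take $X=T^*\C^\times$, which is affine and hence strongly admissible, with $z$ of weight $0$ and $p$ of weight $d$. Quantize it by $\hb$-differential operators with $[p,z]=\hb$, and use the parity $\epsilon(z)=z$, $\epsilon(p)=p$, $\epsilon(\hb)=-\hb$. The maps $\phi_c\colon z\mapsto z,\ p\mapsto p+c\hb z^{-1}$ are $\cm$-equivariant automorphisms trivial mod $\hb$. They are not of the form $\exp(\frac1\hb\ad a)$, since their symbol corresponds to the non-exact form $c\,dz/z$. They are excluded only because $\epsilon\phi_c\epsilon=\phi_{-c}$, i.e. by parity, not by weights.

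Your final step is also off. $\exp(\frac1\hb\ad a)$ commutes with $\epsilon$ iff $\epsilon(a)-a$ is central, so the condition is invariance, not anti-invariance. This leaves $a\in\hb^2\Gamma(X,\OO_\hb)$ whose leading coefficient is a function of weight $-d$. Nothing in your argument excludes this, and something must. For the Moyal quantization of $T^*\C$ with $z$ of weight $-1$ and $p$ of weight $d+1$, $t\mapsto\exp(t\hb\,\ad(z^d))$ is a nontrivial symmetrized lift of the trivial $\mathbb{G}_a$-action. So you need a positivity input: no nonzero global functions of negative weight. This holds for $X^{reg}$ with $X$ conical, which is where the paper uses the result.

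A correct route goes as follows.

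\emph{Uniqueness.} Let $\chi$ be $\cm\times\Z_2$-equivariant and trivial mod $\hb$, and suppose $D=\log\chi\neq 0$ has leading term $\hb^k\xi$ with $\xi\neq0$.
\begin{itemize}
\item Since $\epsilon$ negates $\hb$ and is the identity mod $\hb$, the condition $\epsilon D\epsilon=D$ gives $(-1)^k\xi=\xi$, so $k\geq 2$ is even.
\item Then $\iota_\xi\Omega$ has weight $(1-k)d<0$. By Cartan's formula it equals $df$ with $f=\frac{1}{(1-k)d}\,\iota_{\mathrm{eu}}\iota_\xi\Omega$, a global function of negative weight.
\item Under positivity $f=0$, so $\xi=0$, a contradiction.
\end{itemize}

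\emph{Existence.} No innerness is needed. Set $\psi:=\Phi_g^{-1}\epsilon\,\Phi_g\,\epsilon$. It is $\cm$-equivariant, trivial mod $\hb$, and satisfies $\epsilon\psi\epsilon=\psi^{-1}$. Hence $\Phi_g\psi^{1/2}$, with $\psi^{1/2}=\exp(\frac12\log\psi)$, is symmetrized.

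In this argument the vanishing of $H^1(X,\OO_X)$ and $H^2(X,\OO_X)$ enters only through the period map. Your algebraicity paragraph is only a sketch. Existence is already settled, so obstructions in $H^2(X,\OO_X)$ are beside the point; what must be shown is that $g\mapsto\Phi_g \bmod \hb^n$ is a morphism.
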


Now suppose $(X,\Omega)$ is equipped with a Hamiltonian action of an affine algebraic group $G$ that commutes with the $\cm$-action, and a classical moment map $\mu: X \to \fg^*$. Then $\mu$ corresponds to a classical co-moment map $\phi: \fg \to \C[X]=\Gamma(X, \OO_X)$ for the associated graded Poisson algebra $\C[X]$. By definition, $\mu$ and $\phi$ are $G$-equivariant. We assume further that $\mu$ is $\cm$-equivariant with respect to the $\cm$-action on $\fg^*$ of weight $d$. This is equivalent to saying that the image of $\phi$ lies in the degree $d$ part of $\C[X]$. We call the datum $(X, \Omega, \beta, \phi)$ a {\it graded (classical) Hamiltonian $G$-space}. 
There is a quantum version of a Hamiltonian action.

\begin{defi}\label{defn:Hamilt_action_quantization}
	A {\it $G$-equivariant graded Hamiltonian quantization} of $(X, \Omega)$ consists of
	\begin{itemize}
		\item 
			a $G$-equivariant graded quantization $\OO_\hb$ of $(X, \Omega)$ with $G$-action $\beta_\hb$, and
		\item
			a $G \times \cm$-equivariant $\C$-linear map $\Phi_\hb: \g \to \Gamma(X, \OO_\hb)$, i.e., its image lies in the degree $d$ part of $\Gamma(X, \OO_\hb)$,
	\end{itemize}
	such that the Lie algebra $\fg$-action on $\OO_\hb$ given by the adjoint action $\hb^{-1} [\Phi_\hb(\xi), \cdot]$ for any $\xi \in \fg$ agrees with the infinitesimal action of $\xi$ on $\OO_\hb$ induced by the $G$-equivariant structure. The map $\Phi_\hb$ is called a {\it quantum co-moment map} and the $G$-action is called a \emph{(graded) quantum Hamiltonian $G$-action}.
\end{defi}

For any $G$-equivariant graded Hamiltonian quantization of $(X, \Omega)$, let $\phi$ be the composition 
\[ \g \xrightarrow{\Phi_\hb} \Gamma(X, \OO_\hb) \to \Gamma(X, \OO_\hb / \hb \OO_\hb) \simeq \C[X]. 
\]
Then $\phi$ corresponds to a classical moment map $\mu: X \to \g^*$. We say that $\Phi_\hb$ lifts the classical co-moment map $\phi$ or the moment map $\mu$, and $(\OO_\hb, \beta_\hb, \Phi_\hb)$ is a ($G$-equivariant) graded Hamiltonian quantization of the Hamiltonian $G$-space $(X, \Omega, \beta, \mu)$. 

Conversely, given a Hamiltonian $G$-space $(X, \Omega, \beta, \phi)$, we want to construct a graded Hamiltonian quantization of it. We will only consider the case when the quantization is even. For general quantizations with $G$ restricted to be a reductive group, see \cite[Proposition 3.11]{BPW}.

\begin{defi}[{\cite[Definition 3.4.2]{LY}}]\label{defn:quan_moment_symm}
	Assume $(X,\Omega,\beta,\phi)$ is a graded Hamiltonian $G$-space and is equipped with a quantum Hamiltonian quantization $(\OO_\hb, \beta_\hb, \Phi_\hb)$. Suppose $\OO_\hb$ is an even quantization with a parity involution $\epsilon$ and the quantum $G$-action $\beta_\hb$ is symmetrized in the sense of \Cref{defn:symm_beta}. We say that the quantum co-moment map $\Phi_\hb$ is \emph{symmetrized} if $\epsilon(\Phi_\hb(\xi)) = \Phi_\hb(\xi)$ for all $\xi \in \fg$. In this case, we say that the datum $(\OO_\hb, \epsilon, \beta_\hb, \Phi_\hb)$ is a \emph{symmetrized (even) $G$-Hamiltonian quantization} and $(\beta_\hb,\Phi_\hb)$ is a \emph{symmetrized quantum Hamiltonian $G$-action}.
\end{defi}

\begin{Prop}[{\cite[Proposition 3.4.4]{LY}}]\label{prop:quan_moment_symm}
	Let $(X,\Omega, \beta, \phi)$ be a graded Hamiltonian $G$-space such that $X$ is strongly admissible. Then any even quantization $(\OO_\hb, \epsilon)$ of $(X,\Omega)$ admits a unique symmetrized quantum Hamiltonian $G$-action $(\beta_\hb, \Phi_\hb)$ that lifts $(\beta,\phi)$, so that $(\OO_\hb, \epsilon, \beta_\hb, \Phi_\hb)$ is a symmetrized $G$-Hamiltonian quantization of $(X,\Omega, \beta, \phi)$.
\end{Prop}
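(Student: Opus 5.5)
The plan is to reduce to \cite[Proposition 3.4.4]{LY}-style arguments split into existence and uniqueness. Fix an even quantization $(\OO_\hb,\epsilon)$. By the preceding proposition (\cite[Corollary 3.3.2]{LY}), since $X$ is strongly admissible, the $G$-action $\beta$ lifts uniquely to a symmetrized action $\beta_\hb$ on $\OO_\hb$. It then remains to produce $\Phi_\hb$ lifting $\phi$, make it symmetrized, and show uniqueness.

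\emph{Construction of $\Phi_\hb$.} Differentiating $\beta_\hb$ gives a Lie algebra map $\fg \to \der(\OO_\hb)$, $\xi \mapsto \xi_\hb$, and these derivations commute with $\cm\times\Z_2$. The first step is to show each $\xi_\hb$ is inner, i.e.\ of the form $\hb^{-1}[a,\cdot]$. The obstruction lies in the derivations modulo inner ones. By the standard comparison, this space is filtered $\hb$-adically with graded pieces controlled by $H^1(X,\C)$ via the de Rham complex. However, since $\xi_\hb$ reduces mod $\hb$ to the Hamiltonian vector field of $\phi(\xi)$, its class is inner to leading order. One then corrects order by order in $\hb$. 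At each order the obstruction is a class in $H^1(X,\OO_X)$ (closed $1$-form modulo exact pieces, made Čech-style), which vanishes by strong admissibility. I expect this step to be the main obstacle. The issue is that the naive obstruction lives in $H^1_{dR}$, not $H^1(\OO_X)$. I would first try to handle it using the $\cm$-grading: the correction terms have fixed nonzero weight, and a weight argument kills the topological part, since a $\cm$-equivariant closed form of nonzero weight is exact by the Cartan homotopy formula. So this yields local Hamiltonians $a_\xi\in\Gamma(X,\OO_\hb)$ of degree $d$. These are unique up to central elements, which are $\C\series$ constants; the degree-$d$ constraint leaves no freedom except constants at weight zero.

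\emph{Equivariance and symmetrization.} Average over the choice to make $\xi\mapsto a_\xi$ linear and $G$-equivariant. Here $\xi\mapsto a_\xi$ is determined up to a map $\fg\to\C\series$ of degree $d>0$ in $\hb$-weight, i.e.\ into $\C\hb^{d/d}$ pieces. The discrepancy $g\cdot a_{\xi}-a_{\Ad_g\xi}$ is then a cocycle with values in such constants. Also $\Phi_\hb$ must reduce to $\phi$ mod $\hb$, which pins down the leading term. Next replace $a_\xi$ by $\tfrac12(a_\xi+\epsilon(a_\xi))$. Because $\beta_\hb$ commutes with $\epsilon$ and $\epsilon$ is an anti-automorphism sending $\hb\mapsto-\hb$, the element $\epsilon(a_\xi)$ again satisfies $\hb^{-1}[\epsilon(a_\xi),\cdot]=\xi_\hb$. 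Hence the average still works, lifts $\phi$, and is $\epsilon$-fixed. The remaining ambiguity is a $G$-equivariant map $\fg\to\C\series$ of weight $d$, namely $c\hb$ times a character of $\fg$. Symmetrization forces it into the $\epsilon$-fixed part, and $\epsilon(\hb)=-\hb$ kills the odd power $\hb$. This proves existence of the symmetrized $\Phi_\hb$.

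\emph{Uniqueness.} Suppose two symmetrized $(\beta_\hb,\Phi_\hb)$ and $(\beta'_\hb,\Phi'_\hb)$ both lift $(\beta,\phi)$. Then $\beta_\hb=\beta'_\hb$ by the uniqueness in \cite[Corollary 3.3.2]{LY}. The difference $\Phi_\hb-\Phi'_\hb$ is central-valued, hence lands in $\C\series$. It also has weight $d$ and vanishes mod $\hb$, so it lies in $\C\hb$. Finally, being $\epsilon$-fixed, it must be zero. This completes the argument.
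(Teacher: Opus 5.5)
Your proposal has a genuine gap in the innerness step, and it sits exactly at first order in $\hb$. Write the Hamiltonian as $a_\xi = a_0 + \hb a_1 + \hb^2 a_2 + \cdots$, with $a_0$ a weight-$d$ lift of $\phi(\xi)$. Then $a_k$ must have weight $(1-k)d$. Equivalently, if $\xi_\hb - \hb^{-1}[a,\cdot] = \hb^k D_k$, then $D_k \bmod \hb$ is a Poisson vector field of weight $-kd$, and its contraction with $\Omega$ is a closed $1$-form of weight $(1-k)d$. For $k \geqslant 2$ your Cartan homotopy argument does make this form exact, and $H^1(X,\OO_X)=0$ lets you lift the primitive to $\Gamma(X,\OO_\hb)$.

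For $k=1$, however, the form has weight $0$. So your claim that "the correction terms have fixed nonzero weight" fails precisely there, and neither the grading nor strong admissibility removes this obstruction. Here is a concrete example. Take $X = T^*\C^\times$ with coordinates $(z,p)$, $\Omega = dp\wedge dz/z$, and $\cm$ scaling $p$ with weight $d$; it is affine, hence strongly admissible. In its quantization with $[p,z]=\hb z$, let $D$ be the derivation with $D(z)=0$ and $D(p)=\hb$.
\begin{itemize}
\item $D$ has weight $0$ and vanishes modulo $\hb$, so $t\mapsto\exp(tD)$ is a graded lift of the trivial Hamiltonian $\mathbb{G}_a$-action with $\phi=0$.
\item $D$ is not inner. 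Anything commuting with $z$ is some $f(z)\in\C[z^{\pm1}]\series$, and $\hb^{-1}[f,p] = -zf'(z)$ has no $z^0$-term, so it cannot equal $\hb$.
\end{itemize}
An argument that does not use $\epsilon$ before producing $a_\xi$ would therefore prove a false statement.

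The missing idea is that the symmetrization hypothesis is needed for existence, not only for uniqueness. Choose $a_0$ to be $\epsilon$-fixed by replacing a weight-$d$ lift $\tilde a$ of $\phi(\xi)$ with $\tfrac12(\tilde a+\epsilon(\tilde a))$.
\begin{itemize}
\item Since $\beta_\hb$ is symmetrized, and by the same computation you use in your symmetrization step, $\epsilon\circ\hb^{-1}[a_0,\cdot]\circ\epsilon^{-1} = \hb^{-1}[\epsilon(a_0),\cdot]$. Hence $\hb D_1 = \xi_\hb - \hb^{-1}[a_0,\cdot]$ commutes with $\epsilon$.
\item Because $\epsilon(\hb y) = -\hb\,\epsilon(y)$, this says $\epsilon D_1\epsilon^{-1} = -D_1$.
\item Reducing modulo $\hb$, where $\epsilon$ is the identity, gives $D_1\equiv 0$.
\end{itemize}
So the remainder is divisible by $\hb^2$, and from there your weight argument goes through. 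In the example above, $\epsilon(z)=z$ and $\epsilon(p)=p$, and $D$ anticommutes with $\epsilon$; that is exactly why it is excluded.

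Two smaller corrections:
\begin{itemize}
\item The ambiguity of a weight-$d$ Hamiltonian is $\C\hb$, not "constants at weight zero".
\item You cannot average over $G$, which need not be reductive. You also do not need to: once the $\epsilon$-fixed weight-$d$ Hamiltonian of each $\xi_\hb$ is unique, linearity and $G$-equivariance of $\xi\mapsto a_\xi$ follow from uniqueness. Indeed, $a_\xi+a_\eta$ and $g\cdot a_\xi$ are again $\epsilon$-fixed of weight $d$, with Hamiltonian derivations $(\xi+\eta)_\hb$ and $(\Ad_g\xi)_\hb$ respectively.
\end{itemize}
Your uniqueness paragraph is fine as written.
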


\subsection{Quantization of conical symplectic singularities} \label{subsec:quant_conical_symp_sing}

The notion of a \emph{symplectic singularity} was introduced in \cite{Beauville1}. A \emph{conical symplectic singularity} is a normal graded Poisson variety $X$ with symplectic singularities such that the $\cm$-action on $X$ contracts it to a point, which is automatically affine.
We will briefly recall the classification results of quantizations of conical symplectic singularities. For details, see \cite{Losev:quan_symp_orbit, LMBM}.

To obtain filtered quantizations of $A=\C[X]$ for a conical symplectic singularity $X$, we need to take a certain partial resolution of $X$ and take graded formal quantizations of the smooth locus of the resulting variety. Recall that a normal variety is $\bQ$-factorial if every Weil divisor has a
(nonzero) integer multiple that is Cartier. The following was proven by Losev in \cite{Losev:SRA}, based on \cite{BCHM}.

\begin{Prop} \label{prop:terminalization}
	Let $X$ be a Poisson variety with symplectic singularities. Then there is a birational projective morphism $\rho: \widetilde{X} \to X$ such that 
	\begin{itemize}
		\item[(i)] $\widetilde{X}$ is an irreducible normal Poisson variety (hence has symplectic singularities).
		\item[(ii)] $\widetilde{X}$ is $\bQ$-factorial.
		\item[(iii)] $\widetilde{X}$ has terminal singularities.
	\end{itemize}
\end{Prop}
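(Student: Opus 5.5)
We will obtain the morphism $\rho$ as a $\bQ$-factorial terminalization in the sense of the minimal model program, following the approach of \cite{Losev:SRA} built on \cite{BCHM}. First take a resolution of singularities $\pi: Z \to X$; it is projective and birational. Since $X$ has symplectic singularities, $X$ is normal and Gorenstein with $K_X = 0$ (the symplectic form on $X^{reg}$ trivializes $\omega_X$). Moreover, by \cite{Beauville1}, symplectic singularities are canonical; in fact the pullback of the symplectic form extends regularly to $Z$, so every discrepancy satisfies $a(E,X) \geqslant 0$.

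Write $K_Z = \pi^* K_X + \sum_E a_E E$ with $a_E \geqslant 0$. We then run a relative MMP over $X$ for the klt pair $(Z, \Delta)$, where $\Delta$ is a small boundary supported on the exceptional divisors with $a_E > 0$. This contracts precisely those exceptional divisors of positive discrepancy and keeps the crepant ones ($a_E = 0$). Concretely, we apply \cite[Corollary 1.4.3]{BCHM}: for a klt variety $X$ and any set of exceptional divisors with discrepancy in $(-1,0]$, there is a projective birational $\rho: \widetilde{X} \to X$ from a $\bQ$-factorial variety extracting exactly those divisors. We choose the set to be all exceptional divisors with $a_E = 0$ (over $X$). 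Then $\widetilde{X}$ is $\bQ$-factorial and normal, and $K_{\widetilde{X}} = \rho^* K_X$, so $\rho$ is crepant. Since every divisor over $\widetilde{X}$ that is exceptional over $\widetilde{X}$ either has positive discrepancy over $X$ or is not among those extracted, $\widetilde{X}$ has terminal singularities. This gives (ii) and (iii). Irreducibility is automatic because $\rho$ is birational onto the irreducible $X$.

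For (i), the key input is that a crepant partial resolution of a variety with symplectic singularities again has symplectic singularities. The form $\Omega$ on $X^{reg}$ pulls back to a $2$-form on an open subset of $\widetilde{X}^{reg}$ whose complement has codimension $\geqslant 2$. By normality it extends to $\widetilde{X}^{reg}$, and it is nondegenerate there because $K_{\widetilde{X}} = \rho^* K_X$ is trivialized by $\Omega^{\wedge n}$. Its pullback to a resolution of $\widetilde{X}$ equals the pullback from $X$, hence extends regularly. The Poisson bivector is the inverse of this form on $\widetilde{X}^{reg}$, and it extends to $\widetilde{X}$ by normality.

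The main obstacle is the existence statement from \cite{BCHM}, specifically running the relative MMP with scaling and guaranteeing termination. We do not reprove this; we cite it, noting that canonical (hence klt) singularities make the hypotheses apply. The rest is bookkeeping with discrepancies. A possible subtlety is ensuring that $\widetilde{X}$ is nonempty of the extracted form when there are no crepant divisors. In that case we take the small $\bQ$-factorialization, which is again provided by \cite{BCHM}.
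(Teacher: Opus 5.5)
The overall route matches what the paper relies on. The paper gives no argument of its own and cites \cite{Losev:SRA}, where the terminalization comes from \cite[Corollary 1.4.3]{BCHM} applied to the canonical (hence klt) variety $X$ with the set of all crepant exceptional valuations, exactly as you do. Your treatment of crepancy, (ii), (iii) and irreducibility is fine. Two small remarks:
\begin{itemize}
\item Your closing ``subtlety'' is not one: Corollary 1.4.3 with the empty set already yields a small $\bQ$-factorialization.
\item Terminality comes from the direct citation with all crepant valuations over $X$ (a finite set, since $X$ is klt). Your heuristic MMP on $(Z,\Delta)$ would only retain the crepant divisors that happen to lie on $Z$.
\end{itemize}

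However, one step in (i) fails as written. The open set on which $\Omega$ visibly pulls back is $\rho^{-1}(X^{reg})$, and its complement contains every divisor extracted by $\rho$. For example, for $X=\C^2/\Z_2$ one has $\widetilde X=T^*\mathbb{P}^1$, and the complement is the zero section. So normality/Hartogs cannot carry $\rho^*\Omega$ across the exceptional divisors.

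Crepancy does not rescue this either: it controls $\widetilde\Omega^{\wedge n}$, not $\widetilde\Omega$. A closed meromorphic $2$-form can have a pole along a divisor while its top power is regular and nonvanishing; for instance, $x^{-1}dx\wedge dy+d(xz)\wedge dw$ on $\C^4$.

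This extension is exactly where the symplectic-singularity hypothesis must be used, and you already have the ingredient, just deployed too late:
\begin{itemize}
\item Choose a resolution $\pi\colon Z\to\widetilde X$ that is an isomorphism over $\widetilde X^{reg}$.
\item It is also a resolution of $X$, so by hypothesis $(\rho\circ\pi)^*\Omega$ extends to a regular $2$-form on $Z$.
\item Restricting this form to $\pi^{-1}(\widetilde X^{reg})\cong\widetilde X^{reg}$ gives the regular extension $\widetilde\Omega$.
\end{itemize}
Only after this does your crepancy argument apply. $\widetilde\Omega^{\wedge n}$ is a section of $\omega_{\widetilde X^{reg}}$ with divisor $\sum_E a_E E=0$, so $\widetilde\Omega$ is nondegenerate. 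The Poisson bracket then extends to $\widetilde X$ by normality, as you say.
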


\begin{Rem}
	Modulo condition (i), (iii) is equivalent to saying that the singular locus of $\widetilde{X}$ is of codimension $\geqslant 4$, see \cite[Main Theorem]{Namikawa1}.
\end{Rem}

The morphism $\rho: \widetilde{X} \to X$, or the variety $\widetilde{X}$, in \Cref{prop:terminalization} is called a {\it $\bQ$-factorial terminalization} of $X$. If $X$ is a conical symplectic singularity, then $\widetilde{X}$ also admits a $\cm$-action such that $\rho$ is $\cm$-equivariant (\cite[A.7]{Namikawa2}).

The following important properties of a $\bQ$-factorial terminalization, proven by \cite[Lemma 12]{Namikawa2} (see also \cite[Proposition 2.5]{Losev:quan_symp_orbit}), will be crucial in the construction of quantizations of $X$.

\begin{Prop} \label{prop:H_vanishing}
	Let $X$ be a conical symplectic singularity and $\rho: \widetilde{X} \to X$ be a $\bQ$-factorial terminalization. Then $\rho$ and the inclusion $\widetilde{X}^{reg} \hookrightarrow \widetilde{X}$ induce canonical isomorphisms of graded Poisson algebras
	\[ \C[X] \xrightarrow{\sim} \C[\widetilde{X}] \xrightarrow{\sim} \C[\widetilde{X}^{reg}] \]
	Moreover, $H^i(\widetilde{X}^{reg},\OO_{\widetilde{X}^{reg}}) = 0$ for $i = 1, 2$ and $H^i(\widetilde{X},\OO_{\widetilde{X}}) = 0$ for all $i > 0$.
\end{Prop}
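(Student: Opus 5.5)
This statement is Namikawa's result, and the paper cites it with a reference, so I would reconstruct the standard argument rather than prove something new. The approach has three steps: the isomorphisms of function rings, then the vanishing on $\widetilde{X}$, then the vanishing on $\widetilde{X}^{reg}$.

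\emph{Step 1: the ring isomorphisms.} Since $X$ is normal and $\rho$ is birational and projective, Zariski's main theorem gives $\rho_*\OO_{\widetilde{X}} = \OO_X$. Because $X$ is affine, this yields $\C[X]\simeq\C[\widetilde{X}]$. By the remark after \Cref{prop:terminalization}, $\widetilde{X}$ is normal with singular locus of codimension $\geqslant 4$. Hartogs extension (normality plus codimension $\geqslant 2$) then gives $\C[\widetilde{X}]\simeq\C[\widetilde{X}^{reg}]$.

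Both maps are pullbacks along morphisms that are $\cm$-equivariant (via \cite[A.7]{Namikawa2}) and Poisson on a dense open set. Hence they preserve the grading and the Poisson bracket.

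\emph{Step 2: vanishing of $H^i(\widetilde{X},\OO_{\widetilde{X}})$ for $i>0$.} Symplectic singularities are rational, by Beauville \cite{Beauville1}. I would use this in the following form: for a resolution $\pi: Z\to\widetilde{X}$, the composite $\rho\circ\pi$ is a resolution of $X$. Since both $X$ and $\widetilde{X}$ have rational singularities, the Leray spectral sequence gives $R^i\rho_*\OO_{\widetilde{X}}=0$ for $i>0$. As $X$ is affine, $H^i(\widetilde{X},\OO)=H^0(X,R^i\rho_*\OO)=0$.

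Alternatively, one can apply Grauert--Riemenschneider vanishing. The canonical bundle of $\widetilde{X}$ is trivial (given by the top power of the symplectic form), so $\OO_{\widetilde{X}}\simeq\omega_{\widetilde{X}}$. Then Kawamata--Viehweg relative vanishing, applied to $\rho$ with $\widetilde{X}$ having rational (hence Cohen--Macaulay) singularities, gives the result.

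\emph{Step 3: vanishing on $\widetilde{X}^{reg}$ for $i=1,2$.} This is the main obstacle. Set $S=\widetilde{X}^{sing}$ and $U=\widetilde{X}^{reg}$. I would use the local cohomology sequence
\[ H^i_S(\widetilde{X},\OO)\to H^i(\widetilde{X},\OO)\to H^i(U,\OO)\to H^{i+1}_S(\widetilde{X},\OO). \]
By Step 2 the middle terms vanish for $i\geqslant 1$. So it suffices to show $H^j_S(\widetilde{X},\OO)=0$ for $j\leqslant 3$.

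$\widetilde{X}$ is Cohen--Macaulay because its singularities are rational. Grothendieck's depth criterion then gives $\mathcal{H}^j_S(\OO)=0$ for $j<\codim S$. Since $\codim S\geqslant 4$, this covers $j\leqslant 3$. The local-to-global spectral sequence then kills $H^j_S$ for $j\leqslant 3$, which yields $H^1(U,\OO)=H^2(U,\OO)=0$.

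The delicate input is Namikawa's codimension-$4$ bound for terminal symplectic singularities, which I would simply quote as in the remark after \Cref{prop:terminalization}.
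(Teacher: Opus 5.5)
Your proposal is correct and follows essentially the same route as the sources the paper cites for this statement (\cite[Lemma 12]{Namikawa2}, \cite[Proposition 2.5]{Losev:quan_symp_orbit}); the paper gives no proof of its own beyond these citations. In both, rationality of $X$ and $\widetilde{X}$ plus affineness of $X$ gives $H^{i}(\widetilde{X},\OO_{\widetilde{X}})=0$ for $i>0$, and Cohen--Macaulayness of $\widetilde{X}$ together with $\codim(\widetilde{X}^{sing},\widetilde{X})\geqslant 4$ kills the local cohomology along $\widetilde{X}^{sing}$ in degrees $\leqslant 3$, which yields the vanishing on $\widetilde{X}^{reg}$ for $i=1,2$.
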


Next we recall the parameter space for filtered quantizaitons of $A=\C[X]$.
Let $\rho: \widetilde{X} \to X$ be a $\bQ$-factorial terminalization. The \emph{Namikawa space} associated to $X$ is the finite-dimensional complex vector space
\[\fP = \fP^X := H^2(\widetilde{X}^{reg},\C). \]
There is a finite Coxeter group $W=W^X$, called the \emph{Namikawa Weyl group}, which acts linearly on $\fP^X$. The definition of the Namikawa space and Weyl group is independent of the choice of the $\bQ$-factorial terminalization $\rho$.

By \Cref{prop:H_vanishing} and \Cref{thm:per_graded}, graded formal quantizations of $\widetilde{X}^{reg}$ exist and we have a bijection 
\[ \per: \quan(\widetilde{X}^{reg})^{gr} \xrightarrow{\sim} H^2(\widetilde{X}^{reg},\C) = \fP^X. \]
Let $\OO_{\lambda, \hb}$ be a graded formal quantization of $\widetilde{X}^{reg}$ such that $\per(\OO_{\lambda, \hb}) = \lambda \in \fP^X$. Set $\cA_{\lambda, \hb} := \Gamma(\widetilde{X}^{reg},\OO_{\lambda,\hb})$, which is a flat $\C\series$-algebra, completed and separated in the $\hb$-adic topology, equipped with a pro-rational $\cm$-action. Let $\cA_{\lambda,\hb}^{fin} \subset \cA_{\lambda,\hb}$ be the $\cm$-finite sections of $\cA_{\lambda,\hb}$. Then the $\C$-algebra $\cA_{\lambda} := \cA_{\lambda,\hb}^{fin}/(\hb-1)$ comes with a natural filtration and an injective homomorphism $\varsigma: \gr\cA_{\lambda} \hookrightarrow \C[\widetilde{X}^{reg}] \simeq \C[X]$ of Poisson algebras. It turns out that $\cA_{\lambda}$ is a filtered quantization of $\C[X]$ and all filtered quantizations of $\C[X]$ are of this form by the following result of Losev \cite[Proposition 3.3, Theorem 3.4]{Losev:quan_symp_orbit}, which relies crucially on \Cref{prop:H_vanishing} (cf. \cite[Proposition 3.4]{BPW}).

\begin{Thm} \label{thm:quant_conical_symp_sing}
	With notation as above, the following hold:
	\begin{enumerate}[label=(\roman*)]
		\item $\varsigma: \gr \cA_{\lambda} \hookrightarrow \C[X]$ is an isomorphism of Poisson algebras, so that $(\cA_{\lambda}, \varsigma)$ is a filtered quantization of $\C[X]$.
		\item Any filtered quantization of $\C[X]$ is isomorphic to $\cA_{\lambda}$ for some $\lambda \in \fP^X$.
		\item $(\cA_{\lambda}, \varsigma)$ and $(\cA_{\lambda'}, \varsigma')$ are isomorphic as filtered quantizations of $\C[X]$ if and only if $\lambda' \in W^X \cdot \lambda$.
	\end{enumerate}
Thus there is a bijection
\[ \fP^X/W^X \simeq \mathrm{Quant}(\C[X]), \qquad W^X \cdot \lambda \mapsto \cA_{\lambda}. \]
\end{Thm}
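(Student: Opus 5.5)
This is a cited theorem (Losev), so the plan is to follow the standard argument via the $\bQ$-factorial terminalization and \Cref{prop:H_vanishing}.

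\textbf{Part (i).} Work on $\widetilde{X}^{reg}$, which is strongly admissible by \Cref{prop:H_vanishing}, so the graded quantization $\OO_{\lambda,\hb}$ exists by \Cref{thm:per_graded}. There is an exact sequence
\[ 0\to\OO_{\lambda,\hb}\xrightarrow{\hb}\OO_{\lambda,\hb}\to\OO_{\widetilde{X}^{reg}}\to 0. \]
Taking global sections, the obstruction to surjectivity of $\cA_{\lambda,\hb}/\hb\to\C[\widetilde{X}^{reg}]$ lies in $H^1(\widetilde{X}^{reg},\OO_{\lambda,\hb})$. Vanishing of $H^1(\OO)$ together with $\hb$-adic completeness gives $H^1(\OO_{\lambda,\hb})=0$ by induction on $\hb^k$-truncations and passage to the limit (Mittag-Leffler holds since the truncations' sections surject). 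Hence $\cA_{\lambda,\hb}/\hb\cA_{\lambda,\hb}\simeq\C[\widetilde{X}^{reg}]\simeq\C[X]$. Restricting to $\cm$-finite vectors and setting $\hb=1$ turns the $\hb$-adic filtration into the degree filtration. Because the grading on $\C[X]$ is non-negative with finite-dimensional pieces, $\gr\cA_\lambda\to\C[X]$ is an isomorphism.

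\textbf{Part (ii).} Given a filtered quantization $\cA$ of $\C[X]$, form its Rees algebra and $\hb$-adically complete it. Microlocalize this to a graded formal quantization of $X$, and restrict it to the smooth locus $X^{reg}$. Since $\rho$ is an isomorphism over $X^{reg}$ away from codimension $\geq2$ in $\widetilde{X}^{reg}$, extend the quantization to all of $\widetilde{X}^{reg}$. This extension step is the main obstacle. The plan is to argue that deformations are governed by $H^1,H^2$ of $\OO$, and that the complement has codimension $\geq2$ in the normal variety. Equivalently, use that graded quantizations of $\widetilde{X}^{reg}$ are classified by the period and that the map to quantizations of the open part is compatible, following Losev's reduction to the Namikawa universal deformation. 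Taking global sections then recovers $\cA$ by normality, using Hartogs for sections of $\OO_\hb$ via induction modulo $\hb^k$. So $\cA\simeq\cA_\lambda$ with $\lambda$ the period.

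\textbf{Part (iii).} Compare with Namikawa's universal Poisson deformation $\widetilde{\mathfrak{X}}\to\fP$ of $\widetilde{X}$ and $\mathfrak{X}\to\fP/W$ of $X$. Quantizations $\cA_\lambda$ arise as quantizations of fibers, or via a canonical quantization over $\fP$. Isomorphisms of filtered quantizations correspond to isomorphisms of the associated deformations, and these are exactly given by the $W$-action, since the universal deformation of $X$ has base $\fP/W$. This gives the bijection $\fP/W\simeq\mathrm{Quant}(\C[X])$.
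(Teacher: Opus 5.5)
Your argument for (i) is the standard one and is fine. The gap is in (ii), and it starts from a false geometric claim. The complement of $\rho^{-1}(X^{reg})\simeq X^{reg}$ in $\widetilde{X}^{reg}$ is in general a \emph{divisor}, not a subset of codimension $\geqslant 2$. Along a codimension-two symplectic leaf $\mathcal{L}\subset X$, the transversal slice is a Kleinian singularity $\C^2/\Gamma$. Since $\widetilde{X}$ is terminal (its singular locus has codimension $\geqslant 4$), $\rho$ cannot be an isomorphism over generic points of $\mathcal{L}$. Hence $\rho^{-1}(\mathcal{L})$ is a divisor meeting $\widetilde{X}^{reg}$ (formally, $\mathcal{L}$ times the exceptional curves of the minimal resolution). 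So there is no Hartogs-type extension. Moreover, no argument producing a \emph{unique} extension can work. The theorem itself gives $\cA_\lambda\simeq\cA_{w\lambda}$ for $w\in W^X$, so $\OO_{\lambda,\hb}$ and $\OO_{w\lambda,\hb}$ have isomorphic restrictions to $\rho^{-1}(X^{reg})$, although $w\lambda\neq\lambda$ in general. Nor can you read off a period on $X^{reg}$ directly, because $X^{reg}$ is not strongly admissible here: $H^1(X^{reg},\OO)\simeq H^2_{X^{sing}}(X,\OO)\neq 0$ once $X^{sing}$ has a component of codimension $2$. Your sentence deferring to ``Losev's reduction to the Namikawa universal deformation'' names the missing step rather than supplying it.

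Part (iii) has the same hole.
\begin{itemize}
\item The ``if'' direction is a quantum statement. You must actually produce isomorphisms $\cA_\lambda\simeq\cA_{w\lambda}$, e.g.\ a $W^X$-action on the global sections of the universal graded quantization of $\widetilde{X}^{reg}$ over $\fP^X$. This does not follow formally from $W^X$ being the Galois group of $\fP^X\to\fP^X/W^X$ for commutative Poisson deformations.
\item The ``only if'' direction is justified only through an ``associated deformation'' of a filtered quantization, which you never construct.
\item $\cA_\lambda$ is not a quantization of a fiber of the Poisson deformation. It quantizes the central fiber $X$, and it is the specialization at $\lambda$, $\hb=1$, of a quantization of the whole family.
\end{itemize}
The missing ingredient is an analysis along the codimension-two leaves. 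There $X$ is formally a symplectic disc times $\C^2/\Gamma$, and the quantizations of the Kleinian factor, together with the action of its Weyl group, are explicitly known. This must be combined with Namikawa's description of $\fP^X$ and of $W^X$ as a product of Weyl groups attached to those leaves. The paper does not reprove the theorem; it cites Losev, whose argument supplies this input.

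Your approach does go through in the $2$-leafless case that the paper actually uses.
\begin{itemize}
\item $X$ is then terminal, so $\rho$ is small and $W^X=1$.
\item $X^{reg}$ is strongly admissible, because $X$ is Cohen--Macaulay with $\codim(X^{sing},X)\geqslant 4$.
\item $H^2(\widetilde{X}^{reg},\C)\simeq H^2(X^{reg},\C)$.
\end{itemize}
So the period of the microlocalization of $\cA$ to $X^{reg}$ determines $\lambda$ and yields both (ii) and (iii).
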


For Hamiltonian actions of semisimple groups, the equivariant structure and the quantum co-moment map are uniquely determined.

The following lemma is a special case of \cite[Lemma 4.11.2]{LMBM}.

\begin{Lem} \label{lem:Hamiltonian_quant}
	Let $G$ be a connected semisimple algebraic group and suppose $A=\C[X]$ admits a Hamiltonian $G$-action. Then the following are true:
	\begin{itemize}
		\item[(i)] There is a unique classical co-moment map $\phi: \fg \to A_d$.
		\item[(ii)] Every filtered quantization $\cA \in \mathrm{Quant}(A)$ has a unique $G$-equivariant structure.
		\item[(iii)] For every $\cA \in \mathrm{Quant}(A)$, there is a unique quantum co-moment map $\Phi: \fg \to \cA_{\leq d}$.
	\end{itemize}
	In particular, there is a canonical bijection
	\[ \fP^X/W^X \xrightarrow{\sim} \mathrm{Quant}^G(A), \qquad W^X \cdot \lambda \mapsto (\cA_{\lambda}, \Phi).\]
\end{Lem}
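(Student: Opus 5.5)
We prove the three assertions of \Cref{lem:Hamiltonian_quant} in order. For (i), connectedness lets us check the co-moment equation $\{\phi(\xi),\cdot\}=\xi_A$ on the Lie algebra. Let $\phi,\phi'$ be two classical co-moment maps. Their difference $\psi=\phi-\phi'\colon\fg\to A_d$ is $G$-equivariant, and each $\psi(\xi)$ Poisson-commutes with all of $A$. Since $X$ is a conical symplectic singularity, $X^{reg}$ is symplectic and $X$ is normal, so $\C[X]=\C[X^{reg}]$. A function whose Hamiltonian vector field vanishes on the connected symplectic manifold $X^{reg}$ is constant. Hence $\psi(\xi)\in A_0=\C$, and since $d>0$ we get $\psi(\xi)\in A_d\cap\C=0$. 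Existence is part of the hypothesis that the action is Hamiltonian.

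For (ii), we construct the $G$-action on $\cA$ by integrating derivations. By \Cref{thm:quant_conical_symp_sing}, write $\cA=\cA_\lambda$, and first lift the classical co-moment map. The obstruction to lifting $\phi(\xi)\in A_d$ to an element $\Phi(\xi)\in\cA_{\leqslant d}$ with $[\Phi(\xi),\cdot]$ lifting $\xi_A$ is controlled by filtered deformation theory. We avoid it by working with the formal quantization $\OO_{\lambda,\hb}$ of $\widetilde X^{reg}$. By \Cref{prop:H_vanishing}, $\widetilde X^{reg}$ is strongly admissible, and the $G$-action lifts to $\widetilde X$ (the $\bQ$-factorial terminalization can be chosen $G$-equivariant, since $G$ is connected and acts on $X$ preserving the Poisson structure). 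So we lift $\fg$ to derivations of $\OO_{\lambda,\hb}$ by the standard argument: $\hb^{-1}[\tilde\phi(\xi),\cdot]$ for degree-$d$ lifts $\tilde\phi(\xi)$ of $\phi(\xi)$, which exist since $H^1(\widetilde X^{reg},\OO)=0$. The failure of this assignment to be a Lie map is a $\fg$-cocycle valued in central (constant) sections. Because $\fg$ is semisimple, $H^1(\fg,\C)=H^2(\fg,\C)=0$, so we can correct $\tilde\phi$ to an honest $\fg$-equivariant Lie map. Since $G$ is connected and the action on each graded piece is locally finite, this integrates to a rational $G$-action. We then pass to $\cA_\lambda$ by taking $\cm$-finite parts and setting $\hb=1$.

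Uniqueness in (ii) and (iii) comes next. Two $G$-equivariant structures differ by a map $G\to\Aut(\cA)$ that acts trivially on $\gr\cA$. Its differential is a $\fg$-family of filtered derivations of degree $\leqslant -1$ relative to the associated graded. Every derivation of $\cA$ lowering the filtration is inner up to central terms in the conical setting, because Poisson derivations of $A$ of negative degree are Hamiltonian (via $H^1(X^{reg},\C)$-type vanishing, used as in \cite{LMBM}). Combining this with $H^1(\fg,\C)=0$ and connectedness of $G$ forces the two actions to agree. For (iii), two quantum co-moment maps differ by a $G$-equivariant map $\fg\to Z(\cA)\cap\cA_{\leqslant d}$. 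The center of $\cA$ is $\C$, because $\gr Z(\cA)$ lies in the Poisson center of $A$, which is $\C$ by the argument in (i). A $G$-equivariant map $\fg\to\C$ is zero since $\fg=[\fg,\fg]$. The final bijection then follows from \Cref{thm:quant_conical_symp_sing}, since the forgetful map $\mathrm{Quant}^G(A)\to\mathrm{Quant}(A)$ is a bijection by (ii) and (iii).

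The main obstacle is the uniqueness part of (ii), namely controlling outer filtered automorphisms that are trivial on $\gr\cA$. If that argument becomes delicate, the plan is to reduce directly to \cite[Lemma 4.11.2]{LMBM}, of which the statement is a special case.
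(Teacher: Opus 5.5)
The paper gives no argument of its own here; it cites \cite[Lemma 4.11.2]{LMBM}, which is your fallback. Your direct argument is sound for (i), for the uniqueness statement in (iii) (the centre of $\cA$ is $\C$ and $\fg=[\fg,\fg]$), and for constructing one Hamiltonian structure on $\cA_\lambda$. You should make two points explicit there.

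First, the defect $c(\xi,\eta)=\hb^{-1}[\tilde\phi(\xi),\tilde\phi(\eta)]-\tilde\phi([\xi,\eta])$ is a priori only an element of $\hb\,\Gamma(\OO_{\lambda,\hb})$. That would only give an inner derivation by a possibly non-central element. What makes it scalar is the grading: $c(\xi,\eta)$ is homogeneous of degree $d$, the degree-$0$ part of $\Gamma(\OO_{\lambda,\hb})$ is $\C$, and nothing lives in negative degrees. Hence $c(\xi,\eta)\in\hb\,\C$.

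Second, the corrected $\fg$-action on the finite-dimensional pieces $\cA_{\lambda,\leqslant i}$ a priori integrates only to the simply connected cover of $G$. It descends to $G$ because, by complete reducibility, $\cA_{\lambda,\leqslant i}\simeq\gr\cA_{\lambda,\leqslant i}$ as representations of that cover.

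The genuine gap is the uniqueness half of (ii). You conclude that two $G$-equivariant structures on $\cA$ literally coincide, and this is false. Take $X=\C^2$, the affinization of the universal cover of the regular orbit of $\slf_2$. Let $A=\C[x,y]$ with $\deg x=\deg y=1$ and $\{x,y\}=1$ (so $d=2$), let $G=\SL_2$, and let $\cA$ be the Weyl algebra with its degree filtration and standard $\SL_2$-action $\alpha$. Then $\tau=\exp(\ad x)$, i.e.\ $x\mapsto x$, $y\mapsto y+1$, is a filtered automorphism with $\gr\tau=\Id$. So $g\mapsto\tau\alpha(g)\tau^{-1}$ is another $G$-equivariant structure compatible with $\varsigma$. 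Yet $-1\in\SL_2$ acts by $y\mapsto -y$ under $\alpha$ and by $y\mapsto -y-2$ under the conjugate.

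In your terms, $d\alpha'(\xi)-d\alpha(\xi)=[\psi(\xi),\cdot\,]$ with $\psi(\xi)\in\cA_{\leqslant d-1}$ non-scalar. The vanishing of $H^1(\fg,\C)$ gives no control over such $\psi$. Also, $g\mapsto\alpha'(g)\alpha(g)^{-1}$ is not a homomorphism, so it has no differential in the sense you use. Thus (ii) must be read up to isomorphism of filtered quantizations. That is also all the final bijection needs, since $\mathrm{Quant}(A)$ and $\mathrm{Quant}^G(A)$ consist of isomorphism classes.

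The missing ingredient is a gauge argument with \emph{nontrivial} coefficients:
\begin{itemize}
\item Let $\Phi,\Phi'$ be co-moment maps for two structures; by (i) both lift the same $\phi$.
\item Suppose $\psi=\Phi'-\Phi$ takes values in $\cA_{\leqslant d-k}$ with $k\geqslant 1$. Comparing $[\Phi'(\xi),\Phi'(\eta)]=\Phi'([\xi,\eta])$ with the same identity for $\Phi$ shows that the symbol $\bar\psi\colon\fg\to A_{d-k}$ is a $1$-cocycle.
\item Whitehead's lemma $H^1(\fg,A_{d-k})=0$ then gives $\bar\psi(\xi)=\{\phi(\xi),\bar h\}$ for some $\bar h\in A_{d-k}$.
\item Replacing $\Phi$ by $e^{-\ad h}\circ\Phi$, for a lift $h\in\cA_{\leqslant d-k}$ of $\bar h$, pushes $\psi$ into $\cA_{\leqslant d-k-1}$.
\item Once $\psi$ is scalar-valued, the same identity gives $\psi([\xi,\eta])=0$, so $\psi=0$.
\end{itemize}
Hence $\Phi'=\tau\circ\Phi$ for a filtered automorphism $\tau$ with $\gr\tau=\Id$, and connectedness of $G$ gives $\alpha'=\tau\alpha\tau^{-1}$.

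To state (ii) for an arbitrary $G$-structure, you also need that every such structure admits a co-moment map. This is where your claim that filtration-lowering derivations are inner is actually used, together with a $G$-equivariant choice of the inner generators by complete reducibility.
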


For our later application, we will only consider the following special case of quantizations.

\begin{defi}[{\cite[Definition 5.0.1]{LMBM}}]\label{defn:can_quant}
	The \emph{canonical quantization} of $\C[X]$ is the Hamiltonian quantization $\cA_0$ corresponding to the parameter $0 \in \fP^X$. 
\end{defi}

In the setting of \cref{lem:Hamiltonian_quant}, we have a classical co-moment map $\phi: \fg \to A_d = \C[X]_d = \C[\widetilde{X}^{reg}]_d$ for the Hamiltonian $G$-action on $\widetilde{X}^{reg}$ restricted from that on $\widetilde{X}$, by \Cref{lem:Hamiltonian_quant} (i) and \Cref{prop:H_vanishing}. By \Cref{prop:quan_moment_symm}, we have a symmetrized even $G$-Hamiltonian quantization $(\OO_\hb, \epsilon, \beta_\hb, \Phi_\hb)$ of $X$ such that the quantum co-moment map $\Phi_\hb: \g \to \Gamma(X, \OO_\hb)$ lifts $\phi$. By \Cref{thm:quant_conical_symp_sing}, $\cA_0 = \Gamma(X, \OO_\hb)^{fin}/(\hb-1)$ is the canonical filtered quantization of $\C[X]$. Moreover, $\Phi_\hb$ descends to a quantum co-moment map $\Phi: \g \to \cA_0$. This $\Phi$ coincides with the one in \Cref{lem:Hamiltonian_quant} (iii). 

\subsection{Quantization of nilpotent $G$-orbit covers}
\label{subsec:quant_G_orbit_covers}

We will be mainly interested in the following example. Let $G$ be an algebraic semisimple group over $\C$ and let $\rho: \widetilde{\orb} \to \orb$ be a connected finite $G$-equivariant covering of a nilpotent orbit $\orb$ in $\g^*$. Let $A=\C[\widetilde{\orb}]$ as in \Cref{ex:orbit_cover} and $X = \spec A$, the affinization of $\widetilde{\orb}$. The covering morphism $\rho: \widetilde{\orb} \to \orb$ induces a surjective morphism $\rho: X \to \overline{\orb}$. The affinization $X$ inherits a natural Poisson structure from $\widetilde{\orb}$. Together with the $\cm$-action on $X$ induced by that on $\widetilde{\orb}$, it makes $X$ a conical symplectic singularity (see \cite[Lemma 2.5]{Losev:HC_bimodules} and \cite[\S P.3]{Namikawa:covers}). 

Let $\Gamma = \Gamma_{\rho}$ be the Galois group of the covering $\rho: \widetilde{\orb} \to \orb$. Then we have a $\Gamma$-action on $X$, which commutes with the $G \times \cm$-action and preserves the Poisson structure. Note that $\Gamma$ is a finite group and hence disconnected in general, so \Cref{lem:Hamiltonian_quant} cannot be applied directly. Nevertheless, by \cite[Proposition 5.1.1]{LMBM}, the $\Gamma$-action on $X$ lifts to a $\Gamma$-action on the canonical quantization $\cA_0$ of $\C[X]$ by automorphisms of filtered algebras, which also commutes with the $G$-action. Moreover, the image of the quantum co-moment map $\Phi: \g \to \cA_0$ lies in the $\Gamma$-invariant subalgebra $\cA_0^\Gamma$ of $\cA_0$.

We regard the quantum co-moment map $\Phi: \g \to \cA_0$ as a homomorphism $\Phi: \Ug \to \cA_0$ of algebras. Let $I_0(\widetilde{\orb}) \subset \Ug$ be the kernel of $\Phi$, which is called the \emph{unipotent ideal} associated to $\widetilde{\orb}$ in \cite{LMBM}. 
We record the following proposition.

\begin{Prop} \label{prop:unip_ideal_prim}
	$I_0(\widetilde{\orb})$ is a completely prime maximal primitive ideal.
\end{Prop}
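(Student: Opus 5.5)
The plan is to identify $I_0(\widetilde{\orb})=\ker(\Phi\colon \Ug\to\cA_0)$ as an ideal of the form $\Ann(M)$ for an irreducible module, and to read off complete primeness and maximality from the structure of $\cA_0$. I would proceed in three steps.

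\textbf{Step 1: complete primeness.} Since $\gr\cA_0\simeq \C[X]$ and $X$ is irreducible and normal, $\C[X]$ is a domain. A filtered algebra whose associated graded is a domain is itself a domain, by comparing leading symbols. Thus $\cA_0$ has no zero divisors. The quotient $\Ug/I_0(\widetilde{\orb})$ embeds in $\cA_0$ via $\Phi$, so it is also a domain, and $I_0(\widetilde{\orb})$ is completely prime.

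\textbf{Step 2: infinitesimal character and associated variety.} The image $\Phi(\Zg)$ lies in the $G$-invariants of $\cA_0$. Taking associated graded, $\gr(\cA_0^G)\subset \C[X]^G=\C$, because $G$ has a dense orbit $\widetilde{\orb}$ in $X$. Hence $\cA_0^G=\C$ and $\Zg$ acts by a character, so $I_0(\widetilde{\orb})$ has an infinitesimal character.

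For the associated variety, $\gr I_0(\widetilde{\orb})$ is contained in the kernel of the composite $S\fg\to\C[X]$, which is the ideal of $\overline{\orb}$. Hence $\cV(I_0(\widetilde{\orb}))\supseteq\overline{\orb}$. Conversely, $\gr(\Ug/I_0)$ embeds in $\C[X]$, a finite module over $\C[\overline{\orb}]$ by the finiteness of $\rho$. This gives $\cV(I_0)=\overline{\orb}$.

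\textbf{Step 3: primitivity and maximality.} This is the main obstacle. My first choice is to use that $\cA_0$ is a Harish-Chandra $\Ug$-bimodule: the adjoint action integrates to $G$, and $\gr\cA_0=\C[X]$ is finitely generated over $S\fg$ because $X\to\overline{\orb}$ is finite. It is also simple as a bimodule, following \cite{Losev:HC_bimodules} and \cite{LMBM}. Then $I_0(\widetilde{\orb})$ is the left annihilator of this HC bimodule. That bimodule contains $\Ug/I_0$ as a sub-bimodule generated by $1$.

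To get maximality, I would argue as follows. Any ideal $J\supsetneq I_0$ with the same infinitesimal character has $\cV(J)\subsetneq\overline{\orb}$. This follows from Gelfand--Kirillov dimension: $\Ug/I_0$ is prime, and Borho--Kraft shows a proper quotient of a prime Noetherian algebra drops GK dimension. A maximal ideal $J$ containing $I_0$ is then primitive. Its associated variety is a proper closed subset of $\overline{\orb}$.

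To exclude this, I would rather invoke \cite[Theorem 5.0.1 / Proposition 5.1.1 ff.]{LMBM}. There it is shown that $\Phi\colon \Ug\to\cA_0^\Gamma$ is surjective onto $\cA_0^{\Gamma}$ in this unipotent setting, or at least that $\cA_0^{G_{ad}\text{-fin}}$ has no nontrivial two-sided ideals of $\Ug$-origin. This follows because $\cA_0$ is simple: a graded quantization of $\widetilde{X}^{reg}$ with $\codim(X\setminus\widetilde{\orb})\ge 2$ admits no proper two-sided ideals, since the associated graded of such an ideal would be a Poisson ideal of $\C[X]$, hence $0$ or everything, as $X$ is a symplectic singularity with finitely many leaves and the open leaf is dense. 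Simplicity of $\cA_0$ as an algebra together with the bimodule structure then gives that $\Ug/I_0$ is simple. I would verify this by transferring the Poisson-ideal argument to $\gr(\Ug/I_0)\subset\C[X]$ via its $\Gamma$-invariant part $\C[\overline{\orb}]$-finiteness.

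Maximal ideals of $\Ug$ with an infinitesimal character are primitive by Duflo, so maximality together with Step 1 completes the proof.
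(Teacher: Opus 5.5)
Your Step 1 is correct, and combined with the infinitesimal character from Step 2 it already gives primitivity without passing through maximality. The argument is this: $\Ug$ is a Jacobson ring, so $I_0(\widetilde{\orb})$ is an intersection of primitive ideals. Every primitive ideal containing it has the same infinitesimal character, and there are only finitely many of those. A prime ideal that is a finite intersection of ideals equals one of them.

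There is a small error in Step 2. The algebra $\gr(\Ug/I_0)$, with the quotient filtration from $\Ug$, need not embed in $\C[X]$, because that filtration need not agree with the one induced from $\cA_0$. The inclusion $\cV(I_0)\subseteq\overline{\orb}$ should instead come from $\mathrm{GKdim}(\Ug/I_0)\leqslant\mathrm{GKdim}\,\cA_0=\dim\orb$, together with irreducibility of associated varieties of primitive ideals.

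The genuine gap is maximality, which is the real content of the statement. The paper does not prove it by hand: it cites \cite[Theorem 5.0.1]{MBM} for maximality, and \cite[Lemma 6.1.1, Proposition 6.1.2]{LMBM} for complete primeness and primitivity.

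Your maximality argument fails at its key step. You claim that the associated graded of a two-sided ideal of $\cA_0$ is a Poisson ideal of $\C[X]$ and hence must be $0$ or $\C[X]$. This is false. For $\orb\neq 0$, the ideal of the closed subset $X\setminus\widetilde{\orb}$ is a nonzero proper Poisson ideal, as is the ideal of any closed union of symplectic leaves. Density of the open leaf only excludes ideals with full support.

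Your reasoning also never uses that the parameter is the canonical one $0\in\fP^X$. It would therefore show that every filtered quantization of such a $\C[X]$ is simple, and that the kernel of every quantum co-moment map is maximal. This already fails for $X=\cN(\slf_2)\simeq\C^2/\Z_2$. For every $c$, $U(\slf_2)/(C-c)$ is a filtered quantization of $\C[X]$ whose quantum co-moment map has kernel $(C-c)$. When $c$ is the Casimir value of a finite-dimensional representation, this kernel lies in the finite-codimensional annihilator of that representation, so it is not maximal. Any valid proof must use the canonical parameter in an essential way.

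Your fallback statements do not close the gap either:
\begin{itemize}
\item $\cA_0$ is not a simple $\Ug$-bimodule when $\Gamma\neq 1$. Since $\Phi(\Ug)\subseteq\cA_0^\Gamma$, the $\Gamma$-isotypic components are sub-bimodules.
\item Surjectivity of $\Ug\to\cA_0^\Gamma$ is not what \cite{LMBM} provides. Even granting it, maximality of $I_0$ would only be restated as simplicity of $\cA_0^\Gamma$, which is the same problem.
\item The step from simplicity of $\cA_0$ to simplicity of its subalgebra $\Ug/I_0$ is asserted without argument.
\end{itemize}
To finish, you need the maximality theorem for unipotent ideals from \cite{MBM}, or an argument of comparable depth that genuinely exploits the canonical parameter.
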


\begin{proof}
	By Lemma 6.1.1 and Proposition 6.1.2 of \cite{LMBM}, $I_0(\widetilde{\orb})$ is a completely prime primitive ideal of $\Ug$. By \cite[Theorem 5.0.1]{MBM}, $I_0(\widetilde{\orb})$ is a maximal primitive ideal.
\end{proof}

Since $I_0(\widetilde{\orb})$ is a primitive ideal, it has an infinitesimal character, denoted as $\gamma_0(\widetilde{\orb})$.

In general, different covers of the same $\orb$ might give rise to the same unipotent ideal. We recall the classification result of all unipotent ideals from \cite{LMBM}. Let $\widetilde{\orb}$ and $\widehat{\orb}$ be $G$-equivariant covers of a common nilpotent orbit $\orb \subset \fg^*$. Set $\widetilde{X} = \spec(\C[\widetilde{\orb}])$, $\widehat{X} = \spec(\C[\widehat{\orb}])$.
Both $\widetilde{X}$ and $\widehat{X}$ contain finitely many $G$-orbits and $\widetilde{\orb}$ (resp. $\widehat{\orb}$) is the unique open orbit in $\widetilde{X}$ (resp. $\widehat{X}$). Moreover, every $G$-equivariant morphism $\widetilde{\orb} \to \widehat{\orb}$ extends uniquely to a finite $G$-equivariant morphism $\widetilde{X} \to \widehat{X}$, which is \'{e}tale over the open orbit $\widehat{\orb} \subset \widehat{X}$. We recall \cite[Definition 1.5.2]{LMBM} below.

\begin{defi} \label{defi:almost_etale}
A finite $G$-equivariant morphism $\widetilde{X} \rightarrow \widehat{X}$ is said to be \emph{almost \'{e}tale} if it is \'{e}tale over the open subset in $\widehat{X}$ obtained by removing all $G$-orbits of codimension $\geqslant 4$. 
\end{defi}

The following proposition is one part of \cite[Proposition 6.2.8]{LMBM}.

\begin{Prop} \label{prop:almost_etale_unip_ideal}
    Let $\widetilde{\orb} \to \widehat{\orb}$ be a $G$-equivariant morphism of covers of $\orb$ such that the induced finite morphism $\widetilde{X} \to \widehat{X}$ is almost \'{e}tale. Then $I_0(\widetilde{\orb}) = I_0(\widehat{\orb})$.
\end{Prop}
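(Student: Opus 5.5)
The plan is to construct an injective filtered algebra homomorphism $\cA_0(\widehat X)\hookrightarrow \cA_0(\widetilde X)$ between the canonical quantizations that intertwines the quantum co-moment maps. Once this is done, $I_0(\widehat\orb)=\ker(\Ug\to\cA_0(\widehat X))$ and $I_0(\widetilde\orb)=\ker(\Ug\to\cA_0(\widetilde X))$ coincide, because the second map factors through the first followed by an injection.

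\textbf{Step 1 (open sets).} Let $\pi:\widetilde X\to\widehat X$ be the finite $G\times\cm$-equivariant morphism. Let $U\subset\widehat X$ be the complement of the $G$-orbits of codimension $\geqslant 4$, and set $\widetilde U:=\pi^{-1}(U)$. By the almost \'etale hypothesis, $\pi|_{\widetilde U}:\widetilde U\to U$ is finite \'etale. Its complement in $\widetilde X$ also has codimension $\geqslant 4$, since $\pi$ is finite and the $G$-orbits of $\widetilde X$ lie over $G$-orbits of $\widehat X$.

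\textbf{Step 2 (terminalizations).} Choose a $\bQ$-factorial terminalization $\widehat Y\to\widehat X$. Let $\widetilde Y$ be the normalization of $\widehat Y\times_{\widehat X}\widetilde X$. Over $U$ this base change is \'etale, and over the locus of codimension $\geqslant 4$ one expects $\widetilde Y\to\widetilde X$ to remain a $\bQ$-factorial terminalization. This is the main obstacle. To handle it, I would first try to avoid terminalizations altogether. Both canonical quantizations can be computed on open subsets whose complements have codimension $\geqslant 4$: $\cA_0(\widehat X)$ as the $\cm$-finite global sections, modulo $\hb-1$, of the even quantization $\OO_\hb$ of $\widehat Y^{reg}$ with period $0$. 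Moreover, the preimage of $U$ in $\widehat Y$ is isomorphic to $U$ away from the exceptional locus, which lies over codimension $\geqslant 4$. Restricting $\OO_\hb$ to $U^{reg}$ therefore loses no global sections, by a Hartogs argument applied to each $\hb$-adic graded piece (this uses normality and the codimension bound $\geqslant 2$).

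\textbf{Step 3 (pullback).} Pull $\OO_\hb|_{U^{reg}}$ back along the \'etale map $\widetilde U^{reg}\to U^{reg}$. The result $\pi^*\OO_\hb$ is a graded quantization of $\widetilde U^{reg}$, and it is even, since the parity involution pulls back. Its period is $\pi^*0=0$, by functoriality of the period map under \'etale pullback. I would then argue that $\pi^*\OO_\hb$ agrees with the restriction to $\widetilde U^{reg}$ of the canonical (even, period $0$) quantization of a terminalization of $\widetilde X$. The argument again uses that the relevant regular loci differ only in codimension $\geqslant 4$, together with the classification in \Cref{thm:quant_conical_symp_sing} and \Cref{thm:per_graded} applied on the terminalization, where \Cref{prop:H_vanishing} supplies strong admissibility. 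Applying Hartogs once more gives $\cA_0(\widetilde X)=\Gamma(\widetilde U^{reg},\pi^*\OO_\hb)^{fin}/(\hb-1)$, and pulling back sections yields a filtered algebra map $\cA_0(\widehat X)\to\cA_0(\widetilde X)$. This map is injective because $\pi$ is dominant and the associated graded map $\C[\widehat X]\to\C[\widetilde X]$ is injective.

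\textbf{Step 4 (co-moment maps).} The map of Step 3 is $G$-equivariant. Composing the quantum co-moment map of $\cA_0(\widehat X)$ with it gives a quantum co-moment map for $\cA_0(\widetilde X)$. By the uniqueness in \Cref{lem:Hamiltonian_quant}(iii), this composite equals the quantum co-moment map of $\cA_0(\widetilde X)$. Taking kernels on $\Ug$ then gives $I_0(\widetilde\orb)=I_0(\widehat\orb)$.
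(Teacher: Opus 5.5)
Steps 1, 2 and 4 are essentially fine. In Step 2 nothing is lost when restricting to $U^{reg}$, because $\C[U^{reg}]=\C[\widehat X]$ and $H^1(\widehat Y^{reg},\OO)=0$. Step 3, however, has a genuine gap: the identification of $\pi^*\OO_\hb$ with the restriction of the canonical quantization of $\widetilde X$ does not follow from evenness and vanishing of the period.

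The classification of \Cref{thm:per_graded} lives on $\widetilde Y^{reg}$ for a terminalization $\widetilde Y\to\widetilde X$. Contrary to what you assume, $\widetilde Y^{reg}$ does \emph{not} differ from $\widetilde U^{reg}\subset\widetilde X^{reg}$ only in codimension $\geqslant 4$: it contains the exceptional divisors lying over the singular codimension-$2$ leaves. Your remark in Step 2 that the exceptional locus lies over codimension $\geqslant 4$ is false. On $\widetilde U^{reg}$, which is not strongly admissible once such leaves exist, these divisor classes restrict to $0$, so ``even with period $0$'' no longer determines the quantization. For instance, take $X=\C^2/\{\pm1\}=\overline{\orb_{min}(\mathfrak{sl}_2)}$. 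On $X^{reg}$ the restrictions of the period-$\lambda$ quantizations of $T^*\mathbb{P}^1$ are all even with period in $H^2(X^{reg},\C)=0$. Yet their global sections are non-isomorphic for $\lambda\neq\pm\lambda'$.

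Relatedly, Steps 2--4 only use two facts: that $\pi$ is \'etale over $U^{reg}$ (true for every cover, by purity), and that $\pi^{-1}(U^{reg})=\widetilde U^{reg}$. \'Etaleness over the singular codimension-$2$ leaves, which is the real content of the hypothesis, is never used. Both weaker facts hold for the double cover of the regular orbit of $\mathfrak{sl}_4$, whose transverse slices along the subregular leaf are $\C^2/\Z_2\to\C^2/\Z_4$. There the unipotent ideals differ, by the \emph{only if} direction recalled in \Cref{rem:equiv_covers}. So, as written, the argument proves too much.

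The fix is to transport the terminalization rather than discard the codimension-$2$ leaves.
Let $\widehat Y\to\widehat X$ be a $\bQ$-factorial terminalization and $\widetilde Y$ the normalization of $\widehat Y\times_{\widehat X}\widetilde X$.
Over the preimage of $U$, which contains all exceptional divisors since these lie over codimension-$2$ leaves, this is an \'etale base change.
The rest has codimension $\geqslant 2$ in $\widehat Y$ by semismallness.
Hence $\widetilde Y\to\widehat Y$ is \'etale in codimension one, and therefore \'etale over $\widehat Y^{reg}$ by Zariski--Nagata purity.
It follows that $\widetilde Y\to\widetilde X$ is a crepant partial resolution with terminal singularities.
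The open set $V:=\widetilde Y\times_{\widehat Y}\widehat Y^{reg}$ has complement of codimension $\geqslant 4$ and is strongly admissible.
The \'etale pullback of the period-$0$ quantization of $\widehat Y^{reg}$ is then the unique even quantization of $V$.
Its global sections compute $\cA_0(\widetilde X)$, since a $\bQ$-factorial terminalization of $\widetilde Y$ is small and an isomorphism over $\widetilde Y^{reg}$.

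With this replacing your Step 3, your injectivity and co-moment arguments go through unchanged. This is exactly where almost \'etaleness is needed: for a branched cover the base change ramifies along exceptional divisors and is not crepant. Note that the paper gives no independent proof of this statement; it quotes it from \cite[Proposition 6.2.8]{LMBM}.
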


\begin{Rem} \label{rem:equiv_covers}
	In fact, \cite[Proposition 6.2.8]{LMBM} says that $I_0(\widetilde{\orb}) = I_0(\widehat{\orb})$ if and only if the two covers are equivalent in the sense of \cite[Definition 6.25]{LMBM}: define a binary relation $\ge$ on the set of all $G$-equivariant covers of $\orb$ such that $\widetilde{\orb} \ge \widehat{\orb}$ if and only if there is a $G$-equivariant morphism $\widetilde{\orb} \to \widehat{\orb}$ whose induced morphism $\widetilde{X} \rightarrow \widehat{X}$ is almost \'{e}tale. The symmetric closure of this relation gives an equivalence relation on the same set. The `if' direction is \cref{prop:almost_etale_unip_ideal}, while the `only if' direction is much harder and we do not need it for our application.
\end{Rem}

Now recall the following definition from \cite[Definition 7.29]{LMBM}.

\begin{defi}
    Let $\orb$ be a nilpotent orbit and $\widetilde{\orb}$ be a cover of $\orb$. We say that $\widetilde{\orb}$ is $2$-leafless if $\spec(\C[\widetilde{\orb}])$ has no codimension $2$ leaves.
\end{defi}

\begin{Rem} \label{rem:2_leafless_covers}
	If $\widehat{\orb}$ is a $2$-leafless cover of $\orb$, then every (connected) cover $\widetilde{\orb}$ of $\orb$ with a morphism $\widetilde{\orb} \to \widehat{\orb}$ is also $2$-leafless and hence the induced morphism $\spec(\C[\widetilde{\orb}]) \to \spec(\C[\widehat{\orb}])$ is almost \'{e}tale in the sense of \cref{defi:almost_etale}. By \cref{prop:almost_etale_unip_ideal}, this implies that $I_0(\widetilde{\orb}) = I_0(\widehat{\orb})$. In particular, this is true for the universal cover $\widetilde{\orb}$ of $\orb$.
\end{Rem}

Among all the $2$-leafless covers, there is an important subclass consisting of the so-called \emph{birationally rigid} $G$-equivariant covers of nilpotent orbits, a notion introduced in \cite[Section 2.4]{LMBM}. To compute the infinitesimal character $\gamma_0(\widetilde{\orb})$ for general covers $\widetilde{\orb}$, it suffices to compute it for birationally rigid covers by \cite[Proposition 8.1.1]{LMBM}.
We will not recall the original definition of birationally rigid covers and the relevant notion of {\it birational induction} of nilpotent orbit covers (\cite[Section 2.4]{LMBM}) here, but only need the following equivalent descriptions, which follow from Sections 4.4, 4.5, and 7.2 of \cite{LMBM}. The case of trivial covers of orbits was first studied in \cite[Proposition 4.5]{Losev:quan_symp_orbit}.

\begin{Thm} \label{thm:birigid}
	Let $X = \spec(\C[\widetilde{\orb}])$. The following conditions are equivalent:
	\begin{enumerate}
		\item $\widetilde{\orb}$ is birationally rigid.
		\item $X$ has terminal singularities, i.e., it is the $\bQ$-factorial terminalization of itself.
		\item $H^2(\widetilde{\orb}, \C) = 0$ and $\widetilde{\orb}$ is $2$-leafless, i.e.,
		\begin{equation} \label{eq:codim_4}
			\codim(X^{sing}, X) \geqslant 4,
		\end{equation}
		where $X^{sing}$ stands for the singular locus of $X$.
		\item The Namikawa space $\fP^X = 0$, i.e., the graded Poisson algebra $A = \C[X]$ has a unique filtered quantization (which is the canonical quantization).
	\end{enumerate}
\end{Thm}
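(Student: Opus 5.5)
The plan is to prove the chain $(2)\Leftrightarrow(3)\Leftrightarrow(4)$ by general facts about conical symplectic singularities, and then to attach $(1)$ to $(2)$ using the description of $\bQ$-factorial terminalizations of $X$ via birational induction from \cite{LMBM}.

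\textbf{Step 1: $(2)\Leftrightarrow(3)$.} By \cite[Main Theorem]{Namikawa1}, a variety with symplectic singularities is terminal if and only if its singular locus has codimension $\geqslant 4$, which is \eqref{eq:codim_4}. Condition (2) additionally asks $X$ to be $\bQ$-factorial, so it remains to show the following: assuming \eqref{eq:codim_4}, $X$ is $\bQ$-factorial if and only if $H^2(\widetilde{\orb},\C)=0$.
\begin{itemize}
\item Since $X$ is normal and affine with a contracting $\cm$-action, $\mathrm{Cl}(X)\simeq\Pic(X^{reg})$ and $\Pic(X)$ is trivial up to torsion. Hence $\bQ$-factoriality is equivalent to $\Pic(X^{reg})\otimes\bQ=0$.
\item By the exponential sequence and the vanishing of $H^1$ and $H^2$ of $\OO_{X^{reg}}$ (the analogue of \Cref{prop:H_vanishing} for $X$ terminal), we get $\Pic(X^{reg})\otimes\C\simeq H^2(X^{reg},\C)$.
\item Under \eqref{eq:codim_4}, the complement $X^{reg}\setminus\widetilde{\orb}$ is a union of finitely many $G$-orbits of complex codimension $\geqslant 2$ in the smooth variety $X^{reg}$. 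Removing a closed subset of complex codimension $c$ does not change $H^k$ for $k\leqslant 2c-2$, so $H^2(X^{reg},\C)=H^2(\widetilde{\orb},\C)$.
\end{itemize}
The $2$-leafless part of (3) is exactly the codimension condition, since the leaves of $X$ are the $G$-orbits.

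\textbf{Step 2: $(2)\Leftrightarrow(4)$.} If (2) holds, take $\widetilde X=X$ in the definition of the Namikawa space. Then $\fP^X=H^2(X^{reg},\C)=0$ by Step 1. Conversely, suppose $\fP^X=H^2(\widetilde X^{reg},\C)=0$ for a $\bQ$-factorial terminalization $\rho:\widetilde X\to X$.
\begin{itemize}
\item If $\rho$ is not an isomorphism, it is projective birational and not finite, so there is a $\rho$-ample line bundle on $\widetilde X$. Its class in $H^2(\widetilde X^{reg},\C)$ is nonzero, since its restriction to a positive-dimensional fibre has positive degree. This is a contradiction, so $\rho$ is an isomorphism.
\item Then $X=\widetilde X$ is terminal and $\bQ$-factorial.
\end{itemize}
The last sentence of (4) is a restatement via \Cref{thm:quant_conical_symp_sing}.

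\textbf{Step 3: $(1)\Leftrightarrow(2)$.} This is where I expect the main obstacle, because it requires the definition of birational induction from \cite[Section 2.4]{LMBM}, which we did not recall. The plan is to use the result of \cite[Sections 4.4, 4.5]{LMBM}: if $\widetilde{\orb}$ is birationally induced from a cover $\widetilde{\orb}_L$ of a Levi $L$ with parabolic $P=LN$, then
\[
\widetilde X':=G\times^{P}\bigl(\spec\C[\widetilde{\orb}_L]\times\fn\bigr)\longrightarrow X
\]
is a projective birational partial resolution. Moreover, every $\bQ$-factorial terminalization of $X$ arises this way from a birationally rigid $\widetilde{\orb}_L$. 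Hence $X$ is its own terminalization exactly when the only such induction datum is $L=G$, i.e.\ when $\widetilde{\orb}$ is birationally rigid.
\begin{itemize}
\item For (1)$\Rightarrow$(2): if $X$ were not its own terminalization, the induction datum of its terminalization would have $L\neq G$, so $\widetilde{\orb}$ would be birationally induced, contradicting rigidity.
\item For (2)$\Rightarrow$(1): if $\widetilde{\orb}$ were induced with $L\neq G$, the map above would be a nontrivial projective birational morphism onto the terminal $\bQ$-factorial $X$. This gives a nonzero class in $\fP^X$, contradicting Step 2.
\end{itemize}
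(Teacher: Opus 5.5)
Your outline is correct, and it is more explicit than the paper. The paper gives no argument for this theorem: it quotes the equivalences from \cite[Sections 4.4, 4.5, 7.2]{LMBM}, and \cite[Proposition 4.5]{Losev:quan_symp_orbit} for trivial covers.

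You split the statement into two parts. The first, (2)$\Leftrightarrow$(3)$\Leftrightarrow$(4), holds for any conical symplectic singularity; you obtain it from Namikawa's characterization of terminality, from $\bQ$-factoriality $\Leftrightarrow \Pic(X^{reg})\otimes\bQ=0$, and from the comparison of $\Pic\otimes\C$ with $H^2$ on regular loci. The second, (1)$\Leftrightarrow$(2), genuinely concerns orbit covers, and there you use generalized Springer maps. This makes visible that the only input specific to nilpotent covers is the \cite{LMBM} result: every cover is birationally induced from a birationally rigid $\widetilde{\orb}_L$, and $G\times^{P}(\spec\C[\widetilde{\orb}_L]\times\mathfrak{n})\to X$ is then a $\bQ$-factorial terminalization. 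Your (1)$\Rightarrow$(2) is literally that result with $L=G$, so the real content still sits in the citation, as in the paper.

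Several justifications need repair, though none is fatal.

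\emph{(a)} Your remark that the leaves of $X$ are the $G$-orbits is false. The symplectic form extends to $X^{reg}$, and its top power has empty zero locus there, since that locus would be a divisor inside a set of codimension $\geqslant 2$. Hence $X^{reg}$ lies in the open leaf. In the situation of \Cref{thm:codim_X}(i), where $\check{X}\simeq\C^2$ by \Cref{prop:A1}, $X$ has codimension-$2$ $G$-orbits but no codimension-$2$ leaves. Under your reading, ``$2$-leafless'' would be strictly stronger than \eqref{eq:codim_4}. Just take \eqref{eq:codim_4} as the definition, as the statement does.

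\emph{(b)} In Step 3, (2)$\Rightarrow$(1) does not follow from ``this gives a nonzero class in $\fP^X$''. The positive-dimensional fibres of $Y\to X$ (for instance $G/P$ over the cone point) lie over $X^{sing}$. But $\fP^X=H^2(X^{reg},\C)$ when $X$ is its own terminalization, so a fibre-degree argument says nothing there. Argue instead as follows.
\begin{enumerate}
\item $Y\to X$ is crepant, so terminality of $X$ rules out exceptional divisors, and the morphism is small.
\item Take $H$ relatively ample on $Y$. Since $X$ is $\bQ$-factorial, $m\rho_*H$ is Cartier for some $m$.
\item $\rho^*(m\rho_*H)$ and $mH$ agree off a set of codimension $\geqslant 2$, so $mH$ is relatively trivial.
\item This contradicts $\dim G/P>0$.
\end{enumerate}

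\emph{(c)} Similarly, in Step 2 the degree on a contracted curve detects $c_1(L)$ only in $H^2(\widetilde X,\C)$, unless that curve lies in $\widetilde X^{reg}$. It is cleaner to write $\fP^X\simeq\Pic(\widetilde X^{reg})\otimes\C=\mathrm{Cl}(\widetilde X)\otimes\C=\Pic(\widetilde X)\otimes\C$, using $\bQ$-factoriality. Then note that a relatively ample $L$ is non-torsion when $\rho$ is not finite.

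\emph{(d)} The isomorphism $\Pic(U)\otimes\C\simeq H^2(U,\C)$ on a regular locus $U$ is not a formal consequence of the exponential sequence plus Zariski vanishing of $H^1$ and $H^2$ of $\OO_U$. The exponential sequence is analytic, and algebraic and analytic Picard groups of non-proper varieties can differ. The isomorphism uses the conical structure, so cite it (Namikawa \cite{Namikawa2}, or \cite{LMBM}) rather than deriving it in one line.
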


The following proposition is a slight generalization of \cite[Proposition 9.1.1]{LMBM}, which relates special unipotent infinitesimal characters to $\gamma_0(\widetilde{\orb})$ for certain covers.

\begin{Prop}\label{prop:dual_dist}
	Let $\ckorb$ be a distinguished nilpotent orbit in $\ckfg^*$. Let $\orb = d(\ckorb)$ and $\widetilde{\orb}$ be the universal cover of $\orb$. Then 
	\begin{enumerate}[label=(\roman*), leftmargin=*]
		\item $\widetilde{\orb}$ is birationally rigid;
		\item $\gamma_0(\widetilde{\orb}) = \lambda_{\ckorb} = \frac{1}{2}\check{h}_{\ckorb}$.
	\end{enumerate}
\end{Prop}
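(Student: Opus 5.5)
For (i), I will verify condition (3) of \Cref{thm:birigid}. The codimension estimate \eqref{eq:codim_4} is already available. \Cref{thm:codim_X} (equivalently \Cref{thm:main_codim_X_dist}) gives $\codim(X^{sing},X)\geqslant 6$ whenever $\ckorb$ is distinguished. So $X$ is in particular $2$-leafless.

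It remains to show $H^2(\widetilde{\orb},\C)=0$. Since $\widetilde{\orb}$ is simply connected, I will compute via the fibration $G/G_{\tilde e}$, where $G$ is simply connected and $G_{\tilde e}=G_e^\circ$. For a homogeneous space $G/H$ with $G$ simply connected semisimple and $H$ connected, $H^2(G/H,\C)\simeq \mathfrak{X}(H)\otimes\C = (\fh/[\fh,\fh])^*$. Using the Levi decomposition $G_e^\circ = Q^\circ\ltimes U$, this reduces the claim to showing that the reductive centralizer $\fq=\Lie(Q)$ has no center, i.e.\ $\fq$ is semisimple.

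I will establish the semisimplicity of $\fq$ as follows. For $\orb=d(\ckorb)$ with $\ckorb$ distinguished, $\orb$ is not a Richardson-type orbit induced in a way producing a central torus in $\fq$. Concretely, $\fz(\fq)\neq 0$ iff $\orb$ meets a proper Levi $\fl$ in a way giving an even-like structure; precisely, $\fz(\fq)$ is the Lie algebra of a torus whose centralizer is a Levi $\fl$ with $e\in\fl$ and $\Sat_L^G$ giving $\orb$ with $\fz(\fq)$ contained in $\fz(\fl)$. I would rather avoid this route and instead invoke \cite[Proposition 9.1.1]{LMBM}. There, for $\ckorb$ distinguished, $\widetilde{\orb}$ (the universal cover) is shown to be birationally rigid, with $\gamma_0=\tfrac12\check h$. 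I will check that its hypotheses match: the \emph{slight generalization} should be that \cite{LMBM} treats a specific cover, e.g.\ the $G$-equivariant cover attached to $\Abar(\orb)$. Here \Cref{lem:H_quasi-dist} says $\Abar(\orb)=\pi_1(\orb)$, so that cover is exactly the universal cover. Thus (i) and (ii) follow from \cite[Proposition 9.1.1]{LMBM} via this identification.

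Should the cited proposition only give a cover $\widehat{\orb}$ dominated by the universal cover, I will use \Cref{rem:2_leafless_covers}. Since $\codim(X^{sing},X)\geqslant 6$, the map from $X$ to $\spec\C[\widehat{\orb}]$ is almost \'etale, so \Cref{prop:almost_etale_unip_ideal} gives $I_0(\widetilde{\orb})=I_0(\widehat{\orb})$. Hence $\gamma_0(\widetilde{\orb})=\gamma_0(\widehat{\orb})=\lambda_{\ckorb}$, which is (ii). For (i) in that case, birational rigidity passes up finite covers that are almost \'etale. I expect this to follow from (3) of \Cref{thm:birigid} via $H^2$ injecting into covers through transfer. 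Explicitly, $H^2(\widetilde{\orb},\C)$ is the $\Gamma$-invariant part upstairs, but we need vanishing upstairs. I will therefore argue directly with (2): $X$ has singularities in codimension $\geqslant 6>4$, so $X$ is terminal by Namikawa. $\bQ$-factoriality of $X$ is the main obstacle. I will deduce it from $\mathrm{Cl}(X)\otimes\bQ\simeq \Pic(\widetilde{\orb})\otimes\bQ\hookrightarrow H^2(\widetilde{\orb},\C)$, which vanishes by the centralizer computation above. That in turn reduces to semisimplicity of $\fq$, which I will check using the tables in \cite{CM} for the relevant orbits $d(\ckorb)$ if needed.
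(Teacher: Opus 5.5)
For the usual Barbasch--Vogan duality, your main line is exactly the paper's proof. \cite[Proposition 9.1.1]{LMBM} is already a statement about the universal cover of $d(\ckorb)$, so your hypothesis-matching via \Cref{lem:H_quasi-dist} is harmless but unnecessary.

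What you have misread is the ``slight generalization'': it is the \emph{metaplectic} duality ($\ckfg=\fg=\fmp(2n,\C)$). The statement is meant to include this case, your proposal never addresses it, and \cite{LMBM} does not cover it. So the citation alone leaves that case open.

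For (i), your own abandoned route closes the gap. \Cref{thm:codim_X} is proved for type $\mC$ as well, independently of this proposition. Since $\Sp(2n,\C)$ is simply connected, $H^2(\widetilde{\orb},\C)\simeq\Hom(\pi_1(Q^\circ),\C)$, which vanishes iff $\mathfrak{z}(\fq)=0$, i.e.\ iff $Q$ has no $\mathrm{O}_2$ factor. Take $\lambda=(\eta^\dagger)^*$ with the type-$\mC$ formula in \eqref{eq:eta_dagger}, where $\eta$ has distinct even parts. An even part $a<p$ of $\lambda$ then has multiplicity $\eta^\dagger_a-\eta^\dagger_{a+1}=\eta_a-\eta_{a+1}+2\geqslant 4$. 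The part $a=p$ has multiplicity $\eta_p+1$, which is $1$ or at least $3$. Hence condition (3) of \Cref{thm:birigid} holds.

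A side correction: $H^2(G/H,\C)$ is $\mathfrak{X}(H)\otimes\C\simeq\mathfrak{z}(\fq)^*$, not $(\Lie(H)/[\Lie(H),\Lie(H)])^*$. The latter is nonzero already for the regular orbit.

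For (ii) in type $\mC$ there is no shortcut; you need an actual computation of $\gamma_0(\widetilde{\orb})$. One way is the formula $\gamma_0(\widetilde{\orb})=\rho^+((\lambda^*)_\dagger)$ of \cite[Proposition 8.1.3]{LMBM}, extended to $2$-leafless covers, as in the proof of \Cref{prop:dual_quasi-dist}.

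Separately, your contingency argument contains a false step. The bound $\codim(X^{sing},X)\geqslant 6$ does not make $X\to\spec\C[\widehat{\orb}]$ almost \'etale. Almost \'etaleness is a condition over the codimension-$2$ leaves of the \emph{target}. If $\spec\C[\widehat{\orb}]$ had such a leaf, a smooth $X$ would map onto it locally as $\C^2\to\C^2/\Z_2$, which is ramified there (compare the proof of \Cref{prop:A1}). What makes \Cref{rem:2_leafless_covers} applicable is that $\widehat{\orb}$ itself is $2$-leafless, which would follow from its birational rigidity. This branch is never needed, since \cite{LMBM} treats the universal cover directly, but as written it is not valid.
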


\begin{proof}
	When $d$ is the usual BV duality map, this is exactly \cite[Proposition 9.1.1]{LMBM}. This is also true for the metaplectic case by similar arguments, whose details are left to the reader. For part (ii) in the metaplectic case, see \cref{prop:dual_quasi-dist} for a generalization.
\end{proof}

\begin{Rem} \label{rem:BVLS_refined}
	\Cref{prop:dual_dist} is a special and fundamental case of a refined version of the Barbasch-Vogan-Lusztig-Spaltenstein (BVLS) duality introduced in \cite{LMBM} and later extended in \cite{MBMY}. Namely, there is a map $\widetilde{\mathsf{D}}$ that enhances the duality map $d$ in the sense that, for any nilpotent orbit $\ckorb$ of $\ckfg^*$, $\widetilde{\mathsf{D}}(\ckorb)$ is an equivalence class of connected finite $G_{ad}$-equivariant covers of the BV dual orbit $\orb=d(\ckorb)$ of $\ckorb$ in the sense of \cref{rem:equiv_covers}. \cite[Proposition 9.2]{LMBM} says that the special unipotent ideal $\J_{\ckorb}$ is equal to the maximal primitive ideal $I_0(\widetilde{\mathsf{D}}(\ckorb))$ associated to (any member of) $\widetilde{\mathsf{D}}(\ckorb)$.
	When $\ckorb$ is distinguished, $\widetilde{\mathsf{D}}(\ckorb)$ contains the universal cover of $\orb$ as a member. 

	Inspired by the Sommers duality map (\cite{Sommers:Duality}) and the Achar duality map (\cite{Achar:duality}), the author and his collaborators later generalized both the refined BVLS duality map and Sommers duality map in \cite{MBMY}. To any pair $(\ckorb, \bar{C})$ of a nilpotent orbit $\ckorb$ of $\ckfg^*$ and a conjugacy class $\bar{C}$ of Lusztig's canonical quotient group $\Abar(\orb)$ of $\orb$, \cite{MBMY} assigns an actual $G$-equivariant cover $D(\ckorb, \bar{C})$ of the Sommers dual orbit $d_S(\ckorb, \bar{C})$ in $\fg^*$, where $G$ is the adjoint group of $\fg$. When the conjugacy class $\bar{C}$ is trivial, $D(\ckorb, 1)$ is a cover in the equivalence class $\widetilde{\mathsf{D}}(\ckorb)$, and when $\ckorb$ is distinguished, $D(\ckorb, 1)$ is exactly the universal cover of $\orb$.

	We also remark that the two refinements of the BVLS duality above can be easily extended to refine the metaplectic BV duality. Note that is this case, all dual covers are equivariant for the complex symplectic groups $\Sp(2n,\C)$, but not necessarily for the adjoint groups. In the classical and metaplectic cases, the refined BVLS duality map has also been realized in \cite{FHN} as taking the Coulomb branches of certain orthosymplectic quiver gauge theories in the framework of 3d mirror symmetry.  
\end{Rem}

\Cref{prop:dual_dist} (i) implies that the cover $\widetilde{\orb}$ is $2$-leafless, i.e., condition \eqref{eq:codim_4} holds. Recall that our \Cref{thm:codim_X} has strengthened the condition \eqref{eq:codim_4}. \Cref{thm:birigid} and \Cref{prop:dual_dist} (i) imply that, when $\ckorb$ is distinguished, there is a unique filtered quantization of $\C[X]$, which is the canonical quantization $\cA_0$. 

We now extend \cref{prop:dual_dist} to the case of quasi-distinguished nilpotent orbits.

\begin{Prop}\label{prop:dual_quasi-dist}
    Let $\ckorb$ be a quasi-distinguished nilpotent orbit in $\ckfg^*$. Let $\orb = d(\ckorb)$ and $\widetilde{\orb}$ be the universal cover of $\orb$. Then 
	\begin{enumerate}[label=(\roman*), leftmargin=*]
			\item $\widetilde{\orb}$ is $2$-leafless.
		\item $\gamma_0(\widetilde{\orb}) = \lambda_{\ckorb} = \frac{1}{2}\check{h}_{\ckorb}$.
		\item $I_0(\widetilde{\orb}) = \J_{\ckorb}$.
	\end{enumerate}
\end{Prop}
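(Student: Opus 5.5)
Part (i) follows from \Cref{thm:codim_X}, split along its two cases. In case (i) of that theorem we already have $\codim(X^{sing},X)\geqslant 6$. Since $X$ is a conical symplectic singularity, its symplectic leaves are finitely many and even-dimensional, so $X$ has no leaves of codimension $2$. In case (ii), $\codim(\partial\orb,\overline{\orb})\geqslant 4$. The finite map $\rho:X\to\overline{\orb}$ sends every non-open $G$-orbit of $X$ into $\partial\orb$ and preserves dimensions of orbits, so every leaf of $X$ other than $\widetilde{\orb}$ has codimension $\geqslant 4$. In both cases $\widetilde{\orb}$ is $2$-leafless.

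Part (iii) follows from (ii) together with \Cref{prop:unip_ideal_prim}. That proposition says $I_0(\widetilde{\orb})$ is a maximal primitive ideal. By (ii) its infinitesimal character is $\lambda_{\ckorb}$, and $\J_{\ckorb}$ is by definition the unique maximal primitive ideal with infinitesimal character $\lambda_{\ckorb}$. Hence $I_0(\widetilde{\orb})=\J_{\ckorb}$.

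The substance is therefore (ii), which I would reduce to the distinguished case \Cref{prop:dual_dist}. First I use \Cref{rem:2_leafless_covers}: by (i), every cover lying over a $2$-leafless cover has the same unipotent ideal. So the universal cover gives the same $I_0$ as any intermediate cover $\widehat{\orb}$ through which it factors almost étale-ly. Next I would compute $\gamma_0$ via birational induction, following \cite[Proposition 8.1.1]{LMBM}. By \Cref{defn:quasi-dist-saturation}, write $\ckorb=\Sat_{\check L}^{\check G}\ckorb_{\check L}$ with $\check L$ a minimal Levi, so that $\ckorb_{\check L}$ is distinguished in $\check{\fl}$ and $\bar\iota$ is injective but not an isomorphism. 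Dually, $\orb=\Ind_L^G d_L(\ckorb_{\check L})$, and the refined duality $D(\ckorb,1)$ of \cite{MBMY} is birationally induced from the universal cover of $d_L(\ckorb_{\check L})$, which is birationally rigid by \Cref{prop:dual_dist}(i). The infinitesimal character of a birationally induced cover is the $W$-orbit of $\gamma_0$ of the Levi cover, viewed in $\fh^*$. By \Cref{prop:dual_dist}(ii) for $\fl$, this is $\frac12\check h_{\ckorb_{\check L}}$, and that equals $\frac12\check h_{\ckorb}$ because saturation preserves the neutral element $\check h$. Hence $\gamma_0(D(\ckorb,1))=\lambda_{\ckorb}$.

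It remains to show that the universal cover $\widetilde{\orb}$ is equivalent to $D(\ckorb,1)$, i.e.\ that the map $\widetilde{\orb}\to D(\ckorb,1)$ extends to an almost étale map of affinizations. By (i) and \Cref{rem:2_leafless_covers}, this holds once $D(\ckorb,1)$ is itself $2$-leafless. Here I expect the main obstacle: the quasi-distinguished hypothesis must force $D(\ckorb,1)$ to have no codimension-$2$ leaves. I would argue via \Cref{lem:H_quasi-dist} ($H(\orb)=\Abar(\orb)=\pi_1(\orb)$) and \Cref{thm:special_piece}, which let one check that every cover of $\orb$ corresponding to a quotient of $\Abar(\orb)$ in the relevant range is unramified over the codimension-$2$ orbits of the special piece. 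Where this fails to be uniform, I would fall back on the case lists. In the classical and metaplectic cases one can alternatively compute $\gamma_0$ directly via the metaplectic/orthosymplectic formulas of \cite{LMBM,MBMY}. In the exceptional cases, \cref{tab:qD_exceptional} reduces the check to five orbits, each verified by hand.
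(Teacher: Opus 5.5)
Parts (i) and (iii) are fine. For (iii), maximality from \Cref{prop:unip_ideal_prim} together with uniqueness of the maximal ideal with a given infinitesimal character is a clean substitute for the paper's appeal to \cite[Proposition 9.2]{LMBM}.

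The gap is in (ii). The value $\gamma_0(D(\ckorb,1))=\lambda_{\ckorb}$ is already \cite[Proposition 9.2]{LMBM} (see \Cref{rem:BVLS_refined}). So everything rests on the step you flag yourself: that $D(\ckorb,1)$ is $2$-leafless, so that $\widetilde{\orb}\to D(\ckorb,1)$ is almost \'etale. Your opening appeal to (i) is misplaced. \Cref{rem:2_leafless_covers} needs the \emph{target} to be $2$-leafless, and the $2$-leaflessness of the source $\widetilde{\orb}$ plays no role.

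When $\codim(\partial\orb,\overline{\orb})\geqslant 4$ there are no codimension-$2$ orbits, so your argument does go through; this covers four of the five rows of \cref{tab:qD_exceptional}. The problem is the case $\codim(\partial\orb,\overline{\orb})=2$ with $\ckorb$ not distinguished. Examples are $\eta=[2,2]$ in $\ckfg=\fsp(4)$ with $\orb=\orb_{[3,1,1]}\subset\fso(5)$, and $\ckorb=D_4(a_1)$ in $E_6$. There the $2$-leaflessness of $D(\ckorb,1)$ is precisely the content of the proposition, and it is not proved.

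Your suggested route cannot prove it. By \Cref{thm:special_piece} and \Cref{lem:H_quasi-dist}, $\tilde{\cP}(\orb)_F$ is smooth if and only if $F=1$. So these results say that every cover other than the universal one is singular somewhere over the special piece. That is the opposite of ``every cover in the relevant range is unramified over the codimension-$2$ orbits''. In the $\fso(5)$ example the only other cover is $\orb$ itself, and $\overline{\orb}$ has an $A_1$ singularity along $\orb_{[2,2,1]}$.

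What you actually need is an identification of $D(\ckorb,1)$ itself, for instance that the relevant generalized Springer map has degree $|\pi_1(\orb)|$. In the example, $T^*\mathbb{P}^3\to\overline{\orb}$ has degree $2$, so $D(\ckorb,1)$ is the universal cover. Nothing in the proposal supplies this in general.

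The fallback only points at the computations that make up the paper's proof; it does not carry them out.
\begin{itemize}
\item Classical and metaplectic types: the paper uses the formula $\gamma_0=\rho^+((\lambda^*)_\dagger)$ of \cite[Proposition 8.1.3]{LMBM}, noting that its proof remains valid for $2$-leafless covers and not only birationally rigid ones. This matters because $\widetilde{\orb}$ is in general not birationally rigid; in the example it is birationally induced from $\GL_2$. It then combines this with the identity $(\eta^\dagger)_\dagger=\eta$, which is forced by the quasi-distinguished conditions.
\item Exceptional types: the paper uses \Cref{rem:2_leafless_covers} for the rows with $\codim(\partial\orb,\overline{\orb})\geqslant 4$. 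For $D_4(a_1)$ in $E_6$ it uses the birationally semi-rigid analysis of \cite{MBM} and \cite[Table 11]{MBM}.
\end{itemize}
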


\begin{proof}
	Part (i) is \cref{thm:codim_X}. For part (ii), the argument is the same as that of \cite[Proposition 9.1.1]{LMBM}, which we briefly recall here. 
	\vskip 1em
	\noindent\emph{Classical and metaplectic case.}
	We can use \cite[Proposition 8.1.3]{LMBM} to compute $\gamma_0(\widetilde{\orb})$. Note that the statement of \cite[Proposition 8.1.3]{LMBM} is only about birationally rigid nilpotent covers equivariant with respect to the linear classical groups $\SO(N)$ and $\Sp(2n)$, but it also works for all $2$-leafless covers without any change of the proof (which is the case for us by part (i)). More precisely, let $X \in \{B, C, D, \mC\}$ be the type of the Lie algebra $\g$ and $(\epsilon, \epsilon') = (\epsilon_X, \epsilon'_X)$ be as defined in \Cref{subsec:special_duality_classical}. Assume $\fg = \fg_{\epsilon}(N)$, so that $\ckfg = \fg_{\epsilon'}(N+\epsilon - \epsilon')$.
	Let $\eta$ be the partition associated to $\ckorb$ in $\ckfg^*$, and $\lambda = d(\eta)$ be the partition associated to the orbit $\orb = d(\ckorb)$ in $\g^*$. Then by \cref{lem:d_quasi-dis}, we have $\lambda = (\eta^\dagger)^*$. The formulas for $\gamma_0(\widetilde{\orb})$ in \cite[Proposition 8.1.3]{LMBM} for all types can be reformulated into the following uniform one:
	\begin{equation}
		\gamma_0(\widetilde{\orb}) = \rho^+((\lambda^*)_\dagger)
	\end{equation}
	where $\rho^+$ is defined as in \cite[Definition 8.6]{LMBM}, and the partition $(\lambda^*)_\dagger = \chi_\dagger$ is obtained from $\chi:=\lambda^* = \eta^\dagger = [\chi_1 \geqslant \chi_2 \geqslant \cdots \geqslant \chi_p \geqslant 0]$ by setting
	\begin{equation} \label{eq:lower_dagger}
		(\chi_\dagger)_i = \chi_i + (-1)^{i+\epsilon}, \quad 1 \leqslant i \leqslant p,
	\end{equation}
	where we again assume that $p$ is the least integer satisfying $p \geqslant \#\chi$ and $p \equiv \epsilon + \epsilon' \bmod 2$, by adding a $0$ to $\chi$ if necessary. To see this, we only need to note that, by property (2) of \cref{defi:quasi-dist_classical} for all parts $a$ of $\chi = \eta^\dagger$, $m(a) \leqslant 2$ and if $m(a) = 2$, then $h(a) \equiv \epsilon \bmod 2$.
	From this, it is easy to see that \eqref{eq:lower_dagger} indeed defines a non-increasing sequence of nonnegative integers and hence a well-defined partition $\chi_\dagger \in \cP_{\epsilon',\,\epsilon}(N + \epsilon - \epsilon')$ and $\chi_\dagger = \eta$. Now $\gamma_0(\widetilde{\orb}) = \rho^+((\lambda^*)_\dagger) = \lambda_{\ckorb} = \frac{1}{2}\check{h}_{\ckorb}$ follows from the explicit formulas for $\slf_2$-triples in \cite[Section 5.2]{CM}.
	\vskip 1em

	\noindent\emph{Exceptional case.}
	Most of the quasi-distinguished nilpotent orbits are distinguished, a case that has already been covered by \cref{prop:dual_dist}. The remaining cases are listed in \cref{tab:qD_exceptional}. A case-by-case analysis shows that $\codim(\partial \orb, \overline{\orb}) \geqslant 4$ for all $\orb$ in these cases, with only one exception when $\g = \mathfrak{e}_6$ and $\ckorb = \orb = D_4(a_1)$. When $\codim(\partial \orb, \overline{\orb}) \geqslant 4$, $I_0(\widetilde{\orb}) = I_0(\orb)$ for any connected cover $\widetilde{\orb}$ of $\orb$ by \cref{rem:2_leafless_covers}.
	For the exceptional case, \cite[Proposition 3.9.5]{MBM} says that $\orb = D_4(a_1)$ is a {\it birationally semi-rigid} orbit (\cite[Definition 3.9.1]{MBM}), meaning that it is not birationally rigid but admits a birationally rigid cover. There is more than one birationally rigid cover in this case, but the unipotent ideals associated to these covers are all equal to $I_0(\widetilde{\orb})$ with $\widetilde{\orb}$ being the universal cover of $\orb$ (which itself, however, is not birationally rigid), again by \cref{rem:2_leafless_covers}. By \cite[Table 11]{MBM}, the infinitesimal character associated to the unipotent ideal of any birationally rigid cover of $\orb = D_4(a_1)$, and hence $\gamma_0(\widetilde{\orb})$, is indeed $\lambda_{\ckorb} = \frac{1}{2}\check{h}_{\ckorb}$.

	For part (iii), we follow the same argument as in the proof of \cite[Proposition 9.2, (iv)]{LMBM}.
\end{proof}

We conclude this subsection by proving the following proposition about special unipotent ideals, which will be used later in the proof of \Cref{lem:AC_special_unip}. We define an anti-involution $\tau_\g: \Ug \to \Ug$ determined by $\tau_\g(\xi) = -\xi$ for all $\xi \in \fg$.

\begin{Prop} \label{prop:J_tau_inv}
	For any $\ckorb \in \check{\cN}_o$ (not necessarily quasi-distinguished), the special unipotent ideal $\J_{\ckorb}$ is invariant under the anti-involution $\tau_\g$ of $\Ug$.
\end{Prop}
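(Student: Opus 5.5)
The idea is that $\tau_\g$ preserves both invariants that characterize $\J_{\ckorb}$: its infinitesimal character and its maximality among primitive ideals. Since $\tau_\g$ is an anti-automorphism of $\Ug$, the image $\tau_\g(\J_{\ckorb})$ is again a two-sided ideal. Moreover, $\J_{\ckorb}$ is the unique maximal ideal with infinitesimal character $\chi_{\lambda_{\ckorb}}$ (see \cite{Duflo}-type uniqueness of $\J_{max}(\lambda)$). So it suffices to check two things:
\begin{enumerate}
\item $\tau_\g(\J_{\ckorb})$ is a maximal ideal of $\Ug$;
\item $\tau_\g(\J_{\ckorb})$ has infinitesimal character $\chi_{\lambda_{\ckorb}}$.
\end{enumerate}

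Maximality is immediate, because $\tau_\g$ induces an inclusion-preserving bijection on the set of two-sided ideals. Primitivity is also preserved: if $\J=\Ann(M)$, then $\tau_\g(\J)$ is the annihilator of the right module $M$ twisted by $\tau_\g$. For two-sided ideals the left and right primitive notions agree, and in any case the uniqueness of the maximal ideal with a given central character only requires maximality. So the real content is the central character.

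For the central character, I would first note that $\tau_\g$ preserves $\Zg$ and restricts to an automorphism of it. Under the Harish-Chandra isomorphism $\Zg\simeq (S\fh)^W$, this automorphism is induced by $-1$ on $\fh$, i.e. $\chi_\lambda\circ\tau_\g=\chi_{-\lambda}$. This is standard: via PBW, the principal symbol of $\tau_\g(z)$ is $\sigma(z)$ composed with $-1$. A cleaner route is to use the Chevalley anti-involution $\tau_\g'=\omega\circ\tau_\g$, where $\omega$ is the Chevalley involution. The anti-involution $\tau_\g'$ acts trivially on $\Zg$, since it fixes a Cartan subalgebra and swaps $\fn^\pm$, so it fixes Harish-Chandra projections. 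Meanwhile $\omega$ acts on $\fh$ by $-1$ and on $(S\fh)^W$ via $-w_0$, taking the $\rho$-shift into account. Hence $\tau_\g^{-1}(\ker\chi_\lambda)=\ker\chi_{-w_0\lambda}$, with $-w_0\lambda\in W\cdot(-\lambda)$, so $\chi_\lambda\circ\tau_\g=\chi_{-\lambda}$.

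It therefore remains to show $-\lambda_{\ckorb}\in W\cdot\lambda_{\ckorb}$. This holds because $\lambda_{\ckorb}=\tfrac12\check h$ and $-\check h$ is $\Ad$-conjugate to $\check h$ in $\ckfg$: the element $\exp(\tfrac{\pi i}{2}(\check e-\check f))$ of $\check G_{ad}$ sends the $\slf_2$-triple $(\check e,\check h,\check f)$ to $(-\check f,-\check h,-\check e)$ (up to signs), and in particular sends $\check h$ to $-\check h$. Hence their images in $\ckfh/W=\fh^*/W$ coincide, giving $\chi_{-\lambda_{\ckorb}}=\chi_{\lambda_{\ckorb}}$. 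Combining this with maximality and uniqueness yields $\tau_\g(\J_{\ckorb})=\J_{\ckorb}$.

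In the metaplectic case the same argument applies, with the identification $\ckfh\simeq\fh^*$ via half the trace form.

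The main obstacle is getting the Harish-Chandra bookkeeping for $\tau_\g$ on $\Zg$ exactly right, in particular handling the $\rho$-shift correctly. I would settle this through the Chevalley-involution factorization, since it isolates the sign change on $\fh$ cleanly.
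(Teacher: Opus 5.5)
Your proposal is correct and follows the paper's argument: $\tau_\g$ carries the maximal ideal with infinitesimal character $\chi_\lambda$ to the one with $\chi_{-\lambda}$, and $-\lambda_{\ckorb}\in W\cdot\lambda_{\ckorb}$ because $-\check h$ is conjugate to $\check h$. The paper gets this conjugacy from the $\slf_2$-triple $(\check f,-\check h,\check e)$ for the same orbit, you get it from an explicit Weyl element, and your Chevalley-involution check supplies Harish-Chandra detail the paper leaves implicit.

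One small slip: the conjugating element should be $\exp(\tfrac{\pi}{2}(\check e-\check f))$, or equivalently $\exp(\tfrac{\pi i}{2}(\check e+\check f))$. The element $\exp(\tfrac{\pi i}{2}(\check e-\check f))$ that you wrote does not send $\check h$ to $-\check h$.
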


\begin{proof}
	Observe that for any primitive ideal $I \subset \Ug$ with infinitesimal character $\chi_\lambda$, $\tau_\g(I)$ is a primitive ideal with infinitesimal character $\chi_{-\lambda}$. Therefore $\tau_\g(\J_{\ckorb})$ is the maximal primitive ideal with infinitesimal character $\chi_{-\lambda_{\ckorb}}$, where $\lambda_{\ckorb} = \check{h}/2$ and $\check{h}$ is the semisimple element in an $\slf_2$-triple $(\check{e}, \check{h}, \check{f})$ for $\ckorb$. Note that $(\check{f}, -\check{h}, \check{e})$ is also an $\slf_2$-triple for $\ckorb$. Therefore $-\check{h}$ is conjugate to $\check{h}$ under the adjoint action of $G$ or the Weyl group $W$ of $\g$, which implies that $\chi_{-\lambda_{\ckorb}} = \chi_{\lambda_{\ckorb}}$ and hence $\tau_\g(\J_{\ckorb}) = \J_{\ckorb}$.
\end{proof}

\subsection{Associated cycles of special unipotent representations} \label{subsec:AC_special_unip}

Before we continue to \cref{sec:orbit_method_real} for the construction of special unipotent representations of real groups by quantization, we will see below that the quantization results for complex groups so far have already imposed strong constraints on the associated cylces of these representations.

We first extend the definition of special unipotent representations to the metaplectic case. Let $\GR = \Mp(2n,\R)$ be the metaplectic group and $\ckfg = \fg = \fmp(2n,\C)$ be of metaplectic type. We can define (metaplectic) special unipotent representations $\UnipOv(\g, K)$ attached to nilpotent orbits $\ckorb$ of $\ckfg^*$ as in \eqref{eq:UnipOv_defn}, with $\lambda_{\ckorb}$ and $\cJ_{\ckorb}$ defined similarly (see \cref{subsec:special_duality}). 

Using the notation in \Cref{subsec:AC}, we have the following description of the associated cycles of special unipotent representations. Note that in this subsection, $\ckorb$ is assumed to be quasi-distinguished, unless otherwise specified.
 
\begin{Lem}\label{lem:AC_special_unip}
	Let $\g$ be simple and $\ckorb \in \check{\cN}_o$. If $\g$ is classical/metaplectic or $\ckorb$ is quasi-distinguished, then the image of the associated cycle map
		\[ \AC: \UnipOv(\g, K) \xrightarrow{\ \ } \AC(\g,K) \]
	lies in $\cK_{>0}\cAOD(\orb, K)$.
\end{Lem}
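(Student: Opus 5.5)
We need to show that for $M\in\UnipOv(\g,K)$ the associated cycle $\AC(M)$ has the form $\sum_i[\cV_i]\cdot\orbk^i$, where each $\orbk^i\subset\orb_{\fk^\perp}$ and each $\cV_i$ is a (possibly reducible) $K$-equivariant admissible vector bundle, not all zero. The argument has three steps: (a) locate the support of $\AC(M)$ inside $\orb_{\fk^\perp}$, (b) show the bundles $\cV_i$ are admissible, and (c) rule out orbits where no admissible bundle exists.

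\textbf{Step (a): support.} Since $\Ann(M)=\J_{\ckorb}$ has associated variety $\overline{\orb}$ with $\orb=d(\ckorb)$, Theorem~\ref{thm:V_AV} shows that the maximal $K$-orbits of $\AV(M)$ lie in $\orb_{\fk^\perp}$. Hence $\AC(M)$ is a positive combination $\sum_i[\cV_i]\cdot\orbk^i$ over $K$-orbits in $\orb_{\fk^\perp}$, with each $\cV_i$ a $K$-equivariant vector bundle.

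\textbf{Step (b): admissibility.} This is the main step. I would follow Vogan's argument \cite[Theorem 8.7]{Vogan_AV}. Choose a good filtration compatible with the $\Ug$-action modulo $\J_{\ckorb}$. Then $\gr M$ becomes a module over $S\g/\gr\J_{\ckorb}$. Near a generic point of $\orbk^i$ we have $\gr\J_{\ckorb}=$ the ideal of $\overline{\orb}$, which uses that $\J_{\ckorb}=I_0(\widetilde{\orb})$ has $\gr$ equal to the reduced ideal on the orbit. The $\cV_i$ then acquire an action of a first-order operator algebra. Specifically, the second-order term of $\J_{\ckorb}$ forces $\cV_i$ to be a module over the Picard algebroid $\TT^+_{\orbk^i}$, i.e.\ to carry a flat structure twisted by $\tfrac12\omega$. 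The key input is that the "$\hbar$-linear" part of the quantization is even. Concretely, Proposition~\ref{prop:J_tau_inv} gives $\tau_\g(\J_{\ckorb})=\J_{\ckorb}$, so the first-order symbol of the deformation of $\C[\overline{\orb}]$ determined by $\J_{\ckorb}$ vanishes. By Vogan's computation, the isotropy Lie algebra $\fk_e$ then acts on the fiber by $\tfrac12\operatorname{tr}\ad_{\fk/\fk_e}$, i.e.\ by $\gamma$.

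\textbf{Step (c): excluding non-admissible orbits.} The twisted-flat structure on the fiber in step (b) needs a nonzero $\gamma$-representation of $K_e$, which exists only if $\orbk^i$ is admissible. In the quasi-distinguished case every $K$-orbit in $\orb_{\fk^\perp}$ is linearly admissible by Proposition~\ref{prop:adm_dist}. In the classical case, specialness of $\orb$ gives admissibility by Proposition~\ref{prop:adm_classical}; in the metaplectic case it follows from Proposition~\ref{prop:metaplectic_adm}. Hence each nonzero $\cV_i$ is a direct sum of admissible bundles, giving a class in $\cK_{>0}\cAOD(\orb,K)$.

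The main obstacle is step (b): making precise that the evenness of $\J_{\ckorb}$ (via $\tau_\g$), together with the codimension estimates of Theorem~\ref{thm:codim_X}, pins the twist to exactly $\TT^+$ rather than an arbitrary twist. I would first try to adapt Vogan's proof verbatim, using the canonical quantization $\cA_0$, whose period $0$ corresponds to the $\tfrac12\omega$-twist, to identify the relevant twist.
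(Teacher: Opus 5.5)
Your strategy is the paper's: apply \cite[Theorem 8.7]{Vogan_AV}. Its hypotheses are (ii) $\tau_\g(\J_{\ckorb})=\J_{\ckorb}$, which you correctly take from \Cref{prop:J_tau_inv}, and (i) that $\Ug/\J_{\ckorb}$ has multiplicity one along $\overline{\orb}$, i.e.\ $\gr\J_{\ckorb}$ is generically reduced. The gap is that you only assert (i), saying that ``$\J_{\ckorb}=I_0(\widetilde{\orb})$ has $\gr$ equal to the reduced ideal''. This is not formal. The degree filtration of $\Ug$ induces on $\Ug/I_0(\widetilde{\orb})\subset\cA_0$ a filtration that is a priori only contained in the one restricted from $\cA_0$. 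Consequently $\gr(\Ug/I_0(\widetilde{\orb}))$ surjects onto the image of $S\g$ in $\C[\widetilde{\orb}]$ (namely $\C[\overline{\orb}]$) with a kernel that is merely nilpotent, and you must show this kernel vanishes generically.

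Moreover, the identification $\J_{\ckorb}=I_0(\widetilde{\orb})$ with $\widetilde{\orb}$ the universal cover is only available in the quasi-distinguished case, via \Cref{prop:dual_quasi-dist}~(iii). For classical $\g$ and general $\ckorb$ it is false. For example, take $\ckorb=0$ for $\g=\fsp_2$: $\J_{\ckorb}$ has infinitesimal character $0$, whereas the universal cover $\C^2\setminus\{0\}$ of the regular orbit yields the ideal with infinitesimal character $\rho/2$. So your step (b) does not cover the classical half of the lemma at all.

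The hypothesis ``classical/metaplectic or quasi-distinguished'' is there precisely to secure (i). The paper writes $\J_{\ckorb}=I_0(\widetilde{\orb})$ for a \emph{Galois} cover $\widetilde{\orb}$:
\begin{itemize}
\item in the quasi-distinguished case, the universal cover;
\item in the classical/metaplectic case, a member of $\widetilde{\mathsf{D}}(\ckorb)$ (\Cref{rem:BVLS_refined}) equivariant for the linear classical group, which is Galois because $A(\orb)$ is abelian.
\end{itemize}
It then obtains $m_{\overline{\orb}}(\J_{\ckorb})=1$ from \cite[Proposition 6.4 (iii)]{LMBM}.

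Once (i) and (ii) hold, Vogan's theorem directly says each nonzero $\cV_i$ is a bundle of $\gamma$-representations. So your step (c) is superfluous, since a nonzero $\gamma$-representation already makes $\orbk^i$ admissible. Also, \Cref{prop:adm_dist} and \Cref{prop:adm_classical} are stated for particular $K$, not an arbitrary pair $(\g,K)$. Neither \Cref{thm:codim_X} nor the period of $\cA_0$ plays any role in this lemma.
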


\begin{proof}
	We apply \cite[Theorem 8.7]{Vogan_AV}. Condition (ii) there follows from \Cref{prop:J_tau_inv}. For condition (i) there, we need to check that the multiplicity $m_{\overline{\orb}}(\J_{\ckorb})$ of $\J_{\ckorb}$ along $\overline{\orb}$ is $1$. When $\ckorb$ is quasi-distinguished, $\J_{\ckorb} = I_0(\widetilde{\orb})$ by \Cref{prop:dual_quasi-dist} (iii), where $\widetilde{\orb}$ is the universal cover of $\orb$. When $\g$ is classical or metaplectic, any equivalence class $\widetilde{\mathsf{D}}(\ckorb)$ as in \cref{rem:BVLS_refined} always contains a member that is equivariant with respect to the linear classical group $G$ with Lie algebra $\g$, and the $G$-equivariant fundamental group $A(\orb)$ is always abelian for classical $\fg$. In both cases, all the relevant covers $\widetilde{\orb}$ are Galois and the conditions in \cite[Proposition 6.4 (iii)]{LMBM} are satisfied, hence $m_{\overline{\orb}}(\J_{\ckorb})=1$.
\end{proof}

Now we consider the situation in the last part of \cref{subsec:basic_settings}: suppose that $\phi: \GR \to \bar{G}_\R$ is a finite surjective homomorphism, which induces a morphism $\phi: (\g,K) \to (\g,\bar{K})$ of symmetric pairs.
Then we have natural embeddings $\UnipOv(\g, \bar{K}) \hookrightarrow \UnipOv(\g, K)$ and $\AOD_{\bar{K}}(\orb) \hookrightarrow \AODK(\orb)$ by inflation. These embeddings are compatible with the associated cycle maps for $(\g, \bar{K})$ and $(\g, K)$ respectively. Note that the subgroup $F:=\ker(K \to \bar{K})$ is a central finite subgroup of $K$ and its adjoint action on $\fg$ factors through $\bar{K}$ and hence is trivial. By Schur's lemma, $F$ acts on any irreducible HC $(\g,K)$-module $M$ by a scalar character $\chi_M: F \to \cm$.

Recall that $\UnipOv(\g,K)^{gen} := \UnipOv(\g,K) \setminus \UnipOv(\g, \bar{K})$ is the subset of $\UnipOv(\g,K)$ consisting of isomorphism classes of genuine (with respect to $\phi: K \to \bar{K}$) special unipotent irreducible HC $(\g,K)$-modules attached to $\ckorb$, that is, those modules with $F$ acting non-trivially. Similarly, let $\cAOD(\orb,K)^{gen}$ be the full abelian subcategory of $\cAOD(\orb,K)$ consisting of genuine objects and 
	\[\AODK(\orb)^{gen} := \AODK(\orb) \setminus \AOD_{\bar{K}}(\orb) = \Irr \cAOD(\orb,K)^{gen}. \]
We use the analogous notation
\[
	\cK\cAOD(\orb,K)^{gen} \quad \text{and} \quad \cK_{>0}\cAOD(\orb, K)^{gen}
\]
for the Grothendieck group of the category $\cAOD(\orb, K)^{gen}$ and its positive semigroup.
The following lemma is standard.

\begin{Lem}\label{lem:AC_gen_F}
	Let $M$ be an HC $(\g,K)$-module such that $F = \ker(K \to \bar{K})$ acts on $M$ by a scalar character $\chi_M: F \to \cm$. Write $\AC(M) = \sum_{j=1}^p [\cV_j] \cdot \orbk^j$. Then $F$ acts on each $\cV_j$ by the same character $\chi_M$.
\end{Lem}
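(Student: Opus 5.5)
The plan is to track the central subgroup $F$ through each step of the construction of $\AC(M)$ in \Cref{subsec:AC}. Since $F$ is central in $K$ and acts trivially on $\g$ through the adjoint action, we may choose a good $K$-stable filtration $F_\bullet M$ of $M$. Each $F_i M$ is $K$-stable, hence $F$-stable. Because $F$ acts on $M$ by the scalar $\chi_M$, it also acts by $\chi_M$ on every $F_iM$. It follows that $F$ acts on $\gr^F M$ by $\chi_M$. As a $K$-equivariant coherent sheaf on $\fk^\perp$, the $F$-action on $\gr^F M$ is compatible with the action of $S(\g/\fk)$. That action is $F$-invariant, because $F$ acts trivially on $\g/\fk$ and hence trivially on $\fk^\perp$.

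Next I will treat the filtration $0=N_0\subset\cdots\subset N_n=\gr^F M$ from \cite[Lemma 2.9]{Vogan_AV}. Its terms are $(S(\g/\fk),K)$-submodules, so they are $F$-stable, and $F$ acts on them and on every subquotient $N_k/N_{k-1}$ by the restricted scalar $\chi_M$. Restricting to a maximal orbit $\orbk^{(j)}$ gives the $K$-equivariant bundles $\cV_j^k$. Because $F$ fixes $\orbk^{(j)}$ pointwise, the equivariant structure makes $F$ act fiberwise. This fiberwise action is induced from the action on sections, so it is again the scalar $\chi_M$.

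Finally, $[\cV_j]=\sum_k[\cV_j^k]$ in $\cK_{\geqslant 0}\Vect^K(\orbk^j)$. Every summand has $F$ acting by $\chi_M$. The category of $K$-equivariant bundles on $\orbk^j$ splits as a direct sum according to the character by which the central group $F$ acts. The class $[\cV_j]$ therefore lies in the $\chi_M$-isotypic part, and $\cV_j$ can be represented by a bundle on which $F$ acts by $\chi_M$.

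The argument uses no deep input, and I do not expect a serious obstacle. The one step that needs a careful sentence is the claim that the action on fibers of the restricted sheaf is still the scalar $\chi_M$. This holds because restriction to the locally closed orbit commutes with the $F$-equivariant structure, and $F$ acts trivially on the base.
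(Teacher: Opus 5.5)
Your argument is correct and is the standard verification the paper has in mind; the paper states this lemma without proof, calling it standard. You show that $F$ acts by $\chi_M$ on a good $K$-stable filtration, hence on $\gr M$, on every subquotient of Vogan's filtration, and on the restricted bundles $\cV_j^k$ because $F$ fixes $\fk^\perp$ pointwise; your passage to the class $[\cV_j]$ via the isotypic splitting of $\Vect^K(\orbk^j)$ correctly relies on $F$ being central in $K$, which makes the isotypic components $K$-stable.
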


\begin{Cor}\label{cor:AC_special_gen}
	The image of any $M \in \UnipOv(\g, K)$ under the associated cycle map 
	\[ \AC: \UnipOv(\g, K) \xrightarrow{\ \ } \cK_{>0}\cAOD(\orb, K) \]
	from \Cref{lem:AC_special_unip} lies in $\cK_{>0}\cAOD(\orb, K)^{gen}$ if and only if $M$ lies in $\UnipOv(\g, K)^{gen}$.
\end{Cor}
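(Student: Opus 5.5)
The plan is to combine \Cref{lem:AC_special_unip} with \Cref{lem:AC_gen_F}, using that the central subgroup $F=\ker(K\to\bar K)$ acts on an irreducible $M$ by a scalar character $\chi_M$ (Schur's lemma, as noted above). By \Cref{lem:AC_special_unip}, $\AC(M)=\sum_j [\cV_j]\cdot\orbk^j$ is a nonzero element of $\cK_{>0}\cAOD(\orb,K)$. Each $\cV_j$ is a $K$-equivariant admissible vector bundle over $\orbk^j\subset\orb_{\fk^\perp}$, and its irreducible constituents are the classes in $\AODK(\orb)$ that occur in it.

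\emph{Genuine case.} Suppose $M\in\UnipOv(\g,K)^{gen}$, so $\chi_M$ is nontrivial. By \Cref{lem:AC_gen_F}, $F$ acts on every $\cV_j$ through the same nontrivial character $\chi_M$. Hence $F$ acts on every irreducible constituent of $\cV_j$ by $\chi_M\neq 1$. No constituent is inflated from $\bar K$, so every constituent lies in $\AODK(\orb)^{gen}$. Since $\cAOD(\orb,K)^{gen}$ is a full abelian subcategory closed under subquotients, $\AC(M)\in\cK_{>0}\cAOD(\orb,K)^{gen}$.

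\emph{Non-genuine case.} Suppose $M\notin\UnipOv(\g,K)^{gen}$, i.e. $M$ factors through $\bar K$, so $\chi_M=1$. Then by \Cref{lem:AC_gen_F}, $F$ acts trivially on each $\cV_j$. So every constituent lies in $\AOD_{\bar K}(\orb)$, which is disjoint from $\AODK(\orb)^{gen}$. Because $\AC(M)\neq 0$ (the associated variety of a nonzero module is nonempty), $\AC(M)$ has at least one nonzero non-genuine constituent. It therefore cannot lie in the positive semigroup $\cK_{>0}\cAOD(\orb,K)^{gen}$. This proves the converse.

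The only point needing care is the decomposition claim: $\cK\cAOD(\orb,K)$ splits as a direct sum of free abelian groups on genuine and non-genuine irreducibles, since $F$ is central and finite and acts semisimply by characters. Membership in the genuine positive semigroup is then read off from the irreducible constituents. I expect no real obstacle beyond invoking \Cref{lem:AC_gen_F} and this splitting.
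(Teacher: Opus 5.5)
Your proposal is correct and follows the paper's intended route. The paper gives no separate proof: it states this corollary directly after \Cref{lem:AC_gen_F}, as an immediate consequence of that lemma, \Cref{lem:AC_special_unip}, and the scalar action of the central subgroup $F$ on irreducible $M$. The two points left implicit there are the splitting of $\cK\cAOD(\orb,K)$ into genuine and non-genuine irreducible classes and the fact that $\AC(M)\neq 0$, and you handle both correctly.
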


We now consider the case when the covering morphism $K \twoheadrightarrow \bar{K}$ corresponds to the metaplectic double covering map $\Mp(2n,\R) \twoheadrightarrow \Sp(2n,\R)$. Note that in \cite[Section 2.3]{BMSZ:counting}, special unipotent representations of $\Mp(2n,\R)$ are defined to be genuine. The following result shows that this in fact coincides with our definition.

\begin{Cor}\label{cor:metaplectic_genuine}
	Let $\GR = \Mp(2n,\R)$ and $\bar{G}_\R = \Sp(2n,\R)$, and let $\phi: \GR \to \bar{G}_\R$ be the $2$-fold covering map. Then all HC modules in $\UnipOv(\g, K)$ are genuine (with respect to $\phi$).
\end{Cor}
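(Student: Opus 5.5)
By \Cref{cor:AC_special_gen}, it suffices to show that for every $M \in \UnipOv(\g,K)$ the associated cycle $\AC(M)$ lies in $\cK_{>0}\cAOD(\orb,K)^{gen}$. Equivalently, we must show that $\AOD_{\bar K}(\orb)$ contributes nothing: no $\bar K$-admissible orbit datum can appear in $\AC(M)$. Here $\bar K = \GL(n,\C)$, $K = \widetilde{\GL}(n,\C)$, and $\orb = d(\ckorb)$ for the metaplectic BV duality map. The metaplectic case is one of those covered by \Cref{lem:AC_special_unip}, so that lemma already gives $\AC(M) \in \cK_{>0}\cAOD(\orb,K)$.

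The key input is \Cref{prop:metaplectic_adm} together with the fact that the image of the metaplectic BV duality map consists exactly of metaplectic special orbits (\Cref{subsec:special_duality}). Thus $\orb = \orb_\lambda$ with $\lambda \in \cP^{sp}_{1,1}(2n)$. By the proof of \Cref{prop:metaplectic_adm}, this means that $m_{>c}(\lambda)$ is odd for every odd part $c$ of $\lambda$. There is a degenerate case: if $\lambda$ has no odd parts, the condition is vacuous. In the nondegenerate case, condition (P') fails, so by \Cref{prop:adm_classical} no $K$-orbit $\orbk \subset \orb_{\fk^\perp}$ is $\bar K$-admissible. Hence $\AOD_{\bar K}(\orb) = \varnothing$, which gives $\cAOD(\orb,K) = \cAOD(\orb,K)^{gen}$. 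Combining this with \Cref{lem:AC_special_unip}, \Cref{lem:AC_gen_F} and \Cref{cor:AC_special_gen}, every $M \in \UnipOv(\g,K)$ is genuine.

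The main obstacle is the degenerate case, where every part of $\lambda$ is even. In this case $\lambda$ is simultaneously special and metaplectic special, and $\bar K$-admissible orbits do exist. To handle it I would use the infinitesimal character instead. Since $\lambda \in \cP^{sp}_{1,1}$, the dual partition $\eta$ in $\cP_{1,1}$ has the property that $\lambda_{\ckorb} = \frac12\check h$ is half-integral but not integral in the coordinates of $\fsp(2n)$ (when $\eta$ has only even parts, since $\check X = \mC$ has $\epsilon' = 1$ and good parity forces even parts). A representation of $\Sp(2n,\R)$, however, has infinitesimal character congruent, modulo the root lattice, to an integral weight (more precisely, to a weight of the linear group). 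This rules out non-genuine modules. Concretely, I would try to show that $\lambda_{\ckorb}$ is not integral for the weight lattice of $\Sp(2n)$ whenever $\ckorb$ is nonzero, and I would treat the case $\ckorb = 0$, where $\orb$ is regular, separately. I expect this integrality bookkeeping to be the main place where checking is needed; if it fails, I would fall back on a direct comparison of $\gamma$ on the $C$-factors, using \cite[Lemma 7]{Ohta:Adm}.
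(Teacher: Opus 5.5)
Your treatment of the case where $\lambda=d(\ckorb)$ has an odd part is exactly the paper's proof. Metaplectic specialness gives $m_{>c}(\lambda)$ odd for every odd part $c$, so (P') fails and no $K$-orbit in $\orb_{\fk^\perp}$ is admissible for $\Sp(2n,\R)$. Hence $\cAOD(\orb,K)=\cAOD(\orb,K)^{gen}$, and \Cref{lem:AC_special_unip} with \Cref{cor:AC_special_gen} finishes. The paper does not separate out the all-even case; it applies \Cref{prop:metaplectic_adm} to every $\orb$. Your observation that this breaks down when $\lambda$ has no odd parts is correct. In that case (P') holds vacuously, so every $\orbk\subset\orb_{\fk^\perp}$ is already admissible for $\Sp(2n,\R)$, and the ``if'' direction of \Cref{prop:metaplectic_adm} fails.

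The gap is your proposed repair, and in fact no repair is possible because the statement itself fails in the all-even case. It is not true that a $(\g,\bar K)$-module of $\Sp(2n,\R)$ must have infinitesimal character integral modulo the root lattice. Principal series have an arbitrary complex continuous parameter, so every infinitesimal character occurs for the linear group.

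For a counterexample, take $n=1$ and $\ckorb$ the regular orbit of $\fmp(2,\C)$, which is distinguished.
\begin{itemize}
\item With the half-trace-form normalization, $\lambda_{\ckorb}=\rho/2$ (the oscillator infinitesimal character, where $\rho$ is the half-sum of positive roots).
\item $\orb=d(\ckorb)$ is the regular orbit $[2]$ of $\fsp(2,\C)\simeq\slf_2$.
\item Since $\rho/2$ is non-integral, $\Ug\ker\chi_{\rho/2}$ is the only primitive ideal with this infinitesimal character. So it equals $\J_{\ckorb}$ and annihilates every irreducible module with infinitesimal character $\rho/2$.
\item The two principal series of $\SL(2,\R)=\Sp(2,\R)$ with continuous parameter $\rho/2$ (even and odd $K$-types) are irreducible. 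Their inflations therefore lie in $\UnipOv(\g,K)$ and are not genuine; their associated cycles also involve both $K$-orbits in $\orb_{\fk^\perp}$.
\end{itemize}
Similarly, $\ckorb=0$ gives $\lambda_{\ckorb}=0$ and $\orb$ regular, with all parts even. This yields non-genuine members of $\UnipOv(\g,K)$ for every $n$: take irreducible constituents of principal series of $\Sp(2n,\R)$ at $\nu=0$, using that $\Ug\ker\chi_0$ is the unique primitive ideal at $\chi_0$.

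So the statement, and \Cref{prop:metaplectic_adm}, hold only when $d(\ckorb)$ has an odd part. Otherwise one must restrict to genuine modules and genuine admissible data, as in \cite{BMSZ:counting}.
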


\begin{proof}
	Let $\ckorb$ be any nilpotent orbit in $\ckfg^*$, where $\ckfg = \fg = \fmp(2n)$ is of metaplectic type, and let $\orb = d(\ckorb)$. Then $\orb$ is metaplectic special. By \cref{prop:metaplectic_adm}, any $K$-orbit $\orbk$ in $\orb_{\fk^\perp}$ is metaplectic admissible. Therefore all $K$-admissible vector bundles are genuine, i.e., $\cAOD(\orb, K) = \cAOD(\orb, K)^{gen}$.
	The conclusion now follows from \Cref{lem:AC_special_unip} and \Cref{cor:AC_special_gen}.
\end{proof}

\section{Coadjoint orbit method for real groups} \label{sec:orbit_method_real}

In this section, we first recall the quantization of vector bundles on smooth Lagrangian subvarieties and its Hamiltonian equivariant form. We then enlarge nilpotent $K$-orbit covers inside nilpotent $G$-orbit covers to larger smooth Lagrangian subvarieties, and establish the extension theorem for admissible vector bundles. Using these extensions, we quantize nilpotent $K$-orbit covers to construct Harish-Chandra modules and define the quantization map from admissible orbit data to special unipotent representations. Finally, we prove that this map is bijective by a counting and exhaustion argument, treating classical and metaplectic groups first and then exceptional groups via component-group and \texttt{atlas} computations.

\subsection{Quantization of Lagrangian subvarieties} \label{subsec:quan_lag}

\subsubsection{Definitions} \label{subsubsec:lag_def}

Let $X$ be a smooth symplectic variety and let $Y$ be a smooth closed Lagrangian subvariety of $X$; denote the embedding by $\iota : Y \hookrightarrow X$. There is a natural notion of quantization of a vector bundle over $Y$ (\cite{BGKP}, \cite[Section 1]{BC}) as well as a graded version (\cite[Section 4.3]{LY}).

\begin{defi}
	Suppose $\OO_\hb$ is a formal quantization of $(X, \Omega)$ and $\cE$ is a locally free sheaf of $\OO_Y$-modules over $Y$. A {\it formal quantization} of $\cE$, or rather the sheaf $\iota_* \cE$ of $\OO_X$-modules, is an $\OO_\hb$-module $\cE_\hb$ flat over $\C\series$, complete and separated in the $\hb$-adic topology, with an isomorphism $\varrho: \cE_\hb /\hb \cE_\hb \xrightarrow{\sim} \iota_*\cE$ of $\OO_X$-modules.

	Now assume additionally that $(X, \Omega)$ is graded and $Y$ is a $\cm$-stable Lagrangian subvariety of $X$. Suppose $\OO_\hb$ is a graded quantization of $X$ and $\cE$ is $\cm$-equivariant. A {\it graded (formal) quantization} of $\cE$ is a formal quantization $\cE_\hb$ of $\cE$ with a pro-rational $\cm$-action, such that the module structure map $\OO_\hb \otimes_{\C\series} \cE_\hb \to \cE_\hb$ is $\cm$-equivariant and the isomorphism $\varrho: \cE_\hb /\hb \cE_\hb \xrightarrow{\sim} \iota_*\cE$ is also $\cm$-equivariant.
\end{defi}

\subsubsection{Picard algebroids over Lagrangian subvarieties} \label{subsubsec:lag_pic}

By \cite[Section 5.3]{BGKP} and \cite[Section 4.1]{LY}, we can associate a Picard algebroid over a Lagrangian subvariety $Y$ to a given formal quantization $\OO_\hb$. Namely, let $\J_Y$ denote the ideal subsheaf of $\OO_X$ consisting of local functions vanishing along $Y$. Let $\J_{Y,\hb}$ and $\J_{Y,\hb}'$ denote the preimages of the ideals $\J_Y$ and $\J_Y^2$, respectively, under the projection $\OO_\hb \twoheadrightarrow \OO_X$. Write $\J_{Y,\hb}^2 = \J_{Y,\hb} \cdot \J_{Y,\hb} \subset \OO_\hb$, which is a subsheaf of two-sided ideals of $\OO_\hb$. Then $\hb^{-1} \J_{Y,\hb}^2 \subset \hb^{-1}\J_{Y,\hb}' \subset \hb^{-1}\J_{Y,\hb}$ are all subsheaves of Lie ideals in the sheaf $\hb^{-1}\OO_\hb$ of Lie algebras (with Lie bracket given by the commutator). 

Consider the sheaf $\tatp := \hb^{-1} (\J_{Y,\hb}/\J_{Y,\hb}^2)$, which is a sheaf of Lie algebras supported on $Y$. The left multiplication of $\OO_\hb$ on $\hb^{-1}\J_{Y,\hb}$ factors through a left $\OO_Y$-module structure on $\hb^{-1} (\J_{Y,\hb}/ \J_{Y,\hb}^2)$. These sheaves fit into a short exact sequence of sheaves of Lie algebras and $\OO_Y$-modules
\begin{equation}\label{exsq:J}
	0 \to \hb^{-1} (\J_{Y,\hb}' / \J_{Y,\hb}^2) \to \tatp = \hb^{-1} (\J_{Y,\hb}/ \J_{Y,\hb}^2) \to \hb^{-1} (\J_{Y,\hb} / \J_{Y,\hb}') \to 0.
\end{equation}
By \cite[Lemma 5.4]{BGKP}, there are canonical isomorphisms of sheaves of Lie algebras and $\OO_Y$-modules
\begin{equation} \label{eq:J_isom}
	\hb^{-1} (\J_{Y,\hb} / \J_{Y,\hb}') \simeq \T_Y, \quad \hb^{-1} (\J_{Y,\hb}' / \J_{Y,\hb}^2) \simeq \OO_Y.
\end{equation}
Then one can check that \eqref{exsq:J} and \eqref{eq:J_isom} together make $\tatp$ into a Picard algebroid over $Y$. If $X$ is graded symplectic and $\OO_\hb$ is graded, then $\tatp$ is naturally a graded Picard algebroid.

\begin{Rem}
	$\TT(\OO_\hb, Y)$ can be determined by Proposition 6.4 and Corollary 6.5 in \cite{Yu:period}. See also \cite[(7.6)]{Yu:period} for the case when $Y$ is a nilpotent $K$-orbit.
\end{Rem}

\subsubsection{Existence and uniqueness of quantizations} \label{subsubsec:lag_vec}

Assume the notation in \Cref{subsubsec:lag_def}. Let $\cE_\hb$ be a quantization of an algebraic vector bundle over $Y$. In particular, $\cE_\hb$ is a module over $\OO_\hb$ and we have $\J_{Y,\hb} \cdot \cE_\hb \subset \hb \cE_\hb$. This induces a Lie algebra action of $\hb^{-1} \J_{Y,\hb}$ on $\cE_\hb$ given by $(\hb^{-1} a) \cdot m = \hb^{-1}(a \cdot m)$, by the flatness of $\cE_\hb$ over $\C\series$. This Lie algebra action descends to a Lie algebroid action of $\tatp$ on $\cE_\hb / \hb \cE_\hb \simeq \cE$, which makes $\cE$ into a $\tatp$-module. Note that the $\tatp$-module structure on $\cE$ is completely determined by the $\OO_\hb / \hb^2 \OO_\hb$-module structure on $\cE_\hb / \hb^2 \cE_\hb$. 
If $X$ is graded symplectic and $\OO_\hb$ and $\cE_\hb$ are graded quantizations, then $\cE$ is naturally a ($\OO_Y$-coherent) graded $\tatp$-module, i.e., a weak $(\tatp, \cm)$-module.
We will consider the problem of constructing graded quantizations of a $\tatp$-module $\cE$ that lift the $(\tatp, \cm)$-module structure (for the ungraded case, see \cite[Theorem 1.1]{BC} and also \cite[Theorem 1.4]{BGKP} for the case when $\cE$ is a line bundle).

To this end, we need the notion of strong $\cm$-actions in \cite[Definition 4.3.2]{LY}, which is crucial in the construction of graded quantizations. In order to avoid confusion with the notion of strong $K$-actions in the sense of \Cref{defn:pic_module_action}, we use the terminology \emph{solid} $\cm$-action instead. We review the definition below in a more general setting.

Let $H$ be an algebraic group with $\Lie(H) = \fh$. Let $X$ be a smooth variety with an algebraic action of $H$, let $\TT$ be a weakly $H$-equivariant Picard algebroid, and let $\cE$ be a weak $(\TT,H)$-module over $X$ that is $\OO_X$-coherent. The $\TT$-module structure on $\cE$ induces an algebraic flat connection on the projectivization $\mathbb{P}(\cE)$ of $\cE$, which lifts the differentiation of the $H$-action on $X$ to a Lie algebra $\fh$-action on $\mathbb{P}(\cE)$. On the other hand, the $H$-action on $\cE$ descends to a $H$-action on $\mathbb{P}(\cE)$, which differentiates to a second $\fh$-action. 

\begin{defi} \label{defn:solid}
	The $H$-action on a weak $(\TT, H)$-module $\cE$ is said to be \emph{solid} if the two Lie algebra $\fh$-actions on $\mathbb{P}(\cE)$ defined above coincide.
\end{defi}

\begin{Rem} \label{rem:solid}
	Suppose $\cE$ is an $(H, \kappa)$-equivariant $\TT$-module, where $\kappa \in (\fh^*)^H$. Then the $H$-action on $\cE$ is solid. Also, any weak $H$-action on a line bundle is automatically solid.
	

	Note that this property is not preserved by taking direct sums; hence the category of $\TT$-modules with solid $H$-action is not additive. On the other hand, \cite[Proposition 6.10]{Yu:period} shows that any weak $\cm$-action on a $\TT$-module whose analytification is irreducible is automatically solid.
\end{Rem}

We can now state the main result of this subsection, from \cite[Theorem 4.3.3]{LY} and \cite[Theorem 6.7]{Yu:period}.

\begin{Thm} \label{thm:quan_lag}
	Let $\cE$ be an $\OO_Y$-coherent weak $(\tatp, \cm)$-module over the Lagrangian subvariety $Y$, such that the $\cm$-action on $\cE$ is solid. Then there exists a graded quantization $\cE_\hb$ of $\cE$ which gives rise to the given $(\tatp, \cm)$-module structure on $\cE$. Such a graded quantization is unique up to $\cm$-equivariant isomorphism.
\end{Thm}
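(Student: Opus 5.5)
The plan is to reduce to an order-by-order deformation problem for twisted connections, following the approach of \cite{BGKP,BC} as refined in \cite[Section 4.3]{LY}. All constructions are local in the Zariski topology, with gluing controlled by cohomology.

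\textbf{Step 1 (local models).} First I would show that locally on $Y$ there is a quantization, and that it is unique up to isomorphism. Near any point of $Y$, pick an affine open $U\subset X$ on which $\cE$ is trivial and $Y\cap U$ has a Darboux-type neighbourhood. On such a $U$ the quantization $\OO_\hb|_U$ is isomorphic to a standard one, and $\tatp|_{Y\cap U}$ is a twisted cotangent-type algebroid. A $\tatp$-module is then a twisted flat connection. One builds $\cE_\hb|_U$ as the quotient of $\OO_\hb|_U\otimes\cE$ by a left ideal lifting $\J_{Y,\hb}$, with the first-order correction fixed by the $\tatp$-action. The obstructions to extending from $\hb^{n}$ to $\hb^{n+1}$ live in de Rham-type cohomology $H^{2}$ of the complex $\mathcal{E}nd(\cE)\otimes\Omega^\bullet_Y$ with its induced flat connection. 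On affine opens the algebraic de Rham complex with coefficients in a coherent module has these classes locally trivial; I would invoke \cite[Theorem 1.1]{BC} here.

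\textbf{Step 2 (gluing and the grading).} This is the key step. Globally, the obstructions and the ambiguities form a Čech/de Rham spectral sequence, and these classes need not vanish. The grading is what kills them. Use the pro-rational $\cm$-action, with $\hb$ of weight $d>0$. The order-$\hb^n$ obstruction class carries a $\cm$-weight shifted by $nd$, and the solid $\cm$-action gives an honest $\cm$-equivariant structure on $\mathcal{E}nd(\cE)$ compatible with the connection. Solidity is exactly what makes the $\fh$-action by Lie derivative agree with the connection on $\mathbb{P}(\cE)$, hence on $\mathcal{E}nd(\cE)$. So the Euler vector field acts on the relevant cohomology through the connection, that is, by zero in de Rham cohomology. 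On the other hand, it acts by the nonzero weight $nd$ on the $\hb^n$-graded piece. Comparing the two forces every obstruction and every automorphism ambiguity in positive $\hb$-degree to vanish; this is the Cartan-homotopy argument. It yields existence of a graded $\cE_\hb$, constructed inductively modulo $\hb^{n}$ and then completed, together with uniqueness up to $\cm$-equivariant isomorphism.

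\textbf{Step 3 (the $\hb^1$ level).} Finally I would check that the resulting $\cE_\hb$ induces the prescribed $(\tatp,\cm)$-structure on $\cE$. This holds by construction, since the first-order data was chosen from it, and the $\tatp$-action is determined by $\cE_\hb/\hb^2$.

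The main obstacle is Step 2: making the weight argument rigorous for a non-line-bundle $\cE$. The $\cm$-action lives only projectively on the connection, and the contracting homotopy must be built on $\mathcal{E}nd(\cE)$, where the scalar ambiguity $\kappa$ cancels. I would first treat $\mathcal{E}nd(\cE)$, where the action is genuinely strong, and then lift, as in \cite[Theorem 6.7]{Yu:period}.
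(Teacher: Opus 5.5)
Your weight argument in Step 2 is the right mechanism. Solidity makes the infinitesimal $\cm$-action on $\mathcal{E}nd(\cE)$ equal to covariant differentiation along the Euler vector field. By the Cartan formula, $\cm$ then acts trivially on $H^\bullet_{DR}(Y,\mathcal{E}nd\,\cE)$, so a $\cm$-invariant class in $\hb^n\otimes H^\bullet_{DR}(Y,\mathcal{E}nd\,\cE)$ with $n\geqslant 1$ must vanish. This is essentially how the grading and solidity are used in the results the paper imports. The paper gives no proof of its own: it cites \cite[Theorem 4.3.3]{LY} and \cite[Theorem 6.7]{Yu:period}, which work in the formal-geometry framework of \cite{BGKP,BC}.

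The gap is Step 1, which is supposed to identify the obstruction groups, and it fails in the algebraic category.
\begin{itemize}
\item There is no Zariski-local Darboux theorem, so neither the symplectic form nor $\OO_\hb$ has a standard model on a Zariski open. Already on an affine $U$, quantizations are classified by their periods in $H^2_{DR}(U)\series$.
\item Algebraic de Rham classes are not Zariski-locally trivial on affines: $dx/x$ stays non-exact on every Zariski open of $\C^*$. So no citation can supply your local vanishing.
\item Local quantizations are not unique. On the zero section $Y=\C^*\subset T^*\C^*$, let the fibre coordinate act on $\C[x^{\pm 1}]\series$ by $\hb\partial_x+c\hb^2x^{-1}$. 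For distinct $c$ these are quantizations of $\OO_Y$ inducing the same $\tatp$-structure. They are non-isomorphic on every Zariski open, since an isomorphism would have to be multiplication by $x^{\hb(c-c')}$.
\end{itemize}
Hence the \v{C}ech gluing in Step 2 has no canonical local pieces to glue. Your identification of the global obstruction and ambiguity groups with $H^2_{DR}(Y,\mathcal{E}nd\,\cE)$ and $H^1_{DR}(Y,\mathcal{E}nd\,\cE)$ is therefore not established.

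To close the gap, set up the deformation problem globally instead of gluing local models. The cited works do this via formal geometry, lifting Harish-Chandra torsors for the pair (formal Weyl algebra, standard module). A more elementary route is the following.
\begin{itemize}
\item Take the $\hb$-adically completed subalgebra of $\OO_\hb[\hb^{-1}]$ generated by $\OO_\hb$ and $\hb^{-1}\J_{Y,\hb}$, restricted to $Y$. It is a flat $\C\series$-deformation of $\sD_{\tatp}$.
\item Quantizations of $\cE$ inducing the given $\tatp$-structure are exactly its flat, complete module deformations of the $\sD_{\tatp}$-module $\cE$.
\item Standard deformation theory then places the obstructions, choices and infinitesimal automorphisms at order $\hb^n$, $n\geqslant 1$, in $\hb^n\otimes\mathrm{Ext}^i_{\sD_{\tatp}}(\cE,\cE)\simeq\hb^n\otimes H^i_{DR}(Y,\mathcal{E}nd\,\cE)$ for $i=2,1,0$. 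This holds globally, with no local triviality needed.
\end{itemize}
Your weight argument then kills all of these groups. You should also explain how, at each stage, a $\cm$-fixed isomorphism class of lifts acquires an actual pro-rational $\cm$-action. This follows from the vanishing of rational cohomology of $\cm$ in positive degrees together with $(\hb^n\otimes H^0_{DR}(Y,\mathcal{E}nd\,\cE))^{\cm}=0$, which also gives uniqueness up to $\cm$-equivariant isomorphism.
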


\subsubsection{Hamiltonian quantization of Lagrangian subvarieties} \label{subsubsec:ham_lag_quant}

With the setting and notation in \Cref{subsubsec:lag_def}, let us now consider the compatibility of (graded) quantizations of Lagrangian subvarieties with Hamiltonian actions on quantizations of a (graded) symplectic variety $X$. We refer the reader to \cite[Section 7.3]{LY} and \cite[Section 4.4]{LosevYu} for the details. 

Suppose $(X, \Omega)$ is a graded smooth symplectic variety equipped with a Hamiltonian action of an algebraic group $G$ and a classical co-moment map $\phi: \g \to \Gamma(X, \OO_X)$. Let $K$ be another algebraic group with a homomorphism $K \to G$ such that the induced Lie algebra homomorphism $\fk \to \fg$ is injective (e.g., as in \cref{subsec:basic_settings}). Suppose that $Y$ is a smooth closed Lagrangian subvariety in $X$, stable under the $K \times \cm$-action. In addition, we assume that the image of $\fk$ under $\phi$ lies in the ideal subsheaf $\J_Y \subset \OO_X$ defining $Y$. 

Now suppose $(\OO_\hb,\Phi_\hb)$ is a $G$-Hamiltonian quantization of $(X, \Omega)$. The restriction of the $G$-action to $K$ preserves the ideal subsheaf $\J_{Y,\hb} \subset \OO_\hb$ defined in \Cref{subsubsec:lag_pic} and hence induces a weak $K$-action on the sheaf $\tatp=\hb^{-1}(\J_{Y,\hb} / \J_{Y,\hb}^2)$ over $Y$, which is compatible with the $\OO_Y$-module structure. Moreover, the restriction $\Phi_\hb|_{\fk} : \fk \to \Gamma(X, \OO_\hb)$ of the quantum co-moment map to $\fk$ has image lying in $\Gamma(X, \J_{Y,\hb})$. The composition of $\Phi_\hb|_{\fk}$ with the quotient map $\J_{Y,\hb} \twoheadrightarrow \J_{Y,\hb} / \J_{Y,\hb}^2$ induces a strong $K$-action on $\tatp$, which lifts the weak $K$-action and hence makes $\tatp$ into a strongly $K$-equivariant graded Picard algebroid on $Y$.

\begin{defi}\label{defn:equiv_quant_modules}
Suppose $\cE$ is a $\cm$-equivariant vector bundle over $Y$. A {\it $K$-equivariant graded Hamiltonian quantization} of $(Y,\cE)$ is a graded quantization $\cE_\hb$ of $\cE$ equipped with a $\C\series$-linear $K$-action that lifts the $K$-action on $Y$, such that
\begin{enumerate}[label=(\roman*)]
	\item the $K$-action commutes with the $\cm$-action on $\cE_\hb$;
	\item the structure morphism $\OO_\hb \otimes \cE_\hb \to \cE_\hb$ is $K$-equivariant;
	\item the differential of the $K$-action on $\cE_\hb$ is given by
	$\xi \cdot m = \hb^{-1} \Phi_\hb(\xi) \cdot m$
	for any $\xi \in \fk$ and $m \in \cE_\hb$.
\end{enumerate}
\end{defi}

If $\cE_\hb$ is a $K$-equivariant graded Hamiltonian quantization of $\cE$, then this structure induces an algebraic $K$-action on $\cE$ as an $\OO_X$-module commuting with the $\cm$-action on $\cE$. Moreover, this $K$-action and the strong $K$-action on $\tatp$ mentioned above make $\cE$ into a strong $(\tatp, K)$-module with a compatible weak $\cm$-action.

Conversely, the following theorem, proven in \cite[Theorem 6.14]{Yu:period}, is an equivariant version of Theorem \ref{thm:quan_lag}. The work \cite[Theorem 6.2.5]{LY} also addresses the case when $\OO_\hb$ is an even quantization.

\begin{Thm}\label{thm:quan_lag_equiv}
	In the setting above, let $\cE$ be an $\OO_Y$-coherent strongly $K$-equivariant $\tatp$-module over $Y$, with a compatible weak $\cm$-action that is solid. Then for any graded quantization $\cE_\hb$ of $\cE$ as in \Cref{thm:quan_lag}, the $K$-action on $\cE$ lifts uniquely to a $K$-action on $\cE_\hb$, making $\cE_\hb$ into a $K$-equivariant graded Hamiltonian quantization of $\cE$. 
\end{Thm}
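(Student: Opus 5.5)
The statement to prove is \Cref{thm:quan_lag_equiv}, the equivariant refinement of \Cref{thm:quan_lag}. Fix a graded quantization $\cE_\hb$ of $\cE$ as produced by \Cref{thm:quan_lag}. The plan has three parts: define a candidate infinitesimal $\fk$-action from the quantum co-moment map, integrate it to a $K$-action on $\cE_\hb$, and check uniqueness.

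\emph{Infinitesimal action.} For $\xi \in \fk$, set
\[
\xi \cdot m := \hb^{-1}\Phi_\hb(\xi)\, m .
\]
This is well defined because $\Phi_\hb(\fk) \subset \Gamma(X,\J_{Y,\hb})$ and $\J_{Y,\hb}\cE_\hb \subset \hb\cE_\hb$, together with the $\hb$-flatness of $\cE_\hb$.
\begin{itemize}
\item It is a Lie algebra action, since $\hb^{-1}[\Phi_\hb(\xi),\Phi_\hb(\eta)] = \Phi_\hb([\xi,\eta])$.
\item It satisfies the Leibniz rule with respect to the $\OO_\hb$-module structure, because $\hb^{-1}[\Phi_\hb(\xi),\cdot]$ is the derivation by which $\xi$ acts on $\OO_\hb$.
\item It commutes with $\cm$, since $\Phi_\hb$ has degree $d$ and $\hb$ has weight $d$.
\item Modulo $\hb$ it recovers the derivative of the given strong $K$-action on $\cE$. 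This is exactly how the strong $K$-structure on $\tatp$ was defined from $\Phi_\hb|_\fk$, together with the strong condition $\kappa=0$.
\end{itemize}
Condition (iii) of \Cref{defn:equiv_quant_modules} then holds by construction.

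\emph{Integration to $K$.} Handle $K^\circ$ first. For $k \in K$, the pullback $k^*\cE_\hb$, with $\OO_\hb$-structure twisted by $\beta_\hb(k)$, is again a graded quantization of $k^*\cE \simeq \cE$ inducing the same $(\tatp,\cm)$-structure, using the given $K$-action on $\cE$. By the uniqueness part of \Cref{thm:quan_lag}, there is a $\cm$-equivariant isomorphism $\cE_\hb \simeq k^*\cE_\hb$.

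To pin these isomorphisms down:
\begin{itemize}
\item Show that automorphisms of $\cE_\hb$ lifting the identity on $\cE$ and commuting with $\cm$ form a pro-unipotent group that is killed by the grading argument. The expected mechanism is that the graded pieces are finite-dimensional and strictly positive-weight corrections vanish on $\cm$-finite parts.
\item Then among the isomorphisms $\cE_\hb \simeq k^*\cE_\hb$ there is a unique one reducing to the given $K$-action on $\cE$ mod $\hb$.
\item Uniqueness forces the cocycle condition, giving a group action of $K$.
\item Algebraicity is checked order by order in $\hb$: each quotient $\cE_\hb/\hb^n\cE_\hb$ is a successive extension of $K$-equivariant coherent sheaves.
\end{itemize}
Finally, the derivative of this action must agree with $\hb^{-1}\Phi_\hb$. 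Their difference is an $\OO_\hb$-linear, $\cm$-equivariant endomorphism vanishing mod $\hb$, so it vanishes by the same rigidity.

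\emph{Uniqueness of the lift.} Any two lifts differ by a family of automorphisms of $\cE_\hb$ that are trivial mod $\hb$ and $\cm$-equivariant, which are identities by the rigidity above. Alternatively, condition (iii) fixes the action on $K^\circ$, and on component representatives it is fixed by the mod-$\hb$ action plus rigidity.

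\emph{Main obstacle.} The hard step is the rigidity claim. In general $\operatorname{End}(\cE)$ is larger than scalars, so I would reduce to the solid condition: solidity of the $\cm$-action makes the obstruction and ambiguity classes live in de Rham-type cohomology with positive $\cm$-weight. Those classes vanish because $\hb$ carries weight $d>0$ and everything is $\cm$-finite on graded pieces. This mirrors the argument of \cite[Theorem 6.2.5]{LY}, and I would follow \cite[Theorem 6.14]{Yu:period} for the non-even case.
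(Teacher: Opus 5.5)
The paper gives no argument for this statement: it is quoted from \cite[Theorem 6.14]{Yu:period}, and from \cite[Theorem 6.2.5]{LY} when $\OO_\hb$ is even. So your proposal has to stand on its own. Its skeleton is sensible: graded quantizations of a fixed solid $(\tatp,\cm)$-module are unique and rigid, so the $K$-symmetry lifts uniquely, and condition (iii) is then checked against $\hb^{-1}\Phi_\hb$. Your uniqueness step and your comparison of the derivative with $\hb^{-1}\Phi_\hb$ are correct once rigidity and algebraicity are in place. However, both of those load-bearing steps are unjustified as written.

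\textbf{Rigidity.} The mechanism you offer (finite-dimensional graded pieces, positive-weight corrections dying on $\cm$-finite parts) is not the reason. Without solidity the claim is false even though everything is $\cm$-finite. Take $\cE=\cE_1\oplus\cE_2$, where $\cE_2$ is $\cE_1$ with its $\cm$-action twisted by $z\mapsto z^{d}$, and take $\cE_\hb=\cE_{1,\hb}\oplus\cE_{2,\hb}$ correspondingly. The identity map $\cE_{2,\hb}\to\cE_{1,\hb}$ has weight $-d$. So $1+\hb\psi$, with $\psi$ this map extended by zero, is a nontrivial $\cm$-equivariant $\OO_\hb$-linear automorphism that is the identity modulo $\hb$.

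The correct argument runs as follows. For such an automorphism $\phi$, the map $\psi=\hb^{-1}(\phi-1)$ is $\OO_\hb$-linear of weight $-d$. Hence $\psi \bmod \hb$ commutes with $\hb^{-1}\J_{Y,\hb}$, i.e.\ it is a horizontal section of $\mathcal{E}nd(\cE)$ of weight $-d$. Solidity says precisely that the infinitesimal $\cm$-action on $\mathbb{P}(\cE)$, and hence on $\mathcal{E}nd(\cE)$, is the lift of the Euler field given by the flat connection. Therefore horizontal endomorphisms are killed by the $\cm$-derivative and have weight $0$. This gives $\psi\in\hb\,\mathcal{E}nd(\cE_\hb)$, and iterating together with $\hb$-adic separatedness gives $\psi=0$. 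So solidity is what forces weight $0$, and $\hb$ is what supplies the weight $-d\neq 0$. Your last paragraph assigns these roles the other way round.

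\textbf{Algebraicity.} Choosing the isomorphism $\cE_\hb\simeq k^*\cE_\hb$ one $k$ at a time via the uniqueness in \Cref{thm:quan_lag} only produces an action of $K$ as an abstract group. Your order-by-order remark does not make it algebraic. For instance, $\cm$ acting on $\C^2$ by $\bigl(\begin{smallmatrix}1&\log|z|\\0&1\end{smallmatrix}\bigr)$ is an abstract action on an extension of trivial algebraic actions that is not algebraic. Rigidity makes the abstract lift unique, but it cannot create algebraicity.

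What is needed is to run the construction in families over $K$:
\begin{itemize}
\item compare $a^*\cE_\hb$ and $p^*\cE_\hb$ on $K\times Y$ ($a$ the action, $p$ the projection) as $K$-families of graded quantizations of $p^*\cE$ over $\OO_K\hat\otimes\OO_\hb$, twisted by $\beta_\hb$;
\item produce an isomorphism between them reducing to the given equivariant structure of $\cE$;
\item deduce the cocycle identity on $K\times K\times Y$ from relative rigidity, which is the same horizontal-section argument relative to $K$.
\end{itemize}
The relative existence statement does not follow from the \emph{statement} of \Cref{thm:quan_lag}; it requires redoing its obstruction theory over the base $K$. That is the real content you are deferring to \cite{Yu:period}. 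Integrating $\hb^{-1}\Phi_\hb$ on $K^\circ$ instead does not bypass this, because extensions of integrable $\fk$-modules need not integrate along the central torus.
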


We will only be interested in the case when $\OO_\hb$ is an even quantization, which was treated in \cite[Theorem 6.2.5]{LY}. We also assume that $\OO_\hb$ is equipped with a symmetrized quantum Hamiltonian $G$-action with a symmetrized quantum co-moment map $\Phi_\hb$ in the sense of \Cref{defn:quan_moment_symm}.
In this case, we have the following concrete description of the Picard algebroid $\tatp$, by \cite[Lemma 6.2.4]{LY} and the discussion following it. Recall that the $K \times \cm$-action on $Y$ induces a strong $K \times \cm$-action on $\TT^+_Y = \frac{1}{2}\TT(\omega_Y)$.

\begin{Prop} \label{prop:tatp_symm}
	In the setting above, suppose $(\OO_\hb,\epsilon,\beta_\hb,\Phi_\hb)$ is a symmetrized even $G$-Hamiltonian quantization of $(X, \Omega)$. Then there exists a $K \times \cm$-equivariant isomorphism $\tatp \simeq \TT^+_Y = \frac{1}{2}\TT(\omega_Y)$ of strongly $K$-equivariant Picard algebroids. 
\end{Prop}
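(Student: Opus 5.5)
The plan is to identify $\tatp$ through its class in the vector space $\PicAlg_{K\times\cm}(Y)$. Both $\tatp$ and $\TT^+_Y$ are strongly $K\times\cm$-equivariant Picard algebroids on $Y$, so it suffices to show that their difference is zero.

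First, I would use the parity involution $\epsilon$. It is an anti-automorphism of $\OO_\hb$ that is the identity modulo $\hb$ and sends $\hb\mapsto-\hb$. Because $\epsilon$ is the identity modulo $\hb$, it preserves $\J_{Y,\hb}$, $\J'_{Y,\hb}$ and $\J_{Y,\hb}^2$. It therefore induces a map $\TT(\OO_\hb,Y)\to\TT(\OO_\hb^{op},Y)$. The sign change $\hb\mapsto-\hb$, combined with the reversal of the commutator, makes this map Lie-compatible once it is adjusted by $-1$ on the central section. The outcome is an isomorphism $\tatp\simeq \TT(\OO_\hb^{op},Y)^{\text{sign-twisted}}$.

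Next, I would invoke the standard relation from \cite{BGKP} (see also \cite[Section 4.1]{LY}) between the Picard algebroid attached to $\OO_\hb^{op}$ and the one attached to $\OO_\hb$. Under the conventions of \eqref{eq:TTop}, this relation reads $\TT(\OO_\hb^{op},Y) = \TT(\omega_Y)-\tatp$, where the $\omega_Y$ term is the same canonical-bundle shift that appears when passing to opposite TDOs. Combining this with the $\epsilon$-isomorphism gives $\tatp = \TT(\omega_Y)-\tatp$ in $\PicAlg(Y)$, hence $2\,\tatp=\TT(\omega_Y)$. Since $\PicAlg(Y)$ is a $\C$-vector space, this yields $\tatp=\TT^+_Y$.

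For equivariance, I would carry out the same argument in $\PicAlg_{K\times\cm}(Y)$, which is also a vector space under Baer sum. The required inputs are as follows.
\begin{enumerate}
\item The $\cm$-action commutes with $\epsilon$, because the quantization is even and graded.
\item The $G$-action, and hence the $K$-action, is symmetrized (\Cref{defn:symm_beta}), so it commutes with $\epsilon$.
\item The strong structure $\nu_\fk$ comes from $\hb^{-1}\Phi_\hb|_\fk$. Since $\Phi_\hb$ is symmetrized, $\epsilon(\Phi_\hb(\xi))=\Phi_\hb(\xi)$, so $\epsilon$ intertwines the $\nu_\fk$ on $\tatp$ with the corresponding map on the opposite side.
\end{enumerate}
Consequently every isomorphism above is $K\times\cm$-equivariant and respects $\nu_\fk$, and the identity $2\,\tatp=\TT(\omega_Y)$ holds in $\PicAlg_{K\times\cm}(Y)$. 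Dividing by $2$ gives the claimed isomorphism with $\TT^+_Y$ together with its canonical strong structure.

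The main obstacle is the sign and shift bookkeeping in the opposite-algebra step. Three things have to be checked: that $\TT(\OO_\hb^{op},Y)$ really equals $\TT(\omega_Y)-\tatp$ and not $-\tatp$ or $\TT(\omega_Y)+\tatp$; that the action of $\epsilon$ on the central section $1_\TT$, coming from $\hb^{-1}(\J'/\J^2)\simeq\OO_Y$, gives exactly a sign flip; and that the strong $\fk$-structures match, in particular that no half-trace character $\tfrac12\operatorname{tr}$ is lost. I would verify these in local Darboux coordinates with the Moyal product, taking $Y=\{p=0\}$. There the computation is explicit: the $\tfrac12\operatorname{div}$ correction appears directly and reproduces $\tfrac12\TT(\omega_Y)$.
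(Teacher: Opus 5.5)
Your argument is correct, and it is not the paper's route. The paper gives no argument of its own: it imports the statement from \cite[Lemma 6.2.4]{LY}, and the Remark right after the construction of $\tatp$ points to \cite[Proposition 6.4, Corollary 6.5]{Yu:period}, where $\tatp$ is determined in general within the framework of the enhanced period map. Your route uses only three inputs: the parity anti-involution, the symmetrization of $\beta_\hb$ and $\Phi_\hb$, and the Beilinson--Bernstein formula \eqref{eq:TTop}. It explains the factor $\frac12$ as self-duality of an even quantization under $\OO_\hb\mapsto\OO_\hb^{op}$, and it avoids formal geometry altogether. The period-map description is heavier, but it also covers non-even quantizations, where $\tatp$ differs from $\TT^+_Y$ by a contribution of the period.

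The bookkeeping you single out as the main obstacle is in fact tautological, and the sign-twist formulation is best dropped.
\begin{itemize}
\item Extend $\epsilon$ by $\epsilon(\hb^{-1})=-\hb^{-1}$. It is then an isomorphism of quantizations of $(X,\Omega)$ from $(\OO_\hb,\hb)$ to $(\OO_\hb^{op},-\hb)$. Since $\hb^{-1}\J_{Y,\hb}=(-\hb)^{-1}\J_{Y,\hb}$, and the identification $\hb^{-1}(\J'_{Y,\hb}/\J^2_{Y,\hb})\simeq\OO_Y$ does not see the sign of $\hb$, it induces $\tatp\simeq\TT(\OO_\hb^{op},Y)$ preserving $1_{\TT}$. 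Were the twist a genuine Baer negation, you would end up with $\TT(\omega_Y)=0$.
\item Unwinding the definitions, $\TT(\OO_\hb^{op},Y)$ is the sheaf $\hb^{-1}\J_{Y,\hb}/\J_{Y,\hb}^2$ with the negated bracket, anchor $-\eta$, and the same map from $\OO_Y$. The only other change is that $\OO_Y$ acts by right multiplication, $g\ast u=ug=gu+\eta(u)(g)\,1_{\TT}$. This is verbatim the recipe producing $(\sD_{\TT}^{op})_{\leqslant 1}$ from $\TT$, so \eqref{eq:TTop} gives $\TT(\OO_\hb^{op},Y)\simeq\TT(\omega_Y)-\tatp$ with no Moyal computation.
\item The strong structure on that side is $-\nu_\fk$, since $\epsilon(\hb^{-1}\Phi_\hb(\xi))=-\hb^{-1}\Phi_\hb(\xi)$. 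This is exactly what \eqref{eq:TTop} assigns equivariantly. For the trivial algebroid it is minus the generating vector fields in $\sD_Y^{op}$, i.e.\ the Lie-derivative structure on $\omega_Y$. So no character is lost.
\item $\tatp$ carries only a weak $\cm$-action, because $\hb$ has nonzero weight and there is no quantum co-moment map for $\cm$. You should therefore run the argument with strong $K$-equivariance and weak $\cm$-equivariance, not in $\PicAlg_{K\times\cm}(Y)$. That is all the proposition asserts.
\end{itemize}
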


\subsection{Enlargement of nilpotent \texorpdfstring{$K$}{K}-orbit covers} \label{subsec:enlarge_lag}

Consider the general setting in \cref{subsec:basic_settings}. We make the additional assumption that the complex reductive group $G$ is connected. Let $\bar{K}$ denote the image of the morphism $K \to G^\theta$.
Set $\sigma = -\theta$, where $\theta$ is the Cartan involution of $\g$; then $\sigma$ is a $K$-equivariant anti-Poisson involution of $\g \simeq \g^*$. Let $\orb \in \cN_o$ and assume that $\orb_{\fk^\perp} \neq \varnothing$; then both $\sigma$ and $\theta$ preserve $\orb$. Take any $K$-orbit $\orbk$ in $\orb_{\fk^\perp}$, which is the same as a $\bar{K}$-orbit. 

Let $\rho: \widetilde{\orb} \to \orb$ be a $G$-equivariant Galois cover of $\orb$ with the $G$-equivariant Galois group $\Gamma = \Gamma_{\rho}$. The preimage $\rho^{-1}(\orbk)$ of $\orbk$ in $\widetilde{\orb}$ is in general a disjoint union of $K$-orbits, on which the $K$-action factors through the morphism $K \twoheadrightarrow \bar{K} \subset G^\theta$. The Galois group $\Gamma$ acts transitively on the set of such $K$-orbits. After fixing such a $K$-orbit $\torbk \subset \rho^{-1}(\orbk)$ (which is again the same as a $\bar{K}$-orbit), the restriction 
$\tilde{\rho} := \rho|_{\torbk}$
of $\rho$ to $\torbk$ is a $K$-equivariant cover of $\orbk$. In fact, it is also a Galois cover. Indeed, pick $e \in \orbk$ and $y \in \tilde{\rho}^{-1}(e) = \rho^{-1}(e) \cap \torbk$. Then $\bar{K}_y = G_y \cap \bar{K}$ is normal in $\bar{K}_e = G_e \cap \bar{K}$, hence $\tilde{\rho}$ is a $\bar{K}$-equivariant, hence $K$-equivariant, Galois cover by condition (iv) of \Cref{defn:Galois}. We also see that the $K$-equivariant Galois group $\Aut_K(\tilde{\rho})$ of $\tilde{\rho}$ is canonically identified with the $\bar{K}$-equivariant Galois group $\Aut_{\bar{K}}(\tilde{\rho})$ of $\tilde{\rho}$, and the latter is naturally a subgroup of $\Gamma$. We denote both of them as $\Gamma_{\tilde{\rho}}$. This group can be characterized intrinsically as the subgroup of $\Gamma$ consisting of those automorphisms preserving $\torbk$.  

Let $X = \spec(\C[\widetilde{\orb}])$; then $X$ is a symplectic singularity. Let $X^{reg}$ denote the smooth locus of $X$, so that it is a smooth symplectic variety with the $\cm$-action induced from the Kostant-Brylinski $\cm$-action on $\widetilde{\orb}$ as in \Cref{subsec:orbit_cm}. Let $Y$ be the closure of $\torbk \subset \widetilde{\orb}$ in $X^{reg}$. Then $Y$ is also stable under the $K \times \cm$-action.

\begin{Rem} \label{rem:choice_Y}
	Different choices of $\torbk$ yield different $Y$. However, once we fix a choice of $\torbk$, the $\Gamma$-action interchanges it with other choices, and identifies the set of all $K$-orbits in $\rho^{-1}(\orbk)$ with $\Gamma/\Gamma_{\tilde{\rho}}$ as $\Gamma$-sets. The same holds for $Y$.
\end{Rem}

We will see below in \Cref{prop:Y_lag} that $Y$ is a smooth closed Lagrangian subvariety of $X^{reg}$. We need some preparation before proving this.

Take a universal $K$-equivariant cover $\varphi: Z=\horbk \to \torbk$ of $\torbk$ as in \Cref{subsubsec:AOD_defn}, so that 
$\hat{\rho} = \tilde{\rho} \circ \varphi$ 
is also a universal $K$-equivariant cover of $\orbk$, whose Galois group is $\Gamma_K$. Since $\tilde{\rho}: \torbk \to \orbk$ is a Galois cover, $\Gamma_{\tilde{\rho}}$ is naturally a quotient of $\Gamma_K$. In particular, we have natural homomorphisms $\Gamma_K \twoheadrightarrow \Gamma_{\tilde{\rho}} \hookrightarrow \Gamma$, via which $\Gamma_K$ also acts naturally on $\torbk$ so that $\varphi$ is $K \times \Gamma_K$-equivariant. Also, since the Brylinski-Kostant $\cm$-actions on $K$-equivariant covers of $\orbk$ in \Cref{subsec:orbit_cm} are unique and canonical, $\varphi$ is $K \times \Gamma_K^{\cm}$-equivariant. We also let $\tGamma_K^{\cm}$ act on $\horbk$, $\torbk$ and hence $Y$ via the quotient map $\tGamma_K^{\cm} \to \Gamma_K^{\cm}$.
Thus the relevant covers and groups are summarized by
\[
	Z=\horbk \xrightarrow{\varphi} \torbk \xrightarrow{\tilde{\rho}} \orbk, \qquad
	\hat{\rho}=\tilde{\rho}\circ\varphi,
\]
with Galois groups $\Gamma_K$, $\Gamma_{\tilde{\rho}}$, and natural maps $\Gamma_K \twoheadrightarrow \Gamma_{\tilde{\rho}} \hookrightarrow \Gamma$.

We can lift the anti-Poisson automorphism $\sigma: \orb \to \orb$ to an anti-Poisson automorphism $\tilde{\sigma}: \widetilde{\orb} \to \widetilde{\orb}$, such that $\tilde{\sigma}$ is $K$-equivariant and fixes $y$, and the covering morphism $\rho: \widetilde{\orb} \to \orb$ intertwines $\tilde{\sigma}$ and $\sigma$. This implies that $\tilde{\sigma}$ fixes the $K$-orbit $\torbk = K \cdot y$ pointwise, and $\torbk$ is a union of connected components of the fixed locus $\widetilde{\orb}^{\tilde{\sigma}}$ of $\tilde{\sigma}$. Note that $\tilde{\sigma}$ only depends on $\orbk$ and $\torbk$ and is independent of the choice of $e$ and $y$.
The involution $\tilde{\sigma}$ induces a $K$-equivariant anti-Poisson automorphism on $X = \spec(\C[\widetilde{\orb}])$, which preserves the smooth locus $X^{reg}$ of $X$. The restriction of this involution to $X^{reg}$ is also denoted as $\tilde{\sigma}$. 

\begin{Prop} \label{prop:Y_lag}
	$Y$ is a $K \times \Gamma_K^{\cm}$-stable smooth closed Lagrangian subvariety of $X^{reg}$.
\end{Prop}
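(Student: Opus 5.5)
The approach is to identify $Y$ with a union of connected components of the fixed locus of the anti-symplectic involution $\tilde{\sigma}$ on the smooth symplectic variety $X^{reg}$. This gives smoothness and the Lagrangian property at once. Stability under $K \times \Gamma_K^{\cm}$ is then checked on the dense open subset $\torbk$.

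\emph{Step 1: $\tilde{\sigma}$ is an involution.} First I would check that $\tilde{\sigma}^2=\Id$ on $\widetilde{\orb}$. The map $\tilde{\sigma}^2$ is $K$-equivariant, lifts $\sigma^2=\Id$, and fixes $y$. Moreover, $\tilde{\sigma}$ is $G$-semilinear: it intertwines the $G$-action with its twist by $\theta$, because $\sigma$ does. Hence $\tilde{\sigma}^2$ is a $G$-equivariant deck transformation of $\rho$ fixing a point, so it is the identity.

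\emph{Step 2: smoothness and the Lagrangian property.} $\tilde{\sigma}$ extends to an involution of $X$ preserving $X^{reg}$. The fixed locus $(X^{reg})^{\tilde{\sigma}}$ of a finite-order automorphism of a smooth variety is a smooth closed subvariety. Since $\tilde{\sigma}^*\Omega=-\Omega$, at a fixed point $x$ the tangent space splits as $T_x=T_x^+\oplus T_x^-$ into the $\pm1$-eigenspaces. Both eigenspaces are isotropic, so both have dimension $\frac12\dim X$, and every component of the fixed locus is Lagrangian.

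\emph{Step 3: $Y$ is a component.} The orbit $\torbk$ is open in $\widetilde{\orb}^{\tilde{\sigma}}$, being a union of its components (by dimension, as $\dim \orbk=\frac12\dim\orb$ by \cite[Corollary 5.20]{Vogan_AV}). Let $C$ be the irreducible component of $(X^{reg})^{\tilde{\sigma}}$ containing $\torbk$. Then $\torbk$ is open dense in $C$, and its closure in $X^{reg}$ is exactly $C$. Therefore $Y=C$ is a connected component, hence smooth, closed and Lagrangian. Connectedness is not an issue since components of smooth varieties are disjoint; if $K$ is disconnected, I would take the union of the finitely many components meeting $\torbk$.

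\emph{Step 4: stability.} The $K$-action, the Kostant--Brylinski $\cm$-action, and the action of $\Gamma_K$ through $\Gamma_{\tilde{\rho}}\subset\Gamma$ all preserve $\torbk$. They are continuous automorphisms of $X^{reg}$, because they extend from $\widetilde{\orb}$ to $X$ by functoriality of affinization, and they preserve the smooth locus. Hence they preserve the closure $Y$ of $\torbk$.

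The main obstacle I expect is Step 1: making precise the semilinear equivariance of the lift $\tilde{\sigma}$, in particular its construction as $y$-fixing, and the argument that $\tilde{\sigma}^2$ is a deck transformation. Once the involution property holds, the remaining steps are standard.
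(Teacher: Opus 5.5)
Your proposal is correct and takes essentially the same route as the paper. Both identify $Y$ with a union of connected components of the smooth Lagrangian fixed locus $(X^{reg})^{\tilde{\sigma}}$, using that $\torbk$ is open and closed in $\widetilde{\orb}^{\tilde{\sigma}}$ by a dimension count. Your Steps 1 and 4 only make explicit what the paper leaves implicit, namely that $\tilde{\sigma}$ is an involution (needed for the fixed-locus smoothness result) and that $Y$ is stable under $K \times \Gamma_K^{\cm}$. Step 1 is also easier than you expect and needs no semilinearity: $\tilde{\sigma}^2$ is a lift of $\Id_{\orb}$ through the connected cover $\rho$ that fixes $y$, so it is the identity by uniqueness of lifts.
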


\begin{proof}
	By \cite[Proposition 3.4]{Edixhoven}, the $\tilde{\sigma}$-fixed locus $\widetilde{\orb}^{\tilde{\sigma}}$ (resp. $(X^{reg})^{\tilde{\sigma}}$) is a smooth closed subvariety of $\widetilde{\orb}$ (resp. $X^{reg}$). Moreover, since $\tilde{\sigma}$ is an anti-Poisson automorphism, both are Lagrangian subvarieties (cf. \cite[Corollary 5.20]{Vogan_AV}). Since $\torbk \subset (X^{reg})^{\tilde{\sigma}}$, we have $Y \subset (X^{reg})^{\tilde{\sigma}}$. Since $(X^{reg})^{\tilde{\sigma}}$ is smooth, its irreducible components are exactly the same as its connected components. The same holds for $\widetilde{\orb}^{\tilde{\sigma}}$. Moreover, the closure of each irreducible/connected component of $\widetilde{\orb}^{\tilde{\sigma}}$ is an irreducible/connected component of $(X^{reg})^{\tilde{\sigma}}$. Note that $\torbk$ is a closed subvariety of $\widetilde{\orb}^{\tilde{\sigma}}$ of the same dimension, hence it is a union of irreducible/connected components of $\widetilde{\orb}^{\tilde{\sigma}}$.
	Therefore, $Y$ is a union of irreducible/connected components of $(X^{reg})^{\tilde{\sigma}}$. The conclusion follows.
\end{proof}

\subsection{Extensions of admissible vector bundles} \label{subsec:adm_extn}

We assume the setting and notation in \Cref{subsec:enlarge_lag}. We now apply the discussion in \cref{subsec:enlarge_lag} in the following situation.
Suppose that $\ckorb$ is a quasi-distinguished nilpotent orbit in $\ckfg$. Set $\orb = d(\ckorb)$, where $d$ stands for the usual/metaplectic Barbasch-Vogan duality map. Let $\rho: \widetilde{\orb} \to \orb$ be the universal cover of $\orb$ and $X := \spec(\C[\widetilde{\orb}])$. In addition, we assume that $\codim(X^{sing}, X) \geqslant 6$ (cf. \Cref{thm:codim_X}), hence
\begin{equation}\label{eq:codim_Y}
	\codim(\partial Y, Y) \geqslant 3,
\end{equation}
where $\partial Y = Y \setminus \torbk$. We will often write $Y^\diamond$ for $\torbk$ to simplify notation.

We now state the main result (\cref{prop:extn_AOD} below) of this subsection. We consider two cases:
\begin{enumerate}[leftmargin=*, itemindent=*]
	\item $G$ is a connected complex reductive group and $\GR \subset G(\R)$ is a linear reductive group. In this case we take $d$ to be the usual BV duality map;
	\item $\GR = \Mp(2n,\R)$ and $G=\Sp(2n,\C)$. In this case we take $d$ to be the metaplectic BV duality map.
\end{enumerate}
For both cases, let $(\g,K)$ denote the corresponding symmetric pair.
Recall that the Picard algebroid $\TT^+_Y = \frac{1}{2} \TT(\omega_Y)$ is equipped with a strong $K \times \Gamma_K^{\cm}$-action, whose restriction to $Y^\diamond = \torbk$ is canonically isomorphic to $\TT^+_{Y^\diamond}$ as strongly $K \times \Gamma_K^{\cm}$-equivariant Picard algebroids. Therefore we can speak of restrictions of objects in $\Coh(\TT^+_Y, K)$ or $\Coh^\kappa(\TT^+_Y, K \times \Gamma_K^{\cm})$ to $Y^\diamond = \torbk$. 

\begin{Prop}\label{prop:extn_AOD}
	Assume the notation above. Suppose $\cE \in \Coh^\kappa(\TT^+_{Y^\diamond}, K \times \Gamma_K^{\cm})$. Then $\cE$ extends uniquely to an object $\overline{\cE}$ in $\Coh^\kappa(\TT^+_Y, K \times \Gamma_K^{\cm})$. More precisely, this means that: 
	\begin{enumerate}[label=(\roman*), leftmargin=*, itemindent=*]
		\item $\overline{\cE}$ is an $\OO_Y$-coherent $(K \times \Gamma_K^{\cm}, \kappa)$-module equipped with an isomorphism $\varsigma: \overline{\cE}|_{Y^\diamond} \xrightarrow{\sim} \cE$ in $\Coh^\kappa(\TT^+_{Y^\diamond}, K \times \Gamma_K^{\cm})$.
		
		\item If there is another pair $(\overline{\cE}', \varsigma')$ satisfying (i), then there is a (unique) isomorphism $\phi: \overline{\cE} \xrightarrow{\sim} \overline{\cE}'$ in $\Coh^\kappa(\TT^+_Y, K \times \Gamma_K^{\cm})$ such that $\varsigma = \varsigma' \circ (\phi|_{Y^\diamond})$. 
	\end{enumerate}
	This extension defines an equivalence of categories
    	\[ \Coh^\kappa(\TT^+_{Y^\diamond}, K \times \Gamma_K^{\cm}) \xrightarrow{\sim} \Coh^\kappa(\TT^+_Y, K \times \Gamma_K^{\cm}), \quad \cE \mapsto \overline{\cE}, \]
    whose quasi-inverse is given by the restriction functor $\overline{\cE} \mapsto \overline{\cE}|_{Y^\diamond}$.

    Similar statements hold for any object $\cE \in \Coh(\TT^+_{Y^\diamond}, K \times \tGamma_K^{\cm})^{gen}$ (which means that the action of $\tGamma_K^{\cm}$ on $\cE$ is genuine); in particular, we have an equivalence of categories
    \[ \Coh(\TT^+_{Y^\diamond}, K \times \tGamma_K^{\cm})^{gen} \xrightarrow{\sim} \Coh(\TT^+_Y, K \times \tGamma_K^{\cm})^{gen}. \]
	

\end{Prop}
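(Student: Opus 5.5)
The plan is to reduce the extension statement to an extension problem for a flat (twisted) connection across a subset of codimension at least $3$ in a smooth variety, using \eqref{eq:codim_Y}. Fix the square root pair $(\omega_Z^{\smallhalf},\sq)$ and pass, as in \eqref{eq:equiv_underline_V}--\eqref{eq:equiv_pushforward}, from $\cE$ to untwisted data. Concretely, $\cE\otimes_{\OO_{Y^\diamond}}\omega_{Y^\diamond}^{-\smallhalf}$ only makes sense on a cover. So I would instead work locally: on $Y$ the Picard algebroid $\TT^+_Y$ is locally of the form $\TT(L)$ for a local square root $L$ of $\omega_Y$ (e.g.\ on analytic or \'etale neighbourhoods). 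Hence an object of $\Coh(\TT^+_Y)$ is, locally, a vector bundle with flat connection, twisted by $L$. Thus $\OO$-coherent $\TT^+$-modules on $Y^\diamond$ correspond to $\pm1$-twisted local systems, i.e.\ local systems on $Y^\diamond$ whose monodromy is twisted by the $\Z_2$-gerbe of square roots of $\omega_Y$.

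The first key step is topological. Since $Y$ is smooth and $\partial Y$ is closed of complex codimension $\geqslant 3$ (real codimension $\geqslant 6$), the inclusion $Y^\diamond\hookrightarrow Y$ induces an isomorphism on $\pi_1$ (componentwise) and on $H^2(-,\Z_2)$. Therefore the gerbe of square roots of $\omega_Y$ on $Y$ is determined by its restriction, and every twisted local system on $Y^\diamond$ extends uniquely to a twisted local system on $Y$. This is because local systems are equivalent to representations of $\pi_1$, here of the $\Z_2$-central extension of $\pi_1$ attached to the gerbe.

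The second step is algebraicity and equivariance. The analytic flat bundle on $Y$ extending $\cE^{an}$ is algebraic, with regular singularities at infinity. I would argue this via the extension $j_*$, where $j\colon Y^\diamond\hookrightarrow Y$. Because $\codim\partial Y\geqslant 2$ and the extended object is locally free, $\overline{\cE}$ must equal $j_*\cE$, which is a coherent reflexive sheaf on the smooth $Y$ by Hartogs. Algebraically, $\overline{\cE}:=j_*\cE$ carries an induced $\TT^+_Y$-action, since sections of $\TT^+_Y$ act on $j_*\cE$ by the same Hartogs extension. It also carries the $K\times\Gamma_K^{\cm}$-action and the $\kappa$-twisted compatibility, since these are identities of sections that hold on the dense open $Y^\diamond$. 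It then suffices to show that $j_*\cE$ is locally free (coherent $\TT$-modules are automatically locally free by \cite[Lemma 2.3.1]{BeilinsonBernstein}), or equivalently that $j_*\cE$ is $\OO_Y$-coherent. Coherence of $j_*\cE$ for a vector bundle across codimension $\geqslant2$ is standard: extend $\cE$ to some coherent sheaf $\cF$ on $Y$ and take its reflexive hull. Then $j_*\cE=\cF^{\vee\vee}$, which is coherent.

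For uniqueness and full faithfulness, any extension $\overline{\cE}'$ is locally free, hence reflexive, so $\overline{\cE}'=j_*j^*\overline{\cE}'$. Morphisms then extend uniquely by Hartogs, which gives (ii) and the equivalence with quasi-inverse restriction. The genuine $\tGamma_K^{\cm}$ case is identical, since genuineness is a condition checked on $Y^\diamond$.

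The main obstacle is establishing the $\TT^+_Y$-module structure and local freeness of $j_*\cE$ purely algebraically, i.e.\ showing that a reflexive sheaf with a flat twisted connection on a smooth variety is locally free. I would handle this with the classical fact that a coherent $\sD$-module (here a $\sD_{\TT^+_Y}$-module) that is $\OO$-coherent is locally free. This applies because $j_*\cE$ is $\OO$-coherent and the twisted connection on $Y^\diamond$ extends by Hartogs. The codimension-$3$ bound is stronger than this step needs (codimension $2$ suffices here); I expect it to be used only later for the quantization.
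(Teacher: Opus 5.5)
You read \eqref{eq:codim_Y} as saying that $Y\setminus Y^\diamond$ has codimension $\geqslant 3$ in $Y$. With that reading the proposition becomes a pure Hartogs statement, but this is not the geometry the proposition is about. By \Cref{thm:special_piece} and \Cref{lem:H_quasi-dist}, $X$ is smooth along the whole preimage of the special piece $\cP(\orb)$. In particular it is smooth over every orbit $\orb'$ of codimension $2$ in $\overline{\orb}$, and such $\orb'$ exist exactly in case (i) of \Cref{thm:codim_X}, which is the case the proposition is designed for.

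Since $Y$ is the closure of $\torbk$ in $X^{reg}$, it therefore contains the $K$-orbits $\torbk'$ lying over $\orbk'\subset\overline{\orb}_K\cap\orb'$. Their dimension is $\dim\torbk'=\frac12\dim\orb'=\dim Y-1$, so they are smooth divisors in $Y$; see the column $\orbk'$ of \Cref{tab:qD_real_exceptional}, and note that the paper's proof works with them explicitly. The codimension-$3$ bound really concerns the part of the closure of $Y$ in $X$ that lies in $X^{sing}$, which is what the finite-generation step in \Cref{prop:quant_cW} needs.

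Across the divisors $\torbk'$ your steps break down:
\begin{itemize}
\item $\pi_1(Y^\diamond)\to\pi_1(Y)$ is only surjective, with kernel generated by meridians around the $\torbk'$.
\item $j_*\cE$ is not coherent across a divisor.
\item The uniqueness argument $\overline{\cE}'=j_*j^*\overline{\cE}'$ fails there.
\end{itemize}
What you have proved is only the last step of the paper's argument. Once an $\OO$-coherent $\TT^+$-extension $\cE_{min}$ to $\cU=\torbk\cup\bigcup\torbk'$ is known, pushing forward across $Y\setminus\cU$, which has codimension $\geqslant 2$, gives $\overline{\cE}$. Your reflexive-hull argument is a fine substitute for \cite[Proposition 5.11.1]{EGA4} there.

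The missing idea is extension across the divisors $\torbk'$. An $\OO$-coherent $\TT^+$-module extending $\cE$ across $\torbk'$ exists if and only if the twisted local monodromy of $\cE$ around $\torbk'$ is trivial, and this is not automatic. Transversally, $Y$ is the $\tilde{\sigma}$-fixed line $\ell$ in the slice $\check{X}\simeq\C^2$, with $\torbk$ corresponding to $\ell\setminus\{0\}$. The meridian is swept out by the one-dimensional torus $R^\theta$, so the monodromy is a sign dictated by equivariance and the half-form twist, that is, by the integrality of $\gamma'$ on $\fr^\theta$.

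This is exactly what \Cref{prop:A1} supplies. It identifies $\check{X}$ with $\C^2$ carrying an action of $R\simeq\SL_2\subset Q'^\circ$. It then shows that $\gamma'|_{\fr^\theta}$ does not integrate to $R^\theta$ in the linear case, using non-admissibility of $\orbk'$ (\Cref{prop:non_adm_codim2}), and that it does integrate in the metaplectic case, using \cite[Lemma 7]{Ohta:Adm}. Feeding this into \cite[Proposition A.2.1]{LosevYu} shows that the minimal extension of $\cE$ to $\cU$ is $\OO_{\cU}$-coherent, which gives existence and uniqueness across the divisors. Your argument never uses any information about $\orbk'$ or its admissibility data, which is a sign it cannot be complete.
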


Before proving this proposition, we need some preparation. 
Assume $\orb' \in \cN_o$ satisfies $\orb' \prec \orb$ and $\codim(\orb', \overline{\orb}) = 2$. By \Cref{thm:codim_X}, $\orb'$ is non-special and hence lies in the special piece $\cP(\orb)$ of $\orb$. Suppose $\orbk \subset \orb_{\fk^\perp}$ is such that $\overline{\orb}_K \cap \orb' \neq \varnothing$. Let $\orbk'$ be any $K$-orbit in $\overline{\orb}_K \cap \orb'$. For exceptional $\g$, we list all such pairs $(\orbk, \orbk')$ together with $\orb$ and $\orb'$ in \Cref{tab:qD_real_exceptional}.

Let $e' \in \orbk'$ and complete it into a normal $\slf_2$-triple $(e',h',f')$. We follow the notation in \Cref{subsec:Slodowy} and let $Q'$ and $K_Q' = K \cap Q'$ denote the $G$-centralizer and $K$-centralizer of $(e',h',f')$, with Lie algebras $\fq'$ and $\fk_Q'$ respectively. Define the slices $\cS$, $\cS_\orb = \cS \cap \overline{\orb}$ and $\check{X}$ as in \Cref{subsec:Slodowy}.
Let $\torbk'$ be a $K$-orbit in the intersection of $Y$ with the preimage of $\orbk'$ in $X$. 
Pick $\tilde{e}' \in \torbk'$ so that $\rho(\tilde{e}')=e'$, and take the slice $\check{X}$ containing $\tilde{e}'$ as in \Cref{subsec:Slodowy}. Then $\check{X}$ is $\tilde{\sigma}$-stable and intersects $Y$. It is a transversal slice in $X$ to the $G$-orbit $\widetilde{\orb}' := G \cdot \torbk' \subset X$, which is a cover of $\orb'$ by the restriction of $\rho: \widetilde{\orb} \to \orb$ to $\widetilde{\orb}'$. Recall that we have a moment map $\pr_0: \cS_\orb \to \mathfrak{q}'$. Let $\gamma'$ be the character of $\fk_{e'} = \Lie(K_{e'})$ associated to $\orbk'$ as defined in \Cref{defn:adm_orbit_datum}. It descends to a character of the reductive quotient $\fk_Q'$, still denoted as $\gamma'$.

Let $\cU$ be the union of $\torbk$ and all $\torbk'$ over all such $\orb'$ as above. Then $\cU$ is an open subset of $Y$ and $\codim(Y \backslash \cU, Y) \geqslant 2$. Each $\torbk'$ is a smooth closed divisor in $\cU$. We first need to show that $\cE$ can be extended to $\cU$ using a similar argument as in \cite[Appendix]{LosevYu}, by analyzing the local behaviour of $\cE$ around $\torbk'$ as follows.

We now isolate the local $A_1$ model that controls these codimension-one boundary pieces.
Recall that the Kleinian singularity of type $A_1$ is $\C^2/\Z_2$, or equivalently, the minimal/regular orbit $\orb_{min}(\slf_2)$ in $\slf_2^*$, and hence is a conical symplectic singularity with symplectic form of degree $2$ with respect to the $\cm$-action. It is equipped with a Hamiltonian $\SL_2$-action which factors through $\PSL_2$, so that we have a group isomorphism $\aut_{\cm}(\orb_{min}(\slf_2))^\circ \simeq \PSL_2$. The Lie algebra of $\aut_{\cm}(\orb_{min}(\slf_2))^\circ$ is identified with the degree $2$ component $\C[\orb_{min}(\slf_2)]_2 \simeq \slf_2$ of the coordinate ring $\C[\orb_{min}(\slf_2)]$ of $\orb_{min}(\slf_2)$.
The following proposition identifies $\check{X}$ with $\orb_{min}(\slf_2)$ as Hamiltonian spaces and describes a direct product decomposition of $Q'^\circ$.

\begin{Prop}\label{prop:A1}
	Let $\fg$ be a complex simple Lie algebra and let $G$ be any connected complex simple group with Lie algebra $\fg$. Let $\orb \subset \fg$ be the (metaplectic) Barbasch-Vogan dual orbit of a quasi-distinguished nilpotent orbit in $\ckfg$ and let $\orb'$ be any nilpotent orbit of codimension $2$ in $\overline{\orb}$. Then there exists a $\theta$-stable subgroup $R=R_{\orb'} \simeq \SL_2$ of $Q'^\circ$ with Lie algebra $\rf$ (so that $R^\theta$ is a one-dimensional torus) satisfying the following properties:
	\begin{enumerate}[itemindent=*, leftmargin=*]
		\item 
		There is a graded $R$-equivariant Hamiltonian isomorphism between $\C^2 / \Z_2$ and $\cS_{\orb}$, and a graded $R$-equivariant Hamiltonian isomorphism $\check{X} \simeq \C^2$. Here, $R \simeq \SL_2$ acts on $\cS_{\orb}$ and $\check{X}$ via the $Q'$-action.
		\item 
		There is a direct product decomposition $Q'^\circ = R \times N$, where $N$ is a reductive subgroup of $Q'^\circ$, such that the action of $Q'^\circ$ on $\cS_{\orb}$ and $\check{X}$ factors through $R$.
		\item \label{item:nonadm}
		For any $K$-orbit $\orbk'$ in $\overline{\orb}_K \cap \orb'$, the following conditions are satisfied:
        \begin{enumerate}[label=(\theenumi\alph*), ref=(\theenumi\alph*), itemindent=*, leftmargin=*]
            \item \label{cond:O'_nonadm} 
			If $\orb$ is a special nilpotent orbit in the usual sense, then the restriction of $\gamma'$ to $\fr^\theta$ does not integrate to a character of $R^\theta$.
            \item \label{cond:O'_metaplectic_adm} 
			If $\fg = \fmp(2n,\C)$ and $\orb$ is a metaplectic special nilpotent orbit in $\fg^*$, then the restriction of $\gamma'$ to $\fr^\theta$ integrates to a character of $R^\theta$.
        \end{enumerate}
	\end{enumerate}
\end{Prop}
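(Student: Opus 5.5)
The plan is to first pin down the transverse singularity of $\overline{\orb}$ along the codimension-two orbits $\orb'$, and then lift everything to the universal cover. By \Cref{thm:codim_X} and its proof, in the case $\codim(\partial\orb,\overline{\orb})=2$ every codimension-two $\orb'\prec\orb$ is non-special and lies in $\cP(\orb)$. By \Cref{thm:special_piece} together with \Cref{lem:H_quasi-dist}, the preimage of $\cP(\orb)$ in $X$ is smooth and $X\to\overline{\orb}$ is a quotient by $H(\orb)$ over $\cP(\orb)$. For classical types, I read off the singularity of $\cS_\orb$ from the tables of \cite{JLS:Duality} (Table 1, type $a$ or $b$ degenerations with $n=1$), or directly from Kraft--Procesi row/column removal, and expect it to be $A_1=\C^2/\Z_2$. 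For exceptional types, only $D_4(a_1)$ in $E_6$ in \Cref{tab:qD_exceptional} together with distinguished cases remains, and I would check case by case in \cite{FJLS:generic_sing} that each codimension-two degeneration is of type $A_1$ rather than $A_k$ with $k\geqslant 2$ or a quotient such as $m$. Since $\check{X}$ is smooth (by \Cref{thm:codim_X}) and is a connected component of the preimage of $\cS_\orb$, it is a smooth two-dimensional conical symplectic variety, and hence is $\C^2$ with a linear $\cm$-action of weight $1$. Consequently $\check{X}\to\cS_\orb$ is the double cover $\C^2\to\C^2/\Z_2$.

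Next I construct $R$. The $Q'^\circ$-action on $\cS_\orb\simeq\C^2/\Z_2$ is Hamiltonian with moment map $\pr_0$, and it commutes with the Kazhdan action. It therefore gives a homomorphism $\Phi:Q'^\circ\to\aut_{\cm}(\C^2/\Z_2)^\circ\simeq\PSL_2$, and $d\Phi$ is identified with the dual of $\pr_0^*:\C[\cS_\orb]_2\simeq\slf_2\to\fq'$. I would show $\Phi$ is surjective by checking that $\pr_0$ is nonconstant in degree $2$, using the case lists above; equivalently, the slice is obtained from the minimal orbit of a simple factor $\slf_2\subset\fq'$. I would then take $\fr$ to be that simple ideal of $\fq'$ on which $d\Phi$ is an isomorphism, and $\mathfrak n$ to be the complementary ideal $\ker d\Phi$. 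Setting $R$ and $N$ to be the corresponding connected subgroups gives $Q'^\circ=R\cdot N$. To upgrade this to a direct product with $R\simeq\SL_2$ rather than $\PSL_2$, I would use that the action of $Q'^\circ$ lifts to $\check{X}\simeq\C^2$, since $\check{X}$ is stable under $Q'^\circ$ by connectedness. Then $R$ acts on $\C^2$ through a lift $R\to\SL_2$, which must be an isomorphism because $-1$ acts nontrivially on the slice fibre; this is the same mechanism by which $\check{X}$ is the cover $\widetilde{\orb}$ restricted to the slice. For $R\cap N$ trivial, the same faithfulness argument applies, checked case by case from the explicit form of $Q'$ (products of $\Sp$, $\mathrm O$ and $\GL$ factors, with $R$ an $\Sp_2$ factor coming from a part of multiplicity $2$). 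Because the normal triple makes everything $\theta$-stable and $\fr$ is an ideal canonically determined by $\pr_0$, $R$ is $\theta$-stable, and $R^\theta$ is a torus because the $K$-orbit $\orbk'$ meets the slice nontrivially. This gives (1) and (2).

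For (3), I compute $\gamma'|_{\fr^\theta}$ via the recipe of \cite[Lemma 7]{Ohta:Adm} in classical types: $\fr^\theta$ corresponds to the $\GL_1$ or $\SO_2$ factor attached to the relevant multiplicity-two part of $\lambda'$. In the usual case, I show that $\gamma'$ is a half-odd multiple of the generator, which is the same parity failure that makes $\orbk'$ non-admissible in \Cref{prop:non_adm_codim2}; in the metaplectic case, the parities $m_{>c}$ flip as in the proof of \Cref{prop:metaplectic_adm}, giving integrality. For exceptional groups, I rely on the tables of \cite{Noel1,Noel2,King}. The main obstacle is (2) together with the $\SL_2$-versus-$\PSL_2$ and trivial-intersection issue, because $Q'$ need not split uniformly and the check is genuinely case-by-case, particularly for $E_6$, $D_4(a_1)$ and for spin groups. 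There I would try first to argue uniformly from the faithful action on $\check{X}\simeq\C^2$, and only fall back to explicit centralizer computations if that fails.
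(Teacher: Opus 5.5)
Your plan for (1), (2) and (3b) follows the paper's proof. You identify $\cS_\orb$ as an $A_1$ slice governed by an $\slf_2$-factor of $\fq'$, use smoothness of $\check{X}$ (from \Cref{thm:codim_X}) to get $\check{X}\simeq\C^2$ and hence $R\simeq\SL_2$, and settle the metaplectic case with \cite[Lemma 7]{Ohta:Adm}.

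The genuine gap is (3a) for exceptional $\fg$. The tables of \cite{Noel1,Noel2} decide whether $\orbk'$ is admissible, that is, whether $\gamma'$ integrates on all of $(K_Q')^\circ=R^\theta\times(N^\theta)^\circ$. They do not isolate the $\fr^\theta$-component. So non-admissibility of $\orbk'$ alone does not exclude that the obstruction sits on $N^\theta$ while $\gamma'|_{\fr^\theta}$ integrates. In classical types your factorwise Ohta computation is fine, because condition (P') for $\lambda'$ fails only at the new multiplicity-two part $c$. For exceptional types you give no analogous mechanism.

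The missing idea is a comparison with the open orbit $\orbk$:
\begin{itemize}
\item Take $e\in(\cS_\orb)^\sigma$ with a normal triple, and set $\mathfrak{n}=\Lie(N)$. Since $N$ acts trivially on the slice, $N\subset Q^\circ$, so $\mathfrak{n}^\theta\subset\fk_Q$.
\item By \cite[Proposition 6.3.2 (1)]{LosevYu}, $\gamma|_{\mathfrak{n}^\theta}=\gamma'|_{\mathfrak{n}^\theta}$.
\item Since $\orbk$ is linearly admissible (\Cref{prop:adm_dist}), $\gamma'|_{\mathfrak{n}^\theta}$ integrates to $N^\theta$.
\item Hence (3a) is equivalent to non-admissibility of $\orbk'$, which is \Cref{prop:non_adm_codim2}, and that is exactly what the tables supply.
\end{itemize}
This argument is uniform in type and also makes the classical factor bookkeeping unnecessary.

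There is a smaller flaw in (1). Non-constancy of $\pr_0$ in degree $2$ does not give surjectivity of $\Phi$ onto $\PSL_2$. The image of the reductive group $Q'^\circ$ could be a one-dimensional torus: the torus acting on $\C^2/\Z_2$ by $\mathrm{diag}(t,t^{-1})$ already has the non-constant quadratic moment map $xy$. What is needed is the stronger input the paper takes from \cite[Proposition 5.12]{MBMY} and \cite[Proposition 3.3 and Remark 3.4]{FJLS:generic_sing}: $\pr_0$ maps $\cS_\orb$ isomorphically onto the minimal nilpotent orbit closure of a simple factor $\fr\simeq\slf_2$ of $\fq'$.

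Once that is in place, your lifting argument works uniformly. The map $R\to\Aut_{\cm}(\check{X})\simeq\SL_2$ is an isogeny onto a simply connected group, hence an isomorphism. Taking $N$ to be the identity component of the kernel of the lifted action on $\check{X}$ then gives $R\cap N=1$ without any case-by-case check.
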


\begin{proof}
	\noindent\emph{Proof of (1).} We claim that, in all cases, the moment map $\pr_0: \cS_\orb \to \mathfrak{q}'$ induces an isomorphism between $\cS_\orb$ and the closure of the $Q'^\circ$-orbit through a minimal nilpotent element in a simple factor $\rf \simeq \slf_2$ of $\mathfrak{q}'$.  
	For classical $\g$, this has already appeared in the proof of \cite[Proposition 5.12]{MBMY} (the argument is essentially the same as that of \cite[Lemma 4.1]{JLS:Duality} and \cite[Proposition 4.2]{JLS:Duality}). We recall the first part of this argument and extend it to the metaplectic case here: let $\fg=\fg_\epsilon(N)$ and let $\lambda, \lambda' \in \Pe(N)$ be the $\epsilon$-partitions associated to $\orb$ and $\orb'$ respectively. It follows easily from the discussion in \cref{subsec:special_piece_classical} that such $\lambda'$ arises from $\lambda$ by replacing a pair of adjacent members $[\lambda_i, \lambda_{i+1}]$ of $\lambda$, such that $\lambda_i = \lambda_{i+1} + 2$ and $\lambda_i \not\equiv \epsilon \bmod 2$, by $[\lambda_i -1, \lambda_i-1]$. Then $\lambda_i - 1$ has multiplicity $2$ in $\lambda'$. Since $\lambda_i -1 \equiv \epsilon \bmod 2$, the pair $[\lambda_i -1, \lambda_i-1]$ contributes a direct product factor $R = R_i \simeq \Sp(2,\C) \simeq \SL_2$ of $Q'^\circ$ when $G$ is a linear classical group (see \cref{subsec:nil_classical}). Then we can follow the rest of the proof of \cite[Proposition 5.12]{MBMY}, and prove the claim for classical $\g$ and all connected simple groups $G$ with Lie algebra $\g$.
	For exceptional $\g$, we appeal to \cite[Proposition 3.3 and Remark 3.4]{FJLS:generic_sing}.

	Now in all types, since $Q'$ acts on $\cS_{\orb}$ by Hamiltonian automorphisms, we have a group homomorphism 
	\[ \Phi: Q'^\circ \to \Aut_{\cm}(\cS_{\orb}) \simeq \Aut_{\cm}(\orb_{min}(\slf_2))^\circ \simeq \PSL_2, \] 
	whose differential is denoted as $\phi = d\Phi: \q' \to \slf_2$. Then the restriction of $\phi$ to $\rf$ is an isomorphism $\rf \xrightarrow{\sim} \slf_2$. 
	Hence we have a direct sum decomposition 
	\begin{equation} \label{eq:q'_decomp}
		\q' = \rf \oplus \ker \phi 
	\end{equation}
	of $\q'$ into Lie ideals. Moreover, we can make this decomposition $\theta$-invariant. 
	


	Let $R$ be the connected closed subgroup of $Q'^\circ$ with Lie algebra $\rf$; then $R$ is also $\theta$-stable. Note that the restriction of $\Phi$ to $R$ only gives a surjective morphism $R \twoheadrightarrow \PSL_2$. We cannot directly conclude that $R \simeq \SL_2$ from the preceding discussion (except when $\fg$ is classical), so some extra work is required.
	
	Recall that by \Cref{thm:special_piece} or \Cref{thm:codim_X}, $X$ is smooth along the preimage of $\orb'$ in $X$ and hence each $\check{X}$ is also smooth. We can also appeal to \cite[Proposition 4.3]{MBMY} with the conjugacy class $\bar{C}$ there being trivial, which only proves that $X$ is smooth in codimension one.\footnote{This approach avoids the computer computation for exceptional Lie algebras in \cite{FJLS23} which \cref{thm:special_piece} relies on.} Since $\rho: \widetilde{\orb} \to \orb$ is a finite \'{e}tale morphism, so is its restriction $\rho: \check{X} \setminus \{\tilde{e}'\} \to (\C^2 \setminus \{0\}) / \Z_2$ to $\check{X} \setminus \{\tilde{e}'\}$. Since $\check{X} \setminus \{ \tilde{e}' \}$ is connected, it is either isomorphic to $(\C^2 \setminus \{0\}) / \Z_2$ or the double cover $\C^2 \setminus \{0\}$. But $\check{X}$ is normal and the point $\tilde{e}'$ is of codimension $2$, hence $\check{X} = \spec(\C[\check{X} \setminus \{ \tilde{e}' \}])$. Therefore $\check{X} \setminus \{ \tilde{e}' \}$ cannot be isomorphic to $(\C^2 \setminus \{0\}) / \Z_2$, since otherwise $\check{X} \simeq \C^2/\Z_2$, which is singular. Then $\check{X}$ must be isomorphic to the standard symplectic vector space $\C^2$, whose group of graded Poisson automorphisms is $\SL_2$. Therefore $R \simeq \SL_2$. This concludes the proof of (1).
	\vskip 1em

	\noindent\emph{Proof of (2).} Let $N = (\ker\Phi)^\circ$; then $\Lie(N) = \ker \phi$. The group $R$ commutes with $N$ by \eqref{eq:q'_decomp}. Moreover, $R \cap N = \{1\}$ since the restriction of $\Phi$ to $R$ is an isomorphism $R \simeq \SL_2$ by (1). Therefore $Q'^\circ = R \times N$.
	\vskip 1em

	\noindent\emph{Proof of (3).} By (2), we have decompositions 
	\begin{equation} \label{eq:decomp_F}
		(Q'^\circ)^\theta = R^\theta \times N^\theta \quad \text{and} \quad \fk_Q'=(\q')^\theta = \fr^\theta \oplus (\ker \phi)^\theta. 
	\end{equation}  
	We pick a point $e \in (\cS_{\orb})^\sigma$ and complete it into a normal $\slf_2$-triple $(e,h,f)$, and define $Q=Z_G(e,h,f)$ and $K_Q=Z_K(e,h,f)$ as usual. Then we have $N \subset Q^\circ$ since $N \subset \ker\Phi$, hence $(\ker \phi)^\theta \subset \fk_Q$. Let $\gamma$ be the character of $\fk_Q$ associated to $\orbk$ as defined in \Cref{subsec:AOD}. Then by \cite[Proposition 6.3.2, (1)]{LosevYu}\footnote{$\gamma'$ is denoted as $\rho_\omega'$ in \cite[Proposition 6.3.2, (1)]{LosevYu}.}, we have
	\[ \gamma|_{(\ker \phi)^\theta} = \gamma'|_{(\ker \phi)^\theta}. \]
	Since $\orbk$ is linearly admissible by \Cref{prop:adm_dist}, $\gamma|_{(\ker \phi)^\theta}$ can be integrated to a character of $N^\theta$. Therefore by \eqref{eq:decomp_F}, in the non-metaplectic case, the condition \ref{cond:O'_nonadm} holds if and only if $\gamma'$ cannot be integrated to a character of $(K_Q')^\circ$, i.e., $\orbk'$ is not linearly admissible. Now we can apply \Cref{prop:non_adm_codim2} to deduce \ref{cond:O'_nonadm}.

	It remains to consider the case of \ref{cond:O'_metaplectic_adm} for metaplectic type. Recall the description of the partition $\lambda'$ corresponding to $\orb'$ at the start of the proof, in the case when $\epsilon=\epsilon'=1$. In fact, as observed before the proposition, $\orb'$ must lie in $\cP(\orb)$ by \Cref{thm:codim_X}, and hence $h_\lambda(\lambda_i) \equiv \epsilon' \equiv 1 \bmod 2$. Set $c=\lambda_i -1 = \lambda_{i+1}+1$. By \cite[Lemma 7]{Ohta:Adm}, $\gamma'|_{\fr^\theta}$ can be integrated to a character of $R^\theta$ if and only if the number $m_{>c}(\lambda')$ of even parts in $\lambda'$ greater than $c$ is even. But $m_{>c}(\lambda') \equiv h_\lambda(\lambda_i)-1 \equiv 0 \bmod 2$, hence \ref{cond:O'_metaplectic_adm} holds.
\end{proof}

\begin{proof}[Proof of \Cref{prop:extn_AOD}]
	We are in the same setting as \cite[Appendix]{LosevYu}. Let $\cU$ be the union of $\torbk$ and all $\torbk'$ in all $\orb'$ as above. Then $\cU$ is an open subset of $Y$ and $\codim(Y \backslash \cU, Y) \geqslant 2$. We can then follow the discussion in Section A.2.1, especially Proposition A.2.1, and the same argument as in the proof of Theorem A.3.3 in \cite{LosevYu}, as well as \Cref{prop:A1} (especially \ref{item:nonadm}). Specifically, when $\GR$ is linear and $d$ is the usual BV duality map, we are in the case of \cref{prop:adm_classical} and the case \ref{cond:O'_nonadm} of \Cref{prop:A1}, so that $l$ is even, $n=1$ and $k=0$ in \cite[Proposition A.2.1]{LosevYu}; when $\GR = \Mp(2n,\R)$ and $d$ is the metaplectic BV duality map, we are in the case of \cref{prop:metaplectic_adm} and the case \ref{cond:O'_metaplectic_adm} of \Cref{prop:A1}, so that $l$ is odd, $n=2$ and $k=1$ in \cite[Proposition A.2.1]{LosevYu}. Then in all cases, \cite[Proposition A.2.1]{LosevYu} shows that $\cE$ extends to a $\OO_\cU$-coherent $\TT^+_{\cU}$-module $\cE_{min}$, which is nothing but the minimal extension of $\cE$ to $\cU$.

	Now since $\codim(Y \setminus \cU, Y) \geqslant 2$, the zero extension $\overline{\cE}$ of $\cE_{min}$ to $Y$ is also $\OO_Y$-coherent by \cite[Proposition 5.11.1]{EGA4}. The uniqueness statement about $\overline{\cE}$ follows from the uniqueness of the minimal extension. The equivariant structures extend uniquely by the uniqueness of $\overline{\cE}$. The property that the extended action on $\overline{\cE}$ is strong is automatic.
\end{proof}

\subsection{Quantization of nilpotent $K$-orbit covers} \label{subsec:quant_K_orb}

We assume the setting and notation in \Cref{subsec:enlarge_lag} and \Cref{subsec:adm_extn}.
As above, let $\varphi: Z=\horbk \to \torbk$ be a universal $K$-equivariant cover of $\torbk$, so that $\hat{\rho}:=\tilde{\rho} \circ \varphi: \horbk \to \orbk$ is also a universal $K$-equivariant cover of $\orbk$. Since $\tilde{\rho}$ is Galois, we can retain the notations and definitions in \Cref{subsubsec:galois} (see \Cref{rem:strong_cm_action_Galois}). In particular, we have 
\[
	\cW = \varphi_* \omega_Z^{\smallhalf} \in \Coh(\TT^+_{Y^\diamond}, K \times \tGamma_K^{\cm})^{gen}.
\]
By \Cref{prop:extn_AOD}, $\cW$ extends uniquely to an object $\overline{\cW}$ in $\Coh(\TT^+_Y, K \times \tGamma_K^{\cm})$.

As in \Cref{subsubsec:strong_cm_action}, we fix a choice of the character $\varepsilon: \widetilde{T} \to \cm$ and $\kappa = d\varepsilon$, and lift any $V \in \Rep(\tGamma_K)^{gen}$ to an object $V \boxtimes_\C \C_\varepsilon$ in $\Rep(\tGamma_K^{\cm})^{gen}$ (still denoted as $V$) via a functor $\Rep(\tGamma_K)^{gen} \hookrightarrow \Rep(\tGamma_K^{\cm})^{gen}$.
Recall from \eqref{eq:cWV_Y} that we have
\[
	\cWV = (V \otimes_\C \cW)^{\Gamma_\varphi} \in \Coh^\kappa(\TT^+_{Y^\diamond}, K \times \Gamma_{\tilde{\rho}}^{\cm}).
\]
By \Cref{lem:galois_pullback_adm_vb}, specifically \eqref{eq:cWV_FV_pullback}, we have a natural isomorphism 
	\[\cWV \simeq \tilde{\rho}^* \cF_V \quad \text{in} \quad \Coh^\kappa(\TT^+_{Y^\diamond}, K \times \Gamma_{\tilde{\rho}}^{\cm}).\] 
By \Cref{prop:extn_AOD} again, $\cWV$ extends uniquely to an object $\cWVbar$ in $\Coh^\kappa(\TT^+_Y, K \times \Gamma_K^{\cm})$; moreover, we have a unique isomorphism
\begin{equation} \label{eq:isom_cWVbar_Wbar_inv}
	\cWVbar \simeq (V \otimes_\C \overline{\cW})^{\Gamma_\varphi} \quad \text{in} \quad \Coh^\kappa(\TT^+_Y, K \times \Gamma_K^{\cm})
\end{equation} 
which restricts to the identity isomorphism of $\cWV$ by the uniqueness of the minimal extension.

By \Cref{prop:dual_quasi-dist}, $\widetilde{\orb}$ is $2$-leafless. By \Cref{prop:H_vanishing}, the smooth locus $X^{reg}$ is a strongly admissible graded symplectic variety with symplectic form of degree $2$. 
Let $(\OO_\hb, \epsilon, \beta_\hb, \Phi_\hb)$ be an even graded formal quantization of $X^{reg}$. Then the Hamiltonian $G \times \Gamma$-action on $X^{reg}$ lifts to a symmetrized quantum Hamiltonian $G \times \Gamma$-action on $\OO_\hb$ by \Cref{prop:quan_moment_symm}. 
Recall that in \Cref{subsec:quant_conical_symp_sing}, we have defined the canonical filtered quantization $\cA_0 = \Gamma(X, \OO_\hb)^{fin}/(\hb-1)$ of $\C[X^{reg}] = \C[X] = \C[\widetilde{\orb}]$. The quantum co-moment map $\Phi_\hb$ descends to a quantum co-moment map $\Phi: \g \to \cA_0$. We also regard $\cA_0$ as a $G \times \Gamma_K$-equivariant quantization where the $\Gamma_K$-action is the one induced from the $\Gamma$-action on $\cA_0$ and the composition $\Gamma_K \twoheadrightarrow \Gamma_{\tilde{\rho}} \subset \Gamma$.

At this point the construction proceeds in two related steps. First, we quantize the universal extended object $\overline{\cW}$ on $Y$ to obtain $W_{\orbk}$. Then, after tensoring with the chosen genuine representation $V$ and taking $\Gamma_\varphi$-invariants, we quantize the corresponding extended object $\cWVbar$ to obtain $\WV_{\orbk}$.

Let $\iota$ denote the closed embedding $\orbk \hookrightarrow \orb$ and let $\tilde{\iota}$ denote both the closed embeddings $\torbk \hookrightarrow \widetilde{\orb}$ and $Y \hookrightarrow X^{reg}$ by abuse of notation. We can now apply the constructions in \cite{LY} to quantize $\overline{\cW}$ over the Lagrangian subvariety $Y$ to obtain the following proposition. 
 
\begin{Prop} \label{prop:quant_cW}
	There exists an HC $(\cA_0, K \times \tGamma_K)$-module $W_{\orbk}$, with a compatible filtration indexed by $a + \Z$, for some $a \in \frac{1}{2}\Z$, and defined up to degree shift, satisfying the following properties:
    \begin{enumerate}
        \item There is a $K \times \tGamma_K^{\cm}$-equivariant injective homomorphism $\gr W_{\orbk} \hookrightarrow \Gamma(Y,\overline{\cW})$ of finitely generated $\C[X]$-modules, whose localization to $X^{reg}$ is a $K \times \tGamma_K^{\cm}$-equivariant isomorphism $\gr W_{\orbk}|_{X^{reg}} \simeq \tilde{\iota}_* \overline{\cW}$ of $\OO_{X^{reg}}$-modules.
        \item After forgetting the filtration, $W_{\orbk}$ as a HC $(\g, K \times \tGamma_K)$-module is independent of the choice of $K$-orbit $\torbk \subset \rho^{-1}(\orbk)$.
    \end{enumerate}  
\end{Prop}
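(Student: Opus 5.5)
The plan is to run the construction of \cite[Sections 6--7]{LY} in the present setting, using \eqref{eq:codim_Y} in place of the codimension hypothesis there.

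\textbf{Step 1: identify the twist.} Take the symmetrized even quantization $(\OO_\hb,\epsilon,\beta_\hb,\Phi_\hb)$ of $X^{reg}$, with its symmetrized Hamiltonian $G\times\Gamma$-action. By \Cref{prop:tatp_symm}, the Picard algebroid $\TT(\OO_\hb,Y)^+$ is $K\times\cm$-equivariantly isomorphic to $\TT^+_Y$. The $\Gamma_K$-action enters through $\Gamma_K\twoheadrightarrow\Gamma_{\tilde\rho}\subset\Gamma$, and $\Gamma_{\tilde\rho}$ preserves $Y$. Since the $\Gamma$-action is symmetrized, the isomorphism is also $\Gamma_K^{\cm}$-equivariant. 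Hence $\overline{\cW}$, obtained from \Cref{prop:extn_AOD}, is an $\OO_Y$-coherent strong $(\TT(\OO_\hb,Y)^+,K)$-module. It carries a compatible genuine $\tGamma_K^{\cm}$-action.

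\textbf{Step 2: quantize $\overline{\cW}$.} Decompose $\overline{\cW}$ into $\tGamma_K$-isotypic pieces, each of the form $V\otimes\cWVbar$ via \eqref{eq:isom_cWVbar_Wbar_inv}. On each irreducible summand the weak $\cm$-action is solid by \Cref{rem:solid}, since it is $(\cm,\kappa)$-equivariant. \Cref{thm:quan_lag} then gives a unique graded quantization $\cE_\hb$. \Cref{thm:quan_lag_equiv} lifts the $K$-action uniquely, making $\cE_\hb$ a $K$-equivariant graded Hamiltonian quantization. By uniqueness, the $\tGamma_K$-action (equivalently $\Gamma_{\tilde\rho}$ twisted by $V$) also lifts: for $g$ in the group, $g^*\cE_\hb$ is again a graded quantization of the same module, so it is canonically isomorphic to $\cE_\hb$.

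\textbf{Step 3: global sections.} Set $W_{\orbk}:=\Gamma(X^{reg},\tilde\iota_*\cE_\hb)^{fin}/(\hb-1)$. It is a module over $\cA_0=\Gamma(X^{reg},\OO_\hb)^{fin}/(\hb-1)$. Filter it by $\cm$-weights; half-integral weights can occur when $\kappa$ is non-integral, which explains the index set $a+\Z$. The differential of $K$ agrees with $\Phi$ on $\fk$, so $W_{\orbk}$ is a $(\cA_0,K\times\tGamma_K)$-module.

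For (1), use the standard injection $\gr\Gamma(\cE_\hb)\hookrightarrow\Gamma(Y,\overline{\cW})$, which comes from the $\hb$-adic sequence $0\to\hb\cE_\hb\to\cE_\hb\to\overline{\cW}\to0$.
\begin{itemize}
\item \emph{Finite generation.} $\Gamma(Y,\overline{\cW})$ is a finitely generated $\C[X]$-module: $\overline{\cW}$ is coherent on $Y$, the closure of $Y$ in $X$ differs from $Y$ in codimension $\ge2$ (in fact $\ge3$), and one uses normality together with \cite[Proposition 5.11.1]{EGA4}. Hence $\gr W_{\orbk}$ is finitely generated, and $W_{\orbk}$ is Harish-Chandra.
\item \emph{Surjectivity after localization.} This reduces to vanishing of the obstruction in $H^1(Y,\overline{\cW}\otimes\OO(k))$ along the $\hb$-filtration, locally on $X^{reg}$. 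Localizing to affine opens of $X^{reg}$ and using completeness of $\cE_\hb$, the graded pieces lift, which gives $\gr W_{\orbk}|_{X^{reg}}\simeq\tilde\iota_*\overline{\cW}$.
\end{itemize}

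For (2), a different choice of $\torbk$ is $\psi(\torbk)$ for some $\psi\in\Gamma$, by \Cref{rem:choice_Y}. Transporting all data by $\psi$ gives a $\psi$-twisted isomorphism of modules. Because $\psi$ commutes with $G$ and preserves $\Phi$ (its image lies in $\cA_0^\Gamma$), this is an isomorphism of $(\g,K\times\tGamma_K)$-modules once the $\cA_0$-structure is forgotten.

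\textbf{Main obstacle.} The difficult step is the localization isomorphism in (1). In \cite{LY} it relies on $\codim(\partial Y)\ge3$ within a smooth ambient, together with vanishing of $H^1$ of the quantized sheaf on the punctured neighbourhoods. Here we must verify that the same Hartogs/depth argument works on $X^{reg}$ with $Y$ closed in $X^{reg}$, using \eqref{eq:codim_Y}. I would first try to reproduce \cite[Lemma 7.3.x]{LY} verbatim, replacing $\overline{\orb}$ by $X$.
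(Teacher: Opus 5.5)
Your overall route is the paper's: identify $\tatp$ with $\TT^+_Y$ via \Cref{prop:tatp_symm}, quantize the extension $\overline{\cW}$ with \Cref{thm:quan_lag_equiv}, and set $W_{\orbk}=\Gamma(Y,\overline{\cW}_\hb)^{fin}/(\hb-1)$. For (1), rerun the arguments of \cite[Theorem 7.2.3 and Proposition 7.3.1]{LY} with \eqref{eq:codim_Y}, read as you do as a bound on the boundary of $Y$ inside its closure in $X$. Property (2) then follows from \Cref{rem:choice_Y}. Your fallback for the localization isomorphism is exactly what the paper does. Your local sketch (lifting on affine opens) would not suffice by itself, because $W_{\orbk}$ is built from global sections over $Y$. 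The cokernel of $\gr W_{\orbk}\hookrightarrow\Gamma(Y,\overline{\cW})$ is controlled by the $\hb$-torsion of $H^1(Y,\overline{\cW}_\hb)$, and the point is to show it is supported on $X^{sing}$.

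Step 2, however, contains a false claim that you should delete rather than repair. $\overline{\cW}$ does not split into $\tGamma_K$-isotypic pieces $V\otimes\cWVbar$. The reason is that $\tGamma_K$ acts on $Y$ through $\Gamma_K\twoheadrightarrow\Gamma_{\tilde\rho}$, so it does not act $\OO_Y$-linearly; only $\tGamma_\varphi$ does. The actual relation is \eqref{eq:isom_cWVbar_Wbar_inv}, namely $\cWVbar\simeq(V\otimes\overline{\cW})^{\Gamma_\varphi}$. A rank count confirms this: $\overline{\cW}$ has rank $|\Gamma_\varphi|$, while $\bigoplus_V V\otimes\cWVbar$ has rank $\sum_V(\dim V)^2=|\Gamma_K|$. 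These differ as soon as $\Gamma_{\tilde\rho}\neq1$, and quantizing your summands would then produce $\bigoplus_V V\otimes\WV_{\orbk}$ rather than $W_{\orbk}$.

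The decomposition is also unnecessary. \Cref{rem:solid} requires no irreducibility, and $\overline{\cW}$ is a strong module for the action of $\widetilde{T}^\circ$ inherited from $\omega_Z^{\smallhalf}$. That action is therefore solid, and $\overline{\cW}$ can be quantized in one go, as in the paper. The grading must be taken with respect to $\widetilde{T}^\circ$, not $T$. In case (b) of \Cref{subsubsec:strong_cm_action}, the $\widetilde{T}$-action on $\overline{\cW}$ is genuine and does not descend to $T$. This, rather than a separate choice of non-integral $\kappa$ on summands, is why the filtration is indexed by $\frac12+\Z$.

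Finally, ``canonically isomorphic'' quantizations do not by themselves give an honest $\tGamma_K$-action; you need uniqueness of the lifted isomorphisms, i.e.\ the cocycle condition. The paper obtains this by applying \Cref{thm:quan_lag_equiv} to $K\times\tGamma_K$. There $\tGamma_K$ acts on $\OO_\hb$ through $\Gamma_K\to\Gamma_{\tilde\rho}\subset\Gamma$ and the symmetrized $G\times\Gamma$-action.
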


\begin{proof}
	We follow the same argument as in \cite[Section 7.2]{LY}. Let $\OO_\hb$ be the canonical graded quantization of $X^{reg}$. We endow $\overline{\cW}$ with the $\cm$-action given by that of $\widetilde{T}^\circ$. By \Cref{rem:solid}, this action is solid. Then we can apply \Cref{thm:quan_lag_equiv} and \Cref{prop:tatp_symm} to quantize $\overline{\cW}$ over $Y$ to obtain a $K \times \tGamma_K$-equivariant graded Hamiltonian quantization $\overline{\cW}_\hb$ of $\overline{\cW}$ over $\OO_\hb$, with the graded structure coming from the action of $\widetilde{T}^\circ$. Since the action of $\tGamma_K \times \widetilde{T}$ on $\overline{\cW}$ factors through the quotient group $\tGamma_K^{\cm}$, the action of $\tGamma_K \times \widetilde{T}$ on $\overline{\cW}_\hb$ also factors through $\tGamma_K^{\cm}$.

	Next we take $W_{\orbk} = \Gamma(Y, \overline{\cW}_\hb)^{fin}/(\hb-1)$, where $(\cdot)^{fin}$ means taking the finite part of $\Gamma(Y, \overline{\cW}_\hb)$ with respect to the action of $\widetilde{T}^\circ$. Then $W_{\orbk}$ is naturally a filtered $(\cA_0, K \times \tGamma_K)$-module. When the covering morphism $\widetilde{T}^\circ \twoheadrightarrow T$ is an isomorphism as in case (a) of \Cref{subsubsec:strong_cm_action}, the filtration is indexed by $\Z$. Otherwise, $\widetilde{T} \twoheadrightarrow T$ is a connected $2$-fold covering as in case (b) of \Cref{subsubsec:strong_cm_action}, and the filtration is indexed by $\frac{1}{2} + \Z$.
	
	Thanks to the codimension condition \eqref{eq:codim_Y}, we can apply the same arguments as in the proofs of \cite[Theorem 7.2.3 and Proposition 7.3.1]{LY} to show that $W_{\orbk}$ is a finitely generated module over $\cA_0$ and $\gr W_{\orbk}$, as a module over $\gr\cA_0 \simeq \C[X]$, has support in $X$ equal to $\overline{Y}$. Moreover, we have a $K \times \tGamma_K^{\cm}$-equivariant injective homomorphism $\gr W_{\orbk} \hookrightarrow \Gamma(Y,\overline{\cW})$ of $\C[X]$-modules that localizes to an isomorphism $\gr W_{\orbk}|_{X^{reg}} \simeq \tilde{\iota}_* \overline{\cW}$. Therefore $W_{\orbk}$ satisfies property (1). Property (2) follows from \Cref{rem:choice_Y}.
\end{proof}

We next apply the same quantization construction to the sheaf $\cWVbar$ associated with $V$.

\begin{Prop} \label{prop:quant_cWV}
	For any admissible orbit datum $(\orbk, \cF_V)$ with the corresponding $V \in \Rep(\tGamma_K)^{gen}$ and any lifting of $V$ to an object in $\Rep(\tGamma_K^{\cm})^{gen}$, there exists a filtered HC $(\A, K \times \Gamma_{\tilde{\rho}})$-module $\WV = \WV_{\orbk}$ satisfying the following properties:
    \begin{enumerate}
        \item There is a $K \times \Gamma_{\tilde{\rho}}^{\cm}$-equivariant injective homomorphism $\gr \WV \hookrightarrow \Gamma(Y,\cWVbar)$ of finitely generated $\C[X]$-modules, whose localization to $X^{reg}$ is a $K \times \Gamma_{\tilde{\rho}}^{\cm}$-equivariant isomorphism $\gr \WV|_{X^{reg}} \simeq \tilde{\iota}_* \cWVbar$ of $\OO_{X^{reg}}$-modules.
        \item After forgetting the filtration, $\WV$ as a HC $(\g, K \times \Gamma_{\tilde{\rho}})$-module depends only on the admissible orbit datum $(\orbk, \cF_V)$ and is independent of the choice of the $K$-orbit $\torbk \subset \rho^{-1}(\orbk)$ and the choice of the lifting of $V$ to an object in $\Rep(\tGamma_K^{\cm})^{gen}$.
        \item There is a canonical isomorphism $\WV \simeq (V \otimes_\C W_{\orbk})^{\Gamma_\varphi}$ of filtered $(\g, K \times \Gamma_{\tilde{\rho}})$-modules (after shifting the filtration degrees on both sides appropriately).
    \end{enumerate}  
\end{Prop}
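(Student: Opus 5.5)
The plan is to mirror the proof of \Cref{prop:quant_cW}, with $\overline{\cW}$ replaced by $\cWVbar$. Then we compare the outcome with $(V\otimes_\C W_{\orbk})^{\Gamma_\varphi}$ using the isomorphism \eqref{eq:isom_cWVbar_Wbar_inv}.

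\emph{Construction.} Fix the lift $V\boxtimes_\C\C_\varepsilon$ and regard $\cWVbar \in \Coh^\kappa(\TT^+_Y, K\times\Gamma_{\tilde{\rho}}^{\cm})$. This object comes from \Cref{prop:extn_AOD} applied to $\cWV\simeq\tilde{\rho}^*\cF_V$ (\Cref{lem:galois_pullback_adm_vb}).
\begin{enumerate}
\item We equip $\cWVbar$ with the $\cm$-action given by $\widetilde{T}^\circ$. This action is $(\cm,\kappa)$-equivariant, hence solid by \Cref{rem:solid}.
\item By \Cref{prop:tatp_symm}, $\TT(\OO_\hb,Y)\simeq\TT^+_Y$ for the canonical even quantization. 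So \Cref{thm:quan_lag_equiv} yields a $K\times\Gamma_{\tilde{\rho}}$-equivariant graded Hamiltonian quantization $\cWVbar_\hb$. The $\Gamma_{\tilde{\rho}}$-equivariance follows from the uniqueness part of \Cref{thm:quan_lag}, applied together with the $\Gamma$-action on $\OO_\hb$.
\item We set $\WV:=\Gamma(Y,\cWVbar_\hb)^{fin}/(\hb-1)$.
\item Using $\codim(\partial Y,Y)\geqslant 3$ from \eqref{eq:codim_Y}, the arguments of \cite[Theorem 7.2.3, Proposition 7.3.1]{LY} give finite generation over $\cA_0$. They also give the injection $\gr\WV\hookrightarrow\Gamma(Y,\cWVbar)$, which becomes an isomorphism after localizing to $X^{reg}$. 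This proves (1).
\end{enumerate}

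\emph{Property (3).} Taking $\Gamma_\varphi$-invariants is exact, since $\Gamma_\varphi$ is finite. It also commutes with $\hb$-adic completion, global sections, taking $\cm$-finite parts and $(\hb-1)$-reduction. Therefore $(V\otimes_\C\overline{\cW}_\hb)^{\Gamma_\varphi}$ is a graded Hamiltonian quantization of $(V\otimes\overline{\cW})^{\Gamma_\varphi}\simeq\cWVbar$. By uniqueness in \Cref{thm:quan_lag} and \Cref{thm:quan_lag_equiv}, it is isomorphic to $\cWVbar_\hb$. Passing to finite parts then gives $\WV\simeq(V\otimes W_{\orbk})^{\Gamma_\varphi}$. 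The degree shift is needed because the gradings come from $\C_\varepsilon$ and $\widetilde{T}^\circ$, so the $\cm$-actions agree only up to a character.

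\emph{Property (2).} Independence of $\torbk$ follows as in \Cref{prop:quant_cW}(2): the $\Gamma$-action permutes the choices (\Cref{rem:choice_Y}) and intertwines the quantizations. A different lift of $V$ differs by a character of $T$. This only twists the $\cm$-action, hence shifts the filtration, and leaves the underlying $(\g,K\times\Gamma_{\tilde{\rho}})$-module unchanged. Finally, $V$ is determined by $\cF_V$ through \Cref{prop:classification_AOD}, and \Cref{lem:bij_irr_gen} shows this determination is independent of base points. So $\WV$ depends only on $(\orbk,\cF_V)$.

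\emph{Main obstacle.} The hardest step is making (3) rigorous. One must show that the $\tGamma_K$-action on $\overline{\cW}_\hb$ from \Cref{prop:quant_cW} is compatible with the module structures, so that the $\Gamma_\varphi$-invariants are again a Hamiltonian quantization with the right quantum co-moment map. One must also check that $\tGamma_\varphi$ acts through $\Gamma_\varphi$ on $V\otimes\overline{\cW}_\hb$. For both points I would rely on the uniqueness of symmetrized equivariant structures (\Cref{prop:quan_moment_symm} and \Cref{thm:quan_lag_equiv}).
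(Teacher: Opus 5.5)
Your proposal is correct and follows essentially the same route as the paper: you build $\cWVbar_\hb$ from \Cref{thm:quan_lag_equiv} and \Cref{prop:tatp_symm}, set $\WV=\Gamma(Y,\cWVbar_\hb)^{fin}/(\hb-1)$ and argue (1)--(2) as in \Cref{prop:quant_cW}, and for (3) you use \eqref{eq:isom_cWVbar_Wbar_inv} to see that $(V\otimes_\C\overline{\cW}_\hb)^{\Gamma_\varphi}$ is another such quantization of $\cWVbar$, then commute $\Gamma_\varphi$-invariants with global sections and $\cm$-finite parts. The obstacle you flag is mild: $\Gamma_\varphi$ maps trivially to $\Gamma_{\tilde{\rho}}$, so it acts trivially on $Y$ and on $\OO_\hb$, hence $\tGamma_\varphi$ acts $\OO_\hb$-linearly on $\overline{\cW}_\hb$ and the invariants are automatically an $\OO_\hb$-submodule satisfying the same Hamiltonian condition.
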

 
\begin{proof}
    The proof of properties (1) and (2) is similar to that of \Cref{prop:quant_cW}. In particular, we can construct a $K \times \Gamma_{\tilde{\rho}}$-equivariant graded Hamiltonian quantization $\cWVbar_\hb$ of $\cWVbar$ over $\OO_\hb$ by applying \Cref{thm:quan_lag_equiv} and \Cref{prop:tatp_symm}, and then take $\WV = \Gamma(Y, \cWVbar_\hb)^{fin}/(\hb-1)$. Different choices of the lifting of $V$ to an object in $\Rep(\tGamma_K^{\cm})^{gen}$ result only in integer shifts of the filtration indices on $W_{\orbk}$.

	To see property (3), we observe that the sheaf $(V \otimes_\C \overline{\cW}_\hb)^{\Gamma_\varphi}$ is also a $K \times \Gamma_{\tilde{\rho}}$-equivariant graded Hamiltonian quantization of $\cWVbar$ over $\OO_\hb$ by \eqref{eq:isom_cWVbar_Wbar_inv}. Finally, note that taking $\Gamma_\varphi$-invariant parts commutes with taking global sections and with taking $\cm$-finite parts.
\end{proof}

\subsection{Quantization and special unipotent representations} \label{subsec:quant_unip}

In this subsection, we will relate special unipotent representations to the quantization of nilpotent $K$-orbit covers discussed in \Cref{subsec:quant_K_orb}. We first set up some notation.

Let $\cUnipOv(\g,K)$ denote the full subcategory of $\MgK$ consisting of $(\g,K)$-modules $M$ that are annihilated by the maximal primitive ideal $\J_{\ckorb}$. Then $\UnipOv(\g,K) = \Irr \cUnipOv(\g,K)$. Let $\cUnipOv^K(\orbk)$ denote the full subcategory of $\cUnipOv(\g,K)$ consisting of $(\g,K)$-modules $M$ that satisfy $\AV(M) = \overline{\orb}_K$. Let $\UnipOv^K(\orbk) := \Irr \cUnipOv^K(\orbk)$ be the set of all equivalence classes of irreducible objects in $\cUnipOv^K(\orbk)$. Then $\UnipOv^K(\orbk)$ is naturally a subset of $\UnipOv(\g,K)$. In fact, we will see in \Cref{thm:Unip_AOD_bij} that $\UnipOv(\g,K)$ is a disjoint union of all $\UnipOv^K(\orbk)$ where $\orbk$ runs over all $K$-orbits in $\orb_{\fk^\perp}$. 

To prepare for this, we will construct a collection of modules in $\UnipOv^K(\orbk)$ via quantization, which will later be shown to exhaust $\UnipOv^K(\orbk)$, as follows.
Retaining the setting and notation in \Cref{subsec:quant_K_orb}, for any $V \in \Rep(\tGamma_K)^{gen}$, we define the HC $(\A^\Gamma, K)$-module
\begin{equation} \label{eq:EV}
    \MV = \MV_{\orbk} := (V \otimes_\C W_{\orbk})^{\Gamma_K}.
\end{equation} 
By \Cref{prop:quant_cWV}(3), we have an isomorphism of HC $(\A^\Gamma, K)$-modules
\begin{equation} \label{eq:MV_WV}
	\MV = [(V \otimes_\C W_{\orbk})^{\Gamma_\varphi}]^{\Gamma_{\tilde{\rho}}} \simeq (\WV_{\orbk})^{\Gamma_{\tilde{\rho}}}.
\end{equation}
Moreover, the good $K \times \Gamma_{\tilde{\rho}}$-stable filtration on $\WV = \WV_{\orbk}$ induces a good $K$-invariant ascending filtration (bounded from below) on $\MV$:
	\[ \MV_{\leqslant i} := (\WV_{\leqslant i})^{\Gamma_{\tilde{\rho}}}, \quad \forall \, i \in \Z. \]

\begin{Prop} \label{prop:MV_AC}
For any $V \in \Rep(\tGamma_K)^{gen}$, the following conclusions hold:
\begin{enumerate}
	\item 
		The HC module $\MV$ only depends on the admissible orbit datum $(\orbk, \cF_V)$ and is independent of the choice of the $K$-orbit $\torbk \subset \rho^{-1}(\orbk)$. 
	\item 
		We have
		\begin{equation}
			\AC(\MV) = [\cF_V] \cdot \orbk \in \AC(\g,K),
		\end{equation}
		where $\cF_V$ is as in \Cref{prop:classification_AOD}. Therefore we have a well-defined additive functor 
		\[ \quan_{\ckorb, \orbk}^K: \Rep(\tGamma_K)^{gen} \simeq \Coh(\TT^+_{\orbk}, K) \to \cUnipOv^K(\orbk), \quad V \mapsto \MV, \]
		called the \emph{quantization functor}.
	\item
		If $V \in \Irr(\tGamma_K)^{gen}$, then $\MV$ has irreducible associated cycle. In particular, $\MV$ is an irreducible $(\g,K)$-module and hence the functor $\quan_{\ckorb, \orbk}^K$ in (2) sends irreducibles to irreducibles.
	\end{enumerate}
\end{Prop}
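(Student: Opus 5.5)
For (1), I will use \Cref{prop:quant_cWV}(2) together with \eqref{eq:MV_WV}. The module $\WV_{\orbk}$, with its $\Gamma_{\tilde{\rho}}$-action, depends only on $(\orbk,\cF_V)$ as a $(\g,K\times\Gamma_{\tilde{\rho}})$-module. Changing $\torbk$ to $\gamma\cdot\torbk$ for $\gamma\in\Gamma$ (\Cref{rem:choice_Y}) conjugates $\Gamma_{\tilde{\rho}}$ inside $\Gamma$, and the $\Gamma$-action on $\cA_0$ intertwines the resulting modules. Hence the invariants $(\WV)^{\Gamma_{\tilde{\rho}}}$ agree up to isomorphism. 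Additivity in $V$ is clear, since each step of the construction (tensoring, invariants, quantization) is additive.

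For (2), the plan is to compute $\gr\MV$ with respect to the filtration $\MV_{\leqslant i}=(\WV_{\leqslant i})^{\Gamma_{\tilde{\rho}}}$. Because $\Gamma_{\tilde{\rho}}$ is finite, taking invariants is exact, so $\gr\MV=(\gr\WV)^{\Gamma_{\tilde{\rho}}}$. By \Cref{prop:quant_cWV}(1), this embeds into $\Gamma(Y,\cWVbar)^{\Gamma_{\tilde{\rho}}}$ with cokernel supported on $\overline{Y}\cap X^{sing}$. Pushing forward along $\rho:X\to\overline{\orb}$ and intersecting with $\fk^\perp$ gives a module supported on $\overline{\orbk}$. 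I will restrict to the open orbit $\orbk$. There the cokernel vanishes, because it is supported in $\rho(\overline Y\setminus\torbk)$, which has strictly smaller dimension. On $\orbk$ the sheaf becomes $(\tilde{\rho}_*\cWV)^{\Gamma_{\tilde{\rho}}}\simeq\cF_V$ by \eqref{eq:rho_cWV_FV}. Since $\gr\MV$ is reduced along $\orbk$ (it is locally free there), the definition of $\mathcal{AC}$ gives $\AC(\MV)=[\cF_V]\cdot\orbk$.

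Next I must check that $\MV$ lies in $\cUnipOv^K(\orbk)$. The $\Ug$-action on $\MV$ factors through $\Phi:\Ug\to\cA_0^\Gamma$, so $\MV$ is annihilated by $\ker\Phi=I_0(\widetilde{\orb})=\J_{\ckorb}$ (\Cref{prop:dual_quasi-dist}(iii)). Finite generation over $\Ug$ follows from finite generation over $\cA_0$ (\Cref{prop:quant_cW}) together with finiteness of $\cA_0$ over the image of $\Ug$, which is a consequence of finiteness of $\C[X]$ over $\C[\overline{\orb}]$. From this, $\AV(\MV)=\overline{\orbk}$.

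For (3), let $V$ be irreducible. Then $\cF_V$ is irreducible by \Cref{prop:classification_AOD}, so $\AC(\MV)$ is irreducible. If $\MV$ had a proper nonzero submodule $M'$, then $M'$ and the quotient $\MV/M'$ would both be annihilated by the maximal ideal $\J_{\ckorb}$, hence their associated varieties would have dimension $\tfrac12\dim\orb$ by \Cref{thm:V_AV}. Both would therefore contribute nonzero classes on maximal orbits of $\orb_{\fk^\perp}$. This contradicts additivity of $\AC$ together with irreducibility of $[\cF_V]\cdot\orbk$. The main obstacle I expect is step (2), namely controlling the cokernel of $\gr\WV\hookrightarrow\Gamma(Y,\cWVbar)$ and confirming that the filtration on $\MV$ is good, so that $\AC$ is really computed from this filtration. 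To handle this, I would invoke the codimension estimate \eqref{eq:codim_Y}, which shows that the boundary contributes only in lower dimension.
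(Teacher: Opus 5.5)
Your proposal is correct and follows essentially the same route as the paper. Part (1) goes through \Cref{rem:choice_Y}. Part (2) uses exactness of $\Gamma_{\tilde{\rho}}$-invariants to get $\gr \MV \simeq (\gr \WV)^{\Gamma_{\tilde{\rho}}}$, localizes over $\orb$ using \Cref{prop:quant_cWV}(1), and applies the descent isomorphism \eqref{eq:rho_cWV_FV}. Part (3) uses maximality of $\J_{\ckorb}$ together with additivity of $\AC$, which is exactly the argument the paper imports from the proof of \cite[Proposition 7.3.1]{LY}.

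The ``obstacle'' you flag is already handled by the finite generation stated in \Cref{prop:quant_cWV}(1), and the goodness of the filtration follows from that. The cokernel vanishes over $\orbk$ because $\overline{Y}\setminus\torbk$ maps into $\partial\orb$, not because it has smaller dimension.
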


\begin{proof}
For (1), the claim follows from \Cref{rem:choice_Y}.

For (2), first note that by \Cref{prop:quant_cWV}(1), we have $\AV(\MV) = \overline{\orb}_K$. Next, observe that taking $\Gamma_{\tilde{\rho}}$-invariants is an exact functor and commutes with localizations. Therefore we have a canonical isomorphism
\begin{equation}
	\gr \MV = \gr [(\WV)^{\Gamma_{\tilde{\rho}}}] \simeq (\gr \WV)^{\Gamma_{\tilde{\rho}}}
\end{equation}
of $(S\fg, K)$-modules, for any $V \in \Irr(\tGamma_K)^{gen}$. 
Then the localization $\gr \MV|_{\orb}$ of $\gr \MV$ over $\orb$ is given by
\begin{equation}\label{eq:grMV_loc}
	\gr \MV|_{\orb} \simeq (\gr \WV)^{\Gamma_{\tilde{\rho}}}|_{\orb} \simeq (\gr \WV|_{\orb})^{\Gamma_{\tilde{\rho}}}
\end{equation}
where $\gr \WV$ is regarded as a module over $\C[\orb]$ via the pullback morphism $\rho^*: \C[\orb] \to \C[\widetilde{\orb}] = \C[X]$. 

On the other hand, \Cref{prop:quant_cWV} and base change imply that
\[
	\gr \WV|_{\orb} \simeq (\rho_* \tilde{\iota}_* \cWVbar)\big|_{\orb} \simeq \rho_* \left[ (\tilde{\iota}_* \cWVbar) \big|_{\widetilde{\orb}}\right] \simeq \rho_*\tilde{\iota}_* \left(\cWVbar \big|_{Y^\diamond} \right) \simeq \iota_* \tilde{\rho}_* \cWV.
\]
Therefore by \eqref{eq:grMV_loc}, we have 
\begin{equation}\label{eq:MV_AC}
	\gr \MV|_{\orb} \simeq (\iota_* \tilde{\rho}_* \cWV)^{\Gamma_{\tilde{\rho}}} = \iota_* (\tilde{\rho}_* \cWV)^{\Gamma_{\tilde{\rho}}} \simeq \iota_* \cF_V,
\end{equation} 
which gives the main statement of part (2). 

For (3), by the proof of the second statement of \cite[Proposition 7.3.1]{LY}, the maximality of the ideal $\J_{\ckorb} = I_0(\widetilde{\orb})$ (\Cref{prop:dual_quasi-dist} (iii)), together with the irreducibility of $\cF_V$ as a $K$-equivariant vector bundle over $\orbk$, implies that $\MV$ is irreducible by part (2). 
\end{proof}

\begin{Cor} \label{cor:quant_map}
	For any quasi-distinguished $\ckorb \in \check{\cN}_o$, let $\orb=d(\ckorb)$, let $\widetilde{\orb}$ be its universal cover, and set $X = \spec(\C[\widetilde{\orb}])$. Assume that $\codim(\partial_{sp} \orb, \overline{\orb}) \geqslant 6$. Then we have an injective map (called the \emph{quantization map})
	\[ \quan_{\ckorb}^K: \AODK(\orb) \hookrightarrow \UnipOv(\g, K), \quad (\orbk, \cF_V) \mapsto \MV, \]
	where, for each $\orbk \subset \orb_{\fk^\perp}$, $V \in \Irr(\tGamma_K)^{gen}$ and $\cF_V \in \AODK(\orbk)$ is as in \Cref{prop:classification_AOD}. 

	Moreover, the composite map $\AC \,\circ \,\quan_{\ckorb}^K: \AODK(\orb) \to \AC(\g,K)$ coincides with the inclusion map $\AODK(\orb) \hookrightarrow \AC(\g,K)$ as in \Cref{subsec:AC}. 
\end{Cor}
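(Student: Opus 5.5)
The plan is to assemble the map from \Cref{prop:MV_AC} over all $K$-orbits in $\orb_{\fk^\perp}$, and to deduce injectivity from the associated-cycle computation.

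First, the hypotheses put us in the setting of \Cref{subsec:adm_extn}. Since $\codim(\partial_{sp}\orb,\overline{\orb})\geqslant 6$, \Cref{thm:codim_X} gives $\codim(X^{sing},X)\geqslant 6$, hence \eqref{eq:codim_Y} holds for every $Y$ obtained from a $K$-orbit $\orbk\subset\orb_{\fk^\perp}$ and a choice of $\torbk\subset\rho^{-1}(\orbk)$. By \Cref{prop:adm_dist} (or \Cref{prop:metaplectic_adm} in the metaplectic case), every such $\orbk$ is admissible, so the constructions of \Cref{subsec:quant_K_orb} apply. For each $\orbk$ and each $V\in\Irr(\tGamma_K)^{gen}$, \Cref{prop:MV_AC}(3) makes $\MV$ an irreducible HC $(\g,K)$-module. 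By \Cref{prop:MV_AC}(1), $\MV$ depends only on the datum $(\orbk,\cF_V)$, and \Cref{prop:classification_AOD} identifies $\Irr(\tGamma_K)^{gen}$ with $\AODK(\orbk)$; together these make $\quan_{\ckorb}^K$ well defined on the disjoint union \eqref{eq:AODO}.

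The step needing the most care is checking that $\MV$ actually lies in $\UnipOv(\g,K)$, that is, $\Ann(\MV)=\J_{\ckorb}$.
\begin{enumerate}
\item $\MV$ is a module over $\cA_0^{\Gamma}$, with $\g$ acting through the quantum co-moment map $\Phi$. Hence $\MV$ is annihilated by $\ker\Phi=I_0(\widetilde{\orb})$, which equals $\J_{\ckorb}$ by \Cref{prop:dual_quasi-dist}(iii).
\item $\J_{\ckorb}$ is maximal by \Cref{prop:unip_ideal_prim}, and $\Ann(\MV)$ is a proper ideal because $\MV\neq 0$. So $\Ann(\MV)=\J_{\ckorb}$.
\end{enumerate}
As a consistency check, \Cref{thm:V_AV} together with $\AV(\MV)=\overline{\orb}_K$ confirms that the associated variety of the annihilator is $\overline{\orb}$.

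Finally, \Cref{prop:MV_AC}(2) gives $\AC(\MV)=[\cF_V]\cdot\orbk$. This is exactly the image of $(\orbk,\cF_V)$ under the inclusion $\AODK(\orb)\hookrightarrow\AC(\g,K)$ of \Cref{subsec:AC}, which proves the last assertion. Injectivity then follows at once: $\AC\circ\quan_{\ckorb}^K$ is this inclusion, which is injective, so $\quan_{\ckorb}^K$ is injective. Note that distinct orbits $\orbk$ give distinct cycles, so modules coming from different $K$-orbits cannot coincide.
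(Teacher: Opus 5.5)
Your proposal is correct and follows the paper's route. The paper reads the corollary off from \Cref{prop:MV_AC}: well-definedness comes from part (1), irreducibility from part (3), and both injectivity and the identity $\AC\circ\quan_{\ckorb}^K=$ inclusion come from the formula $\AC(\MV)=[\cF_V]\cdot\orbk$ in part (2). Your explicit check that $\Ann(\MV)=\J_{\ckorb}$, via $\ker\Phi=I_0(\widetilde{\orb})=\J_{\ckorb}$ and maximality, spells out what the paper builds into the target category $\cUnipOv^K(\orbk)$ of the quantization functor.
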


We now generalize the quantization map to the case of all quasi-distinguished orbits $\ckorb \in \check{\cN}_o$. In view of \cref{thm:codim_X}, it remains to consider the case when $\codim(\partial \orb, \overline{\orb}) \geqslant 4$. 

In this case, we have an equality $I_0(\widetilde{\orb}) = I_0(\orb)$ of maximal primitive ideals by \cref{prop:almost_etale_unip_ideal}. Then $I_0(\orb)$ equals the special unipotent ideal $\J_{\ckorb}$ attached to $\ckorb$ by \cref{prop:dual_quasi-dist} (iii). Combining this with the assumption that $(\g,K)$ corresponds to linear reductive groups $\GR$, \cite[Theorem 1.3.1]{LosevYu} implies that the associated cycle map $\AC: \UnipOv(\g, K) \to \AC(\orb, K)$ is always a bijection onto the subset $\AODK(\orb)$ of $\AC(\orb, K)$ when $\codim(\partial \orb, \overline{\orb}) \geqslant 4$. Moreover, in this case the inverse of the map $\AC: \UnipOv(\g, K) \to \AODK(\orb)$ is given by the same construction of $\quan_{\ckorb}^K$ above applied to $\orb$ or $\widetilde{\orb}$, see \cite[Section 5.7]{LosevYu}. 

Note that it can happen that $\orb$ satisfies both $\codim(\partial_{sp} \orb, \overline{\orb}) \geqslant 6$ and $\codim(\partial \orb, \overline{\orb}) \geqslant 4$, but the discussions above show that in this case the definition of the quantization map coincides with the inverse of $\AC$.
We therefore denote the relevant map uniformly by $\quan_{\ckorb}^K$. We summarize in the following proposition.

\begin{Prop} \label{prop:quant_map_quasi-dist}
	For any quasi-distinguished nilpotent orbit $\ckorb \in \check{\cN}_o$, we have an injective quantization map
	\[ \quan_{\ckorb}^K: \AODK(\orb) \hookrightarrow \UnipOv(\g, K), \]
	such that the composite map $\AC \,\circ \,\quan_{\ckorb}^K: \AODK(\orb) \to \AC(\g,K)$ coincides with the inclusion map $\AODK(\orb) \hookrightarrow \AC(\g,K)$ as in \Cref{subsec:AC}. 

	Moreover, when $\codim(\partial \orb, \overline{\orb}) \geqslant 4$, $\quan_{\ckorb}^K$ is a bijection with inverse given by $\AC$. 
\end{Prop}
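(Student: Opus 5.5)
The plan is to split according to the dichotomy of \Cref{thm:codim_X}: either $\codim(\partial_{sp}\orb,\overline{\orb})\geqslant 6$, or $\codim(\partial\orb,\overline{\orb})\geqslant 4$. By \Cref{prop:codim_sp_exceptional}, when the first fails the second holds, so the two regimes exhaust all quasi-distinguished $\ckorb$.

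\textbf{First regime.} In the first regime I would invoke \Cref{cor:quant_map} directly. It supplies the injective map $(\orbk,\cF_V)\mapsto \MV$ together with the identity $\AC\circ\quan_{\ckorb}^K=\mathrm{incl}$. Well-definedness on isomorphism classes comes from \Cref{prop:MV_AC}(1), and $\MV\in\UnipOv(\g,K)$ from \Cref{prop:MV_AC}(3). Injectivity is then immediate, because the associated cycle recovers the datum $(\orbk,\cF_V)$, and the bijection $\Irr(\tGamma_K)^{gen}\simeq\AODK(\orbk)$ of \Cref{prop:classification_AOD} shows the domain is exactly $\AODK(\orb)$. Membership in $\UnipOv$ uses that $\MV$ is annihilated by $I_0(\widetilde{\orb})=\J_{\ckorb}$ (\Cref{prop:dual_quasi-dist}(iii)), that it is irreducible, and that $\AV$ has the correct dimension, so that $\Ann$ equals the maximal ideal $\J_{\ckorb}$.

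\textbf{Second regime.} Here I would instead appeal to \cite[Theorem 1.3.1]{LosevYu}, using $I_0(\orb)=I_0(\widetilde{\orb})=\J_{\ckorb}$, which follows from \Cref{prop:almost_etale_unip_ideal} and \Cref{rem:2_leafless_covers}. This gives that $\AC:\UnipOv(\g,K)\to\AODK(\orb)$ is a bijection, and I would define $\quan_{\ckorb}^K$ as its inverse.

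\textbf{Consistency on the overlap.} When both hypotheses hold, the two definitions must agree. Both produce irreducible modules annihilated by $\J_{\ckorb}$ with associated cycle $[\cF_V]\cdot\orbk$. Since $\AC$ is injective on $\UnipOv$ in this regime, the two constructions coincide. Alternatively, the construction in \cite[Section 5.7]{LosevYu} is literally the same quantization of $\overline{\cW}$.

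\textbf{Expected obstacle.} The main subtle point is checking that the hypotheses of \cite{LosevYu} (linear $\GR$ and the codimension-$4$ condition) are met in the second regime. If the metaplectic case arises there, it needs a separate remark, since \cite{LosevYu} treats linear groups. By \Cref{prop:codim_sp_classical}, however, classical and metaplectic types always fall in the first regime, so the second regime occurs only for exceptional (hence linear) groups. This observation removes the issue.
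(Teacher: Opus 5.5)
Your main line is the paper's argument. You use the dichotomy of \Cref{thm:codim_X}, take $\quan_{\ckorb}^K$ from \Cref{cor:quant_map} when $\codim(\partial_{sp}\orb,\overline{\orb})\geqslant 6$, and invoke \cite[Theorem 1.3.1]{LosevYu} via $I_0(\orb)=I_0(\widetilde{\orb})=\J_{\ckorb}$ when $\codim(\partial\orb,\overline{\orb})\geqslant 4$. You then reconcile the overlap using injectivity of $\AC$ (or \cite[Section 5.7]{LosevYu}).

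What needs correcting is your last paragraph. ``Classical and metaplectic types always fall in the first regime'' does not imply ``the second regime occurs only for exceptional groups,'' and the latter is false. For example, the regular orbit $[2n]$ of $\ckfg=\fmp(2n,\C)$ is distinguished. Its metaplectic dual is the minimal orbit $[2,1^{2n-2}]$ of $\fsp(2n,\C)$, whose boundary $\{0\}$ has codimension $2n\geqslant 4$ for $n\geqslant 2$ (cf.\ \Cref{ex:metaplectic_rep}).

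So the first regime suffices to define the map and to prove the first assertion in all classical and metaplectic cases. However, the ``Moreover'' clause for $\Mp(2n,\R)$ in such cases is not obtained by citing \cite{LosevYu} for linear groups. You would need \cite[Theorem 1.3.1]{LosevYu} to cover this nonlinear pair, or else fall back on the later counting in \Cref{cor:counting_classical}. The paper itself invokes \cite{LosevYu} only under the linearity assumption, so this caveat applies equally to its proof; it is not a defect specific to your approach.
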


We will show in \cref{thm:Unip_AOD_bij} that the quantization map $\quan_{\ckorb}^K$ is always a bijection for any quasi-distinguished nilpotent orbit $\ckorb \in \check{\cN}_o$.

\subsection{Counting and exhaustion} \label{subsec:counting}

Throughout this subsection, we assume that the nilpotent orbit $\ckorb$ in $\ckfg$ is quasi-distinguished. We will show that the number of irreducible special unipotent HC $(\g, K)$-modules attached to $\ckorb$ coincides with the number of admissible orbit data in $\orb = d(\ckorb)$ for $(\g, K)$. Together with the construction in \Cref{subsec:quant_unip}, this shows that all special unipotent irreducible HC $(\g, K)$-modules attached to $\ckorb$ can be obtained from our construction.

\begin{Thm} \label{thm:Unip_AOD_bij}
	Let $G$ be a connected complex simple group with Lie algebra $\g$. Consider the following two cases:
	\begin{enumerate}
		\item $\ckfg$ is the Langlands dual Lie algebra of $\g$, $d$ is the usual Barbasch-Vogan duality map, and $\GR = G(\R)^\circ$ so that $K = (G^\theta)^\circ$;
		\item $\ckfg$ is the metaplectic dual Lie algebra of $\g$, $d$ is the metaplectic Barbasch-Vogan duality map, and $\GR = \Mp(2n,\R)$.
	\end{enumerate}
	Then in both cases, for any quasi-distinguished nilpotent orbit $\ckorb \in \check{\cN}_o$ with $\orb=d(\ckorb)$, the quantization map 
	\[ \quan_{\ckorb}^K: \AODK(\orb) \xrightarrow{\ \scriptstyle\sim \ } \UnipOv(\g, K) \]
	in \Cref{prop:quant_map_quasi-dist} is a bijection, whose inverse is given by the associated cycle map $\AC$ defined in \Cref{subsec:AC}.
\end{Thm}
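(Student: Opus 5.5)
By \Cref{prop:quant_map_quasi-dist}, $\quan_{\ckorb}^K$ is already injective with $\AC\circ\quan_{\ckorb}^K$ equal to the inclusion $\AODK(\orb)\hookrightarrow\AC(\g,K)$. It is moreover bijective whenever $\codim(\partial\orb,\overline{\orb})\geqslant 4$. By \Cref{thm:codim_X}, we may therefore restrict to the case $\codim(\partial\orb,\overline{\orb})=2$, where $\codim(X^{sing},X)\geqslant 6$.

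In that case it suffices to prove the numerical equality $|\UnipOv(\g,K)|=|\AODK(\orb)|$. Injectivity then forces surjectivity. Since every $M$ in the image satisfies $\AC(M)\in\AODK(\orb)$, the inverse of the bijection is $\AC$. A cleaner route would be to show directly that $\AC(M)$ is irreducible for every $M$, using \Cref{lem:AC_special_unip}. That lemma only gives $\AC(M)\in\cK_{>0}\cAOD(\orb,K)$, so a multiplicity or length argument would still be needed. I will therefore go through counting.

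\textbf{Classical linear groups.} I would use the counting in \cite{BMSZ:counting}, which computes the number of special unipotent representations attached to a quasi-distinguished $\ckorb$ for the real classical groups. That result is stated for $\mathrm{O}(p,q)$ and related disconnected groups, so I would transfer it to $\SO_0(p,q)$ and the other connected groups. The transfer would go by restriction and inflation, using \Cref{lem:AC_pullback} together with the analogous restriction behaviour of admissible orbit data. The admissible orbit data would then be counted directly from $ab$-diagrams, using \Cref{prop:adm_classical} and the description of $K_Q$ in \Cref{subsec:nil_orbits}.

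\textbf{Spin groups.} For spin groups, the genuine part splits off by \Cref{cor:AC_special_gen}. The genuine admissible orbit data are counted by \Cref{prop:adm_Spin_pq} and \Cref{prop:adm_SO*(2n)}. These counts would be matched with the spin-group results of \cite{BMSZ:spin}.

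\textbf{Metaplectic groups.} For $\Mp(2n,\R)$, all modules are genuine by \Cref{cor:metaplectic_genuine}. Here I would compare with the counts in \cite{BMSZ:counting} together with \Cref{prop:metaplectic_adm}.

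\textbf{Exceptional groups.} The remaining distinguished or quasi-distinguished orbits form the finite list whose non-distinguished members appear in \Cref{tab:qD_exceptional}. For each of them and each real form, I would compute $|\UnipOv(\g,K)|$ with \texttt{atlas}, by listing the irreducibles at infinitesimal character $\lambda_{\ckorb}$ whose annihilator is the maximal ideal. I would compute $|\AODK(\orb)|=\sum_{\orbk}|\Irr(\tGamma_K)^{gen}|$ from the component groups $Z_K$ in \cite{King} and the admissibility data of \cite{Noel1,Noel2}, applying the correction discussed in \Cref{sec:correction_component}. The two numbers are then compared case by case.

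The main obstacle is the bookkeeping in the counting step. For classical groups this means matching conventions such as disconnected versus identity-component groups and genuine versus non-genuine modules. For exceptional groups it means computing the double covers $\tGamma_K$ correctly, since an error in a single component group would break the equality.
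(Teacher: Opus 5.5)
Your reduction to the inequality $|\UnipOv(\g,K)|\leqslant|\AODK(\orb)|$, and the split into classical, spin, metaplectic and exceptional cases, follow the paper. The genuine gap is in the exceptional step: as proposed, $\sum_{\orbk}|\Irr(\tGamma_K)^{gen}|$ cannot be computed from \cite{King} and \cite{Noel1,Noel2}, for two reasons.
First, the groups tabulated in \cite{King} are the component groups $Z_{\bar K}$ for the adjoint group, not $Z_K$ for $K=G^\theta$ with $G$ simply connected.
Second, the number of genuine irreducibles of a central $\Z_2$-extension is not determined by the quotient. For $\fS_2\times\fS_2$ it is $4$ for an abelian extension but $1$ for the dihedral or quaternion group of order $8$, and neither source records the extension class.
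You name this as the main obstacle but give no way around it, so the comparison with \texttt{atlas} cannot actually be made.

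The paper supplies the two missing ingredients.
First, \Cref{lem:adm_count_exceptional} shows that $|\AOD_{\bar K}(\orbk)|=|\Irr(Z_{\bar K})|$ for every orbit in \Cref{tab:qD_real_exceptional}, without identifying the extension. When $Z_{\bar K}\in\{1,\fS_2,\fS_3\}$, every central $\Z_2$-extension has a genuine character, and tensoring with it is a bijection $\Irr(Z_{\bar K})\to\Irr(\widetilde Z_{\bar K})^{gen}$. In the remaining cases ($F_4(a_3)$, $E_8(a_7)$, and orbit $\#23$ of {\bf E I}) one has $\fk_Q=0$, so $\gamma=0$ and admissible data are just irreducible representations of $Z_{\bar K}$.
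Second, the paper never needs $Z_K$ for the simply connected group. Quantization commutes with inflation (\Cref{prop:quant_pullback}), so all maps in \eqref{eq:quant_infl_exceptional} are injective. The \texttt{atlas} count then gives $|\UnipOv(\g,K)|=|\AOD_{\bar K}(\orb)|$, which forces every map to be a bijection. This handles the adjoint and simply connected forms at once.

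A smaller issue is the classical transfer. \Cref{lem:AC_pullback} alone does not control the passage from $\SO(p,q)$ to $\SO_0(p,q)$. Nothing in it prevents $\res_K^{K'}M'$ from staying irreducible while $\res_K^{K'}\AC(M')$ splits into two admissible orbit data.
The paper handles this with the compatibility of quantization with restriction (\Cref{prop:quant_res}, \Cref{cor:quant_res_bij}). A Clifford-theory count in index-two steps would also work, using that $\AC$ is a bijection for the larger group and commutes with twisting by the sign character of $K'/K$. But this has to be said explicitly.

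You also need not recount admissible orbit data from $ab$-diagrams. \cite[Proposition 4.11]{BMSZ:construction_unitarity} gives $|\AODK(\orb)|=|\PBPGext|$ for quasi-distinguished $\ckorb$. Together with the bound $|\UnipOv(\g,K)|\leqslant|\PBPGext|$ from \cite{BMSZ:counting} and with \Cref{lem:res_Opq}, this is what the paper uses.
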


\begin{proof}
	Without loss of generality, we can assume that $G$ is simple and simply connected. 
	In view of \Cref{prop:quant_map_quasi-dist}, it suffices to show that $|\UnipOv(\g, K)| = |\AODK(\orb)|$ or simply $|\UnipOv(\g, K)| \leqslant |\AODK(\orb)|$, where $|\cdot|$ stands for the cardinality of sets. This will be confirmed in \Cref{cor:counting_classical} for classical $\g$ and \Cref{prop:counting_exceptional} for exceptional $\g$ below.
\end{proof}

\begin{Rem}
	Case (1) of \Cref{thm:Unip_AOD_bij} can be easily extended to the general setting of any linear real reductive group. We omit the details, since the current form of \Cref{thm:Unip_AOD_bij} is enough for the unitarity results in \cref{thm:unitarity_dist} and \cref{cor:unitarity_dist_reductive}.
\end{Rem}

The rest of this subsection verifies this cardinality statement in stages. We first treat classical and metaplectic groups using the counting results of \cite{BMSZ:counting, BMSZ:construction_unitarity}. We then pass to identity components by restriction (for $\mathrm{O}(p,q)$), handle spin groups through genuine admissible data, and finally treat exceptional groups by combining component-group counts with \texttt{atlas} computations.

\subsubsection{Counting for classical groups} \label{subsubsec:counting_classical}

We first treat the case of classical groups by using the counting results in \cite{BMSZ:counting}. Note that \cite{BMSZ:counting} and \cite{BMSZ:construction_unitarity} only deal with $\mathrm{O}(p,q)$ and $\SO(p,q)$ rather than their identity component $\SO_0(p,q)$, so some extra care is needed. 

Let $\g = \fo(N)$ with $N \geqslant 7$ and let $\orb = \orb_\lambda$ be a general nilpotent $\SO(N)$-orbit in $\g^*$ (not necessarily of the form $\orb = d(\ckorb)$ for some quasi-distinguished $\ckorb \in \check{\cN}_o$) that is not very even, i.e., its partition $\lambda$ has at least one odd part. Then $\orb$ is also an 	$\mathrm{O}(N)$-orbit. 
Let $\GR = \mathrm{O}(p,q)$ with $N = p + q$ and $\GR^+=\SO(p,q)$. We take the complexifications of maximal compact subgroups of $\GR$ and $\GR^+$ to be $K = \mathrm{O}(p,\C) \times \mathrm{O}(q,\C)$ and $K^+ = S(\mathrm{O}(p) \times \mathrm{O}(q)) \subset K$ respectively, where $K^+$ consists of all pairs $(g_1, g_2) \in \mathrm{O}(p) \times \mathrm{O}(q)$ such that $\det(g_1)\det(g_2) = 1$. We have the following lemma.

\begin{Lem} \label{lem:res_Opq}
	Let $\orb$ be a nilpotent $\SO(N)$-orbit in $\g = \fso(N)$ that is not very even. Then any $K^+$-orbit $\orb_{K^+}$ in $\orb$ is also a $K$-orbit. Moreover, the restriction of any irreducible $K$-admissible vector bundle over $\orbk$ to $K^+$ is still irreducible, and this gives a $2$-to-$1$ surjective restriction map 
	\[\res_{K^+}^{K}: \AODK(\orb) \to \AOD_{K^+}(\orb).\] 
	In particular, we have 
	\[|\AODK(\orb)| = 2|\AOD_{K^+}(\orb)|.\] 
\end{Lem}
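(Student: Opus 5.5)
The plan is to compare stabilizers pointwise. Fix $e \in \orb_{K^+} \subset \orb_{\fk^\perp}$ together with a normal $\slf_2$-triple. Since $K^+$ has index $2$ in $K$, the $K$-orbit $K\cdot e$ is either $\orb_{K^+}$ or the union of two $K^+$-orbits. It coincides with $\orb_{K^+}$ exactly when $K_e \not\subset K^+$, i.e. when some element of the stabilizer has $\det(g_1)\det(g_2)=-1$. To produce such an element we use the reductive centralizer. Because $\lambda$ is not very even, it has an odd part $c$, and this part contributes a factor $\mathrm{O}(p_c)\times\mathrm{O}(q_c)$ to $K_Q$, with $p_c+q_c=m_\lambda(c)\geqslant 1$.

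Take a reflection in one of these orthogonal factors, say in $\mathrm{O}(p_c)$ if $p_c\geqslant1$. In the $ab$-diagram model this element acts on the corresponding odd-length rows. An odd row of length $c$ beginning with $a$ has $(c+1)/2$ boxes labelled $a$ and $(c-1)/2$ labelled $b$, so the determinant on $V_p$ times the determinant on $V_q$ equals $(-1)^{(c+1)/2}(-1)^{(c-1)/2}=(-1)^c=-1$. Therefore this element lies in $K_e\setminus K^+$, which shows that $K_e$ meets both cosets and proves the first claim. This computation, carried out carefully via \cite{Ohta:Adm} or \cite[(3.7)]{GomezZhu:local_theta_lifting}, is the main technical point of the argument.

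Next we handle admissible data. Since $K=K^+K_e$, we have $K/K_e\cong K^+/K^+_e$ with $K^+_e$ of index $2$ in $K_e$, and $\delta$, $\gamma$ and the double covers restrict compatibly, so $\widetilde{K^+_e}$ has index $2$ in $\widetilde{K}_e$. Moreover $K^\circ\subset K^+$, so $K_e^\circ\subset K^+_e$, and hence $\widetilde Z_{K^+}$ is an index-$2$ subgroup of $\widetilde Z_K$. Now apply Clifford theory to genuine irreducibles of $\widetilde Z_K$ restricted to this index-$2$ subgroup. For irreducibility of restrictions it suffices that every genuine irreducible $V$ satisfies $V\otimes\sgn\not\cong V$, where $\sgn$ is the character of $\widetilde Z_K/\widetilde Z_{K^+}$. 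The natural route is to show that the element $z$ built above, lifted to $\widetilde Z_K$, can be chosen central (its $\slf_2$-commuting part acts as $-1$ on one row block). It then acts by a scalar on $V$ and by the negated scalar on $V\otimes\sgn$, which distinguishes the two. If such a central choice is unavailable, we would instead use that $K_e\cong K^+_e\times\{1,z\}$ splits, taking $z$ to be $-1$ on a single odd row of length $c$ and using $z^2=1$ together with the lift being an involution. In that case $\widetilde Z_K\cong\widetilde Z_{K^+}\times\Z_2$, and the induction/restriction is exactly $2$-to-$1$ and surjective.

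Finally, surjectivity in general follows from Frobenius reciprocity, since every genuine irreducible of the subgroup appears in the restriction of its induction. Combined with the irreducibility of restrictions, this yields the $2$-to-$1$ map $\res_{K^+}^K$ on each orbit. Summing over the orbits in $\orb_{\fk^\perp}$, which biject by the first claim, gives $|\AODK(\orb)|=2|\AOD_{K^+}(\orb)|$.
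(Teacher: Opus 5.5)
Your first step is fine: the reflection in an orthogonal factor $\mathrm{O}(p_c)$ attached to an odd part lies in $K_e\setminus K^+$, so $K=K^+K_e$. This is more explicit than the paper's appeal to \cite[Theorem 9.3.4]{CM}. The gap is in the step everything else rests on, namely that every genuine irreducible $V$ of $\widetilde Z_K$ satisfies $V\not\cong V\otimes\sgn$.

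Your first route needs an element of $\widetilde Z_K$ that is central and lies outside $\widetilde Z_{K^+}$. The justification you give only produces central elements of $K_Q$, namely $-\Id$ on a whole factor $\mathrm{O}(p_c)$ or $\mathrm{O}(q_c)$. By your own determinant computation, such an element lies outside $K^+$ only when $p_c$ (resp.\ $q_c$) is odd. When all these multiplicities are even there is no such element. An example is $\lambda=[3,3,1,1]$ in $\fso(4,4)$ with rows $aba$, $aba$, $b$, $b$, where $K_Q=\mathrm{O}(2)\times\mathrm{O}(2)$ and its centre lies in $K^+$. In any case you never explain why centrality survives passage to the double cover.

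This is not a formality. A central extension of $\fS_2^l$ by $\Z_2$ can be dihedral or quaternionic. In that case a genuine $2$-dimensional irreducible splits on every index-$2$ subgroup, and the restriction map would not be $2$-to-$1$.

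Your fallback is wrong as stated. A reflection in $\mathrm{O}(m)$ with $m\geqslant 2$ is not central, so you only get a semidirect product $K_e^+\rtimes\{1,z\}$, not $K_e^+\times\{1,z\}$. Also, a lift $\tilde z=(z,s)$ satisfies $\tilde z^2=(1,\delta(z))$, which equals $\zeta$ whenever $\delta(z)=-1$. So the lift need not be an involution, and you give no reason to exclude $\delta(z)=-1$.

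The missing ingredient is that all genuine irreducibles are one-dimensional, i.e.\ $\widetilde Z_K$ is abelian. The paper gets this from \cite[Lemma 7]{Ohta:Adm}: $\gamma$ vanishes on the $\fo(b_j)$-factors of $\fk_Q$. Write $K_Q=A\times B$ and $K_Q^+=A\times B^+$, with $A$ a product of $\GL$'s and $B=\prod_j\mathrm{O}(b_j)$. Then every admissible representation is a fixed character of $A$ tensored with a representation of $\pi_0(B)\simeq\fS_2^l$, resp.\ $\pi_0(B^+)\simeq\fS_2^{l-1}$. The count $2^l$ versus $2^{l-1}$, with $2$-to-$1$ surjective restriction, is then immediate.

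You can also close your own argument directly. The group $Z_K=\pi_0(B)$ is generated by reflections $r_j$ lying in distinct, mutually commuting direct factors $\mathrm{O}(b_j)$. In $\widetilde K_e=\{(k,s):\delta(k)=s^2\}$ every commutator has the form $([k_1,k_2],1)$, so any lifts of the $r_j$ commute. Together with the central element $\zeta$ they generate $\widetilde Z_K$, which is therefore abelian. Restricting genuine characters to the index-$2$ subgroup $\widetilde Z_{K^+}\ni\zeta$ is then a $2$-to-$1$ surjection.

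The non-admissible case is trivial: admissibility depends only on $K_e^\circ=(K_e^+)^\circ$, so both sets are empty.
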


\begin{proof}
	Since $\orb$ is not very even, it is also an $\mathrm{O}(N)$-orbit. 
	One can deduce from this that any $K^+$-orbit $\orbk$ in $\orb$ is also a $K$-orbit (see \cite[Theorem 9.3.4]{CM}). 
	Moreover, for any normal $\slf_2$-triple $(e, h, f)$ of $\orbk$, its $K$-centralizer $K_Q$ is of the form $A \times B$, where $A$ is the direct product of several $\GL(a_i)$-factors corresponding to even parts of $\lambda$ and $B$ is the direct product of several $\mathrm{O}(b_j)$-factors corresponding to odd parts of $\lambda$, and the $K^+$-centralizer $K_Q^+$ is of the form $A \times B^+$, where $B^+$ denotes the subgroup in $B$ consisting of matrices of determinant $1$. 

	By \cite[Lemma 7]{Ohta:Adm}, the fiber $V$ of a $K$- or $K^+$-equivariant admissible vector bundle $\cV$ over $\orbk$ at $e$, as a representation of the Lie algebra $\fk_Q = \Lie(K_Q) = \Lie(K_Q^+)$, is trivial on the $\fo(b_j)$-factors. Therefore, for a $K$-equivariant admissible vector bundle $\cV$, the $K_Q$-representation $V$ factors through a representation of $A \times \pi_0(B)$, where $\pi_0(B) = \pi_0(K_Q) \simeq \fS_2^l$ for some positive integer $l$, and hence $\pi_0(B^+) = \pi_0(K_Q^+) \simeq \fS_2^{l-1}$.
	This implies the remaining statements.
\end{proof}

Now we come back to the case when $\orb=d(\ckorb)$ for quasi-distinguished nilpotent orbit $\ckorb \in \check{\cN}_o$.

\begin{Prop} \label{prop:counting_classical}
	Let $\GR$ be one of the following two types of groups:
	\begin{enumerate}
		\item $\GR = G(\R)$, where $G$ is a (connected) complex linear simple classical group;
		\item $\GR = \Mp(2n,\R)$ is a metaplectic group.
	\end{enumerate}	
	Then for any quasi-distinguished nilpotent orbit $\ckorb \in \check{\cN}_o$ with $\orb=d(\ckorb)$, we have
	\[ |\UnipOv(\g, K)| \leqslant |\AODK(\orb)|.\]
	In particular, the quantization map $\quan_{\ckorb}^K$ is a bijection.
\end{Prop}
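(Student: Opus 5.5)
The plan is to compare the two cardinalities using the counting results of Barbasch--Ma--Sun--Zhu, reduce each group in the list to a group treated there, and then conclude bijectivity from \Cref{prop:quant_map_quasi-dist}. That proposition says $\quan_{\ckorb}^K$ is injective, with $\AC\circ\quan_{\ckorb}^K$ equal to the inclusion of $\AODK(\orb)$. So the inequality $|\UnipOv(\g,K)|\leqslant|\AODK(\orb)|$ forces $\quan_{\ckorb}^K$ to be a bijection. Its inverse is then $\AC$.

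The first step is to handle the groups covered directly by \cite{BMSZ:counting, BMSZ:construction_unitarity}: $\Sp(2n,\R)$, $\Mp(2n,\R)$, $\mathrm{O}(p,q)$, $\mathrm{U}(p,q)$-type forms if relevant, $\Sp(p,q)$, $\mathrm{O}^*(2n)$, and so on. For quasi-distinguished (good parity) $\ckorb$, those papers count special unipotent representations, for instance \cite[Proposition 4.11]{BMSZ:construction_unitarity}. The count is the number of $K$-orbits in $\orb_{\fk^\perp}$, each weighted by the number of irreducible admissible representations of its component group. I would match that combinatorial count with $|\AODK(\orb)|$. The matching uses \Cref{prop:adm_classical} (admissibility equals specialness), \Cref{prop:adm_dist} (every $K$-orbit in $\orb_{\fk^\perp}$ is admissible), and the $ab$-diagram description of $K_Q$ and $\pi_0$ following \cite{Ohta:Adm}. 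Here the admissible representations form a torsor over $\widehat{\pi_0(K_Q)}$ with $\pi_0$ elementary abelian, so their number is $|\pi_0(K_Q)|$. For the metaplectic case, \Cref{cor:metaplectic_genuine} ensures that the genuine counts in \cite{BMSZ:counting} coincide with ours. Combined with \Cref{prop:metaplectic_adm}, this shows every $K$-orbit contributes $|\pi_0(K_Q)|$ genuine data.

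Second, I pass from $\mathrm{O}(p,q)$ to $\SO_0(p,q)$ and then to $\Spin(p,q)$. By \Cref{rem:dist_very_even}, $\orb$ is not very even. Then \Cref{lem:res_Opq} gives $|\AODK(\orb)|=2|\AOD_{K^+}(\orb)|$. On the representation side, restriction from $\mathrm{O}(p,q)$ to $\SO(p,q)$ is also $2$-to-$1$ on $\UnipOv$: tensoring with $\det$ should give a free involution, and irreducibles stay irreducible because their associated cycles are irreducible with $K=K^+\times\{\pm1\}$-type structure (using \Cref{lem:AC_pullback}). A similar index-two argument passes from $\SO(p,q)$ to $\SO_0(p,q)$, using the analogous component comparison. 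For the simply connected group $\Spin(p,q)$ or $\Spin^*(2n)$, I split each set into the non-genuine part and the genuine part, as in \Cref{cor:AC_special_gen}.
\begin{itemize}
\item The non-genuine part is the $\SO_0$ case just handled.
\item On the genuine side, \Cref{prop:adm_Spin_pq} and \Cref{prop:adm_SO*(2n)} compute $|\AODK(\orb)^{gen}|$ as $0$, $1$, $2$ or $4$. I would compare this with the genuine special unipotent counts for spin groups in \cite{BMSZ:spin}.
\end{itemize}
Alternatively, \Cref{lem:AC_special_unip} and \Cref{cor:AC_special_gen} give an injection of $\UnipOv^{gen}$ into nonempty multisets of genuine admissible data. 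When $\AODK(\orb)^{gen}=\varnothing$, this already gives $\UnipOv^{gen}=\varnothing$.

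The main obstacle I expect is the genuine spin case and the $\mathrm{O}\to\SO\to\SO_0$ bookkeeping. In particular, irreducible $\mathrm{O}(p,q)$-modules might not restrict to irreducibles, so the $2$-to-$1$ claim on the representation side needs care. I would first try to settle this via irreducibility of associated cycles through \Cref{lem:AC_pullback}. If that is not enough, I would fall back on the explicit counts in \cite{BMSZ:spin}.
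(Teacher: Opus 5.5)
Your overall frame matches the paper's: injectivity of $\quan_{\ckorb}^K$ and $\AC\circ\quan_{\ckorb}^K=$ inclusion from \Cref{prop:quant_map_quasi-dist}, plus a cardinality bound imported from Barbasch--Ma--Sun--Zhu. There is, however, a genuine gap in the case $\GR=\SO(p,q)$. This case is part of the statement, since $\SO(p,q)=G(\R)$ for $G=\SO(p+q,\C)$.

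You obtain the representation-side bound for $\SO(p,q)$ by transferring it from $\mathrm{O}(p,q)$. The transfer rests on the claim that restriction $\UnipOv(\g,K)\to\UnipOv(\g,K^+)$ is $2$-to-$1$, where $K=\mathrm{O}(p)\times\mathrm{O}(q)$ and $K^+=S(\mathrm{O}(p)\times\mathrm{O}(q))$ as before \Cref{lem:res_Opq}. Let $a$ (resp.\ $b$) be the number of irreducibles in $\UnipOv(\g,K^+)$ that are stable (resp.\ not stable) under twisting by $K\setminus K^+$. Since $K/K^+\simeq\Z_2$, Clifford theory gives $|\UnipOv(\g,K)|=2a+b/2$ and $|\UnipOv(\g,K^+)|=a+b$. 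So the bound $2a+b/2\leqslant|\AOD_K(\orb)|=2|\AOD_{K^+}(\orb)|$ yields $a+b\leqslant|\AOD_{K^+}(\orb)|$ only if $b=0$. That means every $M\in\UnipOv(\g,K)$ must restrict irreducibly, equivalently $M\not\simeq M\otimes\det$, which is your ``free involution''.

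You justify this by irreducibility of $\AC(M)$ for $\mathrm{O}(p,q)$-modules, but none of your inputs supplies it. The results of \cite{BMSZ:counting} you would cite bound cardinalities; they do not determine associated cycles. The paper's quantization construction assumes $G$ connected and is never applied to $\mathrm{O}(p,q)$. For $\mathrm{O}(p,q)$, irreducibility of associated cycles is available only from the theta-lifting construction of \cite{BMSZ:construction_unitarity}, which this counting argument is designed to avoid (see \Cref{rmk:counting}). Your fallback to \cite{BMSZ:spin} does not address this step.

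The fix is to transfer on the geometric side only, where \Cref{lem:res_Opq} is unconditional. The representation-side bound is already available for $\SO(p,q)$ itself: $|\UnipOv(\g,K^+)|\leqslant|\PBP_{\SO(p,q)}^{\mathrm{ext}}(\ckorb)|$ by \cite[Corollary 5.4]{BMSZ:counting} and the case analysis there. On the other side, \cite[Proposition 4.11]{BMSZ:construction_unitarity} (stated for $\mathrm{O}(p,q)$) gives a bijection $\PBP_{\SO(p,q)}^{\mathrm{ext}}(\ckorb)\times\Z_2\simeq\AOD_K(\orb)$. Since $\orb$ is not very even (\Cref{rem:dist_very_even}), \Cref{lem:res_Opq} then gives $|\AOD_{K^+}(\orb)|=|\PBP_{\SO(p,q)}^{\mathrm{ext}}(\ckorb)|$.

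For the other groups the paper argues the same way, through the intermediate set $\PBPGext$. The unconditional input is the inequality $|\UnipOv(\g,K)|\leqslant|\PBPGext|$, and the equality $|\PBPGext|=|\AODK(\orb)|$ is \cite[Proposition 4.11]{BMSZ:construction_unitarity}. So you need not redo the $ab$-diagram count. Your torsor claim would in any case require the central extension $\pi_0(\widetilde{K}_e)$ to be abelian, which you do not check.

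Finally, $\SO_0(p,q)$ and spin groups lie outside this statement. Your ``similar index-two argument'' for $\SO(p,q)\to\SO_0(p,q)$ has the same defect. The paper sidesteps it in \Cref{cor:quant_res_bij}: quantization commutes with restriction along finite-index inclusions (\Cref{prop:quant_res}). Hence the restriction of a quantized module splits into quantized modules, and these are irreducible by \Cref{prop:MV_AC}(3).
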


\begin{proof}
	In \cite[Section 3.2]{BMSZ:construction_unitarity}, a set of extended parameters $\PBPGext$ is defined for the real classical group $\GR$, based on a set $\PBPG$ of bipartitions, which was defined earlier in \cite{BMSZ:counting}. 
	Then we have the inequality $|\UnipOv(\g, K)| \leqslant |\PBPGext|$ by \cite[Corollary 5.4]{BMSZ:counting} and case-by-case computations in Sections 6--10 of \cite{BMSZ:counting}. See in particular \cite[Corollary 8.7]{BMSZ:counting}. 

	Since $\ckorb$ is quasi-distinguished, we have $|\AODK(\orb)| = |\PBPGext|$ by \cite[Proposition 4.11]{BMSZ:construction_unitarity} and hence the proposition follows. In more detail, \cite[Proposition 4.11]{BMSZ:construction_unitarity} says that, when $\g_\R$ is not $\fso(p,q)$ with $p + q > 0$, there is a bijection $\PBPGext \xrightarrow{\scriptstyle\sim} \AODK(\orb)$. The case of $\SO(p,q)$ with $p + q > 0$ requires a little more elaboration since \cite[Proposition 4.11]{BMSZ:construction_unitarity} deals with $\mathrm{O}(p,q)$ instead of $\SO(p,q)$. Let $K$ and $K^+$ be as defined before \Cref{lem:res_Opq}, then \cite[Proposition 4.11]{BMSZ:construction_unitarity} states that there is a bijection 	
	\[ \PBP_{\SO(p,q)}^{\mathrm{ext}}(\ckorb) \times \Z_2 \xrightarrow{\scriptstyle\sim} \AODK(\orb), \] 
	which implies that $|\AODK(\orb)| = 2|\PBP_{\SO(p,q)}^{\mathrm{ext}}(\ckorb)|$. On the other hand, since $\orb=d(\ckorb)$ for quasi-distinguished $\ckorb$ is not very even by \cref{rem:dist_very_even}, \Cref{lem:res_Opq} implies that $|\AODK(\orb)| = 2|\AOD_{K^+}(\orb)|$, hence we also have $|\AOD_{K^+}(\orb)| = |\PBP_{\SO(p,q)}^{\mathrm{ext}}(\ckorb)|$.
\end{proof}

\begin{Rem} \label{rmk:counting}
	We remark that all the results cited in the proof of \Cref{prop:counting_classical} above only use the theory of cells and the Springer correspondence, and do not rely on any construction of special unipotent representations.
	The inequalities in \Cref{prop:counting_classical} and \cite[Proposition 3.6]{BMSZ:construction_unitarity} are actually equalities by \cite[Theorem 2.28]{BMSZ:counting}, assuming \cite[Conjecture 4.14]{BMSZ:counting}, which is verified for many cases including all classical Lie algebras and metaplectic groups in \cite[Corollary 4.24]{BMSZ:counting}. This requires knowledge of the intricate relationship between Harish-Chandra cells and double cells (see \cite[Section 4.5]{BMSZ:counting}). 

	\Cref{thm:Unip_AOD_bij} for classical types (except for the case of $\g_\R = \fso(p,q)$) and metaplectic type was previously established in \cite[Theorem 5.3, (b)]{BMSZ:construction_unitarity}. There the authors constructed special unipotent representations attached to general nilpotent orbits $\ckorb$ using theta lifting (\cite[Theorem 5.1]{BMSZ:construction_unitarity}). To show that the construction exhausts all the (metaplectic) special unipotent representations, they only use the inequality in \Cref{prop:counting_classical} (or rather \cite[Proposition 3.6]{BMSZ:construction_unitarity}). 
\end{Rem}

The proof of the following corollary uses the compatibility between quantization maps and restriction maps, whose discussion is postponed to \Cref{sec:res_quant}.

\begin{Cor} \label{cor:counting_classical_identity_component}
	Let $\GR$ be one of the following two types of groups:
	\begin{enumerate}
		\item $\GR = G(\R)^\circ$, where $G$ is a (connected) complex linear simple classical group with real points subgroup $G(\R)$;
		\item $\GR = \Mp(2n,\R)$ is a metaplectic group.
	\end{enumerate}	
	Then the quantization map 
	\[\quan_{\ckorb}^{K}: \AOD_{K}(\orb) \xrightarrow{\ \sim \ } \UnipOv(\g, K)\]
	is a bijection.
\end{Cor}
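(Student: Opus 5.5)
The plan is to reduce to the full real points group treated in \Cref{prop:counting_classical} and transport the bijection along restriction. The metaplectic case (2) is literally covered by \Cref{prop:counting_classical}(2), since $\Mp(2n,\R)$ is connected. For case (1), the group $G(\R)$ is connected except when $\g_\R=\fso(p,q)$ with $p,q\geqslant 1$, or the real form of $\fso$ whose $\bar K$ we take connected. So the substantive case is $\GR=\SO_0(p,q)\subset \SO(p,q)=G(\R)$, with $K^\circ=\SO(p)\times\SO(q)\subset K^+=S(\mathrm{O}(p)\times\mathrm{O}(q))$, a subgroup of index $2$. (For $\fso^*(2n)$, $\fsp(2n,\R)$, $\fsp(p,q)$, $\fso(n,\C)$-type real forms, $K$ is already connected and there is nothing to do.)

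\textbf{Injectivity.} Since \Cref{prop:quant_map_quasi-dist} already gives injectivity of $\quan_{\ckorb}^{K^\circ}$ with $\AC\circ\quan=$ inclusion, it suffices to show $|\UnipOv(\g,K^\circ)|\leqslant|\AOD_{K^\circ}(\orb)|$.

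\textbf{Counting via Clifford theory for the index-two pair $K^\circ\subset K^+$.} Every $M\in\UnipOv(\g,K^\circ)$ is a constituent of $\res^{K^+}_{K^\circ}$ of some $M^+\in\UnipOv(\g,K^+)$ (induce and take a constituent; the annihilator is unchanged since $\J_{\ckorb}$ is $G$-stable). Each restriction $\res M^+$ has one or two irreducible constituents, and $M^+,M^+\otimes\sgn$ restrict to the same thing. The same Clifford dichotomy holds on the admissible orbit data side: $K^+/K^\circ$ acts on $K^\circ$-orbits in $\orb_{\fk^\perp}$ and on bundles, and $\res$ of an irreducible admissible bundle is irreducible or splits in two. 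I will use the compatibility of quantization with restriction (\Cref{sec:res_quant}), namely $\res^{K^+}_{K^\circ}\circ\quan^{K^+}_{\ckorb}=\quan^{K^\circ}_{\ckorb}\circ\res^{K^+}_{K^\circ}$ as maps into $\cK_{>0}$, together with \Cref{lem:AC_pullback}: since $\quan^{K^+}$ is a bijection by \Cref{prop:counting_classical}, every $M^+$ is $\quan^{K^+}(\cF)$, so $\res M^+=\quan^{K^\circ}(\res\cF)$, and the irreducible constituents of $\res M^+$ are the quantizations of the irreducible summands of $\res\cF$, which lie in $\AOD_{K^\circ}(\orb)$. Hence every $M\in\UnipOv(\g,K^\circ)$ lies in the image of $\quan^{K^\circ}$, giving surjectivity directly (no explicit numerical count needed).

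\textbf{Main obstacle.} The delicate step is the restriction compatibility for the quantization construction: one must check that the universal $K^+$-cover data, $\tGamma_{K^+}$ and the extension of \Cref{prop:extn_AOD}, restrict correctly to $K^\circ$ (orbits $\orbk$ may split, and $\tGamma_{K^\circ}\subset\tGamma_{K^+}$ with the $\Gamma$-invariants in \eqref{eq:EV} matching after induction). I would verify it at the level of the sheaves $\cWVbar$ using uniqueness of minimal extensions and uniqueness in \Cref{thm:quan_lag_equiv}, and, in the cases $\codim(\partial\orb,\overline{\orb})\geqslant4$, rely on the fact that $\quan$ is inverse to $\AC$ and use \Cref{lem:AC_pullback} alone.
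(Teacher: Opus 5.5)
Your proposal is correct and follows the paper's proof. You reduce to $\SO_0(p,q)$ with $K=\SO(p)\times\SO(q)$ of finite index in $K'=S(\mathrm{O}(p)\times\mathrm{O}(q))$, then lift surjectivity from the bijection $\quan_{\ckorb}^{K'}$ of \Cref{prop:counting_classical} using induction/Frobenius reciprocity and the restriction compatibility of \Cref{prop:quant_res}, and combine it with the injectivity from \Cref{prop:quant_map_quasi-dist}; this is exactly the argument packaged as \Cref{cor:quant_res_bij}. The Clifford-theory dichotomy and the reduction to a cardinality inequality are not needed.
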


\begin{proof}
	In view of \Cref{prop:counting_classical}, we only need to consider the case when $G=\SO(N)$ and $\GR = \SO_0(p,q)$ is the identity component of $\SO(p,q)$ with $N=p+q$. Let $K' = G^\theta$ and $K = (K')^\circ$, so that $K' \simeq S(\mathrm{O}(p) \times \mathrm{O}(q))$ and $K = (K')^\circ \simeq \SO(p) \times \SO(q)$. We apply \Cref{cor:quant_res_bij} to establish the bijectivity of $\quan_{\ckorb}^{K}$.
\end{proof}

We now consider the case when $G=\Spin(N)$. In \cref{prop:counting_spin} below, we follow the notation in \Cref{subsec:adm_special_orbit} and \Cref{subsec:AC}.

\begin{Prop} \label{prop:counting_spin}
	Assume $\g = \fso(N)$ and let $\g_\R$ be a real form of $\g$ corresponding to a Cartan involution $\theta$. Set $G = \Spin(N)$, $\bar{G} = \SO(N)$, $K = G^\theta$, and $\bar{K} = (\bar{G}^\theta)^\circ$. Then the quantization map 
	\[\quan_{\ckorb}^{K}: \AOD_{K}(\orb) \xrightarrow{\ \sim \ } \UnipOv(\g, K)\]
	is a bijection.
\end{Prop}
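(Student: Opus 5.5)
Since $\quan_{\ckorb}^K$ is already injective by \Cref{prop:quant_map_quasi-dist}, I only need the inequality $|\UnipOv(\g,K)| \leqslant |\AODK(\orb)|$. I will split both sides into a non-genuine part and a genuine part with respect to the covering $\phi: K \to \bar{K}$, whose kernel is $F \simeq \Z_2$:
\[
\UnipOv(\g,K) = \UnipOv(\g,\bar{K}) \sqcup \UnipOv(\g,K)^{gen}, \qquad \AODK(\orb) = \AOD_{\bar{K}}(\orb) \sqcup \AODK(\orb)^{gen}.
\]
The quantization map should respect these decompositions. Indeed, $\AC \circ \quan_{\ckorb}^K$ is the inclusion, and by \Cref{cor:AC_special_gen} the genuineness of a module is detected by the genuineness of its associated cycle.

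For the non-genuine parts, $\UnipOv(\g,\bar{K})$ consists of special unipotent representations of $\SO_0(p,q)$ or $\SO^*(2n)$. By \Cref{cor:counting_classical_identity_component}, $\quan_{\ckorb}^{\bar{K}}$ is a bijection, so $|\UnipOv(\g,\bar{K})| = |\AOD_{\bar{K}}(\orb)|$. I will also use the compatibility of quantization with inflation from \Cref{sec:res_quant} to see that $\quan_{\ckorb}^K$ restricted to $\AOD_{\bar{K}}(\orb)$ agrees with $\quan_{\ckorb}^{\bar{K}}$ followed by inflation. This gives a bijection $\AOD_{\bar{K}}(\orb) \to \UnipOv(\g,\bar{K})$ inside $\UnipOv(\g,K)$.

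For the genuine parts, \Cref{lem:AC_special_unip} and \Cref{cor:AC_special_gen} give the following: if $M \in \UnipOv(\g,K)^{gen}$, then $\AC(M)$ is a nonzero element of $\cK_{>0}\cAOD(\orb,K)^{gen}$. I split into cases according to \Cref{prop:adm_Spin_pq} and \Cref{prop:adm_SO*(2n)}.
\begin{enumerate}
\item If $\AODK(\orb)^{gen} = \varnothing$, which covers $\Spin^*(2n)$, case (1) of \Cref{prop:adm_Spin_pq}, and the ``otherwise'' subcase of case (2), then this monoid is empty. Hence $\UnipOv(\g,K)^{gen} = \varnothing$ and we are done.
\item In the remaining cases there is a unique $K$-orbit $\orbk$ carrying genuine data, with $1$, $2$ or $4$ irreducible genuine admissible bundles. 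Every genuine $M$ has $\AC(M) = \sum_V m_V [\cF_V] \cdot \orbk$. I will show this cycle is irreducible and that $M$ is determined by it, using \Cref{thm:Unip_AOD_bij}-type exhaustion. Concretely, I will bound $|\UnipOv(\g,K)^{gen}|$ by an independent count: either the \texttt{atlas} count for genuine special unipotent representations of $\Spin(p,q)$, or the spin-group results of \cite{BMSZ:spin}, which give exactly $4$, $1$, $2$ genuine representations in the cases $p=q$ even, $p=q$ odd, $|p-q|=1$. Comparing with \Cref{prop:adm_Spin_pq} (2) gives equality.
\end{enumerate}

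The main obstacle is this genuine count. I expect to invoke \cite{BMSZ:spin} (or \texttt{atlas} for small ranks) for the inequality $|\UnipOv(\g,K)^{gen}| \leqslant |\AODK(\orb)^{gen}|$. As a first alternative, I would try to argue directly: the quantizations $\MV$ for the genuine $V$ already account for all irreducible genuine constituents, via a Harish-Chandra cell multiplicity argument on $\orbk$ similar to \Cref{lem:res_Opq}.
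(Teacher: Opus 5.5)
Your proposal is correct and is essentially the paper's proof. The non-genuine part goes through the inflation diagram of \Cref{prop:quant_pullback} together with \Cref{cor:counting_classical_identity_component}. The genuine part goes through \Cref{cor:AC_special_gen} when $\AODK(\orb)^{gen}=\varnothing$ (as the paper does for $\Spin^*(2n)$, and as its remark after the proof notes for the distinguished case), and otherwise through comparing \Cref{prop:adm_Spin_pq} with the count of \cite[Theorem 3]{BMSZ:spin}.

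Drop your intermediate plan to prove irreducibility of genuine associated cycles via \Cref{thm:Unip_AOD_bij}-type exhaustion. It would be circular, since that theorem depends on this proposition for spin groups. It is also unnecessary: injectivity of $\quan_{\ckorb}^K$ on the genuine parts plus equality of the finite cardinalities already gives bijectivity.
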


\begin{proof}
    Consider the following commutative diagram induced by \Cref{prop:quant_pullback}:
	\[
	\begin{tikzcd}
		\AOD_{\bar{K}}(\orb) \ar[d, hook] \ar[r,hook, "\quan_{\ckorb}^{\bar{K}}"] & \UnipOv(\g, \bar{K}) \ar[d, hook] \\
		\AODK(\orb) \ar[r, hook, "\quan_{\ckorb}^{K}"] & \UnipOv(\g, K) 
	\end{tikzcd}
	\]
	where the vertical maps are defined via inflation. The upper horizontal map is a bijection by \Cref{cor:counting_classical_identity_component}.
	It suffices to show that $\AODK(\orb)^{gen} = \varnothing$ and $\UnipOv(\g, K)^{gen} = \varnothing$. By \Cref{cor:AC_special_gen}, it suffices to show that the restriction of $\quan_{\ckorb}^{K}$ to the subset $\AODK(\orb)^{gen}$ of genuine admissible orbit data 
    \begin{equation} \label{eq:quant_restrict_gen}
        \quan_{\ckorb}^{K}|_{AODK(\orb)^{gen}}: \AODK(\orb)^{gen} \hookrightarrow \UnipOv(\g, K)^{gen} 
    \end{equation}
    is a bijection. There are two cases: 
    \begin{itemize}[itemindent=*, leftmargin=*]
        \item When $\g_\R = \fso(p,q)$, we have $|\AODK(\orb)^{gen}| = |\UnipOv(\g, K)^{gen}|$ by \Cref{prop:adm_Spin_pq} and the counting result \cite[Theorem 3]{BMSZ:spin}, hence bijectivity follows. 
        \item When $\g_\R = \fso^*(2n)$, we have $\AODK(\orb)^{gen} = \varnothing$ by \Cref{prop:adm_SO*(2n)}, which implies that $\UnipOv(\g, K)^{gen}$ is also empty by \Cref{cor:AC_special_gen}. 
    \end{itemize}
    In both cases, \eqref{eq:quant_restrict_gen} is a bijection.
\end{proof}

\begin{Rem}
    When $\ckorb$ is distinguished, \cref{prop:adm_Spin_pq} and \cref{prop:adm_SO*(2n)} imply that $\AODK(\orb)^{gen} = \varnothing$ and hence $\UnipOv(\g, K)^{gen} = \varnothing$, without appealing to the counting result in \cite[Theorem 3]{BMSZ:spin}.
\end{Rem}

Putting \Cref{cor:counting_classical_identity_component} and \Cref{prop:counting_spin} together, we have

\begin{Cor} \label{cor:counting_classical}
	Let $\GR$ be one of the following two types of groups:
	\begin{enumerate}
		\item $\GR = G(\R)$, where $G$ is a simply connected complex simple classical group;
		\item $\GR = \Mp(2n,\R)$ is a metaplectic group.
	\end{enumerate}	
	Then we have
	\[ |\UnipOv(\g, K)| = |\AODK(\orb)|. \]
	In particular, the quantization map $\quan_{\ckorb}^K$ is a bijection.
\end{Cor}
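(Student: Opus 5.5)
The corollary collects the classical and metaplectic counting results. The plan is to reduce everything to the injectivity of $\quan_{\ckorb}^K$ from \Cref{prop:quant_map_quasi-dist} and then check, case by case, that every special unipotent module is reached.

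\textbf{Case (2).} Here $\GR = \Mp(2n,\R)$, and the statement is already \Cref{prop:counting_classical}(2). That proposition gives $|\UnipOv(\g,K)| \leqslant |\AODK(\orb)|$. Injectivity of $\quan_{\ckorb}^K: \AODK(\orb)\hookrightarrow \UnipOv(\g,K)$ gives the reverse inequality. So the two sets have equal cardinality and the map is bijective.

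\textbf{Case (1).} Let $G$ be simply connected simple classical. Since $G$ is simply connected, $G(\R)$ is connected, so $G(\R)=G(\R)^\circ$. The simply connected groups are of three kinds.
\begin{enumerate}
\item In types $A$ and $C$, $G=\SL_n$ or $\Sp(2n,\C)$ is already a linear classical group. We apply \Cref{cor:counting_classical_identity_component}(1) directly.
\item In type $A$, the only quasi-distinguished $\ckorb$ is regular, so $\orb=0$. Both sides then consist of the finite-dimensional (trivial) data and are trivially in bijection. Alternatively, \Cref{prop:counting_classical} covers this case as stated.
\item In types $B$ and $D$, $G=\Spin(N)$. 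For $N\geqslant 7$ this is exactly \Cref{prop:counting_spin}. For small $N$ we use the isomorphisms $\Spin(3)\simeq \SL_2$, $\Spin(5)\simeq \Sp_4$, $\Spin(6)\simeq\SL_4$, which reduce to the type $A$/$C$ cases. $\Spin(4)$ is not simple, so it is excluded.
\end{enumerate}
In each case bijectivity of $\quan_{\ckorb}^K$ gives $|\UnipOv(\g,K)| = |\AODK(\orb)|$.

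\textbf{Main obstacle.} I expect the only real subtlety to be bookkeeping: the group $K$ must match across the cited results. In \Cref{cor:counting_classical_identity_component} we have $K=(G^\theta)^\circ$. For simply connected $G$, $G^\theta$ is connected by Steinberg's theorem, so $K=G^\theta$ agrees with the $K$ used in \Cref{prop:counting_spin}. Once this is checked, the cited bijections combine directly.
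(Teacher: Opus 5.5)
Your proposal is correct and follows the paper's route: the paper obtains the corollary by combining \Cref{cor:counting_classical_identity_component} (which covers linear classical groups and $\Mp(2n,\R)$) with \Cref{prop:counting_spin} (spin groups), exactly as you do. Your additional remarks are sound bookkeeping that the paper leaves implicit: the small-$N$ isomorphisms, the trivial type $A$ case, and the connectedness of $G^\theta$ for simply connected $G$, which makes the various $K$'s agree.
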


\subsubsection{Counting for exceptional groups} \label{subsubsec:counting_exceptional}

Now we treat the case of exceptional groups. Throughout this subsection, $\g$ is a complex simple exceptional Lie algebra and $\g_\R$ is a real form of $\g$ corresponding to a Cartan involution $\theta$. Let $\bar{G} = \Ad(\g)$ denote the adjoint group of $\g$ and let $\bar{K} := (\bar{G}^\theta)^\circ$, which is the complexification of the identity component of a maximal compact subgroup of the adjoint group $\bar{G}_\R = \Ad(\g_\R)$ of $\g_\R$. Let $G$ be the universal covering group of $\bar{G}$ and $K = G^\theta$.

\begin{Prop} \label{prop:counting_exceptional}
	Let $G$ be a simply connected complex simple exceptional group with Lie algebra $\g$. For any real form $\g_\R$ of $\g$ corresponding to a Cartan involution $\theta$, let $K = G^\theta$. Let $\bar{G}$ be the adjoint group of $\g$ and $\bar{K} = (\bar{G}^\theta)^\circ$. Then all the maps in the following commutative diagram induced by \Cref{prop:quant_pullback}
	\begin{equation} \label{eq:quant_infl_exceptional}
		\begin{tikzcd}
			\AOD_{\bar{K}}(\orb) \ar[d, hook] \ar[r,hook, "\quan_{\ckorb}^{\bar{K}}"] & \UnipOv(\g, \bar{K}) \ar[d, hook] \\
			\AODK(\orb) \ar[r, hook, "\quan_{\ckorb}^{K}"] & \UnipOv(\g, K) 
		\end{tikzcd}
	\end{equation}
	are bijections, where the vertical maps are defined via inflation.
\end{Prop}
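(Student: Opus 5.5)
Since all four maps in \eqref{eq:quant_infl_exceptional} are already known to be injective, the whole statement reduces to one cardinality inequality: $|\UnipOv(\g, K)| \leqslant |\AOD_{\bar{K}}(\orb)|$. The two horizontal maps are injective by \Cref{prop:quant_map_quasi-dist}, applied to $K$ and to $\bar{K}$. The two vertical inflation maps are injective. The diagram commutes by \Cref{prop:quant_pullback}, which I take as given. If the inequality holds, then the chain
\[
|\AOD_{\bar K}(\orb)| \leqslant |\AODK(\orb)| \leqslant |\UnipOv(\g,K)| \leqslant |\AOD_{\bar K}(\orb)|
\]
forces every inclusion in the diagram to be an equality, so all four maps are bijections.

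I split the inequality into a genuine part and a non-genuine part.

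\textbf{Genuine part.} I will show $\AODK(\orb)^{gen} = \varnothing$ and deduce $\UnipOv(\g,K)^{gen} = \varnothing$ from \Cref{cor:AC_special_gen} together with \Cref{lem:AC_special_unip}. Here $F = \ker(K \to \bar{K})$ may be larger than $\Z_2$, because $K = G^\theta$ can be disconnected.
\begin{itemize}
\item For each $K$-orbit $\orbk \subset \orb_{\fk^\perp}$, fix a normal $\slf_2$-triple. By \Cref{lem:adm_repn}, it suffices to show that every $\gamma$-representation of $K_Q$ is trivial on $F$. This holds, for instance, when $F$ lies in $K_Q^\circ$, or more generally when $F$ acts through the unique character $\chi$ of $K_Q^\circ$ and that character is trivial on $F$.
\item I will check this case by case, using the tables of $K$-orbits and component groups $Z_K$ versus $Z_{\bar K}$ from \cite{King}, as corrected in \Cref{sec:correction_component}, together with the admissibility data of \cite{Noel1, Noel2}.
\item If some real form does have genuine admissible data, I will instead count $|\UnipOv(\g,K)^{gen}|$ with \texttt{atlas} and compare the two numbers.
\end{itemize}

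\textbf{Non-genuine part.} I will show $|\UnipOv(\g,\bar K)| \leqslant |\AOD_{\bar K}(\orb)|$.
\begin{itemize}
\item For the right-hand side, I compute $|\AOD_{\bar K}(\orbk)| = |\Irr(\tGamma_{\bar K})^{gen}|$ orbit by orbit via \Cref{prop:classification_AOD}. In practice this is the number of irreducible representations of $\pi_0(\bar K_e)$ twisted by $\chi$. Every such $\orbk$ is admissible by \Cref{prop:adm_dist}.
\item For the left-hand side, I count the irreducible $(\g,\bar K)$-modules with infinitesimal character $\lambda_{\ckorb}$ whose annihilator is the maximal ideal $\J_{\ckorb}$, using \texttt{atlas}. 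Concretely, I list the modules at infinitesimal character $\lambda_{\ckorb}$ and keep those whose associated variety has dimension $\frac12\dim\orb$ and whose complex associated variety is $\overline{\orb}$. This filter is legitimate because \Cref{thm:V_AV} and maximality of $\J_{\ckorb}$ characterize the packet this way.
\item The comparison is needed only for the finitely many pairs consisting of a distinguished or quasi-distinguished $\ckorb$ (including those in \Cref{tab:qD_exceptional}) and a real form of an exceptional $\g$.
\item For the cases with $\codim(\partial\orb,\overline{\orb}) \geqslant 4$, \Cref{prop:quant_map_quasi-dist} already gives bijectivity, so these can be skipped.
\end{itemize}

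\textbf{Main obstacle.} The bulk of the work is the orbit-by-orbit computation of component groups and admissible $\gamma$-representations for the groups $K$ versus $\bar{K}$. Errors in the literature enter exactly here (see \Cref{rem:Z_K}), and this is where the genuine versus non-genuine distinction must be settled. The \texttt{atlas} count is mechanical by comparison, apart from filtering correctly by associated variety.
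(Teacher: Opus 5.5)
Your reduction is exactly the paper's squeeze: all four maps are injective, so it suffices to show $|\UnipOv(\g,K)|\leqslant|\AOD_{\bar K}(\orb)|$. Both arguments then rest on \texttt{atlas} plus component-group data.

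The difference is how the genuine part is handled. You make $\AODK(\orb)^{gen}=\varnothing$ an input, to be proved by comparing $K$- and $\bar K$-centralizers orbit by orbit, and you use \texttt{atlas} only for $(\g,\bar K)$. The paper instead runs \texttt{atlas} on the simply connected group, so its count of $|\UnipOv(\g,K)|$ already includes any genuine modules. It checks that this number equals $\sum_{\orbk}|\Irr(Z_{\bar K})|$, and emptiness of both genuine sets then falls out of the squeeze.

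This is why the paper can work with \cite{King}, which tabulates only the adjoint data $Z_{\bar K}$. Your $K$-side check needs $Z_K$ and $K_Q$ for the simply connected group, and $Z_K\to Z_{\bar K}$ need not be an isomorphism, so this is new computation. You can avoid it. An \texttt{atlas} count showing $\UnipOv(\g,K)^{gen}=\varnothing$ already forces $\AODK(\orb)^{gen}=\varnothing$: $\quan_{\ckorb}^K$ is injective, and since $\AC\circ\quan_{\ckorb}^K$ is the inclusion of $\AODK(\orb)$, \Cref{cor:AC_special_gen} shows it sends genuine data to genuine modules.

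Two small corrections. First, $K=G^\theta$ is connected for simply connected $G$ (Steinberg), so your stated reason for $F$ being large is wrong. In fact $F=Z(G)^\theta$ is $\Z_3$ for the inner forms of $E_6$, $\Z_2$ for $E_7$, and trivial otherwise. Second, since $\delta|_F=1$, the character $\chi$ is automatically trivial on $F\cap K_Q^\circ$ when $F\simeq\Z_3$, but not necessarily for $E_7$. So ``$F\subset K_Q^\circ$'' alone does not suffice there.

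On the $\bar K$ side, counting representations of $\pi_0(\bar K_e)$ ``twisted by $\chi$'' means counting genuine irreducibles of the central extension $\widetilde Z_{\bar K}$. In general this depends on the extension class, and that class is hard to pin down. For example, a nontrivial class over $\fS_2\times\fS_2$ or $\fD_4$ (both appear in \Cref{tab:qD_real_exceptional}) lowers the count.

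The paper's \Cref{lem:adm_count_exceptional} is what makes the count effective. In every case of the table, one of two things holds:
\begin{itemize}
\item $\fk_Q=0$, so $\gamma$-representations are just representations of the finite group $\bar K_Q=Z_{\bar K}$; or
\item $Z_{\bar K}\in\{1,\fS_2,\fS_3\}$, and every central $\Z_2$-extension of these groups has exactly $|\Irr(Z_{\bar K})|$ genuine irreducibles.
\end{itemize}
Your plan needs this observation, or an explicit determination of the extensions.

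Your split does buy one thing. When $\codim(\partial\orb,\overline{\orb})\geqslant 4$, \Cref{prop:quant_map_quasi-dist} gives only the two horizontal bijections. The vertical ones are then equivalent to $\AODK(\orb)^{gen}=\varnothing$. Your decomposition isolates this condition, which must be checked for every quasi-distinguished $\ckorb$, not just those in the table. The paper's reduction to the codimension-two cases leaves it implicit.
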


The rest of this subsection proves \Cref{prop:counting_exceptional}. In view of \cref{prop:quant_map_quasi-dist}, we only need to consider the case when $\codim(\partial \orb, \overline{\orb}) = 2$.
In \Cref{tab:qD_real_exceptional}, we list, for all real simple exceptional groups, all $K$-orbits $\orbk$ in $\orb_{\fk^\perp}$. Here $\orb$ is a special nilpotent orbit in $\fg$ such that $\orb=d(\ckorb)$, where $\ckorb$ is a quasi-distinguished nilpotent orbit in $\ckfg$, and $\overline{\orb}$ contains at least one nilpotent orbit $\orb'$ of codimension $2$, which must lie in $\cP(\orb)$ and turns out to be unique. We also list all $K$-orbits $\orbk$ in $\orb_{\fk^\perp}$ and $K$-orbits $\orbk'$ lying in $\overline{\orb}_K \cap \orb'$. We follow the labeling of the real forms $\g_\R$ and the $K$-orbits in \cite{Dk1, Dk6, Dk5, Dk2, Dk3, Dk4, Dk7, Dk8}, which contain the closure relations of all the $K$-orbits of all real exceptional groups. 

Note that there are three cases in \Cref{tab:qD_real_exceptional} in which the column of $\orbk'$ is marked as `none' since the closure of $\orbk$ does not intersect with $\orb'$. 
In all these cases, $\overline{\orb}_K$ only intersects smaller orbits in $\overline{\orb}$ of codimension $\geqslant 6$. In these cases, we can also apply the results of \cite{LY}, or directly appeal to \cite[Theorem 1.3.1]{LosevYu}, to obtain exhaustion of $\UnipOv(\g,K)$ without counting.

We also list the $\bar{K}$-component groups $\Z_{\bar{K}}$ of all $\bar{K}$-orbits in \Cref{tab:qD_real_exceptional}. This will be useful in counting the number of irreducible admissible vector bundles over $\orbk$ (see \Cref{lem:adm_count_exceptional} below).
However, \cite{King} only computes the component groups for the adjoint real groups, so extra care is needed. Here $K$-orbits in $\orb_{\fk^\perp}$ coincide with $\bar{K}$-orbits. Let $Z_{\bar{K}}$ denote the $\bar{K}$-component group of a $K$-orbit $\orbk$ (this group is denoted as $Z_K$ in \cite{King}). We have a surjective covering map $K \twoheadrightarrow \bar{K}$, which induces a surjective homomorphism $K_Q \twoheadrightarrow \bar{K}_Q$ between reductive centralizers of a point $e \in \orbk$, and in turn a surjective homomorphism $Z_K \twoheadrightarrow Z_{\bar{K}}$ between component groups. In general, the map $Z_K \twoheadrightarrow Z_{\bar{K}}$ does not have to be an isomorphism. 
The middle column of Table \ref{tab:qD_real_exceptional} records the component group $Z_{\bar{K}}$ of the $\bar{K}$-centralizer of $\orbk$, where $\fD_n$ denotes the dihedral group of order $2n$. 

\begin{Rem}\label{rem:Z_K}
	Note that the $K$-orbit $\# 14$ of $E_{8(8)}$ is marked in red because \cite[Table 11]{King} mistakenly records the $\bar{K}$-component group $Z_{\bar{K}}$ as the trivial group, while the correct answer should be $\fS_2$. We postpone the proof of \Cref{lem:Z_K_correction} to \Cref{sec:correction_component}. The same table row also mistakenly records the component group $A(\orb)$ of $\orb$ as the trivial group, which should be $\fS_2$ by \cite[Table 5]{Alexeevski}. 

	The author has also written a GAP program to recalculate the $\bar{K}$-component groups for adjoint exceptional groups based on the approach of \cite{KingNoel}. The computations found no other mistakes in the tables of \cite{King} for the $K$-orbits in \cref{tab:qD_real_exceptional}.
\end{Rem}

With the data of component groups in place, we now turn to the counting of admissible vector bundles.
We first count the number of $\bar{K}$-equivariant (irreducible) admissible vector bundles for each $\orbk$ in Table \ref{tab:qD_real_exceptional}. We adopt the notation in \Cref{subsec:AOD}. All $K$-orbits $\orbk$ in Table \ref{tab:qD_real_exceptional} are linearly admissible by \cite{Noel1, Noel2}. Recall that by \Cref{lem:bij_irr_gen} and \Cref{prop:classification_AOD}, for any $\orbk \subset \orb_{\fk^\perp}$, we have a bijection $\AOD_{\bar{K}}(\orbk) \simeq \Irr(\widetilde{Z}_{\bar{K}})^{gen}$, where $\widetilde{Z}_{\bar{K}} \simeq \tGamma_{\bar{K}}$ is a central extension of $Z_{\bar{K}} \simeq \Gamma_{\bar{K}}$ by $\Z_2$. It is in general not easy to determine this central extension.
Fortunately, and somewhat surprisingly, it turns out that we only need to know the $\bar{K}$-component group $Z_{\bar{K}}$ in the present situation by the following lemma.

\begin{Lem} \label{lem:adm_count_exceptional}
	For any $K$-orbit $\orbk$ in \cref{tab:qD_real_exceptional}, we have 
	\[ |\AOD_{\bar{K}}(\orbk)| = |\Irr(\widetilde{Z}_{\bar{K}})^{gen}| = |\Irr(Z_{\bar{K}})|. \] 
\end{Lem}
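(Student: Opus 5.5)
We have to show $|\Irr(\widetilde{Z}_{\bar{K}})^{gen}| = |\Irr(Z_{\bar{K}})|$ for each $\orbk$ in \Cref{tab:qD_real_exceptional}; the first equality is \Cref{prop:classification_AOD} together with \Cref{lem:bij_irr_gen}. Since $\orbk$ is linearly admissible, the extension $1\to\Z_2\to\widetilde{Z}_{\bar{K}}\to Z_{\bar{K}}\to 1$ has genuine representations, and it is central. So the plan is to show that this extension splits, $\widetilde{Z}_{\bar{K}}\simeq Z_{\bar{K}}\times\Z_2$, in every case of the table. Then genuine irreducibles of $\widetilde{Z}_{\bar{K}}$ are exactly $\pi\boxtimes\sgn$ with $\pi\in\Irr(Z_{\bar{K}})$, which gives the count.

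\textbf{Step 1: read off the groups.} From the middle column of \Cref{tab:qD_real_exceptional} (corrected by \Cref{rem:Z_K}), the groups $Z_{\bar{K}}$ occurring should be of the form $1$, $\fS_2^k$, $\fS_3$, or small dihedral groups $\fD_n$.

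\textbf{Step 2: dispose of the easy cases.} For $Z_{\bar{K}}=1$ the extension is trivial. For $Z_{\bar{K}}=\fS_3$ (and any group with $H^2(Z_{\bar{K}},\Z_2)=0$), every central $\Z_2$-extension splits.

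\textbf{Step 3: the elementary abelian and dihedral cases.} Here $H^2(\,\cdot\,,\Z_2)\neq 0$, so splitting needs an argument. The extension class in $H^2(Z_{\bar{K}},\Z_2)$ is determined by two things:
\begin{enumerate}
\item \emph{The squares.} The lifts $\tilde{x}$ of each involution $x$ may square to $\zeta$. This is detected by whether the character $\delta_e$ on $K_e$ admits a square root on the preimage of $\langle x\rangle\cdot K_e^\circ$.
\item \emph{The commutators.} Commuting involutions $x,y$ may have lifts satisfying $[\tilde x,\tilde y]=\zeta$. Since $\widetilde{K}_e$ is defined as a fibre product with the abelian group $\cm$ via $\delta$, the commutator of lifts in $\widetilde{K}_e$ maps to $1$ in $\cm$ and lies over $1$ in $K_e$. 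Hence the commutator is trivial, and the extension is abelian on abelian subgroups.
\end{enumerate}
The remaining task is therefore to check the squares. For this I would choose representatives of $Z_{\bar{K}}$ inside $\bar{K}_Q$ of order $2$ lying in a torus normalizer, and compute $\delta$ on the one-parameter subgroup through them. Concretely, for $x=\exp(\pi i\, t)$ with $t$ in a Cartan of $\fk_Q$, the lift $\tilde{x}=(x,\exp(\pi i\,\gamma(t)))$ squares to $(1,\exp(2\pi i\,\gamma(t)))$. This value is $1$ exactly when $\gamma(t)$ is an integer, which holds by linear admissibility once $x\in K_Q^\circ$-translates are used. Where necessary, I would twist the lift by a genuine character of the $\Z_2$ to adjust.

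\textbf{Main obstacle.} The hard part is producing these explicit representatives and the values of $\gamma$ on them in the exceptional cases. I would first try to use the data in \cite{Noel1, Noel2} and \cite{King}, which record $\gamma$ on $\fk_Q$ and generators of $Z_{\bar{K}}$. Failing that, I would fall back on the GAP computation of \Cref{rem:Z_K}, computing the action on $\bigwedge^{top}(\fk/\fk_e)^*$ directly, and verify case by case that the extension splits, so that genuine irreducibles are in bijection with $\Irr(Z_{\bar{K}})$.
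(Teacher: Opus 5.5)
Your plan rests on proving that $1\to\Z_2\to\widetilde{Z}_{\bar K}\to Z_{\bar K}\to 1$ splits in every row of the table. That target is both unnecessary and not something you can expect to hold. For the count it suffices to have a genuine one-dimensional character of $\widetilde{Z}_{\bar K}$, since tensoring with it identifies $\Irr(Z_{\bar K})$ with $\Irr(\widetilde{Z}_{\bar K})^{gen}$. Such a character can exist without splitting. For example, if $\fk_Q=0$ and $\delta(x)=-1$ for an involution $x\in Z_{\bar K}=\bar K_Q$, then every lift of $x$ has order $4$, yet $\tilde\delta$ descends to a genuine character of $\widetilde{Z}_{\bar K}$.

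The individual steps also fail as written:
\begin{itemize}
\item In Step 2, $H^2(\fS_3,\Z_2)\cong\Z_2\neq 0$; you are thinking of the Schur multiplier. The non-split extension is the dicyclic group of order $12$.
\item In Step 3, $x=\exp(\pi i t)$ with $t\in\fk_Q$ lies in $\bar K_Q^\circ$, so it represents the trivial element of $Z_{\bar K}$. Multiplying a lift by $\zeta$ never changes its square.
\item Also in Step 3, representatives $\hat x,\hat y\in\bar K_e$ of commuting elements of $Z_{\bar K}$ need only commute modulo $\bar K_e^\circ$. The commutator of their lifts in $\widetilde{Z}_{\bar K}$ is $\zeta$ exactly when $\chi([\hat x,\hat y])=-1$, which your fibre-product argument does not rule out.
\item Most seriously, Step 1 misreads the table. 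It contains $Z_{\bar K}\simeq\fS_4$ ({\bf F I} $\#16$, {\bf E I} $\#23$) and $Z_{\bar K}\simeq\fS_5$ ({\bf E VIII} $\#67$). There the answer really depends on the extension: the binary octahedral double cover of $\fS_4$ has $3$ genuine irreducibles, not $5$. Your plan offers nothing for these rows beyond an unspecified computer check.
\end{itemize}

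The missing idea is the dichotomy used in the paper, which avoids determining any extension. If $Z_{\bar K}\in\{1,\fS_2,\fS_3\}$, nothing has to be computed. Both central $\Z_2$-extensions of $\fS_2$ ($\Z_2\times\fS_2$ and $\Z_4$) have exactly $2$ genuine irreducibles. Both central $\Z_2$-extensions of $\fS_3$ ($\Z_2\times\fS_3$ and the dicyclic group of order $12$) have exactly $3$. In every other row, the orbit is one of:
\begin{itemize}
\item $F_4(a_3)$ or $E_8(a_7)$, which are distinguished, so $\fq=0$;
\item {\bf E I} $\#23$, where $\fk_Q=0$ by \cite[Table VIII]{Djokovic:centralizer_outer}.
\end{itemize}
In these cases $\fk_e=\fu^\theta$ acts nilpotently on $\fk/\fk_e$, so $\gamma=0$. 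By \Cref{lem:adm_repn}, the $\gamma$-representations of $\bar K_e$ are then exactly the representations of $\bar K_e/\bar K_e^\circ=Z_{\bar K}$. This gives $|\AOD_{\bar K}(\orbk)|=|\Irr(Z_{\bar K})|$ directly, whatever the extension $\widetilde{Z}_{\bar K}$ is.
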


\begin{proof}
The first equality follows from \Cref{prop:classification_AOD}. 

Observe from \cref{tab:qD_real_exceptional} that, if $Z_K$ is not isomorphic to $1$ (the trivial group), $\fS_2$, or $\fS_3$, then there are three possibilities: 
\begin{enumerate}
	\item $\orb=F_4(a_3)$ in $\mathfrak{f}_4$, any $\orbk \subset \orb_{\fk^\perp}$;
	\item $\orb=E_8(a_7)$ in $\mathfrak{e}_8$, any $\orbk \subset \orb_{\fk^\perp}$;
	\item $\orb=D_4(a_1)$ in $\mathfrak{e}_6$, $\g_\R$ = {\bf E I}, $\orbk = \#23$ and $Z_{\bar{K}} = \fS_4$.
\end{enumerate}
In the first two cases $\q = 0$ since the orbits $\orb$ are distinguished, so that $\fk_Q = 0$. In the third case, we also have $\fk_Q=0$ by \cite[Table VIII]{Djokovic:centralizer_outer}. Therefore in all three cases, $\bar{K}_Q = Z_{\bar{K}}$. In these cases, $|\AOD_{\bar{K}}(\orbk)| = |\Irr(Z_{\bar{K}})|$. The case of $Z_{\bar{K}} = 1$ is also trivial. 

Now consider the remaining cases when $Z_{\bar{K}} \simeq \fS_2$ or $\fS_3$. When $Z_{\bar{K}} \simeq \fS_2$, then $\widetilde{Z}_{\bar{K}} \simeq \Z_2 \times \fS_2$ or $\Z_4$. In both cases, $|\Irr(\widetilde{Z}_{\bar{K}})^{gen}| = 2 = |\Irr(Z_{\bar{K}})|$. When $Z_{\bar{K}} \simeq \fS_3$, then either $\widetilde{Z}_{\bar{K}} \simeq \Z_2 \times \fS_3$ or $\widetilde{Z}_{\bar{K}} \simeq \mathfrak{BD}_3$, the binary dihedral group of order $12$. In both cases, $|\Irr(\widetilde{Z}_{\bar{K}})^{gen}| = 3 = |\Irr(Z_{\bar{K}})|$. Indeed, in these situations, there exists a (non-unique) one-dimensional genuine representation of $\widetilde{Z}_{\bar{K}}$. Tensoring with such a character induces a (non-canonical) bijection between $\Irr(Z_{\bar{K}})$ and $\Irr(\widetilde{Z}_{\bar{K}})^{gen}$.
\end{proof}

\begin{proof}[Proof of \Cref{prop:counting_exceptional}]
	We compute $|\UnipOv(\g, K)|$ using the \texttt{atlas} software (see \cite{atlas}) and record the result in the rightmost column of Table \ref{tab:qD_real_exceptional}. 
	By \Cref{lem:adm_count_exceptional}, $|\AOD_{\bar{K}}(\orbk)| = |\Irr(Z_{\bar{K}})|$ for each $K$-orbit $\orbk$ in Table \ref{tab:qD_real_exceptional}. One can sum $|\Irr(Z_{\bar{K}})|$ for various $\orbk \subset \orb$ in each row of \Cref{tab:qD_real_exceptional} and see that the total coincides with $|\UnipOv(\g, K)|$. This forces all sets in \eqref{eq:quant_infl_exceptional} to have equal cardinality and hence all four maps are bijections.
\end{proof}

\begin{Rem} \label{rem:atlas_exceptional}
	One can also find the list and number of all special unipotent representations of real simple exceptional groups on the \texttt{atlas} website \cite{atlas_exceptional}.
\end{Rem}

\begin{Ex}
	For the split real form {\bf E VIII} of $E_8$ with $\ckorb=\orb=E_8(a_7)$, the three $K$-orbits $\#66$, $\#67$, and $\#68$ have $Z_{\bar{K}}=\fD_4$, $\fS_5$, and $\fD_6$, respectively, as shown in \Cref{tab:qD_real_exceptional}; hence the corresponding numbers of admissible vector bundles are $|\Irr(\fD_4)|=5$, $|\Irr(\fS_5)|=7$, and $|\Irr(\fD_6)|=6$, and their sum is $5+7+6=18$, matching the total number of representations found by \texttt{atlas}. 
\end{Ex}

\begingroup
\small
\setlength{\tabcolsep}{3pt}
\renewcommand{\arraystretch}{1.05}
\renewcommand{\multirowsetup}{\centering}
\newlength{\excdistcolw}
\newlength{\excdistcolA}\newlength{\excdistcolB}\newlength{\excdistcolC}\newlength{\excdistcolD}
\newlength{\excdistcolE}\newlength{\excdistcolF}\newlength{\excdistcolG}\newlength{\excdistcolH}
\setlength{\excdistcolw}{\dimexpr(\linewidth-16\tabcolsep-9\arrayrulewidth)/8\relax}
\setlength{\excdistcolA}{\dimexpr 1.00\excdistcolw\relax}
\setlength{\excdistcolB}{\dimexpr 0.90\excdistcolw\relax}
\setlength{\excdistcolC}{\dimexpr 1.05\excdistcolw\relax}
\setlength{\excdistcolD}{\dimexpr 0.60\excdistcolw\relax}
\setlength{\excdistcolE}{\dimexpr 1.00\excdistcolw\relax}
\setlength{\excdistcolF}{\dimexpr 1.20\excdistcolw\relax}
\setlength{\excdistcolG}{\dimexpr 0.80\excdistcolw\relax}
\setlength{\excdistcolH}{\dimexpr 0.65\excdistcolw\relax}
\setlength{\LTleft}{0pt plus 1fill}
\setlength{\LTright}{0pt plus 1fill}
\setlength{\LTcapwidth}{\textwidth}
\begin{longtable}{|V{\excdistcolA}|V{\excdistcolB}|V{\excdistcolC}|V{\excdistcolD}|V{\excdistcolE}|V{\excdistcolF}|V{\excdistcolG}|V{\excdistcolH}|}
	\caption{\raggedright Quasi-distinguished $\ckorb$ in exceptional Lie algebras with $\codim(\partial \orb, \overline{\orb}) = 2$}
	\label{tab:qD_real_exceptional} \\
	\hline
	$\g_\R$ &  $\ckorb$  &  $\orb$  & $\orbk$ & $Z_{\bar{K}}$ & $\orb'$ & $\orbk'$ & $\#$repn  \\ \hline
	\endfirsthead
	\multicolumn{8}{c}{\tablename\ \thetable{} (\textit{continued})} \\
	\hline
	$\g_\R$ &  $\ckorb$  &  $\orb$  & $\orbk$ & $Z_{\bar{K}}$ & $\orb'$ & $\orbk'$ & $\#$repn  \\ \hline
	\endhead
	\hline
	\endfoot
		\multirow{2}{=}{{\bf G I}} & \multirow{2}{=}{$G_2(a_1)$} & \multirow{2}{=}{$G_2(a_1)$}  & $3$  & $\fS_2$ & \multirow{2}{=}{$\widetilde{A}_1$} & \multirow{2}{=}{$2$} & \multirow{2}{=}{5}  \\ \cline{4-5}
		& & & $4$ & $\fS_3$ & & & \\ \hline
		\multirow{3}{=}{{\bf F I}} & \multirow{3}{=}{$F_4(a_3)$} & \multirow{3}{=}{$F_4(a_3)$}  & $16$  & $\fS_4$ & \multirow{3}{=}{$C_3(a_1)$} & \multirow{3}{=}{14, 15} & \multirow{3}{=}{14}  \\ \cline{4-5}
		& & & $18$ & \makecell[c]{$\fS_2 \times \fS_2$} & & & \\ \cline{4-5}
		& & & $17$ & $\fD_4$ & & & \\ \hline
		\multirow{2}{=}{{\bf E I}}  & \multirow{2}{=}{$E_6(a_3)$} & \multirow{2}{=}{$A_2$} & 4 & $\fS_2$ & \multirow{2}{=}{$3A_1$} & \multirow{2}{=}{$3$} & \multirow{2}{=}{4}  \\ \cline{4-5}
		& & & 5 & $\fS_2$ & & & \\ \hline
		\multirow{2}{=}{{\bf E I}}  & \multirow{2}{=}{$D_4(a_1)$} & \multirow{2}{=}{$D_4(a_1)$} & 12 & $\fS_2$ & \multirow{2}{=}{$A_3+A_1$} & \multirow{2}{=}{$15$} & \multirow{2}{=}{7}  \\ \cline{4-5}
		& & & 23 & $\fS_4$ & & & \\ \hline
		\multirow{3}{=}{{\bf E II}} & \multirow{3}{=}{$E_6(a_3)$} & \multirow{3}{=}{$A_2$} & 6 & $\fS_2$ & \multirow{3}{=}{$3A_1$} & \multirow{3}{=}{4, 5} & \multirow{3}{=}{6}  \\ \cline{4-5}
		& & & 7 & $\fS_2$ & & & \\ \cline{4-5}
		& & & 8 & $\fS_2$ & & & \\ \hline
		\multirow{3}{=}{{\bf E II}} & \multirow{3}{=}{$D_4(a_1)$} & \multirow{3}{=}{$D_4(a_1)$} & 20 & $\fS_3$ & \multirow{3}{=}{$A_3+A_1$} & \multirow{3}{=}{$18$, $19$} & \multirow{3}{=}{7}  \\ \cline{4-5}
		& & & 21 & $\fS_2$ & & & \\ \cline{4-5}
		& & & 22 & $\fS_2$ & & & \\ \hline
		{\bf E III} & $E_6(a_3)$ & $A_2$ & 6 & 1 & $3A_1$ & none & 1  \\ \hline
		\multirow{2}{=}{{\bf E V}} & \multirow{2}{=}{$E_7(a_5)$} & \multirow{2}{=}{$D_4(a_1)$} & 26 & $\fS_2$ & \multirow{2}{=}{$(A_3+A_1)'$} & \multirow{2}{=}{25} & \multirow{2}{=}{5} \\ \cline{4-5}
		& & & 27 & $\fS_3$ & & & \\ \hline
		\multirow{2}{=}{{\bf E V}} & \multirow{2}{=}{$E_7(a_3)$} & \multirow{2}{=}{$A_2$} & 6 & $\fS_2$ & \multirow{2}{=}{$(3A_1)'$} & \multirow{2}{=}{5} & \multirow{2}{=}{4} \\ \cline{4-5}
		& & & 7 & $\fS_2$ & & & \\ \hline
		\multirow{3}{=}{{\bf E VI}} & \multirow{3}{=}{$E_7(a_5)$} & \multirow{3}{=}{$D_4(a_1)$} & 19 & $\fS_3$ & \multirow{3}{=}{$(A_3+A_1)'$} & \multirow{3}{=}{17, 18} & \multirow{3}{=}{7} \\ \cline{4-5}
		& & & 20 & $\fS_2$ & & & \\ \cline{4-5}
		& & & 21 & $\fS_2$ & & & \\ \hline
		\multirow{3}{=}{{\bf E VI}} & \multirow{3}{=}{$E_7(a_3)$} & \multirow{3}{=}{$A_2$} & 6 & $\fS_2$ & \multirow{3}{=}{$(3A_1)'$} & \multirow{3}{=}{4, 5} & \multirow{3}{=}{6} \\ \cline{4-5}
		& & & 7 & $\fS_2$ & & & \\ \cline{4-5}
		& & & 8 & $\fS_2$ & & & \\ \hline
		{\bf E VII} & $E_7(a_3)$ & $A_2$ & 10 & 1 & $(3A_1)'$ & none & 1 \\ \hline
		\multirow{3}{=}{{\bf E VIII}} & \multirow{3}{=}{$E_8(a_7)$} & \multirow{3}{=}{$E_8(a_7)$} & 66 & $\fD_4$ & \multirow{3}{=}{$E_7(a_5)$} & \multirow{3}{=}{64, 65} & \multirow{3}{=}{18} \\ \cline{4-5}
		& & & 67 & $\fS_5$ & & & \\ \cline{4-5}
		& & & 68 & $\fD_6$ & & & \\ \hline
		\multirow{3}{=}{{\bf E VIII}} & \multirow{3}{=}{$E_8(b_6)$} & \multirow{3}{=}{$D_4(a_1)+A_2$} & 34 & $\fS_2$ & \multirow{3}{=}{$A_3+A_2+A_1$} & \multirow{3}{=}{31, 32} & \multirow{3}{=}{6} \\ \cline{4-5}
		& & & 35 & $\fS_2$ & & & \\ \cline{4-5}
		& & & 36 & $\fS_2$ & & & \\ \hline
		\multirow{2}{=}{{\bf E VIII}} & \multirow{2}{=}{$E_8(b_5)$} & \multirow{2}{=}{$D_4(a_1)$} & 19 & $\fS_2$ & \multirow{2}{=}{$A_3+A_1$} & \multirow{2}{=}{18} & \multirow{2}{=}{5} \\ \cline{4-5}
		& & & 20 & $\fS_3$ & & & \\ \hline
		\multirow{3}{=}{{\bf E VIII}} & \multirow{3}{=}{$E_8(a_5)$} & \multirow{3}{=}{$2A_2$} & \textcolor{red}{14} & \textcolor{red}{$\fS_2$} & \multirow{3}{=}{$A_2+3A_1$} & \multirow{3}{=}{12, 13} & \multirow{3}{=}{5} \\ \cline{4-5}
		& & & 15 & $\fS_2$ & & & \\ \cline{4-5}
		& & & 16 & $\fS_2$ & & & \\ \hline
		\multirow{2}{=}{{\bf E VIII}} & \multirow{2}{=}{$E_8(a_3)$} & \multirow{2}{=}{$A_2$} & 4 & $\fS_2$ & \multirow{2}{=}{$3A_1$} & \multirow{2}{=}{3} & \multirow{2}{=}{4} \\ \cline{4-5}
		& & & 5 & $\fS_2$ & & & \\ \hline
		{\bf E IX} & $E_8(a_5)$ & $2A_2$ & 14 & 1 & $A_2+3A_1$ & none & 1 \\ \hline
		\multirow{3}{=}{{\bf E IX}} & \multirow{3}{=}{$E_8(a_3)$} & \multirow{3}{=}{$A_2$} & 6 & $\fS_2$ & \multirow{3}{=}{$3A_1$} & \multirow{3}{=}{4, 5} & \multirow{3}{=}{6} \\ \cline{4-5}
		& & & 7 & $\fS_2$ & & & \\ \cline{4-5}
		& & & 8 & $\fS_2$ & & & \\ \hline
\end{longtable}
\endgroup

\section{Unitarity of special unipotent representations}
\label{sec:unitarity}

Throughout this section, $\GR$ denotes a real reductive group as in \cref{subsec:basic_settings}, with the corresponding symmetric pair $(\g,K)$.
The proof has three steps: first we show the relevant unipotent ideals are $\theta$-stable, then we deduce Hermitianness of the modules in $\UnipOv(\g,K)$, and finally we apply the mixed-Hodge unitarity criterion.

\subsection{Hermitian forms}

A Hermitian form $\langle \cdot, \cdot \rangle$ on a $(\fg,K)$-module $M$ is said to be \emph{$(\fg_\R,K_\R)$-invariant} if $K_\R$ acts on $M$ by unitary operators and $\fg_\R$ acts on $M$ by skew-Hermitian operators. We say that $M$ is \emph{Hermitian} if it admits a non-degenerate $(\fg_\R,K_\R)$-invariant Hermitian form. If $M$ is irreducible and $\langle \cdot, \cdot \rangle$ is a non-degenerate $(\fg_\R,K_\R)$-invariant Hermitian form on $M$, then $\langle \cdot, \cdot \rangle$ is unique up to multiplication by a nonzero real number. 

The following criterion says that the existence of invariant Hermitian forms on an irreducible $(\fg,K)$-module of real infinitesimal character is an algebraic condition.

\begin{Prop}[\cite{ALTV}] \label{prop:Hermitian}
	Let $M$ be an irreducible $(\fg,K)$-module of real infinitesimal character. Then $M$ admits a non-degenerate $(\fg_\R,K_\R)$-invariant Hermitian form if and only if there is an isomorphism of $(\fg,K)$-modules $M \cong \theta^*M$, where $\theta^*M$ denotes the $(\fg,K)$-module equal to $M$ as a $K$-representation, with the $\fg$-action twisted by $\theta$.
\end{Prop}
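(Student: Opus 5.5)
The plan is to reduce the statement to the existence of \emph{$c$-invariant} Hermitian forms, which is the main structural input of \cite{ALTV}. Let $\sigma$ denote complex conjugation of $\fg$ with respect to $\fg_\R$ and $\sigma_c=\sigma\circ\theta$ complex conjugation with respect to the compact form $\fu_\R=\Lie(U_\R)$. For a $(\fg,K)$-module $M$, define the $\sigma$-Hermitian dual $M^h$ and the $\sigma_c$-Hermitian dual $M^{h_c}$. Both are the space of $K$-finite conjugate-linear functionals on $M$. In $M^h$, $\xi\in\fg$ acts by $(\xi f)(m)=-f(\sigma(\xi)m)$; in $M^{h_c}$ it acts by $(\xi f)(m)=-f(\sigma_c(\xi)m)$. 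In both, $K$ acts via $K_\R$ and its complexification. A $(\fg_\R,K_\R)$-invariant Hermitian form on $M$ is the same as a $(\fg,K)$-map $\phi\colon M\to M^h$ with $\phi^h=\phi$ under the canonical identification $M\simeq (M^h)^h$. The $c$-invariant forms are described the same way using $M^{h_c}$.

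First, for irreducible $M$, I would show that $M$ is Hermitian if and only if $M\cong M^h$. Given an isomorphism $\phi\colon M\to M^h$, the map $\phi^h$ is again an isomorphism. By Schur's lemma, $\phi^h=c\,\phi$ with $|c|=1$, and replacing $\phi$ by $c^{1/2}\phi$ gives a Hermitian symmetric isomorphism, hence a nondegenerate invariant form. The same argument applies to $M^{h_c}$. This step also yields the uniqueness up to a real scalar.

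Second, I would compare the two duals directly from the formulas. Since $\sigma=\sigma_c\circ\theta$ and $\theta$ acts trivially on $K$, one gets a canonical isomorphism $M^h\cong(\theta^*M)^{h_c}\cong\theta^*(M^{h_c})$. Consequently, whenever $M^{h_c}\cong M$, we obtain $M^h\cong\theta^*M$, and the proposition follows from the first step.

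The main obstacle is the theorem of \cite{ALTV} that every irreducible $(\fg,K)$-module with real infinitesimal character carries a nondegenerate $c$-invariant Hermitian form, i.e.\ $M\cong M^{h_c}$. I would take this from \cite{ALTV} rather than reprove it. For orientation, their route is as follows. Realness of the infinitesimal character ensures that $M^{h_c}$ has the same infinitesimal character as $M$. One then shows that the $\sigma_c$-dual preserves Langlands (or Beilinson--Bernstein) parameters, because $\sigma_c$ fixes the relevant $\theta$-stable Cartan data up to $K$-conjugacy. Next, one constructs the $c$-invariant form on standard modules using the compact form, which is positive on the lowest $K$-types. Finally, the form passes to the unique irreducible quotient, where it is nondegenerate.
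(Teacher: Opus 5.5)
Your argument is correct and is essentially the proof in \cite{ALTV}; the paper cites that result without reproducing it. The elementary steps are right: a nondegenerate invariant form exists if and only if $M\cong M^h$, by Schur's lemma and the $c^{1/2}$ rescaling; and $M^h$ coincides with $(\theta^*M)^{h_c}=\theta^*(M^{h_c})$ on the nose, because $\sigma=\theta\circ\sigma_c$ and $\theta$ fixes $K$. The only deep input is the existence of a nondegenerate $c$-invariant form on every irreducible module of real infinitesimal character, and you, like the paper, take that from \cite{ALTV}.
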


We can uniquely extend the Cartan involution $\theta: \fg \to \fg$ to an involutive automorphism of $\Ug$, still denoted by $\theta$.

\begin{Prop}\label{prop:unipotent_theta_stable}
	Let $\orb$ be a nilpotent orbit in $\fg$ such that $\orb_{\fk^\perp} \neq \varnothing$.
	Let $\widetilde{\orb}$ be any finite connected cover of $\orb$. Then the unipotent ideal $I_0(\widetilde{\orb})$ is preserved by $\theta$. 
\end{Prop}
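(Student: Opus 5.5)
The plan is to use the functoriality of the construction $\widetilde{\orb}\mapsto I_0(\widetilde{\orb})$ under automorphisms of $\fg$. If $\alpha$ is any automorphism of $\fg$, extended to $\Ug$, then $\alpha(I_0(\widetilde{\orb})) = I_0(\alpha\cdot\widetilde{\orb})$. Here $\alpha\cdot\widetilde{\orb}$ is the cover $\widetilde{\orb}$ of $\alpha(\orb)$, with the $G$-action twisted by $\alpha$ and with covering map $\alpha\circ\rho$.

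This holds because $\C[\widetilde{\orb}]$ is a conical symplectic singularity whose canonical quantization $\cA_0$ is intrinsic, being attached to the parameter $0\in\fP^X$. Twisting the Hamiltonian $G$-structure by $\alpha$ leaves the graded Poisson algebra unchanged and only replaces the co-moment map $\phi$ by $\phi\circ\alpha^{-1}$. By the uniqueness of the quantum co-moment map (\Cref{lem:Hamiltonian_quant}), the quantum co-moment map becomes $\Phi\circ\alpha^{-1}$. Its kernel is therefore $\alpha(\ker\Phi)$. I would spell this out at the level of $X=\spec\C[\widetilde{\orb}]$, with $\alpha$ lifted to $G$ (after passing to the simply connected group if necessary).

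Apply this with $\alpha=\theta$. The orbit $\orb$ meets $\fk^\perp$, so $\theta(\orb)=\orb$: for $e\in\orb\cap\fs$ we have $\theta(e)=-e\in\orb$, since nilpotent orbits are conical. It then suffices to show that $\theta\cdot\widetilde{\orb}\simeq\widetilde{\orb}$ as $G$-equivariant covers of $\orb$, compatibly with the Poisson structures.

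For this I would reuse the construction in \Cref{subsec:enlarge_lag}. There $\sigma=-\theta$ lifts to a $K$-equivariant anti-Poisson automorphism $\tilde\sigma$ of $\widetilde{\orb}$ covering $\sigma$. Composing $\tilde\sigma$ with the lift of the dilation by $-1$ gives a map $\tilde\theta:\widetilde{\orb}\to\widetilde{\orb}$ covering $\theta$. The dilation lift is available because the Kostant--Brylinski action of $z\in\cm$ covers dilation by $z^2$, so one may take $z=i$. The map $\tilde\theta$ is then Poisson and $\theta$-twisted $G$-equivariant, which identifies $\theta\cdot\widetilde{\orb}$ with $\widetilde{\orb}$.

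The main obstacle is justifying that $\sigma$ lifts to $\widetilde{\orb}$ for an arbitrary cover, not only for the universal one. I would fix $e\in\orb\cap\fs$ and a point $y\in\rho^{-1}(e)$, and write $\widetilde{\orb}=G/H$ with $G_e^\circ\subset H\subset G_e$. A lift of $\theta$ exists if and only if $\theta(H)=H$.

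Since $\theta(e)=-e$, the group $\theta$ preserves $G_e$. The requirement is that its induced action on the component group $A(\orb)$ preserves the subgroup $H/G_e^\circ$, at least up to conjugation. To check this, I would choose a normal $\slf_2$-triple and a point $\exp(\pi i h/2)$ conjugating $-e$ back to $e$. The induced automorphism of $Q/Q^\circ$ should then be shown to be inner (or trivial), because $\theta$ is an inner automorphism composed with a diagram automorphism.

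If this direct approach fails for some diagram automorphism, the fallback is as follows. The grading element lets one compare $\theta$ with the involution $-1$ on $\orb$, which lifts canonically by the previous paragraph. Any remaining discrepancy would then be absorbed into a Galois automorphism, which does not change the ideal $I_0$.
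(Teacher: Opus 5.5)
Your strategy is the one the paper uses. The paper takes $\tilde\sigma$ from \Cref{subsec:enlarge_lag}, sets $\tilde\theta=\gamma(\sqrt{-1})\circ\tilde\sigma$, lifts $\tilde\theta$ to a filtered automorphism $\Theta$ of $\cA_0$, and obtains $\Phi\circ\theta=\Theta\circ\Phi$ from the uniqueness in \Cref{lem:Hamiltonian_quant}. You are right that the only real issue is whether $\tilde\sigma$ exists, and the paper does not argue this for a general cover.

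Your treatment of inner $\theta$ is sound. Write $\theta=\Ad(g_0)$ and take $(e,h,f)$ normal. Then $g_1:=\exp(-\pi i h/2)g_0$ lies in $Q=Z_G(e,h,f)$. The connected group $\exp(\C h)$ acts trivially on $A(\orb)$, so $\theta(H)=g_1Hg_1^{-1}$. Hence $gy\mapsto\theta(g)g_1y$ is a well-defined $\cm$-equivariant lift of $\sigma$.

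In general the lift exists exactly when $\theta(H)$ is $G_e$-conjugate to $H$. This holds for the universal cover $H=G_e^\circ$, since $\theta(G_e)=G_{-e}=G_e$. That is the only case used later, in \Cref{prop:Hermitian_UnipOv} through $\J_{\ckorb}=I_0(\widetilde{\orb})$.

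The gap is the outer case. Your claim that the induced automorphism of $Q/Q^\circ$ is inner "because $\theta$ is inner composed with a diagram automorphism" is unjustified, and your fallback does not close it. A Poisson map $\tilde\theta$ with $\rho\circ\tilde\theta=\theta\circ\rho$ intertwines the moment maps, so it is automatically $\theta$-twisted $G$-equivariant. It therefore exists only if $G/\theta(H)\simeq G/H$ as $G$-covers. Galois automorphisms are $G$-equivariant and cannot produce such an isomorphism. The canonical lift of $x\mapsto -x$ is untwisted, so composing with it just returns you to the problem of lifting $\sigma$.

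No argument can close this gap at the stated generality, because the assertion fails for covers with $\theta(H)$ not $G_e$-conjugate to $H$. Take $\GR=\SL_2(\C)$ viewed as a real group, so $\fg=\slf_2\oplus\slf_2$ and $\theta(x,y)=(-y^{T},-x^{T})$. Then $\orb=\orb_{\mathrm{reg}}\times\orb_{\mathrm{reg}}$ meets $\fk^\perp$. For $\widetilde{\orb}=\orb_{\mathrm{reg}}\times(\C^2\setminus\{0\})$, the canonical quantization is $U(\slf_2)/U(\slf_2)\ker\chi_0\otimes(\text{Weyl algebra of }\C^2)$. So $I_0(\widetilde{\orb})$ has infinitesimal character $(0,\rho_{\slf_2}/2)$, whereas $\theta(I_0(\widetilde{\orb}))$ has $(\rho_{\slf_2}/2,0)$.

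So the argument, yours and the paper's, should be stated for covers with $\theta(H)$ conjugate to $H$ in $G_e$. That covers all covers when $\theta$ is inner, and always the universal cover, which is what the paper needs.
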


\begin{proof}
	As discussed in \Cref{subsec:enlarge_lag}, define the anti-Poisson involution $\sigma=-\theta$ of $\fg^*$, then $\orb$ is $\sigma$-stable, and hence $\theta$-stable, since $\orb_{\fk^\perp} \neq \varnothing$. Write $\rho:\widetilde{\orb} \to \orb$ for the covering map. Let $\tilde{\sigma}: \widetilde{\orb} \to \widetilde{\orb}$ be any $\cm$-equivariant anti-Poisson involution lifting $\sigma$, as in \Cref{subsec:enlarge_lag}, so that $\rho \circ \tilde{\sigma} = \sigma \circ \rho$. Let $\gamma: \cm \times \widetilde{\orb} \to \widetilde{\orb}$ denote the Brylinski-Kostant $\cm$-action on $\widetilde{\orb}$. Set 
		\[ \tilde{\theta} := \gamma(\sqrt{-1}) \circ \tilde{\sigma} = \tilde{\sigma} \circ \gamma(\sqrt{-1}). \]
	Then $\tilde{\theta}$ is a $\cm$-equivariant Poisson automorphism of $\widetilde{\orb}$ and $\rho \circ \tilde{\theta} = \theta \circ \rho$. Note that $\tilde{\theta}$ need not be an involution.	

	Let $(\cA_0, \varsigma)$ be the canonical filtered quantization of $\C[\widetilde{\orb}]$ as in \Cref{subsec:quant_conical_symp_sing}. By \cite[Proposition 5.1.1]{LMBM}, $\tilde{\theta}$ lifts to a filtered algebra automorphism $\Theta$ of $\cA_0$ in the sense that $\tilde{\theta} \circ \varsigma = \varsigma \circ \gr \Theta$. Thus $\Theta$ gives an isomorphism between $(\cA_0, \tilde{\theta} \circ \varsigma)$ and $(\cA_0, \varsigma)$ as filtered quantizations of $\C[\widetilde{\orb}]$.
	Now let $\Phi: \Ug \to \cA_0$ be the quantum co-moment map for $(\cA_0, \varsigma)$. Then $\Phi \circ \theta: \Ug \to \cA_0$ is a quantum co-moment map for $(\cA_0, \tilde{\theta} \circ \varsigma)$. Since $(\cA_0, \tilde{\theta} \circ \varsigma)$ and $(\cA_0, \varsigma)$ are isomorphic as filtered quantizations via $\Theta$, we must have $\Phi \circ \theta = \Theta \circ \Phi$ by the uniqueness of the quantum co-moment map (\Cref{lem:Hamiltonian_quant}). It follows that $I_0(\widetilde{\orb}) = \ker(\Phi)$ is preserved by $\theta$.
\end{proof}


\begin{Prop}[{\cite[Proposition 5.17]{DMB-Hodge}}] \label{prop:AC_theta}
	For any $M \in \cM_f(\fg,K)$, $\AC(M) = \AC(\theta^*M)$.
\end{Prop}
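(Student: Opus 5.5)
The statement is \Cref{prop:AC_theta}: for any $M\in\cM_f(\fg,K)$ we have $\AC(M)=\AC(\theta^*M)$. The plan is to show that twisting by $\theta$ does not change the associated graded module at all, only the way $S(\g/\fk)$ acts on it, and that this change is invisible on $\fk^\perp$.

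First, fix a good $K$-stable filtration $F_\bullet M$. Since $\theta^*M$ equals $M$ as a $K$-representation and $\theta$ preserves the degree filtration of $\Ug$, the same filtration $F_\bullet M$ is a good $K$-stable filtration of $\theta^*M$. Hence $\gr^F(\theta^*M)$ and $\gr^F M$ coincide as graded $K$-modules. The only difference is the $S(\g/\fk)$-module structure: an element $\xi\in\g$ now acts on the associated graded through $\gr(\theta(\xi))$.

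On $\g/\fk\simeq\fs$ the involution $\theta$ acts by $-1$. So in degree one $\xi$ acts by $-\xi$, and in general $\gr^F(\theta^*M)$ is the pullback $\sigma^*\gr^F M$ along the linear map $\sigma=-\theta=-\Id$ on $\fk^\perp$. This identification is $K$-equivariant, because $\theta$ commutes with $\Ad(K)$.

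Next, $\sigma=-\Id$ is the action of $-1\in\cm$ under the dilation action on $\fk^\perp$. Every nilpotent $K$-orbit $\orbk$ is stable under dilations, and the $K$-action commutes with the dilations. We then compare $\mathcal{AC}(\sigma^*N)$ with $\mathcal{AC}(N)$ for $N\in\cC(\g,K)$. A filtration of $N$ as in \cite[Lemma 2.9]{Vogan_AV} pulls back to such a filtration of $\sigma^*N$. Restricting a subquotient to $\orbk$ gives $(\sigma|_{\orbk})^*\cV$. This is a $K$-equivariant vector bundle on $\orbk$ whose fiber at $e$ is the $K_e$-module $\cV_{-e}$.

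The main step is to check that $(\sigma|_{\orbk})^*\cV\simeq\cV$ as $K$-equivariant bundles. The plan is to use a normal $\slf_2$-triple $(e,h,f)$ with $h\in\fk$. The element $k=\exp(\pi i h/2)\in K$ satisfies $\Ad(k)e=e^{\pi i}e=-e$. Then $\sigma|_{\orbk}$ agrees with the $K$-equivariant automorphism $g\cdot e\mapsto gk\cdot e$ of $\orbk$, since both send $e\mapsto -e$ and are $K$-equivariant. Pulling back along it twists the $K_e$-representation $V$ by conjugation by $k$. Since $k\in C_h$ normalizes $K_e$ and acts trivially on $Z_K$ (\Cref{subsec:orbit_cm}), the twisted representation is isomorphic to $V$; if needed, one passes to a Jordan–Hölder class, which suffices in the Grothendieck group.

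Therefore $[\cV]$ is unchanged in $\cK\Vect^K(\orbk)$, which gives $\mathcal{AC}(\sigma^*N)=\mathcal{AC}(N)$ and hence $\AC(\theta^*M)=\AC(M)$. The step I expect to need the most care is this last invariance of $K_e$-representations under conjugation by $k$. On the component group it is immediate, but for $\fk_e$ and $K_e^\circ$ one must use that inner conjugation by $k\in K_e^\circ\cdot C_h$ preserves isomorphism classes. In particular, for representations of the connected reductive part $K_Q^\circ$, conjugation by $k$ is inner up to $K_Q^\circ$ itself, since $C_h$ centralizes $K_Q$.
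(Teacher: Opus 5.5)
The paper gives no argument of its own here; it simply quotes \cite[Proposition 5.17]{DMB-Hodge}. Your proof is correct, but the orbit-by-orbit analysis is a detour that the situation does not require.

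You have already observed that the same good filtration $F_\bullet M$ serves for $\theta^*M$, and that $\xi\in\fs$ acts on $\gr^F(\theta^*M)$ by $-\xi$. From there you can finish at once. The module $\gr^F M=\bigoplus_i\gr^F_i M$ is graded, with $\g/\fk$ raising degree by one and $K$ preserving degrees. Hence the map equal to $(-1)^i$ on $\gr^F_iM$ is a $K$-equivariant isomorphism
\[
\gr^F M\xrightarrow{\sim}\gr^F(\theta^*M)
\]
of $(S(\g/\fk),K)$-modules. So $[\gr^F(\theta^*M)]=[\gr^F M]$ already in $\cK\cC(\g,K)$, and $\AC(\theta^*M)=\AC(M)$ follows. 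This is exactly your remark that $-\Id$ is the action of $-1\in\cm$, applied to the graded module itself rather than to each orbit.

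Your route proves something slightly stronger: $\mathcal{AC}$ on all of $\cC(\g,K)$, graded or not, is invariant under pullback by $-\Id_{\fk^\perp}$. The price is that you need normal $\slf_2$-triples, the decomposition $K_e=K_Q\ltimes U^\theta$, and a Jordan--H\"older argument.

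Two small points in your write-up.
\begin{enumerate}
\item In the paper's notation $\sigma=-\theta$ is the \emph{identity} on $\fk^\perp$; that is why $\orb_{\fk^\perp}=\orb^\sigma$. The map you mean is $\theta|_{\fk^\perp}=-\Id$.
\item The appeal to $k$ acting trivially on $Z_K$ does not by itself give ${}^kV\simeq V$. What does give it is your final remark, which can be stated more sharply.
\begin{itemize}
\item Since $K_Q\subset Z_K(h)$ commutes with all of $C_h=\exp(\C h)$, conjugation by $k=\exp(\pi i h/2)$ is the identity on all of $K_Q$, not merely inner up to $K_Q^\circ$.
\item Every irreducible $K_e$-module factors through $K_e/U^\theta\cong K_Q$.
\end{itemize}
Hence the twist by $k$ fixes each composition factor, and therefore the class $[\cV]$. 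For reducible $V$ you then pass to Jordan--H\"older classes, as you indicate.
\end{enumerate}
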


\begin{Prop} \label{prop:Hermitian_UnipOv}
	Assume that $G$ is a simply connected complex simple group defined over $\R$ and $\GR=G(\R)$, or $\GR = \Mp(2n, \R)$. Let $(\g,K)$ be the associated symmetric pair. Then for any quasi-distinguished nilpotent orbit $\ckorb$ in $\ckfg^*$, all Harish-Chandra modules in $\UnipOv(\g, K)$ are Hermitian.
\end{Prop}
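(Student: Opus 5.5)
Fix $M \in \UnipOv(\g, K)$. The plan is to apply the criterion of \Cref{prop:Hermitian}, so I must check two things: that $M$ has real infinitesimal character, and that $\theta^*M \cong M$.

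\emph{Real infinitesimal character.} The infinitesimal character of $M$ is $\lambda_{\ckorb} = \frac12 \check{h}$. Since $\check{h}$ is the semisimple element of an $\slf_2$-triple, it has integer eigenvalues under the adjoint action, so $\lambda_{\ckorb}$ lies in the real span of the weights. This argument is the same in the metaplectic case.

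\emph{$\theta^*M$ lies in the same packet.} The annihilator of $\theta^*M$ is $\theta(\Ann M) = \theta(\J_{\ckorb})$. By \Cref{prop:dual_quasi-dist}(iii), $\J_{\ckorb} = I_0(\widetilde{\orb})$ with $\widetilde{\orb}$ the universal cover of $\orb = d(\ckorb)$. We have $\orb_{\fk^\perp} \neq \varnothing$, because by \Cref{thm:V_AV} the associated variety of $M$ meets $\orb_{\fk^\perp}$. Hence \Cref{prop:unipotent_theta_stable} gives $\theta(\J_{\ckorb}) = \J_{\ckorb}$. The module $\theta^*M$ is irreducible, has the same $K$-action as $M$, and is annihilated by $\J_{\ckorb}$, so $\theta^*M \in \UnipOv(\g, K)$.

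\emph{$\theta^*M \cong M$.} By \Cref{prop:AC_theta}, $\AC(\theta^*M) = \AC(M)$. By \Cref{thm:Unip_AOD_bij} (with \Cref{cor:counting_classical} and \Cref{prop:counting_exceptional} covering the simply connected and metaplectic cases), the map $\AC$ is injective on $\UnipOv(\g,K)$, since it is the inverse of the bijection $\quan^K_{\ckorb}$. Therefore $\theta^*M \cong M$, and \Cref{prop:Hermitian} shows that $M$ is Hermitian.

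The main obstacle is the $\theta$-stability of the ideal, but that step is already handled by \Cref{prop:unipotent_theta_stable}. What remains to check is that \Cref{prop:Hermitian} and \Cref{prop:AC_theta} apply in the stated generality, including the nonlinear group $\Mp(2n,\R)$. For $\Mp(2n,\R)$, $\theta$ acts on $\g$ and $K$ is connected, so the twist $\theta^*M$ still makes sense and the ALTV criterion still holds for $(\g,K)$-modules, which is what I would rely on there.
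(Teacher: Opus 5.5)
Your proof is correct and follows essentially the same route as the paper: \Cref{thm:V_AV} gives $\orb_{\fk^\perp}\neq\varnothing$; then $\theta$-stability of $\J_{\ckorb}=I_0(\widetilde{\orb})$ puts $\theta^*M$ in $\UnipOv(\g,K)$; equality of associated cycles together with the injectivity of $\AC$ coming from \Cref{thm:Unip_AOD_bij} gives $\theta^*M\cong M$; and \Cref{prop:Hermitian} concludes. Your explicit check that $\lambda_{\ckorb}=\check h/2$ is real, which the paper leaves implicit when it invokes \Cref{prop:Hermitian}, is a correct and worthwhile addition.
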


\begin{proof}
	Let $M \in \UnipOv(\g, K)$. By \Cref{thm:V_AV}, the maximal $K$-orbits in $\AV(M)$ all lie in $\orb_{\fk^\perp} = \orb \cap \fk^\perp$. In particular, $\orb_{\fk^\perp} \neq \varnothing$.
	By \Cref{prop:dual_quasi-dist} (iii) and \Cref{prop:unipotent_theta_stable}, the ideal $\J_{\ckorb}$ is preserved by $\theta$; hence $\Ann(\theta^*M) = \theta \J_{\ckorb} = \J_{\ckorb}$ and $\theta^*M \in \UnipOv(\g, K)$. By \Cref{prop:AC_theta}, we have $\AC(M) = \AC(\theta^*M)$. Therefore \Cref{thm:Unip_AOD_bij} implies that $M \simeq \theta^*M$, which means that $M$ is Hermitian by \Cref{prop:Hermitian}.
\end{proof}

\subsection{Unitarity via mixed Hodge theory}

The following unitarity criterion was proved in \cite[Theorem 5.22]{DMB-Hodge} using the theory of mixed Hodge modules.

\begin{Thm}\label{thm:unitarity_criterion}
	Let $M$ be an irreducible $(\fg,K)$-module such that
	\begin{itemize}
		\item[(i)] $M$ is Hermitian.
		\item[(ii)] $\AC(M)$ is irreducible (\Cref{defn:irred_AC}).
		\item[(iii)] $\Ann(M)$ is a maximal ideal.
		\item[(iv)] $\Ann(M)$ is very weakly unipotent.
	\end{itemize}
	Then $M$ is unitary.
\end{Thm}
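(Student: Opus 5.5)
This is \cite[Theorem~5.22]{DMB-Hodge}, and we take it over as stated. If one wants to reconstruct the argument, I would follow the Hodge-theoretic approach of Schmid--Vilonen and Davis--Vilonen and organize it in three steps.

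\emph{Step 1: Hodge filtration and the $c$-invariant form.} Use Beilinson--Bernstein localization to realize $M$ as global sections of an irreducible $K$-equivariant twisted $\sD$-module on the flag variety. This module underlies a polarizable mixed Hodge module, so $M$ acquires a canonical Hodge filtration $F_\bullet M$. By the main theorem of \cite{DavisVilonen:unitary}, the polarization induces the \emph{$c$-invariant} Hermitian form $\langle\cdot,\cdot\rangle_c$ on $M$, which is invariant for the compact real form. After a normalization shift, its restriction to each $F_pM\cap(F_{p-1}M)^\perp$ is $(-1)^p$-definite. Hypotheses (iii) and (iv) enter here. Very weak unipotence puts the infinitesimal character in the real range where the Davis--Vilonen signature theorem applies. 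It also allows a comparison with the filtration coming from quantization, as in the conjecture of \cite[Remark~7.3.4]{LY} proven in \cite{DMB-Hodge}. Maximality of $\Ann(M)$ then forces $\gr^F M$ to be supported on $\overline{\orb}$, with no lower-dimensional components.

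\emph{Step 2: comparing the two forms.} By (i), $M$ carries a $(\fg_\R,K_\R)$-invariant form $\langle\cdot,\cdot\rangle$. By the uniqueness statement in Proposition~\ref{prop:Hermitian} and Schur's lemma, it has the form $\langle u,v\rangle=\langle u,\Theta v\rangle_c$, where $\Theta$ is an intertwiner $M\to\theta^*M$ normalized to be an involution on each $K$-type. I expect the main obstacle to be showing that $\Theta$ preserves the Hodge filtration and acts on $\gr^F_pM$ by a scalar $\pm(-1)^{p}$ times a global sign. Once this holds, the two signature patterns cancel, and $\langle\cdot,\cdot\rangle$ is definite after multiplying by an overall sign.

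\emph{Step 3: using the irreducible associated cycle.} Here hypothesis (ii) is used. Since $\AC(M)=[\cV]\cdot\orbk$ with $\cV$ irreducible, the associated graded $\gr^FM$, localized to $\orbk$, is a single irreducible $K$-equivariant vector bundle. Under the $\slf_2$-grading, the element $\theta$ coincides with $\gamma(\sqrt{-1})$ up to the $\cm$-action, as in the proof of Proposition~\ref{prop:unipotent_theta_stable}. Because the grading is by degree $2$ in $\fg$, $\theta$ acts on the fiber of $\cV$ by a single scalar and on degree-$p$ sections by $(\pm1)\cdot(-1)^{p}$. Irreducibility of $\cV$ rules out mixing of two different signs, which is exactly the failure mode when $\AC(M)$ is reducible. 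Lifting this statement from $\gr^F M$ to $M$ by a filtered argument on $K$-types gives Step~2, and hence the unitarity of $M$.
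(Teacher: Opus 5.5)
Your proposal matches the paper: the paper gives no argument of its own for this statement and simply imports \cite[Theorem~5.22]{DMB-Hodge}, which is exactly what you do. The optional reconstruction should not be taken as a proof, however, for two reasons: it misassigns hypotheses (iii)--(iv), since the support of $\gr^F M$ already lies in $\overline{\orb}$ with no lower-dimensional components by \Cref{thm:V_AV} and the comparison with the quantization filtration is a byproduct of \cite{DMB-Hodge} rather than an input available for a general $M$; and your Step~3 tacitly assumes that $\gr^F M$ has no nonzero part supported on $\overline{\orb}_K\setminus\orbk$ (i.e.\ that it embeds into its sections over the open $K$-orbit), which is the genuinely nontrivial Hodge-theoretic input such a proof would have to supply.
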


We refer the reader to \cite[Definition 4.11]{DMB-Hodge} for the notion of very weakly unipotent $\Ug$-modules or two-sided ideals of $\Ug$. It is a weaker condition than the weak unipotence previously introduced by Vogan (\cite[Definition 8.16]{Vogan:unitarizability}). For our application, we need the following proposition, established in \cite[Theorem 2.19, Theorem 3.16]{MaYu:weak_unipotence}.  

\begin{Prop} \label{prop:special_weakly_unipotent}
	Let $\fg$ be a complex simple Lie algebra and let $\ckfg$ be its Langlands/metaplectic dual Lie algebra.
	Let $\ckorb$ be any nilpotent orbit in $\ckfg^*$ and let $\J_{\ckorb}$ be the (metaplectic) special unipotent ideal of $\Ug$ attached to $\ckorb$. Then $\J_{\ckorb}$ is very weakly unipotent.
\end{Prop}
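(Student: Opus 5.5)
The statement to prove is \Cref{prop:special_weakly_unipotent}: the very weak unipotence of $\J_{\ckorb}$ for an arbitrary nilpotent orbit $\ckorb$. This is cited from \cite{MaYu:weak_unipotence}, so the proof here amounts to recalling that reference. The plan below sketches how one would argue it directly.

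Recall that an ideal $I$ with infinitesimal character $\lambda$ is very weakly unipotent, in the sense of \cite[Definition 4.11]{DMB-Hodge}, if, roughly, for every finite-dimensional $\g$-representation $F$, no nonzero subquotient of the bimodule $(\Ug/I)\otimes F$ is supported on the same associated variety with an infinitesimal character strictly ``smaller'' than $\lambda$ in the appropriate real-part ordering. Equivalently, translation of $\J$ to any infinitesimal character $\lambda+\mu$ with $\mu$ a weight of $F$ only produces ideals with $\operatorname{Re}\langle \lambda+\mu,\lambda+\mu\rangle \ge \langle\lambda,\lambda\rangle$, once we restrict to those $\lambda+\mu$ whose corresponding primitive ideals still have associated variety $\overline{\orb}$.

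The first step is to take $\lambda=\lambda_{\ckorb}=\frac12\check h$, which is real and dominant. Let $\mu$ be a weight of $F$ such that some ideal with infinitesimal character $\lambda+\mu$ and associated variety $\overline{d(\ckorb)}$ occurs in $(\Ug/\J_{\ckorb})\otimes F$. One would then show that $\lambda+\mu$ is itself integral-regular-dominant relative to the integral root system of $\lambda$, modulo the singular directions. This uses the Barbasch--Vogan characterization: $\J_{\ckorb}$ is the maximal ideal, and the Goldie rank and associated variety of $J_{\max}(\lambda+\mu)$ are governed by the Lusztig--Spaltenstein truncated induction $j_{W_{[\lambda]}}^W(\sgn)$.

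The second step is an inequality. Because $\lambda=\frac12\check h$ and the $\check h$-eigenvalues of the roots are integers, one compares $\|\lambda+\mu\|^2-\|\lambda\|^2=2\langle\lambda,\mu\rangle+\|\mu\|^2$. This quantity is clearly nonnegative when $\mu$ is dominant. When $\mu$ is not dominant, one would reduce to the case where $\lambda+\mu$ lies in the same facet-closure, using that the associated variety condition forces $W_{\lambda+\mu}\supseteq$ a conjugate of $W_\lambda$. That constraint comes from the special-piece and duality structure, together with the $\tau$-invariance of \Cref{prop:J_tau_inv}.

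The main obstacle is the non-dominant $\mu$ case, since controlling which translated ideals keep the same associated variety is a cell-theoretic problem. I would first try the classical Barbasch--Vogan argument \cite[Proposition 5.10]{BV85:unipotent}, which handles the even case. For odd orbits and the metaplectic case, I would defer to the detailed case analysis of \cite[Theorem 2.19, Theorem 3.16]{MaYu:weak_unipotence}, which the paper cites as the source of this proposition.
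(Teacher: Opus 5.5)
Your operative argument is the same as the paper's. The paper gives no independent proof of \Cref{prop:special_weakly_unipotent}: it simply cites \cite[Theorem 2.19, Theorem 3.16]{MaYu:weak_unipotence}, and remarks that \cite[Proposition 5.10]{BV85:unipotent} already covers even $\ckorb$, since Vogan's weak unipotence implies very weak unipotence. As a proof by reference, your proposal is fine.

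The self-contained sketch, however, would not work as written, so you should not present it as a route. Its key reduction is backwards. The associated-variety condition does \emph{not} force $W_{\lambda+\mu}$ to contain a conjugate of $W_\lambda$.

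Take $\ckorb=0$. Then $\lambda_{\ckorb}=0$ and $W_\lambda=W$. The ideal $\J_{\ckorb}$ is the unique primitive ideal at infinitesimal character $0$, and its associated variety is the whole nilpotent cone, which equals $\overline{d(0)}$. Let $X$ be the irreducible Verma module at infinitesimal character $0$ and let $F$ be finite-dimensional. Then $X\otimes F$ has a Verma flag containing a subquotient at infinitesimal character $\mu$ for every weight $\mu$ of $F$. Each such subquotient has an antidominant constituent of full Gelfand--Kirillov dimension. Choosing $\mu$ regular gives $W_{\lambda+\mu}=1$, which contains no conjugate of $W$. The norm inequality is trivially true in this example, so it refutes only your lemma, not the proposition.

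What the associated-variety condition actually gives is an \emph{upper} bound on how singular the new infinitesimal character $\gamma$ can be. If $Y$ is an irreducible constituent at $\gamma$ with $\mathcal V(\Ann Y)=\overline{\orb}$, then $\Ann Y\subseteq J_{\max}(\gamma)$, so $\mathcal V(J_{\max}(\gamma))\subseteq\overline{\orb}$. A sufficient statement to prove would then be the following: $\lambda_{\ckorb}$ has minimal norm among all $\gamma\in W(\lambda_{\ckorb}+P)$ satisfying this bound, where $P$ is the weight lattice.

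For non-even $\ckorb$ this problem lives in the integral Weyl group $W_{[\lambda]}$. In the Langlands-dual case this is the Weyl group of the centralizer of $\exp(\pi i\check h)$ in $\ckfg$, which need not be a parabolic subgroup. That is where the substantive difficulty lies.

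Finally, \Cref{prop:J_tau_inv} concerns the principal anti-involution $\tau_\g$ and says nothing about translation functors. The tool that controls which translates of a primitive ideal survive is the Borho--Jantzen $\tau$-invariant, which is an unrelated object.
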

\begin{Rem}
	\Cref{prop:special_weakly_unipotent} was first established in \cite[Proposition 5.10]{BV85:unipotent} for even nilpotent orbits in $\ckfg^*$. Recall that a nilpotent orbit $\ckorb$ in $\ckfg$ is said to be \emph{even} if $\lambda_{\ckorb} = \check{h}/2$ is integral (i.e., $\exp[\pi i \ad(\check{h})] = \Id_{\ckfg}$) for some (and hence all) $\slf_2$-triple $(\check{e},\check{h}, \check{f})$ of $\ckorb$.  
\end{Rem}

Finally, we prove the main result \Cref{thm:main_unitarity_dist} mentioned in \Cref{subsec:main_results}.

\begin{Thm} \label{thm:unitarity_dist}
	Assume that $G$ is a simply connected complex simple group defined over $\R$ and $\GR=G(\R)$, or $\GR = \Mp(2n, \R)$. Let $(\g,K)$ be the associated symmetric pair. Then for any quasi-distinguished nilpotent orbit $\ckorb$ in $\ckfg^*$, all Harish-Chandra modules in $\UnipOv(\fg,K)$ are unitarizable.
\end{Thm}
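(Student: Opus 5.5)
The plan is to verify the four hypotheses of \Cref{thm:unitarity_criterion} for each $M \in \UnipOv(\fg,K)$ and then invoke that criterion. Fix a quasi-distinguished $\ckorb$, put $\orb = d(\ckorb)$, and let $M$ be an irreducible HC module with $\Ann(M) = \J_{\ckorb}$.

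\begin{itemize}
\item \emph{Hypothesis (i), Hermitianness.} This is exactly \Cref{prop:Hermitian_UnipOv}, which covers both $\GR = G(\R)$ with $G$ simply connected and $\GR = \Mp(2n,\R)$. Its proof needs \Cref{prop:Hermitian} (from \cite{ALTV}), whose statement assumes a real infinitesimal character. Here the infinitesimal character is $\lambda_{\ckorb} = \check{h}/2$, which is real since $\check{h}$ comes from an $\slf_2$-triple and so has real (indeed half-integral) eigenvalues. I will record this explicitly.
\item \emph{Hypothesis (ii), irreducible associated cycle.} By \Cref{thm:Unip_AOD_bij}, in both the linear and metaplectic cases, $M = \quan_{\ckorb}^K(\orbk,\cF_V)$ for some admissible orbit datum, and $\AC(M) = [\cF_V]\cdot\orbk$. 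Here $\cF_V$ is an irreducible $K$-equivariant vector bundle on a single $K$-orbit, so the cycle is irreducible.
\item \emph{Hypothesis (iii), maximality.} By definition $\J_{\ckorb}$ is the maximal primitive ideal with infinitesimal character $\lambda_{\ckorb}$. By \Cref{prop:unip_ideal_prim} together with \Cref{prop:dual_quasi-dist} (iii), it is a maximal ideal of $\Ug$.
\item \emph{Hypothesis (iv), very weak unipotence.} This is \Cref{prop:special_weakly_unipotent}, which applies in both the Langlands and metaplectic dual settings.
\end{itemize}

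With all four hypotheses in place, \Cref{thm:unitarity_criterion} makes each $M$ unitary.

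One point needs care. \Cref{thm:Unip_AOD_bij} is stated for $K = (G^\theta)^\circ$, while in the simply connected case $K = G^\theta$. These agree because $G^\theta$ is connected when $G$ is simply connected; this is a standard fact, used implicitly in \Cref{prop:counting_exceptional} and \Cref{cor:counting_classical}, which work with $K = G^\theta$ directly. I would cite those two results for the simply connected classical and exceptional cases, and \Cref{cor:counting_classical_identity_component} for $\Mp(2n,\R)$.

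The main obstacle I expect is making sure the hypotheses of \Cref{thm:unitarity_criterion} from \cite{DMB-Hodge} apply in this generality. That criterion is formulated for real reductive groups in the sense of \cite{DavisVilonen:unitary}, and these include finite covers such as $\Mp(2n,\R)$, so the metaplectic case should go through. I would check that the notion of associated cycle and the Hodge filtration used there agree with \Cref{defn:irred_AC}. Everything else in the argument is direct citation of earlier results.
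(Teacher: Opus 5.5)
Your proposal is correct and follows the paper's proof: it verifies the four hypotheses of \Cref{thm:unitarity_criterion} using the same inputs, namely \Cref{prop:Hermitian_UnipOv}, \Cref{thm:Unip_AOD_bij}, the maximality of $\J_{\ckorb}$ (the paper takes this directly from the definition), and \Cref{prop:special_weakly_unipotent}. Your two extra remarks are accurate details that the paper leaves implicit: $\lambda_{\ckorb}$ is real, as \Cref{prop:Hermitian} requires, and $G^\theta$ is connected for simply connected $G$, so that $K=(G^\theta)^\circ$ in \Cref{thm:Unip_AOD_bij}.
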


\begin{proof}
	We check the conditions in \Cref{thm:unitarity_criterion}. Condition (i) follows from \Cref{prop:Hermitian_UnipOv}, condition (ii) from \Cref{thm:Unip_AOD_bij}, condition (iii) from the definition of (metaplectic) special unipotent ideals, and condition (iv) from \Cref{prop:special_weakly_unipotent}.
\end{proof}

\begin{Cor}\label{cor:unitarity_dist_reductive}
	Assume that $G$ is a connected complex reductive group defined over $\R$ and $\GR=G(\R)$. Let $(\g,K)$ be the associated symmetric pair. Then for any quasi-distinguished nilpotent orbit $\ckorb$ in $\ckfg^*$, all Harish-Chandra modules in $\UnipOv(\fg,K)$ are unitarizable.
\end{Cor}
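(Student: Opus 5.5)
The plan is to reduce to \Cref{thm:unitarity_dist} by passing from a connected reductive $G$ to its simply connected simple factors. Let $G$ be connected reductive over $\R$ with $\GR = G(\R)$. Write $G_{sc}$ for the simply connected cover of the derived group, $Z$ for the connected center, and use the isogeny
\[ Z \times G_{sc} \twoheadrightarrow G. \]
Decompose $G_{sc} = \prod_i G_i$ over $\R$. Each factor $G_i$ is either $\R$-simple and absolutely simple, or the restriction of scalars of a complex simple group, in which case $G_i(\R)$ is a complex group.

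On the Lie algebra side we have $\g = \mathfrak{z} \oplus \bigoplus_i \g_i$ and dually $\ckfg = \check{\mathfrak{z}} \oplus \bigoplus_i \ckfg_i$. A quasi-distinguished $\ckorb$ decomposes as a product $\prod_i \ckorb_i$ with each $\ckorb_i$ quasi-distinguished. Indeed, Levi subalgebras, saturation and canonical quotients all decompose factorwise, so \Cref{defn:quasi-dist-saturation} is checked factor by factor. Consequently $\lambda_{\ckorb} = (0, (\lambda_{\ckorb_i})_i)$, and $\J_{\ckorb}$ is the ideal generated by $\ker(\chi_0|_{U\mathfrak{z}})$ together with the $\J_{\ckorb_i}$.

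Take $M \in \UnipOv(\g,K)$. Pulling back along the finite map $Z(\R)\times G_{sc}(\R) \to \GR$ (in the sense of \Cref{subsec:basic_settings}) gives a finite direct sum of irreducible modules. Each summand is an exterior tensor product of a character of $Z(\R)$, whose differential vanishes on $\mathfrak{z}$, with modules $M_i \in \UnipOv[\ckorb_i](\g_i,K_i)$.
\begin{itemize}
\item For the absolutely simple factors, \Cref{thm:unitarity_dist} gives that $M_i$ is unitarizable.
\item For the complex factors, unitarity is the complex case of \cite{DMB-Hodge}. Alternatively one can rerun \Cref{thm:unitarity_criterion}, since irreducibility of associated cycles, Hermitianness and maximality hold there as well.
\end{itemize}
The character of $Z(\R)$ has trivial differential, so it is unitary on the identity component; by finiteness of $\pi_0$ it is unitary on all of $Z(\R)$. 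Hence the pullback of $M$ is unitarizable.

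Finally, descend unitarity from the pullback to $M$. Since $M$ is a direct summand of an induced-type restriction, the natural route is to average an invariant inner product over the finite group $\GR/\mathrm{im}(Z(\R)\times G_{sc}(\R))$. Equivalently, $M$ sits inside the induction of a unitary summand from a finite-index subgroup, and finite induction preserves unitarity. An invariant positive-definite form on the pulled-back summand then induces one on $M$, since $M$ is irreducible. I expect this last step to be the main obstacle. The image of $G_{sc}(\R)\times Z(\R)$ in $G(\R)$ need not be all of $G(\R)$, and the kernel of the isogeny must act compatibly. The cleanest fix is to realize $M$ as an irreducible constituent of $\Ind_{H}^{\GR}$ of a unitary module, with $H$ the finite-index image. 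One then checks, via Mackey theory for finite index, that the irreducible constituents of a unitary finitely induced module are unitary.
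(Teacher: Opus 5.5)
Your proposal is correct and follows the paper's route. The paper only asserts that one can reduce to $G$ simply connected with $\g_\R$ simple, then invokes \Cref{thm:unitarity_dist} for the absolutely simple factors and \cite{DMB-Hodge} for the complex ones; you have spelled out that reduction, and the descent step you flag is not a real obstacle. The image $H$ of $Z(\R)\times G_{sc}(\R)$ is a normal finite-index subgroup of $\GR$, because conjugation by $\GR$ lifts (the kernel of the isogeny is central), so you can either average the orthogonal sum of the invariant forms on the summands over $\GR/H$, or use Frobenius reciprocity to embed $M$ into the finite induction of one unitarizable summand and restrict the induced positive-definite form.
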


\begin{proof}
	It is easy to reduce to the case when $G$ is simply connected and the (real) Lie algebra $\g_\R$ of $\GR$ is simple. If $\GR$ is itself complex, this has already been treated in \cite{DMB-Hodge}. In this case, $\fg$ is the product of two isomorphic simple Lie algebras (over $\C$). Otherwise, $\fg$ is a simple Lie algebra over $\C$ and $G$ is a complex simple group. This case follows from \Cref{thm:unitarity_dist}.	
\end{proof}

\appendix

\section{Restriction of quantization} \label{sec:res_quant}

The bijectivity of the quantization map in \Cref{thm:Unip_AOD_bij} is obtained in \Cref{subsec:counting} by reducing to cardinality inequalities. Several cases--notably $\SO_0(p,q)$, Spin groups, and real exceptional groups--require passing from a pair $(\g,K')$ to another pair $(\g,K)$ via a morphism $\phi: (\g,K) \to (\g,K')$ of symmetric pairs as in the last part of \cref{subsec:basic_settings}. As for associated cycles (\Cref{lem:AC_pullback}), we need the quantization functors to be compatible with pullback via $\phi$ along such inclusions. This appendix establishes that compatibility in \Cref{prop:quant_res}, \Cref{prop:quant_pullback} and \Cref{cor:quant_res_bij}; these results are applied in the proofs of \Cref{cor:counting_classical_identity_component}, \Cref{prop:counting_spin}, and \Cref{prop:counting_exceptional}.

We follow the same setting and notation as in \Cref{subsec:quant_K_orb}. We first consider the following special situation: let $G$ be a connected complex reductive group with Cartan involution $\theta$, and let $K \subset K'$ be closed subgroups of finite index in $G^\theta$. Let $\orb$ be a nilpotent $G$-orbit in $\g^*$ as in \Cref{subsec:adm_extn} and let $\orb_{K'} \subset \orb_{\fk^\perp}$ be a $K'$-orbit. Then $\orb_{K'}$ is in general a disjoint union of nilpotent $K$-orbits, say 
	\[ \orb_{K'} = \bigsqcup_{i=1}^r \orb_{K}^i.\]
In analogy with $\cUnipOv^K(\orbk)$ and $\cUnipOv^{K'}(\orb_{K'})$ defined in \Cref{subsec:quant_unip}, we define $\cUnipOv^K(\orb_{K'})$ to be the full subcategory of $\cUnipOv(\g,K)$ consisting of $(\g,K)$-modules $M$ that satisfy $\AV(M) \subset \overline{\orb}_{K'}$. Then for each $\orbk^i$, $\cUnipOv^K(\orbk^i)$ is naturally a full subcategory of $\cUnipOv^K(\orb_{K'})$. On the other hand, we have an obvious equivalence of abelian categories 
\[ \Coh(\TT^+_{\orb_{K'}}, K) \simeq \bigoplus_{i=1}^r \Coh(\TT^+_{\orb_{K}^i}, K). \]
Therefore we can assemble the quantization functors $\quan_{\ckorb, \orbk^i}^K$ as in \Cref{prop:MV_AC} (2) to obtain a quantization functor
\begin{equation} \label{eq:quan_orbK'_K}
	\quan_{\ckorb, \orb_{K'}}^K: \Coh(\TT^+_{\orb_{K'}}, K) \to \cUnipOv^K(\orb_{K'}). 
\end{equation}
When $K=K'$ and $\orb_{K'} = \orb_{K}$, this functor coincides with the quantization functor $\quan_{\ckorb, \orbk}^K$.

As in \Cref{subsec:enlarge_lag} and \Cref{subsec:adm_extn}, we choose a $K'$-orbit in $\rho^{-1}(\orb_{K'})$, denoted by $Y'^\diamond = \widetilde{\orb}_{K'}$, with the corresponding covering morphism $\tilde{\rho}: \widetilde{\orb}_{K'} \to \orb_{K'}$. Let $Y'$ be the closure of $Y'^\diamond$ in $X^{reg}$, which is again a smooth closed Lagrangian subvariety of $X^{reg}$. We have the corresponding groups $\Gamma_{K'}$, $\Gamma_{K'}^{\cm}$, $\tGamma_{K'}$, $\tGamma_{K'}^{\cm}$, etc. 
For each $1 \leqslant i \leqslant r$, set 
\[ Y_i^\diamond = \widetilde{\orb}_K^i := \tilde{\rho}^{-1}(\orb_{K}^i).\]
Let $Y_i$ be the closure of $Y_i^\diamond$ in $X^{reg}$, or equivalently, in $Y'$. Then $Y_i^\diamond$ and $Y_i$ are $K$-invariant. Moreover, each $Y_i$ is both open and closed in $Y'$, and hence are $K \times \cm$-invariant smooth closed Lagrangian subvarieties in $X^{reg}$. We have the decompositions
\[ 
	Y_i^\diamond = \bigsqcup_{j=1}^{s_i} Y_{i,j}^\diamond \quad \text{and} \quad Y_i = \bigsqcup_{j=1}^{s_i} Y_{i,j},
\]
where $s_i \in \Z_+$ for each $1 \leqslant i \leqslant r$, $Y_{i,j}^\diamond$ are $K$-orbits and each $Y_{i,j}$ is the closure of $Y_{i,j}^\diamond$ in $X^{reg}$ (or $Y'$), so that $Y_{i,j}^\diamond$ and $Y_{i,j}$ are $K \times \cm$-invariant open and closed subvarieties in $Y_i^\diamond$ and $Y_i$ respectively. Again, the $Y_{i,j}$ are smooth closed Lagrangian subvarieties in $X^{reg}$. 

For each pair of indices $(i, j)$, let $\tilde{\rho}_{i,j}: Y_{i,j}^\diamond \to \orb_{K}^i$ denote the restriction of $\tilde{\rho}$ to $Y_{i,j}^\diamond$. Then $\tilde{\rho}_{i,j}$ is a $K$-equivariant Galois covering morphism with the Galois group denoted as $\Gamma_{\tilde{\rho}_{i,j}}$. The group $\Gamma_{\tilde{\rho}_{i,j}}$ can be naturally identified with the subgroup of $\Gamma_{\tilde{\rho}}$ consisting of those $K'$-equivariant automorphisms of $\tilde{\rho}: Y'^\diamond \to \orb_{K'}$ which preserve $Y_{i,j}^\diamond$. For a fixed index $i$, $Y_{i,j}^\diamond$ for various $j$ are permuted transitively by $\Gamma_{\tilde{\rho}}$, and the subgroups $\Gamma_{\tilde{\rho}_{i,j}} \subset \Gamma_{\tilde{\rho}}$ are conjugate to each other for each fixed $i$ and all $1 \leqslant j \leqslant s_i$. We can form the variety $\Gamma_{\tilde{\rho}} \times_{\Gamma_{\tilde{\rho}_{i,j}}} Y_{i,j}^\diamond$ equipped with the natural (left) $K \times \Gamma_{\tilde{\rho}}$-action induced by the (left) $K$-action on $Y_{i,j}^\diamond$ and the (left) $\Gamma_{\tilde{\rho}}$-action on $\Gamma_{\tilde{\rho}}$. The following lemma is standard.

\begin{Lem} \label{lem:ind_Y}
	For any $1 \leqslant i \leqslant r$ and $1 \leqslant j \leqslant s_i$, the inclusion $Y_{i,j}^\diamond \subset Y_i^\diamond$ and the $K \times \Gamma_{\tilde{\rho}}$-action on $Y_i^\diamond$ induce a $K \times \Gamma_{\tilde{\rho}}$-equivariant isomorphism
	\[ 
		\Gamma_{\tilde{\rho}} \times_{\Gamma_{\tilde{\rho}_{i,j}}} Y_{i,j}^\diamond \xrightarrow{\sim} Y_i^\diamond,
	\]
	and a similar statement holds for $Y_i$ and $Y_{i,j}$.
\end{Lem}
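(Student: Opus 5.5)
The plan is to construct the map explicitly, check that it is well defined and $K \times \Gamma_{\tilde{\rho}}$-equivariant, and then prove bijectivity by comparing fibres over $\orb_K^i$. First fix $i$ and $j$. Every point of $Y_{i,j}^\diamond$ lies in $Y_i^\diamond = \tilde{\rho}^{-1}(\orb_K^i)$, and $\Gamma_{\tilde{\rho}}$ commutes with $\tilde{\rho}$. Hence $\Gamma_{\tilde{\rho}}$ preserves $Y_i^\diamond$, and the action morphism $\Gamma_{\tilde{\rho}} \times Y_{i,j}^\diamond \to Y_i^\diamond$, $(\gamma, y) \mapsto \gamma \cdot y$, is defined. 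It is invariant under the diagonal $\Gamma_{\tilde{\rho}_{i,j}}$-action $(\gamma h^{-1}, h y)$, so it descends to the quotient $\Gamma_{\tilde{\rho}} \times_{\Gamma_{\tilde{\rho}_{i,j}}} Y_{i,j}^\diamond$. The descended map is $K$-equivariant because the $K$- and $\Gamma_{\tilde{\rho}}$-actions on $Y'^\diamond$ commute, and it is $\Gamma_{\tilde{\rho}}$-equivariant by construction. Since $\Gamma_{\tilde{\rho}}$ is finite, the source is a finite disjoint union of $|\Gamma_{\tilde{\rho}}/\Gamma_{\tilde{\rho}_{i,j}}|$ copies of $Y_{i,j}^\diamond$, indexed by coset representatives.

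For bijectivity, surjectivity follows from transitivity: the $K$-orbits $Y_{i,j'}^\diamond$ with $1 \leqslant j' \leqslant s_i$ exhaust $Y_i^\diamond$, and $\Gamma_{\tilde{\rho}}$ permutes them transitively. Each $Y_{i,j'}^\diamond$ is therefore $\gamma Y_{i,j}^\diamond$ for some $\gamma$. For injectivity, suppose $\gamma y = \gamma' y'$. Then $\gamma^{-1}\gamma'$ maps the $K$-orbit $Y_{i,j}^\diamond$ to a $K$-orbit meeting $Y_{i,j}^\diamond$, so it preserves $Y_{i,j}^\diamond$. By the identification of $\Gamma_{\tilde{\rho}_{i,j}}$ as the stabilizer of $Y_{i,j}^\diamond$ in $\Gamma_{\tilde{\rho}}$, we get $\gamma^{-1}\gamma' \in \Gamma_{\tilde{\rho}_{i,j}}$, and the two points coincide in the quotient. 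Finally, on each summand $\gamma \times Y_{i,j}^\diamond$ the map is the isomorphism $\gamma\colon Y_{i,j}^\diamond \to \gamma Y_{i,j}^\diamond$ onto an open and closed piece of $Y_i^\diamond$, and distinct cosets land on disjoint pieces. So the map is an isomorphism of varieties, not merely a bijection, and no normality argument is needed.

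The statement for the closures $Y_i$ and $Y_{i,j}$ reduces to the above. The group $\Gamma_{\tilde{\rho}}$ sits inside $\Gamma$ and so acts on $X^{reg}$ by automorphisms. It therefore carries closures in $X^{reg}$ to closures, giving $\gamma Y_{i,j} = \overline{\gamma Y_{i,j}^\diamond}$. Since $Y_i = \bigsqcup_j Y_{i,j}$ with each $Y_{i,j}$ open and closed, the same summand-by-summand argument applies verbatim. The one point requiring care is that $\gamma^{-1}\gamma'$ preserves $Y_{i,j}$ if and only if it preserves $Y_{i,j}^\diamond$; this holds because $Y_{i,j}^\diamond$ is the open dense $K$-orbit in the connected component $Y_{i,j}$. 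I expect this stabilizer comparison to be the only mildly delicate step, and everything else is formal.
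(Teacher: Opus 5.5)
Your proposal is correct. The paper gives no proof and simply calls the lemma standard; your argument is that standard verification. The action map from the induced variety is bijective because $\Gamma_{\tilde{\rho}}$ permutes the $K$-orbits $Y_{i,j'}^\diamond$ transitively with stabilizer $\Gamma_{\tilde{\rho}_{i,j}}$, and it is an isomorphism because it restricts to an isomorphism on each open-and-closed summand.

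One harmless imprecision in the closure step: $Y_{i,j}$ need not be connected when $K$ is disconnected. Your stabilizer comparison does not use connectedness. It only needs $Y_{i,j}\cap Y_i^\diamond = Y_{i,j}^\diamond$, which follows from the disjointness of the $Y_{i,j'}$, or equivalently that $Y_{i,j}^\diamond$ is the unique $K$-orbit of maximal dimension in $Y_{i,j}$.
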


The restriction functor defined in the paragraph above \Cref{lem:AC_pullback} induces the restriction/forgetful functor 
	\[ \res_{K}^{K'}: \cUnipOv(\g,K') \to \cUnipOv(\g,K) \] 
which induces 
\[\res_{K}^{K'}: \cUnipOv^{K'}(\orb_{K'}) \to \cUnipOv^K(\orb_{K'}). \]
Similarly we have restriction/forgetful functors for equivariant twisted $\sD$-modules 
\[ \res_{K}^{K'}: \Coh(\TT^+_{\orb_{K'}},K') \to \Coh(\TT^+_{\orb_{K'}},K) \]
and
\[ \res_{K}^{K'}: \Coh^\kappa(\TT^+_{\orb_{K'}},K' \times \cm) \to \Coh^\kappa(\TT^+_{\orb_{K'}},K \times \cm), \]
and so on, which are compatible with each other in the sense that they are intertwined by the right vertical functors in \eqref{diag:lift_graded}. The following proposition states that the quantization functors are compatible with these restriction functors, which is parallel to \Cref{lem:AC_pullback}.

\begin{Prop} \label{prop:quant_res}
	Let $K$ be a closed subgroup of finite index in $K'$. Then the following diagram is commutative:
	\begin{equation}
		\begin{tikzcd}[column sep = large]
			\Coh(\TT^+_{\orb_{K'}},K') \arrow[r, "\quan_{\ckorb, \orb_{K'}}^{K'}"] \arrow[d, swap, "\res_{K}^{K'}"] & \cUnipOv^{K'}(\orb_{K'})  \arrow[d, "\res_{K}^{K'}"] \\
			\Coh(\TT^+_{\orb_{K'}},K) \arrow[r, "\quan_{\ckorb, \orb_{K'}}^K"] & \cUnipOv^K(\orb_{K'})
		\end{tikzcd}
	\end{equation}
\end{Prop}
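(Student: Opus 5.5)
We aim to show, for each $\cF \in \Coh(\TT^+_{\orb_{K'}},K')$, a natural isomorphism of $(\g,K)$-modules $\res_K^{K'}\quan^{K'}_{\ckorb,\orb_{K'}}(\cF) \simeq \quan^K_{\ckorb,\orb_{K'}}(\res_K^{K'}\cF)$. Both functors are additive and the categories are semisimple, so it suffices to treat an irreducible $\cF = \cF_V$ with $V \in \Irr(\tGamma_{K'})^{gen}$. The underlying construction should be compared at the sheaf level on the Lagrangian $Y'$. On the $K'$ side, $\quan^{K'}$ uses the $K'$-orbit $Y'^\diamond$, the extended sheaf $\overline{\cW}^{\scriptscriptstyle V}$ on $Y'$, its graded Hamiltonian quantization over the canonical even quantization $\OO_\hb$ of $X^{reg}$ (\Cref{thm:quan_lag_equiv}, \Cref{prop:tatp_symm}), and then global sections, $\cm$-finite part, $\hb=1$, and $\Gamma_{\tilde\rho}$-invariants as in \eqref{eq:MV_WV}.

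First, we restrict. By \Cref{lem:galois_pullback_adm_vb}, $\tilde\rho^*\cF_V \simeq \cWV$ on $Y'^\diamond$. Restricting the $K'$-equivariance to $K$ and decomposing $Y' = \bigsqcup_{i,j} Y_{i,j}$, the sheaf $\overline{\cW}^{\scriptscriptstyle V}$ splits as a direct sum of its restrictions to the open-closed pieces $Y_{i,j}$. Each restriction lies in $\Coh^\kappa(\TT^+_{Y_{i,j}},K\times\Gamma^{\cm}_{\tilde\rho_{i,j}})$ and extends its restriction to $Y^\diamond_{i,j}$. We invoke the uniqueness in \Cref{prop:extn_AOD}, applied to each $Y_{i,j}$ with group $K$. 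The codimension condition \eqref{eq:codim_Y} is inherited, since $Y_{i,j}$ is open in $Y'$ and $\partial Y_{i,j} = Y_{i,j}\cap\partial Y'$. This identifies the restricted extension with the extension built from the $K$-data.

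Second, we use uniqueness of quantization. The quantization of $\overline{\cW}^{\scriptscriptstyle V}$ over $Y'$ restricts to $K$-equivariant graded Hamiltonian quantizations of each summand on $Y_{i,j}$. By the uniqueness parts of \Cref{thm:quan_lag} and \Cref{thm:quan_lag_equiv}, these agree with the quantizations used to define $\quan^K$. Taking global sections and $\Gamma_{\tilde\rho}$-invariants, \Cref{lem:ind_Y} gives $\Gamma_{\tilde\rho}\times_{\Gamma_{\tilde\rho_{i,j}}} Y_{i,j}\simeq Y_i$. The $\Gamma_{\tilde\rho}$-invariants of the induced module therefore equal the $\Gamma_{\tilde\rho_{i,j}}$-invariants of the piece on $Y_{i,j}$. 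This recovers $\quan^K_{\ckorb,\orbk^i}$ of the $i$-th component of $\res\cF_V$, computed via the Galois cover $\tilde\rho_{i,j}$. \Cref{prop:MV_AC}(1) and \Cref{rem:choice_Y} show the result is independent of the choice of $j$ and of cover. Summing over $i$ gives the diagram.

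The main obstacle is the bookkeeping of the equivariance groups. The $K'$-construction uses the universal $K'$-cover with groups $\Gamma_{K'}$ and $\tGamma_{K'}$, and a chosen genuine character $\varepsilon$. The $K$-construction uses the universal $K$-cover of each $\orbk^i$, whose Galois group $\Gamma_K$ is a subgroup of $\Gamma_{K'}$, since $K_e^\circ = K_e'^\circ$. I would first check that $\omega^{\smallhalf}$ on the universal cover, together with its $\tGamma^{\cm}$-action, is compatible under this inclusion, so that the same $\varepsilon$ can be used for both groups. Any discrepancy there only shifts filtrations, which \Cref{prop:quant_cWV}(2) says is harmless.
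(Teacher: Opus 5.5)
Your proposal is correct and follows the paper's proof essentially step by step. Both pull $\cF$ back to $Y'^\diamond$, take the minimal extension and its graded Hamiltonian quantization on $Y'$, and split everything along the open-closed pieces $Y_{i,j}$ using uniqueness of minimal extensions and of quantizations. Both then use \Cref{lem:ind_Y} with Frobenius reciprocity to identify the $\Gamma_{\tilde{\rho}}$-invariants on $Y_i$ with the $\Gamma_{\tilde{\rho}_{i,j}}$-invariants on $Y_{i,j}$, which is $\quan^K_{\ckorb,\orbk^i}(\cF^i)$ by definition.

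The differences are cosmetic. You reduce to irreducible $\cF_V$ first, whereas the paper works with arbitrary $\cF$. You also flag the choice of $\varepsilon$ explicitly, which the paper handles by fixing one fully faithful embedding into $\Coh^\kappa(\TT^+_{\orb_{K'}},K'\times\cm)$ at the outset.
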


\begin{proof}
	We first fix a fully faithful embedding 
	\[ 
		\Coh(\TT^+_{\orb_{K'}},K') \hookrightarrow \Coh^\kappa(\TT^+_{\orb_{K'}},K' \times \cm) 
	\] 
	as in \Cref{subsubsec:strong_cm_action}, and use the same symbol for the image of any $\cF \in \Coh(\TT^+_{\orb_{K'}},K')$ under this embedding. 
	Set 
	\[ 
		\cF^i := \res_K^{K'}(\cF)|_{\orbk^i} \in \Coh^\kappa(\TT^+_{\orbk^i},K \times \cm). 
	\]
    Then $\res_{K}^{K'}(\cF) = \bigoplus_{i=1}^r \cF^i$, where we do not distinguish $\cF^i$ from its zero extension to $\orb_{K'}$. We follow the same convention below.
	Set 
	\[ \cE := \tilde{\rho}^* \cF \in \Coh^\kappa(\TT^+_{Y'^\diamond}, K' \times \Gamma_{\tilde{\rho}}^{\cm}) \]
	and
	\[ 
		\cE^i := \cE|_{Y_i^\diamond} \in \Coh^\kappa(\TT^+_{Y_i^\diamond}, K \times \Gamma_{\tilde{\rho}}^{\cm}) \quad \text{and} \quad \cE^{i,j} := \cE|_{Y_{i,j}^\diamond} \in \Coh^\kappa(\TT^+_{Y_{i,j}^\diamond}, K \times \Gamma_{\tilde{\rho}_{i,j}}^{\cm}), 
	\]
	where we omit the obvious restriction functors from $K'$ to $K$ to simplify notation. Then $\cE^i \simeq \tilde{\rho}^* \cF^i$ and $\cE^{i,j} \simeq \tilde{\rho}_{i,j}^* \cF^i$.
	
	Let $\overline{\cE}$ be the minimal extension of $\cE$ to $\Coh^\kappa(\TT^+_{Y'}, K' \times \Gamma_{\tilde{\rho}}^{\cm})$ as in \Cref{prop:extn_AOD}. As in the proofs of \Cref{prop:quant_cWV} and \Cref{prop:MV_AC}, we can quantize $\overline{\cE}$ over $Y'$ to obtain a $K' \times \Gamma_{\tilde{\rho}}^{\cm}$-equivariant graded Hamiltonian quantization $\overline{\cE}_\hb$ of $\overline{\cE}$ over the quantization $\OO_\hb$ of $X^{reg}$ as in \cref{subsec:quant_K_orb}. Then, by \Cref{prop:quant_cWV}, \Cref{prop:MV_AC}, \eqref{eq:MV_WV} and \eqref{eq:cWV_FV_pullback}, we have 
		\[\quan_{\ckorb, \orb_{K'}}^{K'}(\cF) \simeq [\Gamma(Y', \overline{\cE}_\hb)^{fin} / (\hb-1)]^{\Gamma_{\tilde{\rho}}}.\] 
	We denote this HC $(\g,K')$-module by $M$ to simplify notations. We write 
		\[ 
			\overline{\cE}^i:= \overline{\cE}|_{Y_i} \in \Coh^\kappa(\TT^+_{Y_i}, K \times \Gamma_{\tilde{\rho}}^{\cm}), 
			\quad \text{and} \quad
			\overline{\cE}^{i,j} := \overline{\cE}|_{Y_{i,j}} \in \Coh^\kappa(\TT^+_{Y_{i,j}}, K \times \Gamma_{\tilde{\rho}_{i,j}}^{\cm})\]
		for any $1 \leqslant i \leqslant r$ and $1 \leqslant j \leqslant s_i$. Then $\overline{\cE}^i$ and $\overline{\cE}^{i,j}$ are the minimal extensions of $\cE^i$ and $\cE^{i,j}$ to $Y_i$ and $Y_{i,j}$ respectively.
		Similarly, we set
		\[
			\overline{\cE}_\hb^i := \overline{\cE}_\hb|_{Y_i} \quad \text{and} \quad \overline{\cE}_\hb^{i,j} := \overline{\cE}_\hb|_{Y_{i,j}}
		\]
	for any $1 \leqslant j \leqslant s_i$. Then $\overline{\cE}_\hb^i$ (resp. $\overline{\cE}_\hb^{i,j}$) is the unique $K \times \Gamma_{\tilde{\rho}}^{\cm}$(resp. $K \times \Gamma_{\tilde{\rho}_{i,j}}^{\cm}$)-equivariant graded Hamiltonian quantization of $\overline{\cE}^i$ (resp. $\overline{\cE}^{i,j}$) over $\OO_\hb$. We have direct sum decompositions
	\[ 
		\overline{\cE} = \bigoplus_{i=1}^r \overline{\cE}^i, \qquad \overline{\cE}^i = \bigoplus_{j=1}^{s_i} \overline{\cE}^{i,j}, 
		\qquad
		\overline{\cE}_\hb = \bigoplus_{i=1}^r \overline{\cE}_\hb^i, \qquad \text{and} \qquad \overline{\cE}_\hb^i = \bigoplus_{j=1}^{s_i} \overline{\cE}_\hb^{i,j},
	\]
	where the summands are implicitly viewed as zero extensions.

	Now define the modules 
		\[ E^i := \Gamma(Y_i, \overline{\cE}_\hb^i)^{fin} / (\hb-1) \in \cM_f(\A, K \times \Gamma_{\tilde{\rho}}), \qquad  M^i := (E^i)^{\Gamma_{\tilde{\rho}}} \in \cM_f(\A^{\Gamma}, K),\]
	and
		\[ E^{i,j} := \Gamma(Y_{i,j}, \overline{\cE}_\hb^{i,j})^{fin} / (\hb-1) \in \cM_f(\A, K \times \Gamma_{\tilde{\rho}_{i,j}}), \qquad  M^{i,j} := (E^{i,j})^{\Gamma_{\tilde{\rho}_{i,j}}} \in \cM_f(\A^{\Gamma}, K).\]
	Then we have $\res_{K}^{K'} M = \bigoplus_{i=1}^r M^i$ as $(\g,K)$-modules, and by \Cref{lem:ind_Y}, we have
	\[ 
		E^i \simeq \mathrm{ind}_{\Gamma_{\tilde{\rho}_{i,j}}}^{\Gamma_{\tilde{\rho}}} E^{i,j} := (\C[\Gamma_{\tilde{\rho}}] \otimes_\C E^{i,j})^{\Gamma_{\tilde{\rho}_{i,j}}}
	\]
	as $(\g,K \times \Gamma_{\tilde{\rho}})$-modules,
	for any $1 \leqslant i \leqslant r$ and $1 \leqslant j \leqslant s_i$. Therefore, by Frobenius reciprocity, we have isomorphisms of $(\g,K)$-modules for any $1 \leqslant i \leqslant r$ and $1 \leqslant j \leqslant s_i$,
		\[ M^i = (E^i)^{\Gamma_{\tilde{\rho}}} \simeq \Hom_{\Gamma_{\tilde{\rho}}}(\mathbbm{1}_{\Gamma_{\tilde{\rho}}}, E^i) \simeq \Hom_{\Gamma_{\tilde{\rho}_{i,j}}}(\mathbbm{1}_{\Gamma_{\tilde{\rho}_{i,j}}}, E^{i,j}) \simeq (E^{i,j})^{\Gamma_{\tilde{\rho}_{i,j}}} = M^{i,j}, \]
	where $\mathbbm{1}$'s stand for the trivial representations of the groups in question. 
	
	On the other hand, by definition of the quantization functors $\quan_{\ckorb, \orbk^i}^K$ (\Cref{prop:MV_AC}) and $\quan_{\ckorb, \orb_{K'}}^K$ in \eqref{eq:quan_orbK'_K}, there are natural isomorphisms 
	\[ 
		\quan_{\ckorb, \orb_{K'}}^K(\cF^i) = \quan_{\ckorb, \orbk^i}^K (\cF^i) \simeq M^{i,j} \simeq M^i
	\] 
	as $(\g,K)$-modules, for any $1 \leqslant i \leqslant r$ and $1 \leqslant j \leqslant s_i$, where on the left $\cF^i$ is viewed as its zero extension to $\orb_{K'}$. Therefore we have
	\[ \quan_{\ckorb, \orb_{K'}}^K \circ \res_{K}^{K'} (\cF) = \bigoplus_{i=1}^r \quan_{\ckorb, \orbk^i}^K (\cF^i) \simeq \bigoplus_{i=1}^r M^i = \res_{K}^{K'} M = \res_{K}^{K'} \circ \quan_{\ckorb, \orb_{K'}}^{K'} (\cF). \]
\end{proof}

We now apply \cref{prop:quant_res} to deduce the following \cref{cor:quant_res_bij} needed in the proof of \Cref{cor:counting_classical_identity_component}. 

Given $M \in \cUnipOv(\g, K)$, we can induce the underlying $K$-representation of $M$ from $K$ to $K'$, obtaining $\Ind_{K}^{K'} M$. Then $\Ind_{K}^{K'} M$ carries a natural $(\g, K')$-module structure. This gives an additive functor 
    \[ \Ind_{K}^{K'}: \cUnipOv(\g, K) \to \cUnipOv(\g, K') \] 
and $\Ind_{K}^{K'}$ is both left and right adjoint to the restriction functor $\res_{K}^{K'}$, since $K$ is of finite index in $K'$.

\begin{Cor} \label{cor:quant_res_bij}
	Suppose $K$ is a subgroup of $K'$ of finite index and the quantization map $\quan_{\ckorb}^{K'}: \AOD_{K'}(\orb) \to \UnipOv(\g, K')$ is a bijection. Then $\quan_{\ckorb}^K: \AODK(\orb) \to \UnipOv(\g, K)$ is also a bijection.
\end{Cor}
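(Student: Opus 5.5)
We reduce the claim to a counting statement. By \Cref{prop:quant_map_quasi-dist}, $\quan_{\ckorb}^K$ is injective and satisfies $\AC\circ\quan_{\ckorb}^K=\mathrm{incl}$. It therefore suffices to show that every $M\in\UnipOv(\g,K)$ lies in the image. Equivalently, we must show that $\AC(M)$ is a single element of $\AODK(\orb)$ and that $M$ is determined by it.

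Fix $M\in\UnipOv(\g,K)$. Since $K$ has finite index in $K'$, the functor $\Ind_K^{K'}$ is both left and right adjoint to $\res_K^{K'}$. Hence $\Ind_K^{K'}M$ is a finite-length object of $\cUnipOv(\g,K')$, and
\[ \Hom_K(\res_K^{K'}N, M)\simeq\Hom_{K'}(N,\Ind_K^{K'}M)\neq0 \]
for some irreducible constituent $N$ of $\Ind_K^{K'}M$. By hypothesis, $N=\quan_{\ckorb}^{K'}(\cF)$ for some $\cF\in\AOD_{K'}(\orb)$, viewed as an object of $\Coh(\TT^+_{\orb_{K'}},K')$.

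By \Cref{prop:quant_res}, we have
\[ \res_K^{K'}N\simeq\quan_{\ckorb,\orb_{K'}}^K(\res_K^{K'}\cF). \]
Since $\Coh(\TT^+_{\orb_{K'}},K)$ is semisimple, $\res_K^{K'}\cF$ decomposes as a finite direct sum of irreducible admissible orbit data $\cF^{i,\alpha}$ on the $K$-orbits $\orbk^i\subset\orb_{K'}$. The functor $\quan^K$ is additive, and by \Cref{prop:MV_AC}(3) it sends irreducibles to irreducibles. Therefore $\res_K^{K'}N\simeq\bigoplus\quan_{\ckorb}^K(\cF^{i,\alpha})$ is semisimple with irreducible summands in the image of $\quan_{\ckorb}^K$. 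A nonzero map $\res_K^{K'}N\to M$ with $M$ irreducible must then be surjective on some summand, so $M\simeq\quan_{\ckorb}^K(\cF^{i,\alpha})$ for some $(i,\alpha)$. This proves surjectivity. The inverse is $\AC$ by \Cref{prop:quant_map_quasi-dist}.

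The main point requiring care is ensuring that the quantization functor has the needed properties. Specifically, we must check that $\quan^K_{\ckorb,\orb_{K'}}$, assembled in \eqref{eq:quan_orbK'_K}, is additive and carries irreducible objects of $\Coh(\TT^+_{\orb_{K'}},K)$ to irreducible modules. This holds orbit by orbit by \Cref{prop:MV_AC} and is compatible with the decomposition $\Coh(\TT^+_{\orb_{K'}},K)\simeq\bigoplus_i\Coh(\TT^+_{\orbk^i},K)$.

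We also verify that $\Ind_K^{K'}M$ indeed lies in $\cUnipOv(\g,K')$. It is annihilated by $\J_{\ckorb}$, since the $\g$-action is unchanged up to conjugation by $K'$ and $\J_{\ckorb}$ is $\Ad(G)$-stable. It has finite length, since it is finitely generated with fixed infinitesimal character.
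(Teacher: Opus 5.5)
Your proposal is correct and follows essentially the same route as the paper: you use Frobenius reciprocity to realize $M$ as a summand of $\res_K^{K'}$ of an irreducible module in $\UnipOv(\g,K')$, write that module as $\quan_{\ckorb}^{K'}$ of an admissible orbit datum, and conclude via \Cref{prop:quant_res} together with the irreducibility in \Cref{prop:MV_AC}(3). One minor precision: for the adjunction step, take $N$ to be an irreducible \emph{submodule} of $\Ind_K^{K'}M$ rather than an arbitrary constituent, so that $\Hom_{K'}(N,\Ind_K^{K'}M)\neq 0$ is guaranteed.
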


\begin{proof}
	By \cref{prop:quant_map_quasi-dist}, $\quan_{\ckorb}^K$ is always injective, so we only need to show that $\quan_{\ckorb}^K$ is surjective. Let $M \in \UnipOv(\g, K)$. By Frobenius reciprocity, $M$ appears as a direct summand in $\res_{K}^{K'} M'$, where $M'$ is an irreducible subquotient of $\Ind_{K}^{K'} M$ and hence lies in $\UnipOv(\g, K')$. By the bijectivity of $\quan_{\ckorb}^{K'}$, we have $M' = \quan_{\ckorb}^{K'}(\orb_{K'}, \cV')$ for some $(\orb_{K'}, \cV') \in \AOD_{K'}(\orb)$. Write
	\[
		\orb_{K'} = \bigsqcup_{i=1}^r \orb_{K}^i 
		\quad \text{and} \quad 
		\res_{K}^{K'}(\orb_{K'}, \cV') = \bigoplus_{i=1}^r (\orbk^i, \cV_i)\]
	for some $(\orbk^i, \cV_i) \in \AODK(\orb)$. By \Cref{prop:quant_res}, we have
		\[ \res_{K}^{K'} M' = \res_{K}^{K'} \circ \quan_{\ckorb}^{K'}(\orb_{K'}, \cV') = \quan_{\ckorb}^K \circ \res_{K}^{K'}(\orb_{K'}, \cV') = \bigoplus_{i=1}^r \quan_{\ckorb}^K(\orbk^i, \cV_i).\]
	Hence $M = \quan_{\ckorb}^K(\orbk^j, \cV_j)$ for some $j$, since each $\quan_{\ckorb}^K(\orbk^j, \cV_j)$ is an irreducible $(\g,K)$-module by \Cref{prop:MV_AC} (3). This establishes the surjectivity of $\quan_{\ckorb}^K$. 
\end{proof}

\Cref{prop:quant_res} has the following generalization.

\begin{Prop} \label{prop:quant_pullback}
	Let $\phi: (\g,K) \to (\g,K')$ be a morphism of symmetric pairs as in the last part of \cref{subsec:basic_settings}. Then the following diagram is commutative:
	\begin{equation} \label{diag:quan_pullback}
		\begin{tikzcd}[column sep = large]
			\Coh(\TT^+_{\orb_{K'}},K') \arrow[r, "\quan_{\ckorb, \orb_{K'}}^{K'}"] \arrow[d, swap, "\phi^*"] & \cUnipOv^{K'}(\orb_{K'})  \arrow[d, "\phi^*"] \\
			\Coh(\TT^+_{\orb_{K'}},K) \arrow[r, "\quan_{\ckorb, \orb_{K'}}^K"] & \cUnipOv^K(\orb_{K'})
		\end{tikzcd}
	\end{equation}
	where $\phi^*$ is defined by pullback via $\phi$.
\end{Prop}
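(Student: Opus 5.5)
The plan is to factor $\phi: K \to K'$ as a surjection onto its image followed by an inclusion: $K \twoheadrightarrow \phi(K) =: K'' \hookrightarrow K'$, where $K''$ is a closed subgroup of finite index in $K'$. Pullback functors compose, on both the sheaf side and the module side, so $\phi^* = \phi_1^* \circ \res_{K''}^{K'}$ with $\phi_1: K \twoheadrightarrow K''$ the inflation. The inclusion step is exactly \Cref{prop:quant_res}. It therefore suffices to prove commutativity of \eqref{diag:quan_pullback} when $\phi: K \to K'$ is a finite surjection with central kernel $F=\ker\phi$. The kernel is central because it acts trivially on $\fg$, as noted in \Cref{subsec:AC_special_unip}.

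In the surjective case, $K$- and $K'$-orbits on $\orb_{\fk^\perp}$ coincide, so $\orb_{K'}$ is a single $K$-orbit. The $K$-equivariant covers of it are acted on through $\phi$, with $F$ acting trivially on all varieties involved. The universal $K$-cover $\widehat{\orb}_K$ still maps to the universal $K'$-cover, but we do not need to change covers. We work with the same $Y'^\diamond=\widetilde{\orb}_{K'} \subset \widetilde{\orb}$, the same closure $Y'$, and the same canonical quantization $\OO_\hb$ of $X^{reg}$. The $K$-action on all of these is the pullback of the $K'$-action.

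Given $\cF \in \Coh(\TT^+_{\orb_{K'}},K')$, the chain of constructions runs as follows: the pullback $\cE=\tilde{\rho}^*\cF$, its unique minimal extension $\overline{\cE}$ from \Cref{prop:extn_AOD}, the graded Hamiltonian quantization $\overline{\cE}_\hb$ from \Cref{thm:quan_lag_equiv}, and the module $[\Gamma(Y',\overline{\cE}_\hb)^{fin}/(\hb-1)]^{\Gamma_{\tilde\rho}}$. This is exactly the description of $\quan$ used in the proof of \Cref{prop:quant_res}. The point is that each construction commutes with $\phi^*$. Pullback of $\cE$ is clear. For the minimal extension, the $\phi^*$ of the $K'$-extension satisfies the defining property of the $K$-extension, so uniqueness identifies them. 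For the quantization, the $K'$-quantization, viewed $K$-equivariantly through $\phi$, is a $K$-equivariant graded Hamiltonian quantization of $\phi^*\overline{\cE}$: condition (iii) of \Cref{defn:equiv_quant_modules} only involves $\fk$, and $d\phi$ is the identity on $\fk$. The uniqueness in \Cref{thm:quan_lag,thm:quan_lag_equiv} then identifies it with the $K$-quantization. The strong $\cm$-structures match, since the choice of $\varepsilon$ can be taken compatibly, as in \eqref{diag:lift_graded}; any other choice only shifts the filtration. Taking global sections, $\cm$-finite parts, specialization $\hb=1$, and $\Gamma_{\tilde\rho}$-invariants all commute with forgetting along $\phi$.

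The main obstacle is the bookkeeping of the Galois groups: on the $K$ side the relevant $\Gamma_{\tilde\rho}$ must be the same as on the $K'$ side. Because $\phi$ is surjective, $K$-equivariant and $K'$-equivariant automorphisms of $\tilde{\rho}$ coincide, which parallels the identification $\Aut_K=\Aut_{\bar K}$ made in \Cref{subsec:enlarge_lag}. I would check this explicitly and then use the description of the quantization functor via \eqref{eq:MV_WV} and \eqref{eq:cWV_FV_pullback}, which is independent of the choice of $V$-presentation by \Cref{prop:quant_cWV}(2).
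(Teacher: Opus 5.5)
Your proposal is correct and follows the paper's proof: factor $\phi$ as the finite surjection $K \twoheadrightarrow \phi(K)$ followed by the finite-index inclusion $\phi(K) \hookrightarrow K'$, handle the inclusion by \Cref{prop:quant_res}, and compose the two commutative squares. The paper simply calls the surjective case trivial, and your uniqueness argument is a valid way to fill it in: you keep the same $Y'$, $\OO_\hb$ and $\Gamma_{\tilde\rho}$, and since $d\phi=\mathrm{id}_{\fk}$, the pullbacks along $\phi$ of the minimal extension and of the Hamiltonian quantization are identified with their $K$-counterparts (your remark that $\ker\phi$ is central is never actually used).
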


\begin{proof}
	Any $\phi$ as in the statement is the composition of $\phi$, viewed as a finite surjective morphism from $K$ onto $\mathrm{Im}(\phi)$, with the inclusion $\mathrm{Im}(\phi) \hookrightarrow K'$. The statement for the finite surjective case is trivial, while the statement for the inclusion $\mathrm{Im}(\phi) \hookrightarrow K'$ is \cref{prop:quant_res}. Composing the two commutative diagrams of the form \eqref{diag:quan_pullback}, we get the commutative diagram for $\phi$.
\end{proof}

\section{Computation of a component group} \label{sec:correction_component}

In this appendix, we prove the claim in Remark \ref{rem:Z_K}. 

\begin{Lem} \label{lem:Z_K_correction}
	Let $G$ be a complex simple Lie group of type $E_8$ and $\GR$ be the split real form {\bf E VIII} of $G$. Then the nilpotent $K$-orbit $\# 14$ in \cite[Table 11]{King} has $K$-component group $Z_K \simeq \fS_2$. 
\end{Lem}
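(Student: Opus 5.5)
We will compute $Z_{\bar K}=\bar K_Q/\bar K_Q^\circ$ for orbit $\#14$ of $E_{8(8)}$, where $\bar G=E_8$ is adjoint and simply connected, so $\bar K=(\bar G^\theta)^\circ\simeq \Spin(16)/\Z_2$ (the half-spin quotient). The orbit $\#14$ lies in $\orb=2A_2$, and $A(2A_2)\simeq\fS_2$ by \cite[Table 5]{Alexeevski}. The reductive centralizer in $\bar G$ is $Q=G_2\times G_2\rtimes \fS_2$, with the $\fS_2$ swapping the two $G_2$ factors. The plan has three steps: fix an explicit representative, show that the swapping component of $Q$ meets $\bar K$, and pin down $\bar K_Q^\circ$.

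\textbf{Step 1: an explicit normal triple.} Choose a normal $\slf_2$-triple $(e,h,f)$ with $e\in\fs$ in orbit $\#14$, using the Kostant--Sekiguchi data (the labelled Dynkin diagram of the $\bar K$-orbit, i.e.\ the weighted diagram of $h\in\fk$) from \cite{Dk8}. The triple should lie in a $\theta$-stable subalgebra of type $2A_2$. The cleanest realisation is inside $\fl=\mathfrak{sl}_3\oplus\mathfrak{sl}_3$, which sits in a $\theta$-stable $\mathfrak{e}_6$-type (or $A_8$) subalgebra. Then $\theta$ restricts to the split Cartan involution of each $\mathfrak{sl}_3$ factor, and $e$ is the sum of the principal nilpotents of the split $\mathfrak{sl}_3(\R)$'s.

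\textbf{Step 2: the swap is realised in $\bar K$.} Next, $\theta$ acts on $\fq=\fg_2\oplus\fg_2$. Since the real orbit is split-type, I expect $\theta$ to preserve each factor and act as the split Cartan involution, so that $\fk_Q=\fsl_2\oplus\fsl_2\oplus\fsl_2\oplus\fsl_2$ (after checking against the dimension of $\fk_e$ from \cite{Dk8}). We then need an element $w\in \bar K$ centralising the triple and swapping the two $G_2$ factors. The candidate is a Weyl group element of $E_8$ exchanging the two $A_2$'s, lifted via Tits' section. We must verify that it commutes with $\theta$, that is, that it lies in $\bar G^\theta=\bar K$, which is connected here. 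This gives a surjection $Z_{\bar K}\twoheadrightarrow\fS_2$ and shows $Z_{\bar K}\neq 1$.

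\textbf{Step 3: no further components, and the main obstacle.} It remains to check that the $\theta$-fixed points of $G_2\times G_2$ inside $\bar K_Q$ are connected. The fixed-point group of the split involution on $G_2$ is $(\SL_2\times\SL_2)/\mu_2$, which is connected. We must also check that the element that is $-1$ on one $G_2$ factor, i.e.\ the possible disconnection coming from $\theta$ acting by an inner element, contributes no extra component. This yields $Z_{\bar K}\simeq\fS_2$. As a cross-check, $|\Irr(Z_{\bar K})|$ summed over $\#14,\#15,\#16$ must give $2+2+2$ minus the discrepancy against the \texttt{atlas} count 5. This suggests rechecking the other rows, or verifying via the GAP computation of Remark \ref{rem:Z_K}, following \cite{KingNoel}: compute the centraliser of the triple in the Weyl group of $\bar K$ acting on the weighted diagram.

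The main obstacle is Step 2, namely certifying that the swapping element can be chosen $\theta$-fixed rather than merely $\theta$-stable up to the identity component. I would first try to write $\theta=\Ad(\exp \pi i x)$ for a coweight $x$ adapted to the $2A_2$ subsystem, and check directly that the Tits lift of the swap commutes with it.
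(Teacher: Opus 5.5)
There is a genuine gap: nothing in your proposal identifies orbit $\#14$, and the configuration you assume is not the one the paper uses. Your overall strategy is sound: exhibit a $\theta$-fixed element of $Q$ swapping the two $G_2$ factors, then check that $(Q^\circ)^\theta$ is connected. The problem is the input.

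The paper takes from \cite[Table XIV]{Djokovic:centralizer_inner} that for $\#14$ the reductive centralizer of the real nilpotent is the \emph{compact} form of $G_2\times G_2$. So $\theta|_{\fq}=\mathrm{id}$ and $\fk_Q=\fq$ is $28$-dimensional. Your guess $\fk_Q=\mathfrak{sl}_2^{\oplus 4}$ (dimension $12$) is the split--split configuration, which is a different $K$-orbit in $\orb_{\fk^\perp}$.

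Being ``split-type'' singles out nothing here, since all three $K$-orbits in $\orb_{\fk^\perp}$ are orbits for the split form. Your proposed check against $\dim\fk_e$ also cannot detect the mismatch: $\dim\fk_e=\tfrac12(\dim\fk-\dim\fs)+\tfrac12\dim\fg_e=-4+46=42$ for every $K$-orbit in $\orb_{\fk^\perp}$. Only $\dim\fk_Q$ or the real form of the centralizer separates them.

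Your Step~1 set-up is in fact incompatible with $\#14$. Suppose $\theta|_{\fq}=\mathrm{id}$. Then $\Ad(\exp(-\pi i h/2))\circ\theta$ is an automorphism of $\fg$ fixing $e,h,f$ and acting trivially on $\fq$. Hence it equals $\Ad(t)$ for some $t\in Z_Q(Q^\circ)$, and $Z_Q(Q^\circ)=1$ because $Z(G_2)=1$ and the non-identity component of $Q$ swaps the factors. Thus $\theta=\Ad(\exp(\pi i h/2))$. This restricts to a nontrivial \emph{inner} involution of each $\mathfrak{sl}_3$ factor of any $\theta$-stable $2A_2$ subalgebra containing the triple (fixed algebra $\mathfrak{gl}_2$, the $\mathfrak{su}(2,1)$ type). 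It is never the outer Cartan involution of split $\mathfrak{sl}_3(\R)$. So as written you would be computing the component group of another orbit. That the answer there also happens to be $\fS_2$ does not prove the statement for $\#14$.

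With the correct input, your Step~2 obstacle disappears and no Tits lift is needed; this is the paper's argument. Since $\theta$ fixes $Q^\circ=G_2\times G_2$ pointwise and $\pi_0Q\simeq\fS_2$, the swap $\zeta\in Q$ with $\zeta^2=1$ satisfies $\theta(\zeta)=\zeta g$ with $g=(a,b)\in Q^\circ$. Applying $\theta$ to $\zeta(x,y)\zeta=(y,x)$ gives $(bya,axb)=(y,x)$ for all $x,y\in G_2$. Hence $b=a^{-1}$ and $a\in Z(G_2)=1$. So $\theta|_Q=\mathrm{id}$, $K_Q=Q^\theta=Q$ (as $K=G^\theta$), and $Z_K=\pi_0(Q)\simeq\fS_2$. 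Equivalently, $\theta=\Ad(\exp(\pi i h/2))$ from the previous paragraph visibly fixes $Q\subset Z_G(h)$ pointwise.

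The same computation shows more generally that $\theta(\zeta)=\zeta$ whenever $\theta|_{Q^\circ}=(\sigma,\sigma)$ in the coordinates defined by $\zeta$. Two smaller points:
\begin{itemize}
\item Your Step~3 worry about an element acting by $-1$ on one $G_2$ factor is vacuous, since $G_2$ is centreless.
\item The \texttt{atlas} cross-check concerns the representation count for the whole row, not the component group of $\#14$, so it can neither confirm nor refute the lemma.
\end{itemize}
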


\begin{proof}
	Let $\theta$ be the Cartan involution of $\fg$ and $G$ corresponding to $\GR$.
	Let $\orb$ be the nilpotent orbit in $\fg$ with Bala-Carter label $2A_2$. As usual, let $Q$ denote the the $G$-centralizer of an $\slf_2$-triple $(e,h,f)$ of a point $e \in \orb$.
	By \cite[Table 5]{Alexeevski}, $Q$ is isomorphic to the semidirect product $\Z_2 \ltimes (H \times H)$, where $H$ is a complex simple Lie group of type $G_2$, and the nontrivial element $\zeta=-1$ of $\Z_2$ acts on $H \times H$ by switching the two $H$-factors. 
	
	Now let $\theta$ be the Cartan involution of $\fg$ and $G$ corresponding to $\GR$. Assume that $e$ lies in the $K$-orbit $\orbk$ $\# 14$ and $(e,h,f)$ is normal. Then $Q$ is $\theta$-stable and the reductive $K$-centralizer $K_Q$ of $e$ equals $Q^\theta$. According to \cite[Table XIV]{Djokovic:centralizer_inner}, $\theta$ restricted to $\fq = \Lie(Q)$ corresponds to the product of two compact real forms of $G_2$, and hence is the identity map of $\fq$. Therefore the identity component $K_Q^\circ$ of $K_Q$ is equal to $(Q^\circ)^\theta = Q^\circ = H \times H$. The induced action of $\theta$ on the component group $A(\orb) = Q/Q^\circ \simeq \fS_2$ is certainly trivial, hence we can assume $\theta(\zeta) = \zeta \cdot g$, where $g = (a,b) \in Q^\circ = H \times H$.  One computes that, for any $(x,y) \in H \times H$,
		\[ \theta[\zeta \cdot (x,y) \cdot \zeta] = \theta[(y,x)] = (y,x),\]
	and 
		\[  \theta[\zeta \cdot (x,y)  \cdot \zeta] = \theta(\zeta)\theta[(x,y)]\theta(\zeta) = \zeta g (x,y) \zeta g  = \zeta (a,b)(x,y)(b,a) \zeta =( bya, axb ).\]
	Compare the above two equations: taking $x = y = 1$, we get $b = a^{-1}$, and hence $ax = xa$ for all $x \in H$, i.e., $a$ is in the center of $H$, which is trivial. This implies that $g=(1,1)$ and hence $\theta$ is the identity automorphism on the entire $Q$. Therefore $K_Q = Q^\theta = Q$ and $Z_K = \pi_0 K_Q = \pi_0 Q = Z \simeq \fS_2$.
\end{proof}

\printbibliography

@article{coh,
  author     = {Biswas, Indranil and Chatterjee, Pralay},
  title      = {On the exactness of {K}ostant-{K}irillov form and the second
                cohomology of nilpotent orbits},
  journal    = {Internat. J. Math.},
  fjournal   = {International Journal of Mathematics},
  volume     = {23},
  year       = {2012},
  number     = {8},
  pages      = {1250086, 25},
  issn       = {0129-167X},
  mrclass    = {17B08 (17B20)},
  mrnumber   = {2949224},
  mrreviewer = {Alexander Isaakovich Shtern},
  doi        = {10.1142/S0129167X12500863},
  url        = {https://doi.org/10.1142/S0129167X12500863}
}

@article{Achar:duality,
  title = {An Order-Reversing Duality Map for Conjugacy Classes in {{Lusztig}}'s Canonical Quotient},
  author = {Achar, Pramod N.},
  year = {2003},
  journaltitle = {Transformation Groups},
  shortjournal = {Transform. Groups},
  volume = {8},
  number = {2},
  pages = {107--145},
  issn = {1083-4362},
  doi = {10.1007/s00031-003-0422-x},
  url = {https://mathscinet.ams.org/mathscinet-getitem?mr=1976456},
  urldate = {2022-02-24},
  mrnumber = {1976456}
}

@article{Brylinski-Kostant,
  author     = {Brylinski, Ranee and Kostant, Bertram},
  title      = {Nilpotent orbits, normality and {H}amiltonian group actions},
  journal    = {J. Amer. Math. Soc.},
  fjournal   = {Journal of the American Mathematical Society},
  volume     = {7},
  year       = {1994},
  number     = {2},
  pages      = {269--298},
  issn       = {0894-0347},
  mrclass    = {22E46 (14L30 22E60 32M05 58F06)},
  mrnumber   = {1239505},
  mrreviewer = {William M. McGovern},
  url        = {https://doi.org/10.2307/2152759}
}

@book{Vogan:unitary,
  author    = {Vogan, D. A. },
  title     = {Unitary representations of reductive Lie groups},
  year      = {1987},
  series    = {Ann. of Math. Stud.},
  volume    = {118},
  publisher = {Princeton University Press}
}

@incollection{Vogan_AV,
  author     = {Vogan, Jr., David A.},
  title      = {Associated varieties and unipotent representations},
  booktitle  = {Harmonic analysis on reductive groups ({B}runswick, {ME},
                1989)},
  series     = {Progr. Math.},
  volume     = {101},
  pages      = {315--388},
  publisher  = {Birkh\"auser Boston, Boston, MA},
  year       = {1991},
  mrclass    = {22E46},
  mrnumber   = {1168491},
  mrreviewer = {D. Mili\v ci\'c}
}

@article{Vogan:unitarizability,
  title   = {Unitarizability of {Certain} {Series} of {Representations}},
  volume  = {120},
  issn    = {0003-486X},
  url     = {https://www.jstor.org/stable/2007074},
  doi     = {10.2307/2007074},
  number  = {1},
  urldate = {2025-04-07},
  journal = {Annals of Mathematics},
  author  = {Vogan, David A.},
  year    = {1984}
}

@article{Vogan:unitary_dual_GL,
  title        = {The Unitary Dual of {{GL}}(n) over an Archimedean Field},
  author       = {Vogan, David A.},
  year         = {1986},
  journal      = {Inventiones mathematicae},
  shortjournal = {Invent Math},
  volume       = {83},
  number       = {3},
  pages        = {449--505},
  issn         = {1432-1297},
  doi          = {10.1007/BF01394418},
  url          = {https://doi.org/10.1007/BF01394418},
  urldate      = {2025-11-03},
  langid       = {english}
}

@online{MaYu:weak_unipotence,
  title       = {Weak Unipotence and {{Langlands}} Duality},
  author      = {Ma, Jia-jun and Yu, Shilin},
  date        = {2025-10-15},
  eprint      = {2510.13523},
  eprinttype  = {arXiv},
  eprintclass = {math},
  doi         = {10.48550/arXiv.2510.13523},
  url         = {http://arxiv.org/abs/2510.13523},
  urldate     = {2025-10-20}
}

@article{BeKaledin,
  author     = {Bezrukavnikov, R. and Kaledin, D.},
  title      = {Fedosov quantization in algebraic context},
  journal    = {Mosc. Math. J.},
  fjournal   = {Moscow Mathematical Journal},
  volume     = {4},
  year       = {2004},
  number     = {3},
  pages      = {559--592, 782},
  issn       = {1609-3321},
  mrclass    = {53D55 (14F05)},
  mrnumber   = {2119140 (2006j:53130)},
  mrreviewer = {Stefan Waldmann}
}

@article{LY,
  author   = {Leung, Naichung Conan and Yu, Shilin},
  title    = {Equivariant deformation quantization and coadjoint orbit
              method},
  journal  = {Duke Math. J.},
  fjournal = {Duke Mathematical Journal},
  volume   = {170},
  year     = {2021},
  number   = {8},
  pages    = {1781--1850},
  issn     = {0012-7094},
  mrclass  = {22E46 (17B08 53D55)},
  mrnumber = {4278663},
  doi      = {10.1215/00127094-2020-0066},
  url      = {https://doi.org/10.1215/00127094-2020-0066}
}

@article{BGKP,
  author     = {Baranovsky, Vladimir and Ginzburg, Victor and Kaledin, Dmitry
                and Pecharich, Jeremy},
  title      = {Quantization of line bundles on lagrangian subvarieties},
  journal    = {Selecta Math. (N.S.)},
  fjournal   = {Selecta Mathematica. New Series},
  volume     = {22},
  year       = {2016},
  number     = {1},
  pages      = {1--25},
  issn       = {1022-1824},
  mrclass    = {53D55 (14D21 53D12)},
  mrnumber   = {3437831},
  mrreviewer = {Rongmin Lu},
  url        = {https://doi.org.ezproxy.library.tamu.edu/10.1007/s00029-015-0181-2}
}

@online{BPW,
  author      = {Braden, Tom and Proudfoot, Nicholas and Webster, Ben},
  title       = {Quantizations of conical symplectic resolutions {I}: local and global structure},
  date        = {date},
  date        = {2022-05-08},
  eprint      = {1208.3863},
  eprinttype  = {arXiv},
  eprintclass = {math},
  year        = {2022}
}

@incollection{BeilinsonBernstein,
  author     = {Beilinson, A. and Bernstein, J.},
  title      = {A proof of {J}antzen conjectures},
  booktitle  = {I. {M}. {G}elfand {S}eminar},
  series     = {Adv. Soviet Math.},
  volume     = {16},
  pages      = {1--50},
  publisher  = {Amer. Math. Soc., Providence, RI},
  year       = {1993},
  mrclass    = {22E47 (14A22 14F05)},
  mrnumber   = {1237825 (95a:22022)},
  mrreviewer = {D. Mili{\v{c}}i{\'c}}
}

@article{BC,
  author  = {Baranovsky, Vladimir and Chen, Taiji},
  title   = {Quantization of Vector Bundles on Lagrangian Subvarieties},
  journal = {International Mathematics Research Notices},
  volume  = {},
  number  = {},
  pages   = {rnx230},
  year    = {2017},
  doi     = {10.1093/imrn/rnx230},
  url     = { + http://dx.doi.org/10.1093/imrn/rnx230},
  eprint  = {/oup/backfile/content_public/journal/imrn/pap/10.1093_imrn_rnx230/3/rnx230.pdf}
}

@article{Vergne,
  author     = {Vergne, Mich\`ele},
  title      = {Instantons et correspondance de {K}ostant-{S}ekiguchi},
  journal    = {C. R. Acad. Sci. Paris S\'er. I Math.},
  fjournal   = {Comptes Rendus de l'Acad\'emie des Sciences. S\'erie I.
                Math\'ematique},
  volume     = {320},
  year       = {1995},
  number     = {8},
  pages      = {901--906},
  issn       = {0764-4442},
  mrclass    = {22E47 (32M15)},
  mrnumber   = {1328708},
  mrreviewer = {J. S. Joel}
}

@article{Sekiguchi:bijection,
  author     = {Sekiguchi, Jir\=o},
  title      = {Remarks on real nilpotent orbits of a symmetric pair},
  journal    = {J. Math. Soc. Japan},
  fjournal   = {Journal of the Mathematical Society of Japan},
  volume     = {39},
  year       = {1987},
  number     = {1},
  pages      = {127--138},
  issn       = {0025-5645},
  mrclass    = {53C35 (17B20)},
  mrnumber   = {867991},
  mrreviewer = {Juan Tirao},
  doi        = {10.2969/jmsj/03910127},
  url        = {http://dx.doi.org/10.2969/jmsj/03910127}
}

@article{Losev:isom_quant,
  author     = {Losev, Ivan},
  title      = {Isomorphisms of quantizations via quantization of resolutions},
  journal    = {Adv. Math.},
  fjournal   = {Advances in Mathematics},
  volume     = {231},
  year       = {2012},
  number     = {3-4},
  pages      = {1216--1270},
  issn       = {0001-8708},
  mrclass    = {53D55 (16G20 17B63)},
  mrnumber   = {2964603},
  mrreviewer = {Chengming Bai},
  url        = {https://doi.org/10.1016/j.aim.2012.06.017}
}

@article{Losev:quan_symp_orbit,
  title    = {Deformations of Symplectic Singularities and Orbit Method for Semisimple {{Lie}} Algebras},
  author   = {Losev, Ivan},
  year     = {2022},
  fjournal = {Selecta Mathematica},
  journal  = {Sel. Math. New Ser.},
  volume   = {28},
  number   = {2},
  pages    = {30},
  issn     = {1420-9020},
  doi      = {10.1007/s00029-021-00754-y},
  url      = {https://doi.org/10.1007/s00029-021-00754-y},
  urldate  = {2025-02-08},
  langid   = {english}
}

@article{Losev:HC_bimodules,
  title   = {Harish-Chandra bimodules over quantized symplectic singularities},
  author  = {Losev, I.},
  date    = {2021-06-01},
  journal = {Transformation Groups},
  volume  = {26},
  number  = {2},
  pages   = {565--600},
  issn    = {1531-586X},
  doi     = {10.1007/s00031-020-09638-5},
  url     = {https://doi.org/10.1007/s00031-020-09638-5}
}

@article{Losev:SRA,
  year      = {2019},
  publisher = {Oxford University Press ({OUP})},
  author    = {Losev, I.},
  pages     = {442--472},
  title     = {Derived Equivalences for Symplectic Reflection Algebras},
  journal   = {Int. Math. Res. Not.}
}

@misc{LosevYu,
  author        = {Losev, Ivan and Yu, Shilin},
  title         = {On {H}arish-{C}handra modules over quantizations of nilpotent orbits},
  year          = {2025},
  note          = {arXiv:2309.11191},
  eprint        = {2309.11191},
  eprinttype    = {arXiv},
  archiveprefix = {arXiv},
  primaryclass  = {math.RT}
}

@article{Beauville1,
  author     = {Beauville, Arnaud},
  title      = {Symplectic singularities},
  journal    = {Invent. Math.},
  fjournal   = {Inventiones Mathematicae},
  volume     = {139},
  year       = {2000},
  number     = {3},
  pages      = {541--549},
  issn       = {0020-9910},
  mrclass    = {14B05 (14E15 32S45)},
  mrnumber   = {1738060},
  mrreviewer = {Marko Roczen},
  doi        = {10.1007/s002229900043},
  url        = {https://doi.org.ezproxy.library.tamu.edu/10.1007/s002229900043}
}

@article{Namikawa1,
  author  = {Namikawa, Yoshinori},
  year    = {2001},
  month   = {02},
  pages   = {},
  title      = {A note on symplectic singularities},
  eprint     = {math/0101028},
  eprinttype = {arXiv},
  journal    = {arXiv:math/0101028}
}

@article{Namikawa2,
  author     = {Namikawa, Yoshinori},
  title      = {Flops and {P}oisson deformations of symplectic varieties},
  journal    = {Publ. Res. Inst. Math. Sci.},
  fjournal   = {Kyoto University. Research Institute for Mathematical
                Sciences. Publications},
  volume     = {44},
  year       = {2008},
  number     = {2},
  pages      = {259--314},
  issn       = {0034-5318},
  mrclass    = {14D15 (14E05 14E30 32G07)},
  mrnumber   = {2426349},
  mrreviewer = {Baohua Fu},
  doi        = {10.2977/prims/1210167328},
  url        = {https://doi.org.ezproxy.library.tamu.edu/10.2977/prims/1210167328}
}

@article{BV85:unipotent,
  author     = {Barbasch, Dan and Vogan, Jr., David A.},
  title      = {Unipotent representations of complex semisimple groups},
  journal    = {Ann. of Math. (2)},
  fjournal   = {Annals of Mathematics. Second Series},
  volume     = {121},
  year       = {1985},
  number     = {1},
  pages      = {41--110},
  issn       = {0003-486X},
  mrclass    = {22E46 (20G05 22E50)},
  mrnumber   = {782556},
  mrreviewer = {Floyd L. Williams},
  doi        = {10.2307/1971193},
  url        = {https://doi-org.ezproxy.library.tamu.edu/10.2307/1971193}
}

@article{Barbasch:unitary_dual_complex_classical,
  author     = {Barbasch, Dan},
  title      = {The unitary dual for complex classical {L}ie groups},
  journal    = {Invent. Math.},
  fjournal   = {Inventiones Mathematicae},
  volume     = {96},
  year       = {1989},
  number     = {1},
  pages      = {103--176},
  issn       = {0020-9910},
  mrclass    = {22E46 (22E45)},
  mrnumber   = {981739},
  mrreviewer = {J. A. Wolf},
  doi        = {10.1007/BF01393972},
  url        = {https://doi.org/10.1007/BF01393972}
}

@incollection{Barbasch:unipotent_dual_pair,
  author     = {Barbasch, Dan},
  title      = {Unipotent representations and the dual pair correspondence},
  booktitle  = {Representation theory, number theory, and invariant theory},
  series     = {Progr. Math.},
  volume     = {323},
  pages      = {47--85},
  publisher  = {Birkh\"{a}user/Springer, Cham},
  year       = {2017},
  mrclass    = {22E46 (22E55)},
  mrnumber   = {3753908},
  mrreviewer = {Karl-Hermann Neeb}
}

@article{Beynon-Spaltenstein,
  author     = {Beynon, W. Meurig and Spaltenstein, Nicolas},
  title      = {Green functions of finite {C}hevalley groups of type
                {$E_{n}$} {$(n=6,\,7,\,8)$}},
  journal    = {J. Algebra},
  fjournal   = {Journal of Algebra},
  volume     = {88},
  year       = {1984},
  number     = {2},
  pages      = {584--614},
  issn       = {0021-8693},
  coden      = {JALGA4},
  mrclass    = {20G40 (20C15)},
  mrnumber   = {747534 (85k:20136)},
  mrreviewer = {Toshiaki Shoji},
  doi        = {10.1016/0021-8693(84)90084-X},
  url        = {http://dx.doi.org/10.1016/0021-8693(84)90084-X}
}

@article{Shoji:Green,
  author   = {Shoji, Toshiaki},
  title    = {Green polynomials of a {C}hevalley group of type {$F_4$}},
  journal  = {Comm. Algebra},
  fjournal = {Communications in Algebra},
  volume   = {10},
  year     = {1982},
  pages    = {505--543}
}

@article{BMSZ:metaplecticBV,
  title   = {On the {{Notion}} of {{Metaplectic Barbasch}}--{{Vogan Duality}}},
  author  = {Barbasch, Dan and Ma, Jia-Jun and Sun, Binyong and Zhu, Chen-Bo},
  year    = {2023},
  month   = oct,
  journal = {International Mathematics Research Notices},
  volume  = {2023},
  number  = {20},
  pages   = {17822--17852},
  issn    = {1073-7928},
  doi     = {10.1093/imrn/rnad097},
  urldate = {2024-03-02}
}

@article{BMSZ:counting,
  author  = {Barbasch, Dan and Ma, Jia-Jun and Sun, Binyong and Zhu, Chen-Bo},
  title   = {Special unipotent representations of real classical groups: Counting and Reduction},
  journal = {J. Eur. Math. Soc.},
  year    = {2025},
  doi     = {10.4171/JEMS/1609},
  url     = {https://ems.press/journals/jems/articles/14298688},
  note    = {Published online first},
  mrclass = {22E46, 22E47}
}

@article{BMSZ:construction_unitarity,
  title       = {Special Unipotent Representations of Real Classical Groups: Construction and Unitarity},
  author      = {Barbasch, Dan and Ma, Jia-Jun and Sun, Binyong and Zhu, Chen-Bo},
  journal     = {J. Amer. Math. Soc.},
  doi         = {10.1090/jams/1082},
  eprint      = {1712.05552},
  eprinttype  = {arXiv},
  eprintclass = {math},
  year        = {2026},
  pubstate    = {To appear}
}

@incollection{BMSZ:spin,
  title     = {Genuine Special Unipotent Representations of Spin Groups},
  booktitle = {Symmetry in Geometry and Analysis, Volume 1: {{Festschrift}} in Honor of Toshiyuki Kobayashi},
  author    = {Barbasch, Dan and Ma, Jia-Jun and Sun, Binyong and Zhu, Chen-Bo},
  editor    = {Pevzner, Michael and Sekiguchi, Hideko},
  year      = {2025},
  pages     = {141--164},
  publisher = {Springer Nature Singapore},
  location  = {Singapore},
  doi       = {10.1007/978-981-97-8449-3_3},
  url       = {https://doi.org/10.1007/978-981-97-8449-3_3},
  isbn      = {978-981-97-8449-3}
}

@article{He,
  author  = {He, H.},
  title   = {Unipotent representations, theta correspondences, and quantum induction},
  journal = {Mem. Amer. Math. Soc.},
  volume  = {299},
  year    = {2024},
  number  = {1496, vii+90 pp.}
}

@book{HTT:D-modules,
  title       = {D-{{Modules}}, {{Perverse Sheaves}}, and {{Representation Theory}}},
  editor      = {Hotta, Ryoshi and Takeuchi, Kiyoshi and Tanisaki, Toshiyuki},
  editora     = {Bass, Hyman and Oesterlé, Joseph and Weinstein, Alan},
  editoratype = {redactor},
  date        = {2008},
  series      = {Progress in {{Mathematics}}},
  volume      = {236},
  publisher   = {Birkhäuser},
  location    = {Boston, MA},
  doi         = {10.1007/978-0-8176-4523-6},
  url         = {http://link.springer.com/10.1007/978-0-8176-4523-6},
  urldate     = {2025-10-09},
  isbn        = {978-0-8176-4363-8 978-0-8176-4523-6},
  langid      = {english}
}

@article{Mason-Brown,
  author  = {Mason-Brown, Lucas},
  year    = {2018},
  month   = {6},
  pages   = {},
  title      = {Unipotent Representations and Microlocalization},
  eprint     = {1805.12038},
  eprinttype = {arXiv},
  journal    = {arXiv:1805.12038}
}

@article{Howe79,
  author    = {Howe, Roger},
  title     = {$\theta$-series and invariant theory},
  journal = {Automorphic Forms, Representations and $L$-functions, Proc. Sympos. Pure Math, vol. 33},
  year      = {1979},
  pages     = {275--285}
}

@article{Howe82,
  author  = {Howe, Roger},
  title   = {On a notion of rank for unitary representations of the classical groups},
  journal = {Harmonic analysis and group representations, Liguori, Naples},
  year    = {1982},
  pages   = {223--331}
}

@article{Howe89,
  author   = {Howe, Roger},
  title    = {Transcending classical invariant theory},
  journal  = {J. Amer. Math. Soc.},
  fjournal = {Journal of the American Mathematical Society},
  volume   = {2},
  year     = {1989},
  number   = {3},
  pages    = {535--552},
  issn     = {0894-0347},
  mrclass  = {22E45},
  mrnumber = {985172},
  doi      = {10.2307/1990942},
  url      = {https://doi-org.ezproxy.library.tamu.edu/10.2307/1990942}
}

@article{Li,
  author     = {Li, Jian-Shu},
  title      = {Singular unitary representations of classical groups},
  journal    = {Invent. Math.},
  fjournal   = {Inventiones Mathematicae},
  volume     = {97},
  year       = {1989},
  number     = {2},
  pages      = {237--255},
  issn       = {0020-9910},
  mrclass    = {22E50 (22E46)},
  mrnumber   = {1001840},
  mrreviewer = {Marko Tadi\'c},
  doi        = {10.1007/BF01389041},
  url        = {https://doi-org.ezproxy.library.tamu.edu/10.1007/BF01389041}
}

@article{HuangLi,
  author     = {Huang, Jing-Song and Li, Jian-Shu},
  title      = {Unipotent representations attached to spherical nilpotent
                orbits},
  journal    = {Amer. J. Math.},
  fjournal   = {American Journal of Mathematics},
  volume     = {121},
  year       = {1999},
  number     = {3},
  pages      = {497--517},
  issn       = {0002-9327},
  mrclass    = {22E47 (20G20)},
  mrnumber   = {1738410},
  mrreviewer = {William M. McGovern},
  url        = {http://muse.jhu.edu.ezproxy.library.tamu.edu/journals/american_journal_of_mathematics/v121/121.3huang.pdf}
}

@incollection{Trapa,
  author     = {Trapa, Peter E.},
  title      = {Special unipotent representations and the {H}owe
                correspondence},
  booktitle  = {Functional analysis {VIII}},
  series     = {Various Publ. Ser. (Aarhus)},
  volume     = {47},
  pages      = {210--229},
  publisher  = {Aarhus Univ., Aarhus},
  year       = {2004},
  mrclass    = {22E45},
  mrnumber   = {2127175},
  mrreviewer = {Tomasz Przebinda}
}

@incollection{Arthur84,
  author     = {Arthur, James},
  title      = {On some problems suggested by the trace formula},
  booktitle  = {Lie group representations, {II} ({C}ollege {P}ark, {M}d., 1982/1983)},
  series     = {Lecture Notes in Math.},
  volume     = {1041},
  pages      = {1--49},
  publisher  = {Springer, Berlin},
  year       = {1984},
  mrclass    = {11F72 (22E45 22E55)},
  mrnumber   = {748504},
  mrreviewer = {A. B. Venkov},
  doi        = {10.1007/BFb0073144},
  url        = {https://doi-org.ezproxy.library.tamu.edu/10.1007/BFb0073144}
}

@article{Arthur89,
  author     = {Arthur, James},
  title      = {Unipotent automorphic representations: conjectures},
  note       = {Orbites unipotentes et repr\'{e}sentations, II},
  journal    = {Ast\'{e}risque},
  fjournal   = {Ast\'{e}risque},
  number     = {171-172},
  year       = {1989},
  pages      = {13--71},
  issn       = {0303-1179},
  mrclass    = {22E50 (11F70)},
  mrnumber   = {1021499},
  mrreviewer = {Marko Tadi\'{c}}
}

@article{Arthur:endoscopic,
  author    = {Arthur, J.},
  title     = {The endoscopic classification of representations. Orthogonal and symplectic groups},
  journal   = {Amer. Math. Soc. Colloq. Publ.},
  volume    = {61},
  year      = {2013},
  publisher = {Amer. Math. Soc.},
  location  = {Providence, RI}
}

@article{MoeglinRenard2020,
  author  = {M{\oe}glin, C. and Renard, D.},
  title   = {Sur les paquets d'Arthur des groupes classiques r\'eels},
  journal = {J. Eur. Math. Soc. },
  volume  = {22},
  number  = {6},
  year    = {2020},
  pages   = {1827--1892}
}

@unpublished{Adams:quasi-dist,
  title = {Nilpotent orbits},
  author = {Adams, J.},
  note = {Unpublished notes},
  year = {2013}
}

@online{AdamsTrapaVogan,
  title  = {Computing Hodge filtrations},
  author = {Adams, J. and Trapa, P. and Vogan, D.},
  url    = {http://www.liegroups.org/papers/atlasHodge.pdf},
  year   = {2019}
}

@book{ABV,
  author     = {Adams, Jeffrey and Barbasch, Dan and Vogan, Jr., David A.},
  title      = {The {L}anglands classification and irreducible characters for
                real reductive groups},
  series     = {Progress in Mathematics},
  volume     = {104},
  publisher  = {Birkh\"auser Boston, Inc., Boston, MA},
  year       = {1992},
  pages      = {xii+318},
  isbn       = {0-8176-3634-X},
  mrclass    = {22-02 (22E47)},
  mrnumber   = {1162533},
  mrreviewer = {Brian E. Blank},
  doi        = {10.1007/978-1-4612-0383-4},
  url        = {https://doi-org.ezproxy.library.tamu.edu/10.1007/978-1-4612-0383-4}
}

@article{ALTV,
  author     = {Adams, Jeffrey D. and van Leeuwen, Marc A. A. and Trapa, Peter
                E. and Vogan, Jr., David A.},
  title      = {Unitary representations of real reductive groups},
  journal    = {Ast\'erisque},
  fjournal   = {Ast\'erisque},
  number     = {417},
  year       = {2020},
  pages      = {x + 174},
  issn       = {0303-1179,2492-5926},
  isbn       = {978-2-85629-918-0},
  mrclass    = {22E46 (17B15 20G05)},
  mrnumber   = {4146144},
  mrreviewer = {William\ M.\ McGovern},
  doi        = {10.24033/ast},
  url        = {https://doi.org/10.24033/ast}
}

@unpublished{AMVV,
  author = {Adams, J. and Miller, S. and van Leeuwen, M. and Vogan, D. A.},
  title  = {Unipotent representations of real exceptional groups},
  note   = {In preparation}
}

@article{Miller,
  author  = {Miller, S. D.},
  title   = {Residual automorphic forms and spherical unitary representations of exceptional groups},
  journal = {Ann. of Math.},
  volume  = {177},
  number  = {2},
  year    = {2013},
  pages   = {1169--1179}
}

@online{AIMV:ArthurPackets,
  title = {The {{Unitarity}} of {{Arthur Packets}} for {{Real Reductive Groups}}},
  author = {Adams, Jeffrey and Ionov, Andrei and Mason-Brown, Lucas and Vogan, David},
  date = {2026-06-01},
  eprint = {2606.01609},
  eprinttype = {arXiv},
  eprintclass = {math.RT},
  doi = {10.48550/arXiv.2606.01609},
  url = {http://arxiv.org/abs/2606.01609},
  urldate = {2026-06-14},
  pubstate = {prepublished},
}

@article{AdamsArancibiaMezo,
  author  = {Adams, Jeffrey and Arancibia, Nicolas Robert and Mezo, Paul},
  title   = {Equivalent definitions of Arthur packets for real classical groups},
  journal = {Mem. Amer. Math. Soc.},
  volume  = {300},
  number  = {1503, v+110 pp.},
  year    = {2024}
}

@article{ArancibiaMezo:unitary,
  title   = {Equivalent definitions of Arthur packets for real unitary groups},
  doi     = {10.1090/ert/707},
  journal = {Represent. Theory},
  author  = {Arancibia Robert, Nicolas and Mezo, Paul},
  year    = {2025},
  pages   = {968--1027}
}

@article{ArancibiaMezo:symplectic_orthogonal,
  title   = {Arthur packets for pure real forms of symplectic and special orthogonal groups},
  doi     = {10.4153/S000843952500027X},
  journal = {Canadian Mathematical Bulletin},
  author  = {Arancibia Robert, Nicolas and Mezo, Paul},
  year    = {2025},
  pages   = {1–22}
}

@misc{GLLS:covering_BV,
  title = {Covering {{Barbasch-Vogan}} Duality and Wavefront Sets of Genuine Representations},
  author = {Gao, Fan and Liu, Baiying and Lo, Chi-Heng and Shahidi, Freydoon},
  year = 2025,
  month = nov,
  number = {arXiv:2511.14750},
  eprint = {2511.14750},
  eprinttype = {arXiv},
  primaryclass = {math},
  publisher = {arXiv},
  doi = {10.48550/arXiv.2511.14750},
  urldate = {2026-01-28},
  archiveprefix = {arXiv}
}

@article{Mok,
  author  = {Mok, C. P.},
  title   = {Endoscopic classification of representations of quasi-split unitary groups},
  journal = {Mem. Amer. Math. Soc.},
  volume  = {235},
  number  = {1108},
  year    = {2015},
  pages   = {vi+248 pp.}
}

@article{Sommers:Duality,
  year      = {2001},
  month     = sep,
  publisher = {Elsevier {BV}},
  volume    = {243},
  number    = {2},
  pages     = {790--812},
  author    = {Eric Sommers},
  title     = {Lusztig{\textquotesingle}s Canonical Quotient and Generalized Duality},
  journal   = {Journal of Algebra}
}

@article{FJLS:generic_sing,
  author     = {Fu, Baohua and Juteau, Daniel and Levy, Paul and Sommers,
                Eric},
  title      = {Generic singularities of nilpotent orbit closures},
  journal    = {Adv. Math.},
  fjournal   = {Advances in Mathematics},
  volume     = {305},
  year       = {2017},
  pages      = {1--77},
  issn       = {0001-8708},
  mrclass    = {14B05 (17B08 37B05)},
  mrnumber   = {3570131},
  mrreviewer = {Arvid Siqveland},
  doi        = {10.1016/j.aim.2016.09.010},
  url        = {https://doi-org.ezproxy.library.tamu.edu/10.1016/j.aim.2016.09.010}
}

@misc{JLS:Duality,
  title         = {Minimal special degenerations and duality},
  author        = {Daniel Juteau and Paul Levy and Eric Sommers},
  year          = {2023},
  eprint        = {2310.00521},
  eprinttype    = {arXiv},
  note          = {arXiv:2310.00521},
  archiveprefix = {arXiv},
  primaryclass  = {math.RT}
}

@article{Wong:Richardson,
  author     = {Wong, Kayue Daniel},
  title      = {Unipotent representations of exceptional {R}ichardson orbits},
  journal    = {J. Lie Theory},
  fjournal   = {Journal of Lie Theory},
  volume     = {33},
  year       = {2023},
  number     = {4},
  pages      = {1087--1111},
  issn       = {0949-5932},
  mrclass    = {22E46 (17B08 22E47)},
  mrnumber   = {4652478},
  mrreviewer = {Zhanqiang\ Bai}
}

@book{Lusztig:book,
  isbn      = {9780691083513},
  url       = {http://www.jstor.org/stable/j.ctt1b9x10c},
  author    = {Lusztig, G.},
  publisher = {Princeton University Press},
  title     = {Characters of Reductive Groups over a Finite Field. (AM-107)},
  urldate   = {2025-10-02},
  year      = {1984}
}

@article{Lusztig:irred_Weyl_I,
  author  = {Lusztig, George},
  title   = {A class of irreducible representations of a {W}eyl group},
  journal = {Indag. Math.},
  volume  = {41},
  year    = {1979},
  pages   = {323--335}
}

@article{Lusztig:irred_Weyl_II,
  author    = {Lusztig, George},
  title     = {A Class of Irreducible Representations of a {{Weyl}} Group. {{II}}},
  year      = {1982},
  journal   = {Indagationes Mathematicae (Proceedings)},
  volume    = {85},
  number    = {2},
  pages     = {219--226},
  publisher = {North-Holland},
  issn      = {1385-7258},
  doi       = {10.1016/S1385-7258(82)80013-9},
  url       = {https://www.sciencedirect.com/science/article/pii/S1385725882800139},
  urldate   = {2025-09-24},
  langid    = {american}
}

@article{Lusztig:green,
  author  = {Lusztig, George},
  title   = {Green polynomials and singularities of unipotent classes},
  journal = {Invent. Math.},
  volume  = {42},
  year    = {1981},
  pages   = {169--178}
}

@article{Lusztig:unipotent,
  author     = {Lusztig, George},
  journal    = {Asian J. Math.},
  title      = {Notes on unipotent classes},
  year       = {1997},
  issn       = {1093-6106},
  number     = {1},
  pages      = {194--207},
  volume     = {1},
  fjournal   = {The Asian Journal of Mathematics},
  mrclass    = {20G05 (20G40)},
  mrnumber   = {1480994 (98k:20078)},
  mrreviewer = {Toshiaki Shoji}
}

@article{JiangLiuSavin,
  author  = {Jiang, D. and Liu, B. and Savin, G.},
  title   = {Raising nilpotent orbits in wave-front sets},
  journal = {Represent. Theory},
  volume  = {20},
  pages   = {419--450},
  year    = {2016}
}

@article{Moeglin:p-adic,
  author  = {M{\oe}glin, C.},
  title   = {Front d'onde des repr\'esentations des groupes classiques $p-$adiques},
  journal = {Amer. J. Math.},
  volume  = {118},
  number  = {6},
  year    = {1996},
  pages   = {1313--1346}
}

@article{Moeglin:multi1,
  author  = {M{\oe}glin, C.},
  title   = {Multiplicit\'e 1 dans les paquets d’Arthur aux places p-adiques},
  journal = {Clay Math. Proc.},
  publisher = {Amer. Math. Soc.},
  address = {Providence, RI},
  volume  = {13},
  year    = {2011},
  pages   = {333--374}
}

@unpublished{FHN,
    author = {Finkelberg, Michael and Hanany, Amihay and Nakajima, Hiraku},
    title = {Coulomb branches of orthosymplectic quiver gauge theories},
    note = {In preparation}
}

@article{Namikawa:covers,
  author     = {Namikawa, Yoshinori},
  journal    = {Selecta Math. (N.S.)},
  title      = {Birational geometry for the covering of a nilpotent orbit closure},
  year       = {2022},
  issn       = {1022-1824,1420-9020},
  number     = {4},
  pages      = {Paper No. 75, 59},
  volume     = {28},
  doi        = {10.1007/s00029-022-00790-2},
  fjournal   = {Selecta Mathematica. New Series},
  mrclass    = {14E15 (17B08)},
  mrnumber   = {4470296},
  mrreviewer = {Calum\ Spicer},
  url        = {https://doi.org/10.1007/s00029-022-00790-2}
}

@book{Spaltenstein,
  author     = {Spaltenstein, Nicolas},
  title      = {Classes unipotentes et sous-groupes de {B}orel},
  series     = {Lecture Notes in Mathematics},
  volume     = {946},
  publisher  = {Springer-Verlag},
  address    = {Berlin},
  year       = {1982},
  pages      = {ix+259},
  isbn       = {3-540-11585-4},
  mrclass    = {14L30 (20G15)},
  mrnumber   = {672610 (84a:14024)},
  mrreviewer = {Klaus Pommerening}
}

@article{Duflo,
  author     = {Duflo, Michel},
  title      = {Th\'{e}orie de {M}ackey pour les groupes de {L}ie alg\'{e}briques},
  journal    = {Acta Math.},
  fjournal   = {Acta Mathematica},
  volume     = {149},
  year       = {1982},
  number     = {3-4},
  pages      = {153--213},
  issn       = {0001-5962},
  mrclass    = {22E50 (22E55)},
  mrnumber   = {688348},
  mrreviewer = {A. Kleppner},
  doi        = {10.1007/BF02392353},
  url        = {https://doi-org.ezproxy.library.tamu.edu/10.1007/BF02392353}
}

@book{Schwartz,
  author    = {Schwartz, James Oliver},
  title     = {The determination of the admissible nilpotent orbits in real classical groups},
  note      = {Thesis (Ph.D.)--Massachusetts Institute of Technology},
  publisher = {ProQuest LLC, Ann Arbor, MI},
  year      = {1987},
  pages     = {(no paging)},
  mrclass   = {Thesis},
  mrnumber  = {2941135},
  url       = {http://gateway.proquest.com.ezproxy.library.tamu.edu/openurl?url_ver=Z39.88-2004&rft_val_fmt=info:ofi/fmt:kev:mtx:dissertation&res_dat=xri:pqdiss&rft_dat=xri:pqdiss:0376526}
}

@article{BCHM,
  author     = {Birkar, Caucher and Cascini, Paolo and Hacon, Christopher D.
                and McKernan, James},
  title      = {Existence of minimal models for varieties of log general type},
  journal    = {J. Amer. Math. Soc.},
  fjournal   = {Journal of the American Mathematical Society},
  volume     = {23},
  year       = {2010},
  number     = {2},
  pages      = {405--468},
  issn       = {0894-0347},
  mrclass    = {14E30 (14E05)},
  mrnumber   = {2601039},
  mrreviewer = {Mark Gross},
  doi        = {10.1090/S0894-0347-09-00649-3},
  url        = {https://doi-org.ezproxy.library.tamu.edu/10.1090/S0894-0347-09-00649-3}
}

@book{CM,
  author     = {Collingwood, David H. and McGovern, William M.},
  title      = {Nilpotent orbits in semisimple {L}ie algebras},
  series     = {Van Nostrand Reinhold Mathematics Series},
  publisher  = {Van Nostrand Reinhold Co., New York},
  year       = {1993},
  pages      = {xiv+186},
  isbn       = {0-534-18834-6},
  mrclass    = {17-02 (17B20 17B25 22E60)},
  mrnumber   = {1251060},
  mrreviewer = {Stephen Slebarski}
}

@article{Dk1,
  author     = {\DJ{okovi\'c}, Dragomir \v{Z}.},
  title      = {The closure diagrams for nilpotent orbits of real forms of {$F_4$} and {$G_2$}},
  journal    = {J. Lie Theory},
  fjournal   = {Journal of Lie Theory},
  volume     = {10},
  year       = {2000},
  number     = {2},
  pages      = {491--510},
  issn       = {0949-5932},
  doi        = {10.5802/jolt.218},
  mrclass    = {14L30 (17B45)},
  mrnumber   = {1774875},
  mrreviewer = {Joyce O'Halloran}
}

@article{Dk2,
  author     = {\DJ{okovi\'c}, Dragomir \v{Z}.},
  title      = {The closure diagrams for nilpotent orbits of real forms of {$E_6$}},
  journal    = {J. Lie Theory},
  fjournal   = {Journal of Lie Theory},
  volume     = {11},
  year       = {2001},
  number     = {2},
  pages      = {381--413},
  issn       = {0949-5932},
  doi        = {10.5802/jolt.238},
  mrclass    = {22E46 (17B20)},
  mrnumber   = {1851797},
  mrreviewer = {William M. McGovern}
}

@article{Dk3,
  author     = {\DJ{okovi\'c}, Dragomir \v{Z}.},
  title      = {The closure diagrams for nilpotent orbits of the real forms {E} {VI} and {E} {VII} of {$E_7$}},
  journal    = {Represent. Theory},
  fjournal   = {Representation Theory. An Electronic Journal of the American
                Mathematical Society},
  volume     = {5},
  year       = {2001},
  pages      = {17--42},
  issn       = {1088-4165},
  doi        = {10.1090/S1088-4165-01-00112-1},
  mrclass    = {22E60 (17B25)},
  mrnumber   = {1826427},
  mrreviewer = {William M. McGovern}
}

@article{Dk4,
  author   = {\DJ{okovi\'c}, Dragomir \v{Z}.},
  title    = {Correction to: ``{T}he closure diagrams for nilpotent orbits of the real forms {E} {VI} and {E} {VII} of {$E_7$}'' [{R}epresent. {T}heory {\bf 5} (2001), 17--42; {MR}1826427 (2002b:22033)]},
  journal  = {Represent. Theory},
  fjournal = {Representation Theory. An Electronic Journal of the American Mathematical Society},
  volume   = {5},
  year     = {2001},
  pages    = {503},
  issn     = {1088-4165},
  doi      = {10.1090/S1088-4165-01-00143-1},
  mrclass  = {22E60 (17B25)},
  mrnumber = {1870600}
}

@article{Dk5,
  author     = {\DJ{okovi\'c}, Dragomir \v{Z}.},
  title      = {The closure diagram for nilpotent orbits of the split real
                form of {$E_7$}},
  journal    = {Represent. Theory},
  fjournal   = {Representation Theory. An Electronic Journal of the American Mathematical Society},
  volume     = {5},
  year       = {2001},
  pages      = {284--316},
  issn       = {1088-4165},
  doi        = {10.1090/S1088-4165-01-00124-8},
  mrclass    = {17B25 (14Q15)},
  mrnumber   = {1857083},
  mrreviewer = {William M. McGovern}
}

@article{Dk6,
  author   = {\DJ{okovi\'c}, Dragomir \v{Z}.},
  title    = {The closure diagram for nilpotent orbits of the real form {EIX} of {$E_8$}},
  journal  = {Asian J. Math.},
  fjournal = {Asian Journal of Mathematics},
  volume   = {5},
  year     = {2001},
  number   = {3},
  pages    = {561--584},
  issn     = {1093-6106},
  doi      = {10.4310/AJM.2001.v5.n3.a9},
  mrclass  = {17B20 (22E47)},
  mrnumber = {1868580}
}

@article{Dk7,
  author     = {\DJ{okovi\'c}, Dragomir \v{Z}.},
  title      = {The closure diagram for nilpotent orbits of the split real form of {$E_8$}},
  journal    = {Cent. Eur. J. Math.},
  fjournal   = {Central European Journal of Mathematics},
  volume     = {1},
  year       = {2003},
  number     = {4},
  pages      = {573--643},
  issn       = {1644-3616},
  doi        = {10.2478/BF02475183},
  mrclass    = {17B25 (17B45)},
  mrnumber   = {2040654},
  mrreviewer = {William M. McGovern}
}

@article{Dk8,
  author   = {\DJ{okovi\'c}, Dragomir \v{Z}.},
  title    = {Corrections for ``{T}he closure diagram for nilpotent orbits of the split real form of {$E_8$}'' [{C}ent. {E}ur. {J}. {M}ath. {\bf 1} (2003), no. 4, 573--643 (electronic); MR2040654]},
  journal  = {Cent. Eur. J. Math.},
  fjournal = {Central European Journal of Mathematics},
  volume   = {3},
  year     = {2005},
  number   = {3},
  pages    = {578--579},
  issn     = {1644-3616},
  doi      = {10.2478/BF02475924},
  mrclass  = {17B25 (17B45)},
  mrnumber = {2152477}
}

@article{Djokovic:centralizer_inner,
  title   = {Classification of nilpotent elements in simple exceptional real Lie algebras of inner type and description of their centralizers},
  journal = {Journal of Algebra},
  volume  = {112},
  number  = {2},
  pages   = {503-524},
  year    = {1988},
  issn    = {0021-8693},
  doi     = {https://doi.org/10.1016/0021-8693(88)90104-4},
  url     = {https://www.sciencedirect.com/science/article/pii/0021869388901044},
  author  = {\DJ{okovi\'c}, Dragomir \v{Z}.}
}

@article{Djokovic:centralizer_outer,
  title = {Classification of Nilpotent Elements in Simple Real {{Lie}} Algebras {{{\emph{E}}}}6(6) and {{{\emph{E}}}}6(-26) and Description of Their Centralizers},
  author = {{\DJ}okovi{\'c}, Dragomir {\v Z}},
  year = 1988,
  month = jul,
  journal = {Journal of Algebra},
  volume = {116},
  number = {1},
  pages = {196--207},
  issn = {0021-8693},
  doi = {10.1016/0021-8693(88)90201-3},
  urldate = {2026-05-30},
}

@article{Djokovic:bijection,
 ISSN = {00029947},
 URL = {http://www.jstor.org/stable/2000858},
 author = {Dragomir Ž. Ðoković},
 journal = {Transactions of the American Mathematical Society},
 number = {2},
 pages = {577--585},
 publisher = {American Mathematical Society},
 title = {Proof of a Conjecture of Kostant},
 urldate = {2026-06-19},
 volume = {302},
 year = {1987}
}

@article{GomezZhu:local_theta_lifting,
  title = {Local Theta Lifting of Generalized {{Whittaker}} Models Associated to Nilpotent Orbits},
  author = {Gomez, Raul and Zhu, Chen-Bo},
  date = {2013-08-28},
  eprint = {1302.3744},
  eprinttype = {arXiv},
  eprintclass = {math},
  doi = {10.48550/arXiv.1302.3744},
  url = {http://arxiv.org/abs/1302.3744},
  urldate = {2025-09-29},
  pubstate = {prepublished}
}

@book{McGovern1994,
  author    = {McGovern, W.},
  title     = {Completely Prime Maximal Ideals and Quantization},
  series    = {Mem. Amer. Math. Soc.},
  publisher = {AMS},
  year      = {1994}
}

@article{Nevins,
  author     = {Nevins, Monica},
  title      = {Admissible nilpotent orbits of real and {$p$}-adic split
                exceptional groups},
  journal    = {Represent. Theory},
  fjournal   = {Representation Theory. An Electronic Journal of the American
                Mathematical Society},
  volume     = {6},
  year       = {2002},
  pages      = {160--189},
  issn       = {1088-4165},
  mrclass    = {20G25 (17B25)},
  mrnumber   = {1915090},
  mrreviewer = {William M. McGovern},
  doi        = {10.1090/S1088-4165-02-00134-6},
  url        = {https://doi-org.ezproxy.library.tamu.edu/10.1090/S1088-4165-02-00134-6}
}

@article{Noel1,
  author     = {No{\"e}l, Alfred G.},
  title      = {Classification of admissible nilpotent orbits in simple
                exceptional real {L}ie algebras of inner type},
  journal    = {Represent. Theory},
  fjournal   = {Representation Theory. An Electronic Journal of the American
                Mathematical Society},
  volume     = {5},
  year       = {2001},
  pages      = {455--493},
  issn       = {1088-4165},
  mrclass    = {17B25 (22E60)},
  mrnumber   = {1870598},
  mrreviewer = {Dmitriy A. Rumynin},
  doi        = {10.1090/S1088-4165-01-00141-8},
  url        = {https://doi-org.ezproxy.library.tamu.edu/10.1090/S1088-4165-01-00141-8}
}

@article{Noel2,
  author     = {No{\"e}l, Alfred G.},
  title      = {Classification of admissible nilpotent orbits in simple real
                {L}ie algebras {$E_{6(6)}$} and {$E_{6(-26)}$}},
  journal    = {Represent. Theory},
  fjournal   = {Representation Theory. An Electronic Journal of the American
                Mathematical Society},
  volume     = {5},
  year       = {2001},
  pages      = {494--502},
  issn       = {1088-4165},
  mrclass    = {17B25 (22E60)},
  mrnumber   = {1870599},
  mrreviewer = {Dmitriy A. Rumynin},
  doi        = {10.1090/S1088-4165-01-00142-X},
  url        = {https://doi-org.ezproxy.library.tamu.edu/10.1090/S1088-4165-01-00142-X}
}

@article{King,
  author     = {King, Donald R.},
  title      = {The component groups of nilpotents in exceptional simple real
                {L}ie algebras},
  journal    = {Comm. Algebra},
  fjournal   = {Communications in Algebra},
  volume     = {20},
  year       = {1992},
  number     = {1},
  pages      = {219--284},
  issn       = {0092-7872},
  mrclass    = {17B25},
  mrnumber   = {1145333},
  mrreviewer = {Dragomir \v{Z}. Djokovi\'{c}},
  doi        = {10.1080/00927879208824339},
  url        = {https://doi.org/10.1080/00927879208824339}
}

@article{KingNoel,
  author     = {King, Donald R. and No{\"e}l, Alfred G.},
  title      = {Component groups of centralizers of nilpotents in complex
                symmetric spaces},
  journal    = {J. Algebra},
  fjournal   = {Journal of Algebra},
  volume     = {232},
  year       = {2000},
  number     = {1},
  pages      = {94--125},
  issn       = {0021-8693},
  mrclass    = {17B45},
  mrnumber   = {1783916},
  mrreviewer = {Dragomir \v Z. Djokovi\'c},
  doi        = {10.1006/jabr.2000.8389},
  url        = {https://doi-org.ezproxy.library.tamu.edu/10.1006/jabr.2000.8389}
}

@incollection{Alexeevski,
  author    = {Alexeevski, A.},
  title     = {Component groups of the centralizers of unipotent elements in
               semisimple algebraic groups},
  booktitle = {Lie groups and invariant theory},
  series    = {Amer. Math. Soc. Transl. Ser. 2},
  volume    = {213},
  pages     = {15--31},
  publisher = {Amer. Math. Soc., Providence, RI},
  year      = {2005},
  isbn      = {0-8218-3733-8},
  mrclass   = {20G15},
  mrnumber  = {2140712},
  doi       = {10.1090/trans2/213/02},
  url       = {https://doi.org/10.1090/trans2/213/02}
}

@article{AK,
  author     = {Auslander, L. and Kostant, B.},
  title      = {Polarization and unitary representations of solvable {L}ie
                groups},
  journal    = {Invent. Math.},
  fjournal   = {Inventiones Mathematicae},
  volume     = {14},
  year       = {1971},
  pages      = {255--354},
  issn       = {0020-9910},
  mrclass    = {22E45 (22E25)},
  mrnumber   = {0293012},
  mrreviewer = {A. J. Coleman},
  doi        = {10.1007/BF01389744},
  url        = {https://doi-org.ezproxy.library.tamu.edu/10.1007/BF01389744}
}

@article{KostantRallis,
  issn      = {00029327, 10806377},
  url       = {http://www.jstor.org/stable/2373470},
  author    = {B. Kostant and S. Rallis},
  journal   = {American Journal of Mathematics},
  number    = {3},
  pages     = {753--809},
  publisher = {The Johns Hopkins University Press},
  title     = {Orbits and Representations Associated with Symmetric Spaces},
  urldate   = {2025-09-10},
  volume    = {93},
  year      = {1971}
}

@book{Matsumura:comm_ring,
  place      = {Cambridge},
  series     = {Cambridge Studies in Advanced Mathematics},
  title      = {Commutative Ring Theory},
  publisher  = {Cambridge University Press},
  author     = {Matsumura, H.},
  year       = {1987},
  collection = {Cambridge Studies in Advanced Mathematics}
}

@incollection{SchmidVilonen,
  author     = {Schmid, Wilfried and Vilonen, Kari},
  title      = {Hodge theory and unitary representations of reductive {L}ie
                groups},
  booktitle  = {Frontiers of mathematical sciences},
  pages      = {397--420},
  publisher  = {Int. Press, Somerville, MA},
  year       = {2011},
  mrclass    = {22E46 (22D10 32C38 32G20 32M05 58A14)},
  mrnumber   = {3050836},
  mrreviewer = {Salah Mehdi}
}

@article{Saito88,
  title        = {Modules de {{Hodge Polarisables}}},
  author       = {Saito, Morihiko},
  date         = {1988-12-31},
  journaltitle = {Publications of the Research Institute for Mathematical Sciences},
  volume       = {24},
  number       = {6},
  pages        = {849--995},
  issn         = {0034-5318},
  doi          = {10.2977/prims/1195173930},
  url          = {https://ems.press/journals/prims/articles/3445},
  urldate      = {2025-10-30},
  langid       = {english}
}

@article{Saito90,
  author     = {Saito, Morihiko},
  title      = {Mixed {H}odge modules},
  journal    = {Publ. Res. Inst. Math. Sci.},
  fjournal   = {Kyoto University. Research Institute for Mathematical
                Sciences. Publications},
  volume     = {26},
  year       = {1990},
  number     = {2},
  pages      = {221--333},
  issn       = {0034-5318},
  mrclass    = {14D07 (14C30 32J25)},
  mrnumber   = {1047415},
  mrreviewer = {Min Ho Lee},
  doi        = {10.2977/prims/1195171082},
  url        = {https://doi.org/10.2977/prims/1195171082}
}

@article{Kirillov_nil,
  author  = {Kirillov, A. A.},
  title   = {Unitary representations of nilpotent {L}ie groups},
  journal = {Russian Mathematical Surveys},
  volume  = {17},
  number  = {4},
  pages   = {53},
  url     = {http://stacks.iop.org/0036-0279/17/i=4/a=R02},
  year    = {1962}
}

@article{Kraft-Procesi:special,
  author  = {Kraft, Hanspeter and Procesi, Claudio},
  title   = {A special decomposition of the nilpotent cone of a classical Lie algebra},
  journal = {Ast\'erisque},
  volume  = {173-174},
  year    = {1989},
  pages   = {271--279}
}

@article{Kraft-Procesi:normal,
  title = {Closures of Conjugacy Classes of Matrices Are Normal},
  author = {Kraft, Hanspeter and Procesi, Claudio},
  date = {1979-10-01},
  journaltitle = {Inventiones mathematicae},
  shortjournal = {Invent Math},
  volume = {53},
  number = {3},
  pages = {227--247},
  issn = {1432-1297},
  doi = {10.1007/BF01389764},
  url = {https://doi.org/10.1007/BF01389764},
  urldate = {2026-06-19},
  langid = {english}
}

@misc{FJLS23,
  author        = {Baohua Fu and Daniel Juteau and Paul Levy and Eric Sommers},
  title         = {Local geometry of special pieces of nilpotent orbits},
  year          = {2024},
  archiveprefix = {arXiv},
  eprint        = {2308.07398},
  eprinttype    = {arXiv},
  primaryclass  = {math.RT}
}

@online{JLSY:SpecialPiece,
  title = {Lusztig's Special Pieces Conjecture},
  author = {Juteau, Daniel and Levy, Paul and Sommers, Eric and Yu, Shilin},
  date = {2026-07-16},
  eprint = {2607.15406},
  eprinttype = {arXiv},
  eprintclass = {math.RT},
  doi = {10.48550/arXiv.2607.15406},
  url = {http://arxiv.org/abs/2607.15406},
  urldate = {2026-07-20}
}

@article{Joseph:associated_variety,
  author  = {Joseph, A.},
  title   = {On the associated variety of a primitive ideal},
  journal = {J. Algebra},
  volume  = {93},
  date    = {1985},
  number  = {2},
  pages   = {509--523},
  issn    = {0021-8693}
}

@article{MBMY,
  title   = {Unipotent representations of complex groups and extended Sommers duality},
  volume  = {130},
  issn    = {1460-244X},
  url     = {https://onlinelibrary.wiley.com/doi/abs/10.1112/plms.70035},
  doi     = {10.1112/plms.70035},
  pages   = {e70035},
  number  = {3},
  journal = {Proceedings of the London Mathematical Society},
  author  = {Mason-Brown, Lucas and Matvieievskyi, Dmytro and Yu, Shilin},
  year    = {2025},
  langid  = {english}
}

@misc{LMBM,
  title         = {Unipotent Ideals and {H}arish-{C}handra bimodules},
  author        = {Losev, I. and Mason-Brown, L. and Matvieievskyi, D.},
  year          = {2021},
  note          = {arXiv:2108.03453},
  eprint        = {2108.03453},
  eprinttype    = {arXiv},
  archiveprefix = {arXiv},
  primaryclass  = {math.RT}
}

@article{MBM,
  author    = {Mason-Brown, Lucas and Matvieievskyi, Dmytro},
  title     = {Unipotent Ideals for Spin and Exceptional Groups},
  year      = {2023},
  month     = feb,
  journal   = {Journal of Algebra},
  volume    = {615},
  pages     = {358--454},
  publisher = {Academic Press},
  issn      = {0021-8693},
  doi       = {10.1016/j.jalgebra.2022.10.011},
  urldate   = {2025-09-08},
  langid    = {american}
}

@misc{DMB-Hodge,
  title         = {Hodge theory, intertwining functors, and the Orbit Method for real reductive groups},
  author        = {Davis, Dougal and Mason-Brown, Lucas},
  year          = {2025},
  note          = {arXiv:2503.14794},
  eprint        = {2503.14794},
  eprinttype    = {arXiv},
  archiveprefix = {arXiv},
  primaryclass  = {math.RT}
}

@article{DavisVilonen:MHM_real_groups,
  title        = {Mixed {{Hodge}} Modules and Real Groups},
  author       = {Davis, Dougal and Vilonen, Kari},
  date         = {2025-06},
  journaltitle = {Advances in Mathematics},
  shortjournal = {Advances in Mathematics},
  volume       = {470},
  pages        = {110255},
  issn         = {00018708},
  doi          = {10.1016/j.aim.2025.110255},
  url          = {https://linkinghub.elsevier.com/retrieve/pii/S0001870825001537},
  urldate      = {2025-10-23},
  langid       = {english}
}

@online{DavisVilonen:unitary,
  title       = {Unitary Representations of Real Groups and Localization Theory for {{Hodge}} Modules},
  author      = {Davis, Dougal and Vilonen, Kari},
  date        = {2025-02-17},
  eprint      = {2309.13215},
  eprinttype  = {arXiv},
  eprintclass = {math},
  doi         = {10.48550/arXiv.2309.13215},
  url         = {http://arxiv.org/abs/2309.13215},
  urldate     = {2025-10-23}
}

@article{EGA4,
     author = {Grothendieck, Alexander},
     title = {\'El\'ements de g\'eom\'etrie alg\'ebrique : {IV.} {\'Etude} locale des sch\'emas et des morphismes de sch\'emas, {Quatri\`eme} partie},
     journal = {Publications Math\'ematiques de l'IH\'ES},
     pages = {5--361},
     year = {1967},
     publisher = {Institut des Hautes \'Etudes Scientifiques},
     volume = {32},
     doi = {10.1007/BF02732123},
     zbl = {0153.22301},
     language = {fr},
     url = {https://www.numdam.org/articles/10.1007/BF02732123/}
}

@article{Edixhoven,
  author    = {Edixhoven, Bas},
  journal   = {Compositio Mathematica},
  language  = {eng},
  number    = {3},
  pages     = {291-306},
  publisher = {Kluwer Academic Publishers},
  title     = {N\'{e}ron models and tame ramification},
  url       = {http://eudml.org/doc/90141},
  volume    = {81},
  year      = {1992}
}

@article{Ohta:Adm,
  title   = {Classification of admissible nilpotent orbits in the classical real Lie algebras},
  journal = {Journal of Algebra},
  volume  = {136},
  number  = {2},
  pages   = {290-333},
  year    = {1991},
  issn    = {0021-8693},
  doi     = {https://doi.org/10.1016/0021-8693(91)90049-E},
  url     = {https://www.sciencedirect.com/science/article/pii/002186939190049E},
  author  = {Takuya Ohta}
}

@online{atlas,
  title  = {Atlas of Lie Groups and Representations software},
  author = {Fokko du Cloux and Marc van Leeuwen},
  year   = {2024},
  url    = {https://www.liegroups.org},
  lastchecked = {2025-05-30}
}

@online{atlas_exceptional,
  title  = {List of all special unipotent representations of real simple exceptional groups},
  author = {ATLAS},
  year   = {2024},
  url    = {https://www.liegroups.org/tables/unipotentExceptional/},
  lastchecked = {2025-05-30}
}

@article{Yu:period,
  title   = {The {{Enhanced Period Map}} and {{Equivariant Deformation Quantizations}} of {{Nilpotent Orbits}}},
  author  = {Yu, Shi Lin},
  year    = {2024},
  journal = {Acta Mathematica Sinica, English Series},
  volume  = {40},
  number  = {3},
  pages   = {885--934},
  issn    = {1439-7617},
  doi     = {10.1007/s10114-023-2215-6},
  url     = {https://doi.org/10.1007/s10114-023-2215-6}
}

@article{Sahi,
  author  = {Sahi, S.},
  title   = {Explicit Hilbert spaces for certain unipotent representations},
  journal = {Invent. Math.},
  volume  = {110},
  number  = {2},
  pages   = {409--418},
  year    = {1992}
}

@article{HuangZhu,
  author  = {Huang, J.-S.},
  author  = {Zhu, C.-B.},
  title   = {On certain small representations of indefinite orthogonal groups},
  journal = {Represent. Theory},
  volume  = {1},
  pages   = {190--206},
  year    = {1997}
}

@article{Brylinski,
  author  = {Brylinski, R.},
  title   = {Dixmier algebras for classical complex nilpotent orbits via Kraft-Procesi models. I},
  journal = {The orbit method in geometry and physics (Marseille, 2000). Progr. Math.},
  volume  = {213},
  pages   = {49--67},
  year    = {2003}
}

@incollection{BarbaschWong:normality,
  author    = {Barbasch, Dan and Wong, Kayue Daniel},
  title     = {Admissible modules and normality of classical nilpotent
               orbits},
  booktitle = {Symmetry in geometry and analysis. {V}ol. 1. {F}estschrift in
               honor of {T}oshiyuki {K}obayashi},
  series    = {Progr. Math.},
  volume    = {357},
  pages     = {165--197},
  publisher = {Birkh\"auser/Springer, Singapore},
  year      = {2025},
  isbn      = {978-981-97-8448-6; 978-981-97-8449-3},
  mrclass   = {22E60 (17B08)},
  mrnumber  = {4867011},
  doi       = {10.1007/978-981-97-8449-3\_4},
  url       = {https://doi.org/10.1007/978-981-97-8449-3_4}
}

@article{PaulTrapa,
  title   = {Some small unipotent representations of indefinite orthogonal groups and the theta correspondence},
  author  = {Paul, A. and Trapa, P.},
  journal = {Univ. Aarhus Publ. Ser.},
  volume  = {48},
  pages   = {103--125},
  year    = {2007}
}

@incollection{Moeglin:unipotent_Howe,
  author     = {M{\oe}glin, Colette},
  title      = {Paquets d'{A}rthur sp\'eciaux unipotents aux places
                archim\'ediennes et correspondance de {H}owe},
  booktitle  = {Representation theory, number theory, and invariant theory},
  series     = {Progr. Math.},
  volume     = {323},
  pages      = {469--502},
  publisher  = {Birkh\"auser/Springer, Cham},
  year       = {2017},
  isbn       = {978-3-319-59727-0; 978-3-319-59728-7},
  mrclass    = {11F70 (22E50)},
  mrnumber   = {3753920},
  mrreviewer = {Anne-Marie\ H.\ Aubert},
  doi        = {10.1007/978-3-319-59728-7\_15},
  url        = {https://doi.org/10.1007/978-3-319-59728-7_15}
}

@article{Przebinda91,
  author  = {Przebinda, T.},
  title   = {Characters, dual pairs, and unipotent representations},
  journal = {J. Funct. Anal.},
  volume  = {98},
  number  = {1},
  pages   = {59--96},
  year    = {1991}
}

@article{Przebinda93,
  author  = {Przebinda, T.},
  title   = {Characters, dual pairs, and unitary representations},
  journal = {Duke Math. J. },
  volume  = {69},
  number  = {3},
  pages   = {547--592},
  year    = {1993}
}

\end{document}